\newtheorem{theorem}{Theorem}[section]
\newtheorem*{theoremA*}{theorem A}
\newtheorem{proposition}[theorem]{Proposition}
\newtheorem{lemma}[theorem]{Lemma}
\newtheorem{corollary}[theorem]{Corollary}
\newtheorem{definition}[theorem]{Definition}
\newtheorem{remark}[theorem]{Remark}
\newtheorem{remarks}[theorem]{Remarks}
\numberwithin{equation}{section}
\theoremstyle{definition}
\begin{document}

\def\bfQ{\mathbf Q}

\def \bA {\mathbb A}
\def \bB {\mathbb B}
\def \bC {\mathbb C}
\def \bD {\mathbb D}
\def \bE {\mathbb E}
\def \bF {\mathbb F}
\def \bG {\mathbb G}
\def \bH {\mathbb H}
\def \bI {\mathbb I}
\def \bJ {\mathbb J}
\def \bK {\mathbb K}
\def \bL {\mathbb L}
\def \bM {\mathbb M}
\def \bN {\mathbb N}
\def \bO {\mathbb O}
\def \bP {\mathbb P}
\def \bQ {\mathbb Q}
\def \R {\mathbb R}
\def \bS {\mathbb S}
\def \R {\mathbb R}
\def \bR {\mathbb R}
\def \bT {\mathbb T}
\def \bU {\mathbb U}
\def \bV {\mathbb V}
\def \bW {\mathbb W}
\def \bX {\mathbb X}
\def \bY {\mathbb Y}
\def \bZ {\mathbb Z}

\def \cA {\mathcal A}
\def \cB {\mathcal B}
\def \cC {\mathcal C}
\def \cD {\mathcal D}
\def \cE {\mathcal E}
\def \cF {\mathcal F}
\def \cG {\mathcal G}
\def \cH {\mathcal H}
\def \cI {\mathcal I}
\def \cJ {\mathcal J}
\def \cK {\mathcal K}
\def \LL {\mathcal L}
\def \cM {\mathcal M}
\def \cN {\mathcal N}
\def \cO {\mathcal O}
\def \cP {\mathcal P}
\def \cQ {\mathcal Q}
\def \cR {\mathcal R}
\def \cS {\mathcal S}
\def \cR {\mathcal R}
\def \cR {\mathcal R}
\def \cT {\mathcal T}
\def \cU {\mathcal U}
\def \cV {\mathcal V}
\def \cW {\mathcal W}
\def \cX {\mathcal X}
\def \cY {\mathcal Y}
\def \cZ {\mathcal Z}

\def \fa {\mathfrak a}
\def \fb {\mathfrak b}
\def \fc {\mathfrak c}
\def \fd {\mathfrak d}
\def \fe {\mathfrak e}
\def \ff {\mathfrak f}
\def \fg {\mathfrak g}
\def \fh {\mathfrak h}
\def \mfi {\mathfrak i}
\def \fj {\mathfrak j}
\def \fk {\mathfrak k}
\def \fl {\mathfrak l}
\def \fm {\mathfrak m}
\def \fn {\mathfrak n}
\def \fo {\mathfrak o}
\def \fp {\mathfrak p}
\def \fq {\mathfrak q}
\def \fr {\mathfrak r}
\def \fs {\mathfrak s}
\def \ft {\mathfrak t}
\def \fu {\mathfrak u}
\def \fv {\mathfrak v}
\def \fw {\mathfrak w}
\def \fx {\mathfrak x}
\def \fy {\mathfrak y}
\def \fz {\mathfrak z}

\def \fA {\mathfrak A}
\def \fB {\mathfrak B}
\def \fC {\mathfrak C}
\def \fD {\mathfrak D}
\def \fE {\mathfrak E}
\def \fF {\mathfrak F}
\def \fG {\mathfrak G}
\def \fH {\mathfrak H}
\def \fI {\mathfrak I}
\def \fJ {\mathfrak J}
\def \fK {\mathfrak K}
\def \fL {\mathfrak L}
\def \fM {\mathfrak M}
\def \fN {\mathfrak N}
\def \fO {\mathfrak O}
\def \fP {\mathfrak P}
\def \fQ {\mathfrak Q}
\def \fR {\mathfrak R}
\def \fS {\mathfrak S}
\def \fR {\mathfrak R}
\def \fR {\mathfrak R}
\def \fT {\mathfrak T}
\def \fU {\mathfrak U}
\def \fV {\mathfrak V}
\def \fW {\mathfrak W}
\def \fX {\mathfrak X}
\def \fY {\mathfrak Y}
\def \fZ {\mathfrak Z}

\def\osum{\mathop{{\sum}^\oplus}}
\def \Sp {\text{\rm Sp}}
\def \sp {\mathfrak {sp}}
\def \su {\mathfrak {su}}
\def \RE {\text{\rm Re}\,}
\def \IM {\text{\rm Im}\,}
\def \al {\alpha}
\def \la {\lambda}
\def \ph {\varphi}
\def \del {\delta}
\def \eps {\varepsilon}
\def \lan {\langle}
\def \ran {\rangle}
\def \de {\partial}
\def \trans{\,{}^t\!}
\def \half{\frac{1}{2}}
\def \inv{^{-1}}
\def \rinv#1{^{(#1)}}
\def \supp {\text{\rm supp\,}}
\def \inter {\overset \text{\rm o} \to}
\def \deg {\text{\rm deg\,}}
\def \dim {\text{\rm dim\,}}
\def \span {\text{\rm span\,}}
\def \rad {\text{\rm rad\,}}
\def \tr {\text{\rm tr\,}}
\def \bl {[\![}
\def \br {]\!]}
\def \bfE {\mathbf E}
\def\[{[\![}
\def\]{]\!]}

\def\mbe{{\mathbf e}}
\def\mbh{{\mathbf h}}
\def\mbi{{\mathbf i}}
\def\mbj{{\mathbf j}}
\def\mbk{{\mathbf k}}
\def\x {{\mathbf x}}
\def\mby{{\mathbf y}}
\def\mbz{{\mathbf z}}
\def\mbw{{\mathbf w}}
\def\mbu{{\mathbf u}}
\def\mbv{{\mathbf v}}
\def\mbs{{\mathbf s}}
\def\mbt{{\mathbf t}}
\def\x{{\mathbf X}}
\def\mbE{{\mathbf E}}
\def\mbL{{\mathbf L}}
\def\mb0{{\mathbf 0}}
\def\bxi{{\boldsymbol\xi}}
\def\beps{{\boldsymbol\eps}}
\def\beeta{{\boldsymbol\eta}}

\def\Rho{{\rm P}}
\def\bv{\big\vert}

\def\be{\begin{equation}}
\def\ee{\end{equation}}
\def\bes{\begin{equation*}}
\def\ees{\end{equation*}}
\def\bea{\begin{equation}\begin{aligned}}
\def\eea{\end{aligned}\end{equation}}
\def\beas{\begin{equation*}\begin{aligned}}
\def\eeas{\end{aligned}\end{equation*}}

\def\a{\mathbf a}
\def\b{\mathbf b}
\def\c{\mathbf c}
\def\d{\mathbf d}
\def\e{\mathbf e}
\def\f{\mathbf f}
\def\g{\mathbf g}
\def\h{\mathbf h}
\def\i{\mathbf i}
\def\j{\mathbf j}
\def\k{\mathbf k}

\def\m{\mathbf m}
\def\n{\mathbf n}
\def\o{\mathbf o}
\def\p{\mathbf p}
\def\q{\mathbf q}
\def\r{\mathbf r}
\def\s{\mathbf s}
\def\t{\mathbf t}
\def\u{\mathbf u}
\def\v{\mathbf v}
\def\w{\mathbf w}
\def\x{\mathbf x}
\def\y{\mathbf y}
\def\z{\mathbf z}

\def\0{{\mathbf 0}}
\def\1{{\mathbf 1}}
\def\2{{\mathbf 2}}
\def\3{{\mathbf 3}}
\def\4{{\mathbf 4}}
\def\5{{\mathbf 5}}
\def\6{{\mathbf 6}}
\def\7{{\mathbf 7}}
\def\8{{\mathbf 8}}
\def\9{{\mathbf 9}}

\def\A{\mathbb A}
\def\B{\mathbb B}
\def\C{\mathbb C}
\def\D{\mathbb D}
\def\E{\mathbb E}
\def\F{\mathbb F}
\def\G{\mathbb G}
\def\H{\mathbb H}
\def\I{\mathbb I}
\def\J{\mathbb J}
\def\K{\mathbb K}
\def\L{\mathbb L}
\def\M{\mathbb M}
\def\N{\mathbb N}
\def\O{\mathbb O}
\def\P{\mathbb P}
\def\Q{\mathbb Q}
\def\R{\mathbb R}
\def\S{\mathbb S}
\def\T{\mathbb T}
\def\U{\mathbb U}
\def\V{\mathbb V}
\def\W{\mathbb W}
\def\X{\mathbb X}
\def\Y{\mathbb Y}
\def\Z{\mathbb Z}

\def\AA{\mathcal A}
\def\BB{\mathcal B}
\def\CC{\mathcal C}
\def\DD{\mathcal D}
\def\EE{\mathcal E}
\def\FF{\mathcal F}
\def\GG{\mathcal G}
\def\HH{\mathcal H}
\def\II{\mathcal I}
\def\JJ{\mathcal J}
\def\KK{\mathcal K}
\def\LL{\mathcal L}
\def\MM{\mathcal M}
\def\NN{\mathcal N}
\def\OO{\mathcal O}
\def\PP{\mathcal P}
\def\QQ{\mathcal Q}
\def\RR{\mathcal R}
\def\SS{\mathcal S}
\def\TT{\mathcal T}
\def\UU{\mathcal U}
\def\VV{\mathcal V}
\def\WW{\mathcal W}
\def\XX{\mathcal X}
\def\YY{\mathcal Y}
\def\ZZ{\mathcal Z}

\def \bfA {\mathbf A}
\def \bfB {\mathbf B}
\def \bfC {\mathbf C}
\def \bfD {\mathbf D}
\def \bfE {\mathbf E}
\def \bfF {\mathbf F}
\def \bfG {\mathbf G}
\def \bfH {\mathbf H}
\def \bfI {\mathbf I}
\def \bfJ {\mathbf J}
\def \bfK {\mathbf K}
\def \bfL {\mathbf L}
\def \bfM {\mathbf M}
\def \bfN {\mathbf N}
\def \bfO {\mathbf O}
\def \bfP {\mathbf P}
\def \bfQ {\mathbf Q}
\def \bfR {\mathbf R}
\def \bfS {\mathbf S}
\def \bfR {\mathbf R}
\def \bfR {\mathbf R}
\def \bfT {\mathbf T}
\def \bfU {\mathbf U}
\def \bfV {\mathbf V}
\def \bfW {\mathbf W}
\def \bfX {\mathbf X}
\def \bfY {\mathbf Y}
\def \bfZ {\mathbf Z}

\def\CZ{Calder\'on-Zygmund }

\def\alphab{{\alpha\!\!\!\!\alpha}}
\def\ellb{\boldsymbol{\ell}}
\def\betab{\boldsymbol{\beta}}
\def\gammab{\boldsymbol{\gamma}}
\def\deltab{\boldsymbol{\delta}}
\def\epsilonb{\boldsymbol{\epsilon}}
\def\zetab{\boldsymbol{\zeta}}
\def\etab{\boldsymbol{\eta}}
\def\thetab{\boldsymbol{\theta}}
\def\iotab{\boldsymbol{\iota}}
\def\kappab{\boldsymbol{\kappa}}
\def\lambdab{\boldsymbol{\lambda}}
\def\mub{\boldsymbol{\mu}}
\def\nub{\boldsymbol{\nu}}
\def\xib{{\boldsymbol\xi}}
\def\pib{\boldsymbol{\pi}}
\def\rhob{{\boldsymbol\rho}}
\def\omegab{\boldsymbol{\omega}}
\def\taub{\boldsymbol{\tau}}
\def\upsilonb{\boldsymbol{\upsilon}}
\def\phib{\boldsymbol{\phi}}
\def\varphib{\boldsymbol{\varphi}}
\def\chib{\boldsymbol{\chi}}
\def\psib{\boldsymbol{\psi}}
\def\omegab{\boldsymbol{\omega}}
\def\varthetab{\boldsymbol{\vartheta}}
\def\Sigmab{{\boldsymbol\Sigma}}

\def\bigkappa{\text{\large $\kappa$}}
\def\dbar{\bar\partial}
\def\bx{\square}
\def \RE {\Re\text{\rm e}}
\def \IM {\Im\text{\rm m}}
\def\[{\left[\!\!\left[}
\def\]{\right]\!\!\right]}

\def\interior{\text{int\,}}
\newcommand\bkt[2]{\left\langle #1,#2\right\rangle}

\def\be{\begin{equation}}
\def\ee{\end{equation}}
\def\bes{\begin{equation*}}
\def\ees{\end{equation*}}
\def\bea{\begin{equation}\begin{aligned}}
\def\eea{\end{aligned}\end{equation}}
\def\beas{\begin{equation*}\begin{aligned}}
\def\eeas{\end{aligned}\end{equation*}}

\def\AAA{\mathbf A}
\def\BBB{\mathbf B}
\def\CCC{\mathbf C}
\def\DDD{\mathbf D}
\def\EEE{\mathbf E}
\def\EEEt{\tidetilde{\EEE}}
\def\FFF{\mathbf F}
\def\tI{\tilde I}
\def\xt{\tilde{\x}}
\def\cdott{\,\tilde\cdot\,}
\def\INT{\text{\rm Int\,}}
\def\VOL{\text{\rm Vol}}
\def\NRSW{\cite{MR3862599} }

\newtheorem*{theorem*}{Theorem}
\newtheorem*{corollary*}{Corollary}
\newtheorem*{lemma*}{Lemma}
\newtheorem*{proposition*}{Proposition}
\newtheorem*{definition*}{Definition}
\newtheorem*{remark*}{Remark}
\thispagestyle{empty}

\allowdisplaybreaks[0]
\def\NRSW{\cite{MR3862599} }
\def\Emb{{\rm Emb}}
\def\Enl{{\rm Enl}}
\def\Dil{{\rm Dil}}
\def\sh{{\rm sh}}
\def\[{[\![}
\def\]{]\!]}
\def\bigllbracket{\big[\!\big[}
\def\bigrrbracket{\big]\!\big]}
\def\llbracket{[\![}
\def\rrbracket{]\!]}

\def\bmp{\marginpar{\color{brown}XX}}
\def\bsout#1{{\color{brown}\sout{#1}}}
\def\badd#1{{\color{brown}{#1}}}

\title[Square functions and local Hardy space]{Littlewood-Paley square functions\\ and the local Hardy space\\for Multi-Norm Structures on $\R^{d}$}
\author{Agnieszka Hejna$^{1}$}\thanks{$^{1}$Instytut Matematyczny, Uniwersytet Wroc\l awski, Pl. Grunwaldzki 2, 50-384 Wroc\l aw,\\ \texttt{\phantom{xxxxx} agnieszka.hejna@math.uni.wroc.pl}}
\author{Alexander Nagel$^{2}$}\thanks{$^{2}$University of Wisconsin-Madison, Madison, WI 53706,\,\,\,\texttt{ajnagel@\,me.com}}
\author{Fulvio Ricci $^{3}$}\thanks{$^{3}$Scuola Normale Superiore, Piazza dei Cavalieri 7, 56126 Pisa,\,\,\, \texttt{fulvio.ricci@sns.it}}
\maketitle

\setcounter{tocdepth}{1}
\tableofcontents

\thispagestyle{empty}

\begin{abstract}
Multi-norm singular integrals and Fourier multipliers were introduced in \cite{MR3862599}, and one application of these notions was a precise description of the composition of convolution operators with Calder\'on-Zygmund kernels adapted to $n$ different families of dilations. The description of the resulting operators was given in terms of differential inequalities specified by a matrix $\bfE$, and in terms of dyadic decompositions of the kernels and multipliers. In this paper we extend the analysis of multi-norm structures on $\R^d$ by studying the induced Littlewood-Paley decomposition of the frequency space and various associated square functions. After establishing their $L^1$-equivalence, we use these square functions to define a \textit{local multi-norm Hardy space} $\h^1_\bfE(\R^d)$. We give several equivalent descriptions of this space, including an atomic characterization. There has been recent work, limited to the $2$-dilation case, by other authors. The general $n$-dilation case treated here is considerably harder and requires new ideas and a more systematic approach. 
\end{abstract}

\tableofcontents

\thispagestyle{empty}

\section{Introduction}\label{Sec1}

This paper studies Littlewood-Paley theory, square functions, and the associated local Hardy space in the context of the  multi-norm theory formally introduced, though not with this name,  in \cite{MR3862599}. 
The main objective of \cite{MR3862599} was to identify a class of singular integral operators that includes the composition of several given one-parameter Calder\'on-Zygmund operators having different homogeneities. This class is characterized by differential inequalities on the  multipliers or equivalent differential inequalities and cancellation conditions on the kernels.  These kernels and multipliers also have dyadic decompositions, and the class is closed under composition. The result is a multi-parameter theory that is intermediate between product theory and each of the one-parameter theories corresponding to the given homogeneities. In this paper we proceed with this type of analysis, introducing new ideas and techniques or adapting those available in product theory.

The contents of this paper is discussed in detail in Section \ref{Sec2}, but  it may be useful to begin with a general  description of the place of multi-norm theory in relation with other well established theories in Euclidean harmonic analysis having to do with dilations of different kinds. It is customary to distinguish between one-parameter and multi-parameter theories, the first being essentially based on Calder\'on-Zygmund theory and its extensions, while the second ones require more sophisticated techniques. 

A one-parameter structure on $\R^d$ is determined by a family of dilations, 
\be\label{1.1}
\delta_{t}(\x)=\delta_{t}(x_1,\dots,x_d)= (t^{\la_1}x_1,\dots,t^{\la_d}x_d)
\ee
 depending on a single parameter $t>0$.
 The exponents $\la_1,\dots,\la_d$  are assumed to be strictly positive, and we  use the notation $\delta_{r}(\x)=r\x$ when no ambiguity with scalar multiplication is possible. The relevant ingredients of the theory are Calder\'on-Zygmund singular integrals, Fourier multipliers of Mihlin-H\"ormander type, Hardy-Littlewood maximal functions and their smoothened versions, Littlewood-Paley decompositions, square functions, and Hardy and BMO spaces. Each of these structures also admits a ``local'' sub-theory, which includes Calder\'on-Zygmund kernels with compact support and maximal functions restricted to small scaling parameters. On the Fourier transform side, the local theory includes Mihlin-H\"ormander multipliers that are only singular at infinity, and Littlewood-Paley decompositions limited to high frequencies. The local Hardy spaces are the $\h^p$ spaces introduced by D.~Goldberg \cite{ MR545262, MR523600}.

The prototypical examples of multi-parameter theory are the {\it product structures}.  In this paper, an $n$-parameter product structure on $\R^d$, endowed with dilations of the form \eqref{1.1}, is obtained by decomposing $\R^d$ as product of dilation-invariant subspaces $\bR^{d_1},\ldots,\bR^{d_n}$ and then allowing independent scaling in each component as in \cite{MR0664621}: 
\be\label{1.2}
\delta_{t_1, \ldots, t_n}(\x _{1}, \ldots, \x _{n})=\big(t_1\x _{1},\ldots, t_n\x _{n})\ ,\qquad (t_1,\dots,t_n)\in\bR^n_+\ .
\ee
The corresponding product theory includes singular integral operators of product type (such as tensor products of Calder\'on-Zygmund kernels on each component), strong-type maximal functions, Marcinkiewicz multipliers, and the product Hardy and $BMO$ spaces of A.~Chang and R.~Fefferman \cite{MR584078, MR658542, MR766959}. Product theory also admits {\it local} versions; see \cite{MR4026464, MR3925053} for the relevant local Hardy spaces. In these theories it is also of interest to determine the precise order of regularity of integral kernels and multipliers required to obtain boundedness results. However in this paper we will only consider kernels and multipliers that are smooth away from the appropriate singular set (the origin in the one-parameter setting, the coordinate hyperplanes in the product setting, etc.) and satisfy the appropriate differential inequalities for derivatives of any order. 

A basic aspect of these theories that identifies the structure at hand and serves as a paradigm for extensions to new ones is the \textit{dyadic Littlewood-Paley decomposition}. This is the \textit{spectral partition} of the frequency space $\R^d_\xib$ into bounded sets $B_{L}$ depending on a scale parameter $L$ varying in an appropriate subset $\LL\subseteq\Z^n$. In the $n$-parameter product theory defined by the dilations \eqref{1.2}, these sets $B_L$ depend on  $L=(\ell_1,\dots,\ell_n)\in\Z^n$ and one can take $B_L=\big\{\xib=(\xib_1,\dots,\xib_n)\in \R^{d_{1}}\times\cdots\times\R^{d_{n}}:2^{\ell_{i}-1}<|\xib_i|\leq 2^{\ell_i}\ \forall\,i\big\}$. (With $n=1$ this also covers the one-parameter case.) For the local product theory, the parameter $L$ varies in $\N^n$ and $B_L=\big\{\xib:1+|\xib_i|\sim2^{\ell_i}\ \forall\,i\big\}$. The Littlewood-Paley decomposition $\R^d_\xib=\bigcup_{L\in\LL}B_L$ associated to a given structure is essentially unique and identifies the structure. The following analytic objects can be defined in terms of it.
\begin{enumerate}[(i)]
\item The associated {\it Littlewood-Paley square functions}, of the form $S f(\x)=\big(\sum_{L\in\LL}\big|P_Lf(\x)\big|^2\big)^{\frac{1}{2}}$ where $\sum_{L}P_{L}=I$ and each $P_L$ is a smoothened spectral projection on $B_L$. The simplest example is that of an operator $\widehat{P_Lf}(\xib)=\widehat f(\xib)\eta_L(\xib)$, where $\{\eta_L\}$ is a smooth partition of unity with each $\eta_L$ supported in a sufficiently small neighborhood of $B_L$ and satisfying appropriate uniformity conditions.

\smallskip

\item The algebra (under point-wise product) of Fourier multipliers $m(\xib)$ adapted to the given structure, which can be expressed as $\sum_{L\in\LL}m_L(\xib)$, with smooth functions $m_L$ satisfying the same support and uniformity condition as the $\eta_L$ in (i).

\smallskip

\item The convolution algebra of singular kernels $\KK$ adapted to the given structure, consisting of the inverse Fourier transforms of the multipliers in (ii).

\smallskip

\item The Hardy spaces $\bfH^{p}$ (or $\h^p$ if the structure is local) with $p\le1$, consisting of those distributions $f$ whose square functions $Sf$, as defined in (i), is in $L^p$.
\end{enumerate}

For one-parameter and product structures, (i)-(iv) describe well-known objects. Multi-norm structures depend on the decomposition of $\R^d$ as the product of $n$ subspaces $\R^{d_i}$ and on the choice of an $n\times n$ {\it standard matrix} $\bfE$, as defined below in \eqref{2.3}. The most interesting property of this structure, and the reason for its study, is that it describes, under appropriate non-degeneracy conditions, the singular integral operators arising as compositions of Calder\'on-Zygmund operators adapted to $n$ different dilation families. A brief history of this study is the following.
 A parametrix for the $\overline\partial$-Neumann problem on strictly pseudo-convex domain was constructed in \cite{MR0499319}, and the resulting operators involve compositions of two kinds of operators that reflect the co-existence of two geometries on the boundary of these domains: the ``isotropic'' Riemannian geometry induced from the ambient space and the ``parabolic'' sub-Riemannian geometry induced by the CR-structure. These operators have different homogeneities, and one of the ``open questions'' from 1979 raised by N. Rivi\`ere (collected in \cite[p. xvii]{MR0545233}) concerns the behavior near $p=1$ of the composition of two Calder\'on-Zygmund operators on $\R^d$, one elliptic and the other parabolic. Some years later this question was studied by D.H.~Phong and E.M.~Stein in \cite{MR648484, MR0853446} (questions related to this have been studied recently by Han, Krantz, and Tan \cite{MR4760280}). In \cite{MR3862599}, A.~Nagel, F.~Ricci, E.M.~Stein and S.~Wainger described the classes of kernels and Fourier multipliers that are obtained by composing $n$ local Calder\'on-Zygmund operators for $n$ dilation families depending on a standard matrix. The differential inequalities satisfied by these operators involve $n$ different homogeneous norms, generated by the assignment of the matrix $\bfE$. The two-factor situation is simpler than those with a higher number of factors. For instance, in the definition of the standard matrix $\bfE$ in \eqref{2.3} below, if $n=2$ the conditions in the second line  are essentially vacuous and the strict inequality in the third line is verified as soon as the two chosen dilations are different, modulo a possible interchange. This is no longer true for $n\ge3$, as shown by a comparison between the cases $n=2$ and $n=3$ described in Section \ref{Sec3.7}.
 
Multi-norm structures are defined as local structures. We stress from the beginning that this is not an arbitrary choice, but arises naturally from direct analysis of compositions of singular kernels or Littlewood-Paley decompositions. 
Each multi-norm structure is, in any reasonable sense, intermediate between the local product structure for the decomposition of $\R^d$ as product of the $\R^{d_i}$ and any of the $n$ local one-parameter structures induced by the dilations dictated by the matrix $\bfE$. For instance, the multi-norm Littlewood-Paley decomposition of the frequency space is coarser than the product decomposition and finer than each of the $n$ one-parameter ones. Similarly, the space of multi-norm Fourier multipliers is included in the space of local Marcinkiewicz multipliers and includes the $n$ spaces of local Mihlin-H\"ormander multipliers, etc.

Another kind of structure,  {\it flag structure}, has been introduced in recent years, and this  is also intermediate between one-parameter and product structures. 
Multi-norm and flag structures have many points in common, but there are also several differences. The common setting is $\R^d$, endowed with the dilations \eqref{1.1} and decomposed as product of dilation invariant subspaces $\R^{d_i}$. The typical example of a {\it flag singular kernel} is a convolution $\KK=\KK_1*\cdots*\KK_n$, where $\KK_1$ is a smooth Calder\'on-Zygmund kernel on $\R^d$ adapted to the given dilations and, for $i\ge2$, $\KK_i=\del_\0\otimes \HH_i$, $\del_\0$ denoting the Dirac delta on $\R^{d_1}\times\cdots\times\R^{d_{i-1}}$ and $\HH_i$ a smooth Calder\'on-Zygmund kernel on $\R^{d_i}\times\cdots\times\R^{d_n}$, also adapted to the given dilations. See \cite{MR1312498}, \cite{MR1376298}, where flag kernels appear for the first time (though not with this name), \cite{MR1818111}, \cite{MR2949616} for formal definitions and analysis of singular kernels and multipliers, and \cite{MR3293443, MR3192289, MR4458535, arXiv:2102.07371} for introduction and study of flag Hardy spaces. In the last set of references, however, the analysis is limited to the two-factor case. See also \cite{MR2312951, MR2602167} for a different approach to flag theory on nilpotent groups. To the best of our knowledge the literature on multi-norm structures consists, in addition to the already mentioned papers, of \cite{MR3148598, MR3270788}, both limited to two factors. 

The objective of this paper is to develop and extend parts of multi-norm theory not treated in~\cite{MR3862599}, concentrating our attention on the definition of a {\it multi-norm Littlewood-Paley decomposition} of the frequency space, on the equivalence of different forms of Littlewood-Paley square functions in defining a local Hardy space for $p=1$, and finally on the atomic structure of this space. 
Our approach to multi-norm Littlewood-Paley theory is new, even in the two-factor case. Nonetheless, we prove that the Hardy space defined in this paper coincides with that, previously defined in \cite{MR3148598, MR3270788}, with $n=2$ and by means of other (non-multinorm) square functions.

\section{Basic definitions and outline of the paper}\label{Sec2} 

The underlying space in this paper is $\bR^d$, endowed with the dilations $\delta_{t}$ in \eqref{1.1}. $\R^{d}$ is decomposed as the product of $n$ subspaces: $\bR^d=\bR^{d_1}\times\cdots\times\bR^{d_n}$, and each $\R^{d_{i}}$ is invariant under $\delta_{t}$. There are two `dual' copies of $\R^{d}$: convolution kernels live on the `physical space' $\R_{\x}^{d}$ where variables are denoted by Latin letters $\x$, $\y, \ldots$, while the corresponding multipliers live on the  dual `frequency space' $\R_{\xib}^{d}$ where the variables are denoted by Greek letters $\xib$, $\etab, \ldots$. If $\x, \xib\in \R^{d}$ write $\x=(\x_{1}, \ldots, \x_{n})$ and $\xib=(\xib_{1}, \ldots, \xib_{n})$ with $\x_{i}, \xib_{i}\in \R^{d_{i}}$. \label{E_{i}} To identify individual scalar coordinates, let $\big\{E_{1}, \ldots, E_{n}\big\}$ be the partition of $\{1, \ldots, d\}$ so that $\x_{i}=(x_h)_{h\in E_i}$.\footnote{At a first stage the reader may assume that the subspaces $\R^{d_i}$ are one-dimensional and that the dilations $\del_t$ are just scalar multiplication by $t$, but later on must be prepared to meet coarser decompositions with non-isotropic dilations coming into the picture.} Thus
\bea\label{2.1}
E_{i}=\Big\{h\in \{1,\ldots,d\}:\text{$x_{h}$ is a coordinate in $\R^{d_{i}}$}\Big\}.
\eea
If $G\subseteq\{1,\dots,n\}$, we write $\R^{G}$, $\x_{G}$, $\xib_G$ for $\bigoplus_{i\in G}\bR^{d_i}$, $(\x_{i})_{i\in G}$, $(\xib_i)_{i\in G}, \textit{etc.}$. We denote by $G^{c}$ the complement of $G$ in $\{1,\dots,n\}$.  For any $d$-tuple $ \alphab=(\alpha_{1}, \ldots, \alpha_{d})$ we write $\alphab_{i}=(\alpha_{h})_{h\in E_{i}}$. Denote the homogeneous dimensions of $\R^{d_{i}}$  by 
 \bea\label{2.2ww}
 q_{i}&=\sum\nolimits_{h\in E_{i}}\lambda_{h},
 \eea
and by $|\x_{i}|$ the homogeneous norm on $\R^{d_{i}}$ given by 
 \bea\label{2.2}
 |\x_{i}|= \sum\nolimits_{h\in E_{i}}|x_{h}|^{\frac{1}{\lambda_{h}}}.
 \eea
An $n\times n$ matrix $\bfE$ with positive entries $\{e(j,k)\}$ is called  \textit{standard} if
\bea\label{2.3}
e(j,j) &=1 &&\text{for $1\leq j\leq n$},\\
e(j,\ell)&\leq e(j,k)e(k,\ell) &&\text{for $1 \leq j,k,\ell\leq n$},\\
1&<e(j,k)e(k,j)&&\text{for $1\leq j, k\leq n$, $j\neq k$}.
\eea
In \cite{MR3862599}, a matrix satisfying the first two conditions in \eqref{2.3} is said to satisfy the \textit{basic assumptions}. Notice that the first two conditions imply that $1\le e(i,j)e(j,i)$ for all $i,j$. The purpose of the added third condition is to exclued a degeneracy  as explained in  Remarks \ref{Rem3.11}. Fixing a standard matrix $\bfE$, we define families of dilations $\delta_{t}^{i}(\x)$ and $\widehat\delta_{t}^{i}(\xib)$ and corresponding homogeneous norms  $N_{i}(\x)$ and $\widehat N_{i}(\xib)$ by
\bea\label{2.5a}
\delta_{t}^{i}(\x)&=\big(t^{\frac{1}{e(i,1)}}\x_{1}, \ldots, t^{\frac{1}{e(i,n)}}\x_{n}\big),&
\widehat\delta_{t}^{i}(\xib)&=\big(t^{e(1,i)}\xib_1,\dots, t^{e(n,i)}\xib_n\big),\\
N_{i}(\x)&=\max\big\{|\x_{j}|^{e(i,j)}:1\leq j \leq n\big\}, &\widehat N_{i}(\xib)&=\max\big\{|\xib_j|^{\frac{1}{e(j,i)}}:1\leq j \leq n\big\}.
\eea 
This notation is taken from \cite{MR3862599}.\footnote{The symbols $N$ and $\widehat N$ are motivated in \cite{MR3862599} by the fact that the norms $N_{i}(\x)$ control the size of the singular multi-norm kernels and their derivatives,  while the norms $\widehat N_{i}(\xib)$ control the size of derivatives of Fourier multipliers. We will see that the same occurs in this paper with the dyadic decompositions in the $\x$- and the $\xib$-space.}  The choice of the $\ell^{\infty}$-norm for $N_{i}(\x)$ and $\widehat N_{i}(\xib)$ produces various technical simplifications. For instance, all the norms in \eqref{2.5a} have the same unit ball given by $B(1)=\prod_{i=1}^n B_i(1)=\big\{(\x_{1}, \ldots, \x_{n})\in \R^{d}:\text{$|\x_{j}|\leq 1$ for $1 \leq j \leq n$}\big\}$, and similarly in the $\xib$-variables.

\subsection{Littlewood-Paley decomposition of $\R^d_\xib$}\label{Sec2.1}\quad

\smallskip

 An important focus of this paper is the multi-norm Littlewood-Paley decomposition of frequency space $\R_{\xib}^{d}$, and  this is discussed in detail in the first four sections of Section \ref{Sec3}. Here we give a brief introduction. The starting point is a decomposition of the complement $B(1)^{c}$ of the unit ball in the frequency space $\R_{\xib}^{d}$.  For $1\leq i \leq n$, the local one-parameter Littlewood-Paley partition of the frequency space associated to the $i$-th dilation $\widehat \delta^{i}_{r}$ in \eqref{2.5a} consists of the unit ball $B(1)=B_{\0}^{i}$ and the dyadic annuli $B_{\ell}^{i}=\big\{\bxi\in\R^{d}:\widehat N_i(\bxi)\sim 2^\ell\big\}$ with $\ell\geq 1$ so that $\R_{\xib}^{d}=B(1)\cup\bigcup_{\ell=1}^{\infty}B_{\ell}^{i}$. Then, roughly speaking, the {\it multi-norm Littlewood-Paley decomposition} is the minimal decomposition that is finer than all the $\widehat\delta_{t}^{i}$  Littlewood-Paley decompositions for $1\leq i \leq n$. It consists of the \textit{non-empty} intersections 
\bea\label{2.5}
 B_L= \bigcap\nolimits_{j=1}^n B^j_{\ell_j}=\Big\{\bxi\in \R_{\xib}^{d}:\widehat N_{i}(\bxi)\sim 2^{\ell_{i}} \text{ if }\ell_i\ge1\ ,\ \widehat N_{i}(\bxi)\le 1 \text{ if }\ell_i=0\Big\},
\eea
with $L=(\ell_{1}, \ldots, \ell_{n})\in \N^{n}$.
Thus the multi-norm Littlewood-Paley decomposition is indexed by the set $\LL=\big\{L\in \N^{n}:B_{L}\neq\emptyset\big\}$ 
 of {\it admissible scales}. 

To better understand the decomposition we introduce the notions of dominance and marked partitions. 
We say that a component $\xib_{i}$ is {\it dominant} in the norm $\widehat N_{j}(\xib)$ if $\widehat N_{i}(\xib)=|\xib_{j}|^{\frac{1}{e(j,i)}}$.\footnote{Two different components are both dominant only on a negligible set of $\xib$.} Then  the inequalities \eqref{2.3} for a standard matrix imply that if $\xib_{i}$ is dominant in $\widehat N_{j}(\xib)$ then it is also dominant in $\widehat N_{i}(\xib)$. It follows that dominant components in $\widehat N_{j}(\xib)$ are described by a {\it marked partition}, {\it i.e.} a partition $\{1,\ldots, n\}=A_{1}\cup\cdots\cup A_{s}$ together with a ``marked entry'' $k_{r}\in A_{r}$ for $1\leq r\leq s$ so that $\xib_{k_{r}}$ is dominant in $\widehat N_{i}(\xib)$ for all $i\in A_{r}$. We write such a marked partition as $S=\big\{(A_{1},k_{1}), \ldots, (A_{s},k_{s})\big\}$. Given a marked partition $S$ there is then the associated set of $\xib$ such that the dominant components of $\widehat N_{i}(\xib)$ are given by $S$.

The {\it principal marked partition} $S_{p}$ has $s=n$ and $A_{r}=\{r\}$. The associated {\it principal cone} is 
\bea\label{2.8}
 \Gamma(\bfE)=\big\{(t_{1}, \ldots, t_{n})\in \R_{+}^{n}:\text{$ t_{i}\leq e(i,j)t_{j}$ for $1\leq i,j\leq n$}\big\}\subseteq\R_{+}^{n}.
 \eea
The elements $\t\in\Gamma(\bfE)$ are characterized by the following property: if $\xib$ is such that $|\xib_i|=2^{t_i}$ for $i=1,\dots,n$, then every $\xib_i$ is dominant in $\widehat N_{i}(\xib)$. It turns out that the index set $\LL$ consists of those lattice points $L\in \N^{n}$ at distance at most $1$ from $\Gamma(\bfE)$. More generally, associated to each marked partition $S=\big\{(A_{1},k_{1}), \ldots, (A_{s},k_{s})\big\}$ there is a cone $\Gamma_{S}\subseteq\R_{+}^{n}$ (see \eqref{3.2rr}).  

The explicit forms of the sets $B_L$ with $L\in\LL\setminus\{0\}$ can also be described in terms of  marked partitions. Roughly speaking, if $L=(\ell_{1}, \ldots, \ell_{n})$ is close to $\Gamma_{S}$ then  $B_L$ can be expressed as the set of $\xib$ with $|\xib_{k_r}|\sim2^{\ell_{k_r}}$ for $r=1,\dots,s$ and, if $i\ne k_1,\dots,k_r$, then $|\xib_i|\lesssim 2^{\ell_i}$.
This indicates that a bump function supported in a small neighborhood of $B_L$ has an inverse Fourier transform with cancellation in the marked variables $\x_{k_r}$, but not in the others. 

Thus the set $\LL\setminus\{0\}$ naturally decomposes, in an essentially unique way, as a disjoint union of subsets $\LL_S$ indexed by marked partitions $S$. However,  only  those $S$ such that $\Gamma_{S}$ has nonempty interior will play a role. We denote by $\SS_n$ the set of all marked partitions and by $\SS_\bfE$ the set for which $\Gamma_{S}$ has nonempty interior. A coarser decomposition of $\LL$  is obtained as the union of the $\LL_S$ for all the marked partitions $S$ with the same set $D$ of marked elements. In this case we use the notation $\LL(D)$. It is then natural to set $\LL(\emptyset)=\LL_\emptyset=\{0\}$. We define $\SS^*_\bfE=\SS_\bfE\cup \{\emptyset\}$, so that the most frequently used partition of $\LL$ takes the form
\be\label{L=cupL_S}
\LL=\bigcup\nolimits_{S\in\SS^*_\bfE}\LL_S.
\ee
Note that this Littlewood-Paley decomposition is \textit{local} in nature; the sets $B_{L}$ are all ``large'', the associated singular integral kernels are singular only at the origin, and the associated multipliers are smooth at the origin. See Question 1 in Section \ref{Sec12} for more comments on this issue.

In Section \ref{Sec3.7} we describe in detail the Littlewood-Paley decomposition in the cases $n=2$ or $n=3$ since this may help the reader understand the geometry involved in the general construction.

\subsection{Littlewood-Paley and Calder\'on reproducing formulas}\label{Sec2.3}\quad 

\smallskip

For $L=(\ell_{1}, \ldots, \ell_{n})\in \N^{n}$ we set 
\bea\label{2.12}
f^{(-L)}(\x_{1}, \ldots, \x_{n})=2^{\sum_{1}^{n}\ell_{j}q_{j}}f(2^{\ell_{1}}\x_{1}, \ldots, 2^{\ell_{n}}\x_{n}).
\eea
A {\it multi-norm Littlewood-Paley family} $\{\Xi_L:L\in\LL\}\subset\SS(\R^{d})$ is a set of uniformly bounded Schwartz functions such that $\sum_{L\in\LL}\Xi_L^{(-L)}=\del_\0$ in the distribution sense. A (discrete) \textit{multi-norm Calder\'on family} associated to $\{\Xi_L:L\in\LL\}$ is then a set $\Xi=\big\{\Xi_{L}, \,\widetilde\Xi_{L}:L\in\LL\big\}\subset\SS(\R^{d})$, uniformly bounded in every Schwartz norm, such that $\sum\nolimits_{L\in \LL}\widetilde\Xi_{L}^{(-L)}*\Xi_{L}^{(-L)}=\del_\0$. We use the expression {\it Littlewood-Paley reproducing formula} and {\it Calder\'on reproducing formulas} for the identities $f=\sum\nolimits_{L\in\LL}f*\Xi_L^{(-L)}$ and $f=\sum\nolimits_{L\in\LL}f*\Xi_L^{(-L)}*\widetilde\Xi_L^{(-L)}$ respectively.  In Section \ref{Sec4} we construct associated Calder\'on families whose tilded elements $\widetilde\Xi_{L}$ have vanishing moments (depending on the type and on $L$) up to any preassigned order. Our proofs of Propositions \ref{Prop4.3} and \ref{Prop4.4} are based on the construction due to V.~Rychkov \cite{MR1821243} in the one-parameter case. These are based on two different types of Littlewood-Paley-families adapted to the multi-norm structure.

\smallskip

\noindent (i) \textit{Tensor  type.}

\smallskip
For each $1\leq i \leq n$ we take a local one-parameter reproducing formula $f(\x_{i})=\sum_{\ell_{i}=0}^{\infty}f*\psi_{i,\ell_{i}}^{(-\ell_{j})}(\x_{i})$ on $\R^{d_{i}}$ where $\psi_{i,\ell_{i}}$ are uniformly bounded in $\SS(\R^{d_{i}})$ and have integral zero for $\ell\ge1$. Taking the tensor product $\Psi_{L}^{\otimes}(\x_{1}, \ldots, \x_{n})=\prod_{\ell=1}^{n}\psi_{i,\ell_{i}}(\x_{i})$ we obtain a reproducing formula $f(\x )=\sum_{L\in\N^{n}}f*(\Psi_{L}^{\otimes})^{(-L)}(\x )$ on $\R^{d}$, but this is not a Littlewood-Paley reproducing formula in our sense since we are summing over $L\in \N^{n}$ rather than $L\in \LL$. However, we can easily obtain a Littlewood-Paley reproducing formula $f(\x )=\sum_{L\in \LL}f*\Psi_{L}^{(-L)}(\x )$ by appropriately grouping together finite sets of indices. Each $\Psi_L$ will have integral zero in any variable $\x_i$ with $i\in D_L$. The same can be applied to the $n$ Calder\'on reproducing formulas, producing a multi-norm Calder\'on reproducing formula $f(\x )=\sum_{L\in \LL}f*\Psi_{L}^{(-L)}*\widetilde\Psi_{L}^{(-L)}(\x )$, where each $\Psi_{L}$ and $\widetilde\Psi_{L}$ is a tensor product and each $\widetilde\Psi_{L}$ has vanishing moments  in any variable $\x_i$ with $i\in D_L$. 

\smallskip

\noindent (ii)  \textit{Convolution type.}

\smallskip
For each $1 \leq i \leq n$ we construct a reproducing formula on all of $\R^{d}$ using the $i^{th}$ family  $\widehat \delta_{t}^{i}$ of dilations defined in \eqref{2.5a}. With the notation $f^{(-\ell)_{i}}(\x)=2^{\ell\sum_{j}e(j,i)q_{j}}f(\widehat\delta^{i}_{2^{\ell}}\x)$, we fix functions $\sigma_{i,\ell}$ with $\ell\in \N$, uniformly bounded in $\SS(\R^{d})$, with integral zero if $\ell\ge1$ and giving a reproducing formula $f(\x )=\sum_{\ell=0}^{\infty}f*\sigma_{i,\ell}^{(-\ell)_{i}}(\x )$. By composition, we obtain a reproducing formula $f(\x )=\sum_{L\in\N^n}f*\sigma_{1,\ell_{1}}^{(-\ell_{1})_{1}}*\cdots*\sigma_{n,\ell_{n}}^{(-\ell_{n})_{n}}(\x )$ which, once again, is not a multi-norm Littlewood-Paley decomposition. However, the same grouping of terms with $L\not\in\LL$ used in (i) produces a multi-norm reproducing formula $f(\x )=\sum_{L\in\LL}f*\Sigma_L^{(-L)}(\x )$. It is proved in Corollary \ref{Cor4.6} that the terms $\Sigma_L^{(-L)}$ produced by this grouping can be expressed as convolutions $\sigma_{1,\ell_{1}}^{(-\ell_{1})_{1}}*\cdots*\sigma_{n,\ell_{n}}^{(-\ell_{n})_{n}}$, where the $\sigma_{i,\ell_{i}}$ are uniformly bounded in $\SS(\R^{d})$ and have integral zero if $i\in D_L$. The same operations can be performed on Calder\'on reproducing formulas relative to the $n$ dilation families to obtain a multi-norm Calder\'on reproducing formula of convolution type, with the $\widetilde\sigma_{i,\ell_{i}}$ uniformly bounded in $\SS(\R^{d})$ and with vanishing moments up to any preassigned order in each variable $i\in D_L$.

\subsection{Square functions}\label{Sec2.4}\quad

\smallskip

Given a Littlewood-Paley family $\Xi=\big\{\Xi_{L}:L\in \LL\big\}$, we define the associated square function
\be\label{2.13}
\SS_\Xi f=\Big(\sum\nolimits_{L\in\LL}\big|f*\Xi_L^{(-L)}\big|^2\Big)^{\half}.
\ee
One of the main themes  of this paper is the {\it $L^{1}$-equivalence} of certain multi-norm square functions, and this is the subject of Sections \ref{Sec5}, \ref{Sec6}, and \ref{Sec7}. Two square functions $S_\Xi$ and $S_{\Xi'}$ are $L^{1}$-equivalent if the condition $S_\Xi f\in L^{1}(\bR^d)$ is equivalent to $S_{\Xi'} f\in L^{1}(\bR^d)$ for every tempered distribution $f$. All the different versions of square functions will play a role in the description of the multi-norm Hardy space $\h_{\bfE}^{1}(\R^d)$. In Section \ref{Sec5}, Theorem \ref{Thm5.1}, we prove $L^{1}$-equivalence of square functions of tensor type. In Section \ref{Sec6.1}, Theorem \ref{Thm6.2}, we prove $L^{1}$-equivalence of square functions of convolution type and then, in Theorem \ref{Thm6.4}, between these and those of tensor type. In Section \ref{Sec6.3} we study the operator $S''_\sigma f=\big(\sum_{L\in\N^{n}}\big|f*\big(\mathop*_{i=1}^n\sigma_{i,\ell_i}^{(-\ell_i)_i}\big)\big|^2\big)^{\frac{1}{2}}$ which arises from the reproducing formula $f(\x )=\sum_{L\in \Z^{n}}f*\big(\mathop*_{i=1}^n\sigma_{i,\ell_i}^{(-\ell_i)_i}\big)$ mentioned above and, in Theorem \ref{Thm6.11}, prove its $L^{1}$-equivalence with the previous square functions.\footnote{This is not a Littlewood-Paley decomposition because it is not necessarily true that each term is an approximate spectral projection. This type of square function, however, is a local analogue of the ones used in previous work on Hardy spaces related to the two-norm situation \cite{arXiv:2102.07371}. It is also useful in some proofs where an inductive argument on the number $n$ of factors is needed.}  In Theorem \ref{Thm7.1} in Section \ref{Sec7} we establish $L^{1}$-equivalence of \textit{Plancherel-P\'olya} variants of the square functions of tensor type. The definition of these requires the notion of multi-norm dyadic rectangles, which is the subject of Section \ref{Sec7.1}.

\subsection{Dyadic structure of $\R^d_\x$ and Plancherel-P\'olya square functions}\label{Sec2.2}\quad

\smallskip

In Section \ref{Sec7.1} we shift our attention to the $\x$-space and define multi-norm dyadic cubes and rectangles. This requires some care since several different families of dilations are involved. These definitions are  ``local'' in the sense that we are only concerned with sets at scales smaller than $1$, and are adapted the the norms $\{N_{i}:1\leq i \leq n\}$. These notions are used in Section \ref{Sec7}  to define Plancherel-P\'olya type square functions, and in Section \ref{Sec13}, in connection with the atomic theory of the Hardy space $\h^1_\bfE(\R^d)$. Dyadic cubes $\bfQ_{i}^{\ell_{i}}$ at scales $2^{-\ell_{i}}$ are defined in each factor $\R^{d_i}$, $i=1,\dots,n$, and dyadic rectangles $\bfR$ in $\R^d$ are products of dyadic cubes $\bfQ_{i}^{\ell_{i}}$ in $\R^{d_i}$ at scales $2^{-\ell_i}$ subject to the admissibility condition $L=(\ell_1,\dots,\ell_n)\in\LL$.

\subsection{Local Hardy space}\label{Sec2.5}\quad

\smallskip

In Section \ref{Sec8} we introduce the multi-norm Hardy space $\h_{\bfE}^{1}(\bR^d)$ as the space of distributions $f$ such that, for any of the square functions $S_\Xi$ introduced in the previous sections, $S_\Xi f\in L^{1}(\bR^d)$. We first prove that $\h_{\bfE}^{1}(\bR^d)$ is contained in $L^{1}(\bR^d)$ (Theorem \ref{Thm8.1}). Proposition \ref{Prop8.4} shows that it is invariant under smooth cutoffs and consequently that every $f\in \h_{\bfE}^{1}(\bR^d)$ can be decomposed as a sum of $\h_{\bfE}^{1}$-functions supported in a mesh of unit cubes with control of their norms. Theorem \ref{Thm8.6} shows that the multi-norm singular integral operators (defined in Section \ref{Sec3.5} ) are bounded from $\h_{\bfE}^{1}(\bR^d)$ to itself. Theorem \ref{Thm8.9} shows that $\h_{\bfE}^{1}(\bR^d)$ can also be defined via {\it multi-norm local Riesz tranforms} $R_J=\cK^\flat_{1,j_1}*\cdots*\cK^\flat_{n,j_n}$, where for each fixed $i=1,\dots,n$, the kernels $\cK^\flat_{i,j}$ characterize the local Hardy space $\h^1_{\widehat\del^i}(\bR^d)$ relative to the dilations $\widehat\del^i$.

\subsection{Atomic characterization of $\h^1_\bfE(\R^d)$}\label{Sec2.6}\quad

\smallskip

 In Section \ref{Sec9} we obtain an atomic decomposition of $\h_{\bfE}^{1}$-functions. Atoms depend on a parameter $\tau>0$ and an element $S\in\SS_{\bfE}^{*}$.  There are three types, depending on the number $s$ of marked indices in $S$. 
The precise definition of an $(S,\tau)$-atom is given in Section \ref{Sec9.1}, the following being only a rough description. 
\begin{enumerate}[1.]
\item If $s=0$ then $S=\emptyset$ and an $(\emptyset,\tau)$-atom is a function supported in a translate of $2^{\tau} \B(1)$ and normalized in~$L^2$, with no required cancellation. 

\smallskip
\item If $s=1$ with dotted entry $k$, an $(S,\tau)$-atom $a$ is a Coifman-Weiss atom for the local Hardy space $\h^1_{\del^k}(\bR^d)$ associated to the dilations $\del^k$ in \eqref{2.5a}, supported on the $2^{\tau}$-dilate of a dyadic cube at scale $2^{-L}$, with $L\in\LL_S$, satisfying the stronger cancellation $\int_{\bR^{d_k}}a(\x )\,d\x _k=0$.

\smallskip

\item If $S=\big\{(A_r,k_r):r=1,\dots,s\big\}$ with $s\ge2$, an $(S,\tau)$-atom is a product (Chang-Fefferman) atom relative to the coarser decomposition $\bR^d=\bR^{A_1}\times\cdots \bR^{A_s}$ with dilations $\delta^{k_{r}}$ on each $A_{r}$, supported in a set $\Omega$ contained in a translate of $\tau \B(1)$, with pre-atoms $b_j$, each supported on the $2^{\tau}$-dilate of a dyadic cube at scale $2^{-L_j}$, with $L_j\in\LL_S$, with the stronger cancellations $\int_{\bR^{d_{k_r}}}b_j(\x )\,d\x _{k_r}=0$ for every $r=1,\dots,s$.
\end{enumerate}
Theorem \ref{Thm9.3} gives an atomic decomposition of a function $f\in \h_{\bfE}^{1}(\R^d)$: for any $\tau>0$, there is a constant $C_\tau$ so that $f=\sum_{j=0}^\infty\la_ja_j$, 
where each $a_j$ is a $(S_j,\tau)$-atom for some $S_j\in\SS_{\bfE}^{*}$ and $\sum_{j=0}^{\infty}|\la_j|\le C_\tau \|f\|_{\h^1_\bfE}$. 
More precisely, rewriting the Calder\'on reproducing formula as
$$
 f=f*\Psi_0*\widetilde\Psi_0+\sum\nolimits_{S\in\cS'_\bfE}\Big(\sum\nolimits_{L\in\LL_S}f*\Psi_L^{(-L)}*\widetilde\Psi_L^{(-L)}\Big)=f_\emptyset+\sum\nolimits_{S\in\cS'_\bfE}f_S,
 $$
 the proof shows that for given $\tau$, each term $f_S$ decomposes as a sum of $(S,\tau)$-atoms. 
 
 To complete the discussion of atomic decompositions, it remains to show that every $(S,\tau)$-atom belongs to the space $\h_{\bfE}^{1}(\R^d)$. The proof  is given in Theorem \ref{Thm11.7} and Proposition \ref{Prop11.9}, and requires a preliminary estimate on convolutions with multi-norm kernels proved in Section \ref{Sec12gg}  as well as a version of Journ\'e's covering lemma in $n$-parameters. The required statements are stated and proved in Section \ref{journe}, and follows the arguments in  \cite{MR1072104}.

\section{Multi-norm structures}\label{Sec3}

In this section we review the basic multi-norm theory developed in \cite{MR3862599}. Write $\R^{d}=\R^{d_{1}}\oplus\cdots\oplus\R^{d_{n}}$ and let  $\bfE=\{e(i,j)\}$ be a standard $n\times n$ matrix as defined in \eqref{2.3}. The family of dilations $\delta_{r}$ is defined in \eqref{1.1}, and $q_{i}=\sum_{h\in E_{i}}\la_i$ is the homogeneous dimensions of $\R^{d_{i}}$ as in \eqref{2.2ww}. For $1\leq i \leq n$,   $|\xib_i|$ is the $\delta_{r}$--homogeneous norm on $\R^{d_i}$ as in \eqref{2.2}. 
Let $\R_{+}^{n}=\big\{\t\in\R^{n}:\text{$t_{j}\geq 0$ for $1\leq j \leq n$}\}$. For $\x=(\x_{1}, \ldots, \x_{n}), \,\xib=(\xib_{1}, \ldots, \xib_{n})\in \R^{d}$, recall from \eqref{2.5a} that
\bea\label{2.5aa}
\delta_{t}^{i}(\x)&=\big(t^{\frac{1}{e(i,1)}}\x_{1}, \ldots, t^{\frac{1}{e(i,n)}}\x_{n}\big),&
\widehat\delta_{t}^{i}(\xib)&=\big(t^{e(1,i)}\xib_1,\dots, t^{e(n,i)}\xib_n\big),\\
N_{i}(\x)&=\max\nolimits_{1\leq j \leq n}|\x_{j}|^{e(i,j)}, &\widehat N_{i}(\xib)&=\max\nolimits_{1\leq j \leq n}|\xib_j|^{\frac{1}{e(j,i)}}.
\eea

To obtain the explicit description of the sets $\LL$ and  $B_L$ in \eqref{2.5}, it is convenient to introduce logarithmic parameters $\t=(t_1,\dots,t_n)\in\R^n_+=\big\{\t=(t_{1}, \ldots, t_{n})\in \R^{n}:t_i\ge0\ \forall i\big\}$ and logarithmic norms $\widehat n_i(t)$ by setting
\be\label{2.10}
\t=\pi(\xib)\overset{\rm def}=\left(\log_{2}^{+}|\xib_{1}|, \ldots, \log_{2}^{+}|\xib_{n}|\right)\text{ and }
\widehat n_{i}(\t)=\widehat n_{i}(t_{1}, \ldots, t_{n})=\max\nolimits_{1\leq j \leq n}|t_{j}|e(j,i)^{-1}.
\ee
The above formula for $\widehat n_i$ is such that $\widehat N_i(\xib)=2^{\widehat n_i(\pi(\xib))}$ for all $\xib \in B(1)^c$,
The conditions \eqref{2.3} guarantee that the principal cone $\Gamma(\bfE)$ in \eqref{2.8} has nonempty interior. In the next section we privilege definitions in the $\t$ variables, transporting them to the $\xib$ variables via $\pi$.

\subsection{Marked partitions and dominance}\label{Sec3.1}\quad

\smallskip

A {\it marked partition} $S=\big\{(A_{1},k_{1}), \ldots, (A_{s},k_{s})\big\}$ is a  partition of $\{1, \ldots, n\}$ with the entries $k_{r}\in A_{r}$ for $1\leq r\leq s$. We denote the set of {\it marked}, or {\it dotted}, elements of $S$ as
\bea\label{3.3qq}
D(S)=\{k_{1}, \ldots, k_{s}\}.
\eea
 When convenient we can use a different notation for marked partitions. For instance, instead of writing $S=\big\{(\{1,3\},1),(\{2\},2)\big\}$   we write $S=\{\overset. 1,3\}\{\overset.2\}$. The {\it principal marked partition}  is 
 \bea\label{3.4qq}
 S_{p}=\{\overset . 1\}\{\overset . 2\}\cdots\{\overset . n\}.
 \eea
 The variable $t_{k}$ is called {\it dominant} (respectively {\it strictly dominant}) in $\widehat n_{i}$ at the point $\t\in\R^n_+$, or shortly ``in $\widehat n_{i}(\t)$'', if $\frac{t_{k}}{e(j,k)}\geq \frac{t_{j}}{e(j,i)}$ for all $1\leq j \leq n$ (respectively $\frac{t_{k}}{e(j,k)}> \frac{t_{j}}{e(j,i)}$ for all $j\neq k$). If $S=\big\{(A_{1},k_{1}), \ldots, (A_{s},k_{s})\big\}$ is a marked partition, let
\bea\label{3.2rr}
\Gamma_{S}=\Big\{(t_{1}, \ldots, t_{n})\in \R_{+}^{n}:\frac{t_{j}}{e(j,i)}\leq \frac{t_{k_{r}}}{e(k_{r},i)} \text{ for all $1\leq r\leq s$, all $i\in A_{r}$ and all $1\leq j \leq n$}\Big\}
\eea
denote the closed convex cone of points $\t\in\R_{+}^{n}$ such that $t_{k_{r}}$ is dominant in $\widehat n_{i}(\t)$ for all $i\in A_{r}$ and all $1\leq r\leq s$. Then $\Gamma_{S_p}=\Gamma(\bfE)$. The symbols $\SS_n$, $\SS_\bfE$ and $\SS^*_{\bfE}$ are as defined just before equation \eqref{L=cupL_S}. The notions of dominance and strict dominance are extended to the $\xib$ variable in terms of the norms $\widehat N_j$ as follows.
\be\label{3.1}
\widehat E_{S}=\pi^{-1}(\Gamma_S)\cap B(1)^c=\big\{\xib\in B(1)^c:\text{ $j\in A_{r} \Longrightarrow |\xib_\ell|^{1/e(\ell,j)}\leq |\xib_{k_r}|^{1/e(k_r,j)}$ for $1\leq \ell \leq n$}\big\}.
\ee
It is possible for more than one variable to be dominant in {the same norm. This occurs on the sets 
\beas
\Lambda_{\R^{n}}&=\Big\{\t\in \R^{n}:\big(\exists j\big)\big(\exists k\big)(\exists\ell\big)\big(\text{$\ell\neq k$ and $t_k e(k,j)^{-1}=t_{\ell}e(\ell,j)^{-1}$}\Big\},\\
\Lambda_{\R^{d}}&=\big\{\xib\in\R^{d}\setminus B(1):\big(\exists j\big)\big(\exists k\neq \ell\big)\big(|\xib_k|^{1/e(k,j)}=|\xib_{\ell}|^{1/e(\ell,j)}\big)\big\}.
\eeas
 These are finite unions of proper closed submanifolds and so have measure zero. Thus if $\t\notin \Lambda_{\R^{n}}$ (respectively $\xib\notin \Lambda_{\R^{d}}$), for every $j$ there is a unique index $k$ so that $t_{k}$ is strictly dominant in $\widehat n_{j}(\t)$ (respectively $\xib_{k}$ is strictly dominant in $\widehat N_{j}(\xib)$), and the same is true on a full neighborhood of $\t$ or $\xib$. Lemma \ref{Lem3.1} below summarizes the main consequences of these notions. The proofs can be extracted from Sections 3.1-3.3 in \cite{MR3862599}.\footnote{Parts \eqref{Lem3.1i}--\eqref{Lem3.1iv} only require  the weaker hypothesis that the matrix satisfies the basic assumptions, the first two lines of \eqref{2.3}.} 

\goodbreak

\begin{lemma}\label{Lem3.1}
Let $\bfE$ satisfy \eqref{2.3} and let $S$ be a marked partition. 
\begin{enumerate}[\rm(1)]
\item\label{Lem3.1i}
 If $\t\in \R_{+}^{n}$ and $t_{k}$ is (strictly) dominant in $\widehat n_{j}(\t)$ for some $j\ne k$,  then $t_{k}$ is also (strictly) dominant in $\widehat n_{k}(\t)$.
 
 \smallskip

\item  If $\xib\in B(1)^c$ and $\xib_k$ is (strictly) dominant in $\widehat N_j(\xib)$  then $\xib_{k}$ is  (strictly) dominant in $\widehat N_k(\xib)$. 

\smallskip

\item\label{Lem3.1iii} If $\t\notin \Lambda_{\R^{n}}$ (respectively $\xib\notin\Lambda_{\R^{d}}$), there is a unique marked partition $S=\big\{(A_1,k_1),\dots,(A_s,k_s)\big\}$ such that, for every $r$, $t_{k_{r}}$ is strictly dominant in $\widehat n_{j}(\t)$ for $j\in A_{r}$ (respectively $\xib_{k_r}$ is strictly dominant in all the norms $\widehat N_j(\xib)$ with $j\in A_r$).

\smallskip
\item \label{Lem3.1iv}
 $\xib\in\widehat E_S$  if and only if  $\xib_{k_r}$ is dominant in  $\widehat N_j(\xib)$ for all $j\in A_r$ and every $1\leq r\leq s$.

\smallskip

\item \label{Lem3.1v}If $\bfE$ is a standard matrix and $S_p=\{\overset . 1\}\{\overset . 2\}\cdots\{\overset . n\}$  then $\text{\rm Int}(\Gamma_{S_{p}})\neq \emptyset$.
\end{enumerate}
\end{lemma}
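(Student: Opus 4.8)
The key point is that all five statements are, on the logarithmic side, assertions about the cones $\Gamma_S\subseteq\R^n_+$, and the $\xib$-versions follow from the $\t$-versions by applying the projection $\pi$ of \eqref{2.10} together with the identity $\widehat N_i(\xib)=2^{\widehat n_i(\pi(\xib))}$, valid on $B(1)^c$. The plan is therefore to first check that this transport is faithful and then to prove the $\t$-side statements. The only subtlety in the transport is the truncation $\log_2^+$ in $\pi$: on $B(1)^c$ every maximum $\widehat n_i(\pi(\xib))=\max_j t_j/e(j,i)$ is attained at some index $j$ with $|\xib_j|>1$, hence with $t_j=\log_2|\xib_j|$ there, and from this one reads off that ``$t_k$ is (strictly) dominant in $\widehat n_j(\pi(\xib))$'' is equivalent to ``$\xib_k$ is (strictly) dominant in $\widehat N_j(\xib)$'' whenever $\xib\in B(1)^c$. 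Granting this, (2) is (1), and the $\xib$-parts of (3)--(4) reduce to their $\t$-parts, so it suffices to treat (1), the $\t$-part of (3), (4), and (5).

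For (1) I would argue directly from the triangle inequality. If $t_k$ is dominant in $\widehat n_j(\t)$, so $t_k/e(k,j)\ge t_\ell/e(\ell,j)$ for every $\ell$, then for each $\ell$ the inequality $e(\ell,j)\le e(\ell,k)e(k,j)$ (second line of \eqref{2.3}) yields $t_k\ge t_\ell\,e(k,j)/e(\ell,j)\ge t_\ell/e(\ell,k)$, i.e.\ $t_k/e(k,k)\ge t_\ell/e(\ell,k)$ since $e(k,k)=1$; thus $t_k$ is dominant in $\widehat n_k(\t)$. For the strict version one takes the first inequality strict for $\ell\neq k$ (and observes that $t_\ell=0$ then forces $t_k>0$), and the same chain gives strict dominance in $\widehat n_k$.

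For the $\t$-part of (3): when $\t\notin\Lambda_{\R^n}$, for each $j$ the maximum defining $\widehat n_j(\t)$ is attained at exactly one index, call it $k(j)$, so $t_{k(j)}$ is strictly dominant in $\widehat n_j(\t)$; by the strict form of (1) it is then strictly dominant in $\widehat n_{k(j)}(\t)$, whence $k(k(j))=k(j)$ by uniqueness. Thus $j\mapsto k(j)$ is a retraction of $\{1,\dots,n\}$ onto $D=\{k:k(k)=k\}=\{k_1,\dots,k_s\}$, and $A_r:=k^{-1}(k_r)$ defines the desired marked partition $S=\{(A_1,k_1),\dots,(A_s,k_s)\}$; its uniqueness is forced, since in any such partition the marked element of the part containing $j$ must be strictly dominant in $\widehat n_j(\t)$, hence equal to $k(j)$. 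Statement (4) is then the unwinding of definitions: $\widehat E_S=\pi^{-1}(\Gamma_S)\cap B(1)^c$, and $\t\in\Gamma_S$ says precisely that $t_{k_r}$ is dominant in $\widehat n_i(\t)$ for all $i\in A_r$ and all $r$ (this is how $\Gamma_S$ was defined in \eqref{3.2rr}); transporting via $\pi$ yields both the second equality in \eqref{3.1} and the dominance characterization of (4).

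The one place where the third line of \eqref{2.3} is needed is (5). Here the goal is a point $\t$ with $t_i<e(i,j)t_j$ for all $i\neq j$; a full neighbourhood of such a $\t$ then lies in $\Gamma(\bfE)=\Gamma_{S_p}$, since the constraints with $i=j$ (namely $t_i\le t_i$) are automatic. Writing $u_i=\log_2 t_i$ and $a(i,j)=\log_2 e(i,j)$, this amounts to solving the difference-constraint system $u_i-u_j<a(i,j)$ for $i\neq j$, which by the classical shortest-path feasibility criterion is solvable provided every cycle $i_1\to i_2\to\cdots\to i_m\to i_1$ with $i_r\neq i_{r+1}$ for $r=1,\dots,m$ and $i_{m+1}:=i_1$ has $\sum_r a(i_r,i_{r+1})>0$. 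Iterating the triangle inequality gives $e(i_1,i_2)e(i_2,i_3)\cdots e(i_{m-1},i_m)\ge e(i_1,i_m)$, and multiplying by $e(i_m,i_1)$ gives $\prod_r e(i_r,i_{r+1})\ge e(i_1,i_m)e(i_m,i_1)>1$, the last inequality by the third line of \eqref{2.3} since $i_m\neq i_1$; taking $\log_2$ gives the cycle condition. I expect this cycle analysis to be the only genuinely substantive step — the rest is bookkeeping with the triangle inequality and the definition of $\pi$ — and it is exactly where the non-degeneracy hypothesis $e(i,j)e(j,i)>1$ enters, consistent with the footnote stating that (1)--(4) require only the basic assumptions.
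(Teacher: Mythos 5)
Your proof is correct in substance, and since the paper itself gives no argument for this lemma (it only points to Sections 3.1--3.3 of \cite{MR3862599}), what you have written is a genuinely self-contained alternative. Parts (1)--(2) via the second line of \eqref{2.3}, part (3) via the idempotence of $j\mapsto k(j)$, and part (4) as an unwinding of \eqref{3.2rr} and \eqref{3.1} are the standard arguments. Your treatment of (5) -- recasting $t_i<e(i,j)t_j$ as the strict difference system $u_i-u_j<\log_2 e(i,j)$ and checking the cycle-positivity criterion by iterating $e(i,\ell)\le e(i,k)e(k,\ell)$ and closing the cycle with $e(i_1,i_m)e(i_m,i_1)>1$ -- is clean and correctly isolates the role of the third condition in \eqref{2.3}, consistent with the paper's footnote.

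One step needs repair: the claim that the $\xib$-part of (3) reduces to the $\t$-part via $\pi$. The dominance transport you establish is fine, but the \emph{hypothesis} does not transport: $\xib\notin\Lambda_{\R^{d}}$ does not imply $\pi(\xib)\notin\Lambda_{\R^{n}}$. If two components satisfy $|\xib_k|\le 1$ and $|\xib_\ell|\le 1$ (which is possible for $\xib\in B(1)^c$ as soon as $n\ge 3$), then $t_k=t_\ell=0$, so $t_k e(k,j)^{-1}=t_\ell e(\ell,j)^{-1}$ and $\pi(\xib)\in\Lambda_{\R^{n}}$ automatically, even though $\xib$ may avoid $\Lambda_{\R^{d}}$; for such $\xib$ you cannot invoke the $\t$-version of (3). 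The fix is immediate: $\xib\notin\Lambda_{\R^{d}}$ directly gives, for each $j$, a unique strict maximizer of $\ell\mapsto|\xib_\ell|^{1/e(\ell,j)}$, and your retraction argument runs verbatim on the $\xib$ side with part (2) in place of part (1). Part (4) is unaffected, since it involves only the dominance conditions, which do transport faithfully on $B(1)^c$.
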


Let $S= \big\{(A_1,k_1),\dots,(A_s,k_s)\big\}\in \SS_{\bfE}$  and let $1\le q,r\le s$. Define
\bea\label{3.3rr}
\tau_{S}(k_q,k_{r})=\min\nolimits_{\ell\in A_{r}}\frac{e(k_q,\ell)}{e(k_{r},\ell)}.
\eea
The following result is contained in \cite{MR3862599}, Corollary 3.10. 
\begin{proposition}\label{Prop3.2}
If $S= \big\{(A_1,k_1),\dots,(A_s,k_s)\big\}\in \SS_{\bfE}$ then
\bea\label{3.3}
\Gamma_S&=\Big\{\t\in \R_{+}^{n}:t_j\leq\frac {t_{k_{r}}}{e(k_{r},j)}\text{ for all }j \in A_{r} \text{ and } t_{k_{p}}\leq \tau_{S}(k_{p}, k_{q})t_{k_{q}}\text{ for all }1\leq p,q \leq s\Big\},
\eea
and
 \bea\label{3.2}
 \widehat E_{S}=\Bigg\{\xib\in B(1)^c:
 &
\begin{cases}
 |\xib_j|\leq |\xib_{k_r} |^{\frac1{e(k_r,j)}}&\text{ for }1\le r\le s\text{ and }j\in A_r,\\ 
 |\xib_{k_{p}}|\leq |\xib_{k_{q}}|^{\tau_{S}(k_{p}, k_{q})}&\text{ for }1\leq p,q \leq s\qquad
 \end{cases}\Bigg\}.
\eea
\end{proposition}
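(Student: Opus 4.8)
The plan is to establish the identity \eqref{3.3} for $\Gamma_S$ in the logarithmic variables, where every constraint is linear, and then to obtain \eqref{3.2} for $\widehat E_S$ by transporting the inequalities through the map $\pi$ of \eqref{2.10}, exactly as $\widehat E_S$ was derived from $\Gamma_S$ in \eqref{3.1}. I would write the defining system \eqref{3.2rr} of $\Gamma_S$ as the family of inequalities $t_j/e(j,i)\le t_{k_r}/e(k_r,i)$ indexed by triples $(r,i,j)$ with $1\le r\le s$, $i\in A_r$ and $1\le j\le n$. Since $\{A_1,\dots,A_s\}$ is a partition, each $j$ belongs to a unique block $A_p$, and I would split the system into its \emph{within-block} part ($j$ in the same block as $i$) and its \emph{cross-block} part ($j\in A_p$ with $p\ne r$), then show that the within-block part is equivalent to the first batch of inequalities in \eqref{3.3}, and that, granted the first batch, the cross-block part is equivalent to the $\tau_S$-inequalities in \eqref{3.3}.

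For the within-block reduction: choosing $i=j\in A_r$ and using $e(j,j)=1$ shows the system forces $t_j\le t_{k_r}/e(k_r,j)$ for every $r$ and every $j\in A_r$; conversely, if these hold then for any $i\in A_r$ the submultiplicativity bound $e(k_r,i)\le e(k_r,j)e(j,i)$ (second line of \eqref{2.3}, triple $(k_r,j,i)$) gives $t_j/e(j,i)\le t_{k_r}/\big(e(k_r,j)e(j,i)\big)\le t_{k_r}/e(k_r,i)$, recovering the corresponding within-block inequality. For the cross-block reduction, assume the within-block inequalities already hold. Specializing a cross-block inequality to $r=q$, $j=k_p\in A_p$ with $p\ne q$ gives $t_{k_p}/e(k_p,i)\le t_{k_q}/e(k_q,i)$ for every $i\in A_q$, i.e. $t_{k_p}\le\big(e(k_p,i)/e(k_q,i)\big)t_{k_q}$, and minimizing over $i\in A_q$ yields exactly $t_{k_p}\le\tau_S(k_p,k_q)t_{k_q}$ by the definition \eqref{3.3rr}; conversely, given both batches of \eqref{3.3}, for $i\in A_q$ and $j\in A_p$ with $p\ne q$ the within-block inequality for block $p$ gives $t_j\le t_{k_p}/e(k_p,j)$, hence $t_j/e(j,i)\le t_{k_p}/e(k_p,i)$ by submultiplicativity for $(k_p,j,i)$, while $t_{k_p}\le\tau_S(k_p,k_q)t_{k_q}\le\big(e(k_p,i)/e(k_q,i)\big)t_{k_q}$ because $i\in A_q$; chaining the two bounds gives $t_j/e(j,i)\le t_{k_q}/e(k_q,i)$, the cross-block inequality for $(q,i,j)$. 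Since the instances $j=k_r$ of the first batch and $p=q$ of the second ($\tau_S(k_p,k_p)=1$) are vacuous, this shows $\Gamma_S$ coincides with the right-hand side of \eqref{3.3}; transporting through $\pi$ and exponentiating base $2$ turns $t_j\le t_{k_r}/e(k_r,j)$ into $|\xib_j|\le|\xib_{k_r}|^{1/e(k_r,j)}$ and $t_{k_p}\le\tau_S(k_p,k_q)t_{k_q}$ into $|\xib_{k_p}|\le|\xib_{k_q}|^{\tau_S(k_p,k_q)}$ on $B(1)^c$, exactly as in the passage from \eqref{3.2rr} to \eqref{3.1}, which is \eqref{3.2}.

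I do not expect a genuine obstacle here: the statement is a Gaussian-elimination-type pruning of a highly redundant linear system. The points that will need care are purely organizational -- isolating the diagonal instances $i=j$ that generate the first batch of inequalities, and recognizing that the minimum over $i\in A_q$ of the cross-block constraints is precisely $\tau_S(k_p,k_q)$ as defined in \eqref{3.3rr} -- while the genuine content is the two applications of submultiplicativity, one propagating a constraint among the indices of a single block and one passing from a block's marked index to another block. (In fact only the first two lines of \eqref{2.3} and their consequence $1\le e(i,j)e(j,i)$ enter the argument; it reproduces \cite{MR3862599}, Corollary~3.10.)
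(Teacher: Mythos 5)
Your proof is correct. Note that the paper does not actually prove Proposition \ref{Prop3.2}: it defers entirely to \cite{MR3862599}, Corollary 3.10, so there is no internal argument to compare against; what you have written is a complete, self-contained replacement for that citation, and it is the natural one. Both directions check out: the forward inclusion follows from the diagonal specializations $i=j$ (giving the within-block inequalities, using $e(j,j)=1$) and from $j=k_p$ with $i$ ranging over $A_q$ (whose minimum over $i$ is exactly $\tau_S(k_p,k_q)$ as defined in \eqref{3.3rr}); the reverse inclusion uses submultiplicativity $e(k_r,i)\le e(k_r,j)e(j,i)$ once within a block and once more, chained with $\tau_S(k_p,k_q)\le e(k_p,i)/e(k_q,i)$ for $i\in A_q$, across blocks. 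You are also right that only the first two lines of \eqref{2.3} enter, and that nonnegativity of the $t_j$ is what licenses multiplying the inequalities by $1/e(j,i)$ and dropping to the weaker denominator. The only caveat concerns the final exponentiation step: for $\xib\in B(1)^c$ with some component $|\xib_j|\le 1$ one has $t_j=\log_2^+|\xib_j|=0$, so $t_j\le t_{k_r}/e(k_r,j)$ is automatic while $|\xib_j|\le|\xib_{k_r}|^{1/e(k_r,j)}$ need not be (if $|\xib_{k_r}|<1$); but this identification of $\pi^{-1}(\Gamma_S)\cap B(1)^c$ with the corresponding set of modulus inequalities is already made by the paper in \eqref{3.1}, so you are following the paper's own convention rather than introducing a gap.
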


\begin{remark}
{\rm Notice that the conditions in \eqref{3.2} describing the set $\widehat E_S$ can also be expressed in the following equivalent form:
\be\label{3.2new}
 \widehat E_{S}=\Bigg\{\xib\in B(1)^c:
\begin{cases}
N_{k_r}(\xib_{A_r})= |\xib_{k_r} |&\text{ for }1\le r\le s,\\ 
 N_{k_p}(\xib_{A_p})\leq N_{k_q}(\xib_{A_q})^{\tau_{S}(k_{p}, k_{q})}&\text{ for }1\leq p,q \leq s
 \end{cases}\Bigg\},
\ee
where $\xib_{A_r}=(\xib_i)_{i\in A_r}$. Together with the analogous reformulation of \eqref{3.3}, this is interesting in two ways: 
\begin{enumerate}[$\bullet$]
\item it emphasizes the presence of a coarser multi-norm structure on $\R^d$ associated to $S$, with $\R^d$ decomposed as $\prod_{r=1}^s\R^{A_r}$, homogeneous norms $N_{k_r}$, and principal cone $\big\{(t_{k_1},\dots,t_{k_s}):(t_1,\dots,t_n)\in\Gamma_S\big\}\subset\R^s_+$;
\item  it gives a simple example of the interplay between the two kinds of norms in \eqref{2.5aa}: starting from the dominance principle in the $\widehat N$-norms, the resulting sets $\widehat E_{S}$ are described in terms of the $N$-norms. 
\end{enumerate}
The most relevant form of this interplay is the correspondence, via Fourier transform, described in Section \ref{Sec3.5} between kernels satisfying estimates involving the $N$-norms and multipliers satisfying estimates involving the $\widehat N$-norms.
}
\end{remark}

\subsection{Partition of $\R_{+}^{n}$ in terms of dotted elements}\label{Sec3.3}\quad

\smallskip

At this point it is preferable to classify points $\t\in\R^n_+\setminus\{\0\}$ in terms of sets $D$ of dotted elements rather than  in terms of marked partitions.  By Lemma \ref{Lem3.4} below, it turns out that this can be done in a natural way without excluding the singular points in $\Lambda_{\R^n}$. In fact we show that for every $\t\in\R_{+}^{n}\setminus\{\0\}$ the set of the $D(S)\subseteq\{1, \ldots, n\}$ with $S\in\SS_\bfE$ such that $\t\in\Gamma_S$ admits a minimum element under inclusion.

\begin{lemma}\label{Lem3.4}
If $\0\neq \t\in \R_{+}^{n}$, let
$\SS(\t)=\big\{S\in \SS_{\bfE}:\t\in\Gamma_{S}\big\}$. There exists a (possibly non-unique)\footnote{Non-uniqueness is quite possible if $n\ge3$: Figure 2 shows that there are points belonging to $\Gamma_S\cap\Gamma_{S'}$ with $S=\{\overset\cdot 1\}\{\overset\cdot 2,3\}$ and $S'=\{\overset\cdot1,3\}\{\overset\cdot2\}$, with $D=\{1,2\}$.} $S_{0}=\big\{(A_1,k_1),\dots,(A_{s_{0}},k_{s_{0}})\big\}\in\SS(\t)$ such that 
\begin{enumerate}[(a)]
\item\label{Lem3.4a} $t_{k_p}<e(k_p,k_q)t_{k_q}$ for all  $1\leq p\neq q\leq s_{0}$;
\smallskip
\item\label{Lem3.4b} if $T=\big\{(B_{1},m_{1}), \ldots, (B_{\sigma}, m_{\sigma})\big\}\in \SS(\t)$ then $\{k_{1}, \ldots, k_{s_{0}}\}\subseteq \{m_{1}, \ldots, m_{\sigma}\big\}$.
\end{enumerate}
\end{lemma}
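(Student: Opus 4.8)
The plan is to analyze $\SS(\t)$ directly through the dominance structure at the point $\t$, using Lemma \ref{Lem3.1} as the main tool. First I would observe that $\SS(\t)$ is nonempty, since by part \eqref{Lem3.1iii} of Lemma \ref{Lem3.1} a generic point has a unique ``strict'' marked partition, and by a perturbation/limiting argument every $\t\in\R^n_+\setminus\{\0\}$ lies in $\Gamma_S$ for at least one $S\in\SS_\bfE$ (indeed $\Gamma_S$ for $S$ coming from a nearby generic point, using that $\Gamma_S$ is closed and that standardness of $\bfE$ forces the relevant cones to have nonempty interior). The goal is to produce $S_0\in\SS(\t)$ whose dotted set $D(S_0)$ is contained in $D(S)$ for every $S\in\SS(\t)$; condition \eqref{Lem3.4a} will be an automatic byproduct of a minimality/greedy choice.

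The key construction I would use is a greedy selection of dotted indices. Set $t^*=\max_j t_j$ and, among all indices $k$ with $t_k$ dominant in $\widehat n_k(\t)$ and, crucially, ``minimal'' in a suitable sense (say, $t_k$ as small as possible subject to being a legitimate first dotted index of some $S\in\SS(\t)$), pick $k_1$; then restrict attention to the indices not yet ``covered'' and repeat. To make this precise I would use the characterization in Proposition \ref{Prop3.2}: membership $\t\in\Gamma_S$ is equivalent to the inequalities $t_j\le t_{k_r}/e(k_r,j)$ for $j\in A_r$ together with $t_{k_p}\le\tau_S(k_p,k_q)t_{k_q}$. So the dotted indices of any $S\in\SS(\t)$ must ``dominate'' every other index via the matrix $\bfE$, and the non-dotted index $j$ in block $A_r$ must satisfy $t_je(k_r,j)\le t_{k_r}$. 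The essential combinatorial claim is: if $k$ and $k'$ are both dotted for some (possibly different) members of $\SS(\t)$ and $k$ ``dominates'' $k'$ at $\t$ in the sense that $k'$ could sit in a block marked by $k$, then one can replace $k'$ by $k$ and stay in $\SS(\t)$; iterating this replacement collapses any $S\in\SS(\t)$ onto a canonical minimal $S_0$. Condition \eqref{Lem3.4a} then holds because if $t_{k_p}\ge e(k_p,k_q)t_{k_q}$ for two dotted indices then (using $1<e(k_p,k_q)e(k_q,k_p)$, the third line of \eqref{2.3}) one could merge the two blocks and delete $k_q$ from the dotted set, contradicting minimality of $D(S_0)$.

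More concretely, I would argue as follows. Define $D_0$ to be the intersection $\bigcap_{S\in\SS(\t)}D(S)$ — except that I first need to know this intersection is itself realized as $D(T)$ for some $T\in\SS(\t)$; this is really the content of \eqref{Lem3.4b}. So instead I would show $\SS(\t)$ is closed under a ``meet'' operation on dotted sets: given $S,S'\in\SS(\t)$, there is $S''\in\SS(\t)$ with $D(S'')\subseteq D(S)\cap D(S')$. Granting the meet operation, finiteness of $\SS_\bfE$ gives a single $S_0$ with $D(S_0)=\bigcap_{S\in\SS(\t)}D(S)$, which is \eqref{Lem3.4b}, and \eqref{Lem3.4a} follows as above. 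To build the meet, take $k\in D(S)\setminus D(S')$: then $k$ lies in some block $B_m$ of $S'$ with marked index $m\ne k$, so by Proposition \ref{Prop3.2} applied to $S'$ we have $t_k e(m,k)\le t_m$; on the other hand $k$ being dotted in $S$ with $t$-dominance, together with part \eqref{Lem3.1i} of Lemma \ref{Lem3.1}, constrains how $t_k$ compares to the other $t_j$'s in its $S$-block $A_k$. The point is to check that removing $k$ from the dotted set of $S$ and absorbing its block $A_k$ into the block containing $m$ (or re-marking appropriately) still yields a marked partition in $\SS(\t)$ with $\Gamma$ having nonempty interior — i.e., lands in $\SS_\bfE$ — using the cocycle inequality (second line of \eqref{2.3}) to propagate the dominance inequalities.

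The main obstacle I expect is precisely verifying that the merged partition still lies in $\SS_\bfE$, not merely in $\SS_n$: the constraint $\Gamma_{S''}$ has nonempty interior is a genuine restriction, and one must use the strict inequality in the third line of \eqref{2.3} to guarantee that when two dotted indices are merged the resulting cone does not collapse to a lower-dimensional face. A secondary subtlety is the behavior on the singular set $\Lambda_{\R^n}$, where several indices tie for dominance; here I would lean on the fact (stated in the text) that dominance, as opposed to strict dominance, is a closed condition, so $\Gamma_S$ is closed and $\t\in\Gamma_S$ can be tested by taking limits of nearby generic points — this is what lets the argument avoid excluding $\Lambda_{\R^n}$, as the lemma requires. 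The transitivity bookkeeping (that the inequalities $t_{k_p}\le\tau_S(k_p,k_q)t_{k_q}$ survive re-marking) will reduce, via \eqref{3.3rr} and the submultiplicativity $e(j,\ell)\le e(j,k)e(k,\ell)$, to a finite check that I would not grind through here.
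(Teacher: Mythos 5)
Your plan is essentially the paper's argument: the paper takes $S_0$ minimizing $|D(S)|$ over $\SS(\t)$ and derives both (a) and (b) from exactly the block-merging move you describe (chain the dominance inequalities through the submultiplicativity $e(j,\ell)\le e(j,k)e(k,\ell)$ to fuse two blocks, contradicting minimality); your meet-closure formulation is an equivalent repackaging of that minimal-cardinality argument. Two clarifications on the points you flagged. First, the strict inequality $1<e(j,k)e(k,j)$ is used in the paper not to control the interior of the merged cone but to dispose of the degenerate case you wave at with ``(or re-marking appropriately)'': if the $T$-mark $m$ of the block containing $k$ lies in the $S_0$-block marked by $k$ itself, the two dominance inequalities combine to give $e(k,m)e(m,k)\le 1$, a contradiction with standardness (this is the case $p=1$ in the paper's proof of (b)); part (a) needs only submultiplicativity. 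Second, the obstacle you single out as the main one --- that the merged partition must land in $\SS_\bfE$, i.e.\ that $\Gamma_{\widetilde S_0}$ has nonempty interior --- is not verified in the paper either: the printed proof concludes the contradiction directly from $\t\in\Gamma_{\widetilde S_0}$, so on that point your proposal is no less complete than the paper's.
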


\begin{proof}
Let $s_{0}=\min\big\{\big|(D(S)\big|:S\in \cS(\mbt)\big\}$, let $S_0=\big\{(A_1,k_1),\dots,(A_{s_0},k_{s_0})\big\}\in\SS(\t)$, and suppose that $t_{k_p}\ge e(k_p,k_q)t_{k_q}$ for some $p\neq q$. Now if $j\in A_q$, $\frac{t_\ell}{e(\ell,j)}\leq \frac{t_{k_q}}{e(k_q,j)}$ for every $\ell$ since  $\t\in S_{0}$ and so 
\beas
\frac{t_\ell}{e(\ell,j)}\leq \frac{t_{k_q}}{e(k_q,j)}\leq \frac{t_{k_p}}{e(k_p,k_q)e(k_q,j)}\leq \frac{t_{k_p}}{e(k_p,j)}.
\eeas 
It then follows from \eqref{3.2rr}  that $\t\in \Gamma_{\widetilde S_{0}}$ where $\widetilde S_{0}$ is obtained from $S_{0}$ by replacing the two elements $(A_p,k_p),(A_q,k_q)$ of $S_0$ with the single element $(A_p\cup A_q,k_p)$. This would contradict the minimality of $s_{0}$, proving \eqref{Lem3.4a}.

Now let $T=\big\{(B_1,m_1),\dots,(B_s,m_s)\big\}\in\cS(\mbt)$. To prove \eqref{Lem3.4b}
suppose that $k_1\notin\{m_{1}, \ldots, m_{\sigma}\}$. Without loss of generality, we may suppose that $k_1\in B_1\setminus\{m_1\}$. Let $p$ be such that $m_1\in A_p$. If $p=1$ then since $k_{1}\in B_{1}$, this means that $t_{m_1}\leq \frac{t_{k_1}}{e(k_1,m_1)}\leq \frac{t_{m_1}}{e(k_1,m_1)e(m_1,k_1)}$, implying that $e(k_1,m_1)e(m_1,k_1)\le1$. This contradicts the hypothesis that the matrix $\bfE$ is standard, and so $p\neq 1$. It remains to consider the case $p\ge2$. Suppose that $m_1\in A_2$. For every $i\in A_1$ we have
$$
t_i\leq \frac{t_{k_1}}{e(k_1,i)}\leq \frac{t_{m_1}}{e(m_1,k_1)e(k_1,i)}\leq \frac{t_{k_2}}{e(k_2,m_1)e(m_1,k_1)e(k_1,i)} \leq \frac{t_{k_2}}{e(k_2,i)}\ .
$$
This implies that $S'=\big\{(A_1\cup A_2,k_2),(A_3,k_3),\dots,(A_{s_0},k_{s_0})\big\}\in\cS(\mbt)$, contradicting once again minimality of $s_0$. This completes the proof. 
\end{proof}

We denote by $D(\t)$ the set $D(S_0)$, $S_0$ being as in Lemma \ref{Lem3.4}. For $D\subseteq\{1,\dots,n\}$ we define
$$
\Gamma(D)=\begin{cases}\{0\}&\text{ if }D=\emptyset\\
\big\{\t\ne0:D(\t)=D\big\}& \text{ otherwise. }\end{cases}
$$
Obviously the sets $\Gamma(D)$ partition $\R^n_+$. The next result  follows directly from Lemma \ref{Lem3.4}.

\begin{proposition}\label{Prop3.5}For $\emptyset\ne D\subseteq\{1,\dots,n\}$, let $\SS(D)$ be the set of $S\in\SS_\bfE$ such that $D(S)=D$. Then
$$
\Gamma(D)=\Big(\bigcup\nolimits_{S\in \SS(D)}\Gamma_{S}\Big)\cap\Big\{\t\in\R_{+}^{n}:\text{$0<t_{k_{p}}<t_{k_{q}}e(k_{p},k_{q})$ for all $k_{p},k_{q}\in D$ with $p\neq q$}\Big\}.
$$
\end{proposition}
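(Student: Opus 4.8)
The plan is to prove the two inclusions separately, using Lemma \ref{Lem3.4} as the main tool. Write $\Gamma'(D)$ for the set on the right-hand side, namely $\Gamma'(D)=\big(\bigcup_{S\in\SS(D)}\Gamma_S\big)\cap\big\{\t\in\R^n_+: 0<t_{k_p}<t_{k_q}e(k_p,k_q)\text{ for all }k_p\ne k_q\text{ in }D\big\}$. For the inclusion $\Gamma(D)\subseteq\Gamma'(D)$, take $\t$ with $D(\t)=D$. By definition $D(\t)=D(S_0)$ where $S_0$ is the marked partition furnished by Lemma \ref{Lem3.4}; since $S_0\in\SS(\t)$ we have $\t\in\Gamma_{S_0}$, and $S_0\in\SS(D)$ because $D(S_0)=D$, so $\t$ lies in the first factor. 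That $\t$ lies in the second factor is exactly property \eqref{Lem3.4a} of $S_0$. This direction is essentially immediate from the lemma.

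For the reverse inclusion $\Gamma'(D)\subseteq\Gamma(D)$, fix $\t\in\Gamma'(D)$, so there is some $S=\{(A_1,k_1),\dots,(A_s,k_s)\}\in\SS(D)$ with $\t\in\Gamma_S$ and with the strict inequalities $t_{k_p}<t_{k_q}e(k_p,k_q)$ holding among the elements of $D$. I must show $D(\t)=D$. Apply Lemma \ref{Lem3.4} to get $S_0$ with $D(\t)=D(S_0)$. By part \eqref{Lem3.4b} of the lemma applied with $T=S$, we get $D(\t)=D(S_0)\subseteq D(S)=D$. For the reverse containment $D\subseteq D(\t)$, I would argue that $S$ itself is a minimal-length element of $\SS(\t)$: indeed, part \eqref{Lem3.4b} applied to any $S_0$ shows $D(S_0)\subseteq D$, while the strict inequalities $t_{k_p}<t_{k_q}e(k_p,k_q)$ prevent any merging of two marked blocks of $S$ (the merging step in the proof of Lemma \ref{Lem3.4}\eqref{Lem3.4a} required the opposite inequality $t_{k_p}\ge e(k_p,k_q)t_{k_q}$), so $|D(S_0)|\ge$ ... — more carefully, I should show directly that $S$ can serve as the $S_0$ of the lemma for this particular $\t$: $S\in\SS(\t)$, $S$ satisfies \eqref{Lem3.4a} by the strict inequality hypothesis, and then the uniqueness-up-to-$D(S)$ content of the lemma gives $D(\t)=D(S)=D$.

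The one point needing care — and the step I expect to be the mild obstacle — is the passage "$S$ satisfies the conclusions of Lemma \ref{Lem3.4}, hence $D(\t)=D(S)$." The lemma asserts existence of \emph{some} $S_0$ with properties \eqref{Lem3.4a}, \eqref{Lem3.4b}, and defines $D(\t):=D(S_0)$; for the definition to be unambiguous one needs that any two such $S_0$ have the same dotted set, which is forced by \eqref{Lem3.4b} applied symmetrically. So the clean way to finish is: our $S$ satisfies \eqref{Lem3.4a} (by hypothesis) and \eqref{Lem3.4b} (since for any $T\in\SS(\t)$, applying \eqref{Lem3.4b} of the lemma to the pair $S_0,T$ and then to $S_0,S$ gives $D(S)=D(S_0)\subseteq D(T)$), hence $S$ is a legitimate choice of $S_0$ and $D(\t)=D(S)=D$. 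Assembling the two inclusions yields $\Gamma(D)=\Gamma'(D)$, which is the claim. I would keep the write-up short, citing Lemma \ref{Lem3.4} for all the combinatorial work and only spelling out the symmetry argument that pins down $D(\t)$.
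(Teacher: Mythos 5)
The paper offers no written proof here --- it simply declares the proposition to ``follow directly from Lemma \ref{Lem3.4}'' --- so you are supplying detail the authors omit. Your inclusion $\Gamma(D)\subseteq\Gamma'(D)$ is correct. The reverse inclusion, however, is circular at exactly the point you flag as the ``mild obstacle.'' To certify that your $S$ satisfies property \eqref{Lem3.4b} you write that applying \eqref{Lem3.4b} to the pairs $(S_0,T)$ and $(S_0,S)$ ``gives $D(S)=D(S_0)\subseteq D(T)$.'' But \eqref{Lem3.4b} only yields the containments $D(S_0)\subseteq D(T)$ and $D(S_0)\subseteq D(S)$; the equality $D(S)=D(S_0)$ --- equivalently $D\subseteq D(\t)$ --- is precisely what you are trying to prove, so it cannot be invoked. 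The earlier ``no merging'' remark does not close this either: $S_0$ is not in general obtained from $S$ by merging blocks, so knowing that no two blocks of $S$ can be merged does not by itself bound $|D(S)|$ from above by $|D(S_0)|$.

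The missing step is genuine but short, and the idea you need is to play the strict inequalities of $\Gamma'(D)$ against the dominance inequalities \eqref{3.2rr} for $S_0$. Suppose $k_p\in D(S)\setminus D(S_0)$ and write $S_0=\big\{(B_1,m_1),\dots\big\}$. Then $k_p$ lies in some block $B_j$ whose marked element $m_j$ is different from $k_p$; since $D(S_0)\subseteq D(S)$, we have $m_j=k_q$ for some $q\ne p$. The condition $\t\in\Gamma_{S_0}$, read at the index $i=k_p\in B_j$ with $j=k_p$ in \eqref{3.2rr}, gives $t_{k_p}\le t_{k_q}/e(k_q,k_p)$, i.e.\ $e(k_q,k_p)\,t_{k_p}\le t_{k_q}$, while the hypothesis $\t\in\Gamma'(D)$ applied to the pair $k_q,k_p\in D$ gives $t_{k_q}<e(k_q,k_p)\,t_{k_p}$ --- a contradiction. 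Hence $D(S)\subseteq D(S_0)$ and your argument closes. (Two cosmetic points: membership in the second factor also requires $0<t_{k_p}$, which is not literally part of \eqref{Lem3.4a} but follows from it, since $t_{k_p}=0$ would force $t_{k_q}<0$ for $q\ne p$; and when $|D|=1$ the second factor is vacuous, so nothing needs checking there.)
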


The sets $\Gamma(D)$ are cones but are not necessarily convex.  If $S=\big\{(A_{1},k_{1}), \ldots, (A_{s},k_{s})\big\}$ and $D=\{k_{1}, \ldots,k_{s}\}$ let $F_{S}=\Gamma_{S}\cap\Gamma(\bfE)$ and  $F(D)=\Gamma(D)\cap \Gamma(\bfE)$ so that
\bea\label{3.6*}
F_{S}&=\Big\{\t\in\R_{+}^{n}:\text{$t_{j}=\frac{t_{k_{p}}}{e(k_{p},j)}$ if $j\in A_{r}$ and $t_{k_{p}}\leq \tau_{S}(k_{p},k_{q})t_{k_{q}}$ for $1\leq p,q\leq s$}\Big\},\\
F(D)&=
\Big(\bigcup\nolimits_{S\in \SS(D)}F_S\Big)\cap\left\{\t\in \R_{+}^{n}:t_{k_{p}}<e(k_{p},k_{q})t_{k_{q}} \text{ for all\ } k_{p}\neq k_{q}\in D\right\}.
\eea
In particular if $S_{p}$ is the principal marked partition from \eqref{3.4qq} then $F_{S_{\rm p}}=\Gamma(\bfE)$.  The sets $\big\{F(D):D\subseteq\{1, \ldots, n\}\big\}$ are disjoint by Proposition \ref{Prop3.5}, and partition the cone $\Gamma(\bfE)$. More precisely, if $D=\{1,\dots,n\}$, $F(D)=\INT\big(\Gamma(\bfE)\big)$, and the faces $F(D)$ with $D\subsetneqq\{1,\dots,n\}$ partition $\de\Gamma(\bfE)$, with $F(\emptyset)=\{0\}$.\footnote{It can be proved that each set $F_{S}$ is a full {\it face} of  $\Gamma(\bfE)$, but we do not prove this since it is not needed. If $n\ge3$ not all faces of $\Gamma(\bfE)$ are included.}

\subsection{ Multi-norm dyadic decomposition of $\R_{\xib}^{d}$}\label{Sec3.4}\quad
\smallskip

If $L=(\ell_{1}, \ldots, \ell_{n})\in \N^{n}$ and $D\subseteq \{1, \ldots, n\}$ let 
\bea
Q_{L}&=\Big\{\s=(s_{1}, \ldots, s_{n})\in\R^{n}:\ell_{j}-\frac{1}{2}\leq s_{j}\leq \ell_{j}+\frac{1}{2}\Big\}&&\text{ and }&
\QQ_{D}&=\Big\{Q_{L}:L\in \Gamma(D)\Big\}.
\eea 
The sets $Q_{L}$ are cubes of unit size with disjoint interiors and are entirely contained in $\R_{+}^{n}$ if all $\ell_{j}>0$.
Since the sets $\big\{\Gamma(D):D\subseteq\{1, \ldots, n\}\big\}$ partition $\R_{+}^{n}$, the sets $\big\{\QQ_{D}:D\subseteq\{1, \ldots, n\}\big\}$ partition the set of cubes $\{Q_{L}:L\in\N^{n}\}$.  Let $\{\e_{1}, \ldots, \e_{n}\}$ be the standard basis of $\R^{n}$, and let  $D\subseteq\{1, \ldots, n\}$ be non-empty.
The set $Q_{L}\in\QQ_{D}$ is {\it maximal in $\QQ_{D}$} if $Q_{L+\e_{j}}\notin \QQ_{D}$ for every $j\not\in D$.  Note that if $D=\{1, \ldots, n\}$ every $Q_{L}\in \QQ_{D}$ is automatically maximal in $\QQ_{D}$ and $L\in \INT\Gamma(\bfE)$. Also if $D=\emptyset$ then $\QQ(\emptyset)$ contains exactly the unit cube centered at the origin, which is obviously  maximal in $\QQ_{\emptyset}$. For $\emptyset\neq D\subsetneqq \{1, \ldots, n\}$ the situation is more complicated.

\begin{lemma}\label{Lem3.7xyz}
Let $\emptyset\neq D\subsetneqq \{1, \ldots, n\}$, $L\in\Gamma(D)$ and $S=\big\{(A_{1},k_{1}), \ldots, (A_{s},k_{s})\big\}$ be a marked partition with $D(S)=D$ and $L\in\Gamma(D)\cap\Gamma_S$.  Let $\lfloor x\rfloor$ denote the largest integer less than or equal to $x$. With $\tau_S(k_{p},k_{q})$  defined in \eqref{3.3rr}, if $L\in \Gamma_S$ then $Q_{L}$ is maximal in $\QQ_{D}$ if and only if
\bea\label{3.6}
\ell_{i}&=\left\lfloor\frac{\ell_{k_{r}}}{e(k_{r},i)}\right\rfloor&&\text{for all $i\in A_{r}$}&&\text{and}&
\ell_{k_{p}}&< \tau_S(k_{p},k_{q})\ell_{k_{q}}&&\text{for all $1\leq p\neq q\leq s$}.
\eea
\end{lemma}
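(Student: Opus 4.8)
The plan is to unwind the definitions and track how the dominance structure changes when one increments a coordinate $\ell_j$ with $j\notin D$. Recall that $Q_L$ is maximal in $\QQ_D$ precisely when $L+\e_j\notin\Gamma(D)$ for every $j\notin D$, and that $\Gamma(D)$ can be used in either of two equivalent forms: as $\{\t\neq\0:D(\t)=D\}$, or, combining Propositions \ref{Prop3.2} and \ref{Prop3.5}, as $\bigl(\bigcup_{S'\in\SS(D)}\Gamma_{S'}\bigr)$ intersected with the open region $\{0<t_{k_p}<e(k_p,k_q)t_{k_q}\text{ for all }p\neq q\}$. Since $L\in\Gamma_S$, Proposition \ref{Prop3.2} already gives $\ell_i\le \ell_{k_r}e(k_r,i)^{-1}$, hence $\ell_i\le\lfloor \ell_{k_r}e(k_r,i)^{-1}\rfloor$, for $i\in A_r$, and $\ell_{k_p}\le\tau_S(k_p,k_q)\ell_{k_q}$ for all $p\neq q$; thus \eqref{3.6} is exactly the assertion that the first family of inequalities holds with equality and the second strictly.

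For the implication from \eqref{3.6} to maximality, fix $j\notin D$, so that $j\in A_r\setminus\{k_r\}$ for some $r$. The floor equality yields $\ell_j+1>\ell_{k_r}e(k_r,j)^{-1}$, while the defining inequalities of $\Gamma_S$, written as in \eqref{3.2rr} with the index taken to be $i=j\in A_r$, give $\ell_m e(m,j)^{-1}\le \ell_{k_r}e(k_r,j)^{-1}<\ell_j+1$ for every $m$. Hence at the point $L+\e_j$ the coordinate $t_j$ is strictly dominant in $\widehat n_j$. By the dominance principle (Lemma \ref{Lem3.1}) a strictly dominant coordinate must be the marked entry of its block in any marked partition to which the point belongs, so $j\in D(L+\e_j)$; since $j\notin D$ this forces $L+\e_j\notin\Gamma(D)$, and $Q_L$ is maximal.

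For the converse I argue by contraposition: assuming \eqref{3.6} fails I produce some $j\notin D$ with $L+\e_j\in\Gamma(D)$. If a floor equality fails, say $\ell_i<\lfloor \ell_{k_r}e(k_r,i)^{-1}\rfloor$ with $i\in A_r\setminus\{k_r\}$, then $\ell_i+1\le \ell_{k_r}e(k_r,i)^{-1}$, and inspection of \eqref{3.3} shows $L+\e_i\in\Gamma_S$ (only the $i$-th inequality was affected and it still holds, while the $\tau$-inequalities involve only the untouched $k$-coordinates); the open conditions on the $k$-coordinates are unchanged, so Proposition \ref{Prop3.5} gives $L+\e_i\in\Gamma(D)$ with $i\notin D$. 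In the remaining case all floor equalities hold but $\ell_{k_p}=\tau_S(k_p,k_q)\ell_{k_q}$ for some $p\neq q$. Here I let $\ell^*\in A_q$ realize the minimum defining $\tau_S(k_p,k_q)$ in \eqref{3.3rr}; then $\ell^*\neq k_q$, for otherwise $\ell_{k_p}=e(k_p,k_q)\ell_{k_q}$ would contradict $L\in\Gamma(D)$, so $\ell^*\notin D$, and the equality forces $\ell_{k_p}e(k_p,\ell^*)^{-1}=\ell_{k_q}e(k_q,\ell^*)^{-1}=\widehat n_{\ell^*}(L)$, i.e. $t_{k_p}$ is also dominant in $\widehat n_{\ell^*}$. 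The plan is to exhibit a marked partition $S'\in\SS(D)$, obtained from $S$ by transferring $\ell^*$ (and, if necessary, a whole sub-block) into the block of $k_p$, such that $L+\e_{\ell^*}$ still lies in $\Gamma_{S'}$ and the relevant open $k$-conditions still hold; the verification of the dominance and $\tau$-inequalities of $S'$ at $L+\e_{\ell^*}$ uses the above equality together with the submultiplicativity of the entries of $\bfE$ in \eqref{2.3}.

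The step I expect to be the main obstacle is precisely this last case: when $L$ lies on a ``seam'' $\ell_{k_p}=\tau_S(k_p,k_q)\ell_{k_q}$, one must identify the correct coordinate to increment and the correct marked partition in $\SS(D)$ from which to view the incremented point, and at the same time keep control of the several floor operations so that the new index point still respects the lattice constraints built into $\QQ_D$. By contrast the floor-equality case of the converse and the implication from \eqref{3.6} to maximality are routine once the dominance dictionary provided by Lemma \ref{Lem3.1} and Propositions \ref{Prop3.2} and \ref{Prop3.5} is in place.
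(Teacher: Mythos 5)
Your proof of the implication from \eqref{3.6} to maximality, and of the ``floor equality fails'' half of the converse, is correct and is considerably more explicit than the paper's own argument (which simply asserts that \eqref{3.6} is necessary in general and sufficient when $S$ is the unique element of $\SS(D)$ whose cone contains $L$, and then only checks that the conditions do not depend on the choice of $S$). One small imprecision: the fact that a strictly dominant coordinate must be the marked entry of its block is not literally what Lemma \ref{Lem3.1} states, but it follows in one line from \eqref{3.2rr} with $i=j$, so this is harmless.

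The genuine gap is exactly the seam case you flagged, and it cannot be closed, because the assertion to be proved there is false. Notice that your forward argument uses \emph{only} the floor equalities: for each $j\notin D$ it makes $t_j$ strictly dominant in $\widehat n_j$ at $L+\e_j$, forcing $j$ to be marked in any cone containing $L+\e_j$ and hence $L+\e_j\notin\Gamma(D)$. So once all floor equalities hold, $Q_L$ is maximal whether or not the $\tau$-inequalities are strict; in particular your plan of incrementing the minimizer $\ell^*$ is doomed, since $\ell_{\ell^*}+1>\ell_{k_q}e(k_q,\ell^*)^{-1}=\ell_{k_p}e(k_p,\ell^*)^{-1}$ makes $t_{\ell^*}$ strictly dominant in $\widehat n_{\ell^*}$ at $L+\e_{\ell^*}$, which expels that point from every $\Gamma_{S'}$ in which $\ell^*$ is unmarked. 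The seam case does occur at lattice points: take $n=3$, $d_i=1$, $e(1,2)=2$, $e(2,1)=1$, $e(1,3)=3$, $e(3,1)=9/10$, $e(2,3)=2$, $e(3,2)=1$ (a standard matrix with all triangle inequalities strict), $S=\{\overset{.}{1},2\}\{\overset{.}{3}\}$ and $L=(4,2,2)$. Then $L\in\Gamma(\{1,3\})\cap\Gamma_S$, all floor equalities hold, and $\tau_S(3,1)=\min\{e(3,1),e(3,2)/e(1,2)\}=1/2$, so $\ell_3=\tau_S(3,1)\ell_1$ with equality; yet $Q_L$ is maximal, because $\widehat n_2(4,3,2)=3$ is attained only by $t_2$, so $L+\e_2\notin\Gamma(\{1,3\})$. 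The correct characterization is therefore ``$Q_L$ is maximal in $\QQ_D$ if and only if $\ell_i=\lfloor \ell_{k_r}e(k_r,i)^{-1}\rfloor$ for all $i\in A_r$ and all $r$,'' which is precisely what your two completed cases prove; the strict $\tau$-inequalities are not implied by maximality. Your instinct that the difficulty lies in the seam case was right, but the resolution is not a cleverer choice of $S'$: the ``only if'' direction of the stated lemma fails there, a point that the paper's one-line assertion of necessity also glosses over.
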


\begin{proof}The conditions \eqref{3.6} are necessary in general and also sufficient is there is no other $S'$ such that $D(S')=D$ and $L\in\Gamma(D)\cap\Gamma_{S'}$. Assuming instead that this occurs with $S'=\big\{(B_{1},k_{1}), \ldots, (B_{s},k_{s})\big\}$, suppose $i\in B_{q}\cap A_{r}$.  Then both $\ell_{k_{r}}$ and $\ell_{k_{q}}$ are dominant in $\widehat n_{i}(\ell_{1}, \ldots, \ell_{n})$ and it follows from \eqref{2.5aa} that $e(k_{r},i)^{-1}\ell_{k_{r}}=e(k_{q},i)^{-1}\ell_{k_{q}}$. So the conditions \eqref{3.6} are the same for $S$ and $S'$.
\end{proof}

\medskip
Let $D\subseteq\{1, \ldots, n\}$ and put
\bea\label{3.16rr}
\LL(D)&=\Big\{L\in \N^{n}:\text{$Q_{L}$ is maximal in $\QQ_{D}$}\Big\}&&\text{ and }&
\LL&=\bigcup\nolimits_{D\subseteq\{1, \ldots, n\}}\LL(D)\subset\N^{n}.
\eea
Since $\{\QQ_{D}\}$ partitions the set of  cubes, $\{\LL(D):D\subseteq\{1, \ldots, n\}\big\}$ is a partition of the index set $\LL$.  If $L\in \LL$ let $D_{L}\subseteq\{1, \ldots, n\}$ be the unique subset such that $L\in \Gamma(D_{L})$. We enlarge each cube which is maximal in $\QQ_{D}$ to a larger \textit{tube} in $\R_{+}^{n}$. If $L=(\ell_{1}, \ldots, \ell_{n})\in \LL$ put
\bea\label{3.8}
\TT_{L}&=
\Big\{L'=(\ell_{1}', \ldots, \ell_{n}')\in \N^{n}:\text{$\ell_{j}'=\ell_{j}$ if $j\in D_{L}$ and $0\leq \ell_{j}'
\leq \ell_{j}$ if $j \notin D_{L}$}\Big\}&&\text{and}\\
T_{L}&=
\Bigg\{\t\in \R_{+}^{n}:
\begin{cases}
\ell_{j}-\frac{1}{2}\leq t_{j}\leq \ell_{j}+\frac{1}{2}&\text{if $j\in D_{L}$}\\
0\leq t_{j}\leq \ell_{j}+\frac{1}{2}&\text{if $j\notin D_{L}$}
\end{cases}\Bigg\}
=\bigcup\nolimits_{L'\in \TT_{L}}Q_{L'}
\eea
Thus if $D_{L}=\{1, \ldots, n\}$ or $D_{L}=\emptyset$ then $T_{L}=Q_{L}$ is a cube and $\TT_{L}=L$. If $\emptyset\neq D\subsetneq\{1, \ldots, n\}$ then $T_{L}$ is a tube of side length $1$ in the directions of $\e_{j}$ if $j\in D_{L}$ and length $\ell_{j}+\frac{1}{2}$ in the direction of $\e_{j}$ if $j\notin D_{L}$. The following lemma says that the sets $\{\TT_{L}:L\in \LL\}$ are a partition of $\N^{n}$ and the tubes $\{T_{L}:L\in \LL\}$ cover $\R_{+}^{n}$ with disjoint interiors.

\begin{lemma}\label{Lem3.7}\quad

\begin{enumerate}[\rm(a)]
\item \label{Lem3.7a}
If $Q_{M}\in \QQ_{D}$ there is a unique maximal element $Q_{L}\in \QQ_{D}$ with $Q_{M}\subseteq T_{L}$.

\smallskip

\item \label{Lem3.7c}
$\displaystyle \R_{+}^{n}= \bigcup\nolimits_{D\subseteq\{1, \ldots, n\}}\Big(\bigcup\nolimits_{L\in \LL(D)}T_{L}\Big)$.

\smallskip

\item \label{Lem3.7d}
If $L_{1}\neq L_{2}$ are in $\LL$, then $T_{L_{1}}\cap T_{L_{2}}$ is closed with measure zero.

\end{enumerate}

\end{lemma}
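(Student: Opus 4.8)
The plan is to deduce all three parts from the definition of maximality in $\QQ_D$ together with the partition of $\R^n_+$ into the cones $\Gamma(D)$ (Proposition \ref{Prop3.5}) and, crucially, the explicit description of maximal cubes in Lemma \ref{Lem3.7xyz}. The key structural observation is that, given any $L\in\Gamma(D)$, if we replace the non-dotted coordinates $\ell_j$ ($j\notin D$) by $\big\lfloor \ell_{k_r}/e(k_r,j)\big\rfloor$ for the appropriate $r$ with $j\in A_r$ (where $S=\{(A_1,k_1),\dots,(A_s,k_s)\}$ is any marked partition with $D(S)=D$ and $L\in\Gamma_S$), while keeping the dotted coordinates fixed, we obtain a point $L^{\sharp}$ which is again in $\Gamma(D)\cap\Gamma_S$ and whose cube $Q_{L^{\sharp}}$ is maximal in $\QQ_D$ by the criterion \eqref{3.6}. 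Moreover $\ell^{\sharp}_j\le\ell_j$ for $j\notin D$ and $\ell^{\sharp}_j=\ell_j$ for $j\in D$, so $Q_M\subseteq T_{L^{\sharp}}$ in the notation of \eqref{3.8}. (Here I use that the dotted-coordinate constraints $\ell_{k_p}<\tau_S(k_p,k_q)\ell_{k_q}$ from \eqref{3.6} are implied by $L\in\Gamma(D)$ via the defining inequalities of $\Gamma(D)$ in Proposition \ref{Prop3.5}, and are not disturbed by changing the non-dotted coordinates.)

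For part \eqref{Lem3.7a}: existence of a maximal $Q_L\in\QQ_D$ with $Q_M\subseteq T_L$ is exactly the construction of $L^{\sharp}$ above. For uniqueness, suppose $Q_M\subseteq T_{L_1}\cap T_{L_2}$ with both $L_1,L_2$ maximal in $\QQ_D$. From $M\in\TT_{L_i}$ we get $(\ell_i)_j = m_j$ for $j\in D$; since the dotted coordinates of a maximal cube in $\QQ_D$ already determine all the non-dotted coordinates via the floor formula \eqref{3.6} (and the marked partition relevant to $M$ — or more precisely the formula \eqref{3.6} is independent of which admissible $S$ one picks, as the proof of Lemma \ref{Lem3.7xyz} shows), $L_1$ and $L_2$ share all dotted coordinates and hence all coordinates, so $L_1=L_2$. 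One subtlety to handle carefully is that $M$ may lie on the singular set $\Lambda_{\R^n}$, so there may be several $S$ with $L\in\Gamma_S$; but the last sentence of the proof of Lemma \ref{Lem3.7xyz} shows the floor conditions \eqref{3.6} agree across such $S$, so $L^{\sharp}$ and the uniqueness argument are unaffected.

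Part \eqref{Lem3.7c} is immediate: given $\t\in\R^n_+$, pick the cube $Q_M\ni\t$ (choosing any one if $\t$ is on a boundary), say $Q_M\in\QQ_D$; by part \eqref{Lem3.7a} there is a maximal $Q_L\in\QQ_D$, i.e. $L\in\LL(D)$, with $Q_M\subseteq T_L$, hence $\t\in T_L$. Since the $\QQ_D$ partition the set of all unit cubes $\{Q_M:M\in\N^n\}$, this covers all of $\R^n_+$.

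Part \eqref{Lem3.7d}: each $T_L$ is a product of closed intervals, hence closed, and so is $T_{L_1}\cap T_{L_2}$. For the measure-zero claim, if the interiors of $T_{L_1}$ and $T_{L_2}$ overlapped in positive measure, they would contain a common cube $Q_M$ in their interiors; by part \eqref{Lem3.7a} (applied to the $D$ with $Q_M\in\QQ_D$ — note $L_1,L_2$ must then both be maximal in this same $\QQ_D$ since $Q_M\subseteq T_{L_i}$ forces the dotted set of $L_i$ to be $D_M=D$) uniqueness forces $L_1=L_2$, a contradiction; so the overlap of interiors is empty and $T_{L_1}\cap T_{L_2}$ lies in the union of the boundaries, which has measure zero. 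The main obstacle I anticipate is the bookkeeping around the singular set and the possibility of multiple marked partitions $S$ with $D(S)=D$ and $L\in\Gamma_S$: one must be sure that the floor formula \eqref{3.6}, and therefore the map $L\mapsto L^{\sharp}$ and the dotted-coordinates-determine-everything principle, are genuinely well-defined and $S$-independent — but this is precisely the content of the final computation in the proof of Lemma \ref{Lem3.7xyz} ($e(k_r,i)^{-1}\ell_{k_r}=e(k_q,i)^{-1}\ell_{k_q}$ when $i\in A_r\cap B_q$), so no new ideas are needed beyond careful invocation of that fact.
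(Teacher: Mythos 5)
Your proposal is correct and follows essentially the same route as the paper: existence in part \eqref{Lem3.7a} via replacing the non-dotted coordinates by $\big\lfloor \ell_{k_r}/e(k_r,j)\big\rfloor$, uniqueness via the fact that maximal cubes in $\QQ_D$ are determined by their dotted entries, and parts \eqref{Lem3.7c} and \eqref{Lem3.7d} as formal consequences of \eqref{Lem3.7a}. One slip to fix: the inequality ``$\ell^{\sharp}_j\leq \ell_j$ for $j\notin D$'' is stated backwards --- since $\ell_j\leq \ell_{k_r}/e(k_r,j)$ and $\ell_j\in\N$, one gets $\ell_j\leq\big\lfloor \ell_{k_r}/e(k_r,j)\big\rfloor=\ell^{\sharp}_j$, and it is precisely this direction that yields $Q_M\subseteq T_{L^{\sharp}}$ in view of \eqref{3.8}.
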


\begin{proof} 
Let $D=\{k_{1}, \ldots, k_{s}\}$. If $M=(m_{1}, \ldots, m_{n})$ and $Q_{M}\in \QQ_{D}$ then $M\in \Gamma_S$ for a marked partition $S=\big\{(A_{1},k_{1}), \ldots, (A_{s},k_{s})\big\}$ and from \eqref{3.3} we have
\beas
0&\leq m_{j}\leq \frac{1}{e(k_{r},j)}m_{k_{r}} &&\text{for all $j \in A_{r}$, \quad and \quad}&
0&<m_{k_{p}}\leq\tau_{S}(k_{p}, k_{q})m_{k_{q}}&\text{for all $1\leq p,q \leq s$.}
\eeas
Put
$
\ell_{j}=
\begin{cases}
m_{j}&\text{if $j=k_{r}\in D$}\\
\lfloor e(k_{r},j)^{-1}m_{k_{r}}\rfloor&\text{if $j\in A_{r}$ and $j\neq k_{r}$}
\end{cases}$. 
Then $L$ satisfies the inequalities of \eqref{3.6} and so $Q_{L}\in \QQ_{D}$ and $Q_{L}$ is maximal. Since $m_{j}\leq \ell_{j}$ for all $1 \leq j \leq n$ it follows that $M\in \TT_{L}$ and so $Q_{M}\in T_{L}$. Since cube maximal in $\LL(D)$ are determined by the dotted entries, this proves \eqref{Lem3.7a}. 
If $\t=(t_{1}, \ldots, t_{n})\in \R_{+}^{n}$ then $\t\in Q_{M}$ for some $M\in \N^{n}$, and so \eqref{Lem3.7c} follows from \eqref{Lem3.7a}. Finally if two distinct sets $\TT_{L}$ intersect in a set of positive measure then the intersection contains a cube $Q_{L}$, and so \eqref{Lem3.7d} follows from \eqref{Lem3.7a}.
\end{proof}
\label{LLS}
Although the sets $\LL(D)$ partition $\LL$, we will require a finer partition  indexed by marked partitions  $S\in\SS_{\bfE}^{*}=\SS_\bfE\cup\{\emptyset\}$ rather than by sets of dotted entries. This partition involves a certain, though irrelevant, amount of arbitrariness. We choose pairwise disjoint subsets $\LL_{S}\subseteq \LL(D)$ such that $\LL_\emptyset=\{0\}$, $\LL_S$ contains all  $L\in\LL\cap \INT\Gamma(S)$, and $\LL(D)=\bigcup\nolimits_{D(S)=D}\LL_S$. Thus if $L=(\ell_{1}, \ldots, \ell_{n})\in \LL_{S}$ it follows from \eqref{3.6} that  $\ell_{i}=\left\lfloor\frac{\ell_{k_{r}}}{e(k_{r},i)}\right\rfloor$ for all $i\in A_{r}$ and $\ell_{k_{p}}< \tau_S(k_{p},k_{q})\ell_{k_{q}}$ for all $1\leq p \neq q\leq s$. In addition to the  decomposition in Lemma \ref{Lem3.7} \eqref{Lem3.7c} we also have  the alternative decomposition
\be\label{sum LL_S}
\R_{+}^{n}= \bigcup\nolimits_{S\in\SS_{\bfE}^{*}}\Big(\bigcup\nolimits_{L\in \LL_S}T_{L}\Big).
\ee

This now leads to decompositions of the frequency space $\R^{d}$. With $\pi:\R^{d}\to\R_{+}^{n}$ defined in \eqref{2.10} and setting $B_L=\pi^{-1}(T_L)$ for $L\in \LL$, we have the decompositions  $ \R^{d}= \bigcup\nolimits_{D\subseteq\{1, \ldots, n\}}\big(\bigcup\nolimits_{L\in \LL(D)}B_L\big)$ and $\R^{d}= \bigcup\nolimits_{S\in\SS_{\bfE}^{*}}\big(\bigcup\nolimits_{L\in \LL_{S}}B_L\big)$.
The sets $B_L$ are the basic dyadic blocks in $\R^{d}$ for the multi-norm structure generated by $\bfE$.
We prove that the sets $B_L$ are comparable to products of dyadic balls and annuli in coordinate subspaces and $N_i$-norms depending on the marked partition $S$ such that $L\in\LL_S$.
\begin{proposition}\label{Prop3.8.5}
Let $L\in\LL_S$ with $S=\big\{(A_r,k_r):r=1,\dots,s\big\}$. Then there are constants $c,C>0$ depending only on the matrix $\bfE$ such that
\bea\label{3.8.5}
&\prod\nolimits_{r=1}^s\big\{\xib_{A_r}\in\R^{A_r}:N_{k_r}(\xib_{A_r})\le c2^{\ell_{k_r}}\,,\,|\xib_{k_r}|>(c/2)2^{\ell_{k_r}}\big\}\\
&\qquad\subset B_L\subset\prod\nolimits_{r=1}^s\big\{\xib_{A_r}\in\R^{A_r}:N_{k_r}(\xib_{A_r})\le C2^{\ell_{k_r}}\,,\,|\xib_{k_r}|>(C/2)2^{\ell_{k_r}}\big\}\ .
\eea
\end{proposition}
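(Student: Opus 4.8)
The plan is to unwind the definition $B_L=\pi^{-1}(T_L)$ and compare it, coordinate by coordinate, with the defining inequalities of $\widehat E_S$ from Proposition \ref{Prop3.2}. First I would fix $L\in\LL_S$ with $S=\{(A_r,k_r):r=1,\dots,s\}$ and recall from Lemma \ref{Lem3.7xyz} (and the choice of the $\LL_S$ just after it) that the coordinates of $L$ satisfy $\ell_i=\lfloor \ell_{k_r}/e(k_r,i)\rfloor$ for $i\in A_r$ and $\ell_{k_p}<\tau_S(k_p,k_q)\ell_{k_q}$ for $p\ne q$. The point is that $T_L$, which by \eqref{3.8} is the tube $\{\t:\ell_j-\tfrac12\le t_j\le\ell_j+\tfrac12$ if $j\in D_L$, $0\le t_j\le\ell_j+\tfrac12$ if $j\notin D_L\}$, is sandwiched between two translates-and-dilates of the cone $\Gamma_S$: precisely, $T_L$ is contained in $\{\t\in\R_+^n: t_j\le \ell_{k_r}/e(k_r,j)+\tfrac12$ for $j\in A_r\}$, and it contains the corresponding region with $\ell_{k_r}-\tfrac12$ in place of $\ell_{k_r}$ on the marked coordinates together with the full range $0\le t_j$ below on the unmarked ones. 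Applying $\pi^{-1}$ converts the additive shifts by $\pm\tfrac12$ into multiplicative constants $2^{\pm1/2}$, which get absorbed into $c$ and $C$.

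The main work is then to translate the tube inequalities into the product form on the right of \eqref{3.8.5}. For the upper inclusion: if $\xib\in B_L$, write $\xib=(\xib_{A_1},\dots,\xib_{A_s})$; the inequalities $t_j\le \ell_{k_r}/e(k_r,j)+\tfrac12$ for all $j\in A_r$ say exactly that $|\xib_j|\le (C')^{1/e(k_r,j)}\,2^{\ell_{k_r}/e(k_r,j)}$, i.e. $|\xib_j|^{e(k_r,j)}\le C'2^{\ell_{k_r}}$ for each $j\in A_r$, which is $N_{k_r}(\xib_{A_r})\le C2^{\ell_{k_r}}$ after taking the max over $j\in A_r$ (here I use $e(k_r,k_r)=1$). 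For the lower bound on $|\xib_{k_r}|$ one uses that $L\in\LL_S$ forces $\ell_{k_r}\ge1$ whenever $k_r\in D_L\ne\emptyset$, so on $T_L$ the marked coordinate $t_{k_r}$ ranges in $[\ell_{k_r}-\tfrac12,\ell_{k_r}+\tfrac12]$, giving $|\xib_{k_r}|>(C/2)2^{\ell_{k_r}}$ (and also $\le C2^{\ell_{k_r}}$, which is subsumed in the $N_{k_r}$ bound). The condition $\ell_{k_p}<\tau_S(k_p,k_q)\ell_{k_q}$ is automatically consistent with $L$ lying near $\Gamma_S$ and need not be invoked for the inclusions themselves — it only guarantees the product region is nonempty and that the various $\xib_{A_r}$ pieces are genuinely at the stated scales. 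The lower inclusion is the mirror image: one checks that any $\xib$ with $N_{k_r}(\xib_{A_r})\le c2^{\ell_{k_r}}$ and $|\xib_{k_r}|>(c/2)2^{\ell_{k_r}}$ satisfies, after taking $\log_2^+$, the tube inequalities defining $T_L$, provided $c$ is small enough that the $+\tfrac12$ and $-\tfrac12$ slack is respected; here one must check that $\xib\notin B(1)$, which follows since $|\xib_{k_r}|>(c/2)2^{\ell_{k_r}}$ and at least one $\ell_{k_r}\ge1$.

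The one genuinely delicate point, which I expect to be the main obstacle, is the \emph{floor function} $\ell_i=\lfloor\ell_{k_r}/e(k_r,i)\rfloor$ on the unmarked coordinates $i\in A_r$. This means $T_L$ is not literally $\pi$-image-of a dilate of $\Gamma_S$ but is off by $O(1)$ in each unmarked logarithmic coordinate; equivalently $\ell_i$ and $\ell_{k_r}/e(k_r,i)$ differ by less than $1$. One must check that this $O(1)$ discrepancy only affects the constants: since $|\ell_i-\ell_{k_r}/e(k_r,i)|<1$, we have $t_i\le \ell_i+\tfrac12< \ell_{k_r}/e(k_r,i)+\tfrac32$ on $T_L$, so $|\xib_i|^{e(k_r,i)}\le 2^{3e(k_r,i)/2}\,2^{\ell_{k_r}}$, and since $e(k_r,i)$ ranges over a finite set of values determined by $\bfE$, the exponent $2^{3e(k_r,i)/2}$ is bounded by a constant depending only on $\bfE$. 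The lower inclusion requires the symmetric observation that $t_i\ge 0\ge \ell_{k_r}/e(k_r,i)+\log_2 c$ once $c\le 2^{-\ell_{k_r}/e(k_r,i)}$ — but this is not uniform in $\ell_{k_r}$, so instead one argues directly: given $\xib$ in the left-hand product region, set $t_j=\log_2^+|\xib_j|$ and verify membership in $T_L$ using that the product region's constraints plus $\ell_i=\lfloor\ell_{k_r}/e(k_r,i)\rfloor\le \ell_{k_r}/e(k_r,i)$ give $t_i\le \ell_i+\tfrac12$ directly from $|\xib_i|^{e(k_r,i)}\le c2^{\ell_{k_r}}$ after choosing $c$ small. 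Once the bookkeeping with the floors and the $2^{\pm 1/2}$ factors is organized, the two inclusions are routine; the whole argument is essentially a careful rewriting of Proposition \ref{Prop3.2} with integer truncations and fattening by unit cubes.
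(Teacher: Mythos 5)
Your argument is essentially the paper's own proof: both unwind $B_L=\pi^{-1}(T_L)$ into a product of per-block conditions on $|\xib_{k_r}|$ and $|\xib_i|$ for $i\in A_r$, and both use $\ell_i=\lfloor\ell_{k_r}/e(k_r,i)\rfloor$ to convert those conditions into bounds on $N_{k_r}(\xib_{A_r})$, absorbing the $O(1)$ logarithmic discrepancies (the $\pm\tfrac12$ slack and the floor) into the constants $c,C$. Your write-up is more explicit about this bookkeeping than the paper's terse "the first inclusion is proved similarly," but the route and the key observations are identical.
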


\begin{proof}
By \eqref{3.8},
\beas
B_L=\pi^{-1}(T_L)&=\Big\{\xib:\frac1{\sqrt2}2^{\ell_j}\le|\xib_j|\le\sqrt2\,2^{\ell_j}\text{ if }j\in D_L\text{ and }|\xib_j|\le\sqrt2\,2^{\ell_j}\text{ if }j\not\in D_L\Big\}\\
&=\prod\nolimits_{r=1}^s\Big\{\xib_{A_r}:\frac1{\sqrt2}2^{\ell_{k_r}}\le|\xib_{k_r}|\le\sqrt2\,2^{\ell_{k_r}}\ ,\ |\xib_j|\le\sqrt2\,2^{\ell_j}\text{ if }j\ne k_r\Big\}.
\eeas

Given $\xib\in B_L$ and $r\in\{1,\dots,s\}$, we have $N_{k_r}(\xib_{A_r})=\max_{i\in A_r}|\xib_i|^{e(k_r,i)}\le\max_{i\in A_r}2^{e(k_r,i)\ell_i}$. Since $\ell_i=\big\lfloor\frac{\ell_{k_r}}{e(k_r,i)}\big\rfloor$, $N_{k_r}(\xib_{A_r})\le 2^{\ell_{k_r}}$. The other inequality is obvious and this proves the second inclusion. The first inclusion is proved similarly.
\end{proof}

\subsection{Distances between tubes and $\Gamma(\bfE)$}\label{Sec3.3.3}\quad

\smallskip
We use the $\ell^{\infty}$-distance $|\t-\u|=\max_{i=1\dots,n}|t_{i}-u_{i}|$ on $\R_{+}^{n}$. 

\begin{lemma}\label{Lem3.8}
There is a constant $\kappa>0$ depending only on $\mbE$ such that
\begin{enumerate}[\rm(i)]
\item \label{Lem3.8a}for every $L\in\LL$, the distance from $L$ to $\Gamma(\mbE)$ is smaller than $\kappa$; more precisely, if $L\in\Gamma_S$, then the distance from $L$ to $F_S$ is smaller than $\kappa$;
\item \label{Lem3.8b}for every point $\t\in\Gamma(\mbE)$, the distance from $\t$ to $\LL$ is smaller than $\kappa$;
\item \label{Lem3.8c} given $L,L'\in\LL$, the distance from $\bar L=L\wedge L'\in\N^{n}$ to $\LL$ is smaller than $\kappa$;
\item \label{Lem3.8d}if $L,L'\in\LL$ and $T_L\cap T_{L'}\ne\emptyset$, then $|L-L'|\leq \kappa$.
\end{enumerate}
\end{lemma}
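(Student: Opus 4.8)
The plan is to prove all four statements essentially simultaneously, since they all follow from the fact that the cones $\Gamma_S$ and the tubes $T_L$ are built from the same finite collection of linear inequalities with coefficients $e(i,j)$, and all the relevant ``rounding'' operations (taking floors, replacing $\t$ by a nearby lattice point, taking componentwise minima) move points by at most $1/2$ in each coordinate before one passes through these inequalities. I would fix once and for all the finite set of constants $\{e(i,j)\}$ and let $M=\max_{i,j}e(i,j)$, $m=\min_{i,j}e(i,j)$; the eventual $\kappa$ will be an explicit expression in $M,m,n$. Throughout I use that $\Gamma(\mbE)=\bigcup_{S\in\SS_\mbE}F_S$ and $F_S=\Gamma_S\cap\Gamma(\mbE)$, together with the explicit description \eqref{3.3}, \eqref{3.6*} of $\Gamma_S$ and $F_S$ in terms of the marked partition $S$.

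For \eqref{Lem3.8a}: given $L=(\ell_1,\dots,\ell_n)\in\LL$, pick $S=\{(A_1,k_1),\dots,(A_s,k_s)\}$ with $L\in\LL_S\subseteq\Gamma_S$. I would define the point $\t^*\in\R^n_+$ by $t^*_{k_r}=\ell_{k_r}$ for each marked index and $t^*_i=\ell_{k_r}/e(k_r,i)$ for $i\in A_r\setminus\{k_r\}$. By Lemma \ref{Lem3.7xyz} (or the description of $\LL_S$ just before \eqref{sum LL_S}) the unmarked coordinates of $L$ satisfy $\ell_i=\lfloor\ell_{k_r}/e(k_r,i)\rfloor$, so $|t^*_i-\ell_i|<1$; the marked coordinates agree exactly. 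Then I must check $\t^*\in F_S$: the equalities $t^*_j=t^*_{k_r}/e(k_r,j)$ for $j\in A_r$ hold by construction, and the inequalities $t^*_{k_p}\le\tau_S(k_p,k_q)t^*_{k_q}$ hold because $t^*_{k_p}=\ell_{k_p}<\tau_S(k_p,k_q)\ell_{k_q}=\tau_S(k_p,k_q)t^*_{k_q}$ by the defining property of $\LL_S$. Hence $\mathrm{dist}(L,F_S)\le|L-\t^*|_\infty<1\le\kappa$, which also gives $\mathrm{dist}(L,\Gamma(\mbE))<\kappa$.

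For \eqref{Lem3.8b}: given $\t\in\Gamma(\mbE)$, it lies in some $F_S$. Round to a lattice point by setting $L_{k_r}=\lfloor t_{k_r}\rfloor$ or the nearest integer for marked indices, and $\ell_i=\lfloor t_{k_r}/e(k_r,i)\rfloor$ for unmarked $i\in A_r$; this differs from $\t$ by at most a bounded amount in each coordinate once one accounts for the propagation of the rounding of $t_{k_r}$ through division by $e(k_r,i)$, i.e.\ by at most $1/2+1/(2m)$ or so. Then I check that this $L$ lies in $\LL_S$ — it satisfies the floor equalities by construction, and the strict inequalities $\ell_{k_p}<\tau_S(k_p,k_q)\ell_{k_q}$ hold for all sufficiently large $|L|$; near the origin one handles the finitely many small cases by hand or by absorbing them into $\kappa$. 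For \eqref{Lem3.8c}: write $\bar L=L\wedge L'$; I would observe $\bar L$ lies in whichever $\Gamma(D)$ has $D=D_{\bar L}$, apply Lemma \ref{Lem3.7} \eqref{Lem3.7a} to find the maximal cube $Q_{L''}\supseteq Q_{\bar L}$ in $\QQ_{D_{\bar L}}$ with $L''\in\LL$, and note that $0\le L''_j-\bar L_j$ is controlled because the passage from $\bar L$ to $L''$ only adjusts unmarked coordinates by the same floor-of-division operation as above, so $|\bar L-L''|_\infty\le\kappa$. Finally \eqref{Lem3.8d}: if $T_L\cap T_{L'}\ne\emptyset$, pick $\t$ in the intersection; combining \eqref{3.8} with \eqref{Lem3.8a} applied to both $L$ and $L'$, each of $L,L'$ is within $\kappa$ of $\t$ (the tube $T_L$ has width $1$ in marked directions and its unmarked coordinate $\ell_j$ dominates, while the point $\t$ recovers $\ell_{k_r}$ up to $1/2$, hence recovers all coordinates up to $\kappa$ via the floor relations), so $|L-L'|_\infty\le 2\kappa$, and one renames the constant.

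The main obstacle, and the place requiring the most care, is the control of how the rounding of a marked coordinate $t_{k_r}$ propagates to the unmarked coordinates through the relations $\ell_i=\lfloor t_{k_r}/e(k_r,i)\rfloor$: a change of $1/2$ in $t_{k_r}$ produces a change of up to $1/(2e(k_r,i))\le 1/(2m)$ in $t_{k_r}/e(k_r,i)$, and then the floor adds another error up to $1$; iterating or chaining through composite inequalities (using the submultiplicativity $e(j,\ell)\le e(j,k)e(k,\ell)$) one must be sure these errors do not compound without bound. They do not, because each unmarked coordinate is pinned directly to a single marked coordinate via the partition $S$ — there is no chaining — so a single pass of the estimate suffices, and $\kappa$ can be taken to be, say, $1+1/(2m)$ (or a small multiple thereof to cover \eqref{Lem3.8d}). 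The secondary nuisance is the behaviour near the origin where the strict inequalities defining $\LL_S$ versus the non-strict ones defining $F_S$ can fail for the rounded point; this is handled by noting there are only finitely many lattice points with, say, $|L|_\infty\le 2M$, and enlarging $\kappa$ to dominate the diameter of that finite set.
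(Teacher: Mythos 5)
Your part (i) is fine and is essentially a special case of the paper's argument: the witness $\t^*$ with $t^*_i=\ell_{k_r}/e(k_r,i)$ is exactly the point the paper uses, and the floor relations of $\LL_S$ give $|L-\t^*|<1$. (One caveat: the lemma asserts the bound for \emph{every} $S$ with $L\in\Gamma_S$, not only the $S$ with $L\in\LL_S$; for the other $S$ the floor equalities need not hold, and one needs the paper's intermediate device — the relaxed cone $\Gamma_\del=\{\t:\,t_i/e(i,j)\le t_j+\del\ \forall i,j\}$, which contains all of $\LL$ — to see that the same witness still lies within $\del$ of $L$.) The genuine gaps are in (ii)--(iv). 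In (ii), your claim that the strict inequalities $\ell_{k_p}<\tau_S(k_p,k_q)\ell_{k_q}$ can only fail after rounding ``near the origin'' is false: they fail whenever $\t$ lies on or near the face $t_{k_p}=\tau_S(k_p,k_q)t_{k_q}$, which happens for arbitrarily large $\t$, so the ``finitely many exceptions absorbed into $\kappa$'' fallback is unavailable. The correct route (the paper's) is to take the integer point $M$ with $\t\in Q_M$, invoke Lemma \ref{Lem3.7}\eqref{Lem3.7a} to get $L'\in\LL$ with $Q_M\subseteq T_{L'}$, write $M=L'-\sum_{i\notin D_{L'}}p_i\e_i$, and bound the $p_i$ using the inequalities $t_{k_r}\le e(k_r,j)t_j$ that hold because $\t\in\Gamma(\mbE)$. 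In (iii), your assertion that $L''_j-\bar L_j$ ``is controlled by the same floor-of-division operation'' begs the question: a priori $\bar\ell_j=\min(\ell_j,\ell'_j)$ could sit far below $\lfloor\bar\ell_{k_r}/e(k_r,j)\rfloor$. The missing ingredient is that the relaxed cone $\Gamma_\del$ is closed under componentwise minimum (because $\bar\ell_{k_r}\le\ell_{k_r}$ forces $\bar\ell_{k_r}/e(k_r,j)\le\ell_j+\del$ whichever of $\ell_j,\ell'_j$ realizes the minimum), so that $\bar L\in\Gamma_\del$ and one can then apply (i)/(ii).

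Part (iv) is the most seriously under-argued. The tube $T_L$ is thin only in the marked directions; in an unmarked direction $j$ a point $\t\in T_L$ satisfies merely $0\le t_j\le\ell_j+\tfrac12$, so $\t$ does \emph{not} recover $\ell_j$ — it recovers $\ell_j$ only indirectly, through $\ell_{k_r}$ and the floor relation attached to the particular marked partition of $L$. Your parenthetical ``hence recovers all coordinates up to $\kappa$'' therefore silently assumes $D_L=D_{L'}$. When $i\in D_L\setminus D_{L'}$ one must show $\ell'_i$ is not much larger than $\ell_i$ by routing through $\ell'_{k'_q}$, $t_{k'_q}$, and the fact that $L$ is within $\kappa$ of $\Gamma(\mbE)$; and when $i\notin D_L\cup D_{L'}$ the two floor reconstructions of $\ell_i$ and $\ell'_i$ come from possibly different marked representatives, and the paper needs a further reduction (translating both cubes inside their tubes so that one of the offsets $p_i,p'_i$ vanishes) before the previous case applies. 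None of this is present in the proposal, and it is precisely where the content of (iv) lies.
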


\begin{proof}

For $\delta >0$ consider the set
$
\Gamma_\del=\Big\{\t\in\bR^n_+: \frac{t_i}{e(i,j)}\leq t_j+\del\ ,\ \text{ for all } i,j\Big\} .
$
We show that points in $\Gamma_\del$ have uniformly bounded distances from $\Gamma(\mbE)$.
Clearly $\Gamma(\mbE)\subset\Gamma_\del$. Let $\t\in\Gamma_\del\setminus\Gamma(\mbE)$ and fix a marked partition $S=\big\{(A_r,k_r):r=1,\dots,s\big\}$ such that $\t\in\Gamma_S$. Consider the point $\u\in F_S$ with entries
$u_i=t_{k_r}/e(k_r,i)$ if $i\in A_r$. By \eqref{3.3}, $|\u- \t|<\del$. It suffices to prove that, for some $\del>0$, every $L\in\LL$ is in $\Gamma_\del$. This is obvious if $L\in\LL_{S_{\rm p}}$. If $S$ is non-principal, we use \eqref{3.6} to obtain an upper bound for the differences $d_{i,j}=\ell_i/e(i,j)-\ell_j$.
For $i\in A_p$ and $j\in A_q$ we have
$
d_{i,j}\leq \frac{\ell_{k_p}}{e(k_p,i)e(i,j)}-\big(\frac{\ell_{k_q}}{e(k_q,j)}-1\big)
\leq \big(\frac{e(k_p,k_q)}{e(k_p,i)e(i,j)}-\frac1{e(k_q,j)}\big)\ell_{k_q}+1
\leq \frac1{e(i,j)e(k_q,j)} \big(\frac{e(k_p,j)}{e(k_p,i)}-e(i,j)\big)\ell_{k_q}+1
\leq 1\ .
$
It follows that $\LL\subset \Gamma_\del$ with $\del=\max_{i,j}e(i,j)$, and this proves \eqref{Lem3.8a}.
To prove (ii), let $L\in\N^{n}$ such that $\t\in Q_L$. If $L\in\LL$, we are finished. If $L\not\in L$, then there is $L'\in\LL$ such that $Q_M\subset T_L$, as defined in \eqref{3.8}. The same formula easily implies that $L=L'-\sum_{i\not\in D_{L'}}p_i\e_i$ with $p_i\in\bN$. Then (ii) will be proved if we show that, since $Q_L$ intersects $\Gamma(\mbE)$, then $\max_{i\not\in D_{L'}}p_i=|L-L'|$ must have an upper bound independent of $\t$. Write $\t$ as $\t=L+\u=L'-\sum_{i\not\in D_{L'}}p_i\e_i+\u$, with $|u_i|\leq \frac{1}{2}$ for every $i$. Since $\t\in\Gamma(\mbE)$, setting $p_i=0$ for $i\in D_{L'}$, the conditions
$
\ell'_i-p_i+u_i\leq e(i,j)(\ell'_j-p_j+u_j)
$
are satisfied for every $i,j$. If $L'\in\Gamma_S$ with $S=\big\{(A_r,k_r):r=1,\dots,s\big\}$, we take $i=k_r$, $j\in A_r\setminus\{k_r\}$. Then \eqref{3.6} gives
$\ell'_{k_r}+u_{k_r}\leq e(k_r,j)(\ell'_j-p_j+u_j)$,
so that 
$
p_j\leq \ell'_j -\frac{\ell'_{k_r}}{e(k_r,j)}+u_j-\frac{u_{k_r}}{e(k_r,j)}\leq \half\Big(1+\frac1{e(k_r,j)}\Big),
$
and \eqref{Lem3.8b} follows easily.
To prove \eqref{Lem3.8c}, observe that $\Gamma_\del$ is closed under $\wedge$. Then, for $L,L'\in\LL$, also $\bar L\in\Gamma_\del$ and the proof of \eqref{Lem3.8a} shows that $d\big(\bar L,\Gamma(\mbE)\big)\leq C\del$ and the conclusion follows easily. To prove \eqref{Lem3.8d}, consider $L\in\Gamma_S$ and $L'\in\Gamma_{S'}$ such that there exists $\mbt\in T_L\cap T_{L'}$. We let $S=\big\{(A_r,k_r):r=1,\dots,s\big\}$, $S'=\big\{(A'_r,k'_r):r=1,\dots,s'\big\}$., and $D=D_S$, $D'=D_{S'}$. There exist cubes $Q_{L-P}\subset T_L$ and $Q_{L'-P'}\subset T_{L'}$, with
$P=\sum_{i\not\in D}p_i\mbe_i$, $P'=\sum_{i\not\in D'}p'_i\mbe_i$ in $\N^{n}$ such that $\mbt\in Q_{L-P}\cap Q_{L'-P'}$. We may assume that $p_i=0$ for $i\in D$ and $p'_i=0$ for $i\in D'$. We remark that, since $|L-L'|\leq |L-\mbt|+|\mbt-L'|\leq |P|+|P'|+n$, we must find uniform upper bounds for $p_i$ and $p'_i$. This is obvious for $i\in D\cap D'$. Assume now that $i\in D\setminus D'$. Since $p_i=0$ and $p'_i\ge0$, we have $|t_i-\ell_i|\leq \frac{1}{2}$, and $|t_i-\ell'_i+p'_i|\leq \frac{1}{2}$. In particular, $\ell_i\leq \ell'_i+1$. Assume that $i\in A'_r$ and $k'_r\in A_q$. Then, using \eqref{3.6}, $p'_i\leq \ell'_i-t_i+\half \leq \ell'_i-\ell_i+1
\leq \frac{\ell'_{k'_r}}{e(k'_r,i)}-\frac{\ell_{k_q}}{e(k_q,i)}\leq \frac{\ell'_{k'_r}}{e(k'_r,i)}-\frac{e(k_q,k'_r)}{e(k_q,i)}\ell_{k'_r}
\leq \frac{\ell'_{k'_r}-\ell_{k'_r}}{e(k'_r,i)}$. Since $k'_r\in D'$, it follows from the previous remark that $\ell'_{k'_r}\leq \ell_{k'_r}+1$, so that $p'_i\leq 1/e(k'_r,i)$. If $i\in D'\setminus D$ the argument is similar. It remains to consider the case $i\not\in D\cup D'$.
We first observe that, if both $p_i$ and $p'_i$ are strictly positive, say $1\leq p_i\leq p'_i$, then also the translated cubes $Q_{L-P+p_i\mbe_i}$ and $Q_{L'-P'+p_i\mbe_i}$ have nonempty intersection and are contained in $T_L$, $T_{L'}$ respectively. We may then assume that $p_i=0$. Then the previous proof can be repeated.
\end{proof}

\subsection{Multi-norm multipliers and kernels}\label{Sec3.5}\quad

\smallskip

The paper \cite{MR3862599} studies algebras $\PP_{0}(\bfE)$ of singular integral operators associated with a standard matrix~$\bfE$. The sizes of the Schwartz kernels of operators in $\PP_{0}(\bfE)$ are  controlled by the norms $N_{i}$ and the corresponding multipliers in $\MM_{\infty}(\bfE)$ are controlled by the norms $\widehat N_{i}$ defined in \eqref{2.5a}. To give the precise definitions we need the following notation. 
\begin{enumerate}[$\circ$]
\item
$\SS(\R^{d})$ is the space of Schwartz functions and $\SS'(\R^{d})$ is the space of tempered distributions on $\R^{d}$. If $\varphi\in \SS(\R^{d})$ then the quantities 
\bea\label{3.18yy}
\|\varphi\|_{(M)}= \sup\big\{\big\vert\x^{\alphab}\partial^{\betab}\varphi(\x)\big\vert:\text{$|\alphab|+|\betab|\leq M$ and $\x\in \R^{d}$}\big\}
\eea
are the basic semi-norms defining the topology on $\SS(\R^{d})$. 

\smallskip

\item
The families of dilations $\delta_{t}^{i}$ and $\widehat\delta_{t}^{i}$ on $\R^{d}=\bigoplus_{i=1}^{n}\R^{d_{i}}$ were defined in \eqref{2.5a}. The homogeneous dimensions of $\R^{d}$ are given by
\bea\label{3.16ww}
Q_{i}&=\sum\nolimits_{j=1}^{n}q_{j}e(i,j)^{-1}&&\text{for the dilations $\delta_{t}^{i}$, and}\\
\widehat Q_{i}&=\sum\nolimits_{j=1}^{n}q_{j}e(j,i)&&\text{for the dilations $\widehat\delta_{t}^{i}$}.
\eea
For $\alphab=(\alpha_{1}, \ldots, \alpha_{d})\in\bN^d$, set $\alphab^j=(\al_k)_{k\in E_j}\in\N^{d_j}$, so that $\alphab=(\alphab^1,\dots,\alphab^n)$. Then
\bea \label{3.17ww}
\bl\alphab^{j}\br &=\sum\nolimits_{k\in E_j}\la_k\alpha_k &&\text{is the homogeneous length of $\alphab^j$ relative to the dilations \eqref{1.1},} \\
\bl\alphab\br_i&= \sum\nolimits_{j=1}^{n}e(j,i)\bl\alphab^j\br&&\text{is the homogeneous length of $\alpha$ relative to } \widehat\delta^{i}_{r}.
\eea

\item
Given $\cK\in\cS'(\bR^d)$ and $\omega\in\cS(\bR^G)$, denote by $\cK_\omega\in\cS'(\bR^{G^c})$ the distribution such that
$\lan \cK_\omega,\ph\ran=\lan \cK, \omega\otimes\ph\ran$ for all $\ph\in\cS(\bR^{G^c})$. With an abuse of language we  write $\cK_\omega=\int_{\bR^G}\cK(\x _G,\cdot)\omega(\x _G)\,d\x _G$.

\smallskip

\item
If $f:\R^{d}\to\C$, the Fourier transform is denoted by $\FF[f]$ or $\widehat f$.

\end{enumerate}
The spaces of multi-norm kernels and multipliers are defined as follows. The symbol $\|\ \|$ will be used to denote any (homogeneous) norm on the appropriate space, whose choice is irrelevant.

\begin{definition}\label{Def3.1}\quad

\begin{enumerate}[A.]
\item
The space of \textit{multi-norm kernels} $\PP_{0}(\bfE)$ is the set $\KK\in\SS'(\R^{d})$ satisfying the following conditions. 
\begin{enumerate}[i)]
\item On $\R^{d}\setminus\{\0\}$ $\KK$ is given by integration against a function $K\in \CC^{\infty}(\R^{d}\setminus\{\0\})$, and
for $\alphab\in\bN^d$ and $N\in\bN$ there is a constant $C_{\alphab,N}$ so that for all $\x\neq \0$
\be\label{3.9}
\big|\de^\alphab K(\x )\big|\leq C_{\alphab,N}\Big(\prod\nolimits_{i=1}^nN_i(\x )^{-q_i-\bl\alphab_i\br}\Big)\big(1+\|\x \|\big)^{-N}\ .
\ee

\item
For every $F\subseteq\{1,\dots,n\}$, $R\in\bR^F_+$, and $\eta\in C^\infty(\bR^F)$ compactly supported on the unit ball $B_{F}(1)$ with $\|\eta\|_{\CC^n}\le1$, the distribution $\cK_{R,\eta}=\int_{\bR^F}\cK(\x _F,\cdot)\eta(R\x _F)\,d\x _F\in\cS'(\bR^{F^c})$ is given on $\bR^{F^c}\setminus\{0\}$ by a smooth function $K_{R,\eta}(\xib_{F^c})$ satisfying inequalities analogous to \eqref{3.9}; precisely,
\bea\label{3.9.5}
\big|\de^\al K_{R,\eta}(\x _{F^c})\big|\leq C'_{\alphab,N}\Big(\prod\nolimits_{i\in F^c}N_i(\x _{F^c})^{-q_i-\bl\alphab_i\br}\Big)\big(1+\|\x _{F^c}\|\big)^{-N}
\eea
for every $\al,N$, with constants $C'_{\al,N}$ independent of $R,\eta$.
\end{enumerate}
\item
The space of \textit{multi-norm multipliers} $\MM_{\infty}(\bfE)$ \ is the set of $m\in\CC^{\infty}(\R^{d})$ such that every $\alphab\in\bN^d$ there is a constant $C_{\alphab}$ so that
\bea\label{3.10}
\big|\de^{\alphab} m(\xib)\big|\leq C_{\mathbf\alphab}\prod\nolimits_{i=1}^{n}\big(1+\widehat N_{i}(\xib)\big)^{-\bl\alphab^i\br}.
\eea 
\end{enumerate}

\noindent We refer to the constants $C_{\alphab,N}$ in \eqref{3.9} and $C'_{\al,N}$ in \eqref{3.9.5} as the {\rm kernel constants} of $\KK$ and to the constants $C_{\alphab}$ in \eqref{3.10} as the {\rm multiplier constants} of $m$.
\end{definition}

\goodbreak

\begin{theorem}\label{Thm3.9}
If $\bfE$ is a standard matrix and $m\in \CC^{\infty}(\R^{d})$, the following properties are equivalent. 
\begin{enumerate}[\rm(1)]
\item $m\in \MM_{\infty}(\bfE)$.

\smallskip

\item $m=\FF\KK$ with $\KK\in \PP_{0}(\bfE)$.
\smallskip

\item
There exists $\big\{m_{L}:L\in\LL\big\}\subseteq\SS(\R^{d})$, uniformly bounded in every Schwartz norm, so that
$ m(\xib)=\sum\nolimits_{L\in\LL}m_L(2^{-L}\xib)$ and, if $L\in\LL_S$ and $D_S=\{k_1,\ldots,k_{s}\}$, then $m_L(\xib)=0$ on the subspace $V_{k_r}=\{\xib:\xib_{k_r}=\0\}$ for $1 \leq r\leq s$.

\smallskip

\item \label{Thm3.9.(4)}
There exist $\big\{\eta_{L}:L\in\LL\big\}\subseteq\SS(\R^{d})$, uniformly bounded in every Schwartz norm, so that $ \cK=\sum\nolimits_{L\in\LL}\eta_L^{(-L)}$  \footnote{The notation $f^{(-L)}$ is defined in \eqref{2.12}.} with the sum converging in the sense of distributions and, if $L\in\LL_S$ with $S=\big\{(A_r,k_r)\,,\, r=1,\dots,s\big\}$, then $\int_{\bR^{k_r}} \eta_L(\x )\,d\x _{k_r}=0$ for every $r=1,\dots,s$.
\end{enumerate}
\end{theorem}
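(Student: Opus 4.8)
I would prove the four statements equivalent by running the cycle $(1)\Rightarrow(3)\Rightarrow(4)\Rightarrow(2)\Rightarrow(1)$. The equivalence of $(1)$ and $(2)$ and the existence of \emph{some} dyadic decomposition of a multiplier in $\MM_\infty(\bfE)$ are essentially Theorem-level material in \cite{MR3862599}; the genuinely new content here is that the decomposition can be organized by the sets $\LL_S$ in such a way that the piece indexed by $L\in\LL_S$ is spectrally localized to $B_L$ \emph{and} vanishes on the coordinate subspaces $V_{k_r}$ attached to the marked indices of $S$ (equivalently, its inverse Fourier transform has mean zero in each marked variable). That refinement is read off from the product description of $B_L$ in Proposition \ref{Prop3.8.5}.

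\textbf{$(1)\Rightarrow(3)$.} The first step is to build a smooth partition of unity $\{\Phi_L:L\in\LL\}$ with $\sum_{L\in\LL}\Phi_L\equiv1$, each $\Phi_L$ supported in a fixed dilate of $B_L$, and with $\Phi_L$ of product form $\Phi_L=\prod_{r=1}^s\phi_{L,r}(\xib_{A_r})$ when $L\in\LL_S$, where $\phi_{L,r}$ is an annular cutoff adapted to $\{N_{k_r}(\xib_{A_r})\sim2^{\ell_{k_r}}\}$ that \emph{also} vanishes on a neighborhood of $\{\xib_{k_r}=\0\}$ --- legitimate because, by Proposition \ref{Prop3.8.5}, $|\xib_{k_r}|\gtrsim2^{\ell_{k_r}}$ throughout $B_L$. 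The combinatorial point --- that such product cutoffs can be made to sum to $1$ over $L\in\LL$ while respecting the partition $\LL=\bigcup_S\LL_S$ and the floor relations \eqref{3.6} --- is handled using the tube decomposition \eqref{sum LL_S} together with the uniform distance estimates of Lemma \ref{Lem3.8}. Setting $m_L(\xib)=(\Phi_Lm)(2^L\xib)$ gives $m(\xib)=\sum_{L\in\LL}m_L(2^{-L}\xib)$, a locally finite sum since the dilated blocks have bounded overlap (again Lemma \ref{Lem3.8}), and $m_L$ vanishes near each $V_{k_r}$ by construction. Uniform Schwartz bounds for $m_L$ come from a homogeneity count: on $B_L$ one has $\widehat N_i(\xib)\sim2^{\ell_i}$ --- using the cone condition $\ell_j/e(j,i)\le\ell_{k_r}/e(k_r,i)$ defining $\Gamma_S$ and Proposition \ref{Prop3.8.5} --- so \eqref{3.10} gives $|\partial^{\alphab}m(2^L\xib)|\lesssim\prod_i2^{-\ell_i\bl\alphab^i\br}$, while the chain rule contributes the factor $2^{\sum_j\ell_j\bl\alphab^j\br}$; these cancel because $\sum_j\ell_j\bl\alphab^j\br=\sum_i\ell_i\bl\alphab^i\br$, and $\Phi_L(2^L\cdot)$ is uniformly controlled because the rescaled blocks are comparable to a fixed region, once more by Proposition \ref{Prop3.8.5}.

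\textbf{$(3)\Leftrightarrow(4)$ and $(4)\Rightarrow(2)$.} The first of these is formal: with $\eta_L=\FF^{-1}m_L$ one has $\FF^{-1}[m_L(2^{-L}\cdot)]=\eta_L^{(-L)}$, so $\cK=\FF^{-1}m=\sum_L\eta_L^{(-L)}$ in $\SS'(\R^d)$, $\{\eta_L\}$ is uniformly Schwartz iff $\{m_L\}$ is, and taking the partial Fourier transform of $\eta_L$ in $\x_{k_r}$ and evaluating at $\xib_{k_r}=\0$ identifies $\int_{\R^{d_{k_r}}}\eta_L\,d\x_{k_r}$ with $m_L|_{V_{k_r}}$, so one vanishes iff the other does. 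For $(4)\Rightarrow(2)$ one must verify \eqref{3.9} and \eqref{3.9.5} for $\cK=\sum_{L\in\LL}\eta_L^{(-L)}$, grouping as $\sum_{S\in\SS_{\bfE}^{*}}\sum_{L\in\LL_S}$. For $L\in\LL_S$ the coordinates are linked, $\ell_i=\lfloor\ell_{k_r}/e(k_r,i)\rfloor$ for $i\in A_r$, so the inner sum is effectively over $(\ell_{k_1},\dots,\ell_{k_s})$; at a fixed $\x\ne\0$ the rapid decay of $\eta_L$ controls the terms where some $2^{\ell_{k_r}}N_{k_r}(\x_{A_r})$ is large, and the cancellation $\int\eta_L\,d\x_{k_r}=0$ together with smoothness supplies the gain needed to sum the terms where it is small; each geometric series in $\ell_{k_r}$ produces exactly the factors of \eqref{3.9} indexed by $i\in A_r$, and summing over the finitely many $S$ completes \eqref{3.9}, with \eqref{3.9.5} following the same way after noting that integrating $\eta_L^{(-L)}$ against a bump $\eta(R\x_F)$ either preserves the cancellation in the remaining marked variables or annihilates the term when $F$ meets some $A_r$ at an incompatible scale. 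I expect \emph{this} summation --- matching the output of the geometric series with the mixed product of $N_i$-norms, uniformly in the order of cancellation --- to be the main obstacle, exactly as it is the technical core of the corresponding estimate in \cite{MR3862599}.

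\textbf{$(2)\Rightarrow(1)$.} Given $\cK\in\PP_0(\bfE)$ with associated function $K$, I would decompose $K=\sum_LK\,\Theta_L$ over a smooth partition of unity $\{\Theta_L\}$ of $\R^d\setminus\{\0\}$ adapted to the physical-space blocks $\{|\x_j|\sim2^{-\ell_j}\}$ (plus a compactly supported piece near the origin carrying the cancellation of \eqref{3.9.5}), Fourier transform each piece, and sum: \eqref{3.9} controls $\|\FF[K\Theta_L]\|_\infty$, the conditions \eqref{3.9.5} make $\FF[K\Theta_L]$ negligible outside the dual block $B_L$, and differentiating under the integral sign with \eqref{3.9} yields decay in the $\widehat N_i$; summing over $L$ gives \eqref{3.10}. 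Since this implication and the equivalence $(1)\Leftrightarrow(2)$ are essentially in \cite{MR3862599}, in the write-up one may instead simply invoke them and record only the modifications needed to keep track of the $\LL_S$-organization.
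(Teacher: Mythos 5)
Your proposal is correct in substance and rests on the same two pillars as the paper's proof: the equivalences are ultimately delegated to the theorems of \cite{MR3862599}, and the genuinely new point --- that the pieces can be made to vanish on the individual marked subspaces $V_{k_r}$ rather than merely on the blocks $V_{A_r}=\{\xib_{A_r}=\0\}$ --- is extracted from the fact that $|\xib_{k_r}|\gtrsim 2^{\ell_{k_r}}$ throughout $B_L$ for $L\in\LL_S$ (your appeal to Proposition \ref{Prop3.8.5}; the paper instead points to the tensor-product family $\Psi_L$ of \eqref{3.4ee}, whose factors $\psi_{i,\ell_i}$ with $i\in D_L$ have integral zero, so that $m=\sum_L m\,\widehat{\Psi_L^{(-L)}}$ already realizes $(1)\Rightarrow(3)$ and $(2)\Rightarrow(4)$).

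Where you diverge is in the logical organization, and the paper's choice is worth noting because it saves you your own hardest step. You close the cycle via $(4)\Rightarrow(2)$, which forces you to redo the summation $\sum_{S}\sum_{L\in\LL_S}\eta_L^{(-L)}$ against the mixed product of $N_i$-norms in \eqref{3.9}--\eqref{3.9.5} --- you correctly flag this as the main obstacle, but you only sketch it. The paper sidesteps it entirely: it introduces the \emph{weaker} conditions $(3')$, $(4')$ (vanishing on $V_{A_r}$, resp.\ cancellation in the full block $\x_{A_r}$), observes that $(1)\Leftrightarrow(2)\Leftrightarrow(3')\Leftrightarrow(4')$ is exactly Theorems 4.1, 4.2, 6.13, 6.14 of \cite{MR3862599}, that $(3)\Rightarrow(3')$ and $(4)\Rightarrow(4')$ are trivial (cancellation in a single marked coordinate implies cancellation in the block containing it), and that $(1)\Rightarrow(3)$, $(2)\Rightarrow(4)$ follow from the reproducing family of Section \ref{Sec4}. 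With that bookkeeping, every arrow you would have to estimate is either cited or immediate. If you keep your cycle, you must actually carry out the $(4)\Rightarrow(2)$ summation with the stronger cancellation hypothesis, which is more than the theorem requires; I recommend adopting the paper's intermediate conditions instead.
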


\begin{proof}The proof is essentially contained in \cite{MR3862599}, though the statements are formulated with the following weaker conditions:
\begin{enumerate}[(3')]
\item[($3^{\prime}$)]: same as (3) but with $m_L$ vanishing on the smaller subspaces $V_{A_r}=\{\xib:\xib_{A_r}=\0\}$;

\smallskip

\item[($4^{\prime}$)]: same as (4) but with $\eta_L$ having integral 0 in each block of coordinates $\x_{A_r}$.
\end{enumerate}

The equivalence of (1), (2), (3'), (4') is a summary of Theorems 4.1, 4.2, 6.13, 6.14 in \cite{MR3862599}.
However, we will see in Section \ref{Sec4} below that $m(\xib)=\sum_{L\in\LL}m(\xib)\widehat{\Psi_L^{(-L)}}(\xib)$ where $\Psi_{L}$ is defined in \eqref{3.4ee}. If $L\in \LL(D)$ and $k\in D(S)$ then $\Psi_{L}\in\SS(\R^{d})$ has cancellation of order $m$ in each set of variables $\x_{k}$. This gives the implications (1)$\Rightarrow$(3) and (2)$\Rightarrow$(4).
The implications $(3)\Rightarrow (3')$ and $(4)\Rightarrow (4')$ are obvious. Thus the equivalence of all six conditions is established.
\end{proof} 

Convolutions of local smooth \CZ kernels adapted to different dilations are contained in the classes $\PP_{0}(\bfE)$ for an appropriate matrix $\mbE$. Thus for $1\leq i \leq n$, let $m_{i}\in \CC^{\infty}(\R^{d})$ be a local smooth {\it Mihlin-H\"ormander multiplier} for the family of dilations $\{\widehat\delta_{r}^{i}:r> 0\}$. This means that $\big|\de^\alpha m_i(\xib)\big|\leq C_\alpha\big(1+\widehat N_i(\xib)\big)^{-\bl\alphab\br_i}$ for all $\alpha\in \N^{d}$. Each $m_i$ is the Fourier transforms of a local smooth {\it Calder\'on-Zygmund kernel} $\cK_i$; \textit{i.e.} $\KK_{i}$ is a distribution which, away from the origin, is given by a smooth function $K_{i}$ satisfying
$\big|\de^\alphab K_{i}(\x)\big|\leq C_{\al,N}  \widehat N_i(\x)^{- Q_i-\bl\alpha\br_i}\big(1+\|\x\|\big)^{-N}$ 
together with cancellations in the full set of variables, i.e.,
$ \big|\lan\cK_i,\eta\circ\widehat\del^i_r\ran\big|\leq C$,
 for all $r>0$ and $\eta\in C^\infty_c(\bR^d)$ supported on the unit cube $B$ and with $\|\eta\|_{C^1}\le1$. Then the following follows from \cite[Theorem 10.2]{MR3862599}.

\begin{theorem}\label{Thm3.10}
Suppose that $\mbE$ is a standard matrix. For $1\leq i \leq n$, let $\cK_{i}$ be a local smooth \CZ kernel relative to the dilations $\widehat\delta^{i}_{r}$ given in \eqref{2.5a} and let $m_{i}=\widehat \cK_{i}$ be the corresponding local multiplier. Then $m=\prod_{i=1}^{n}m_{i}$ is in $\MM_\infty(\mbE)$ and the kernel $\cK_{1}*\cdots*\cK_{n}$ belongs to  $\PP_{0}(\EEE)$.\end{theorem}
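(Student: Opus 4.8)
The plan is to reduce Theorem~\ref{Thm3.10} to the characterization of $\MM_\infty(\mbE)$ provided by Theorem~\ref{Thm3.9}, so that the only thing that must actually be checked is that the pointwise product $m=\prod_{i=1}^n m_i$ satisfies the differential inequalities \eqref{3.10}. Once $m\in\MM_\infty(\mbE)$ is established, the equivalence (1)$\Leftrightarrow$(2) in Theorem~\ref{Thm3.9} immediately gives that the convolution kernel $\cK_1*\cdots*\cK_n$, whose Fourier transform is $m$, lies in $\PP_0(\mbE)$. So the real content is the multiplier estimate.

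First I would record the hypothesis in its multiplicative form: for each $i$, $|\de^\alphab m_i(\xib)|\le C_{\alphab}\big(1+\widehat N_i(\xib)\big)^{-\bl\alphab\br_i}$, where $\bl\alphab\br_i=\sum_{j=1}^n e(j,i)\bl\alphab^j\br$. Applying the Leibniz rule to $m=m_1\cdots m_n$, the derivative $\de^\alphab m$ is a finite sum of products $\prod_{i=1}^n \de^{\betab^{(i)}}m_i$ with $\sum_i \betab^{(i)}=\alphab$, so it suffices to bound each such product by the right-hand side of \eqref{3.10}. Using the hypothesis on each factor, such a product is dominated by $C\prod_{i=1}^n\big(1+\widehat N_i(\xib)\big)^{-\bl\betab^{(i)}\br_i}$, and the task becomes the purely combinatorial claim that
\[
\prod_{i=1}^n\big(1+\widehat N_i(\xib)\big)^{-\bl\betab^{(i)}\br_i}\ \le\ C\prod_{i=1}^n\big(1+\widehat N_i(\xib)\big)^{-\bl\alphab^i\br}\ ,
\]
uniformly in $\xib$, whenever $\sum_i\betab^{(i)}=\alphab$. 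Here $\bl\alphab^i\br$ is the homogeneous length of the $i$-th block of $\alphab$ relative to the underlying dilations \eqref{1.1}.

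The main obstacle — and the place where the standard-matrix inequalities \eqref{2.3} actually get used — is this combinatorial comparison of exponents. The key point is the second line of \eqref{2.3}, $e(j,\ell)\le e(j,k)e(k,\ell)$, together with $e(i,i)=1$, which controls how the homogeneous lengths transform between the $n$ dilation families. Writing everything out, $\bl\betab^{(i)}\br_i=\sum_{j=1}^n e(j,i)\bl(\betab^{(i)})^j\br$, so the left-hand exponent of $(1+\widehat N_i(\xib))$ that I want to produce on the $i$-th norm from the contribution of the $i$-th block of $\alphab$ is $\bl\alphab^i\br$; one checks, using $e(i,i)=1$, that the term $j=i$ in $\bl\betab^{(i)}\br_i$ already contributes $\bl(\betab^{(i)})^i\br$, while the cross terms ($j\ne i$) contribute extra nonnegative powers of $(1+\widehat N_i)$ in the numerator after inversion — those must be absorbed by comparing $\widehat N_i$ and $\widehat N_j$. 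Precisely, I would use the elementary inequality $(1+\widehat N_j(\xib))\le C(1+\widehat N_i(\xib))^{?}$ that follows from the definitions $\widehat N_i(\xib)=\max_k|\xib_k|^{1/e(k,i)}$ and the matrix inequalities; more cleanly, one reassembles $\sum_{i,j}e(j,i)\bl(\betab^{(i)})^j\br$ as a double sum and regroups by the index $j$ of the block, obtaining $\sum_j\bl\alphab^j\br\cdot(\text{something}\ge 1)\cdot\log(1+\widehat N_{i(j)})$-type bounds; the upshot, after a short calculation that I will carry out in detail, is exactly \eqref{3.10} with a new constant $C_{\alphab}$ depending only on $\mbE$, the order $|\alphab|$, and the constants $C_{\betab,N}$ for the $m_i$. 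An alternative, arguably cleaner route is to verify condition (4) of Theorem~\ref{Thm3.9} directly for $\cK=\cK_1*\cdots*\cK_n$ by unwinding the one-parameter dyadic decompositions of each $\cK_i$ and regrouping the resulting multi-index sum into the blocks $\LL_S$ exactly as in Section~\ref{Sec2.3}; but since \cite[Theorem~10.2]{MR3862599} already records this, the short path is to invoke it, and I will simply cite that theorem for the statement while giving the multiplier computation above as the self-contained argument. The expected difficulty is entirely bookkeeping with the exponents $\bl\cdot\br_i$ and the inequalities \eqref{2.3}; there is no analytic subtlety beyond the Leibniz rule.
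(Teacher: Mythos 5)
Your proposal is correct, but it takes a genuinely different route from the paper: the paper offers no argument at all for Theorem \ref{Thm3.10}, simply deducing it from \cite[Theorem 10.2]{MR3862599}, whereas you give a self-contained proof of the multiplier estimate and then pass to the kernel statement via Theorem \ref{Thm3.9}(1)$\Leftrightarrow$(2). That reduction is legitimate, and your Leibniz-rule strategy does work. The one step you defer (``a short calculation that I will carry out in detail'') is exactly the heart of the matter, so let me record it cleanly, since your description of it (``extra nonnegative powers of $(1+\widehat N_i)$ in the numerator'') is garbled: after Leibniz you must show
\begin{equation*}
\prod_{i=1}^{n}\big(1+\widehat N_i(\xib)\big)^{-\bl\betab^{(i)}\br_i}
=\prod_{i,j}\big(1+\widehat N_i(\xib)\big)^{-e(j,i)\bl(\betab^{(i)})^j\br}
\le C\prod_{j=1}^{n}\big(1+\widehat N_j(\xib)\big)^{-\bl\alphab^j\br},
\end{equation*}
and since $\bl\alphab^j\br=\sum_i\bl(\betab^{(i)})^j\br$, this follows factor by factor from the single inequality $1+\widehat N_j(\xib)\le 2\big(1+\widehat N_i(\xib)\big)^{e(j,i)}$. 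To see that inequality: if $\widehat N_j(\xib)=|\xib_k|^{1/e(k,j)}$ for some $k$, then $|\xib_k|\le\widehat N_i(\xib)^{e(k,i)}$ gives $\widehat N_j(\xib)\le\widehat N_i(\xib)^{e(k,i)/e(k,j)}\le\widehat N_i(\xib)^{e(j,i)}$ when $\widehat N_i(\xib)\ge1$, using precisely the second line of \eqref{2.3} in the form $e(k,i)\le e(k,j)e(j,i)$; and when $\widehat N_i(\xib)<1$ all $|\xib_k|<1$, hence $\widehat N_j(\xib)<1$ and the bound is trivial. Note this uses only the basic assumptions and not the strict inequality $1<e(j,k)e(k,j)$, consistent with Remark \ref{Rem3.11}. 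What your route buys is independence from the external reference for the multiplier half; what it does not by itself reprove is the equivalence (1)$\Leftrightarrow$(2) of Theorem \ref{Thm3.9}, which is still imported from \cite{MR3862599}, so the argument is self-contained only relative to that theorem. A minor point worth a sentence in a polished write-up is why $\cK_1*\cdots*\cK_n$ is a well-defined tempered distribution with Fourier transform $\prod_i m_i$, so that the injectivity of $\FF$ identifies it with the kernel produced by Theorem \ref{Thm3.9}(2).
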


\begin{remarks}\label{Rem3.11}\quad{\rm
\begin{enumerate}[1.]
\item
The strict inequality $1<e(j,k)e(k,j)$, which is the third requirement in \eqref{2.3} for a standard matrix, is not required either in \cite[Theorem 10.2]{MR3862599} or in Theorem \ref{Thm3.10}, but as explained in \cite[Section 14.4]{MR3862599}, if equality occurs for some $j\ne k$, then the two families of dilations $\{\delta_{r}^{j}\},\{\delta_{r}^{k}\}$ coincide, up to a re-parametrization, and the same holds for the dilations $\{\widehat\delta_{r}^{j}\}$ and $\{\widehat\delta_{r}^{k}\}$. Consequently the norms $N_j$ and $N_k$, as well as $\widehat N_j$ and $\widehat N_k$, are paired by the property of being one a power of the other. At the same time, the two quantities $|\bxi_j|$, $|\bxi_k|$ appear with proportional exponents in each norm $\widehat N_i$, so that the product $\bR^{d_j}\times\bR^{d_k}$ can be regarded as a single factor, reducing by one the dimension of the matrix $\mbE$. Hence the strict inequality is a non-reducibility condition and it will be assumed throughout this paper.

\item
If $\mbE$ is standard it can be proved that $\cM_\infty(\mbE)$ is minimal in the following sense: it is the smallest class of smooth multipliers that is closed under pointwise multiplication and contains the local smooth Mihlin-H\"ormander multipliers for all dilations $\{\widehat\del^i_r\}$, $i=1,\dots,n$.
Theorem 10.2 in \cite{MR3862599} also proves that, given any number of dilations $\widetilde\delta_{r}^\nu$ with exponents $\la(1,\nu),\dots,\la(n,\nu)$, and corresponding smooth local Mihlin-H\"ormander multipliers $m_\nu$, then the product $m=\prod_\nu m_\nu$ belongs to $\cM_\infty(\mbE)$ where $\mbE$ has entries $e(i,j)=\max_{\nu}\frac{\la(i,\nu)}{\la(j,\nu)}$.
This matrix $\mbE$ satisfies the first two conditions in \eqref{2.3} but is not necessarily standard. It may also happen that $\mbE$ is standard but $\cM_\infty(\mbE)$ does not satisfy the minimality condition relative to the dilations $\widetilde\delta_{r}^{\nu}$ defined above. See \cite[Section 10.3]{MR3862599}.
\end{enumerate}}
\end{remarks}

\subsection{The cases $n=2$  and $n=3$}\label{Sec3.7}\quad

\smallskip

The dyadic decomposition $\R_{+}^{n}=\bigcup_{L\in \LL}T_{L}$ and the corresponding Littlewood-Paley decomposition $\R^{d}= B(1)\cup\bigcup_{L\in \LL}\pi^{-1}(T_{L})$ are hard to visualize, and it may help to see what they mean when $n$ is small. 
\smallskip
\subsubsection{Case $n=2$}\quad

Consider $\R^{2}=\R\times\R$ and the standard matrix $\bfE={\left[\begin{matrix}1&a\\b&1\end{matrix}\right]}$ with $ab>1$.\footnote{With 2 factors any pair of distinct dilations can be reduced to the present form. This is no longer the case with $n\ge3$ factors. The assumption $d_1=d_2=1$ is just for notational simplicity and is not essential at all.} In this case $\Gamma(\bfE)=\big\{(t_{1},t_{2})\in \R^{2}_{+}:t_{1}/a\leq t_{2}\leq bt_{1}\big\}$ with dilations $\widehat\delta^{1}_{r}(\xi_{1},\xi_{2})=(r\xi_{1}, r^b\xi_{2})$ and  $\widehat\delta^{2}_{r}(\xi_{1},\xi_{2})=(r^a\xi_{1}, r\xi_{2})$. The relative homogeneous norms are $\widehat N_{1}(\xi_{1},\xi_{2})=\max\big\{|\xi_{1}|, |\xi_{2}|^{\frac{1}{b}}\big\}$ and $\widehat N_{2}(\xi_{1},\xi_{2})=\max\big\{|\xi_{1}|^{1/a}, |\xi_{2}|\big\}$. 

There are three marked partitions of $\{1, 2\}$: $S_p=\{\overset.1\}\{\overset.2\}$ (the principal partition), $S_{1}=\{\overset .1,2\}$, $S_{2}=\{1,\overset . 2\}$. The frequency space $\R^{2}$ is the union of four essentially disjoint regions, $\R^{2} = B(1)\cup \widehat E_{S_{1}} \cup \widehat E_{1}\cup \widehat E_{2}$ where $B(1)=\big\{(\xi_{1},\xi_{2}):|\xi_{1}|\leq 1,\,|\xi_{2}|\leq 1\big\}$ and the other sets are as follows.
\begin{enumerate}[1.]

\item
$\widehat E_{p}$ is the set where $1\leq |\xi_{1}|$ is dominant in $\widehat N_{1}(\xib)$ and $1\leq |\xi_{2}|$ is dominant in $\widehat N_{2}(\xib)$, \textit{i.e.} where $ |\xi_1|^{1/a}\le|\xi_2|\leq |\xi_1|^b$. Here $\widehat N_{1}(\xib)=|\xi_1|$ and $\widehat N_{2}(\xib)=|\xi_{2}|$. This is the ``product'' region.

\smallskip
\item 
$\widehat E_{1}$ is the set where $1\leq |\xi_{1}|$ is dominant in both $\widehat N_{1}(\xib)$ and $\widehat N_{2}(\xib)$; \textit{i.e.} where $|\xi_{1}|\geq |\xi_{2}|^a$  (notice that $|\xi_2|$ can be small). This is the region where $N_1(\xi_1,\xi_2)=\max\{|\xi_1|,|\xi_2|^a\}=|\xi_1|\ge1$.

\smallskip
 
\item 
$\widehat E_{2}$ is the set where $1\leq |\xi_{2}|$ is dominant in both $\widehat N_{1}(\xib)$ and $\widehat N_{2}(\xib)$; \textit{i.e.} where $ |\xib_2|\leq |\xib_1|^b$ and $|\xib_{2}|\geq 1$. This is the region where $N_2(\xi_1,\xi_2)=\max\{|\xi_1|^b,|\xi_2|\}=|\xi_2|\ge1$.
\end{enumerate}

The dyadic decomposition is done accordingly in each of the four regions as shown in  Figure~3.1 in the case $a=1$, $b=2$ .

\begin{center}
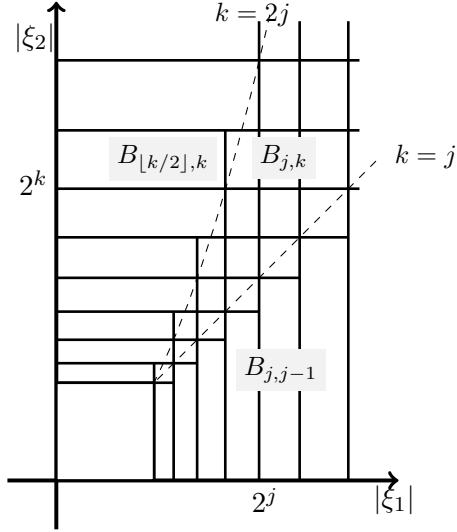
\begin{figure}

\begin{tikzpicture}[scale=1.3] [>=stealth]

\draw[line width=1] (0,1) -- (1.2,1);
\draw[line width=1] (0,1.2) -- (1.44,1.2);
\draw[line width=1] (0,1.44) -- (1.728,1.44);
\draw[line width=1] (0,1.728) -- (2.0736,1.728);
\draw[line width=1] (0,2.0736) -- (2.48832,2.0736);
\draw[line width=1] (0,2.48832) -- (2.985984,2.48832);
\draw[line width=1] (0,2.985984) -- (3.1,2.985984);
\draw[line width=1] (0,3.5831808) -- (3.1,3.5831808);
\draw[line width=1] (0,4.29981696) -- (3.1,4.29981696);

\draw[line width=1] (1,0) -- (1,1.2);
\draw[line width=1] (1.2,0) -- (1.2,1.728);
\draw[line width=1] (1.44,0) -- (1.44,2.48832);
\draw[line width=1] (1.728,0) -- (1.728,3.5831808);
\draw[line width=1] (2.0736,0) -- (2.0736,4.7);
\draw[line width=1] (2.48832,0) -- (2.48832,4.7);
\draw[line width=1] (2.985984,0) -- (2.985984,4.7);

\draw [->] [line width=1.5](-.5,0) -- (3.5,0);
\draw [->] [line width=1.5](0,-.5) -- (0,4.9);

\draw[dashed] (0,0) --(3.3,3.3) ;
\draw[dashed] (0,0) parabola (2.2,4.84);

\put(120,-10){$|\xi_1|$}
\put(-16,165){$|\xi_2|$}
\put(128,120){\small $k=j$}
\put(60,174){\small $k=2j$}
\put(74,-12){$2^j$}
\put(-14,108){$2^k$}
\small
\put(74,120){\colorbox{gray!10}{${B_{j,k}}$}}
\put(68,40){\colorbox{gray!10}{${B_{j,j-1}}$}}
\put(20,120){\colorbox{gray!10}{${B_{\lfloor k/2\rfloor,k}}$}}

\draw[fill=white , line width=1] (1,1) rectangle (0,0);

\end{tikzpicture}

\caption{Dyadic decomposition of the frequency space $\R_{\xib}^{2}$ (case $a=1$, $b=2$).}

\end{figure}

\end{center}

\subsubsection{Case $n=3$}\quad

For the case $n=3$, let $\bfE=\big\{e(j,k)\big\}$ be a standard $3\times 3$ matrix. For simplicity we assume $\bR^{d_i}=\bR$ for $i=1,2,3$. For $1\leq i \leq 3$ we define families of dilations $
\widehat\delta^{i}_{r}(\xi_{1},\xi_{2},\xi_{3})=(r^{e(1,i)}\xi_{1}, r^{e(2,i)}\xi_{2}, r^{e(3,i)}\xi_{3})$ for $1\leq i \leq 3$. The corresponding homogeneous norms are $\widehat N_{i}(\xi_{1},\xi_{2},\xi_{3})=\max_{1\leq j\leq 3}|\xi_{j}|^{\frac{1}{e(j,i)}}$ for $1 \leq i \leq 3$. The set $\SS_3$ of marked partitions with three terms consists of 10 elements. Given a $3\times3$ standard matrix $\bfE$, all marked partitions give a cone $\Gamma_S$ with nonempty interior if and only if the inequalities $e(i,k)\le e(i,j)e(j,k)$ are strict for all triples of distinct $i,j,k$.
Then the frequency space decomposes into at most $11$ non-trivial regions, one of them being $B(1)$ and the others associated to elements of $\SS_{\bfE}$. 

 Besides the principal marked partition $S_{p}=\{\overset.1\}\{\overset.2\}\{\overset.3\}$, we have two other types, those of the form $S=\{\overset.a,b\}\{\overset.c\}$ and $S=\{\overset.a,b,c\}$.
We describe them with $(a,b,c)=(1,2,3)$.
\begin{enumerate}[1.]

\item 
If $S_{p}=\{\overset.1\}\{\overset.2\}\{\overset.3\}$ is the principal partition and $\xib\in \widehat E_{S_p}$ then, for $1\leq i \leq 3$, $|\xi_{i}|$ is dominant in $\widehat N_{i}(\xib)$, hence $\ge1$. This is the ``product'' region and $\widehat E_{S_{p}}$ is defined by the six inequalities $|\xi_i|\le |\xi_j|^\frac1{e(j,i)}$ with $i\ne j$.
\smallskip

\item
If $S=\{\overset.1,2\}\{\overset.3\}$ and $\xib\in \widehat E_{S}$ then $|\xi_{1}|\ge1$ is dominant in $\widehat N_{1}(\xib)$ and $\widehat N_{2}(\xib)$, while $|\xi_{3}|\ge1$ is dominant in $\widehat N_{3}(\xib)$. The cone $\Gamma_{S}$ in $\R^3_\t$ is defined by the inequalities $\frac{e(1,2)}{e(3,2)}t_3\le t_1\le e(1,3)t_3$ and $t_2\le\frac{1}{e(1,2)}t_1$. The set
$\widehat E_S$ is defined by the conditions $|\xi_3|^{\frac{e(1,2)}{e(3,2)}}\le |\xi_1|\le |\xi_3|^{e(1,3)}$ and $N_1(\xi_1,\xi_2)=|\xi_1|$.

\smallskip

\item If $S=\{\overset.1,2,3\}$ and $\xib\in \widehat E_{S}$ then $|\xi_{1}|\ge1$ is dominant in $\widehat N_{i}(\xib)$ for $1 \leq i \leq 3$. The cone $\Gamma_S$ is defined by the inequalities $t_2\le\frac{t_1}{e(1,2)}$ and $t_3\le\frac{t_1}{e(1,3)}$
and $\widehat E_S$ by the single condition $N_1(\xib)=|\xi_1|\ge1$.
\end{enumerate}
\smallskip

 Figure 2 shows a transversal section of $\R^3_+$ with the $\t$-coordinates. The various polygons are sections of the cones $\Gamma_S$, assuming that all inequalities in \eqref{2.3} $e(j,\ell)<e(j,k)e(k,\ell)$ with $j,k,\ell$ all different are strict.

{\begin{center}
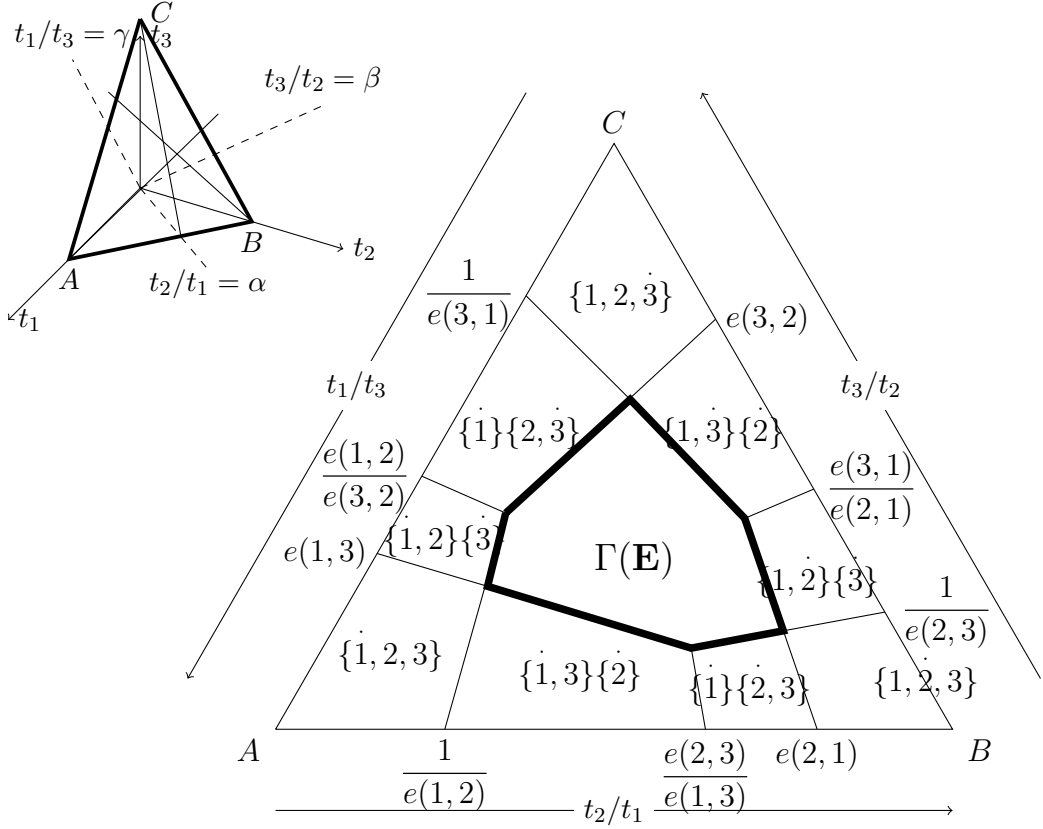
\begin{figure}

\begin{tikzpicture}[scale=.9] [>=stealth]

\draw (0,0)--(10,0)--({10*cos(60)},{10*sin(60)})--(0,0);

\draw (5.238,{5.63*sin(60)})--(3.7,{3.7*tan(60)});
\draw (3.1315,{2.44*sin(60)})--(1.5,{1.5*tan(60)});
\draw (3.4,{3.69*sin(60)})--({6.5},{3.5*tan(60)});
\draw (7.5,{1.68*sin(60)})--(9,{tan(60)});
\draw (3.4,{3.69*sin(60)})--(2.5,0);
\draw (6.93,{3.6*sin(60)})--(8,0);

\draw[line width=0.1cm] (3.4,{3.69*sin(60)})--(5.238,{5.63*sin(60)})--(6.93,{3.6*sin(60)})--(7.5,{1.68*sin(60)})--(6.14,{1.385*sin(60)}) --(3.1315,{2.44*sin(60)}) -- (3.4,{3.69*sin(60)});

\draw (6.14,{1.385*sin(60)})--(6.35,0);
\draw (6.93,{3.6*sin(60)})-- (10-2.05,{2.05*tan(60)});
\draw (3.4,{3.69*sin(60)})--(2.16,{2.16*tan(60)});

\node at (5.3,2.5) {\Large $\Gamma(\mathbf E )$};
\node at (1.7,1.2) {\large $\{\overset\cdot1,2,3\}$};
\node at (9.6,.8) {\large $\{1,\overset\cdot2,3\}$};
\node at (5.1,6.5) {\large $\{1,2,\overset\cdot3\}$};
\node at (2.5,2.9) {\large $\{\overset\cdot1,2\}\{\overset\cdot3\}$};
\node at (3.6,4.5) {\large $\{\overset\cdot1\}\{2,\overset\cdot3\}$};
\node at (6.6,4.5) {\large $\{1,\overset\cdot3\}\{\overset\cdot2\}$};
\node at (8,2.3) {\large $\{1,\overset\cdot2\}\{\overset\cdot3\}$};
\node at (7,.7) {\large $\{\overset\cdot1\}\{\overset\cdot2,3\}$};
\node at (4.5,.9) {\large $\{\overset\cdot1,3\}\{\overset\cdot2\}$};


\node[below] at (-.4,0) {\large$A$};
\node[below] at (10.4,0) {\large$B$};
\node[above] at ({10*cos(60)},{10*sin(60)}) {\large$C$};


\node[below] at (2.5,0) {\large $\displaystyle\frac1{e(1,2)}$} ;
\node[below] at (6.35,0) {\large $\displaystyle\frac{e(2,3)}{e(1,3)}$} ;
\node[below] at (8,0) {\large $e(2,1)$} ;


\node[left] at (3.67,{3.7*tan(60)}) {\large $\displaystyle\frac1{e(3,1)}$} ;
\node[left] at (2.13,{2.16*tan(60)}) {\large $\displaystyle\frac{e(1,2)}{e(3,2)}$} ;
\node[left] at (1.5,{1.5*tan(60)}) {\large $e(1,3)$} ;

 
\node[right] at ({10-.9},{tan(60)}) {\large $\displaystyle\frac1{e(2,3)}$} ;
\node[right] at (10-2.01,{2.05*tan(60)}) {\large $\displaystyle\frac{e(3,1)}{e(2,1)}$} ;
\node[right] at ({10-3.5},{3.5*tan(60)}) {\large $e(3,2)$} ;

\draw [->] (0,-1.2)--(10,-1.2) node [midway,fill=white] {$t_2/t_1$};
\draw [<-] ({-1.5*cos(30)},{1.5*sin(30)})--({10*cos(60)-1.5*cos(30)},{10*sin(60)+1.5*sin(30)}) node [midway,fill=white] {$t_1/t_3$};
\draw [->] ({10+1.5*cos(30)},{1.5*sin(30)})--({10*cos(60)+1.5*cos(30)},{10*sin(60)+1.5*sin(30)}) node [midway,fill=white] {$t_3/t_2$};

\draw [->] (-2,8)--(-3.95,6.05) ;
\draw [->] (-2,8)--(1,7.1) ;
\draw [->] (-2,8)--(-2,10.25) ;
\draw [line width=0.5mm] (-2,10.5)--(-3.05,6.95)--(-.35,7.505)--(-2,10.5);
\draw [dashed] (-2,8)-- (-1,6.8);
\draw (-2,10.5)-- (-1.4,7.3);

\draw [dashed] (-2,8)-- (.7,9.2205);
\draw (-3.05,6.95)-- (-.845,9.0975);

\draw [dashed] (-2,8)-- (-3,9.9);
\draw (-.35,7.505)-- (-2.47,9.42);

\node[right] at (-3.95,6.05) {$t_1$};
\node[right] at (1,7.1) {$t_2$};
\node[right] at (-2,10.25) {$t_3$};
\node[below] at (-1,6.95){$t_2/t_1=\al $};
\node[above] at (.7,9.24){$t_3/t_2=\beta$};
\node[above] at (-3,9.9){$t_1/t_3=\gamma$};
\node[below] at (-3.05,6.95) {$A$};
\node[below] at (-.35,7.505) {$B$};
\node[right] at (-2,10.6) {$C$};
\end{tikzpicture}
\caption{The case $n=3$ (in the $\t$ coordinates). The big triangle on the right gives a projective view of the positive orthant (as its section on the plane $t_1+t_2+t_3=1$) and its subdivision into the cones $\Gamma_S$. All edges belong to lines issued from one of the three vertices $A,B,C$.}
\end{figure}
\end{center}

\section{Multi-norm Calder\'on reproducing formulas}\label{Sec4}

In this section we develop a multi-norm version of the Calder\'on reproducing formula. Working in a vector space $\R^{\nu}$ we use the following notation and terminology.  

\begin {enumerate}[\rm(i)]
\item Let $r\x= (r^{\lambda_{1}}x_1,\dots,r^{\lambda_{\nu}}x_{\nu})$ be a one-parameter family of dilations so that $\R^{\nu}$ has homogeneous dimension $Q=\sum_{i=1}^{\nu}\lambda_{i}$. If $\psi:\R^{\nu}\to\C$ and $p\in \R$ then $\psi^{(p)}(\x)=2^{-pQ}\psi\big(2^{-p}\x\big)$.

\smallskip

\item   The Schwartz norms $\|f\|_{(M)}$ were defined in \eqref{3.18yy}. Let $\FF=\{f_{\sigma}:\sigma\in\Sigma\big\}\subseteq\SS(\R^{\nu})$ be a family of Schwartz functions such that $\|f_{\sigma}\|_{(M)}\leq B_{M}$ for all $\sigma\in \Sigma$.  Then a second family $\GG=\{g_{\sigma'}:\sigma\in\Sigma'\big\}\subseteq\SS(\R^{d})$  is  \textit{normalized relative to $f$} if for every $M$ there exists $N$ so that $\|g_{\sigma'}\|_{(M)}\leq C_MB_{N}$ for all $\sigma'\in \Sigma'$.  ({\it Cf}. Definition 4.1 in \cite{MR2949616}.)

\smallskip

\item  A function $\psi:\R^{\nu}\to\C$ has {\it cancellation of order $m$}
if it has vanishing moments up to order $m-1$; \textit{i.e.} $\int_{\R^{\nu}}\x^{\alphab}\psi(\x)\,d\x=0$ for all $|\alphab|=\sum_{j}\alpha_{j}\leq m-1$. For $\psi\in\SS(\R^{\nu})$ this is the same as saying that the Fourier transform $\widehat\psi(\xib)$ has a zero of order $m$ at $\xib=\0$. Thus cancellation of order 1 means $\int_{\R^{\nu}}\psi(\x)\,d\x=0$ and cancellation of order 0 is a vacuous condition. 
\end{enumerate}

\subsection{The one-parameter case}\label{Sec4.1}\quad

\smallskip

Let $\big\{\ph_\ell:\ell\in\N\big\}\subseteq\SS(\R^{\nu})$ be a sequence that is uniformly bounded in every Schwartz norm  and satisfies $\int_{\R^\nu}\ph_\ell(\x)\,d\x=1$ for $\ell\geq 0$. Put $\varphi_{-1}(\x)\equiv 0$.} Then define
\bea\label{4.1}
\psi_{\ell}(\x)=
\varphi_{\ell}(\x)-\varphi_{\ell-1}^{(1)}(\x)\qquad\text{so that}\qquad
\int_{\R^{\nu}}\psi_{\ell}(\x)\,d\x =
\begin{cases}
1&\text{if $\ell=0$}\\
0&\text{if $\ell\neq 0$}
\end{cases}.
\eea
Then\beas
\psi_{\ell}^{(-\ell)}(\x)&=2^{\ell Q}\psi_{\ell}(2^{\ell}\x)- 2^{(\ell-1)Q}\psi_{\ell-1}(2^{\ell-1}\x)&&\text{so that }&  \varphi_{N}^{(-N)}(\x)&=\sum_{\ell=0}^{N}\psi_{\ell}^{(-\ell)}(\x) \quad\forall N\geq 0.
\eeas 
Then in the sense of distributions,
\beas
\delta_{\0}&=\sum\nolimits_{\ell\geq 0}\psi_{\ell}^{(-\ell)}&&\text{ and }& \delta_{\0}&=\varphi_{M}^{(-M)}+\sum\nolimits_{\ell>M}\psi_{\ell}^{(-\ell)}&&\text{ for any $M\geq 0$}.
\eeas
As a consequence of Hadamard's lemma we have

\begin{lemma}\label{Lem4.1}
A function $\psi\in \SS(\R^{\nu})$ has cancellation of order $m$ if and only if there exist functions $\big\{\psi_{\betab}\in \SS(\R^{\nu}):|\betab|=m\big\}$ such that $\psi=\sum_{|\betab|=m}\partial^{\betab}\psi_{\betab}$ with $\big\{\psi_{\betab}\big\}$ normalized relative to $\psi$. 
\end{lemma}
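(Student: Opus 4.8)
The plan is to translate everything to the Fourier side, where, as recalled in item (iii) above, ``$\psi$ has cancellation of order $m$'' means exactly ``$\widehat\psi$ has a zero of order $m$ at $\xib=\0$''. The case $m=0$ is vacuous (take $\psi_{\mathbf 0}=\psi$), so assume $m\ge1$. The easy implication is immediate: if $\psi=\sum_{|\betab|=m}\de^\betab\psi_\betab$ with each $\psi_\betab\in\SS(\R^\nu)$, then $\widehat\psi=\sum_{|\betab|=m}(i\xib)^\betab\widehat{\psi_\betab}$ vanishes to order $m$ at $\0$, so $\de^\alphab\widehat\psi(\0)=0$ for $|\alphab|\le m-1$, and since $\de^\alphab\widehat\psi(\0)=(-i)^{|\alphab|}\int_{\R^\nu}\x^\alphab\psi(\x)\,d\x$, the moments of $\psi$ up to order $m-1$ vanish.

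For the converse I would start from $\widehat\psi$ having a zero of order $m$ at the origin and apply Hadamard's lemma $m$ times (equivalently, Taylor's formula with integral remainder) to get a representation $\widehat\psi(\xib)=\sum_{|\betab|=m}\xib^\betab R_\betab(\xib)$ with $R_\betab$ smooth, explicitly $R_\betab(\xib)=\tfrac{m}{\betab!}\int_0^1(1-t)^{m-1}(\de^\betab\widehat\psi)(t\xib)\,dt$. The \emph{main obstacle} is that these $R_\betab$ are in general not Schwartz: differentiating the integral formula shows they only satisfy symbol estimates of order $-1$, so one cannot simply take them (up to a constant) as the Fourier transforms of the $\psi_\betab$.

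To repair this I would use a low/high frequency cut-off. Fix $\chi\in C_c^\infty(\R^\nu)$ with $\chi\equiv1$ near $\0$, so that $1-\chi$ vanishes on a ball around the origin, and write $\widehat\psi=\chi\widehat\psi+(1-\chi)\widehat\psi$. For the low frequencies, $\chi R_\betab$ is compactly supported, hence Schwartz, and $\chi\widehat\psi=\sum_{|\betab|=m}\xib^\betab(\chi R_\betab)$. For the high frequencies, where $|\xib|$ is bounded away from $0$ on $\supp(1-\chi)$, I would use the multinomial identity $|\xib|^{2m}=\sum_{|\betab|=m}\binom{m}{\betab}\xib^{2\betab}$ to write $(1-\chi)\widehat\psi=\sum_{|\betab|=m}\xib^\betab v_\betab$ with $v_\betab=\binom{m}{\betab}\xib^\betab|\xib|^{-2m}(1-\chi)\widehat\psi$; since $\xib^\betab|\xib|^{-2m}$ is smooth with all derivatives bounded on $\supp(1-\chi)$ and vanishes near $\0$, each $v_\betab\in\SS(\R^\nu)$. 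Adding the two pieces, $\widehat\psi=\sum_{|\betab|=m}\xib^\betab(\chi R_\betab+v_\betab)$, and I would set $\psi_\betab=i^{-m}\FF^{-1}[\chi R_\betab+v_\betab]\in\SS(\R^\nu)$, so that $\psi=\sum_{|\betab|=m}\de^\betab\psi_\betab$.

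Finally, the normalization statement is obtained by bookkeeping of Schwartz seminorms through this construction: $\FF$ and $\FF^{-1}$ are continuous on $\SS(\R^\nu)$, multiplication by the fixed functions $\chi$ and $\xib^\betab|\xib|^{-2m}(1-\chi)$ is continuous on $\SS(\R^\nu)$, and the integral formula bounds $\de^\gammab R_\betab$ uniformly on the compact set $\supp\chi$ by $\|\widehat\psi\|_{(m+|\gammab|)}$. Combining these, for every $M$ there is an $N=N(M)$ with $\|\psi_\betab\|_{(M)}\le C_M\|\psi\|_{(N)}$, where $C_M$ depends only on $M,\nu,m,\chi$; that is, $\{\psi_\betab\}$ is normalized relative to $\psi$. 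I expect the Schwartz-regularity point handled by the frequency splitting to be the only genuine difficulty, the remaining estimates being routine.
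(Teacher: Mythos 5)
Your proof is correct, and it follows exactly the route the paper intends: the paper states the lemma as "a consequence of Hadamard's lemma" without writing out a proof, and your argument is the standard way to make that precise (Hadamard/Taylor on the Fourier side, then a low/high frequency cutoff with the multinomial identity $|\xib|^{2m}=\sum_{|\betab|=m}\binom{m}{\betab}\xib^{2\betab}$ to repair the fact that the Taylor remainders are not Schwartz). The only discrepancies are harmless normalization constants ($i\xib$ versus $2\pi i\xib$) stemming from the paper's Fourier convention.
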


The following version of the Calder\'on reproducing formula is taken from \cite{MR1821243}. The proof given below is simplified by the use of Fourier transform and requires weaker assumptions. Let $B(\0,r)$ be the open ball centered at the origin with radius $r$. Denote by $\widehat g$ or $\FF[g]$ the Fourier transform of a function $g$.

\begin{lemma}\label{Lem4.2}
Let $\{\varphi_{\ell}\}\subseteq \SS(\R^{\nu})$ be uniformly bounded with $\int_{\R^{\nu}}\varphi_{\ell}(\x)\,d\x=1$, and let $\psi_{\ell}$ be defined in \eqref{4.1}. Given $m>0$ there are two families of Schwartz functions, $\{\widetilde\ph_\ell\}_{\ell\in\bN}$, $\{\widetilde\psi_\ell\}_{\ell\in\bN}$,
such that
\begin{enumerate}[\rm(a)]
\item the families $\{\widetilde\psi_\ell\}$ and $\{\widetilde\ph_\ell\}$ are normalized relative to $\{\ph_\ell\}$;
\smallskip

\item $\widetilde\psi_0=\widetilde\ph_0$;
\smallskip

\item \label{Lem4.2b}
for $\ell>0$, $\widetilde\psi_\ell$ has cancellation of order $m$;

\smallskip

\item \label{Lem4.2c}
if $\ell\geq 0$ then $\displaystyle \del_\0=\ph_\ell^{(-\ell)}*\widetilde\ph_\ell^{(-\ell)}+\sum\nolimits_{\ell'>\ell}\psi_{\ell'}^{(-\ell')}*\widetilde\psi_{\ell'}^{(-\ell')}$;

\smallskip

\item \label{Lem4.2d}
if $\ell\geq 0$ then $\displaystyle \ph_\ell^{(-\ell)}*\widetilde\ph_\ell^{(-\ell)}=\sum\nolimits_{0\leq \ell'\leq \ell}\psi_{\ell'}^{(-\ell')}*\widetilde\psi_{\ell'}^{(-\ell')}$;
\smallskip

\item \label{Lem4.2e} if the Fourier transforms $\FF[\ph_\ell]$ are supported on $B(\0,r)$ 
and are identically equal to $1$ on $B\left(\0,\frac{r}{2}\right)$ so that $\{\psi_{\ell}\}$ are supported on $ B(\0,2r)\setminus B\left(\0,\frac{r}{2}\right)$, then the Fourier transforms $\FF[\widetilde\ph_\ell]$ are supported on $B(\0,r)$ and $\FF[\widetilde\psi_\ell]$ are  supported on $B(\0,2r)\setminus B\left(\0,\frac{r}{2}\right)$;
\smallskip

\item \label{Lem4.2f} if the functions $\{\ph_\ell\}$ are supported on a compact neighborhood $K$ of $\0$, then the functions $\widetilde\ph_\ell,\widetilde\psi_\ell$ are supported on a dilate $c_mK$ where $c_{m}$ depends only on $m$.

\end{enumerate}
\end{lemma}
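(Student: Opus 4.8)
First I would pass to the Fourier transform and reduce everything to a single master identity. Write $\Lambda_\ell=\widehat{\varphi_\ell}$, so $\{\Lambda_\ell\}$ is uniformly bounded in every Schwartz seminorm, $\Lambda_\ell(\0)=1$, and $\widehat{\psi_\ell}(\xi)=\Lambda_\ell(\xi)-\Lambda_{\ell-1}(2\xi)$ with $\Lambda_{-1}\equiv0$, so that $\sum_{\ell'=0}^{\ell}\widehat{\psi_{\ell'}}(2^{-\ell'}\xi)=\Lambda_\ell(2^{-\ell}\xi)\to1$. Writing $\widetilde\Lambda_\ell=\widehat{\widetilde\psi_\ell}$, $\widetilde\Phi_\ell=\widehat{\widetilde\varphi_\ell}$, and using $\widehat{f^{(-\ell)}}(\xi)=\widehat f(2^{-\ell}\xi)$, the conclusions become: $\widetilde\Phi_0=\widetilde\Lambda_0$ (this is (b)); $\widetilde\Lambda_\ell$ has a zero of order $m$ at $\0$ for $\ell\ge1$ (this is (c)); the partial-sum identity $\Lambda_\ell(2^{-\ell}\xi)\,\widetilde\Phi_\ell(2^{-\ell}\xi)=\sum_{\ell'=0}^{\ell}\widehat{\psi_{\ell'}}(2^{-\ell'}\xi)\,\widetilde\Lambda_{\ell'}(2^{-\ell'}\xi)$ (this is (e)); and (d) is (e) subtracted from the \emph{master identity} $\sum_{\ell\ge0}\widehat{\psi_\ell}(2^{-\ell}\xi)\,\widetilde\Lambda_\ell(2^{-\ell}\xi)=1$. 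Since $\psi_0=\varphi_0$, (e) at $\ell=0$ is automatic once $\widetilde\Phi_0=\widetilde\Lambda_0$, so everything reduces to producing uniformly Schwartz $\widetilde\Lambda_\ell$ with the stated vanishing satisfying the master identity, and then reading $\widetilde\Phi_\ell$ off from (e).

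For the master identity I would invert by a Neumann series built on an auxiliary family. Fix $\widehat{\sigma_0}$, a smooth bump equal to $1$ on a large ball (so $\int\sigma_0=1$), and for $\ell\ge1$ a single smooth compactly supported $\theta=\widehat{\sigma_\ell}$ equal to $1$ on a wide annulus $\{\epsilon\le|\eta|\le R\}$ and vanishing near $\0$; each $\sigma_\ell$ ($\ell\ge1$) then has cancellation of every order. Put $m_\sigma(\xi)=\sum_{\ell\ge0}\widehat{\psi_\ell}(2^{-\ell}\xi)\,\widehat{\sigma_\ell}(2^{-\ell}\xi)$. Since $\sum_\ell\widehat{\psi_\ell}(2^{-\ell}\xi)=1$ and $\widehat{\psi_\ell}(\eta)$ decays rapidly as $\eta\to\0$ (it vanishes there) and as $|\eta|\to\infty$, taking $\epsilon$ small and $R$ (and the radius for $\widehat{\sigma_0}$) large makes $\|1-m_\sigma\|$ as small as desired in every Mihlin-H\"ormander seminorm; hence $m_\sigma$ is bounded away from $0$ and $1/m_\sigma$ is again a Mihlin-H\"ormander multiplier. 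Setting
\begin{equation*}
\widetilde\Lambda_\ell(\eta)=\frac{\widehat{\sigma_\ell}(\eta)}{m_\sigma(2^{\ell}\eta)},
\end{equation*}
one gets $\widetilde\Lambda_\ell(2^{-\ell}\xi)=\widehat{\sigma_\ell}(2^{-\ell}\xi)/m_\sigma(\xi)$, so the master identity becomes $m_\sigma(\xi)^{-1}\sum_\ell\widehat{\psi_\ell}(2^{-\ell}\xi)\,\widehat{\sigma_\ell}(2^{-\ell}\xi)=1$; for $\ell\ge1$, $\widetilde\Lambda_\ell$ inherits the cancellation of $\widehat{\sigma_\ell}$, giving (c); and $\{\widetilde\Lambda_\ell\}$ is uniformly Schwartz, since each $\widehat{\sigma_\ell}$ ($\ell\ge1$) is compactly supported in a fixed annulus away from $\0$, on which $2^{\ell}|\eta|\gtrsim2^{\ell}$, so the Mihlin bounds on $1/m_\sigma$ give $|\partial^{\alpha}(m_\sigma(2^{\ell}\cdot)^{-1})|\lesssim_{\alpha}1$ uniformly in $\ell$.

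Next I would put $\widetilde\varphi_0:=\widetilde\psi_0$, which gives (b), and for $\ell\ge1$ define $\widetilde\varphi_\ell$ by (e). One must check that (e) really determines a uniformly Schwartz $\widetilde\varphi_\ell$, that is, that $\sum_{\ell'\le\ell}\widehat{\psi_{\ell'}}(2^{-\ell'}\xi)\,\widehat{\sigma_{\ell'}}(2^{-\ell'}\xi)$ equals $\Lambda_\ell(2^{-\ell}\xi)$ times a (uniformly) Schwartz factor; I would do this by choosing the $\sigma_\ell$ compatibly with the dyadic scaling and performing summation by parts on the telescoping $\widehat{\psi_{\ell'}}(2^{-\ell'}\xi)=\Lambda_{\ell'}(2^{-\ell'}\xi)-\Lambda_{\ell'-1}(2^{-\ell'+1}\xi)$ — a bookkeeping point that needs care. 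Then (d) follows from (e) and the master identity, all the series converging in $\SS'$ with the usual rapid bounds.

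Finally, for the support statements the picture splits. For (f), if $\widehat{\varphi_\ell}$ is supported in $B(\0,r)$ and equals $1$ on $B(\0,r/2)$, so $\widehat{\psi_\ell}$ ($\ell\ge1$) lives in $B(\0,2r)\setminus B(\0,r/2)$, I would take the auxiliary family Fourier-supported in the same sets and equal to $1$ on the supports of the $\widehat{\psi_\ell}$: then $m_\sigma\equiv1$ on the relevant region, $\widetilde\Lambda_\ell$ agrees with $\widehat{\sigma_\ell}$ there, and $\widetilde\Lambda_\ell,\widetilde\Phi_\ell$ keep the claimed Fourier supports. Statement (g), asking for \emph{physical} compact support in a dilate $c_mK$ of $K$, is the main obstacle, because $1/m_\sigma$ is not the Fourier transform of a compactly supported distribution, so the inversion above ruins spatial localization. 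Here I would argue at the operator level, in the spirit of Rychkov: write the approximate reproducing operator built from the $\sigma_\ell$ as $I-\mathcal R$ with $\mathcal R$ convolution by a kernel supported near $\0$ and of norm $<1$ in the pertinent seminorms, and invert not by the full Neumann series but by a partial sum of length $N=N(m)$ large enough that the residual has cancellation of order $m$; tracking how the finitely many powers $\mathcal R^{k}$, $k<N$, enlarge supports and absorbing $\sum_{k<N}\mathcal R^{k}$ into the building blocks yields $\widetilde\varphi_\ell,\widetilde\psi_\ell$ supported in a fixed dilate $c_mK$, $c_m$ depending only on $m$, while preserving (a)--(e). The parts I expect to cost the most work are this support control for (g) and the divisibility check underlying the definition of $\widetilde\varphi_\ell$.
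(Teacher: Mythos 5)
Your route is genuinely different from the paper's. You obtain the $\widetilde\psi_\ell$ by \emph{exactly} inverting the multiplier $m_\sigma(\xi)=\sum_\ell\widehat{\psi_\ell}(2^{-\ell}\xi)\,\widehat{\sigma_\ell}(2^{-\ell}\xi)$ attached to an auxiliary family, via smallness of $1-m_\sigma$ in Mihlin--H\"ormander norms. The paper never inverts anything: with $\nu_\ell=\FF[\varphi_\ell]$ it expands the exact algebraic identity $1=\big(\nu_0^2(\xi)+\sum_{\ell\ge1}(\nu_\ell^2(2^{-\ell}\xi)-\nu_{\ell-1}^2(2^{-\ell+1}\xi))\big)^M$ and regroups the terms by minimal index, which produces the closed formulas $\varphi_\ell*\widetilde\varphi_\ell=\delta_\0-(\delta_\0-\varphi_\ell*\varphi_\ell)^{*M}$ and $\psi_\ell*\widetilde\psi_\ell=(\delta_\0-g_{\ell-1}^{(1)})^{*M}-(\delta_\0-g_\ell)^{*M}$ with $g_\ell=\varphi_\ell*\varphi_\ell$. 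Two points in your version are genuine gaps, not bookkeeping.

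First, the definition of $\widetilde\varphi_\ell$. Once the $\widetilde\psi_{\ell'}$ are fixed, item (e) \emph{forces} $\nu_\ell(2^{-\ell}\xi)\,\FF[\widetilde\varphi_\ell](2^{-\ell}\xi)=P_\ell(\xi):=\sum_{\ell'\le\ell}\widehat{\psi_{\ell'}}(2^{-\ell'}\xi)\,\FF[\widetilde\psi_{\ell'}](2^{-\ell'}\xi)$. A general $\varphi_\ell\in\SS(\R^\nu)$ with integral $1$ can have $\nu_\ell(\eta_0)=0$, and with your choice $\FF[\widetilde\psi_{\ell'}]=\widehat{\sigma_{\ell'}}/m_\sigma(2^{\ell'}\cdot)$ nothing forces $P_\ell$ to vanish on the zero set of $\nu_\ell(2^{-\ell}\cdot)$ --- already the $\ell'=0$ contribution $\nu_0(\xi)\widehat{\sigma_0}(\xi)/m_\sigma(\xi)$ is unrelated to $\nu_\ell$. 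At such points no $\widetilde\varphi_\ell$ exists, and no summation by parts can manufacture the missing factor; the obstruction is to your choice of the $\widetilde\psi_{\ell'}$ themselves, which must be built so that the partial sums factor through $\nu_\ell$. The paper's construction has this built in, since $\FF[\varphi_\ell*\widetilde\varphi_\ell]=1-(1-\nu_\ell^2)^M=\nu_\ell\cdot\big[\nu_\ell\sum_{k=0}^{M-1}(1-\nu_\ell^2)^k\big]$ visibly divides by $\nu_\ell$ with a Schwartz quotient.

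Second, item (g). You correctly diagnose that $1/m_\sigma$ is incompatible with compact physical support, but the proposed repair --- truncate the Neumann series at order $N(m)$ and ``absorb the residual'' --- is not a patch on your construction; the residual of a truncated inverse is a single global error that must be redistributed across all scales while preserving the exact identities (d)--(e) and the order-$m$ cancellations, and once one writes that out one is led back to a finite polynomial identity in the $\nu_\ell$, i.e.\ to the paper's $M$-th power expansion (there $\widetilde\varphi_\ell$ and $\widetilde\psi_\ell$ are finite linear combinations of convolution powers of $\varphi_\ell,\varphi_{\ell-1}$, so (g) is immediate). As it stands, (e) and (g) are not proved, and (e) fails for the family you construct unless the $\widetilde\psi_{\ell'}$ are changed.
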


\begin{proof}

The hypothesis on the $\ph_\ell$ imply that $\lim_{\ell\to\infty}\FF[\ph_\ell^{(-\ell)}]=1$ point-wise and boundedly, so that $\lim_{\ell\to\infty}\ph_\ell^{(-\ell)}=\del_\0$. The same is true with $\ph_\ell$ replaced by $g_\ell=\ph_\ell*\ph_\ell$. Let $\nu_{\ell}=\FF[\varphi_{\ell}]$ and $\mu_{\ell}=\FF[\psi_{\ell}]$, so that $\mu_0=\nu_0$ and $\mu_\ell(\xi)=\FF[\psi_\ell](\xi)=\FF[\ph_\ell](\xi)-\FF[\ph_{\ell-1}](2\xi)$ for $\ell>0$. By dominated convergence, we have the following identities in the sense of distributions for all $M\geq 1$:

\beas
1&=\lim_{\ell\to\infty} \FF[g_\ell^{(-\ell)}](\xib)=\lim_{\ell\to\infty} \nu_\ell^2(2^{-\ell}\xib)=\nu_0^2(\xib)+\sum\nolimits_{\ell=1}^\infty \big(\nu_\ell^2(2^{-\ell}\xib)-\nu_{\ell-1}^2(2^{-\ell+1}\xib)\big)\\
&=\nu_0^2(\xib)+\sum\nolimits_{\ell=1}^\infty \mu_\ell(2^{-\ell}\xib)\big(\nu_\ell(2^{-\ell}\xib)+\nu_{\ell-1}(2^{-\ell+1}\xib)\big)\\
&=\Big(\nu_0^2(\xib)+\sum\nolimits_{\ell=1}^\infty \mu_\ell(2^{-\ell}\xib)\big(\nu_\ell(2^{-\ell}\xib)+\nu_{\ell-1}(2^{-\ell+1}\xib)\big)\Big)^M=\big(\nu_0^2(\xib)+s_0(\xib)\big)^M\ ,
\eeas
We expand the $M$-th power into terms depending on $(\ell_1,\dots,\ell_M)\in\bN^M$ and group together all terms with the same $\bar\ell=\min\{\ell_1,\dots,\ell_M\}$.
For $\bar\ell=0$ we must take all terms containing $\nu_0$, which add up to $1-s_0(\xib)^M=1-\big(1-\nu_0^2(\xib)\big)^M=\nu_0^2(\xib)\sum_{k=0}^{M-1}\big(1-\nu_0^2(\xib)\big)^k$, since $s_0(\xib)=1-\nu_0^2(\xib)$.

We then define $\widetilde\ph_0=\widetilde\psi_0=\cF\inv\big[\nu_0(\xib)\sum_{k=0}^{M-1}\big(1-\nu_0^2(\xib)\big)^k\big]$, and for any $\bar\ell>0$, we denote by $s_{\bar\ell}$ the remainder $s_{\bar\ell}(\xib)=\sum_{\ell=\bar\ell}^\infty \big(\nu_\ell^2(2^{-\ell}\xib)-\nu_{\ell-1}^2(2^{-\ell+1}\xib)\big)=1-\nu_{\bar\ell-1}^2(2^{-\bar\ell+1}\xib)$. The terms to be grouped together are 
those appearing in the expansion of $s_{\bar\ell}^M$ but not in the expansion of $s_{\bar\ell+1}^M$. These are
\beas
s_{\bar\ell}(\xib)^M-s_{\bar\ell+1}(\xib)^M&=\big(1-\nu_{\bar\ell-1}^2(2^{-\bar\ell+1}\xib)\big)^M-\big(1-\nu_{\bar\ell}^2(2^{-\bar\ell}\xib)\big)^M\\
&=\mu_{\bar\ell}(\xib)\big(\nu_{\bar\ell}(\xib)+\nu_{{\bar\ell}-1}(2\xib)\big)\sum\nolimits_{k=0}^{M-1}\big(1-\nu_{{\bar\ell}-1}^2(2\xib)\big)^{M-1-k}\big(1-\nu_{\bar\ell}^2(\xib)\big)^k\ .
\eeas
For $\ell\ge1$ define 
\bea\label{4.2}
\widetilde\ph_\ell&=\cF\inv\Big(\nu_\ell\sum\nolimits_{k=0}^{M-1}\big(1-\nu_\ell^2\big)^k\Big)\\
\widetilde\psi_\ell&= \cF\inv\Big(\big(\nu_\ell+\nu_{\ell-1}(2\cdot)\big)\sum\nolimits_{k=0}^{M-1}\big(1-\nu_{\ell-1}^2(2\cdot)\big)^{M-1-k}\big(1-\nu_\ell^2\big)^k\Big)\ ,
\eea
so that
\bea\label{4.3}
\qquad\ph_\ell*\widetilde\ph_\ell&=\del_\0-(\del_\0-g_\ell\big)^{*M}\\
\psi_\ell*\widetilde\psi_\ell&=\big(\del_\0-g_{\ell-1}^{(1)}\big)^{*M}-\big(\del_\0-g_\ell\big)^{*M}\ ,
\eea
where the exponent $*M$ denotes $M$-th convolution power. It is quite clear that $\widetilde\ph_\ell$ and $\widetilde\psi_\ell$ are Schwartz functions.
Then \eqref{4.2} shows uniform boundedness of their Schwartz norms, as well as cancellation of order $M-1$ of $\widetilde\psi_\ell$ at $\xib=\0$, while \eqref{4.3} provides the identities $\sum_{\ell\in\bN} \mu_\ell(2^{-\ell}\xi)\widetilde\mu_\ell(2^{-\ell}\xi)=1$ and $\sum_{0\leq \ell'\leq \ell}\mu_{\ell'}(2^{-\ell'}\xi)\widetilde\mu_{\ell'}(2^{-\ell'}\xi)= \nu_\ell(2^{-\ell}\xi)\widetilde\nu_\ell(2^{-\ell}\xi)$. This establishes \eqref{Lem4.2c} and \eqref{Lem4.2d}. Then \eqref{Lem4.2e} follows from \eqref{4.2}. Also \eqref{Lem4.2f} follows from \eqref{4.2}, observing that $\widetilde\ph_\ell$ is a linear combination of positive convolution powers of $\ph_\ell$, up to degree $2M+1$. The same applies to $\widetilde \psi_\ell$.
\end{proof}

\subsection{Multi-norm Calder\'on reproducing formulas by tensor products}\label{Sec4.2}
\quad
\smallskip

Based on the formulas in Section \ref{Sec4.1}, we now construct a multi-norm Calder\'on reproducing formula adapted to the dyadic decomposition from Section \ref{Sec3.4}. Let $\R^d=\R^{d_1}\times\cdots\times\R^{d_n}$, and for $1\leq i \leq n$ let $\big\{\varphi_{i,\ell}:\ell\geq 0\big\}\subset \SS(\R^{d_{i}})$ be a family of functions uniformly bounded in every Schwartz norm satisfying $\int_{\R^{d_{i}}}\varphi_{i,\ell}(\x_{i})d\x_{i}=1$. Let $\displaystyle \psi_{i,\ell}(\x)=\varphi_{i,\ell}(\x)-\varphi_{i,\ell-1}^{(1)}(\x)$, and let $\widetilde\ph_{i,\ell},\widetilde\psi_{i,\ell}$ be the functions constructed from $\varphi_{i,\ell}$ and $\psi_{i,\ell}$ as in Lemma \ref{Lem4.2}. Let $\LL$ and $\LL(D)\subset\N^{n}$ be the index sets defined in equation \eqref{3.16rr}.
 If $L\in \LL$ recall that $D_{L}$ is the unique subset of $\{1, \ldots, n\}$ such that $L\in D_{L}$. For $L=(\ell_{1}, \ldots, \ell_{n})\in\LL$ let 
\bea\label{3.4ee}
\Phi_{L}(\x)&=\prod\nolimits_{i=1}^{n}\varphi_{i,\ell_{i}}(\x_{i}), &
\Psi_{L}(\x)&=
\prod\nolimits_{i\in D_{L}}\psi_{i,\ell_i}(\x_{i})\prod\nolimits_{i\notin D_{L}}\ph_{i,\ell_i}(\x_{i}),\\
\widetilde\Phi_{L}(\x)&=\prod\nolimits_{i=1}^{n}\widetilde\varphi_{i,\ell_{i}}(\x_{i}),
&
\widetilde\Psi_{L}(\x)&=
\prod_{i\in D_{L}}\widetilde\psi_{i,\ell_i}(\x_{i})\prod_{i\notin D_{L}}\widetilde\ph_{i,\ell_i}(\x_{i}),\\
\Psi_{i,L}(\x_{i})&=\begin{cases}\psi_{i,\ell_{i}}(\x_{i})&\text{if $i\in D_{L}$}\\
\varphi_{i,\ell_{i}}(\x_{i})&\text{if $i\notin D_{L}$}
\end{cases},
&
\widetilde \Psi_{i,L}(\x_{i})&=\begin{cases}\widetilde \psi_{i,\ell_{i}}(\x_{i})&\text{if $i\in D_{L}$}\\
\widetilde\varphi_{i,\ell_{i}}(\x_{i})&\text{if $i\notin D_{L}$}
\end{cases}.
\eea
We say  that the family $\Psi=\big\{\Phi_{L}, \Psi_{L}, :L\in \LL\big\}$ is \textit{constructed via tensor products} from $\big\{\varphi_{i,\ell}\big\}$. If $L\in\LL_D$ and $i\in D$ then $\ell_i>0$ so $\Psi_{L}(\x)$ has integral zero in $\x_{i}$, and $\widetilde\Psi_{L}(\x)$ has cancellation of order~$m$ in $\x_{i}$. On the Fourier transform side, $\FF[\Psi_{L}^{(-L)}]$ and $\FF[\widetilde\Psi_{L}^{(-L)}]$ give a Schwartz partition of unity subordinated to the sets $B_L=\tau\inv(T_L)$ where $T_L$ is the rectangle in \eqref{3.8}.  We have the following multi-norm reproducing formulas.

\goodbreak

\begin{proposition}\label{Prop4.3}\quad
\begin{enumerate}[(a)]
\item
The identities $\del_\0=\sum_{L\in \LL}\Psi_{L}^{(-L)}$ and $\del_\0=\sum_{L\in \LL}\Psi_{L}^{(-L)}*\widetilde\Psi_{L}^{(-L)}$ hold in the sense of distributions. 

\smallskip

\item If $L,L'\in\N^{n}$ write $L'\leq L$ if and only if $\ell'_i\leq \ell_i$ for all $i$ and $L'<L$ if and only if $L'\leq L$ and $L'\ne L$. Then for every $L\in \LL$, $\Phi_{L}^{(-L)}*\widetilde\Phi_{L}^{(-L)}=\sum_{L'\in \LL\,,\,L'\leq L}\Psi_{L'}^{(-L')}*\widetilde\Psi_{L'}^{(-L')}$.
\end{enumerate}
\end{proposition}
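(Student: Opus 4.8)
\emph{Approach.} The plan is to reduce both identities to the one–parameter Calder\'on reproducing formulas of Section~\ref{Sec4.1} and Lemma~\ref{Lem4.2} via the tensor–product structure, carrying out all manipulations on the Fourier side, where the series involved converge absolutely and boundedly so that every rearrangement is legitimate (exactly as in the dominated–convergence argument in the proof of Lemma~\ref{Lem4.2}). The only algebra needed is that $\del_\0$ on $\R^d=\R^{d_1}\times\cdots\times\R^{d_n}$ is the tensor product $\del_\0^{(1)}\otimes\cdots\otimes\del_\0^{(n)}$, together with the identities $\big(\prod_i f_i\big)^{(-L)}=\prod_i f_i^{(-\ell_i)}$ and $\big(\prod_i f_i\big)^{(-L)}*\big(\prod_i g_i\big)^{(-L)}=\prod_i\big(f_i^{(-\ell_i)}*g_i^{(-\ell_i)}\big)$ for tensor products (the convolution on the left being on $\R^d$, those on the right on the factors).

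\emph{Part (a).} Tensoring the one–parameter formula $\del_\0=\sum_{\ell\ge0}\psi_{i,\ell}^{(-\ell)}$ on $\R^{d_i}$ (the identity displayed after \eqref{4.1}) over $i=1,\dots,n$ gives $\del_\0=\sum_{L\in\N^n}\big(\prod_i\psi_{i,\ell_i}\big)^{(-L)}$. Regroup the index set $\N^n$ according to the partition $\N^n=\bigcup_{L\in\LL}\TT_L$ from Lemma~\ref{Lem3.7}. For fixed $L\in\LL$ the inner sum $\sum_{M\in\TT_L}\prod_i\psi_{i,m_i}^{(-m_i)}$ factors over $i$: the factors with $i\in D_L$ are frozen at $\psi_{i,\ell_i}^{(-\ell_i)}$, while for $i\notin D_L$ the sum $\sum_{m_i=0}^{\ell_i}\psi_{i,m_i}^{(-m_i)}=\varphi_{i,\ell_i}^{(-\ell_i)}$ telescopes; comparison with \eqref{3.4ee} identifies the inner sum as $\Psi_L^{(-L)}$, giving $\del_\0=\sum_{L\in\LL}\Psi_L^{(-L)}$. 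The second identity of (a) is the same computation with $\psi_{i,\ell}^{(-\ell)}$ replaced by $\psi_{i,\ell}^{(-\ell)}*\widetilde\psi_{i,\ell}^{(-\ell)}$: here one uses that $\del_\0=\sum_{\ell\ge0}\psi_{i,\ell}^{(-\ell)}*\widetilde\psi_{i,\ell}^{(-\ell)}$ (Lemma~\ref{Lem4.2}, parts~(c)--(d) taken at $\ell=0$) and that $\sum_{0\le m\le\ell}\psi_{i,m}^{(-m)}*\widetilde\psi_{i,m}^{(-m)}=\varphi_{i,\ell}^{(-\ell)}*\widetilde\varphi_{i,\ell}^{(-\ell)}$ (Lemma~\ref{Lem4.2}, part~(d)); then \eqref{3.4ee} collapses the inner sum to $\Psi_L^{(-L)}*\widetilde\Psi_L^{(-L)}$.

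\emph{Part (b).} Since $\Phi_L=\prod_i\varphi_{i,\ell_i}$ regardless of $D_L$, we have $\Phi_L^{(-L)}*\widetilde\Phi_L^{(-L)}=\prod_i\big(\varphi_{i,\ell_i}^{(-\ell_i)}*\widetilde\varphi_{i,\ell_i}^{(-\ell_i)}\big)$, and expanding each factor by Lemma~\ref{Lem4.2}, part~(d), gives $\Phi_L^{(-L)}*\widetilde\Phi_L^{(-L)}=\sum_{M\in\N^n,\,M\le L}\big(\prod_i\psi_{i,m_i}\big)^{(-M)}*\big(\prod_i\widetilde\psi_{i,m_i}\big)^{(-M)}$. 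It then remains to check that the finite down-set $\{M\in\N^n:M\le L\}$ equals the disjoint union $\bigcup_{L'\in\LL,\,L'\le L}\TT_{L'}$; granting this, grouping by the tiles $\TT_{L'}$ and collapsing each inner sum as in part~(a) produces $\sum_{L'\in\LL,\,L'\le L}\Psi_{L'}^{(-L')}*\widetilde\Psi_{L'}^{(-L')}$. Since $L'$ is the coordinatewise maximum of $\TT_{L'}$, each $\TT_{L'}$ with $L'\le L$ lies in the down-set, so one inclusion is automatic; the substantive claim is the converse: if $M\le L$ and $M\in\TT_{L'}$ with $L,L'\in\LL$, then $L'\le L$. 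I would prove this from the explicit maximal–cube relations \eqref{3.6}: writing $D=D_M=D_{L'}$ with a marked partition $S=\{(A_r,k_r)\}$, so that $\ell'_j=\lfloor m_{k_r}/e(k_r,j)\rfloor$ for $j\in A_r$ (where $k_r$ is the dotted entry of the block containing $j$), the inequality $\ell'_j\le\ell_j$ is immediate for $j\in D$, while for $j\in A_r\setminus\{k_r\}$ one has $\ell'_j\le m_{k_r}/e(k_r,j)\le\ell_{k_r}/e(k_r,j)$ and then $\ell_{k_r}/e(k_r,j)<\ell_j+1$ follows by combining the floor relations \eqref{3.6} satisfied by $L$ as an element of $\LL$ with the triangle inequality $e(a,c)\le e(a,b)e(b,c)$ of \eqref{2.3} and, when the relevant indices lie in different blocks of the marked partition of $L$, with the $\tau$-bounds of Proposition~\ref{Prop3.2}; since $\ell'_j$ is an integer, $\ell'_j\le\ell_j$ follows.

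\emph{Main obstacle.} The one genuinely combinatorial point is the tiling claim in part~(b) --- equivalently, that the coordinate-order down-set of $L$ in $\N^n$ is the exact union of the tiles $\TT_{L'}$ with $L'\le L$. Once it is established, both identities are routine bookkeeping over the one–parameter results of Section~\ref{Sec4.1} and Lemma~\ref{Lem4.2} and the partition properties recorded in Lemma~\ref{Lem3.7}. A secondary, routine matter is that the infinite rearrangement in part~(a) takes place in $\SS'$, which is handled by passing to Fourier transforms and invoking bounded convergence exactly as in the proof of Lemma~\ref{Lem4.2}.
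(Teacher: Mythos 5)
Your proof is correct and follows essentially the same route as the paper: reduce to the one-parameter reproducing formulas of Lemma \ref{Lem4.2} via the tensor structure, and regroup using the tiling $\N^n=\bigcup_{L\in\LL}\TT_L$ of Lemma \ref{Lem3.7}, with the down-set version $\{M\in\N^n:M\le L\}=\bigcup_{L'\in\LL,\,L'\le L}\TT_{L'}$ handling part (b). The one difference is that the paper asserts this last tiling identity implicitly (it simply equates $\sum_{L'\in\LL,\,L'\le L}\sum_{M\in\TT_{L'}}$ with $\sum_{M\le L}$ without comment), whereas you correctly isolate it as the substantive combinatorial point and your sketch of it — using the floor relations \eqref{3.6} together with the cone inequalities defining $\Gamma_{S}$ and the triangle inequality in \eqref{2.3} to get $\ell'_j<\ell_j+1$ and hence $\ell'_j\le\ell_j$ for the undotted coordinates — is sound.
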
 

\begin{proof}
We only prove the last identity since the first two are analogous and simpler.
According to Lemma \ref{Lem4.2}\eqref{Lem4.2d}, $\ph_{i,\ell_{i}}^{(-\ell)}=\sum\nolimits_{0\leq \ell_{i}'\leq \ell_{i}}\psi_{{i,\ell_{i}}'}^{(-\ell')}*\widetilde\psi_{{i,\ell_{i}}'}^{(-\ell')}$. Thus
\beas
\Psi_{L}^{(-L)}&*\widetilde \Psi_{L}^{(-L)}(\x)\\
&=
\Big(\prod\nolimits_{i\in D_{L}}\psi_{i,\ell_i}^{(-\ell_{i})}(\x_{i})\Big)\Big(\prod\nolimits_{i\notin D_{L}}\ph_{i,\ell_i}^{(-\ell_{i})}(\x_{i})\Big)*
\Big(\prod\nolimits_{i\in D_{L}}\widetilde\psi_{i,\ell_i}^{(-\ell_{i})}(\x_{i})\Big)\Big(\prod\nolimits_{i\notin D_{L}}\widetilde\ph_{i,\ell_i}^{(-\ell_{i})}(\x_{i})\Big)\\
&=
\Big(\prod\nolimits_{i\in D_{L}}\psi_{i,\ell_{i}}^{(-\ell_{i})}*\widetilde\psi_{i,\ell_{i}}^{(-\ell_{i})}(\x_{i})\Big)\Big(\prod\nolimits_{i\notin D_{L}}\varphi_{i,\ell_{i}}^{(-\ell_{i})}*\widetilde\varphi_{i,\ell_{i}}^{(-\ell_{i})}(\x_{i})\Big)\\
&=
\Big(\prod\nolimits_{i\in D_{L}}\psi_{i,\ell_{i}}^{(-\ell_{i})}*\widetilde\psi_{i,\ell_{i}}^{(-\ell_{i})}(\x_{i})\Big)\Big(\prod\nolimits_{i\notin D_{L}}\sum\nolimits_{0\leq \ell_{i}'\leq \ell_{i}}\psi_{{i,\ell_{i}}'}^{(-\ell')}*\widetilde\psi_{{i,\ell_{i}}'}^{(-\ell')}(\x_{i})\Big)\\
&=
\sum\nolimits_{L'\in T_L}\mathop{\otimes}_{i=1}^{n}(\psi_{i,\ell_{i}}^{(-\ell_{i})}*\widetilde\psi_{i,\ell_{i}}^{(-\ell_{i})})(\x).
\eeas
Then by Lemma \ref{Lem4.2}
\beas
\sum\nolimits_{L\in \LL}\Psi_{L}^{(-L)}*\widetilde\Psi_{L}^{(-L)}
&=
\sum\nolimits_{L'\in\N^{n}}\mathop{\otimes}_{i=1}^n\big(\psi_{i,\ell'_i}^{(-\ell'_i)}*\widetilde\psi_{i,\ell'_i}^{(-\ell'_i)}\big)
=
\mathop{\otimes}_{i=1}^n\sum\nolimits_{\ell'_i\in\bN}\big(\psi_{i,\ell'_i}^{(-\ell'_i)}*\widetilde\psi_{i,\ell'_i}^{(-\ell'_i)}\big)
=\del_\0,\\
\sum_{\substack{L'\in \LL\\L'\leq L}}\Psi_{L'}^{(-L')}*\widetilde\Psi_{L'}^{(-L')}
&=
\sum_{\substack{L'\in \N^{n}\\ L'\leq L}}\mathop{\otimes}_{i=1}^n\big(\psi_{i,\ell'_i}^{(-\ell'_i)}*\widetilde\psi_{i,\ell'_i}^{(-\ell'_i)}\big)
=
\mathop{\otimes}_{i=1}^{n}\sum_{\ell_{i}'\leq \ell_{i}}\big(\psi_{i,\ell'_i}^{(-\ell'_i)}*\widetilde\psi_{i,\ell'_i}^{(-\ell'_i)}\big)
=\Phi_{L}^{(-L)}*\widetilde\Phi_{L}^{(-L)}. \qedhere
\eeas
\end{proof}

\subsection{Multi-norm Calder\'on reproducing formulas by convolution}\label{Sec4.3}
\quad
\smallskip

In this section we construct reproducing formulas $\delta_{\0}=\sum_{L\in \LL}\Sigma_{L}^{(-L)}$ and $\delta_{\0}=\sum_{L\in \LL}\Sigma_{L}^{(-L)}*\widetilde\Sigma_{L}^{(-L)}$ where the functions $\big\{\Sigma_{L}\big\}$ are convolutions rather than tensor products. Recall from \eqref{2.5a} that $\widehat N_{i}(\xib)$ is a homogeneous norm for the dilations $\widehat\delta^{i}_{r}(\xib)=\left(r^{e(1,i)}\xib_{1}, \ldots, r^{e(n,i)}\xib_{n}\right)$. If $f:\R^{d}\to\C$ let $f^{(-t)_{i}}$ denote $f$ rescaled by $2^{-t}$ relative to the dilations $\widehat\delta^{i}_{r}$. Thus $f^{(-t)_i}(\x)=2^{t\widehat Q_i}f\big(\widehat\delta^{i}_{2^{t}}(\x)\big)$, where $\widehat Q_i$, defined in \eqref{3.16ww}, denotes the homogeneous dimension of $\R^d$ relative to these dilations.

\begin{lemma}\label{Lem4.4w} 
Let $\t=(t_{1}, \ldots, t_{n})\in \R_{+}^{n}$ and let $\widetilde \t=(\widetilde t_{1}, \ldots, \widetilde t_{n})$ where $\widetilde t_{j}=\min_{1\leq i \leq n}e(j,i)t_{i}$. Then $\widetilde t_{j}\leq t_{j}$ and $\widetilde \t\in \Gamma(\bfE)$. Moreover $\t\in \Gamma(\bfE)$ if and only if $\widetilde \t=\t$.\end{lemma}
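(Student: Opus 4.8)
The plan is to verify the three assertions in order; each one follows directly from the first two conditions in \eqref{2.3} (the basic assumptions), and the strict inequality in the third line of \eqref{2.3} is not needed. First I would note that, since $e(j,j)=1$, taking $i=j$ in the minimum defining $\widetilde t_j$ already produces the value $t_j$, so $\widetilde t_j=\min_{1\le i\le n}e(j,i)t_i\le t_j$; together with the positivity of the entries $e(j,i)$ and of the $t_i$, this also shows $\widetilde\t\in\R_{+}^{n}$.

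Next, to see that $\widetilde\t\in\Gamma(\bfE)$, recall from \eqref{2.8} that this means $t_j\le e(j,k)t_k$ for all $1\le j,k\le n$. So I would fix $j,k$ and choose an index $i_0$ realizing the minimum, i.e. $\widetilde t_k=e(k,i_0)t_{i_0}$. The submultiplicativity inequality $e(j,i_0)\le e(j,k)e(k,i_0)$ from \eqref{2.3} then gives
\bes
e(j,k)\widetilde t_k=e(j,k)e(k,i_0)t_{i_0}\ge e(j,i_0)t_{i_0}\ge\min_{1\le i\le n}e(j,i)t_i=\widetilde t_j,
\ees
where the final inequality holds because $e(j,i_0)t_{i_0}$ is one of the terms in the minimum defining $\widetilde t_j$. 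Since $j,k$ were arbitrary, $\widetilde\t\in\Gamma(\bfE)$.

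Finally, for the equivalence $\t\in\Gamma(\bfE)\iff\widetilde\t=\t$: if $\widetilde\t=\t$, then $\t=\widetilde\t\in\Gamma(\bfE)$ by the previous paragraph. Conversely, if $\t\in\Gamma(\bfE)$, then $t_j\le e(j,i)t_i$ for every $i$, hence $t_j\le\min_{1\le i\le n}e(j,i)t_i=\widetilde t_j$; combined with $\widetilde t_j\le t_j$ from the first step, this forces $\widetilde t_j=t_j$ for all $j$, i.e. $\widetilde\t=\t$.

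There is no real obstacle here: all the minima are over the finite index set $\{1,\dots,n\}$ and hence attained, and every step uses only $e(j,j)=1$ and submultiplicativity. One can summarize the statement by saying that the map $\t\mapsto\widetilde\t$ is a retraction of $\R_{+}^{n}$ onto the cone $\Gamma(\bfE)$, decreasing every coordinate.
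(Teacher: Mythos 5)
Your proof is correct and follows essentially the same route as the paper: $\widetilde t_j\le t_j$ from $e(j,j)=1$, membership in $\Gamma(\bfE)$ from the submultiplicativity $e(j,\ell)\le e(j,k)e(k,\ell)$ (the paper takes the minimum over $i$ of the chain $\widetilde t_j\le e(j,i)t_i\le e(j,k)e(k,i)t_i$, whereas you pick the minimizing index for $\widetilde t_k$ — the same argument read in the other direction), and the final equivalence exactly as in the paper. No gaps.
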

\begin{proof}
Since $e(j,j)=1$ it follows that $\widetilde t_{j}\leq t_{j}$. For any $i,j,k$ we have $\widetilde t_{j}\leq e(j,i)t_{i}\leq e(j,k)e(k,i)t_{i}$. Taking the minimum over $i$ it follows that $\widetilde t_{j}\leq e(j,k)\widetilde t_{k}$ and so $\widetilde t\in \Gamma(\bfE)$. Finally, $\t\in \Gamma(\bfE)$ if and only if $t_{j}= \min_{1\leq i \leq n}e(j,i)t_{i}=\widetilde t_{j}$.
\end{proof}

\begin{lemma}\label{Lem4.5}
Let $f_{1}, \ldots, f_{n}\in\SS(\R^{d})$, $\t=(t_{1}, \ldots, t_{n})\in \R_{+}^{n}$ and let $h\in \SS(\R^{d})$ be such that $f_{1}^{(-t_{1})_{1}}*\cdots * f_{n}^{(-t_{n})_{n}}=h^{(-\widetilde \t)}$ where $\widetilde \t$ is given in Lemma \ref{Lem4.4w}. Then $\|h\|_{(N)}\leq C_N\prod_{i=1}^n\|f_i\|_{(N')}$ for every $N\geq 0$ where $C_N$ and $N'$ depend only on $N$.
\end{lemma}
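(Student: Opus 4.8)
The plan is to pass to the Fourier transform side, where the $n$-fold convolution becomes a product and the dilation bookkeeping becomes transparent. Using that each $\widehat\delta^i_r$ is a diagonal dilation of $\R^d$, hence self-adjoint for the pairing $\x\cdot\xib$, one gets $\FF[f_i^{(-t_i)_i}](\xib)=\widehat{f_i}\big(\widehat\delta^i_{2^{-t_i}}\xib\big)$ and therefore $\FF\big[f_1^{(-t_1)_1}*\cdots*f_n^{(-t_n)_n}\big](\xib)=\prod_{i=1}^n\widehat{f_i}\big(\widehat\delta^i_{2^{-t_i}}\xib\big)$. Comparing this with $\FF[h^{(-\widetilde\t)}](\xib)=\widehat h\big(2^{-\widetilde t_1}\xib_1,\dots,2^{-\widetilde t_n}\xib_n\big)$ and substituting $\xib_j\mapsto 2^{\widetilde t_j}\xib_j$ gives the explicit formula
\bes
\widehat h(\xib)=\prod_{i=1}^n \widehat{f_i}\big(2^{-a_{i,1}}\xib_1,\dots,2^{-a_{i,n}}\xib_n\big),\qquad a_{i,j}=e(j,i)t_i-\widetilde t_j
\ees
(which in particular exhibits $h$ as a Schwartz function). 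By Lemma \ref{Lem4.4w}, $\widetilde t_j=\min_{1\le k\le n}e(j,k)t_k$, so every exponent $a_{i,j}$ is $\ge 0$; moreover, for each fixed $j$ there is an index $i(j)$ with $a_{i(j),j}=0$. This last observation is the crux of the argument, and is exactly the structure of $\widetilde\t$ furnished by Lemma \ref{Lem4.4w}.

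I would then estimate the Schwartz seminorms of $\widehat h$. Write $F_i(\xib)=\widehat{f_i}\big(2^{-a_{i,1}}\xib_1,\dots,2^{-a_{i,n}}\xib_n\big)$, so $\widehat h=\prod_{i=1}^n F_i$. Since the exponents $a_{i,j}$ are nonnegative, applying $\partial^\betab$ to $F_i$ only produces factors that are $\le 1$; the one operation that could enlarge $F_i$ non-uniformly in $\t$ is multiplication by a monomial $\xib_j^{\alphab^j}$ with $a_{i,j}>0$, which costs a factor $2^{a_{i,j}\bl\alphab^j\br}$. To neutralize this, when bounding $\xib^\alphab\partial^\betab\widehat h$ I would expand $\partial^\betab\prod_i F_i$ by the Leibniz rule and distribute the monomial $\xib^\alphab=\prod_j\xib_j^{\alphab^j}$ by assigning each block $\xib_j^{\alphab^j}$ to the factor $F_{i(j)}$, where the exponent $a_{i(j),j}$ vanishes and the monomial is free. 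Each term in the resulting sum is a product over $i$ of expressions of the form (a monomial of total degree $\le|\alphab|$ in a contracted argument $\y$) times (a partial derivative of $\widehat{f_i}$ of order $\le|\betab|$, evaluated at $\y$), hence bounded by $\|\widehat{f_i}\|_{(|\alphab|+|\betab|)}$ uniformly in $\xib$ and in $\t$. Summing the finitely many Leibniz terms — whose number and multiplicities depend only on $|\betab|$ and $n$ — yields $\|\widehat h\|_{(N)}\le C_N\prod_{i=1}^n\|\widehat{f_i}\|_{(N)}$.

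Finally I would transfer the bound back to $h$ via the classical estimates relating the Schwartz topology to the Fourier transform, namely $\|g\|_{(N)}\le C_N\|\widehat g\|_{(N+d+1)}$ and $\|\widehat g\|_{(M)}\le C_M\|g\|_{(M+d+1)}$; chaining these with the previous step gives $\|h\|_{(N)}\le C_N\|\widehat h\|_{(N+d+1)}\le C_N\prod_{i=1}^n\|\widehat{f_i}\|_{(N+d+1)}\le C_N\prod_{i=1}^n\|f_i\|_{(N+2d+2)}$, so $N'=N+2d+2$ works ($d$ and $n$ being fixed). The only real obstacle is the one already flagged: recognizing that the potentially divergent polynomial weights are harmless because for each coordinate block at least one of the factors $\widehat{f_i}$ carries no dilation in that block — equivalently, that $\widetilde\t$ is the coordinatewise minimum of Lemma \ref{Lem4.4w} — and organizing the Leibniz bookkeeping to exploit this; the remaining estimates are routine.
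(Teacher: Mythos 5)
Your proof is correct and follows essentially the same route as the paper: pass to the Fourier side, write $\widehat h$ as a product of contracted copies of the $\widehat{f_i}$ with nonnegative exponents $a_{i,j}=e(j,i)t_i-\widetilde t_j$, and exploit that for each block $j$ some factor has $a_{i(j),j}=0$. The paper packages this last point as the single weight inequality \eqref{4.7} rather than by assigning monomial blocks to factors, but the underlying mechanism is identical.
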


\begin{proof}
If $h=\big(f_{1}^{(-t_{1})_{1}}*\cdots * f_{n}^{(-t_{n})_{n}}\big)^{(\widetilde\t)}$ then $\FF[h](\x)=\FF\big[f_{1}^{(-t_{1})_{1}}*\cdots * f_{n}^{(-t_{n})_{n}}\big](2^{\widetilde t_{1}}\xib_{1}, \ldots, 2^{\widetilde t_{n}}\xib_{n})=
\prod\nolimits_{i=1}^n\widehat{f_i}\big(2^{\tilde t_1-e(1,i)t_i}\xib_1,\dots,2^{\tilde t_n-e(n,i)t_i}\xib_n\big)$. We show that all derivatives of $\FF[h]$ are rapidly decaying at infinity. Since $\tilde t_j-e(j,i)t_i\le0$, using Leibniz formula it suffices to prove the inequality
\bea\label{4.7}
\prod\nolimits_{i=1}^n\big(1+2^{\tilde t_1-e(1,i)t_i}|\xib_1|+\cdots+2^{\tilde t_n-e(n,i)t_i}|\xib_n|\big)\ge 1+|\xib_1|+\cdots+|\xib_n|.
\eea
Let $E_{i}$ the set of $j$ such that $\tilde t_j=e(j,i)t_i$. Then, since every $j$ is in some $E_{i}$,
\beas
\prod_{i=1}^n\big(1+2^{\tilde t_1-e(1,i)t_i}|\xib_1|+\cdots+2^{\tilde t_n-e(n,i)t_i}|\xib_n|\big)&\ge \prod_{i=1}^n\Big(1+\sum_{j\in E_{i}}|\xib_j|\Big)
\ge1+\sum_{j=1}^n|\xib_j|.\qedhere
\eeas

\end{proof}

We give a discrete version of Lemma \ref{Lem4.5} involving only dyadic scales $2^{-L}$ with $L\in\LL$. 
If $|\t-\t'|<\delta$, if $f\in \SS(\R^{d})$, and if $f^{(-\t)}=g^{(-\t')}$ then $g\in \SS(\R^{d})$ and $\|g\|_{(N)}\leq C_{N,\del}\|f\|_{(N)}$ for $\forall N$.

%
%

\begin{corollary}\label{Cor4.6}
Let $L=(\ell_1,\dots,\ell_n)\in\LL$ and $f_1,\dots,f_n\in\cS(\R^d)$. Then $f_{1}^{(-\ell_{1})_{1}}*\cdots * f_{n}^{(-\ell_{n})_{n}}=k^{(-L)}$ where $k\in\SS(\R^d)$ and $\|k\|_{(N)}\leq C_N\prod_{i=1}^n\|f_i\|_{(N')}$ for every $N\geq 0$, with $C_N$ and $N'$ depend only on~$N$.
\end{corollary}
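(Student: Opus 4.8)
The plan is to deduce the corollary from Lemma~\ref{Lem4.5} by showing that, when $L\in\LL$, the point $\widetilde\t$ produced there with $\t=L$ differs from $L$ by a vector whose coordinates are bounded, in absolute value, by a constant depending only on $\bfE$; the estimate then transfers under a bounded anisotropic rescaling.

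First I would apply Lemma~\ref{Lem4.5} with $\t=L=(\ell_1,\dots,\ell_n)\in\R_+^n$. This produces a Schwartz function $h\in\SS(\R^d)$ with $f_1^{(-\ell_1)_1}*\cdots*f_n^{(-\ell_n)_n}=h^{(-\widetilde L)}$, where $\widetilde L=(\widetilde\ell_1,\dots,\widetilde\ell_n)$, $\widetilde\ell_j=\min_{1\le i\le n}e(j,i)\ell_i$, and $\|h\|_{(N)}\le C_N\prod_{i=1}^n\|f_i\|_{(N')}$ for every $N$, with $C_N,N'$ depending only on $N$. By Lemma~\ref{Lem4.4w} we already know $\widetilde\ell_j\le\ell_j$ for all $j$, so it remains to bound $\ell_j-\widetilde\ell_j$ from above uniformly in $L\in\LL$. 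For this I would invoke the observation established inside the proof of Lemma~\ref{Lem3.8} that $\LL\subset\Gamma_\delta$ with $\delta=\max_{i,j}e(i,j)$, i.e.\ $\ell_i/e(i,j)\le\ell_j+\delta$ for all $i,j$ whenever $L\in\LL$. Applying this with the index $k$ that realizes the minimum defining $\widetilde\ell_j$ (so $\widetilde\ell_j=e(j,k)\ell_k$) gives $\ell_j\le e(j,k)(\ell_k+\delta)=\widetilde\ell_j+e(j,k)\delta\le\widetilde\ell_j+\delta^2$, hence $0\le\ell_j-\widetilde\ell_j\le\delta^2$. Thus $|L-\widetilde L|\le\delta^2$, a constant depending only on $\bfE$.

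Finally I would set $k:=\big(h^{(-\widetilde L)}\big)^{(L)}=h^{(-(\widetilde L-L))}\in\SS(\R^d)$, so that $k^{(-L)}=h^{(-\widetilde L)}=f_1^{(-\ell_1)_1}*\cdots*f_n^{(-\ell_n)_n}$, as required. Since $|\widetilde L-L|\le\delta^2$, the rescaling estimate stated just before the corollary (with $\t=\widetilde L$, $\t'=L$, and distance bound $\delta^2$) yields $\|k\|_{(N)}\le C_{N,\delta}\|h\|_{(N)}\le C_N\prod_{i=1}^n\|f_i\|_{(N')}$, with all constants depending only on $N$ and $\bfE$. I do not expect a genuine obstacle here: the only points requiring a little care are the uniform bound $|L-\widetilde L|\le\delta^2$, which is immediate from $\LL\subset\Gamma_\delta$, and the bookkeeping between the two rescaling conventions $f^{(-t)_i}$ and $f^{(-L)}$, which is precisely what Lemmas~\ref{Lem4.4w} and~\ref{Lem4.5} package for us.
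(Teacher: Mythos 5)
Your proof is correct and follows essentially the same route as the paper's: apply Lemma~\ref{Lem4.5}, show $|L-\widetilde L|$ is uniformly bounded for $L\in\LL$, and then rescale using the remark preceding the corollary. The only (immaterial) difference is that the paper derives the bound on $\ell_j-\widetilde\ell_j$ from the statement of Lemma~\ref{Lem3.8}\eqref{Lem3.8a} via a nearby point $\t'\in\Gamma(\bfE)$, whereas you use the inclusion $\LL\subset\Gamma_\delta$ established inside that lemma's proof, which gives the same conclusion slightly more directly.
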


\begin{proof}
By Lemma \ref{Lem4.5}, $\mathop*_{i=1}^nf_i^{(-\ell_i)_i}=h^{(-\t_L)}$ where $\t_L$ has entries $t_j=\min_{i=1,\dots,n}e(j,i)\ell_i$ and $\|h\|_{(N)}\leq C_N\prod_{i=1}^n\|f_i\|_{(N')}$. By Lemma \ref{Lem3.8} \eqref{Lem3.8a} there is $\t'\in\Gamma(\mbE)$ such that $|\t'-L|<\kappa$. Given $j$, let $i$ be such that $t_j=e(j,i)\ell_i$. Then $t_j\leq \ell_j$ and
$
|\ell_j-t_j|= \ell_j-e(j,i) \ell_i\leq t'_j+\kappa-e(j,i)(t'_i-\kappa)\leq \big(1+e(j,i)\big)\kappa\ .
$
Hence the distances $|L-\t_L|$ are uniformly bounded and we can define $k=\big[h^{(-\t_L)}\big]^{(L)}$, keeping the uniform control of Schwartz norms.
\end{proof}

Now for $1\leq i \leq n$ fix a family $\{\rho_{i,\ell}\}_{\ell\in\bN}\subset\cS(\bR^d)$ uniformly bounded in every Schwartz norm and with $\int_{\R^d}\rho_{i,\ell}=1$. Following the procedure in Subsection \ref{Sec4.1}, relative to the $\widehat\delta^{i}_{r}$ dilations in \eqref{2.5a}, construct the functions $\sigma_{i,\ell}=\rho_{i,\ell}-\rho_{i,\ell-1}^{(1)_i}$ as in \eqref{4.1} and let $\widetilde\rho_{i,\ell},\widetilde\sigma_{i,\ell}$ be the functions, depending on the order $m$ of the required cancellations, given by Lemma \ref{Lem4.2}.
We define the functions $\Sigma_L$ and $\widetilde\Sigma_L$ by
\bea\label{4.5}
\Sigma_L^{(-L)}&=\Big(\mathop{*}_{i\in D_{L}}\sigma_{i,\ell_i}^{(-\ell_i)_i}\Big)*\Big(\mathop{*}_{i\not\in D_{L}}\rho_{i,\ell_i}^{(-\ell_i)_i}\Big)&&\text{and}&
\widetilde\Sigma_L^{(-L)}&=\Big(\mathop{*}_{i\in D_{L}}\widetilde\sigma_{i,\ell_i}^{(-\ell_i)_i}\Big)*\Big(\mathop{*}_{i\not\in D_{L}}\widetilde\rho_{i,\ell_i}^{(-\ell_i)_i}\Big)
\eea
where $D_L$ is such that $L\in\Gamma(D_L)$.
The following statement is the analogue of Proposition \ref{Prop4.3}. 
\begin{proposition}\label{Prop4.4} We have
$\del_\0=\sum_{L\in \LL}\Sigma_L^{(-L)}$ and $\del_\0=\sum_{L\in \LL}\Sigma_L^{(-L)}*\widetilde\Sigma_L^{(-L)}$ in the sense of distributions. Also if $\Rho_L=\mathop{*}_{i=1}^n\rho_{i,\ell_i}$ and $\widetilde\Rho_L=\mathop{*}_{i=1}^n\widetilde\rho_{i,\ell_i}$, then $\Rho_L^{(-L)}*\widetilde\Rho_L^{(-L)}=\sum_{L'\in \LL\,,\,L'\leq L}\Sigma_{L'}^{(-L')}*\widetilde\Sigma_{L'}^{(-L')}$.
\end{proposition}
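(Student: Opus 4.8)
The plan is to follow the proof of Proposition~\ref{Prop4.3} almost line by line, replacing the tensor-product structure on $\R^d=\R^{d_1}\times\cdots\times\R^{d_n}$ by the convolution structure attached to the $n$ dilation families $\widehat\delta^i_r$, and invoking Corollary~\ref{Cor4.6} wherever the tensor proof used factorization.

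First I would record the one-parameter ingredients, all rescalings being the $(-\ell)_i$ relative to $\widehat\delta^i_r$. From \eqref{4.1} and the identities following it, applied on $\R^d$ with the $i$-th dilations, one has $\del_\0=\sum_{\ell\ge0}\sigma_{i,\ell}^{(-\ell)_i}$ and the telescoping $\rho_{i,\ell}^{(-\ell)_i}=\sum_{0\le\ell'\le\ell}\sigma_{i,\ell'}^{(-\ell')_i}$; and from Lemma~\ref{Lem4.2}\eqref{Lem4.2c}--\eqref{Lem4.2d}, together with $\sigma_{i,0}=\rho_{i,0}$, $\widetilde\sigma_{i,0}=\widetilde\rho_{i,0}$, one has
\[
\del_\0=\sum_{\ell\ge0}\sigma_{i,\ell}^{(-\ell)_i}*\widetilde\sigma_{i,\ell}^{(-\ell)_i}\ ,\qquad
\rho_{i,\ell}^{(-\ell)_i}*\widetilde\rho_{i,\ell}^{(-\ell)_i}=\sum\nolimits_{0\le\ell'\le\ell}\sigma_{i,\ell'}^{(-\ell')_i}*\widetilde\sigma_{i,\ell'}^{(-\ell')_i}\ .
\]

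Next I would compose. Writing $\del_\0=\del_\0*\cdots*\del_\0$ ($n$ copies on $\R^d$), substituting, and using commutativity and associativity of convolution of Schwartz functions, one obtains the $\N^n$-indexed reproducing formulas $\del_\0=\sum_{L'\in\N^n}\mathop{*}_{i=1}^n\sigma_{i,\ell'_i}^{(-\ell'_i)_i}$ and $\del_\0=\sum_{L'\in\N^n}\mathop{*}_{i=1}^n\bigl(\sigma_{i,\ell'_i}^{(-\ell'_i)_i}*\widetilde\sigma_{i,\ell'_i}^{(-\ell'_i)_i}\bigr)$ in $\SS'(\R^d)$. Since the tubes $\{\TT_L:L\in\LL\}$ partition $\N^n$ (Lemma~\ref{Lem3.7}), I would regroup the $\N^n$-sums according to $L\in\LL$. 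For $L'\in\TT_L$ one has $\ell'_i=\ell_i$ when $i\in D_L$ and $0\le\ell'_i\le\ell_i$ when $i\notin D_L$, so by commutativity (isolating the factors with $i\in D_L$) and the telescoping identities above (summing the factors with $i\notin D_L$),
\[
\sum_{L'\in\TT_L}\mathop{*}_{i=1}^n\bigl(\sigma_{i,\ell'_i}^{(-\ell'_i)_i}*\widetilde\sigma_{i,\ell'_i}^{(-\ell'_i)_i}\bigr)
=\Bigl(\mathop{*}_{i\in D_L}\sigma_{i,\ell_i}^{(-\ell_i)_i}*\widetilde\sigma_{i,\ell_i}^{(-\ell_i)_i}\Bigr)*\Bigl(\mathop{*}_{i\notin D_L}\rho_{i,\ell_i}^{(-\ell_i)_i}*\widetilde\rho_{i,\ell_i}^{(-\ell_i)_i}\Bigr)=\Sigma_L^{(-L)}*\widetilde\Sigma_L^{(-L)}\ ,
\]
the last equality being commutativity applied to \eqref{4.5}. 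Summing over $L\in\LL$ yields the second identity, and the identical regrouping of $\sum_{L'}\mathop{*}_i\sigma_{i,\ell'_i}^{(-\ell'_i)_i}$ yields the first. For the third identity, the telescoping identities give $\Rho_L^{(-L)}*\widetilde\Rho_L^{(-L)}=\mathop{*}_{i=1}^n\bigl(\rho_{i,\ell_i}^{(-\ell_i)_i}*\widetilde\rho_{i,\ell_i}^{(-\ell_i)_i}\bigr)=\sum_{L'\le L}\mathop{*}_{i=1}^n\bigl(\sigma_{i,\ell'_i}^{(-\ell'_i)_i}*\widetilde\sigma_{i,\ell'_i}^{(-\ell'_i)_i}\bigr)$; since for $L\in\LL$ the set $\{L'\in\N^n:L'\le L\}$ is the disjoint union of the tubes $\TT_{L'}$ with $L'\in\LL$ and $L'\le L$ --- the same combinatorial fact used for Proposition~\ref{Prop4.3}(b) --- regrouping as above turns this into $\sum_{L'\in\LL,\,L'\le L}\Sigma_{L'}^{(-L')}*\widetilde\Sigma_{L'}^{(-L')}$.

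I do not expect a deep obstacle; the one point requiring genuine care is the convergence of the $\N^n$-indexed convolution series in $\SS'(\R^d)$ and the legitimacy of rearranging it into tubes, since the terms are now honest convolutions and the natural partial sums run over cubes $\{L':0\le\ell'_i\le N\}$ rather than over $\LL$. This is handled exactly as in the one-parameter Lemma~\ref{Lem4.2} and Proposition~\ref{Prop4.3}: the cube partial sums telescope to $\mathop{*}_{i=1}^n\rho_{i,N}^{(-N)_i}$ (resp. with the tildes), whose Fourier transforms tend to $1$ boundedly and pointwise, hence converge to $\del_\0$ in $\SS'(\R^d)$; and the uniform Schwartz bounds of Corollary~\ref{Cor4.6}, together with the cancellation of $\sigma_{i,\ell}$ (resp. $\widetilde\sigma_{i,\ell}$) for $\ell\ge1$, supply the summability needed to pass to the full $\N^n$-sum and to rearrange freely. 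Everything else is bookkeeping resting on commutativity of convolution and the tube decomposition of Section~\ref{Sec3.4}.
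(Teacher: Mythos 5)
Your proposal is correct and follows essentially the same route as the paper's proof: obtain the $\N^n$-indexed identities by convolving the $n$ one-parameter reproducing formulas (Lemma \ref{Lem4.2}\eqref{Lem4.2c}), partition $\N^n$ into the tubes $\TT_L$ via Lemma \ref{Lem3.7}\eqref{Lem3.7c}, and collapse each tube sum to $\Sigma_L^{(-L)}$ (resp.\ $\Sigma_L^{(-L)}*\widetilde\Sigma_L^{(-L)}$) using the telescoping identity of Lemma \ref{Lem4.2}\eqref{Lem4.2d} together with commutativity of convolution. You are merely more explicit than the paper about distributional convergence and about the third identity, which the paper dismisses as ``similar''; the combinatorial fact you invoke for that identity is the same one underlying Proposition \ref{Prop4.3}(b).
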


\begin{proof}
The identity $\del_\0=\sum\nolimits_{L\in\N^{n}}\Big(\mathop{*}_{i=1}^n\sigma_{i,\ell_i}^{(-\ell_i)_i}\Big)*\Big(\mathop{*}_{i=1}^n\widetilde\sigma_{i,\ell_i}^{(-\ell_i)_i}\Big)$ is a trivial consequences of Lemma \ref{Lem4.2}\eqref{Lem4.2c}. By Lemma \ref{Lem3.7}\eqref{Lem3.7c}, $\N^{n}$ is the disjoint union of the sets $T_L\cap\N^{n}$, parametrized by $L\in\LL$. For $L\in\LL_D$, with $D\subseteq\{1,\dots,n\}$, $T_L\cap\N^{n}$ consists of the elements $L'\in\N^{n}$ such that
$\ell'_i=\ell_i$ if $i\in D$ and $\ell'_i\leq \ell_i$ if $i\not\in D$. It follows from Lemma \ref{Lem4.2}\eqref{Lem4.2d} that
$\Sigma_L^{(-L)}*\widetilde\Sigma_L^{(-L)}=\sum_{L'\in T_L\cap\N^{n}}\big(\mathop{*}_{i=1}^n\sigma_{i,\ell'_i}^{(-\ell'_i)_i}\big)*\big(\mathop{*}_{i=1}^n\widetilde\sigma_{i,\ell'_i}^{(-\ell'_i)_i}\big)$ for every $L\in\LL$. This gives the two reproducing formulas. The proof of the formula for $\Rho_L^{(-L)}*\widetilde\Rho_L^{(-L)}$ is similar.
\end{proof}

We say  that the family $\Sigma=\big\{\Sigma_{L}, \Rho_{L}, :L\in \LL\big\}$ is \textit{constructed via convolution} from $\big\{\rho_{i,\ell}\big\}$.

\subsection{Associated square functions}\label{Sec4.4}
\quad
\smallskip

\begin{definition} 
{\rm All reproducing formulas defined above have  corresponding square functions.
\begin{enumerate}[\rm(i)]
\item 
Let $\Psi=\big\{\Psi_{L}, \Phi_{L} :L\in \LL\big\}$ be a family constructed via tensor product. The associated {\it square function of tensor type} is
\be\label{5.1}
S_\Psi f=\Big(\sum\nolimits_{L\in\LL}\big|f*\Psi_{L}^{(-L)}\big|^2\Big)^\half.
\ee

\item Let $\Sigma=\big\{\Sigma_{L}, \Rho_{L}, :L\in \LL\big\}$ be a family constructed via convolution. The associated {\it square function of convolution type} is
\be\label{6.1}
S'_\Sigma f=\Big(\sum\nolimits_{L\in\LL}\big|f*\Sigma_L^{(-L)}\big|^2\Big)^\half\ .
\ee
\end{enumerate} }
\end{definition}

The main result of the next two section is that, for $f\in\SS(\R^d)$, the condition $Sf\in L^{1}(\R^{d})$ is  independent of the choice of 
the  square function $S$ among those defined above. We call this property {\it $L^1$-equivalence}.

\section{$L^1$-equivalence of multi-norm square functions of tensor type}\label{Sec5}

Let $\Psi=\big\{\Phi_{L}, \Psi_{L}, :L\in \LL\big\}$ and $\Psi'=\big\{\Phi'_{L}, \Psi'_{L}, :L\in \LL\big\}$ be two families constructed via tensor products from $\big\{\varphi_{i,\ell}\big\}$ and $\big\{\varphi'_{i,\ell}\big\}$ respectively and let $S_\Psi$, $S_{\Psi'}$ be the associated square functions.

\begin{theorem}\label{Thm5.1}
If $f\in \SS^{\prime}(\R^{d})$, then $S_{\Psi}f\in L^{1}(\R^{d})$ if and only if $S_{\Psi^{\prime}}\in L^{1}(\R^{d})$. More precisely, there is a constant $C>0$ independent of $\Psi$ and $\Psi'$ and an integer $M>0$ depending on the exponents $\{\lambda_{i}\}$ and the matrix $\bfE$ so that if $f\in \SS^{\prime}(\R^{d})$ and $S_{\Psi'}f\in L^{1}(\R^{d})$ then 
\be\label{5.1.5}
\big\vert\big\vert S_{\Psi^{\prime}}f\big\vert\big\vert_{L^{1}}\leq C\,\big\vert\big\vert\Psi^{\prime }\big\vert\big\vert_{(M)}\big\vert\big\vert S_{\Psi}f\big\vert\big\vert_{L^{1}},
\ee
and similarly with $\Psi$ and $\Psi'$ interchanged.
\end{theorem}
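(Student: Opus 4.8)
The plan is the classical almost-orthogonality scheme adapted to the admissible index set $\LL$, with the $L^1$ endpoint handled by duality against $L^\infty(\ell^2)$ rather than by any maximal operator. By the symmetry of the statement it suffices to prove \eqref{5.1.5}; interchanging $\Psi$ and $\Psi'$ then gives the reverse bound, and hence the equivalence. First I would pass from $\Psi$ to a tensor-type Calder\'on family: applying Lemma \ref{Lem4.2} in each factor $\R^{d_i}$ with a large cancellation order $m$ — to be fixed in the third step in terms of $\{\lambda_i\}$ and $\bfE$ — Proposition \ref{Prop4.3} produces functions $\widetilde\Psi_L$, each a tensor product whose $i$-th factor has cancellation of order $m$ for $i\in D_L$, with $\del_\0=\sum_{L\in\LL}\Psi_L^{(-L)}*\widetilde\Psi_L^{(-L)}$. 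Writing $g_L=f*\Psi_L^{(-L)}$ and convolving this identity with $\Psi_{L'}^{\prime(-L')}$ gives $f*\Psi_{L'}^{\prime(-L')}=\sum_{L\in\LL}g_L*K_{L,L'}$ with $K_{L,L'}=\widetilde\Psi_L^{(-L)}*\Psi_{L'}^{\prime(-L')}$.

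Because $\widetilde\Psi_L$ and $\Psi'_{L'}$ are tensor products, $K_{L,L'}(\x)=\prod_{i=1}^n k_i(\x_i)$ with each $k_i=\widetilde\Psi_{i,L}^{(-\ell_i)}*\Psi_{i,L'}^{\prime(-\ell'_i)}$ a one-parameter convolution on $\R^{d_i}$. The second step is the standard one-parameter estimate $|k_i(\x_i)|\le C_N\|\Psi'\|_{(M)}\,\theta_i(L,L')\,2^{\mu_iq_i}(1+2^{\mu_i}|\x_i|)^{-N}$ with $\mu_i=\min(\ell_i,\ell'_i)$, where $\theta_i(L,L')=2^{-\beta_i|\ell_i-\ell'_i|}$ whenever a factor carrying cancellation lies on the finer of the two scales — that is, when $i\in D_L$ and $\ell_i\ge\ell'_i$, or $i\in D_{L'}$ and $\ell'_i\ge\ell_i$ — and $\theta_i(L,L')=1$ otherwise; here $\beta_i$ grows linearly with $m$ on the $D_L$-side and is a fixed positive number depending only on $\bfE,\{\lambda_i\}$ on the $D_{L'}$-side, and $M$ depends only on $N$. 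Multiplying over $i$ gives the bump bound $|K_{L,L'}(\x)|\le C_N\|\Psi'\|_{(M)}\Theta(L,L')\prod_{i=1}^n 2^{\mu_iq_i}(1+2^{\mu_i}|\x_i|)^{-N}$ with $\Theta(L,L')=\prod_{i=1}^n\theta_i(L,L')$.

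The main obstacle is the third step: upgrading the partial gain $\Theta(L,L')$ to a genuine decay $\Theta(L,L')\le C\,2^{-\epsilon|L-L'|}$ over $\LL\times\LL$ for some $\epsilon>0$, so that $\sup_{L\in\LL}\sum_{L'\in\LL}\Theta(L,L')<\infty$ and $\sup_{L'\in\LL}\sum_{L\in\LL}\Theta(L,L')<\infty$. In the product case this is a trivial geometric series in $\Z^n$, because there $D_L=\{1,\dots,n\}$ always and every block contributes full decay; in the multi-norm case a block may contribute nothing, and one must use the geometry of $\LL$. I would reduce to $L\in\LL_S$ and $L'\in\LL_{S'}$ for a fixed pair $S,S'\in\SS^*_\bfE$, parametrize $L$ and $L'$ by their dotted coordinates (the remaining coordinates being slaved via \eqref{3.6}, by Lemma \ref{Lem3.7xyz}), and invoke Lemma \ref{Lem3.8} to place both $L$ and $L'$ within distance $\kappa$ of $\Gamma(\bfE)$. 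For each dotted index $k_r$ ($1\le r\le s$) block $k_r$ carries cancellation of order $m$, which controls $\ell_{k_r}-\ell'_{k_r}$ when $\ell_{k_r}\ge\ell'_{k_r}$; when $\ell_{k_r}<\ell'_{k_r}$ the gap $\ell'_{k_r}-\ell_{k_r}$ is instead controlled, through the inequalities pinning $L,L'$ to $\Gamma(\bfE)$, by gaps between dotted coordinates of $L'$, where cancellation is again available. Summing these gains over all dotted indices and converting control of the dotted coordinates into control of $|L-L'|$ via Lemma \ref{Lem3.8} yields the required decay once $m$ (hence $M$) is large enough; this combinatorial bookkeeping, together with the matching of the two parametrizations through $\Gamma(\bfE)$, is where the real work lies.

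Finally I would conclude by duality. For a finite set $F\subset\LL$ and a test family $\{\varphi_{L'}\}_{L'\in F}$ with $\big\|(\sum_{L'\in F}|\varphi_{L'}|^2)^{1/2}\big\|_\infty\le1$, write $\sum_{L'\in F}\langle f*\Psi_{L'}^{\prime(-L')},\varphi_{L'}\rangle=\sum_{L\in\LL}\langle g_L,\chi_L\rangle$ with $\chi_L=\sum_{L'\in F}\widetilde K_{L,L'}*\varphi_{L'}$ and $\widetilde K_{L,L'}(\x)=\overline{K_{L,L'}(-\x)}$, the interchange being legitimate by the decay estimate. Then $\big|\sum_{L\in\LL}\langle g_L,\chi_L\rangle\big|\le\|S_\Psi f\|_{L^1}\,\big\|(\sum_{L\in\LL}|\chi_L|^2)^{1/2}\big\|_\infty$, so it remains to bound the $L^\infty(\ell^2)$ norm of $\{\chi_L\}$. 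From the bump bound, $|\chi_L(\x)|\le C\|\Psi'\|_{(M)}\sum_{L'}2^{-\epsilon|L-L'|}\cM_{\mathrm{str}}\varphi_{L'}(\x)$, where $\cM_{\mathrm{str}}$ is the strong maximal function of the product structure $\R^{d_1}\times\cdots\times\R^{d_n}$; a Schur test with the weights $2^{-\epsilon|L-L'|}$ and the Cauchy--Schwarz inequality in $L$, followed by the vector-valued maximal inequality $\big\|(\sum_{L'}|\cM_{\mathrm{str}}\varphi_{L'}|^2)^{1/2}\big\|_\infty\le C\big\|(\sum_{L'}|\varphi_{L'}|^2)^{1/2}\big\|_\infty$ in $L^\infty(\ell^2)$, give $\big\|(\sum_{L\in\LL}|\chi_L|^2)^{1/2}\big\|_\infty\le C\|\Psi'\|_{(M)}$. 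Taking the supremum over $F$ and $\{\varphi_{L'}\}$ and letting $F\uparrow\LL$ by monotone convergence yields $\|S_{\Psi'}f\|_{L^1}\le C\|\Psi'\|_{(M)}\|S_\Psi f\|_{L^1}$, which is \eqref{5.1.5} (the Schwartz order $M$ and the constant being determined by $\{\lambda_i\}$, $\bfE$, and, through Lemma \ref{Lem4.2}, the Schwartz norms of $\Psi$); in particular $S_\Psi f\in L^1$ forces $S_{\Psi'}f\in L^1$, and symmetry gives the full equivalence.
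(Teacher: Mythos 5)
Your first three steps track the paper's argument closely: the reduction via the tensor-type Calder\'on family (Proposition \ref{Prop4.3}), the one-parameter convolution estimate with cancellation on the finer scale (Lemma \ref{Lem5.2}), and the ``borrowing'' of decay from the dotted indices using Lemma \ref{Lem3.8} and the proximity of $\LL$ to $\Gamma(\bfE)$ are exactly the content of Lemma \ref{Lem5.3}, and you have correctly identified that combinatorial step as the multi-norm-specific work. The gap is in your concluding step. The duality reduction to $\big\|\big(\sum_{L}|\chi_L|^2\big)^{1/2}\big\|_\infty$ is legitimate, and the Schur test correctly reduces this to the inequality $\big\|\big(\sum_{L'}(\cM_{\mathrm{str}}\varphi_{L'})^2\big)^{1/2}\big\|_\infty\le C\big\|\big(\sum_{L'}|\varphi_{L'}|^2\big)^{1/2}\big\|_\infty$. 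But the Fefferman--Stein vector-valued maximal inequality fails at $p=\infty$ (for $q=2$): take $\varphi_j=\1_{R_j}$ for $N$ pairwise disjoint rectangles $R_j$ accumulating geometrically toward a point $\x_0$; then $\big(\sum_j|\varphi_j|^2\big)^{1/2}\le1$ while $\cM_{\mathrm{str}}\varphi_j(\x_0)\ge c$ for each $j$, so the left side is at least $c\sqrt N$. So the inequality you invoke is false, and the argument does not close. This is not a removable technicality: proving $L^\infty(\ell^2)$-boundedness of the adjoint would amount to $L^1(\ell^2)$-boundedness of the full matrix operator $\{g_L\}\mapsto\{\sum_L g_L*K_{L,L'}\}$ on arbitrary sequences, which is exactly the endpoint statement that the scalar maximal function cannot deliver.

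What the paper does instead is the Peetre/Plancherel--P\'olya device. It introduces the maximal functions $\omega_{M,L}(\x)=\sup_{\x'}|f*\Psi_L^{(-L)}(\x-\x')|\prod_i(1+2^{\ell_i}|\x'_i|)^{-M}$ and, after the pointwise bound $|f*\Psi_P'{}^{(-P)}|\le C\sum_L 2^{-\delta m|P-L|}\omega_{M,L}$ (so that Young's inequality on $\Z^n$ reduces everything to $\big\|\big(\sum_L\omega_{M,L}^2\big)^{1/2}\big\|_1$), establishes in Lemma \ref{Lem5.7} the sub-mean-value inequality $\omega_{M,L}(\x)^r\le C\sum_{L'\in\LL_L^+}2^{-rM|L-L'|}\,\cM_{\mathrm{str}}\big(|f*\Psi_{L'}^{(-L')}|^r\big)(\x)$ for $r<1$ with $rM>1$. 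Raising to the power $1/r$ moves the estimate into $L^{1/r}(\ell^{2/r})$, where both exponents exceed $1$ and the Fefferman--Stein inequality is valid; undoing the powers then gives $\big\|\big(\sum_L\omega_{M,L}^2\big)^{1/2}\big\|_1\le C\|S_\Psi f\|_1$. You would need to replace your duality step with this mechanism (or an equivalent one) to reach the $L^1$ endpoint.
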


The proof is modelled upon \cite{MR3925053} and is based on the existence of associated Calder\'on reproducing formulas, cf. Proposition \ref{Prop4.3} (a), involving auxiliary functions $\widetilde\Psi_L,\widetilde\Phi_L$, which are also tensor products of function $\widetilde\psi_{i,\ell_i},\widetilde\ph_{i,\ell_i}$, and $\widetilde\Psi_L$ can be assumed, for $i\in D_L$, to have cancellations of order $m$ in the variables $\x _i$, with $m$ arbitrarily large. We also need several lemmas.
Recall that $\big\{\lambda_{i}:1\leq i \leq d\big\}$ are the exponents of the dilations in equation \eqref{1.1}. 
Let 
\bea\label{5.2WWT}
\bar\lambda_{i}&=\min_{k\in E_{i}}\lambda_{k}
&&
\text{ and }&
\bar\lambda&=\min_{1\leq i \leq d}\lambda_{i}.
\eea 
If $f,g\in \SS(\R^{d_{i}})$, $\ell\in \Z$, and $\betab\in \N^{d_{i}}$,
\beas
(f*g)^{(\ell)}&=f^{(\ell)}*g^{(\ell)}, & \partial^{\betab}\big[f*g\big]&=\big[\partial^{\betab}f\big]*g=f*\big[\partial^{\betab}g\big], &&\text{and}& \big[\partial^{\betab}f\big]^{(-\ell)}&=2^{-\ell\,\[\betab\]_{i}}\partial^{\betab}\big[f^{(-\ell)}\big].
\eeas

\begin{lemma}\label{Lem5.2}
Let $f,g\in \SS(\R^{d_{i}})$, and suppose $f$ has cancellation of order $m$ and $g$ has cancellation of order $m'$. Then for any $p,\ell \in\bN$ there exists $h\in\SS(\R^{d_{i}})$ with order of cancellation $m+m'$ and normalized relative to $f$ and $g$ so that
\beas
f^{(-p)}*g^{(-\ell)}=\begin{cases}2^{-\bar\la_i m(p-\ell)}h^{(-\ell)}&\text{ if $p\geq \ell$} \\
2^{-\bar\la_i m'(\ell-p)}h^{(-p)}&\text{ if $\ell\geq p$}
\end{cases}.
\eeas
\end{lemma}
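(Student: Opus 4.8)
The plan is to reduce this one-parameter lemma about rescaled convolutions to a Fourier-side estimate, exactly in the spirit of the proof of Lemma~\ref{Lem4.2}. First I would pass to the Fourier transform: writing $\nu=\FF[f]$ and $\mu=\FF[g]$, the hypotheses become that $\nu$ vanishes to order $m$ at $\0$ and $\mu$ vanishes to order $m'$ at $\0$, and all that needs to be controlled is the Schwartz function whose Fourier transform is $\nu(2^{-p}\xi)\mu(2^{-\ell}\xi)$ (with the homogeneous-dilation normalization $2^{-pQ}f(2^{-p}\cdot)$ on the kernel side matching the plain $\nu(2^{-p}\xi)$ on the multiplier side). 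By symmetry it suffices to treat the case $p\ge\ell$; the case $\ell\ge p$ follows by interchanging the roles of $f$ and $g$.

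In the case $p\ge\ell$, set $j=p-\ell\ge0$. After the change of variables $\xi\mapsto 2^\ell\xi$ the multiplier becomes $\nu(2^{-j}\xi)\mu(\xi)$, and I want to write this as $2^{-\bar\la_i m j}\,\widetilde h(\xi)$ with $\widetilde h=\FF[h]$ a Fourier transform of a Schwartz function, uniformly bounded in every Schwartz seminorm by seminorms of $f$ and $g$, and vanishing to order $m+m'$ at $\0$. The key point is the factor $2^{-\bar\la_i m j}$: because $\nu$ vanishes to order $m$ at the origin, Lemma~\ref{Lem4.1} (Hadamard's lemma, applied $m$ times, or directly its statement) lets me write $f=\sum_{|\betab|=m}\de^\betab f_\betab$ with $\{f_\betab\}$ normalized relative to $f$, hence $\nu(\xi)=\sum_{|\betab|=m}(i\xi)^\betab\,\widehat{f_\betab}(\xi)$, and so
\[
\nu(2^{-j}\xi)=\sum_{|\betab|=m}2^{-j\,\[\betab\]_i}(i\xi)^\betab\,\widehat{f_\betab}(2^{-j}\xi)\ .
\]
Since $\[\betab\]_i=\sum_{k\in E_i}\la_k\beta_k\ge\bar\la_i|\betab|=\bar\la_i m$ whenever $|\betab|=m$, each term carries a factor of size at most $2^{-\bar\la_i m j}$. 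Multiplying by $\mu(\xi)$ and defining
\[
\widetilde h(\xi)=\sum_{|\betab|=m}2^{j(\bar\la_i m-\[\betab\]_i)}(i\xi)^\betab\,\widehat{f_\betab}(2^{-j}\xi)\,\mu(\xi)\ ,
\]
the coefficients $2^{j(\bar\la_i m-\[\betab\]_i)}$ are bounded by $1$, the functions $\widehat{f_\betab}(2^{-j}\cdot)$ are uniformly bounded in every Schwartz seminorm (dilating by $2^j\ge1$ only improves decay) with constants controlled by seminorms of $f$, the polynomial factors $(i\xi)^\betab$ are harmless against the rapid decay of $\mu$, and $(i\xi)^\betab\mu(\xi)$ still vanishes to order $m'$ at $\0$ while $\widehat{f_\betab}(2^{-j}\xi)$ is smooth — so $\widetilde h$ vanishes to order $m'$ at the origin; one can in fact redistribute to get the full order $m+m'$ by instead applying Hadamard to $g$ as well, or simply noting the $(i\xi)^\betab$ already contributes $m$ more. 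Then $h=\FF\inv[\widetilde h]\in\SS(\R^{d_i})$ has cancellation of order $m+m'$, is normalized relative to $f$ and $g$, and $f^{(-p)}*g^{(-\ell)}=2^{-\bar\la_i m j}h^{(-\ell)}$ as claimed.

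The main obstacle is bookkeeping rather than conceptual: one must make sure the extraction of the factor $2^{-\bar\la_i m(p-\ell)}$ is exactly the worst case — i.e. that no $\betab$ with $|\betab|=m$ produces a larger coefficient — which is precisely why $\bar\la_i=\min_{k\in E_i}\la_k$ appears, and one must keep the normalization (in the sense of Definition 4.1 of \cite{MR2949616}, recalled in Section~\ref{Sec4}) uniform in $p$ and $\ell$, using that dilation by a factor $\ge1$ on the Fourier side does not worsen any Schwartz seminorm. The cancellation count $m+m'$ then follows by combining the $m$ derivatives extracted from $f$ with the $m'$ vanishing order retained in $\mu$; alternatively, splitting the factorization symmetrically between $f$ and $g$ makes both the decay factor and the cancellation order transparent. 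All remaining estimates are the routine Leibniz-rule and dilation-scaling bounds already used repeatedly in the proof of Lemma~\ref{Lem4.2}.
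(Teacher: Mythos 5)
Your proof is correct and follows essentially the same route as the paper's: both hinge on Hadamard's lemma (Lemma \ref{Lem4.1}) to write $f=\sum_{|\betab|=m}\de^{\betab}f_{\betab}$, extract the factor $2^{-(p-\ell)\[\betab\]_i}\le 2^{-\bar\la_i m(p-\ell)}$ produced by rescaling the derivatives, and obtain cancellation of order $m+m'$ by combining the $m$ derivatives (equivalently the factor $\xi^{\betab}$ on the Fourier side) with the cancellation of $g$; the paper merely carries out the seminorm estimates in physical space via a convolution integral rather than on the multiplier side. One small imprecision worth noting: $\widehat{f_{\betab}}(2^{-j}\cdot)$ does not gain decay as $j$ grows (it flattens toward the constant $\widehat{f_{\betab}}(\0)$), so the uniform rapid decay of $\widetilde h$ must come entirely from $\mu$, with $\widehat{f_{\betab}}(2^{-j}\cdot)$ contributing only uniformly bounded derivatives --- this is exactly what your Leibniz-rule step needs, and it does not affect the conclusion.
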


\begin{proof}
We can assume $p\geq \ell $. Also $f^{(-p)}*g^{(-\ell)}=\big(f^{(-(p-\ell))}*g\big)^{(-\ell)}$ so by rescaling we can assume that $\ell=0$. Note that $\[\beta\]_{i}=\sum_{k\in E_{i}}\beta_{k}\lambda_{k}\geq\bar\lambda_{i}\sum_{k\in E_{i}}\beta_{k}=m\bar\lambda_{i}$ so $2^{-\ell\,[\[\betab\]_{i}-\bar\lambda_{i}m]}\leq 1$. By Lemma \ref{Lem4.1} we have $f=\sum_{|\betab|=m}\partial^{\betab}f_{\betab}$. Thus
\beas
f^{(-p)}*g &
=
\sum\nolimits_{|\betab|=m}\big[\partial^{\betab}f_{\betab}\big]^{(-p)}*g
=
\sum\nolimits_{|\betab|=m}2^{-p\,\[\betab\]_{i}}\partial^{\betab}\big[f_{\betab}^{(-p)}\big]*g\\
&=
\sum\nolimits_{|\betab|=m}2^{-p\,\[\betab\]_{i}}\partial^{\betab}\big[f_{\betab}^{(-p)}*g\big]=
2^{-\bar\lambda_{i}m\ell}\Big[\sum\nolimits_{|\betab|=m}2^{-\ell\,[\[\betab\]_{i}-\bar\lambda_{i}m]}\partial^{\betab}\big[f_{\betab}^{(-p)}*g\big]\Big]\\
&=:2^{-\bar\lambda_{i}m\ell}h.
\eeas
Clearly $h$ has vanishing moments up to order $m+m'$. To estimates the Schwartz norms of $\partial^{\betab}\big[f_{\betab}^{(-\p)}*g\big]=f_{\betab}^{(-p)}*\partial^{\betab}g$ we must estimate $(1+|\t|)^{N}\big\vert\f_{\beta}^{(-p)}*\partial^{\betab+\gammab}g(\t)\big\vert$ for $\t\in \R^{d_{i}}$. 
\beas
(1&+|\t|)^{N}\big\vert\f_{\beta}^{(-p)}*\partial^{\betab+\gammab}g(\t)\big\vert
=
(1+|\t|)^{N}\Big|\int_{\R^{d_i}}2^{q_{i} p}f_\beta(2^p \s)\de^{\betab+\gammab} g(\t-\s)\,d\s\Big|
\\
&\leq \|f_{\betab}\|_{(N)}\|\partial^{\betab+\gammab}g\|_{(N)}\int_{\R^{d_i}}\frac{\big(1+|\t|\big)^{N}2^{q_i\ell}d\s}{\big(1+\big|2^\ell \s\big|\big)^N\big(1+|\t-\s|\big)^N}\\
&\leq C_{N} \|f_{\betab}\|_{(N)}\|\partial^{\betab+\gammab}g\|_{(N)}\int_{\R^{d_i}}\frac{\big|2^{\ell}\s\big|^{N}+2^{q_i\ell}\big(1+|\t-\s|\big)^N}{\big(1+\big|2^\ell \s\big|\big)^N\big(1+|\t-\s|\big)^N}\,d\s \leq C_{N} \|f_{\betab}\|_{(N)}\|\partial^{\betab+\gammab}g\|_{(N)}.\qedhere
\eeas
\end{proof}

For $1 \leq i \leq n$ let $\{\varphi_{i,\ell}:\ell\in \N\big\}\subseteq\SS(\R^{d_{i}})$ and $\{\varphi_{i,p}^{\prime}:p\in \N\big\}\subseteq\SS(\R^{d_{i}})$ be uniformly bounded families with $\int_{\R^{d_{i}}}\varphi_{i,\ell}(\x_{i})\,d\x_{i}=\int_{\R^{d_{i}}}\varphi_{i,p}^{\prime}(\x_{i})\,d\x_{i}=1$. Let $\Psi=\big\{\Psi_{L}, \widetilde\Psi_{L}:L\in \LL\big\}$ and $\Psi^{\prime}=\big\{\Psi_{P}^{\prime},\widetilde\Psi_{P}^{\prime}:P\in \LL\big\}$ be the families of Schwartz functions on $\R^{d}$ constructed via tensor products from $\{\varphi_{i,\ell}\}$ and $\{\varphi_{i,p}^{\prime}\}$ as in Section \ref{Sec4.2}. If $P,L\in \N^{n}$ then $|P-L|=\max_{i=1,\dots,n}|p_{i}-\ell_{i}|$. Let $\kappa$ be the constant from Lemma \ref{Lem3.8} and let $N=\kappa\big(1+\max_{j,k}e(j,k)^{-1}\big)$.

\begin{lemma}\label{Lem5.3}
Let $M>0$ and let $m$ be the order of cancellation of the functions $\widetilde\psi_{i,\ell}$. With $\bar\lambda$ defined in \eqref{5.2WWT}, if $\bar\lambda(m-1)>M$ there exists $ C, \epsilon>0$ depending only on $\kappa$, $\bar\lambda$, and the matrix $\bfE$ so that for all $P=(p_{1}, \ldots, p_{n})$ and $L=(\ell_{1}, \ldots, \ell_{n})$ in $\LL$ we have $\int_{\R^{d}}\big(\prod_{i=1}^{n}(1+2^{\ell_{i}}|\x_{i}|)\big)^{M}\big\vert (\Psi_{P}^{\prime})^{(-P)}*(\widetilde\Psi_{L})^{(-L)}(\x)\big\vert\,d\x \leq C\,2^{-\epsilon|P-L|}$.
\end{lemma}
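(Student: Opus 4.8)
The plan is to exploit the tensor-product structure of $\Psi'_P$ and $\widetilde\Psi_L$ to reduce the estimate to a product of one-dimensional estimates on the factors $\R^{d_i}$, to handle each factor with Lemma \ref{Lem5.2}, and to combine using the geometry of the cones $\Gamma_S$ (Proposition \ref{Prop3.2}) together with the distance estimates of Lemma \ref{Lem3.8}. First, recall from \eqref{3.4ee} that $\Psi'_P=\bigotimes_{i=1}^n\Psi'_{i,P}$ and $\widetilde\Psi_L=\bigotimes_{i=1}^n\widetilde\Psi_{i,L}$, where $\Psi'_{i,P}$ is $\psi'_{i,p_i}$ if $i\in D_P$ and $\varphi'_{i,p_i}$ otherwise, and $\widetilde\Psi_{i,L}$ is $\widetilde\psi_{i,\ell_i}$ if $i\in D_L$ and $\widetilde\varphi_{i,\ell_i}$ otherwise. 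Since the rescalings $f\mapsto f^{(-P)},f^{(-L)}$ and convolution respect tensor products, and the weight $\prod_i(1+2^{\ell_i}|\x_i|)^M$ also factors, the integral in the statement equals $\prod_{i=1}^nI_i$ with $I_i=\int_{\R^{d_i}}(1+2^{\ell_i}|\x_i|)^M\,\big|(\Psi'_{i,P})^{(-p_i)}*(\widetilde\Psi_{i,L})^{(-\ell_i)}(\x_i)\big|\,d\x_i$. So it suffices to estimate each $I_i$ and take the product.

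For each $i$, Lemma \ref{Lem5.2} applies: with $r_i=\min(p_i,\ell_i)$ the convolution equals $G_i^{(-r_i)}$ for some $G_i\in\SS(\R^{d_i})$ normalized relative to the data, carrying a factor $2^{-\bar\lambda_i c_i|p_i-\ell_i|}$, where $c_i$ is the order of cancellation of whichever of the two functions sits at the finer scale (so $c_i=0$ if that function has none). Rescaling by $2^{r_i}$ gives $I_i=\int_{\R^{d_i}}(1+2^{(\ell_i-p_i)_+}|\y_i|)^M|G_i(\y_i)|\,d\y_i\lesssim 2^{M(\ell_i-p_i)_+}\,2^{-\bar\lambda_i c_i|p_i-\ell_i|}$, with $\bar\lambda$ as in \eqref{5.2WWT}. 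Hence: if $p_i\ge\ell_i$ the weight matches the scale of $G_i$ and the finer function is $\Psi'_{i,P}$, so $I_i\lesssim 2^{-\bar\lambda(p_i-\ell_i)}$ when $i\in D_P$ (then $c_i\ge1$) and $I_i\lesssim1$ otherwise; if $\ell_i>p_i$ and $i\in D_L$, the weight overcounts by $2^{M(\ell_i-p_i)}$ but the finer function is $\widetilde\psi_{i,\ell_i}$ with $c_i=m$, and since $\bar\lambda_i m\ge\bar\lambda m>M+\bar\lambda$ by the hypothesis we get $I_i\lesssim 2^{(M-\bar\lambda_i m)(\ell_i-p_i)}\lesssim 2^{-\bar\lambda(\ell_i-p_i)}$; the only case producing genuine growth is $\ell_i>p_i$ with $i\notin D_L$, where only $I_i\lesssim 2^{M(\ell_i-p_i)}$ is available. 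In particular $I_i\lesssim 2^{-\bar\lambda|p_i-\ell_i|}$ whenever $i\in D_L$ and $\ell_i\ge p_i$, or $i\in D_P$ and $p_i\ge\ell_i$.

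It remains to combine the $I_i$ into $\prod_iI_i\lesssim 2^{-\epsilon|P-L|}$, and this is where the structure of the index set enters. If $L\in\LL_S$ with $S=\{(A_r,k_r):r=1,\dots,s\}$, then by \eqref{3.6} every non-dotted coordinate satisfies $\ell_i=\lfloor\ell_{k_r}/e(k_r,i)\rfloor$ for $i\in A_r$, so $e(k_r,i)\ell_i\le\ell_{k_r}$; the analogous relations hold for $P\in\LL_{S'}$. Combining these with Lemma \ref{Lem3.8}, which places $L$ and $P$ within a bounded distance of the faces $F_S$ and $F_{S'}$, shows that $|P-L|$ is comparable, up to an additive constant and to factors depending only on $\bfE$, to $\max_{k\in D_L\cup D_P}|\ell_k-p_k|$. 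Consequently every growth direction — $i\in A_r\setminus\{k_r\}$ with $\ell_i>p_i$ — can be charged against the decay $2^{-\bar\lambda m(\ell_{k_r}-p_{k_r})}$ coming from the dotted index $k_r\in D_L$ of its block, using $\ell_i-p_i\le e(k_r,i)^{-1}\ell_{k_r}$ together with $\ell_{k_r}>p_{k_r}$ (the latter obtained after a further reduction from the cone inequalities); since $\bar\lambda(m-1)>M$, the decay contributed by the dotted coordinates dominates the accumulated weight growth with enough to spare for a factor $2^{-\epsilon|P-L|}$, while the directions that are neither of growth nor of decay type contribute only bounded factors.

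The main obstacle is precisely this last combination: one must organize the bookkeeping — which growth direction is charged to which dotted direction, and how to handle blocks in which $p_{k_r}$ is itself so large that $\ell_{k_r}-p_{k_r}$ carries no gain (in which case the gain has to be traced back through the block structure of $P$) — so that the total decay beats the accumulated weight growth uniformly in $P,L\in\LL$ and still leaves honest exponential decay in $|P-L|$. This is where the cone geometry of Section \ref{Sec3}, rather than the one-parameter Lemma \ref{Lem5.2} alone, is essential.
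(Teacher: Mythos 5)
Your proposal follows the same route as the paper's proof: factor the integral as $\prod_{i=1}^nI_i$ using the tensor structure, apply Lemma \ref{Lem5.2} to each factor, and arrive at exactly the four-case estimate of \eqref{5.2} (decay $2^{-\bar\lambda(p_i-\ell_i)}$ for $p_i\ge\ell_i$, $i\in D_P$; $O(1)$ for $p_i\ge\ell_i$, $i\notin D_P$; decay $2^{-(\bar\lambda m-M)(\ell_i-p_i)}$ for $\ell_i>p_i$, $i\in D_L$; growth $2^{M(\ell_i-p_i)}$ for $\ell_i>p_i$, $i\notin D_L$). That part is correct and matches the paper, including the initial reduction to $|p_i-\ell_i|$ larger than a fixed constant.

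The issue is that your final paragraph names the decisive step — charging each non-decaying or growing direction to a dotted direction of the appropriate block — as ``the main obstacle'' and stops there, whereas this bookkeeping \emph{is} the proof. The paper closes it with two explicit claims, and it is worth seeing that both of your worries dissolve once they are written down. Writing $P\in\LL_S$, $L\in\LL_T$ and $\SS^{\pm},\TT^{\pm}$ for the four index classes: (1) if $i\in\SS^-$ (so $p_i>\ell_i$, $i\in A_r\setminus\{k_r\}$, contributing only $O(1)$ but possibly realizing $|P-L|$), then $p_i\le p_{k_r}/e(k_r,i)$ from \eqref{3.6}, while the point $\v\in F_T$ with $|L-\v|\le\kappa$ satisfies $v_{k_r}\le e(k_r,i)v_i$, whence $p_i-\ell_i\le (p_{k_r}-\ell_{k_r})/e(k_r,i)+C\kappa$; this simultaneously shows $p_{k_r}-\ell_{k_r}>0$ (so $k_r\in\SS^+$ really does decay — no ``tracing back'' is needed) and that its size is comparable to $p_i-\ell_i$. (2) Symmetrically, if $i\in\TT^-$ then $\ell_i-p_i\le(\ell_{m_q}-p_{m_q})/e(m_q,i)+C\kappa$ using $\u\in F_S$ near $P$, forcing $m_q\in\TT^+$ with a comparable gap. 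One then multiplies each bad factor $I_i$ by a small power $I_{k_r}^{\alpha}$ (resp.\ $I_{m_q}^{\alpha}$) of its designated good factor; since each good factor is shared by at most $n$ bad ones, choosing $\alpha$ small and then $m$ large (so that $\bar\lambda m-M$ dominates) yields $\prod_iI_i\le C2^{-\epsilon|P-L|}$. So the obstacle you flag is surmountable exactly along the lines you sketch, but as submitted the argument is incomplete without these two claims and the $\alpha$-borrowing device that converts them into the product bound.
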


\begin{proof} 
Let $S=\big\{(A_{1},k_{1}), \ldots, (A_{s},k_{s})\big\}$ and $T=\big\{(B_{1},m_{1}), \ldots, (B_{\sigma},m_{\sigma})\big\}$ be marked partitions such that $P\in \LL_{S}$ and $L\in \LL_{T}$. Put $D_{S}=\{k_{1}, \ldots, k_{s}\}$ and $D_{T}=\{m_{1}, \ldots, m_{\sigma}\}$. Since $\Psi_{P}^{\prime}$ and $\widetilde\Psi_{L}$ are tensor products the integral in the statement equals $\prod_{i=1}^{n}I_{i}$ where 
\beas
I_{i}=\int_{\R^{d_i}}\big(1+2^{\ell_i}|\x_{i}|\big)^M\big| (\psi_{i,p_{i}}^{\prime})^{(-p_{i})}*(\widetilde\psi_{i,\ell_{i}})^{(-\ell_{i})}(\x_{i})\big|\,d\x_{i}.
\eeas
We can assume that $|p_{i}-\ell_{i}|>\kappa(1+\max_{j,k}e(j,k)^{-1})$ for otherwise $2^{-\bar\lambda|p_{i}-\ell_{i}|}$ is controlled from above and below by fixed constants and the inequality $I_i\leq C2^{-\eps|p_i-\ell_i|}$ is obvious. 
By Lemma \ref{Lem5.2} there exists $h_{i}\in \SS(\R^{d_{i}})$, normalized relative to $\Psi$ and $\Psi^{\prime}$, so that
\beas
(\Psi_{i,P}^{\prime})^{(-p_{i})}*(\widetilde\Psi_{i,L})^{(-\ell_{i})}(\x_{i})
=
\begin{cases}
2^{-\bar\lambda_{i}(p_{i}-\ell_{i})}h_{i}^{(-\ell_{i})}(\x_{i}) &\text{if $p_{i}\geq \ell_{i}$ and $i\in D_{S}$}\\
h_{i}^{(-\ell_{i})}(\x_{i}) &\text{if $p_{i}\geq \ell_{i}$ and $i\notin D_{S}$}\\
2^{-\bar\lambda_{i}m(\ell_{i}-p_{i})}h_{i}^{(-p_{i})}(\x_{i})&\text{if $\ell_{i}\geq p_{i}$ and $i\in D_{T}$}\\
h_{i}^{(-p_{i})}(\x_{i}) &\text{if $\ell_{i}\geq p_{i}$ and $i\notin D_{T}$}
\end{cases}
\eeas
and so 
\bea\label{5.2}
I_{i}\leq
\begin{cases}
C_{M}\,2^{-\bar\lambda(p_{i}-\ell_{i})}&\text{if $p_{i}\geq \ell_{i}$ and $i\in D_{S}$}\\
C_{M}&\text{if $p_{i}\geq \ell_{i}$ and $i\notin D_{S}$}\\
C_{M}\,2^{-(\bar\lambda m-M)(\ell_{i}-p_{i})}&\text{if $\ell_{i}\geq p_{i}$ and $i\in D_{T}$}\\
C_{M}\,2^{M(\ell_{i}-p_{i})}&\text{if $\ell_{i}\geq p_{i}$ and $i\notin D_{T}$}
\end{cases}.
\eea
Let 
\beas
\SS^{+}&=\Big\{i:p_{i}>\ell_{i},\, i\in D_{S}\Big\}&&\text{\qquad and \quad}& \TT^{+}&=\Big\{i:\ell_{i}>p_{i},\,i\in D_{T}\Big\},\\
\SS^{-}&=\Big\{i:p_{i}>\ell_{i},\,i\notin D_{S}\Big\}&&\text{\qquad and \quad}& \TT^{-}&=\Big\{i:\ell_{i}>p_{i},\, i\notin D_{T}\Big\}.
\eeas
The estimates \eqref{5.2} are small in $|p_{i}-\ell_{i}|$ only when $i\in \SS^{+}$, in which case $I_{i}\leq C_{M}e^{-\bar\lambda(p_{i}-\ell_{i})}$, or when $i\in \TT^{+}$, in which case $I_{i}\leq C_{M}2^{-(\bar\lambda m-M)(\ell_{i}-p_{i})}$. If $i\in \SS^{-}$ the estimate is not small and if $i\in \TT^{-}$ the estimate for $I_{i}$ is large. To prove Lemma \ref{Lem5.3} we show that we can ``borrow'' some of the gain from $I_{i}$ when $i\in \SS^{+}\cup\TT^{+}$ to obtain a gain for $I_{j}$ when $j\notin \SS^{+}\cup\TT^{+}$. We show that there are constants $c_{1},c_{2}>0$ depending only on $\kappa$ and the matrix $\bfE$ so that if $|p_{i}-\ell_{i}|>\kappa\big(1+\max_{j,k}e(j,k)^{-1}\big)$ then the following hold.
\begin{enumerate}[(1)]
\item\label{A}
If $i\in\SS^{-}$ there exists $k_{r}\in \SS^{+}$ with $p_{k_{r}}-\ell_{k_{r}}>c_{1}(p_{i}-\ell_{i})-c_{2}$, and so for $0<\alpha<\!\!<1$
\beas
I_{k_{r}}^{\alpha}I_{i}\leq C_{M}2^{-\bar\lambda \alpha(p_{k_{r}}-\ell_{k_{r}})}\leq C_{M}^{\prime}2^{-\bar\lambda c_{1}\alpha(p_{i}-\ell_{i})}.
\eeas

\item\label{B}
If $i\in\TT^{-}$ there exists $m_{q}\in \TT^{+}$ with $\ell_{m_{q}}-p_{m_{q}}\geq c_{1}(\ell_{i}-p_{i})-c_{2}$, and so
\beas
I_{m_{q}}^{\alpha}I_{i}\leq C_{M}2^{M-(\bar\lambda m-M)\alpha(\ell_{m_{q}}-p_{m_{q}})}\leq C_{M}^{\prime}2^{M-(\bar\lambda m-M-1)c_{1}\alpha(\ell_{i}-p_{i})}.
\eeas
\end{enumerate}
Assuming \eqref{A} and \eqref{B}, then choosing $\alpha$ sufficiently small and then $m$ sufficiently large, it follows that there exists $\epsilon >0$ depending on $n$, $\kappa$, and the matrix $\bfE$ so that $\prod_{i=1}^{n}I_{i}\leq C_{M}^{\prime\prime}2^{-\epsilon|P-L|}$.

Before proving \eqref{A} and \eqref{B}, it follows from Lemma \ref{Lem3.8} that since $P\in \LL_{S}$ and $L\in \LL_{T}$, there are points $\u\in F_{S}$ and $\v\in F_{T}$ with $|P-\u|\leq \kappa$ and $|L-\v|\leq \kappa$. Since $\u,\v\in \Gamma(\bfE)$ we have $u_{i}\leq e(i,j)u_{j}$ and $v_{i}\leq e(i,j)v_{j}$ for all $1 \leq i,j\leq n$. Also $ i\in A_{r}\Longrightarrow p_{i}\leq \frac{p_{k_{r}}}{e(k_{r},i)}$ and $ i\in B_{q}\Longrightarrow m_{i}\leq \frac{\ell_{m_{q}}}{e(m_{q},i)}$.

\medskip

\noindent \textit{Proof of {\rm(1)}.}
Suppose $i\in A_{r}$ so $p_{i}\leq \frac{p_{k_{r}}}{e(k_{r},i)}$. Note that $i\notin \TT^{+}$ since otherwise $\ell_{i}>p_{i}$. We have $\ell_{i}=v_{i}+(\ell_{i}-v_{i})$ with $|\ell_{i}-v_{i}|\leq \kappa$. Since $\v\in \Gamma(\bfE)$ we have $v_{k_{r}}\leq e(k_{r},i)v_{i}$ so
\beas
p_{i}-\ell_{i}&\leq \frac{p_{k_{r}}}{e(k_{r},i)}-\ell_{i}=\frac{p_{k_{r}}}{e(k_{r},i)}-v_{i}+|v_{i}-\ell_{i}|
\leq 
\frac{p_{k_{r}}-v_{k_{r}}}{e(k_{r},i)}+|v_{i}-\ell_{i}| \\
&\leq 
\frac{p_{k_{r}}-\ell_{k_{r}}}{e(k_{r},i)}+\frac{\ell_{k_{r}}-v_{k_{r}}}{e(k_{r},i)}+|v_{i}-\ell_{i}|\leq \frac{p_{k_{r}}-\ell_{k_{r}}}{e(k_{r},i)}+\frac{\kappa}{e(k_{r},i)}+\kappa.
\eeas
It follows that $p_{k_{r}}-\ell_{k_{r}}>0$ from our assumption that $|p_{i}-\ell_{i}|$ is large. Thus $k_{r}\in \SS^{+}$, completing the proof.

\smallskip

\noindent\textit{Proof of {\rm(2)}.}
Suppose $\ell_{i}>p_{i}$ and $i\notin D_{T}$. In this case $\ell_{i}\in B_{q}$ for some $1 \leq q\leq \sigma$ but $\ell_{i}\neq m_{q}$. Then $\ell_{i}=\frac{\ell_{m_{q}}}{e(m_{q},i)}$, and since $u_{m_{q}}\leq e(k_{q},i)u_{i}$ we have

\beas
0<\ell_{i}-p_{i}&=\frac{\ell_{m_{q}}}{e(m_{q},i)}-u_{i}-(p_{i}-u_{i})
\leq
\frac{\ell_{m_{q}}}{e(m_{q},i)}-\frac{u_{m_{q}}}{e(m_{q},i)}-(p_{i}-u_{i})\\
&=
\frac{\ell_{m_{q}}-p_{m_{q}}}{e(m_{q},i)}-\frac{(u_{m_{q}}-p_{m_{q}})}{e(m_{q},i)}-(p_{i}-u_{i})
\eeas
Again it follows that $\ell_{m_{q}}-p_{m_{q}}>0$ and so $m_{q}\in \TT^{+}$. This completes the proof of Lemma \ref{Lem5.3}.
\end{proof}

We will also need to introduce other Calder\'on-type reproducing formulas, with sums restricted to ``large'' elements of $\LL$. Fix $L=(\ell_{1}, \ldots, \ell_{n})\in \LL$ and let $\LL_{L}^{+}=\big\{L'=(\ell_{1}', \ldots, \ell_{n}')\in \LL:\,\ell_{i}'\geq \ell_{i}\big\}$. If $L'\in \LL_{L}^{+}$ let $A_{L,L'}=\big\{i:\ell_{i}'>\ell_{i}\big\}$, and define
\beas
\psi_{i,\ell_{i}'}^{\#}&=
\begin{cases}
\psi_{i,\ell_{i}'}&\text{if $i\in A(L,L')\cap D_{L'}$}\\
\varphi_{i,\ell_{i}'}&\text{if $i\notin A(L,L')\cap D_{L'}$}
\end{cases}
&&\text{and}&\Psi_{L,L'}^{\#}&=\bigotimes_{i=1}^{n}\psi_{i,\ell_{i}'}^{\#},\\
\widetilde \psi_{i,\ell_{i}'}^{\#}&=
\begin{cases}
\widetilde \psi_{i,\ell_{i}'}&\text{if $i\in A(L,L')\cap D_{L'}$}\\
\widetilde \varphi_{i,\ell_{i}'}&\text{if $i\notin A(L,L')\cap D_{L'}$}
\end{cases}&&\text{and}&\widetilde\Psi_{L,L'}^{\#}&=\bigotimes_{i=1}^{n}\widetilde\psi_{i,\ell_{i}'}^{\#}.
\eeas
Note that $\Psi^\#_{L,L'}$ has cancellation in the variable $\x_{i}$ and $\widetilde\Psi^\#_{L,L'}$ has cancellation of order $m$ in the variable $\x_{i}$ only when $i\in D_{L'}\cap A_{L,L'}$. In particular, $\Psi^\#_{L,L}$ and $\widetilde\Psi^\#_{L,L}$ have no cancellation. The following is then a variant of Proposition \ref{Prop4.3}, and we omit the proof.

\begin{lemma} \label{Lem5.4}If $L\in \LL$ and $L'\in \LL_{L}^{+}$ then 
$
\sum_{L'\in\LL_L^+}(\Psi_{L,L'}^\#)^{(-L')}*(\widetilde\Psi_{L,L'}^\#)^{(-L')}=\delta_{\0}.
$
\end{lemma}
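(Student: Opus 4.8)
The plan is to reduce this statement, which is a Calderón reproducing formula restricted to the cone-like index set $\LL_L^+=\{L'\in\LL:\ell_i'\ge\ell_i\ \forall i\}$, to the tensor-product machinery already in place. First I would fix $L=(\ell_1,\dots,\ell_n)\in\LL$ and recall from Lemma~\ref{Lem4.2}\eqref{Lem4.2c}, applied in each factor $\R^{d_i}$ to the one-parameter family $\{\ph_{i,\ell}\}$ starting at level $\ell_i$, the scalar identity
\[
\delta_\0 = \ph_{i,\ell_i}^{(-\ell_i)}*\widetilde\ph_{i,\ell_i}^{(-\ell_i)} + \sum_{\ell_i'>\ell_i}\psi_{i,\ell_i'}^{(-\ell_i')}*\widetilde\psi_{i,\ell_i'}^{(-\ell_i')}
\quad\text{on }\R^{d_i}.
\]
Taking the tensor product of these $n$ identities gives $\delta_\0=\bigotimes_{i=1}^n\delta_\0$ on $\R^d$, and expanding the product over all choices (for each $i$, either the ``$\ph$-term'' indexed by $\ell_i$ or one of the ``$\psi$-terms'' indexed by some $\ell_i'>\ell_i$) produces exactly a sum over $L'\in\N^n$ with $L'\ge L$, where for each such $L'$ the summand is $\bigotimes_{i=1}^n\big(\star_{i,\ell_i'}^{(-\ell_i')}*\widetilde\star_{i,\ell_i'}^{(-\ell_i')}\big)$, the $i$-th factor being a $\psi$-convolution if $\ell_i'>\ell_i$ (i.e.\ $i\in A_{L,L'}$) and a $\ph$-convolution if $\ell_i'=\ell_i$.

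The second step is the bookkeeping that matches this sum over $\{L'\in\N^n:L'\ge L\}$ with the sum over $\{L'\in\LL_L^+\}$ appearing in the statement, exactly as in the proof of Proposition~\ref{Prop4.3}: group together, for each maximal $L'\in\LL$ with $L'\ge L$, all the lattice points $M$ with $Q_M\subset T_{L'}$, i.e.\ $M\in\TT_{L'}$. For such $M$ one has $m_i=\ell_i'$ for $i\in D_{L'}$ and $0\le m_i\le\ell_i'$ for $i\notin D_{L'}$; summing the $\psi$-convolutions over $m_i\le\ell_i'$ in the non-dotted variables collapses (by Lemma~\ref{Lem4.2}\eqref{Lem4.2d} again, now combining levels from $\ell_i$ up to $\ell_i'$, using that $M\ge L$ so the grouping respects the lower cutoff) into a $\ph$-convolution at level $\ell_i'$. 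This is precisely why the definition splits on membership in $A_{L,L'}\cap D_{L'}$: the factor is a genuine $\psi$ (with cancellation) exactly when the variable is both dotted for $L'$ and strictly above its $L$-level, and otherwise it is a $\ph$. Hence the grouped sum equals $\sum_{L'\in\LL_L^+}(\Psi_{L,L'}^\#)^{(-L')}*(\widetilde\Psi_{L,L'}^\#)^{(-L')}$, which therefore equals $\delta_\0$.

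The third step is to justify the two interchanges of summation used above — the expansion of the tensor product of infinite series, and the regrouping into $\TT_{L'}$-blocks. Both are handled by the same estimates that underlie Proposition~\ref{Prop4.3}: the partial sums $\ph_{i,N}^{(-N)}$ converge to $\delta_\0$ in $\cS'$ with uniformly bounded Schwartz norms of the building blocks, so the tensor products converge in $\cS'(\R^d)$, and the regrouping is a rearrangement of a series that converges absolutely when tested against any Schwartz function (the tails are controlled by the decay in Lemma~\ref{Lem5.3}, or more elementarily by the fact that for fixed test function only finitely many blocks contribute significantly at each scale). I expect the main obstacle to be purely notational rather than conceptual: carefully verifying that the combinatorial collapse in the non-dotted directions is compatible with the lower cutoff $L'\ge L$, i.e.\ that the $\ph$-term at level $\ell_i'$ produced by the telescoping genuinely incorporates all and only the levels $\ell_i\le m_i\le\ell_i'$ and not the levels below $\ell_i$ — this is exactly where one uses that we started the one-parameter reproducing formula of Lemma~\ref{Lem4.2}\eqref{Lem4.2c} at level $\ell_i$ rather than at $0$. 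Since the paper explicitly says ``we omit the proof,'' a one-line pointer to Proposition~\ref{Prop4.3} together with the observation above suffices.
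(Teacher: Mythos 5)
Your proof is correct and is essentially the same regroup-the-tensor-product argument the paper uses (the paper expands each $(\Psi_{L,L'}^\#)^{(-L')}*(\widetilde\Psi_{L,L'}^\#)^{(-L')}$ all the way down to level $0$ via Lemma \ref{Lem4.2}\eqref{Lem4.2d} and checks that the resulting index blocks partition $\N^{n}$, while you start from the level-$L$ truncated identity of Lemma \ref{Lem4.2}\eqref{Lem4.2c} and regroup the sum over $\{M\ge L\}$ into $\TT_{L'}$-blocks — the same computation read in the opposite direction). Your telescoping in the non-dotted raised variables and the check that $M\ge L$ and $M\in\TT_{L'}$ force $L'\ge L$ are exactly the points that make the bookkeeping work.
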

\begin{proof}
Let 
$
B_{L,L'}=\Big\{L''\in \N^{n}:\begin{cases}\ell_{i}''=\ell_{i}'&\text{if $i\in A_{L,L'}\cap D_{L'}$}\\ 0\leq \ell_{i}''
\leq \ell_{i}'&\text{ if $i\notin A_{L,L'}\cap D_{L'}$}
\end{cases}\Big\}
$,
so $\bigcup_{L'\in \LL_{L}^{+}}B(L,L')=\N^{n}$. 
Then 
$$
(\Psi_{L,L'}^{\#})^{(-L')}*(\widetilde \Psi_{L,L'}^{\#})^{(-L')}=\sum\nolimits_{L''\in B_{L,L'}}\mathop{\otimes}\nolimits_{i=1}^{n}(\psi_{i,\ell_{i}''}*\widetilde\psi_{i,\ell_{i}''})
$$
and so $\sum_{L'\in\LL_L^+}(\Psi_{L,L'}^\#)^{(-L')}*(\widetilde\Psi_{L,L'}^\#)^{(-L')}=\mathop{\otimes}_{i=1}^{n}\sum_{\ell_{i}=0}^{\infty}(\psi_{i,\ell_{i}}*\widetilde\psi_{i,\ell_{i}})=\delta_{\0}$. \qedhere
\end{proof}

For $M>0$ let $\chi_M(\x)=\prod\nolimits_{i=1}^n\big(1+|\x_{i}|\big)^{-M}$.
\begin{lemma}\label{Lem5.5}
Let $f\in\SS^{\prime}(R^{d})$, let $M>0$, and let $m$ be the order of cancellation of $\widetilde\psi_{i,\ell_{i}}$. There is a constant $\delta>0$ depending only on the exponents $\lambda_{i}$ and the matrix $\bfE$, and a constant $C>0$, depending only on $M,m,\delta$ so that 
\beas
\big|f*\Psi_{L}^{(-L)}(\x)\big|\leq C \sum\nolimits_{L'\in\LL_L^+}2^{-m\delta |L-L'|}\big|f*\Psi_{L'}^{(-L')}\big|*\chi_M^{(-L)}(\x)\ ,
\eeas
\end{lemma}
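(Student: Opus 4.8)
The identity we exploit is the Calderón reproducing formula of Lemma \ref{Lem5.4}, which expands the identity as $\delta_\0=\sum_{L'\in\LL_L^+}(\Psi_{L,L'}^\#)^{(-L')}*(\widetilde\Psi_{L,L'}^\#)^{(-L')}$, a sum over the ``large'' scales $L'\geq L$. Applying this to $f$ and then convolving with $\Psi_L^{(-L)}$ gives
\begin{equation*}
f*\Psi_L^{(-L)}=\sum\nolimits_{L'\in\LL_L^+}f*(\Psi_{L,L'}^\#)^{(-L')}*(\widetilde\Psi_{L,L'}^\#)^{(-L')}*\Psi_L^{(-L)}.
\end{equation*}
The point will be that the $L'=L$ term reproduces (a harmless variant of) $f*\Psi_L^{(-L)}$ itself, so this looks circular; but the key observation is that for $L'\neq L$ the auxiliary kernel $\Psi_{L,L'}^\#$ carries cancellation of order $m$ in precisely the variables $\x_i$ with $i\in D_{L'}\cap A_{L,L'}$, and one should rather read the estimate as bounding $f*\Psi_L^{(-L)}$ by its values at the \emph{other} scales $L'$. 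So the correct reading is to solve for the $L'=L$ term is not what we want; instead we use the formula directly to write $f*\Psi_L^{(-L)}$ as a superposition over all $L'\geq L$ of $\big(f*\Psi_{L'}^{(-L')}\big)$ (or its $\#$-variant, which is normalized relative to $\Psi_{L'}$) convolved with the fixed kernel $(\widetilde\Psi_{L,L'}^\#)^{(-L')}*\Psi_L^{(-L)}$, and then estimate the $L^1$-operator norm of that kernel against the weight $\chi_M^{(-L)}$.

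\textbf{The core kernel estimate.} The heart of the matter is to prove that for $L'\in\LL_L^+$,
\begin{equation*}
\big|(\widetilde\Psi_{L,L'}^\#)^{(-L')}*\Psi_L^{(-L)}(\x)\big|\leq C\,2^{-m\delta|L-L'|}\chi_M^{(-L)}(\x),
\end{equation*}
with $\delta>0$ depending only on the $\lambda_i$ and $\bfE$, after which summing over $L'$ and using Young's inequality (the $L^1$-norm of $\chi_M^{(-L)}$ is bounded uniformly in $L$) yields the claim. Since both $\widetilde\Psi_{L,L'}^\#$ and $\Psi_L$ are tensor products over $i=1,\dots,n$, this factors into $n$ one-variable estimates. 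For each $i$ we are convolving $\widetilde\psi^\#_{i,\ell_i'}$ at scale $2^{-\ell_i'}$ (which has cancellation of order $m$ exactly when $i\in D_{L'}\cap A_{L,L'}$, i.e. when $\ell_i'>\ell_i$ and $i$ is dotted at $L'$) with $\Psi_{i,L}$ at the coarser scale $2^{-\ell_i}$. I would apply Lemma \ref{Lem5.2} (with the roles of cancellation orders as: $m$ for the finer-scale function when $i\in D_{L'}$, and $0$ otherwise, against $0$ for $\Psi_{i,L}$) to get a gain $2^{-\bar\lambda m(\ell_i'-\ell_i)}$ in the dotted-at-$L'$ directions and no loss (a bounded normalized Schwartz function at scale $2^{-\ell_i}$) in the others; multiplying over $i$ and taking the maximum gives the factor $2^{-m\delta|L-L'|}$ with $\delta=\bar\lambda$ times a combinatorial constant. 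One must check the Schwartz-norm bookkeeping is uniform in $L,L'$, which is exactly the normalization machinery already set up, and that $|L-L'|=\max_i|\ell_i-\ell_i'|$ is controlled by the sum $\sum_i(\ell_i'-\ell_i)$ restricted to the dotted directions — this last point is where the admissibility structure of $\LL$ (via Lemma \ref{Lem3.8}, relating each $L'$ to $\Gamma_S$ and forcing the undotted coordinates to be determined, up to $O(1)$, by the dotted ones) is needed so that a gain in the dotted coordinates controls the gain in $|L-L'|$.

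\textbf{Main obstacle.} The delicate step is precisely the last one: ensuring that the exponential gain harvested only in the ``dotted at $L'$'' coordinates $i\in D_{L'}\cap A_{L,L'}$ is enough to dominate $2^{-m\delta|L-L'|}$, since a priori $|L-L'|$ could be large due to a large difference $\ell_j'-\ell_j$ in an \emph{undotted} coordinate $j\notin D_{L'}$, in which direction Lemma \ref{Lem5.2} gives no gain at all. This is the same ``borrowing'' phenomenon that appears in the proof of Lemma \ref{Lem5.3}: one uses that $L,L'\in\LL$ forces the existence of $\u\in F_S$, $\v\in F_{S'}$ within distance $\kappa$, and the relations $\ell_j'\leq \ell_{k_r}'/e(k_r,j)$ for $j\in A_r'$, so that a large $\ell_j'-\ell_j$ in an undotted slot forces a proportionally large $\ell_{k_r}'-\ell_{k_r}$ in the dotted slot $k_r$ governing it (up to additive constants depending only on $\kappa$ and $\bfE$). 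Feeding this back produces a uniform $\delta>0$ with $2^{-m\delta|L-L'|}$ as claimed. The only other bookkeeping point is that the summands should really be $f*(\Psi_{L,L'}^\#)^{(-L')}$ rather than $f*\Psi_{L'}^{(-L')}$; but $\Psi_{L,L'}^\#$ and $\Psi_{L'}$ differ only in which tensor factors are $\psi$'s versus $\varphi$'s, and on the frequency side both are supported in (neighborhoods of) $B_{L'}$, so absorbing this discrepancy is routine and costs only a further application of Lemma \ref{Lem4.2}(d) to pass between $\varphi$-type and $\psi$-type pieces at the expense of an additional exponentially-decaying sum, which is harmless.
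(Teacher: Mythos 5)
Your overall strategy is the same as the paper's: expand via Lemma \ref{Lem5.4} over $\LL_L^+$, estimate the composite kernel factor-by-factor with Lemma \ref{Lem5.2} to harvest a gain $2^{-\bar\lambda_i m(\ell_i'-\ell_i)}$ in the coordinates $i\in D_{L'}\cap A_{L,L'}$, and then ``borrow'' from the dotted coordinates to control $|L-L'|$ in the undotted ones via the structure of $\LL$ and Lemma \ref{Lem3.8}. Those parts of your argument are sound and match the paper. The genuine gap is your final ``bookkeeping'' step, where you convert $f*(\Psi_{L,L'}^\#)^{(-L')}$ into $f*\Psi_{L'}^{(-L')}$. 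The two differ in every factor $i\in D_{L'}\setminus A_{L,L'}$ (i.e.\ $\ell_i'=\ell_i$ with $i$ dotted at $L'$ --- for $L'=L$ this is all of $D_L$, so the discrepancy is not small), where $\Psi_{L,L'}^\#$ carries $\varphi_{i,\ell_i'}$ while $\Psi_{L'}$ carries $\psi_{i,\ell_i'}$. The only general way to pass from a $\varphi$-factor to $\psi$-factors is the telescoping $\varphi_{i,\ell_i}^{(-\ell_i)}=\sum_{\ell''\le\ell_i}\psi_{i,\ell''}^{(-\ell'')}$, whose $\ell_i+1$ terms carry coefficient $1$ (no exponential decay whatever in $\ell_i-\ell''$, so this is not ``an additional exponentially-decaying sum'') and live at scales strictly coarser than $L$, hence at indices outside $\LL_L^+$. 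That destroys the form of the inequality: the right-hand side must involve only $L'\ge L$, and this restriction is essential for Lemmas \ref{Lem5.6} and \ref{Lem5.7}. The appeal to common frequency support near $B_{L'}$ is also unavailable here: in Lemma \ref{Lem5.5} the $\varphi_{i,\ell}$ are arbitrary normalized Schwartz functions with integral one, and even with frequency localization $\varphi_{i,\ell_i}$ is supported on a ball, not on the annulus carrying $\psi_{i,\ell_i}$.

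The paper sidesteps this entirely by a commutation performed \emph{before} any estimate: factor by factor, $\Psi_{i,L}^{(-\ell_i)}*(\psi_{i,\ell_i'}^\#)^{(-\ell_i')}=\psi_{i,\ell_i'}^{(-\ell_i')}*(\psi_{i,\ell_i}^\#)^{(-\ell_i)}$ (trivial when $\ell_i'=\ell_i$, plain commutativity when $i\in A_{L,L'}\cap D_{L'}$), so that each summand becomes $f*\Psi_{L'}^{(-L')}*\big[\Psi_{L,L}^\#*(\widetilde\Psi_{L,L'}^\#)^{(L-L')}\big]^{(-L)}$ with the factor adjacent to $f$ being exactly $\Psi_{L'}^{(-L')}$; only the leftover kernel is then estimated by $2^{-m\delta|L-L'|}\chi_M^{(-L)}$. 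You need this rearrangement (or an equivalent one) at the outset rather than a post hoc conversion at the end; with it in place, the rest of your argument goes through as written.
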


\begin{proof}
We have $f*\Psi_{L}^{(-L)}=\sum_{L'\in \LL_{L}^{+}}f*\Psi_{L}^{(-L)}*(\Psi_{L,L'}^\#)^{(-L')}*(\widetilde\Psi_{L,L'}^\#)^{(-L')}$ from Lemma \ref{Lem5.4}. Now
\beas
\psi_{i,\ell_{i}}^{(-\ell_{i})}*\big(\psi_{i,\ell_{i}'}^{\#}\big)^{(-\ell_{i}')}
&=
\begin{cases}
\psi_{i,\ell_{i}}^{(-\ell_{i})}*\psi_{i,\ell_{i}'}^{(-\ell_{i}')}=\psi_{i,\ell_{i}}^{(-\ell_{i})}*\psi_{i,\ell_{i}'}^{(-\ell_{i}')}=\psi_{i,\ell_{i}'}^{(-\ell_{i}')}*\psi_{i,\ell_{i}}^{(-\ell_{i})}&\text{if $i\in A(L,L')\cap D_{L'}$}\\
\psi_{i,\ell_{i}}^{(-\ell_{i})}*\varphi_{i,\ell_{i}'}^{(-\ell_{i}')}=\psi_{i,\ell_{i}}^{(-\ell_{i})}*\varphi_{i,\ell_{i}}^{(-\ell_{i})}=\psi_{i,\ell_{i}'}^{(-\ell_{i}')}*\varphi_{i,\ell'}^{(-\ell_{i})}&\text{if $i\notin A(L,L')\cap D_{L'}$}
\end{cases}\\
&=
\psi_{i,\ell_{i}'}^{(-\ell_{i}')}*(\psi_{i,\ell_{i}}^{\#})^{(-\ell_{i})},
\eeas
so
\beas
f*\Psi_{L}^{(-L)}
&=\sum\nolimits_{L'\in \LL_{L}^{+}}f*\Psi_{L'}^{(-L')}*(\Psi_{L,L}^\#)^{(-L)}*(\widetilde\Psi_{L,L'}^\#)^{(-L')}\\
&=\sum\nolimits_{L'\in \LL_{L}^{+}}f*\Psi_{L'}^{(-L')}*\left[\Psi_{L,L}^\#*(\widetilde\Psi_{L,L'}^\#)^{(L-L')}\right]^{(-L)}.
\eeas
Now from Lemma \ref{Lem5.2}, there exists $h_{i}\in \SS(\R^{d_{i}})$, normalized relative to $\Psi$ and $\Psi'$, so that
\bea\label{5.3}
\psi_{i,\ell_{i}}^\#*(\widetilde\psi_{i,\ell_{i}'}^\#)^{(-(\ell_{i}'-\ell_{i}))}(\x_{i})=
\begin{cases}
2^{-\bar\lambda_{i}m(\ell_{i}'-\ell_{i})}h_{i}(\x_{i})&\text{if $i\in D_{L'}\cap A_{L,L'}$}\\
h_{i}(\x_{i})&\text{if $i\notin D_{L'}\cap A_{L,L'}$}
\end{cases}.
\eea

To complete the proof we claim that $\prod_{i=1}^{n}\psi_{i,\ell_{i}}^\#*(\widetilde\psi_{i,\ell_{i}'}^\#)^{(-(\ell_{i}'-\ell_{i}))}(\x_{i})=2^{-\delta m|L'-L|}\prod_{i=1}^{n}h_{i}(\x_{i})$ where $\delta>0$ depends on the exponents $\{\lambda_{i}\}$ and the matrix $\bfE$. From \eqref{5.3} we only need additional estimates for factors with $i\notin D_{L'}\cap A_{L,L'}$. If $i\in D_{L}\setminus A_{L,L'}$ then $\ell_{i}'-\ell_{}=0$ and so $\psi_{i,\ell_{i}}^{\#}*(\widetilde\psi_{i,\ell_{i}'}^\#)^{(-(L'-L))}(\x_{i})=2^{-\bar\lambda_{i}m(\ell_{i}'-\ell_{i})}h_{i}^{(-(\ell_{i}'-\ell_{i}))}(\x_{i})$. If $i\notin D_{L'}$ then $i\in A_{r}$ for some $1\leq r\leq s$ and $k_{r}\in D_{L'}\cap A_{L,L'}$. Then $|\ell_{i}-\ell_{i}'|=\ell_{i}-\ell_{i}'\leq c_{1}|\ell_{k_{r}}-\ell_{k_{r}}'|+c_{2}$ where $c_{1}, c_{2}$ where $c_{1}, c_{2}>0$ depend only on $\kappa$ and the matrix $\bfE$. Then we can ``borrow'' part of the estimate $2^{-\bar\lambda_{k_{r}}m(\ell_{k_{r}}'-\ell_{k_{r}})}$ to obtain a gain $2^{-\delta m(\ell_{i}'-\ell_{i})}$. This establishes the claim, and completes the proof.
\end{proof}

We introduce two auxiliary functions related to the function $f*\Psi_{L}^{(-L)}$. 
\bea\label{5.4}
\omega_{M,L}(\x)&=\sup\nolimits_{\x'\in\R^{d}}\frac{|f*\Psi_{L}^{(-L)}(\x-\x')|}{\prod_{i=1}^{n}(1+2^{\ell_{i}}|\x_{i}'|)^{M}}\\
&=
2^{-\sum_{i=1}^{n}q_{i}\ell_{i}}\sup\nolimits_{\x'\in \R^{d}}|f*\Psi_{L}^{(-L)}(\x-\x')|\chi_{M}^{(-L)}(\x')\\
\omega^*_{M,L}(\x)&=\sup\nolimits_{\x'\in\R^d}\sup\nolimits_{L'\in\LL_L^+}2^{-M|L-L'|}\frac{\big|f*\Psi_{L'}^{(-L')}(\x-\x')\big|}{\prod_{i=1}^{n}\big(1+2^{\ell_i}|\x'_i|\big)^M}\\
&=2^{-\sum_{i=1}^{n} q_i\ell_i}\sup\nolimits_{\x'\in\R^d}\sup\nolimits_{L'\in\LL_L^+}2^{-M|L-L'|}\big|f*\Psi_{L'}^{(-L')}(\x-\x')\big|\chi_M^{(-L)}(\x')\ .
\eea
Obviously $\omega_{M,L}(\x)\leq \omega^*_{M,L}(\x)$.

\begin{lemma}\label{Lem5.6}
Let $f\in\SS^{\prime}(\R^{d})$, let $M>0$, let $m$ be the order of cancellation of $\widetilde \psi_{i,\ell_{i}}$, and let $\delta$ be the constant from Lemma \ref{Lem5.5}. There is a constant $C>0$ depending only on $M,m,\delta$ so that 
\beas
\omega_{M,L}(\x)\leq C \sum\nolimits_{L'\in\LL_L^+}2^{-m\delta|L-L'|}\big|f*\Psi_{L'}^{(-L')}\big|*\chi_M^{(-L)}(\x).
\eeas
\end{lemma}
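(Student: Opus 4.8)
The plan is to derive Lemma~\ref{Lem5.6} as a short corollary of Lemma~\ref{Lem5.5}, the only new ingredient being that $\chi_M^{(-L)}$ is a ``slowly varying'' weight at scale $2^{-L}$. First I would apply the pointwise bound of Lemma~\ref{Lem5.5} not at $\x$ but at the translated point $\x-\x'$, which gives
$$
\big|f*\Psi_L^{(-L)}(\x-\x')\big|\leq C\sum_{L'\in\LL_L^+}2^{-m\delta|L-L'|}\int_{\R^d}\big|f*\Psi_{L'}^{(-L')}(\y)\big|\,\chi_M^{(-L)}(\x-\x'-\y)\,d\y.
$$
The objective is then to absorb the translation by $\x'$ into the factor $\prod_{i=1}^n\big(1+2^{\ell_i}|\x_i'|\big)^{M}$ that appears in the denominator of the definition \eqref{5.4} of $\omega_{M,L}$.

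The key (and essentially only) step is the weight inequality. Each homogeneous norm $|\cdot|$ on $\R^{d_i}$ obeys a quasi-triangle inequality $|\u_i+\v_i|\leq c_0\big(|\u_i|+|\v_i|\big)$, with $c_0\geq1$ depending only on the exponents $\lambda_h$ (indeed $t\mapsto t^{1/\lambda_h}$ is quasi-subadditive). Applying this with $\u_i=\x_i-\x_i'-\y_i$ and $\v_i=\x_i'$ and multiplying by $2^{\ell_i}$ gives, uniformly in $\ell_i\in\N$,
$$
1+2^{\ell_i}|\x_i-\y_i|\leq c_0\big(1+2^{\ell_i}|\x_i-\x_i'-\y_i|\big)\big(1+2^{\ell_i}|\x_i'|\big),
$$
hence $\big(1+2^{\ell_i}|\x_i-\x_i'-\y_i|\big)^{-M}\leq c_0^{M}\big(1+2^{\ell_i}|\x_i-\y_i|\big)^{-M}\big(1+2^{\ell_i}|\x_i'|\big)^{M}$. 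Since $\chi_M^{(-L)}(\z)=2^{\sum_i\ell_iq_i}\prod_{i=1}^n\big(1+2^{\ell_i}|\z_i|\big)^{-M}$, taking the product over $i$ yields
$$
\chi_M^{(-L)}(\x-\x'-\y)\leq c_0^{\,nM}\,\chi_M^{(-L)}(\x-\y)\,\prod_{i=1}^n\big(1+2^{\ell_i}|\x_i'|\big)^{M}.
$$
Substituting this into the displayed inequality and dividing both sides by $\prod_{i=1}^n\big(1+2^{\ell_i}|\x_i'|\big)^{M}$, that product cancels on the right and I am left with
$$
\frac{\big|f*\Psi_L^{(-L)}(\x-\x')\big|}{\prod_{i=1}^n\big(1+2^{\ell_i}|\x_i'|\big)^{M}}\leq C'\sum_{L'\in\LL_L^+}2^{-m\delta|L-L'|}\big|f*\Psi_{L'}^{(-L')}\big|*\chi_M^{(-L)}(\x),
$$
with $C'=c_0^{\,nM}C$ independent of $\x'$.

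To conclude, I would take the supremum over $\x'\in\R^d$ of the left-hand side, which by \eqref{5.4} is exactly $\omega_{M,L}(\x)$; since the right-hand side does not involve $\x'$, this is immediate and gives the asserted inequality with $C$ depending only on $M,m,\delta$ (the dependence on $n$ and $c_0$ being absorbed, as $c_0$ is itself a function of the $\lambda_h$ alone). I do not expect any real obstacle: the subtlety is confined to the quasi-triangle ``slowly varying weight'' estimate and the bookkeeping of the constant through the $n$ coordinate blocks; should one be concerned about finiteness of $\omega_{M,L}$, the inequality is to be read in $[0,+\infty]$, and all terms are finite once $M$ is large relative to the order of the distribution $f$.
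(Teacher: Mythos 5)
Your proof is correct and follows essentially the same route as the paper: apply Lemma \ref{Lem5.5} at the shifted point $\x-\x'$ and absorb the shift into the weight via the Peetre-type inequality $\chi_M^{(-L)}(\x-\x'-\y)\leq C\,\chi_M^{(-L)}(\x-\y)\prod_{i}(1+2^{\ell_i}|\x_i'|)^{M}$, which is exactly the paper's inequality \eqref{5.5} up to the normalization factor $2^{\sum_i q_i\ell_i}$ that both arguments track consistently. The quasi-triangle constant $c_0$ for the homogeneous norms \eqref{2.2} is indeed controlled by the exponents $\lambda_h$ alone, so no gap remains.
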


\begin{proof}
From Lemma \ref{Lem5.5}, we have
\beas
\Big|f*\Psi_{L}^{(-L)}(\x-\x')\Big|&\chi_M^{(-L)}(\x')
\leq C\sum_{L'\in\LL_L^+}\!\!2^{-m\delta|L-L'|}\!\!\int \big|f*\Psi_{L'}^{(-L')}(\x-\y)\big|\chi_M^{(-L)}(\y-\x')\chi_M^{(-L)}(\x')d\y .
\eeas
From the general inequality 
\bea\label{5.5}
\chi_M^{(-L')}(\y-\x')\chi_M^{(-L)}(\x')\leq C\, 2^{\sum_i q_i\ell'_i}\chi_M^{(-L)}(\y)
\eea
for $L'\in\LL_L^+$ applied with $L'=L$, we obtain 
$$
\chi_M^{(-L)}(\x')\int \big|f*\Psi_{L'}^{(-L')}(\x-\y)\big|\chi_M^{(-L)}(\y-\x')\,dy\leq C2^{\sum_i q_i\ell_i} \int \big|f*\Psi_{L'}^{(-L')}(\x-\y)\big|\chi_M^{(-L)}(\y)\,d\y\ .
$$
The right-hand side is independent of $\x'$, completing the proof.
\end{proof}

\begin{lemma}\label{Lem5.7}
For $r\leq 1$ and $rM>1$, we have the inequality
$$
\omega^*_{M,L}(\x)^r\leq C_M \sum\nolimits_{L'\in\LL_L^+}2^{-rM|L-L'|}\big|f*\Psi_{L'}^{(-L')}\big|^r*\chi_{M}^{(-L)}(\x)\ .
$$
\end{lemma}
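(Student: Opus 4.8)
Lemma \ref{Lem5.7} is the "power-trick" version of Lemma \ref{Lem5.6}, and the plan is to deduce it from Lemma \ref{Lem5.6} by the standard Peetre/Fefferman–Stein device: squeeze out a power $r<1$ of the maximal function $\omega^*_{M,L}$ from a fixed power of $\omega_{M,L'}$, sacrificing some of the exponential gain in $|L-L'|$.

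First I would observe that the defining supremum in $\omega^*_{M,L}$ can be majorized, for each fixed $L'\in\LL_L^+$, by a quantity built from $\omega_{M,L'}$. Precisely, for $\x'\in\R^d$ and $L'\in\LL_L^+$,
\[
\frac{|f*\Psi_{L'}^{(-L')}(\x-\x')|}{\prod_{i=1}^n(1+2^{\ell_i}|\x'_i|)^M}\le \omega_{M,L'}(\x)\,\prod_{i=1}^n\frac{(1+2^{\ell'_i}|\x'_i|)^M}{(1+2^{\ell_i}|\x'_i|)^M}\le 2^{M\sum_i(\ell'_i-\ell_i)\cdot(\text{something})}\,\omega_{M,L'}(\x),
\]
but since $\ell'_i\ge\ell_i$ this ratio is bounded by $2^{M|L-L'|\cdot c}$ for a dimensional constant, and after multiplying by the prefactor $2^{-M|L-L'|}$ and adjusting $M$ (or absorbing into the decay below) we get $\omega^*_{M,L}(\x)\le C\sup_{L'\in\LL_L^+}2^{-M'|L-L'|}\omega_{M,L'}(\x)$ for a slightly smaller $M'$ still with $rM'>1$. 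Then raising to the power $r\le1$ and using $\sup_k a_k^r\le\sum_k a_k^r$ gives $\omega^*_{M,L}(\x)^r\le C\sum_{L'\in\LL_L^+}2^{-rM'|L-L'|}\omega_{M,L'}(\x)^r$.

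Next I would apply Lemma \ref{Lem5.6} to each $\omega_{M,L'}$: it bounds $\omega_{M,L'}(\x)$ by $C\sum_{L''\in\LL_{L'}^+}2^{-m\delta|L'-L''|}|f*\Psi_{L''}^{(-L'')}|*\chi_M^{(-L')}(\x)$. Since $r\le1$, the inequality $(\sum b_k)^r\le\sum b_k^r$ lets me pull the $r$-th power inside, turning the convolution-with-$L^1$-kernel estimate into one with $\chi_M^{(-L')}$ replaced (after renaming and using $\chi_M^{r}\sim\chi_{rM}$, and $rM>1$ so these kernels are still integrable) by $\chi_{rM}^{(-L')}$. Composing the two sums over $L'$ and $L''$, with $L\le L'\le L''$, and using the elementary pointwise estimate $\chi_{rM}^{(-L')}*\chi_{rM}^{(-L)}\le C\,2^{\sum_i q_i\ell'_i}\chi_{rM}^{(-L)}$ (the analogue of \eqref{5.5}; note $L'\in\LL_L^+$), one collapses the double convolution into a single convolution against $\chi_{rM}^{(-L)}$. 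The $L'$-sum over the range $L\le L'\le L''$ of $2^{-r(\text{gain})|L-L'|}2^{-(\text{gain})|L'-L''|}$ converges geometrically and is dominated by $C\,2^{-rM|L-L''|}$ after possibly shrinking the exponent; renaming $L''$ as $L'$ yields exactly the claimed bound
\[
\omega^*_{M,L}(\x)^r\le C_M\sum_{L'\in\LL_L^+}2^{-rM|L-L'|}\big|f*\Psi_{L'}^{(-L')}\big|^r*\chi_M^{(-L)}(\x).
\]

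The main obstacle, and the step requiring care, is the bookkeeping of exponents: one must check that after each of the three reductions (passing from $\omega^*$ to $\sup_{L'}\omega_{M,L'}$, raising to the $r$-th power, and composing the two geometric sums) the surviving exponential decay rate in $|L-L'|$ is still positive and can be taken to be exactly $rM$ (or at least $\ge rM$, which suffices since one is free to enlarge $m$, hence the gain $m\delta$, as much as needed — this is why the statement is allowed to quote "the order of cancellation $m$" implicitly). There is also a minor technical point in justifying the pointwise bound $\big(|f*\Psi_{L''}^{(-L'')}|*\chi_M^{(-L')}\big)^r\le C\,2^{(r-1)\sum q_i\ell'_i}|f*\Psi_{L''}^{(-L'')}|^r*\chi_M^{(-L')}$: this is Jensen's inequality for the probability measure $\chi_M^{(-L')}/\|\chi_M^{(-L')}\|_1\,d\x$, valid since $r\le1$ and $M>0$ makes $\chi_M$ integrable, and the constants $2^{(r-1)\sum q_i\ell'_i}$ are harmless because they are compensated when the $\chi$-kernels are composed via the analogue of \eqref{5.5}.
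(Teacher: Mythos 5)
Your proposal has a genuine gap at its central step, and it is precisely the difficulty the lemma is designed to overcome. You claim that $\big(|f*\Psi_{L''}^{(-L'')}|*\chi_M^{(-L')}\big)^r\le C\,2^{(r-1)\sum_i q_i\ell'_i}\,|f*\Psi_{L''}^{(-L'')}|^r*\chi_M^{(-L')}$ follows from Jensen's inequality for the probability measure $\chi_M^{(-L')}/\|\chi_M^{(-L')}\|_1\,d\x$. Jensen goes the other way: for $r\le1$ the map $t\mapsto t^r$ is concave, so $\big(\int g\,d\mu\big)^r\ge\int g^r\,d\mu$ for a probability measure $\mu$; the subadditivity $(\sum_k b_k)^r\le\sum_k b_k^r$ that you invoke is special to counting measure and does not transfer to convolutions. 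There is no pointwise inequality of the form $(|g|*\mu)^r\lesssim|g|^r*\mu$. The paper's proof circumvents this with the Peetre-type interpolation $\int h\le\big(\sup h\big)^{1-r}\int h^r$ applied to $h(\x')=|f*\Psi_{L''}^{(-L'')}(\x-\x')|\,\chi_M^{(-L)}(\x')$; the supremum is then recognized as $\lesssim 2^{\sum_i q_i\ell_i}\,2^{M|L''-L|}\,\omega^*_{M,L}(\x)$, so the factor $\omega^*_{M,L}(\x)^{1-r}$ reappears on the right-hand side and is absorbed by dividing both sides by it. This bootstrap --- the quantity being estimated returning on the right to the power $1-r$ --- is the heart of the lemma and has no counterpart, nor a valid substitute, in your argument.

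A second, independent problem is your opening reduction $\omega^*_{M,L}(\x)\le C\sup_{L'\in\LL_L^+}2^{-M'|L-L'|}\omega_{M,L'}(\x)$. The weight in $\omega_{M,L'}$ is $\prod_i(1+2^{\ell'_i}|\x'_i|)^M$ while $\omega^*_{M,L}$ uses $\prod_i(1+2^{\ell_i}|\x'_i|)^M$, so the ratio you must pay can be as large as $2^{M\sum_i(\ell'_i-\ell_i)}\sim 2^{Mn|L-L'|}$; after multiplying by the available prefactor $2^{-M|L-L'|}$ you are left with growth $2^{M(n-1)|L-L'|}$, not decay with a "slightly smaller $M'$''. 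Moreover this growth is measured in $|L-L'|$, whereas the gain $2^{-m\delta|L'-L''|}$ from Lemma \ref{Lem5.6} is in $|L'-L''|$ and cannot compensate it (take $L''=L'$). The paper never makes this reduction: it estimates $|f*\Psi_{L'}^{(-L')}(\x-\x')|\,\chi_M^{(-L)}(\x')$ directly via Lemma \ref{Lem5.5} and the convolution inequality \eqref{5.5}, keeps the $L$-scale weight throughout, and only at the end takes the supremum over $\x'$ and $L'\in\LL_L^+$ to reconstitute $\omega^*_{M,L}(\x)$ on the left, choosing $m$ large so that the supremum over intermediate $L'$ is attained at $L'=L''$.
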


\begin{proof}
From Lemma \ref{Lem5.5} and equation \eqref{5.5}
\beas
2^{-M|L'-L|}&\big|f*\Psi_{L'}^{(-L')}(\x-\x')\big|\chi_M^{(-L)}(\x')\\
&\leq C 2^{-M|L'-L|} \sum\nolimits_{L''\in\LL_{L'}^+}2^{-m\delta |L''-L'|}\Big(\big|f*\Psi_{L''}^{(-L'')}\big|*\chi_M^{(-L')}\Big)(\x-\x')\chi_M^{(-L)}(\x')\\
&\leq C' 2^{-M|L'-L|}2^{\sum_{i}q_{i}\ell_{i}}\sum\nolimits_{L''\in\LL_{L'}^+}2^{-m\delta |L''-L'|}\Big(\big|f*\Psi_{L''}^{(-L'')}\big|*\chi_M^{(-L)}\Big)(\x)\ .
\eeas
We majorize the convolution as follows:
\beas
\Big(\big|&f*\Psi_{L''}^{(-L'')}\big|*\chi_M^{(-L)}\Big)(\x)=\int \big|f*\Psi_{L''}^{(-L'')}(\x-\x')\big|\chi_M^{(-L)}(\x')\,d\x'\\
&\leq \Big(\sup_{\x'}\big|f*\Psi_{L''}^{(-L'')}(\x-\x')\big|\chi_M^{(-L)}(\x')\Big)^{1-r}\int \big|f*\Psi_{L''}^{(-L'')}(\x-\x')\big|^r\chi_M^{(-L)}(\x')^r\,d\x'\\
&\leq \Big( 2^{\sum_iq_i\ell_i}2^{M|L''-L|}\omega^*_{M,L}(\x)\Big)^{1-r}\int \big|f*\Psi_{L''}^{(-L'')}(\x-\x')\big|^r\chi_M^{(-L)}(\x')^r\,d\x'\\
&=2^{(1-r)M|L''-L|}\omega^*_{M,L}(\x)^{1-r}\Big(\big|f*\Psi_{L''}^{(-L'')}\big|^r*\chi_{rM}^{(-L)}\Big)(\x)\ ,
\eeas
where we have used the identity $\big(\chi_M^{(-L)}\big)^r=2^{-(1-r)\sum_iq_i\ell_i}\chi_{rM}^{(-L)}$. Putting this into the previous formula we obtain
\beas
2^{-\sum_iq_i\ell_i}&2^{-M|L'-L|}\big|f*\Psi_{L'}^{(-L')}(\x-\x')\big|\chi_M^{(-L)}(\x')\\
&\lesssim 2^{-M|L'-L|}\omega^*_{M,L}(\x)^{1-r}\sum\nolimits_{L''\in\LL_{L'}^+}2^{-\mu|L''-L'|}2^{(1-r)M|L''-L|}\Big(\big|f*\Psi_{L''}^{(-L'')}\big|^r*\chi_{rM}^{(-L)}\Big)(\x)\ .
\eeas
Taking the supremum over $\x'\in\R^d$ and $L'\in\LL_L^+$, the left-hand side gives $\omega^*_{M,L}(\x)$. The right-hand side does not depend on $\x'$ can be further majorized by exchanging the order of the supremum in~$L'$ and the sum to give
\beas
\omega^*_{ML}(\x)^r&\lesssim \sum_{L''\in\LL_L^+}2^{(1-r)M|L''-L|}\Big(\big|f*\Psi_{L''}^{(-L'')}\big|^r*\chi_{rM}^{(-L)}\Big)(\x) \sup_{\substack{L'\in \LL\\L\leq L'\leq L''}}\Psi_{L'}^{(-L')}2^{-M|L-L'|}2^{-\mu|L''-L'|}\ .
\eeas
Since 
$
2^{-M|L'-L|}2^{-\mu|L''-L'|}=2^{-M(|L'|-|L|)}2^{-\mu(|L''|-|L'|)}=2^{M|L|}2^{-\mu|L''|} 2^{(\mu-M)|L'|}$, taking $m$ sufficiently large, the sup is reached at $L'=L''$, so that
\beas
\omega^*_{M,L}(x)^r\lesssim \sum\nolimits_{L''\in\LL_L^+}2^{-rM|L''-L|}\Big(\big|f*\Psi_{L''}^{(-L'')}\big|^r*\chi_{rM}^{(-L)}\Big)(\x).\qquad\qedhere
\eeas
\end{proof}

\begin{proof}[Proof of Theorem \ref{Thm5.1}] To estimate of $S_{\Psi^{\prime}}f(\x)=\big[\sum_{P\in \LL}\big\vert f*\big(\Psi_{P}^{\prime}\big)^{(-P)}\big\vert^{2}\big]^{\frac{1}{2}}$, the starting point is the identity $f*(\Psi_{P}^{\prime})^{(-P)}= \sum\nolimits_{L\in \LL}\Big(f*\Psi_{L}^{(-L)}\Big)*\Big((\Psi_{P}^{\prime})^{(-P)}*\widetilde\Psi_{L}^{(-L)}\Big)$ from Proposition \ref{Prop4.3}. Then
\bea\label{5.6}
\big|f*{\Psi_{P}^{\prime}}^{(-P)}(\x)\big| &\leq
\sum\nolimits_{L\in\LL}\int_{\R^{d}}\big|f*\Psi_{L}^{(-L)}(\x-\x')\big|\,\,\big|{\Psi_{P}^{\prime}}^{(-P)}*\widetilde\Psi_{L}^{(-L)}(\x')\big|\,d\x'\\
&\leq C \sum\nolimits_{L\in\LL}\omega_{M,L}(\x)\int\big|{\Psi_{P}^{\prime}}^{(-P)}*\widetilde\Psi_{L}^{(-L)}(\x')\big|\chi_{M}(\x')\,d\x'\\
&\leq C\sum\nolimits_{L\in \LL}2^{-\delta m|P-L|}\omega_{M,L}(\x).
\eea
Using Young's inequality on $\bZ^n$ it follows that 
\bea\label{5.7}
\Big(\sum\nolimits_{P\in\LL}\big|f*{\Psi'_P}^{(-P)}(\x)\big|^2\Big)^\half&\leq C \bigg(\sum\nolimits_{L\in\LL}\Big(\sum\nolimits_{L'\in\LL}2^{-\eps|L-L'|}\omega_{M,L'}(\x)\Big)^2\bigg)^\half\\
&\leq C \Big(\sum\nolimits_{L\in\LL}\omega_{M,L}^2(\x)\Big)^\half\ .
\eea
Thus it suffices to show that $\big\| \big(\sum_{L\in\LL}\omega_{M,L}^2\big)^\half\big\|_1\leq C\|S_\Psi f\|_1$. From Lemma \ref{Lem5.7}, we obtain, for every $M>1/r$, the inequality
\be\label{5.8}
\omega_{M,L}(\x)^r\leq C_M\sum\nolimits_{L'\in\LL_L^+}2^{-M|L'-L|}\cM_s\big(|f*\Psi_{L'}^{(-L')}|^r\big)(\x)\ ,
\ee
where $\cM_s$ is the strong maximal operator
\be\label{Ms}
\cM_sf(\x)=\sup\nolimits_{0<r_i\le1}\Big(\prod\nolimits_{i=1}^n r_i^{-q_i}\int_{|\y_{i}|\leq r_i}\big|f(\x-\y)\big|\,d\y\Big)\ .
\ee
 Going back to \eqref{5.7}, it follows from \eqref{5.8} and Young's inequality that
\beas
\Big(\sum\nolimits_{L\in\LL}\omega_{M,L}^2(x)\Big)^\half&\leq C_M\bigg(\sum\nolimits_{L\in\LL} \Big(\sum\nolimits_{L'\in\LL_L^+} 2^{-M|L'-L|}\cM_s\big(|f*\Psi_{L'}^{(-L')}|^r\big)\Big)^\frac2r\bigg)^\half\\
&\leq C_M\Big(\sum\nolimits_{L\in\LL} \big(\cM_s(|f*\Psi_{L}^{(-L)}|^r)\big)^\frac2r\Big)^\half\ .
\eeas
Finally,
\beas
\big\|\big(\sum\nolimits_{L\in\LL}\omega_{M,L}^2\big)^\half\big\|_1&\leq C_M \big\|\big(\sum\nolimits_{L\in\LL}\cM_s\big(|f*\Psi_{L}^{(-L)}|^r\big)^\frac2r\big)^\frac r2 \big\|_{1/r}^{1/r}\\
&\leq C'_M \big\|\big(\sum\nolimits_{L\in\LL}\big|f*\Psi_{L}^{(-L)}|^2\big)^\frac r2 \big\|_{1/r}^{1/r}=C'_M \big\|\big(\sum\nolimits_{L\in\LL}\big|f*\Psi_{L}^{(-L)}\big|^2\big)^\half \big\|_1\ ,\\
\eeas
by the vector-valued strong maximal theorem. This completes the proof . 
\end{proof}

Theorem \ref{Thm5.1} establishes the equivalence of $S_{\Psi}f\in L^{1}(\R^{d})$ and $S_{\Psi'}f\in L^{1}(\R^{d})$ if the families $\Psi$ and $\Psi'$ are constructed via tensor products of reproducing formulas on each $\R^{d_{i}}$. It is useful to observe that the single inequality \eqref{5.1.5} also holds with weaker conditions imposed on $\Psi'$. Thus suppose $\Theta=\big\{\Theta_{L}:L\in\LL\big\}\subseteq \SS(\R^{d})$ is a family uniformly bounded in every Schwartz norm satisfying the cancellation conditions
\bea\label{5.9}
\int_{\R^{d_{i}}}\Theta_{L}(\ldots,\x_{i},\ldots)\,d\x_{i}&=0&&\text{for all $i\in D_{L}$}.
\eea
Note that $\Theta_{L}$ is \textbf{not} required to be a tensor product. Let $S_{\Theta}f=\big(\sum\nolimits_{L\in\LL}\big|f*\Theta_{L}^{(-L)}\big|^2\big)^\half$. Without further assumptions on $\Theta$ we have the following one-sided inequality.

\begin{proposition}\label{Prop5.8}
Let $\Psi=\big\{\Psi_{L}:L\in\LL\big\}$ be a family given by tensor products as in Theorem \ref{Thm5.1}. There is a constant $C>0$ depending only on the Schwartz norms of $\Theta$ so that if $f\in \SS'(\R^{d})$ and if $S_{\Psi}f\in L_{1}(\R^{d})$ then $S_{\Theta}f\in L^{1}(\R^{d})$ and $\|S_{\Theta}f\|_{L^{1}}\leq C \|S_{\Psi}f\|_{L^{1}}$.
\end{proposition}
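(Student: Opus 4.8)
The plan is to imitate the proof of Theorem \ref{Thm5.1}, the only difference being that the Calderón reproducing formula we use is the one attached to the tensor family $\Psi$ (it exists by Proposition \ref{Prop4.3}(a)), rather than one attached to $\Theta$ — no such formula for $\Theta$ is available, nor needed. Writing $f=\sum_{L\in\LL}f*\Psi_L^{(-L)}*\widetilde\Psi_L^{(-L)}$, with $\widetilde\Psi_L$ a tensor product whose factors $\widetilde\psi_{i,\ell_i}$, $i\in D_L$, have cancellation of a large order $m$, we obtain $f*\Theta_P^{(-P)}=\sum_{L\in\LL}\big(f*\Psi_L^{(-L)}\big)*\big(\Theta_P^{(-P)}*\widetilde\Psi_L^{(-L)}\big)$. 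Everything then reduces to the analogue of Lemma \ref{Lem5.3}: for $m$ large there are $C,\epsilon>0$, depending only on $\bfE$, the $\lambda_i$, and the Schwartz bounds of $\Theta$ and $\Psi$, with
\bes
\int_{\R^d}\Big(\prod\nolimits_{i=1}^n\big(1+2^{\ell_i}|\x_i|\big)\Big)^M\big|\Theta_P^{(-P)}*\widetilde\Psi_L^{(-L)}(\x)\big|\,d\x\le C\,2^{-\epsilon|P-L|}\qquad(P,L\in\LL).
\ees
Granting this, the computation in \eqref{5.6}--\eqref{5.7} gives $|f*\Theta_P^{(-P)}(\x)|\le C\sum_{L\in\LL}2^{-\epsilon|P-L|}\,\omega_{M,L}(\x)$ with $\omega_{M,L}$ the auxiliary function of \eqref{5.4}, hence $\big(\sum_{P\in\LL}|f*\Theta_P^{(-P)}|^2\big)^{1/2}\le C\big(\sum_{L\in\LL}\omega_{M,L}^2\big)^{1/2}$ by Young's inequality on $\Z^n$; and then $\big\|\big(\sum_{L\in\LL}\omega_{M,L}^2\big)^{1/2}\big\|_1\le C\|S_\Psi f\|_1$ follows \emph{verbatim} from the last part of the proof of Theorem \ref{Thm5.1}, since Lemmas \ref{Lem5.5}--\ref{Lem5.7} and the vector-valued strong maximal inequality involve only $\Psi$.

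To prove the key estimate, fix $P\in\LL_S$ and $L\in\LL_T$, say $S=\{(A_r,k_r):r=1,\dots,s\}$ and $T=\{(B_q,m_q):q=1,\dots,\sigma\}$, and split $\{1,\dots,n\}$ into the sets $\SS^\pm,\TT^\pm$ of the proof of Lemma \ref{Lem5.3} ($i\in\SS^+$ if $p_i>\ell_i$ and $i\in D_P$; $i\in\SS^-$ if $p_i>\ell_i$ and $i\notin D_P$; $i\in\TT^+$ if $\ell_i>p_i$ and $i\in D_L$; $i\in\TT^-$ if $\ell_i>p_i$ and $i\notin D_L$); indices with $|p_i-\ell_i|$ bounded by $\kappa\big(1+\max_{j,k}e(j,k)^{-1}\big)$ contribute only fixed constants and may be ignored. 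Since $\widetilde\Psi_L$ is a tensor product, we carry out the convolution one coordinate block at a time. For $i\in\SS^+\subseteq D_P$ we invoke the order-one cancellation \eqref{5.9} of $\Theta_P$ in the variable $\x_i$: applying the one-block form of Lemma \ref{Lem4.1} in $\x_i$ and transferring the resulting derivative onto the Schwartz factor of $\widetilde\Psi_L$ produces a gain $2^{-\bar\lambda(p_i-\ell_i)}$; for $i\in\TT^+\subseteq D_L$ we instead use the order-$m$ cancellation of $\widetilde\psi_{i,\ell_i}$, as in Lemma \ref{Lem5.2}, gaining $2^{-\bar\lambda m(\ell_i-p_i)}$. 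In the remaining blocks the weight $\prod_i(1+2^{\ell_i}|\x_i|)^M$ costs at most $2^{M(\ell_i-p_i)}$ when $i\in\TT^-$ and nothing when $p_i\ge\ell_i$; estimating the leftover convolution against the weight by the routine Schwartz estimates already used in Lemma \ref{Lem5.2}, one arrives at
\bes
\int_{\R^d}\Big(\prod\nolimits_{i=1}^n\big(1+2^{\ell_i}|\x_i|\big)\Big)^M\big|\Theta_P^{(-P)}*\widetilde\Psi_L^{(-L)}(\x)\big|\,d\x\ \le\ C\prod\nolimits_{i\in\SS^+}2^{-\bar\lambda(p_i-\ell_i)}\prod\nolimits_{i\in\TT^+}2^{-\bar\lambda m(\ell_i-p_i)}\prod\nolimits_{i\in\TT^-}2^{M(\ell_i-p_i)}.
\ees

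It remains to absorb the bad factors indexed by $\SS^-$ and $\TT^-$, and here the borrowing argument of Lemma \ref{Lem5.3} applies unchanged: by Lemma \ref{Lem3.8} there are $\u\in F_S$, $\v\in F_T$ with $|P-\u|,|L-\v|\le\kappa$, so for $i\in\SS^-$, with $i\in A_r$, one finds $k_r\in\SS^+$ with $p_{k_r}-\ell_{k_r}\ge c_1(p_i-\ell_i)-c_2$, and for $i\in\TT^-$, with $i\in B_q$, one finds $m_q\in\TT^+$ with $\ell_{m_q}-p_{m_q}\ge c_1(\ell_i-p_i)-c_2$; borrowing a small fraction of the gains at $k_r$ and $m_q$, and choosing first the borrowing fraction and then $m$ sufficiently large, turns the product into $C\,2^{-\epsilon|P-L|}$. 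The one genuinely new point relative to Theorem \ref{Thm5.1}, and the main obstacle, is precisely that $\Theta_P$ is not a tensor product, so $\Theta_P^{(-P)}*\widetilde\Psi_L^{(-L)}$ does not factor over the blocks $\R^{d_i}$; this is handled by decomposing $\Theta_P$ only in the "good" directions $i\in\SS^+$, where the single cancellation \eqref{5.9} in $\x_i$ is exactly what is needed, and by exploiting the tensor structure of $\widetilde\Psi_L$ to perform the convolution block by block, which keeps the bookkeeping of Lemma \ref{Lem5.3} intact.
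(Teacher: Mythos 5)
Your proof is correct, but it takes a genuinely different route from the paper's. The paper does not redo the convolution estimate for a non-tensor $\Theta_P$ at all: it first proves a decomposition lemma (Lemma \ref{Lem5.9}) which writes each $\Theta_L$ as a rapidly convergent series $\sum_{m}\Theta_{m,L}$ of \emph{tensor products} $\Theta_{m,L}=\prod_i\theta_{i,m,L}(\x_i)$, with each factor retaining the required cancellation in the dotted variables and with Schwartz norms decaying like $(1+m)^{-M}$; then Minkowski's inequality gives $S_\Theta f\le\sum_m S_{\Theta_m}f$, and the one-sided half of the proof of Theorem \ref{Thm5.1} is applied verbatim to each tensor family $\Theta_m$ and summed in $m$. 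You instead keep $\Theta_P$ intact and prove the analogue of Lemma \ref{Lem5.3} directly, using the iterated one-block Hadamard decomposition of $\Theta_P$ only in the directions $i\in\SS^+$ and the tensor structure of $\widetilde\Psi_L$ for the rest; this works because, even though $\Theta_P^{(-P)}$ is not a tensor product, its Schwartz majorant $C_N2^{\sum_iq_ip_i}\prod_i(1+2^{p_i}|\x_i|)^{-N}$ is, so the weighted $L^1$ bound on the convolution still factors over the blocks and the borrowing argument goes through unchanged. The trade-off: the paper's route isolates the non-tensor difficulty in a reusable periodization/Fourier-series lemma and leaves the core estimate untouched, while yours avoids Lemma \ref{Lem5.9} entirely at the cost of reworking the kernel estimate. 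It is worth noting that your key device — the iterated one-block Hadamard decomposition preserving the separate cancellations in each $\x_i$ — is exactly the device the paper uses at the end of the proof of Lemma \ref{Lem5.9}, so the two arguments ultimately rest on the same ingredient deployed at different stages.
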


The proof is contained in that of Theorem \ref{Thm5.1}, once the following reduction to tensor products is established. 

\begin{lemma}\label{Lem5.9}
Let $F\in\cS(\R^d)$, let $D\subseteq \{1, \ldots, n\}$, and suppose $\int_{\R^{d_{i}}}F(\ldots,\x_{i}, \ldots)\,d\x_{i}=0$ for $i\in D$. For each $m\in \N$ there are functions $f_{i,m}\in \SS(\R^{d_{i}})$, $1\leq i \leq n$, such that
\begin{enumerate}[\rm(1)]
\item $ \int_{\R^{d_{i}}}f_{i,m}(\x_{i})\,d\x_{i}=0$ if $i\in D$;

\smallskip

\item the Schwartz norms of $f_{i,m}$ decay rapidly in $m$; precisely, for every $N,M\in \N$ there exist $N'\in \N$ and $C_{M,N}>0$ such that $\|f_{i,m}\|_{(N)}\leq C_{N,M}(1+m)^{-M}\|F\|_{(N')}^{1/n}$;

\smallskip

\item
$F(\x_{1}, \ldots, \x_{n})=\sum_{m\in \N}\left(\prod_{i=1}^{n}f_{i,m}(\x_{i})\right)$.
\end{enumerate}
\end{lemma}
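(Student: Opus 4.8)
The plan is to obtain the tensor decomposition of $F$ from a partition-of-unity / Fourier-analytic expansion, arranged so that the cancellation in the variables $\x_i$, $i\in D$, is preserved factor by factor and so that the $m$-th term carries a rapidly decaying coefficient. First I would fix, for each $1\le i\le n$, a one-parameter reproducing resolution on $\R^{d_i}$: choose $\varphi\in C_c^\infty(\R^{d_i})$ with $\widehat\varphi\equiv 1$ near the origin, set $\psi=\varphi-\varphi^{(1)}$ (so $\int\psi=0$), and write $\delta_{\mathbf 0}=\sum_{\ell\ge0}\psi_\ell^{(-\ell)}$ in each factor, exactly as in Section~\ref{Sec4.1}. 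Taking the tensor product of these $n$ resolutions gives $\delta_{\mathbf 0}^{\otimes n}=\sum_{L\in\N^n}\bigotimes_{i=1}^n\psi_{i,\ell_i}^{(-\ell_i)}$ on $\R^d$, where for $i\in D$ the factor $\psi_{i,0}=\psi$ already has integral zero, while for $\ell_i=0$, $i\notin D$, it is $\varphi$. Convolving $F$ with this identity produces $F=\sum_{L\in\N^n}F*\big(\bigotimes_i\psi_{i,\ell_i}^{(-\ell_i)}\big)$, and each summand, being a convolution of a fixed tensor product with $F$, is \emph{not} itself a tensor product — so this alone does not finish.

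The key second step is to convert each term $F*\big(\bigotimes_i\psi_{i,\ell_i}^{(-\ell_i)}\big)$ into a tensor product up to an error controlled by the off-diagonal decay. Here I would invoke Lemma~\ref{Lem5.9}'s proof strategy recursively, or more directly: fix $L$ and work one coordinate block at a time. Write $G_L=F*\big(\bigotimes_i\psi_{i,\ell_i}^{(-\ell_i)}\big)$; since $\psi_{i,\ell_i}^{(-\ell_i)}$ is compactly supported (or Schwartz) and has integral zero whenever $i\in D$ or $\ell_i\ge1$, the function $G_L$ inherits $\int_{\R^{d_i}}G_L(\dots,\x_i,\dots)\,d\x_i=0$ for $i\in D$. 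Now expand $G_L$ in a tensor basis: on the Fourier side, $\widehat{G_L}(\xib_1,\dots,\xib_n)$ is a Schwartz function, and one can expand each variable block $\xib_i$ in, say, Hermite functions (or a Littlewood–Paley–type product basis), giving $\widehat{G_L}=\sum_{\nu}c_\nu\prod_i g_{i,\nu_i}(\xib_i)$ with coefficients $c_\nu$ decaying rapidly in $|\nu|$ because $\widehat{G_L}$ is Schwartz; transporting back, $G_L=\sum_\nu c_\nu\prod_i (g_{i,\nu_i})^{\vee}(\x_i)$, a tensor expansion. The zero-integral condition in $\x_i$ for $i\in D$ translates to $\widehat{G_L}$ vanishing on $\{\xib_i=\mathbf 0\}$, and by choosing the basis functions $g_{i,\nu}$ so that those with $g_{i,\nu}(\mathbf 0)\ne0$ are grouped — or simply subtracting off the value at $\xib_i=\mathbf 0$ — one arranges $\int (g_{i,\nu_i})^\vee\,d\x_i=0$ for $i\in D$ in every term. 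Finally one re-indexes the double sum over $(L,\nu)$ by a single parameter $m$, grouping so that the combined coefficient decays in $m$: the crucial point is that the Schwartz seminorms of $G_L$, hence the decay rate of $c_\nu$, are uniformly bounded in $L$ (by $\|F\|_{(N')}$ times a constant), because convolution with the uniformly-bounded Schwartz family $\bigotimes_i\psi_{i,\ell_i}^{(-\ell_i)}$ is bounded on every Schwartz seminorm uniformly in $L$ — this is where one uses estimates of the type in Lemma~\ref{Lem4.5} or the elementary convolution bounds listed before Lemma~\ref{Lem5.2}.

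Concretely, after the two steps above I have $F=\sum_{(L,\nu)}c_{L,\nu}\prod_{i=1}^n h_{i,L,\nu_i}$ with $\int h_{i,L,\nu_i}\,d\x_i=0$ for $i\in D$, and $|c_{L,\nu}|\cdot\prod_i\|h_{i,L,\nu_i}\|_{(N)}\le C_{N,M}(1+|L|+|\nu|)^{-M}\|F\|_{(N')}$ for every $M$. Enumerating the countable index set $\{(L,\nu)\}$ by $m\in\N$ via a bijection that is roughly monotone in $|L|+|\nu|$, and absorbing $c_{L,\nu}^{1/n}$ into each factor (splitting the phase of $c_{L,\nu}$ into one factor only), yields functions $f_{i,m}\in\SS(\R^{d_i})$ satisfying (1), (2), (3) of the statement. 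The rapid decay (2) follows since along the enumeration $(1+m)$ is comparable to a power of $1+|L|+|\nu|$, so $(1+|L|+|\nu|)^{-M'}\le C(1+m)^{-M}$ for suitable $M'$; absolute convergence of the double series (which justifies the rearrangement into a single index and the pointwise identity (3)) is immediate from the same decay estimate.

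The main obstacle is the second step: producing the tensor expansion of $G_L=F*\big(\bigotimes_i\psi_{i,\ell_i}^{(-\ell_i)}\big)$ \emph{with uniform control in $L$} of both the coefficient decay and the Schwartz norms of the factors, while simultaneously preserving the zero-integral condition in the blocks indexed by $D$. The delicate points are (a) ensuring the basis expansion in each block can be done so that the cancellation is not destroyed — handled by expanding around $\xib_i=\mathbf 0$ and noting $\widehat{G_L}$ already vanishes there for $i\in D$ — and (b) the uniformity in $L$, which rests on the fact that $\x_i\mapsto\psi_{i,\ell_i}^{(-\ell_i)}(\x_i)$, though it concentrates as $\ell_i\to\infty$, acts boundedly on Schwartz seminorms \emph{after} convolution because $F$ itself is Schwartz and $\int|\psi_{i,\ell_i}^{(-\ell_i)}|$, together with all the moment and derivative bounds needed, are uniformly bounded. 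One must be slightly careful that convolution with a dilated bump does not lose decay at infinity; this is exactly the content of the elementary inequalities recorded just before Lemma~\ref{Lem5.2} and of Lemma~\ref{Lem4.5}, which give the required uniform Schwartz-norm bound $\|G_L\|_{(N)}\le C_N\|F\|_{(N')}$ independent of $L$.
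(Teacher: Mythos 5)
Your core idea --- expand $F$ (or its Fourier transform) in a basis of tensor products with rapidly decaying coefficients --- is the right one and is close in spirit to the paper's argument, which localizes $F$ by a partition of unity $\sum_{\k\in\Z^d}\chi(\x-\k)=1$ and then expands each compactly supported piece in a Fourier \emph{series}, whose exponentials factor automatically as $\prod_i e^{i\pi\h_i\cdot\x_i}$. But as written your proof has two genuine gaps. First, the Littlewood--Paley layer is not only superfluous (it brings you no closer to a tensor product, and the basis expansion can be applied to $F$ directly), it introduces a summability problem: you assert $|c_{L,\nu}|\prod_i\|h_{i,L,\nu_i}\|_{(N)}\le C(1+|L|+|\nu|)^{-M}$, but what you actually justify is only that the Schwartz norms of $G_L=F*\big(\bigotimes_i\psi_{i,\ell_i}^{(-\ell_i)}\big)$ are \emph{uniformly bounded} in $L$. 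Uniform boundedness gives decay in $\nu$ for each fixed $L$ but no decay in $L$ at all, and then $\sum_{L\in\N^{n}}$ of your coefficient bounds diverges. (Decay in $L$ is in fact available from the cancellation of $\psi_{i,\ell_i}$ for $\ell_i\ge1$ via a Lemma \ref{Lem5.2}--type integration by parts, but you never invoke it; the simpler repair is to delete the LP step.)

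Second, your treatment of the cancellation condition is not a proof. Expanding $\widehat{G_L}$ in Hermite functions, the individual factors $g_{i,\nu_i}$ do \emph{not} vanish at $\xib_i=\0$ even though the full sum does, and ``subtracting off the value at $\xib_i=\0$'' from each factor changes the expansion: you generate cross terms whose cancellation and whose Schwartz-norm control you would still have to establish. The clean fix --- and the one the paper uses --- is to apply Hadamard's lemma (Lemma \ref{Lem4.1}) \emph{before} expanding: write $F=\sum_{q}\de_{y_{q_1}}\cdots\de_{y_{q_n}}F_q$ with derivatives taken in coordinates belonging to the blocks $i\in D$ and $F_q\in\SS(\R^d)$ normalized relative to $F$, expand each $F_q$ in the tensor basis without worrying about cancellation, and then observe that the derivative $\de_{y_{q_i}}$ lands on a single tensor factor, which therefore has integral zero automatically. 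You should restructure the argument as: (i) reduce to the no-cancellation case via Lemma \ref{Lem4.1}; (ii) expand $F_q$ directly (Hermite on the Fourier side, or localize-and-Fourier-series as in the paper) with coefficients decaying rapidly in the basis index; (iii) enumerate by $m$.
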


\begin{proof}
Begin by ignoring the cancellations imposed on $F$. Let $\chi\in\CC^{\infty}_{0}(\R^{d})$ be supported on the cube $Q=[-1,1]^d$ and satisfy $\sum_{\k\in\bZ^d}\chi(\x-\k)\equiv 1$. Then $F_\k(\x)=F(\x+\k)\chi(\x)$ is supported in $Q$, and $\|F_{\k}\|_{(N)}\leq C_{N,M}\big(1+|\k|\big)^{-M}\|F\|_{(N\vee M)}$ for every $N,M$. Let $F_\k^{\rm per}$ denote the periodic extension of $F_{\k}$ (restricted to $2Q$) having period 4 in each variable. $F_{\k}^{\rm per}$ has a Fourier expansion 
$
F_\k^{\rm per}(\x)=\sum_{\h\in\bZ^d}a_{\k,\h}e^{i\frac\pi2\sum_{j=1}^d h_jx_j}$. Choose $\eta\in C^\infty_{0}(\R)$ supported on $[-2,2]$ and equal to 1 on $[-1,1]$ so that $\eta^\#(\x)=\prod_{j=1}^d\eta(x_j)$ is supported on $2Q$ and equal to 1 on $Q$. Set $ \eta^{\#}_{i}(\x_{i})=\prod_{j\in E_{i}}\eta(x_j)$ and $\h_i=(h_j)_{j\in E_{i}}$. Putting $g_{i,\k,\h}(\x_{i})=a_{\k,\h} ^{1/n}\eta^\#(\x_{i})e^{i\pi \h_i\cdot \x_{i}}$ we have
\beas
F_\k(\x)\eta^\#(\x)=F_\k^{\rm per}(\x)\eta^\#(\x)
=\sum\nolimits_{\h\in\bZ^d}a_{\k,\h}\prod\nolimits_{i=1}^n\big(\eta^\#(\x_{i})e^{i\frac\pi2 \sum\nolimits_{j\in E_{i}}h_jx_j}\big)
=\sum\nolimits_{\h\in\bZ^d}\prod_{i=1}^ng_{i,\k,\h}(\x_{i}),
\eeas 
and so $F(\x)=\sum\nolimits_{\k\in\bZ^d}F_\k(\x-\k)=\sum\nolimits_{\k,\h\in\bZ^d}\prod\nolimits_{i=1}^ng_{i,\k,\h}(\x_{i}-\k_i)$. For every $P\in\bN$, the coefficients $a_{\k,\h}$ satisfy the estimate $\big|a_{\k,\h}\big|\leq C'_P\big(1+|\h|)^{-P}\|F_\k\|_{(P)}
\leq C'_{M,P}\big(1+|\h|)^{-P}\big(1+|\k|\big)^{-M}\|F\|_{(P\vee M)}$. Thus $\big\|g_{i,\k,\h}\big\|_{(N)}\leq C_N \big(1+|\k|\big)^N\|g_{i,\k,\h}\|_{\CC^N}
\leq C_{N,P,M}|\h|^N\big(1+|\h|)^{-\frac Pn}\big(1+|\k|\big)^{-\frac Mn}\|F\|_{(P\vee M)}^\frac1n$ for every $N\in\bN$. This gives the estimates $\|f_{i,m}\|_{(N)}\leq C_{N,M} (m+1)^{-M}\|F\|_{(N')}^{1/n}$ up to well-ordering the pairs $(\k,\h)$ by increasing modulus and choosing $P$ appropriately. 

Now suppose that $F$ satisfies the cancellation conditions. It follows from Lemma \ref{Lem4.1} that $F=\sum\nolimits_{q\in \prod_{i=1}^nE_{i}}\de_{y_{q_1}}\cdots\de_{y_{q_n}}F_q$ with $F_q\in\cS(\R^d)$ and $\|F_q\|_{(N)}\leq C_N\|F\|_{(N)}$ for every $q$. It is then sufficient to apply the previous result to $F_q$.
\end{proof}

\begin{proof}[Proof of Proposition \ref{Prop5.8}]
Given $\Theta=\big\{\Theta_{L}:L\in \LL\big\}$ satisfying \eqref{5.9}, write $\Theta_{L}=\sum_{m\in \N}\Theta_{m,L}$ where $\Theta_{m,L}(\x_{1}, \ldots, \x_{n})=\prod_{i=1}^{n}\theta_{i,m,L}(\x_{i})$ is a tensor product. Then by Minkowski's inequality,
\beas
\SS_{\Theta}f(\x)&=\Big[\sum_{L\in \LL}|f*\Theta_{L}(\x)|^{2}\Big]^{\frac{1}{2}}=\Big[\sum_{L\in \LL}\Big|\sum_{m\in \N}f*\Psi_{m,L}(\x)\Big|^{2}\Big]^{\frac{1}{2}} = \Big\vert \Big\vert \Big\{\sum_{m\in \N}f*\Psi_{m,L}(\x)\Big\}_{L\in \LL} \Big\vert \Big\vert_{\ell^{2}(\LL)}\\
&\leq
\sum_{m\in \N}\Big\vert \Big\vert \Big\{f*\Psi_{m,L}(\x)\Big\}_{L\in \LL} \Big\vert \Big\vert_{\ell^{2}(\LL)}
=
\sum_{m\in \N}S_{\Psi_{m}}f(\x).
\eeas
Repeating the proof of Theorem \ref{Thm5.1}, we obtain that $\big\vert\big\vert \SS_{\Theta_{m}}f\big\vert\big\vert_{L^{1}}\leq \sum_{m\in \N}\|\Theta_{m}\|_{(M)}\big\vert\big\vert S_{\Psi}f\big\vert\big\vert_{L^{1}}$ and the proof is completed applying Lemma \ref{Lem5.9}.
\end{proof}

\section{ $L^1$-equivalence of multi-norm square functions of convolution and tensor type}\label{Sec6}
\medskip

In this section we study the multi-norm square functions of convolution type defined in Section~\ref{Sec4.4}. The first step is to consider two families $\Sigma=\big\{\Sigma_{L}, \Rho_{L}, :L\in \LL\big\}$ and $\Sigma'=\big\{\Sigma'_{L}, \Rho'_{L}, :L\in \LL\big\}$  constructed via convolution from $\big\{\rho_{i,\ell}\big\}$ and $\big\{\rho'_{i,\ell}\big\}$ respectively. We first prove that any two  square functions of convolution type are $L^1$-equivalent and then prove $L^1$- equivalence between specific square functions of different types.

\begin{remark}{\rm
In general, the  study of the functions $\Sigma_L$ cannot be reduced to the study of the functions $\Psi_L$ appearing in square functions of tensor type because they do not necessarily have cancellation in single coordinates $\x_{i}$. Note however that if we assume that the Fourier transform of the factors $\sigma_{i,\ell_i}^{(-\ell_i)}$ is supported near the sets $B^i_{\ell_i}$ introduced in Section \ref{Sec2.1}, then the Fourier transform of $\Sigma_L^{(-L)}$ is supported near the annular set $B_L$ in \eqref{2.5},  and hence 
$\Sigma_L^{(-L)}$ gains cancellation in each variable $\x_i$ with $i\in D_L$ (see the discussion in Section \ref{Sec3.4} and  the proof of Proposition \ref{Prop6.5} below).
This indicates that also for general $\sigma_{i,\ell_i}\in\SS(\R^d)$ with integral zero, the convolution $\mathop*_{i\in D_L}\sigma_{i,\ell_i}^{(-\ell_i)}$ generates some implicit  cancellation in the single variables that are not immediately  apparent from the formulas.}
\end{remark}

\subsection{Independence of the choice of $\big\{\Sigma_{L}:L\in \LL\big\}$}\label{Sec6.1} 
\quad
\smallskip

We show that the condition $S'_\Sigma f\in L^{1}(\R^{d})$ does not depend on the choice of $\{\Sigma_L:L\in \LL\}$.

\begin{theorem}\label{Thm6.2}
Let $\Sigma,\Sigma'$ be two families as in Section \ref{Sec4.3}, and let $S'_\Sigma$, $S'_{\Sigma'}$ be the corresponding square functions \eqref{6.1}. Given $f\in \cS'(\R^d)$ the conditions $S'_\Sigma f\in L^{1}$ and $S'_{\Sigma'}f\in L^{1}$ are equivalent.
\end{theorem}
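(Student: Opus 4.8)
The plan is to follow the same blueprint already used for Theorem \ref{Thm5.1}, but now with the roles of the tensor families $\Psi_L$ played by the convolution families $\Sigma_L$. The key structural input is Proposition \ref{Prop4.4}, which provides a Calder\'on reproducing formula $\delta_\0=\sum_{L\in\LL}\Sigma_L^{(-L)}*\widetilde\Sigma_L^{(-L)}$ with the tilded functions $\widetilde\Sigma_L$ again of convolution type, $\widetilde\Sigma_L^{(-L)}=\big(\mathop*_{i\in D_L}\widetilde\sigma_{i,\ell_i}^{(-\ell_i)_i}\big)*\big(\mathop*_{i\notin D_L}\widetilde\rho_{i,\ell_i}^{(-\ell_i)_i}\big)$, where each $\widetilde\sigma_{i,\ell_i}$ has cancellation of order $m$ (arbitrarily large, via Lemma \ref{Lem4.2}) in the $\widehat\delta^i_r$-homogeneous sense. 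First I would write, for any two families $\Sigma,\Sigma'$,
\[
f*(\Sigma'_P)^{(-P)}=\sum\nolimits_{L\in\LL}\big(f*\Sigma_L^{(-L)}\big)*\big((\Sigma'_P)^{(-P)}*\widetilde\Sigma_L^{(-L)}\big),
\]
and the whole proof reduces, exactly as before, to (i) an almost-orthogonality estimate $\int_{\R^d}\big(\prod_i(1+2^{\ell_i}|\x_i|)\big)^M\big|(\Sigma'_P)^{(-P)}*\widetilde\Sigma_L^{(-L)}(\x)\big|\,d\x\le C2^{-\epsilon|P-L|}$, and (ii) the majorization machinery of Lemmas \ref{Lem5.5}--\ref{Lem5.7} together with the vector-valued strong maximal theorem, which is insensitive to whether the pieces are tensor products or convolutions.

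The step that requires genuinely new work is the almost-orthogonality estimate (i), since $(\Sigma'_P)^{(-P)}$ and $\widetilde\Sigma_L^{(-L)}$ are convolutions of the $n$ factors $\sigma'_{i,p_i}$ and $\widetilde\sigma_{i,\ell_i}$, each rescaled by a \emph{different} family of dilations $\widehat\delta^i_r$, so one cannot simply multiply together $n$ one-dimensional estimates. The approach I would take is to pass to the $\t$-variables: by Corollary \ref{Cor4.6} each of the $n$-fold convolutions $(\Sigma'_P)^{(-P)}$ and $\widetilde\Sigma_L^{(-L)}$ can be rewritten as a single Schwartz function rescaled by $2^{-P}$, resp. $2^{-L}$, with uniformly controlled Schwartz norms; more precisely one reorganizes the convolution $\sigma'_{1,p_1}^{(-p_1)_1}*\cdots*\sigma'_{n,p_n}^{(-p_n)_n}*\widetilde\sigma_{1,\ell_1}^{(-\ell_1)_1}*\cdots*\widetilde\sigma_{n,\ell_n}^{(-\ell_n)_n}$ by collecting, for each index $i$, the pair $\sigma'_{i,p_i}$ with $\widetilde\sigma_{i,\ell_i}$ and applying the one-dimensional-in-scale Lemma \ref{Lem5.2} (adapted to the $\widehat\delta^i_r$ dilations and the homogeneous length $\bl\cdot\br_i$) to gain a factor $2^{-c\,m\,|p_i-\ell_i|}$ when $i\in D_S$ and $p_i\ge\ell_i$, resp. $2^{-c\,m\,|p_i-\ell_i|}$ when $i\in D_T$ and $\ell_i\ge p_i$. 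The resulting intermediate function is then a convolution of $n$ Schwartz pieces at scales $\{\min_j e(i,j)(\text{something})\}$, and Lemma \ref{Lem4.5}/Corollary \ref{Cor4.6} collapse it back to a single bump at a scale $\t$ within bounded $\ell^\infty$-distance of both $P$ and $L$ (Lemma \ref{Lem3.8}), which is what yields the weight $\big(\prod_i(1+2^{\ell_i}|\x_i|)\big)^M$-integrability. The ``borrowing'' argument of Lemma \ref{Lem5.3} --- using the defining inequalities \eqref{3.6} of $\LL_S$, $\LL_T$ together with Lemma \ref{Lem3.8} to transfer a decay gain from a dominant index $k_r\in D_S$ (or $m_q\in D_T$) to a non-dominant index in its block $A_r$ (or $B_q$) --- carries over essentially verbatim, since it only uses the combinatorics of marked partitions and the matrix $\bfE$, not the tensor structure.

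The main obstacle, then, is bookkeeping the different dilation families inside the convolution $(\Sigma'_P)^{(-P)}*\widetilde\Sigma_L^{(-L)}$: one must be careful that the rescaling operations $f\mapsto f^{(-t)_i}$ for different $i$ do not commute in a way that destroys the factorized cancellation, and that the cancellation of $\widetilde\sigma_{i,\ell_i}$ — which is with respect to $\widehat\delta^i_r$ — still translates into a usable gain after the whole product is re-expressed via a \emph{single} isotropic rescaling by $2^{-L}$. I expect this to be handled by a lemma of the following shape: \emph{if $f_i\in\cS(\R^d)$ has $\widehat\delta^i$-cancellation of order $m$ and $g_i\in\cS(\R^d)$ has $\widehat\delta^i$-cancellation of order $m'$, then $f_i^{(-p)_i}*g_i^{(-\ell)_i}$ equals $2^{-c\bar\la\,m(p-\ell)}h^{(-\ell)_i}$ (for $p\ge\ell$) with $h$ normalized}, which is precisely the $\widehat\delta^i$-analogue of Lemma \ref{Lem5.2} and is proved the same way using the $\widehat\delta^i$-version of Hadamard's lemma \ref{Lem4.1}. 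Once this lemma and the estimate (i) are in place, inequalities \eqref{5.6}, \eqref{5.7}, \eqref{5.8} and the closing argument with $\cM_s$ go through unchanged, establishing $\|S'_{\Sigma'}f\|_1\le C\|S'_\Sigma f\|_1$ and, by symmetry, the converse.
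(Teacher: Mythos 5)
Your proposal is correct and follows essentially the same route as the paper: the paper proves Theorem \ref{Thm6.2} via the one-sided Proposition \ref{Prop6.3}, whose proof reduces everything to the almost-orthogonality estimate for $\Gamma_L^{(-L)}*\widetilde\Sigma_{L'}^{(-L')}$, obtained exactly as you describe by pairing the $i$-th factors, applying the $\widehat\delta^i$-version of Lemma \ref{Lem5.2} on $\R^d$, and collapsing the resulting $n$-fold convolution to a single bump near $L\wedge L'$ via Lemma \ref{Lem4.5} and Lemma \ref{Lem3.8}, after which the borrowing argument and the maximal-function machinery of Section \ref{Sec5} go through unchanged. The only small slip is that the collapsed scale lies near $P\wedge L$ (projected into $\LL$), not near both $P$ and $L$, but this does not affect the argument since the weight loss at the indices where the scales differ is absorbed by the order-$m$ cancellation exactly as in Lemma \ref{Lem5.3}.
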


This statement will be a direct consequence of the following analogue of Proposition \ref{Prop5.8}. 
\begin{proposition}\label{Prop6.3}
Let $\Sigma$ be the family of functions introduced in Section \ref{Sec4.3}, and for $1\leq i \leq n$ and $L\in \LL$ let $\{\gamma_{i,L}\}$ be a family of Schwartz functions on $\R^d$, uniformly bounded in every Schwartz norm such that, if $L\in\LL_D$, then $\int_{\R^d}\gamma_{i,L}(\x)\,d\x=0$ for all $i\in D$.
Define $\Gamma_L$ so that $\Gamma_L^{(-L)}=\mathop*_{i=1}^n\gamma_{i,L}^{(-\ell_i)_i}$.
Then $\big\|\big(\sum_{L\in\LL}\big|f*\Gamma_L^{(-L)}\big|^2\big)^\half\big\|_1\leq C \big\|S'_\Sigma f\big\|_1$ where $C>0$ depending only on the bounds for Schwartz norms of $\gamma_{i,L}$.
\end{proposition}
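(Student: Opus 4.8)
The strategy is to mimic the proof of Theorem~\ref{Thm5.1}, but working with the convolution-type reproducing formula of Proposition~\ref{Prop4.4} in place of the tensor-type one. The starting point is the Calder\'on reproducing formula $\del_\0=\sum_{L\in\LL}\Sigma_L^{(-L)}*\widetilde\Sigma_L^{(-L)}$, which gives
\[
f*\Gamma_P^{(-P)}=\sum\nolimits_{L\in\LL}\big(f*\Sigma_L^{(-L)}\big)*\big(\Gamma_P^{(-P)}*\widetilde\Sigma_L^{(-L)}\big)\ .
\]
Both $\Gamma_P^{(-P)}$ and $\widetilde\Sigma_L^{(-L)}$ are $n$-fold convolutions of one-parameter rescaled functions along the $n$ dilation families $\widehat\delta^i$, with the tilded factors $\widetilde\sigma_{i,\ell_i}$ carrying cancellation of high order $m$ when $i\in D_L$, and $\gamma_{i,P}$ carrying cancellation of order~$1$ when $i\in D_P$. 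So the first task is to prove the convolution-type analogue of Lemma~\ref{Lem5.3}: an almost-orthogonality estimate
\[
\int_{\R^d}\Big(\prod\nolimits_{i=1}^n\big(1+\widehat N_i(\widehat\delta^i_{2^{-L}}\x)\big)\Big)^M\big|\Gamma_P^{(-P)}*\widetilde\Sigma_L^{(-L)}(\x)\big|\,d\x\le C\,2^{-\eps|P-L|}\ ,
\]
or rather the simpler scalar version needed, $\int |\Gamma_P^{(-P)}*\widetilde\Sigma_L^{(-L)}(\x)|\,\chi_M(\x)\,d\x\le C2^{-\eps|P-L|}$.

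\textbf{Main steps.} First I would establish the single-factor product rule: by Lemma~\ref{Lem4.5} and Corollary~\ref{Cor4.6}, the convolution $\big(\mathop*_i\gamma_{i,p_i}^{(-p_i)_i}\big)*\big(\mathop*_i\widetilde\sigma_{i,\ell_i}^{(-\ell_i)_i}\big)$ can be rearranged (the $\widehat\delta^i$ commute under convolution) and reassembled into a single function rescaled at dyadic level $\approx\min(P,L)\in\Gamma(\mbE)$, with Schwartz norms controlled, after pairing $\gamma_{i,p_i}^{(-p_i)_i}$ with $\widetilde\sigma_{i,\ell_i}^{(-\ell_i)_i}$ along the $i$-th dilation. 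For each $i$, Lemma~\ref{Lem5.2} applied on $\R^d$ relative to the homogeneous structure of $\widehat\delta^i$ gives a gain $2^{-\bar\lambda m|p_i-\ell_i|}$ when $i\in D_L$ and $\ell_i\ge p_i$, and a gain $2^{-\bar\lambda|p_i-\ell_i|}$ when $i\in D_P$ and $p_i\ge\ell_i$; in the remaining cases one has at worst a controlled loss. The crucial point — identical in spirit to parts (1) and (2) in the proof of Lemma~\ref{Lem5.3} — is the ``borrowing'' argument: using Lemma~\ref{Lem3.8} to locate $\u\in F_S$, $\v\in F_T$ with $|P-\u|,|L-\v|\le\kappa$ (where $P\in\LL_S$, $L\in\LL_T$) and the relations $\ell_i\le\ell_{m_q}/e(m_q,i)$ for $i\in B_q$, one shows that a large $|p_i-\ell_i|$ with $i\notin D_P\cup D_L$ forces a comparably large $|p_{k}-\ell_{k}|$ for some dotted index $k$ that \emph{does} carry a gain, and a fraction of that gain can be transferred. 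This produces the net decay $2^{-\eps|P-L|}$.

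\textbf{Remaining steps and the obstacle.} Once the almost-orthogonality estimate is in hand, the rest runs parallel to Sections~\ref{Sec5}: one introduces the auxiliary maximal functions $\omega_{M,L}$, $\omega^*_{M,L}$ attached to $f*\Sigma_L^{(-L)}$, proves the pointwise domination $|f*\Sigma_L^{(-L)}|\le C\sum_{L'\in\LL_L^+}2^{-m\delta|L-L'|}|f*\Sigma_{L'}^{(-L')}|*\chi_M^{(-L)}$ (the convolution-type analogue of Lemma~\ref{Lem5.5}, using a reproducing formula supported on $\LL_L^+$ of convolution type — here one needs the convolution analogue of Lemma~\ref{Lem5.4}, which follows from Proposition~\ref{Prop4.4} by the same grouping), passes to $\omega^*_{M,L}$ via the $L^r$-trick for $r<1$, $rM>1$ (Lemma~\ref{Lem5.7}), then bounds $|f*\Gamma_P^{(-P)}|\le C\sum_L 2^{-\eps|P-L|}\omega_{M,L}$, applies Young's inequality on $\bZ^n$, bounds $\omega_{M,L}^r$ by the strong maximal function $\cM_s$ of $|f*\Sigma_L^{(-L)}|^r$, and finishes with the vector-valued strong maximal theorem. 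Theorem~\ref{Thm6.2} then follows by taking $\gamma_{i,L}=\sigma'_{i,\ell_i}$ (and symmetrically). The main obstacle I anticipate is precisely the single-factor rule and the borrowing step: unlike the tensor-type situation where the factors literally live on distinct subspaces $\R^{d_i}$ and split the integral, here every $\widetilde\sigma_{i,\ell_i}$ lives on all of $\R^d$, the rescalings $(-\ell_i)_i$ are non-isotropic and mutually skew, and one must carefully track how the homogeneous dimensions $\widehat Q_i$ and the combined scale $\min_i e(j,i)\ell_i$ interact so that the convolution of the $n$ pieces really does collapse to a single bump at a scale within bounded $\ell^\infty$-distance of $\Gamma(\mbE)$ — and, simultaneously, so that the cancellation of order~$1$ in the $\gamma$-factors and of order~$m$ in the $\widetilde\sigma$-factors each translate into genuine decay in the corresponding directions after the rearrangement. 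Controlling the weight $\prod_i(1+\widehat N_i(\cdot))^M$ under these skew rescalings (the analogue of inequality~\eqref{4.7} with a weight attached) is the technically delicate point.
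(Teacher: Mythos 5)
Your proposal is correct and follows essentially the same route as the paper's own proof: the paper likewise reduces everything to the convolution-type analogue of Lemma \ref{Lem5.3}, pairs each $\gamma_{i,\ell_i}^{(-\ell_i)_i}$ with $\widetilde\sigma_{i,\ell'_i}^{(-\ell'_i)_i}$ via Lemma \ref{Lem5.2} applied on $\R^d$ relative to the $i$-th dilations, reassembles the resulting factors into a single bump at the scale $\widetilde L\in\LL$ nearest $L\wedge L'$ using Lemma \ref{Lem4.5} and Lemma \ref{Lem3.8}, controls the weight via \eqref{4.7}, and then reruns the borrowing argument and the maximal-function machinery of Theorem \ref{Thm5.1}. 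The "technically delicate point" you flag (collapsing the skew rescalings and tracking the weight) is exactly the one the paper singles out as the only step needing new work.
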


With a single addition the proof follows the same lines as that of Theorem \ref{5.1}. 

\begin{proof}
 The only point that requires some comment is Lemma \ref{5.3}. We prove that required estimate remains valid in the form: 
\beas
\int_{\R^d}\Big(\prod_{i=1}^n\big(1+2^{\ell'_i}|\x_{i}|\big)\Big)^M\big|\Gamma_L^{(-L)}*\widetilde\Sigma_{L'}^{(-L')}(\x)\big|\,d\x\leq C2^{-\eps|L-L'|}\ ,
\eeas
uniformly in $L,L'\in\LL$, provided $\widetilde\Sigma_{L'}$ has cancellations of sufficiently high order relative to $M$ and $\eps$ is sufficiently small. For each convolution $\gamma_{i,\ell_i}^{(-\ell_i)_i}*\widetilde\sigma_{i,\ell'_i}^{(-\ell'_i)_i}$, we apply Lemma \ref{5.2} on $\R^d$ to obtain that, with $m$ equal to the order of cancellation of $\widetilde\sigma_{i,\ell'_i}$,
\be\label{6.2}
\gamma_{i,\ell_i}^{(-\ell_i)_i}*\widetilde\sigma_{i,\ell'_i}^{(-\ell'_i)_i}=\begin{cases}2^{-m\bar\la_i(\ell'_i-\ell_i)}h_{i,L,L'}^{(-\ell_i)_i}&\text{ if }\ell_i\leq \ell'_i\\
2^{-\bar\la_i(\ell_i-\ell'_i)}h_{i,L,L'}^{(-\ell'_i)_i}&\text{ if }\ell'_i< \ell_i\ ,\end{cases}
\ee
where $h_{i,L,L'}\in\cS(\R^d)$ with Schwartz norms controlled by those of $\gamma_{i,\ell_i}$ and $\widetilde\sigma_{i,\ell'_i}$.
Hence, with $\bar\ell_i=\min\{\ell_i,\ell'_i\}$ and $\mu_i(L,L')$ equal to the exponent of 2 in \eqref{6.2},
$$
\Gamma_L^{(-L)}*\widetilde\Sigma_{L'}^{(-L')}=2^{-\sum_i\mu_i(L,L')}\Big(\mathop*_{i=1}^nh_{i,L,L'}^{(-\bar \ell_i)_i}\Big)\ .
$$
We apply Lemma \ref{Lem4.5} and Lemma \ref{Lem3.8} (iii) to obtain that the functions $k_{L,L'}=\big(\mathop*_{i=1}^nh_{i,L,L'}^{(-\widetilde \ell_i)_i}\big)^{(\widetilde L)}$, with $\widetilde L\in\LL$ closest to $\bar L=L\wedge L'$, are uniformly bounded in every Schwartz norm. We obtain, with $N$ sufficiently large relative to $M$,
\beas
\int_{\R^d}\Big(\prod_{i=1}^n\big(1+2^{\ell'_i}|\x_{i}|\big)\Big)^M&\big|\Gamma_L^{(-L)}*\widetilde\Sigma_{L'}^{(-L')}(\x)\big|\,d\x\\
&=
2^{-\sum_i\mu_i(L,L')}\int_{\R^d}\Big(\prod_{i=1}^n\big(1+2^{\ell'_i}|\x_{i}|\big)\Big)^M\big|k_{L,L'}^{(-\widetilde L)}(\x)\big|\,d\x\\
&\leq C_N2^{\sum_i(q_i\widetilde\ell_i-\mu_i(L,L'))}\int_{\R^d}\frac{\big(\prod_{i=1}^n\big(1+2^{\ell'_i}|\x_{i}|\big)\big)^M}{\prod_{i=1}^n \big(1+\widehat N_i(2^{\widetilde\ell_1}\x_1,\dots,2^{\widetilde\ell_n}\x_n)\big)^{-N}}\,dx\\
&\leq C_N2^{-\sum_i(q_i\widetilde\ell_i-\mu_i(L,L'))}\int_{\R^d}\frac{\big(\prod_{i=1}^n\big(1+2^{\ell'_i}|\x_{i}|\big)\big)^M}{\prod_{i=1}^n \big(1+2^{\widetilde\ell_i}|\x_{i}|\big)^{-N'}}\,dx\ ,
\eeas
where \eqref{4.7} is used in last inequality and the $C_N$ subsumes the norms in $\|h\|_{(N)}\leq C_N\prod_{i=1}^n\|f_i\|_{(N')}$ of the single factors. We then repeat the proof of Lemma \ref{Lem5.3}.
\end{proof}

\subsection{$L^{1}$-equivalence with the square functions of tensor type}\label{Sec6.2}
\quad
\smallskip

\begin{theorem}\label{Thm6.4}
Let $\Psi=\{\Psi_L\}_{L\in\LL}$ be a family as in \eqref{3.4ee} of Section \ref{Sec4.2} constructed via tensor products, and  let $\Sigma=\{\Sigma_L\}_{L\in\LL}$ be a family as in Section \ref{Sec4.3} constructed via convolution. Let $S_\Psi$, $S'_\Sigma$ be the corresponding square functions \eqref{5.1} and \eqref{6.1}. Then, given $f\in \cS'(\R^d)$ the two conditions $S_\Psi f\in L^{1}$ and $S'_\Sigma f\in L^{1}$ are equivalent.
\end{theorem}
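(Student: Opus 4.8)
The plan is to reduce Theorem~\ref{Thm6.4} to the already-established $L^1$-equivalences (Theorems~\ref{Thm5.1} and \ref{Thm6.2}) together with the one-sided Propositions~\ref{Prop5.8} and \ref{Prop6.3}. Since every square function of tensor type is $L^1$-equivalent to every other (Theorem~\ref{Thm5.1}), and every square function of convolution type is $L^1$-equivalent to every other (Theorem~\ref{Thm6.2}), it suffices to prove the two implications $S_\Psi f\in L^1\Rightarrow S'_\Sigma f\in L^1$ and $S'_\Sigma f\in L^1\Rightarrow S_\Psi f\in L^1$ for \emph{one} convenient choice of $\Psi$ and \emph{one} convenient choice of $\Sigma$. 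Concretely, I would build $\Sigma$ from one-parameter families $\{\rho_{i,\ell}\}$ whose Fourier transforms are supported in $B(\0,r)$ and identically $1$ on $B(\0,r/2)$, as in Lemma~\ref{Lem4.2}\eqref{Lem4.2e}, so that (in the $\widehat\delta^i$ dilations) $\widehat{\sigma_{i,\ell}}$ is supported in a dyadic shell; then $\widehat{\Sigma_L^{(-L)}}$ is supported near the annular block $B_L$ of \eqref{2.5}, and similarly choose $\Psi$ with Fourier transforms subordinate to the $T_L$-blocks.

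The first implication is immediate from Proposition~\ref{Prop6.3}: the functions $\Sigma_L^{(-L)}=\mathop*_{i\in D_L}\sigma_{i,\ell_i}^{(-\ell_i)_i}*\mathop*_{i\notin D_L}\rho_{i,\ell_i}^{(-\ell_i)_i}$ are precisely of the form $\Gamma_L^{(-L)}=\mathop*_{i=1}^n\gamma_{i,L}^{(-\ell_i)_i}$ with $\gamma_{i,L}=\sigma_{i,\ell_i}$ for $i\in D_L$ (integral zero) and $\gamma_{i,L}=\rho_{i,\ell_i}$ otherwise, uniformly bounded in every Schwartz norm; but wait---Proposition~\ref{Prop6.3} bounds such a square function by $\|S'_{\Sigma}f\|_1$, i.e.\ it goes the \emph{wrong} way. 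So instead I use the reverse route: by Theorem~\ref{Thm5.1} it is enough to bound $\|S'_\Sigma f\|_1$ by $\|S_\Psi f\|_1$ for \emph{some} tensor family $\Psi$. For this I run the proof of Theorem~\ref{Thm5.1} verbatim but with the second family being $\Sigma$ in place of $\Psi'$: the only input that used the tensor-product structure of $\Psi'$ was the kernel decay estimate of Lemma~\ref{Lem5.3}, and the analogous estimate
\[
\int_{\R^d}\Big(\prod_{i=1}^n(1+2^{\ell_i}|\x_i|)\Big)^M\,\big|\Sigma_L^{(-L)}*\widetilde\Psi_{L'}^{(-L')}(\x)\big|\,d\x\le C\,2^{-\eps|L-L'|}
\]
is exactly the content established in the proof of Proposition~\ref{Prop6.3} (applying Lemma~\ref{Lem5.2} on $\R^d$ to each factor $\sigma_{i,\ell_i}^{(-\ell_i)_i}*\widetilde\psi$-type factor, then Lemma~\ref{Lem4.5} and Lemma~\ref{Lem3.8}\eqref{Lem3.8c} to recombine). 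Feeding this into \eqref{5.6}--\eqref{5.8} and the vector-valued strong maximal theorem gives $\|S'_\Sigma f\|_1\le C\|S_\Psi f\|_1$.

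For the converse implication, $S'_\Sigma f\in L^1\Rightarrow S_\Psi f\in L^1$, the key new point is that $\Sigma_L^{(-L)}$ does have cancellation in each variable $\x_i$ with $i\in D_L$ once the Fourier supports are arranged as above: since $\widehat{\Sigma_L^{(-L)}}$ vanishes near the coordinate subspace $\xib_i=\0$ for $i\in D_L$ (because $B_L$ is bounded away from that subspace when $\ell_i\ge1$, cf.\ Proposition~\ref{Prop3.8.5}), we may write $\Sigma_L=\mathop*_{i\in D_L}\sigma'_{i,\ell_i}{}^{(-\ell_i)_i}*(\cdots)$ with $\sigma'_{i,\ell_i}$ having vanishing moments to any order, uniformly. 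Thus $\{\Sigma_L\}$ satisfies the hypotheses \eqref{5.9} of Proposition~\ref{Prop5.8} (with $\Theta_L=\Sigma_L$, $\Theta_L$ not a tensor product, but a convolution---exactly the case covered by Lemma~\ref{Lem5.9} via the factorization into tensor pieces), so Proposition~\ref{Prop5.8} gives $\|S'_\Sigma f\|_1\le C\|S_\Psi f\|_1$; running the symmetric argument with the roles of tensor and convolution families exchanged (i.e.\ a convolution analogue of Proposition~\ref{Prop5.8}, whose proof is identical once the cancellation of $\Sigma_L$ in the single variables is in hand) gives $\|S_\Psi f\|_1\le C\|S'_\Sigma f\|_1$. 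Combining with Theorems~\ref{Thm5.1} and \ref{Thm6.2} to remove the dependence on the special choices of $\Psi$ and $\Sigma$ finishes the proof. The main obstacle is the step extracting single-variable cancellation from the convolution $\mathop*_{i\in D_L}\sigma_{i,\ell_i}^{(-\ell_i)_i}$; I expect to handle it exactly as in Proposition~\ref{Prop6.5} and the surrounding discussion in Section~\ref{Sec3.4}, i.e.\ by inserting Fourier cutoffs supported near $B_L$ and invoking that $B_L$ is, up to constants, a product of dyadic balls and annuli (Proposition~\ref{Prop3.8.5}).
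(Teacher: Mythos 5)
Your overall architecture coincides with the paper's: reduce via Theorems \ref{Thm5.1} and \ref{Thm6.2} to one convenient tensor family $\Psi$ and one convenient convolution family $\Sigma$ with Fourier supports subordinate to the blocks $B_L$, and prove the two one-sided implications separately. Your first implication is essentially the paper's: the single-variable cancellation of $\Sigma_L^{(-L)}$ in $\x_i$ for $i\in D_L$, extracted from the Fourier support exactly as you describe (this is Proposition \ref{Prop6.5}), places $\{\Sigma_L\}$ under the hypotheses \eqref{5.9} of Proposition \ref{Prop5.8}, which yields $\|S'_\Sigma f\|_1\leq C\|S_\Psi f\|_1$ directly; you do not need to re-run Theorem \ref{Thm5.1} with a mixed kernel estimate, and in any case the estimate you quote is not literally ``the content of the proof of Proposition \ref{Prop6.3}'', which treats $\Gamma_L^{(-L)}*\widetilde\Sigma_{L'}^{(-L')}$ (convolution against convolution), not $\Sigma_L^{(-L)}*\widetilde\Psi_{L'}^{(-L')}$.

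The gap is in the converse implication. You invoke ``a convolution analogue of Proposition \ref{Prop5.8}'' --- i.e.\ Proposition \ref{Prop6.3} --- applied to the tensor family $\Psi$, and assert that its proof is ``identical once the cancellation of $\Sigma_L$ in the single variables is in hand''. But the cancellation of $\Sigma_L$ is not what is at stake in this direction; the hypothesis of Proposition \ref{Prop6.3} is that the test family have the form $\Gamma_L^{(-L)}=\mathop*_{i=1}^n\gamma_{i,L}^{(-\ell_i)_i}$ with $\gamma_{i,L}\in\cS(\R^d)$ (functions on all of $\R^d$, rescaled by the global dilations $\widehat\delta^i$) having integral zero for $i\in D_L$. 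A tensor product $\Psi_L^{(-L)}(\x)=\prod_i\psi_{i,\ell_i}^{(-\ell_i)}(\x_i)$ is not of this form on its face: its natural convolution factorization is into distributions $\psi_{i,\ell_i}^{(-\ell_i)}\otimes\del_\0$, which are neither Schwartz on $\R^d$ nor $\widehat\delta^i_{2^{\ell_i}}$-rescalings of fixed Schwartz functions, so the recombination machinery (Lemma \ref{Lem4.5}, Lemma \ref{Lem6.9}) does not apply verbatim. The missing ingredient is the paper's Lemma \ref{Lem6.6}: using the Fourier localization of $\Psi_L^{(-L)}$ near $B_L$ together with Lemma \ref{Lem3.8}\eqref{Lem3.8a}, one shows that $A2^{\ell_i}\leq\widehat N_i(\xib)\leq B2^{\ell_i}$ on that support for every $i$ (including $i\notin D_L$, which requires the geometry of $\Gamma(\bfE)$), inserts annular cutoffs $\eta_i$ in the $\widehat N_i$-norm equal to $1$ there, and sets $\widehat{\gamma_{i,L}}=\widehat{\psi_{i,\ell_i}}\,\eta_i$; this produces the required factorization $\Psi_L^{(-L)}=\mathop*_{i=1}^n\gamma_{i,L}^{(-\ell_i)_i}$ with the correct cancellations. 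Your proposal places the Fourier-cutoff idea on the wrong side of the argument (on $\Sigma_L$, where Proposition \ref{Prop6.5} already suffices) and omits it where it is actually needed.
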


Having Theorem \ref{Thm6.2} at our disposal, it suffices to prove that, for appropriately chosen pairs of families $(\Psi,\Sigma)$ and $(\Psi',\Sigma')$  we have the implication $S_\Psi f\in L^{1}\Longrightarrow S'_\Sigma f\in L^{1}$ and the implication $S'_{\Sigma'} f\in L^{1}\Longrightarrow S_{\Psi'} f\in L^{1}$.

\begin{proposition}\label{Prop6.5}
Let $\big\{\sigma_{i,L}:i=1,\dots,n\,,\,L\in\LL\big\}\subset \cS(\R^d)$ be uniformly bounded in every Schwartz norm and assume that, if $L\in\LL_D$ with $D\subseteq\{1,\dots,n\}$, then 
\beas
\supp\widehat{\sigma_{i,L}}\subset\begin{cases} \big\{\xib: 2^{-3/4}\leq \widehat N_i(\xib)\le2^{3/4}\big\}&\text{ if }i\in D\\
\big\{\xib: \widehat N_i(\xib)\leq 2^{3/4}\big\}&\text{ if }i\not\in D\ \end{cases}.
\eeas
If $\Sigma_L^{(-L)}=\mathop{*}_{i=1}^n \sigma_{i,L}^{(-\ell_i)}$ then $\big\|\big(\sum_{L\in\LL}\big|f*\Sigma_L^{(-L)}\big|^2\big)^\half\big\|_1\leq C\big\|S_\Psi f\big\|_1$, with $C$ depending on the bounds for the $\sigma_{i,L}$ in an appropriate Schwartz norm, and this establishes  the implication $S_\Psi f\in L^{1}\Longrightarrow S'_\Sigma f\in L^{1}$.

\end{proposition}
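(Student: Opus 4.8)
The plan is to mimic the proof of Theorem~\ref{Thm5.1} (and of Proposition~\ref{Prop6.3}), using the Calder\'on reproducing formula $f=\sum_{L'\in\LL}f*\Psi_{L'}^{(-L')}*\widetilde\Psi_{L'}^{(-L')}$ from Proposition~\ref{Prop4.3}(a), where the auxiliary tensor-product family $\{\widetilde\Psi_{L'}\}$ is chosen so that $\widetilde\Psi_{L'}$ has cancellation of arbitrarily high order $m$ in each variable $\x_i$, $i\in D_{L'}$. Writing $f*\Sigma_L^{(-L)}=\sum_{L'\in\LL}\bigl(f*\Psi_{L'}^{(-L')}\bigr)*\bigl(\Sigma_L^{(-L)}*\widetilde\Psi_{L'}^{(-L')}\bigr)$, the whole argument reduces, exactly as in Section~\ref{Sec5}, to an almost-orthogonality estimate of the form
\be\label{pp-ao}
\int_{\R^d}\Bigl(\prod\nolimits_{i=1}^n\bigl(1+2^{\ell'_i}|\x_i|\bigr)\Bigr)^M\bigl|\Sigma_L^{(-L)}*\widetilde\Psi_{L'}^{(-L')}(\x)\bigr|\,d\x\le C\,2^{-\eps|L-L'|},
\ee
uniformly in $L,L'\in\LL$, with $\eps>0$ small and $m$ large relative to $M$. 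Once \eqref{pp-ao} is in hand, the passage from \eqref{pp-ao} to the $L^1$ bound $\|S'_\Sigma f\|_1\le C\|S_\Psi f\|_1$ is verbatim the chain of Lemmas~\ref{Lem5.5}--\ref{Lem5.7} and the final application of the vector-valued strong maximal theorem; no new idea is needed there.

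The proof of \eqref{pp-ao} is where the spectral support hypothesis on the $\sigma_{i,L}$ enters, and it is the step I expect to be the main obstacle. The difficulty is that $\Sigma_L^{(-L)}=\mathop*_{i=1}^n\sigma_{i,L}^{(-\ell_i)_i}$ is a convolution of factors dilated by the $n$ different dilation families $\widehat\del^i$, rather than a tensor product, so one cannot simply multiply together one-dimensional estimates as in Lemma~\ref{Lem5.3}. First I would exploit the support condition: since $\widehat{\sigma_{i,L}^{(-\ell_i)_i}}$ is supported where $\widehat N_i\sim 2^{\ell_i}$ (for $i\in D_L$) or $\widehat N_i\lesssim 2^{\ell_i}$ (for $i\notin D_L$), Proposition~\ref{Prop3.8.5} (together with the description \eqref{3.2} of $\widehat E_S$) shows that $\widehat{\Sigma_L^{(-L)}}$ is supported in a fixed dilate of $B_L$, and in particular $\Sigma_L^{(-L)}$ genuinely has cancellation of order one in each variable $\x_{k_r}$, $r=1,\dots,s$, when $L\in\LL_S$. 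Dually, the Fourier support of $\widehat\Psi_{L'}^{(-L')}$ lives in a dilate of $B_{L'}$, so the product $\widehat{\Sigma_L^{(-L)}}\cdot\widehat{\widetilde\Psi_{L'}^{(-L')}}$ vanishes unless $B_L$ and $B_{L'}$ (suitably fattened) overlap; combined with Lemma~\ref{Lem3.8}(iv) this already forces $|L-L'|\le\kappa$ on the support, and then \eqref{pp-ao} with any $\eps$ follows from the trivial bound since there are only boundedly many relevant $L'$.

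Because the paper wants the estimate to hold for general $\sigma_{i,L}\in\cS(\R^d)$ with vanishing integral as well (cf. the remark preceding Section~\ref{Sec6.1}), the cleaner route I would actually take avoids Fourier support altogether and instead runs the physical-space argument: apply Lemma~\ref{Lem5.2} on $\R^d$ to each pair $\sigma_{i,L}^{(-\ell_i)_i}*\widetilde\sigma_{i,\ell'_i}^{(-\ell'_i)_i}$ — here I use that, by Corollary~\ref{Cor4.6}, the tensor-product factor $\widetilde\Psi_{L'}^{(-L')}$ can, after regrouping, be compared to a convolution $\mathop*_{i=1}^n\widetilde\sigma_{i,\ell'_i}^{(-\ell'_i)_i}$ with the $\widetilde\sigma_{i,\ell'_i}$ having high-order cancellation for $i\in D_{L'}$ — yielding a scalar gain $2^{-\bar\la_i m(\ell'_i-\ell_i)_+}$ or $2^{-\bar\la_i(\ell_i-\ell'_i)_+}$ for each factor, so that $\Sigma_L^{(-L)}*\widetilde\Psi_{L'}^{(-L')}=2^{-\sum_i\mu_i(L,L')}\bigl(\mathop*_{i=1}^n h_{i,L,L'}^{(-\bar\ell_i)_i}\bigr)$ with the $h_{i,L,L'}$ uniformly Schwartz and $\bar\ell_i=\min\{\ell_i,\ell'_i\}$. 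Then Lemma~\ref{Lem4.5} together with Lemma~\ref{Lem3.8}(iii) rewrites this convolution as $k_{L,L'}^{(-\widetilde L)}$ with $\widetilde L\in\LL$ nearest to $L\wedge L'$ and $k_{L,L'}$ uniformly Schwartz, after which the integral in \eqref{pp-ao} is estimated exactly as in the proof of Proposition~\ref{Prop6.3}: bound the weight against $\prod_i(1+2^{\widetilde\ell_i}|\x_i|)^{N'}$ via the elementary inequality \eqref{4.7}, integrate, and then run the ``borrowing'' argument of Lemma~\ref{Lem5.3} — using Lemma~\ref{Lem3.8} to control the combinatorics of the marked partitions $S,T$ with $L\in\LL_S$, $L'\in\LL_T$ — to trade part of the exponential gain in the coordinates $i\in\SS^+\cup\TT^+$ for a gain in the remaining coordinates. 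This gives \eqref{pp-ao} with some $\eps>0$ once $m$ is chosen large enough relative to $M$, and the proof of Proposition~\ref{Prop6.5} is complete; the implication $S_\Psi f\in L^1\Rightarrow S'_\Sigma f\in L^1$ is then immediate, and the spectral-support hypothesis is used only to guarantee, via Proposition~\ref{Prop3.8.5}, that such $\Sigma_L$ do arise from an honest Littlewood-Paley family, which is what makes $\Sigma$ admissible in Theorem~\ref{Thm6.4}.
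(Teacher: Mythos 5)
Your first route is correct but considerably longer than the paper's argument, and the route you say you would actually take has a gap. The paper's proof is essentially a verification of the hypotheses of Proposition \ref{Prop5.8}: by Lemma \ref{Lem4.5} and Corollary \ref{Cor4.6}, $\Sigma_L^{(-L)}=k_L^{(-L)}$ with $k_L$ uniformly bounded in every Schwartz norm; the spectral hypothesis forces $\supp\widehat{\Sigma_L^{(-L)}}$ into a bounded neighbourhood of $B_L$, hence away from the hyperplanes $\{\xib_i=\0\}$ for $i\in D_L$ (since $\ell_i\ge1$ there), so $\Sigma_L^{(-L)}$ has integral zero in each such single variable $\x_i$; these are exactly the conditions \eqref{5.9}, and Proposition \ref{Prop5.8} then gives $\|S'_\Sigma f\|_1\le C\|S_\Psi f\|_1$ with no further work. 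You correctly identify both facts (uniform Schwartz bounds and the induced single-variable cancellation), but you never invoke Proposition \ref{Prop5.8}, whose entire purpose is to spare you from re-running the Section \ref{Sec5} machinery for a non-tensor family. Your first route — Fourier-support disjointness forcing $|L-L'|\lesssim\kappa$ as in Lemma \ref{Lem3.8}(iv), then the trivial bound — does work, but only after you normalize $\Psi$, via Theorem \ref{Thm5.1}, to a family whose $\widetilde\Psi_{L'}$ have compactly supported Fourier transforms near $B_{L'}$; a generic tensor family has no such support property, and you use this reduction without stating it.

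The second route does not go through as written. Corollary \ref{Cor4.6} converts a convolution $\mathop{*}_{i}f_i^{(-\ell_i)_i}$ into a single rescaled Schwartz function; it does not convert the tensor product $\widetilde\Psi_{L'}^{(-L')}$ into a convolution $\mathop{*}_{i}\widetilde\sigma_{i,\ell'_i}^{(-\ell'_i)_i}$. That converse direction is Lemma \ref{Lem6.6}, and it requires precisely the compact Fourier support of the $\psi_{i,\ell}$ that you claim this route avoids. Moreover, the generalization to arbitrary $\sigma_{i,L}$ with vanishing integrals runs into the obstruction stated in the Remark opening Section \ref{Sec6}: without spectral localization, $\Sigma_L^{(-L)}$ need not have cancellation in the single coordinates $\x_i$, and that cancellation is exactly what both your almost-orthogonality estimate and Proposition \ref{Prop5.8} require. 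So the spectral hypothesis is not merely an admissibility condition for Theorem \ref{Thm6.4}; it is the mechanism producing the single-variable cancellations on which the whole comparison rests.
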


\begin{proof}
We prove that the functions $\Sigma_L$ are uniformly bounded in $\SS(\R^{d})$ and satisfy the cancellation conditions in \eqref{5.9}. In fact Lemma \ref{Lem4.5} gives uniform bounds on Schwartz norms of the $\Sigma_L$. In order to verify the cancellation conditions, we need a detailed description of the support of $\FF[\Sigma_L^{(-L)}]=\prod_{i=1}^n \FF[\sigma_{i,L}^{(-\ell_i)}]$. 
We have
\beas
\supp \widehat{\Sigma_L^{(-L)}}&\subseteq\bigcap_{i\in D}\big\{\xib: 2^{-\ell_i-3/4}\leq \widehat N_i(\xib)\leq 2^{-\ell_i+3/4}\big\}
\cap\bigcap_{i\not\in D}\big\{\xib: \widehat N_i(\xib)\leq 2^{-\ell_i+3/4}\big\} 
\\
&\subset \big\{\xib:2^{\ell_i-3/4}\le|\xib_i|\leq 2^{\ell_i+3/4}\text{ if }i\in D\,,\,|\xib_i|\leq 2^{\ell_i+3/4}\text{ if }i\not\in D\big\}.
\eeas
Since $\ell_i\ge1$ for $i\in D$ by \eqref{3.6}, the support of $\widehat{\Sigma_L^{(-L)}}$ does not intersect the coordinate subspaces $\xib_i=\0$ for $i\in D$, and this gives the required cancellations.
\end{proof}

We now turn to the second implication. For each $i=1,\dots,n$,  fix functions $\ph_{i,\ell}$ on $\R^{d_i}$, so that $\widehat{\ph_{i,\ell}}$ is 1 on the set $\{\xib_i:|\xib_i|\leq \frac{1}{2}\}$ and is supported in $\{\xib_i:|\xib_i|\le2\}$. Then, for $\ell>0$, the functions $\psi_{i,\ell}$ in \eqref{4.1} have Fourier transforms supported in the set $\{\xib_i:1/4\le|\xib_i|\le2\}$.
From these functions we construct $\Psi_L$, for $L\in\LL$ according to \eqref{3.4ee}. 

\begin{lemma}\label{Lem6.6}
For every $L\in\LL$, there exists a family $\big\{\gamma_{i,L}:i=1,\dots,n\,,\,L\in\LL\big\}$ satisfying the hypothesis of Proposition \ref{Prop6.3} and such that
$
\Psi_L^{(-L)}=\mathop*_{i=1}^n\gamma_{i,L}^{(-\ell_i)_i}.
$
\end{lemma}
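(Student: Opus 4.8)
The plan is to verify the identity on the Fourier transform side, where $\widehat{\Psi_L^{(-L)}}$ factors as a product of one-block functions. By \eqref{3.4ee} one has $\widehat{\Psi_L^{(-L)}}(\xib)=\prod_{i=1}^n\widehat{\Psi_{i,L}}(2^{-\ell_i}\xib_i)$, where $\Psi_{i,L}=\psi_{i,\ell_i}$ if $i\in D_L$ and $\Psi_{i,L}=\ph_{i,\ell_i}$ if $i\notin D_L$, and $2^{-\ell_i}\xib_i=\delta_{2^{-\ell_i}}\xib_i$ on $\R^{d_i}$. On the other hand, for $\gamma_{i,L}\in\cS(\R^d)$ we have $\widehat{\gamma_{i,L}^{(-\ell_i)_i}}(\xib)=\widehat{\gamma_{i,L}}(\widehat\delta^i_{2^{-\ell_i}}\xib)$, and the $i$-th coordinate block of $\widehat\delta^i_r\xib$ is precisely $\delta_r\xib_i$, because $e(i,i)=1$. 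So the idea is to let $\widehat{\gamma_{i,L}}$ carry the factor $\widehat{\Psi_{i,L}}$ in the variable $\xib_i$ and, in each remaining variable $\xib_j$, an auxiliary cutoff $\chi_j$ arranged to equal $1$ on the relevant part of frequency space; the product of the $\widehat{\gamma_{i,L}^{(-\ell_i)_i}}$ will then collapse to $\widehat{\Psi_L^{(-L)}}$ times a product of such cutoffs evaluated on the support of $\widehat{\Psi_L^{(-L)}}$, where they are all $1$.

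The input that makes the cutoffs work uniformly in $L$ is the proximity of admissible scales to the principal cone. First I would note, using Lemma~\ref{Lem3.8}, that every $L\in\LL$ lies in $\Gamma_\del=\{\t\in\R^n_+:\ t_i/e(i,j)\le t_j+\del\ \text{for all }i,j\}$ with $\del=\max_{i,j}e(i,j)$, so that $\ell_j-e(j,i)\ell_i$ is bounded by a constant $C=C(\bfE)$ for all $i,j$. By the normalisation of $\ph_{i,\ell},\psi_{i,\ell}$ fixed just before the statement, $\widehat{\Psi_{i,L}}$ is supported in $\{|\xib_i|\le2\}$, hence $\supp\widehat{\Psi_L^{(-L)}}\subseteq\{\xib:|\xib_i|\le2^{\ell_i+1}\ \text{for all }i\}$; applying $\widehat\delta^i_{2^{-\ell_i}}$ maps this set into $\{\xib:|\xib_j|\le 2^{\,\ell_j-e(j,i)\ell_i+1}\ \text{for all }j\}$, which lies in a fixed ball $\{|\xib_j|\le2^{C+1}\}$. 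I would then fix, once and for all, functions $\chi_j\in C^\infty_c(\R^{d_j})$ with $\chi_j\equiv1$ on $\{|\xib_j|\le2^{C+1}\}$ and define $\gamma_{i,L}\in\cS(\R^d)$ by $\widehat{\gamma_{i,L}}(\xib)=\widehat{\Psi_{i,L}}(\xib_i)\prod_{j\ne i}\chi_j(\xib_j)$.

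It then remains to carry out three routine checks. (a) $\{\gamma_{i,L}\}$ is uniformly bounded in every Schwartz norm, since $\gamma_{i,L}$ is the tensor product of the uniformly bounded family $\{\Psi_{i,L}\}$ with the fixed Schwartz functions $\{\FF^{-1}[\chi_j]:j\ne i\}$. (b) If $i\in D_L$ then $\ell_i\ge1$ by \eqref{3.6}, so $\psi_{i,\ell_i}$ has integral zero, whence $\int_{\R^d}\gamma_{i,L}=\widehat{\gamma_{i,L}}(\0)=\widehat{\Psi_{i,L}}(\0)\prod_{j\ne i}\chi_j(\0)=0$; thus the $\gamma_{i,L}$ satisfy the hypotheses of Proposition~\ref{Prop6.3}. (c) Taking the Fourier transform of the convolution, $\FF\big[\mathop{*}_{i=1}^n\gamma_{i,L}^{(-\ell_i)_i}\big](\xib)=\widehat{\Psi_L^{(-L)}}(\xib)\prod_{i\ne j}\chi_j\big(2^{-\ell_i e(j,i)}\xib_j\big)$; on $\supp\widehat{\Psi_L^{(-L)}}$ every $\chi_j$-factor equals $1$ by the support computation above, while off this support both sides vanish, so $\mathop{*}_{i=1}^n\gamma_{i,L}^{(-\ell_i)_i}=\Psi_L^{(-L)}$. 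The only point that is not pure bookkeeping is (c), where the essential ingredient is exactly Lemma~\ref{Lem3.8}: it guarantees that the anisotropic rescaling $\widehat\delta^i_{2^{-\ell_i}}$ does not push the support of $\widehat{\Psi_L^{(-L)}}$ out of the region where the $\chi_j$ are constant, uniformly over $L\in\LL$; without this one could not choose the cutoffs independently of $L$. I would close by remarking that, combined with Proposition~\ref{Prop6.3} applied with $\Gamma_L=\Psi_L$, this lemma yields the implication $S'_\Sigma f\in L^1\Rightarrow S_\Psi f\in L^1$ needed in Theorem~\ref{Thm6.4}.
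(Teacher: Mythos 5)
Your proof is correct, and its skeleton is the same as the paper's: factor $\widehat{\Psi_L^{(-L)}}$ on the Fourier side as $\prod_i\widehat{\gamma_{i,L}}(\widehat\delta^i_{2^{-\ell_i}}\xib)$ by attaching to each $\widehat{\Psi_{i,L}}$ a fixed cutoff that equals $1$ on the rescaled support, with the uniformity in $L$ coming from Lemma \ref{Lem3.8}. The one genuine (and welcome) difference is in the choice of cutoff and, consequently, in where the cancellation comes from. The paper takes $\widehat{\gamma_{i,L}}=\widehat{\psi_{i,\ell_i}}\,\eta_i$ with $\eta_i$ an annular cutoff in the norm $\widehat N_i$ supported away from the origin; this forces it to prove a two-sided estimate $A2^{\ell_i}\le\widehat N_i(\xib)\le B2^{\ell_i}$ on $\supp\widehat{\Psi_L^{(-L)}}$, the lower bound for $i\notin D_L$ being the delicate step (it uses the proximity of $L$ to the face $F(D_L)$), and the cancellation of $\gamma_{i,L}$ is then read off from $\0\notin\supp\widehat{\gamma_{i,L}}$. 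You instead use a product of ball cutoffs $\prod_{j\ne i}\chi_j(\xib_j)$, so you only need the upper bound $\ell_j-e(j,i)\ell_i\le C(\bfE)$ (i.e.\ $\LL\subset\Gamma_\delta$), and you extract the required cancellation for $i\in D_L$ directly from $\widehat{\psi_{i,\ell_i}}(\0)=0$ — which is exactly what Proposition \ref{Prop6.3} asks for, since it only demands vanishing integrals for the dotted indices. Your variant is therefore slightly more economical; the paper's variant gives the stronger conclusion that every $\gamma_{i,L}$ with $L\ne0$ has integral zero, which is more than is needed here. Both verifications (uniform Schwartz bounds via tensor-product structure, and the collapse of the product of cutoffs to $1$ on the support) are carried out correctly.
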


\begin{proof}
The Fourier transform of $\Psi_L^{(-L)}$ is supported, for $L\in\LL_D$, on the set
$$
E_L=\Big(\prod\nolimits_{i\in D}\{\xib_i:2^{\ell_i-2}\le|\xib_i|\le2^{\ell_i+1}\}\Big)\times\Big(\prod\nolimits_{i\not\in D}\{\xib_i:|\xib_i|\le2^{\ell_i+1}\}\Big)\ .
$$
We claim that $E_L\subset \big\{\xib:A2^{\ell_i}\leq \widehat N_i(\xib)\leq B2^{\ell_i}\big\}$ for appropriate constants $B>A>0$ and $i=1,\dots,n$. In fact, for $\xib\in E_L$, Lemma \ref{Lem3.8} (i) gives the upper bound $\widehat N_i(\xi)=\max_j|\xi_j|^\frac1{e(j,i)}\leq c \max_j2^{\frac{\ell_j}{e(j,i)}}\leq c2^{\ell_i+C\kappa}$. The lower bound is obvious if $i\in D$, because
$\widehat N_i(\xib)\ge|\xib_i|\ge 2^{\ell_i-2}$. If $i\not\in D$, we use Lemma \ref{Lem3.8} (i) again and take $\t\in F(D)$ at distance at most $\kappa$ from $L$. Then there is $k\in D$ such that $t_k=e(k,i)t_i$. Then
$
\widehat N_i(\xib)\ge|\xib_k|^\frac1{e(k,i)}\ge 2^{\frac{\ell_k-2}{e(k,i)}}\ge2^{\frac{t_k-\kappa-2}{e(k,i)}}\ge2^{t_i-\frac{\kappa+2}{e(k,i)}}\ge2^{\ell_i-C'}.
$
For $i=1,\dots,n$, fix a smooth function $\eta_i$, equal to 1 on the set $\big\{\xi:A\leq \widehat N_i(\xi)\leq B\big\}$ and supported in $\big\{\xi:A/2\leq \widehat N_i(\xi)\leq 2B\big\}$. Then define functions $\gamma_{i,L}$ by $\widehat{\gamma_{i,L}}=\widehat{\psi_{i,\ell_i}}\eta_i$. Recalling that $\widehat{\psi_{i,\ell_i}}$ only depends on the variable $\xi_i$, it follows that $\widehat{\gamma_{i,L}}(2^{-\ell_i}\cdot_i\xi)=\widehat{\psi_{i,\ell_i}}(2^{-\ell_i}\xi_i)\eta_i(2^{-\ell_i}\cdot_i\xi)$, and therefore
\beas
\prod_{i=1}^n\widehat{\gamma_{i,L}^{(-\ell_i)_i}}(\xi)&=\Big(\prod_{i=1}^n\widehat{\psi_{i,\ell_i}}(2^{-\ell_i}\xi_i)\Big)\Big(\prod_{i=1}^n\eta_i(2^{-\ell_i}\cdot_i\xi)\Big)
=\widehat{\Psi_L^{(-L)}}(\xi)\ .
\eeas
Notice that $\supp \gamma_{i,L}$ contains $\0$ only if $L=0$, so that the cancellations required by Proposition~\ref{Prop6.3} are satisfied. 
\end{proof}

It now follows from Proposition \ref{Prop6.3} that if $\big\{\Psi_L\}_{L\in\LL}$ as in Lemma \ref{Lem6.6} and if $\big\{\Sigma_L\}_{L\in\LL}$ as in in Section \ref{Sec4.3} then  $\big\|S_\Psi f\big\|_1\leq C\big\|S'_\Sigma f\big\|_1$. This gives the second implication $S'_{\Sigma'} f\in L^{1}\Longrightarrow S_{\Psi'} f\in L^{1}$, and this completes the proof of Theorem \ref{Thm6.4}.

\subsection{Square functions of convolution type with parameters varying in $\N^{n}$}\label{Sec6.3}
\quad
\medskip

Keeping the same kind of family $\big\{\sigma_{i,\ell}:i=1,\dots,n\,,\,\ell\in\bN\big\}\subset\cS(\bR^d)$ appearing in \eqref{4.5}, we define the following square function: 
\be\label{6.3}
S''_\sigma f=\Big(\sum_{L\in\N^{n}}\big|f*\big(\mathop*_{i=1}^n\sigma_{i,\ell_i}^{(-\ell_i)_i}\big)\big|^2\Big)^\half\ .
\ee

In this section we first prove independence of the condition $S''_\sigma f\in L^{1}(\R^d)$ from the choice of the family $\big\{\sigma_{i,\ell}\big\}$, and then its equivalence with the same condition on the previously considered square functions.
In analogy with the arguments for the previously considered square functions, we prove the following.

\begin{proposition}\label{Prop6.8}
Let $\{\eta_{i,L}\}_{i=1,\dots,n\,,\,L\in\N^{n}}$ be a family of functions in $\cS(\R^d)$, uniformly bounded in every Schwartz norm and such that
$
\int_{\R^d}\eta_{i,L}(x)\,dx=0\ .
$
for all pairs $(i,L)$ for which $\ell_i>0$.
Then
\be\label{6.4}
\Big\|\Big(\sum_{L\in\N^{n}}\big|f*\big(\mathop*_{i=1}^n\eta_{i,L}^{(-\ell_i)_i}\big)\big|^2\Big)^\half\Big\|_1\leq C \big\|S''_\sigma f\big\|_1\ ,
\ee
with $C$ depending on the uniform bound on the $\eta_{i,L}$ in an appropriate Schwartz norm.
\end{proposition}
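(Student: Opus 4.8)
\textbf{Proof plan for Proposition \ref{Prop6.8}.} The strategy is to follow the now-familiar template used for Theorem \ref{Thm5.1}, Theorem \ref{Thm6.2}, and Proposition \ref{Prop6.3}, but now working with the full lattice $\N^{n}$ as index set rather than the admissible set $\LL$. The key structural difference is that $S''_\sigma$ is built from a genuine one-parameter-in-each-direction reproducing formula $\del_\0=\sum_{L\in\N^{n}}\big(\mathop*_{i=1}^n\sigma_{i,\ell_i}^{(-\ell_i)_i}\big)*\big(\mathop*_{i=1}^n\widetilde\sigma_{i,\ell_i}^{(-\ell_i)_i}\big)$, which is already a tensor-type product structure on the scale side. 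So first I would fix the associated Calder\'on reproducing formula from Lemma \ref{Lem4.2}\eqref{Lem4.2c} applied in each of the $n$ dilation families $\widehat\delta^i_r$, with the $\widetilde\sigma_{i,\ell_i}$ having cancellation of order $m$ (to be chosen large). Then, writing $\Theta_L^{(-L)}=\mathop*_{i=1}^n\eta_{i,L}^{(-\ell_i)_i}$ and $\Sigma_L^{(-L)}=\mathop*_{i=1}^n\sigma_{i,\ell_i}^{(-\ell_i)_i}$, I expand
\[
f*\Theta_L^{(-L)}=\sum_{L'\in\N^{n}}\big(f*\Sigma_{L'}^{(-L')}\big)*\big(\Theta_L^{(-L)}*\widetilde\Sigma_{L'}^{(-L')}\big),
\]
and estimate the "kernel" $\Theta_L^{(-L)}*\widetilde\Sigma_{L'}^{(-L')}$.

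The main technical step is the analogue of Lemma \ref{Lem5.3}: one needs, for suitable $\eps>0$,
\[
\int_{\R^d}\Big(\prod_{i=1}^n\big(1+2^{\ell'_i}|\x_i|\big)\Big)^M\big|\Theta_L^{(-L)}*\widetilde\Sigma_{L'}^{(-L')}(\x)\big|\,d\x\le C\,2^{-\eps|L-L'|},
\]
uniformly in $L,L'\in\N^{n}$. This is actually simpler here than in the $\LL$-indexed setting, precisely because there are no marked partitions to track: applying Lemma \ref{Lem5.2} on $\R^d$ to each factor $\eta_{i,\ell_i}^{(-\ell_i)_i}*\widetilde\sigma_{i,\ell'_i}^{(-\ell'_i)_i}$ (exactly as in \eqref{6.2}) yields a gain $2^{-m\bar\la_i(\ell'_i-\ell_i)}$ when $\ell_i\le\ell'_i$ and $2^{-\bar\la_i(\ell_i-\ell'_i)}$ when $\ell'_i<\ell_i$ — directionally, in \emph{every} coordinate. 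There is no "borrowing" needed between coordinates, since the cancellation hypothesis on $\eta_{i,L}$ holds whenever $\ell_i>0$, i.e. in every direction where a gain is required. I would then compose the rescaled factors using Lemma \ref{Lem4.5} (or rather its $\N^n$-version: $\mathop*_{i=1}^n h_{i,L,L'}^{(-\bar\ell_i)_i}=k_{L,L'}^{(-\widetilde\t)}$ with $\widetilde\t=\pi_{\Gamma(\bfE)}$-type projection of $\bar L=L\wedge L'$), keeping uniform control of Schwartz norms, and bound the resulting integral by \eqref{4.7} just as in the proof of Proposition \ref{Prop6.3}. This produces the decay estimate with $\eps=\bar\la\,\min\{1,m\}$ up to constants, provided $m$ is chosen large relative to $M$.

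Having the kernel estimate, the rest is the vector-valued majorization machinery copied verbatim from the proof of Theorem \ref{Thm5.1}: introduce the auxiliary maximal functions $\omega_{M,L}$, $\omega^*_{M,L}$ (now with $L,L'$ ranging over $\N^n$ and $\LL^+_L$ replaced by $\{L'\in\N^n:\ell'_i\ge\ell_i\}$), reconstruct the analogues of Lemma \ref{Lem5.5}, Lemma \ref{Lem5.6}, and Lemma \ref{Lem5.7} — these only use a Calder\'on formula restricted to "large" scales, which one gets from Lemma \ref{Lem4.2}\eqref{Lem4.2d} tensored over the $n$ directions, and no admissibility is involved — then apply Young's inequality on $\Z^n$ and the vector-valued strong maximal theorem (the operator $\cM_s$ in \eqref{Ms}) to conclude
\[
\Big\|\Big(\sum_{L\in\N^n}\big|f*\Theta_L^{(-L)}\big|^2\Big)^\half\Big\|_1\le C\Big\|\Big(\sum_{L\in\N^n}\big|f*\Sigma_L^{(-L)}\big|^2\Big)^\half\Big\|_1=C\|S''_\sigma f\|_1.
\]
The one point deserving a sentence of care is that $\N^n$ is no longer at bounded distance from a cone, so the geometric estimates from Lemma \ref{Lem3.8} are not available — but they were only ever used to handle the interplay between different marked partitions, which has disappeared here, so their absence causes no difficulty. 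I expect the main obstacle to be purely bookkeeping: verifying that every place in the Theorem \ref{Thm5.1} argument where $\LL$, $\LL_L^+$, and the constant $\kappa$ appeared can be cleanly replaced by $\N^n$, the obvious cone $\{L'\ge L\}$, and the trivial fact that adjacent cubes in $\N^n$ are at distance $1$; no genuinely new idea is required.
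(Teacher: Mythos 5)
Your overall architecture (Calder\'on expansion, a weighted kernel decay estimate, then the Peetre-maximal-function / Young / vector-valued strong maximal machinery) is the same as the paper's, and the final part of your plan would indeed go through once the kernel estimate is in place. But there is a genuine gap in that central step, and it sits exactly where you claim the argument becomes easier. Your proposed estimate
\[
\int_{\R^d}\Big(\prod_{i=1}^n\big(1+2^{\ell'_i}|\x_i|\big)\Big)^M\big|\Theta_L^{(-L)}*\widetilde\Sigma_{L'}^{(-L')}(\x)\big|\,d\x\le C\,2^{-\eps|L-L'|}
\]
is false as stated, because the weight is pinned to the raw indices $\ell'_i$ rather than to the scale at which the kernel actually lives. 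By Lemma \ref{Lem4.5}, the convolution $\mathop*_{i=1}^n\widetilde\sigma_{i,\ell'_i}^{(-\ell'_i)_i}$ is concentrated, in the direction $\x_j$, at scale $2^{-\widetilde\ell'_j}$ with $\widetilde\ell'_j=\min_i e(j,i)\ell'_i$, and for a general $L'\in\N^{n}$ (unlike for $L'\in\LL$, which lies within $\kappa$ of $\Gamma(\bfE)$) the gap $\ell'_j-\widetilde\ell'_j$ is unbounded. Concretely, take $n=2$, $d_1=d_2=1$, $L=L'=(N,0)$: then $\widetilde{L'}=(0,0)$, the kernel $\Theta_L^{(-L)}*\widetilde\Sigma_{L'}^{(-L')}$ is a unit-scale Schwartz function of $L^1$-norm $\gtrsim 2^{-cN}$ with $c$ independent of $m$ (the only gain available is the order-one cancellation of $\eta_{1,L}$ played against the unit-scale factors; the order-$m$ cancellation of $\widetilde\sigma_{1,N}$ gives nothing since $\ell'_1=\ell_1$), so the left-hand side is $\gtrsim 2^{(M-c)N}$ while the right-hand side is $C$, since $|L-L'|=0$.

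The repair is exactly the paper's Lemma \ref{Lem6.9}: one must first rewrite $\mathop*_i\eta_{i,L}^{(-\ell_i)_i}={\eta^*_L}^{(-\widetilde L)}$ and $\mathop*_i\widetilde\sigma_{i,\ell'_i}^{(-\ell'_i)_i}={\widetilde\sigma^*_{L'}}^{(-\widetilde{L'})}$ with $\widetilde L,\widetilde{L'}\in\LL$ — which uses Lemma \ref{Lem3.8}~(ii), precisely the geometric input you declared unnecessary — and state the estimate with the regularized weight $\prod_j\big(1+2^{\widetilde\ell'_j}|\x_j|\big)^M$. Even then, and contrary to your claim that ``no borrowing between coordinates is needed,'' the loss $M(\widetilde\ell'_j-\widetilde\ell_j)_+$ in direction $j$ is driven by $\ell'_{k_r}-\ell_{k_r}$ for the index $k_r$ realizing $\widetilde\ell_j=e(j,k_r)\ell_{k_r}$, which is in general a different coordinate; one must group the $j$'s accordingly and absorb the total loss by the gain $2^{-\bar\la_{k_r}m(\ell'_{k_r}-\ell_{k_r})}$ coming from that other coordinate, which is what forces $m$ to be large relative to $M$ (this is the dominance-principle grouping leading to \eqref{total}). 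The cross-coordinate interplay does not disappear just because the cancellation hypothesis holds in every index direction: it resurfaces through the regularization $L\mapsto\widetilde L$.
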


\begin{proof}
By Lemma \ref{Lem4.5}, given $L\in\N^{n}$, we can express any convolution $\mathop*_{i=1}^nf_i^{(-\ell_i)_i}$ as $h_L^{(-\t_L)}$ with $\t_L\in\Gamma(\mbE)$ and $h_L$ satisfying the inequalities of Lemma \ref{Lem4.5}. By Lemma \ref{Lem3.8} (ii), there is a map $\N^{n}\ni L\longrightarrow \widetilde L\in \LL$ so that $\widetilde L$ is at uniformly bounded distance from $\t_L$ and $\mathop*_{i=1}^nf_i^{(-\ell_i)_i}=k_L^{(-\widetilde L)}$ with $\|k_{L}\|_{(N)}\leq C_N\prod_{i=1}^n\|f_i\|_{(N')}$. Applying this procedure with $f_i$ replaced by $\eta_{i,\ell_i}$, respectively $\sigma_{i,\ell'_i}, \widetilde\sigma_{i,\ell'_i}$, we denote by $\eta^*_L$, respectively $\sigma^*_{L'}, \widetilde\sigma^*_{L'}$, the functions, uniformly bounded in every Schwartz norm, such that
\bea\label{6.5}
\mathop*_{i=1}^n\eta_{i,\ell_i}^{(-\ell_i)_i}=\eta^*_L{}^{(-\widetilde L)}\ ,\qquad
\mathop*_{i=1}^n\sigma_{i,\ell'_i}^{(-\ell'_i)_i}=\sigma^*_{L'}{}^{(-\widetilde{L'})}\ ,\qquad
\mathop*_{i=1}^n\widetilde\sigma_{i,\ell'_i}^{(-\ell'_i)_i}=\widetilde\sigma^*_{L'}{}^{(-\widetilde{L'})}\ .
\eea
Using $\del_\0=\sum\nolimits_{L\in\N^{n}}\Big(\mathop{*}_{i=1}^n\sigma_{i,\ell_i}^{(-\ell_i)_i}\Big)*\Big(\mathop{*}_{i=1}^n\widetilde\sigma_{i,\ell_i}^{(-\ell_i)_i}\Big)$
 write $f*\eta^*_L{}^{(-\widetilde L)}=\sum_{L'\in\N^{n}}\big(f*\sigma^*_{L'}{}^{(-\widetilde{L'})}\big)*\big(\eta^*_L{}^{(-\widetilde L)}*\widetilde\sigma^*_{L'}{}^{(-\widetilde{L'})}\big)$. We need the following analogue of Lemma \ref{Lem5.3}.

\begin{lemma}\label{Lem6.9}
Let $L^\#=(L\wedge L')\,\widetilde{}=\widetilde L\wedge\widetilde{L'}=(\ell^\#_1,\dots,\ell^\#_n)$. Then $L^\#\in\LL$ and, given $M>0$, there is $\eps>0$ such that the inequality $\int_{\R^d}\Big(\prod_{i=1}^n\big(1+2^{\widetilde\ell'_i}|x_{i}|\big)\Big)^M\big|\eta^*_L{}^{(-\widetilde L)}*\widetilde\sigma^*_{L'}{}^{(-\widetilde{L'})}(x)\big|\,dx\leq C_M2^{-\eps|L-L'|}$ holds uniformly in $L,L'\in\N^{n}$, provided the order $m$ of cancellations of the $\widetilde\sigma_{i,\ell'_i}$ is large enough relative to $M$.
\end{lemma}

\begin{proof}
The equality $(L\wedge L')\,\widetilde{}=\widetilde L\wedge\widetilde{L'}$ is obvious.
Let $\bar L=L\wedge L'=(\bar\ell_{1}, \ldots, \bar\ell_{n})$. Then for each $i$, 
$\eta_{i,L}^{(-\ell_i)_i}*\widetilde\sigma_{i,\ell'_i}^{(-\ell'_i)_i}=2^{-\gamma_i|\ell'_i-\ell_i|}\zeta_{i,L,L'}^{(-\bar\ell_i)_i}$, 
with $\zeta_{i,L,L'}$ uniformly bounded in every Schwartz norm with respect to $i,L,L'$. According to Lemma \ref{Lem5.2}, 
$
\gamma_i=\begin{cases}\bar\la_im&\text{ if }\ell'_i>\ell_i\\ \bar\la_i&\text{ if }\ell'_i\leq \ell_i\ .\end{cases}
$
Also from Lemma \ref{Lem4.5} (i),
$
\eta^*_L{}^{(-\widetilde L)}*\widetilde\sigma^*_{L'}{}^{(-\widetilde{L'})}=2^{-\sum_i\gamma_i|\ell'_i-\ell_i|}\zeta^*_{L,L'}{}^{(-L^\#)}$, with the $\zeta^*_{L,L'}$ uniformly bounded in every Schwartz norm.
For any $M$, we then have $\big|\eta^*_L{}^{(-\widetilde L)}*\widetilde\sigma^*_{L'}{}^{(-\widetilde{L'})}(x)\big|\leq C_M2^{\sum_iq_i\ell_i^\#-\sum_i\gamma_i|\ell'_i-\ell_i|}\prod_{i=1}^n\big(1+2^{\ell_i^\#}|x_{i}|\big)^{-2M}$, and 
\beas
\int_{\R^d}\Big(\prod_{i=1}^n\big(1+2^{\widetilde\ell'_i}|x_{i}|\big)\Big)^M&\big|\eta^*_L{}^{(-\widetilde L)}*\widetilde\sigma^*_{L'}{}^{(-\widetilde{L'})}(x)\big|\,dx\\
&\leq C_M 2^{\sum_iq_i\ell_i^\#-\sum_i\gamma_i|\ell'_i-\ell_i|}\prod_{i=1}^n\int_{\R^{d_i}}\frac{\big(1+2^{\widetilde\ell'_i}|x_{i}|\big)^M}{\big(1+2^{\ell_i^\#}|x_{i}|\big)^{2M}}\,dx_{i}\\
&=C_M 2^{-\sum_i\gamma_i|\ell'_i-\ell_i|}\prod_{i=1}^n\int_{\R^{d_i}}\frac{\big(1+2^{\widetilde\ell'_i-\ell_i^\#}|x_{i}|\big)^M}{\big(1+|x_{i}|\big)^{2M}}\,dx_{i}\\
&\leq C'_M 2^{\sum_i\big(M(\widetilde\ell'_i-\ell_i^\#)-\gamma_i|\ell'_i-\ell_i|\big)}.
\eeas
It suffices to prove that, for some $\eps>0$ and $C''_M$ independent of $\ell,\ell'$ we have
\be\label{total}
\sum\nolimits_i\big(M(\widetilde\ell'_i-\ell_i^\#)-\gamma_i|\ell'_i-\ell_i|\big)\leq C''_M- \eps\sum\nolimits_i|\ell'_i-\ell_i|.
\ee

If $\widetilde\ell'_i\leq \widetilde\ell_i$, the $i$-th term in the sum gives the inequality $M(\widetilde\ell'_i-\ell_i^\#)-\gamma_i|\ell'_i-\ell_i|=-\gamma_i|\ell'_i-\ell_i|$. In order to estimate the terms for which $\widetilde\ell'_i> \widetilde\ell_i$, i.e., for which $\ell_i^\#=\widetilde \ell_i$, it is necessary to group terms appropriately.\footnote{The following argument is the same used in \cite{MR3862599} Section 3.1 to obtain the ``dominance principle'' allowing to associate a marked partition to each point in the unit ball in the $x$-space. } 
Let $\t_L\in\Gamma(\mathbf E)$ have entries $t_{L,i}=\min_{j=1,\dots,n}e(i,j)\ell_j$. Suppose that, for some $i$, $t_{L,i}=e(i,k)\ell_k<\ell_i$ (with $k\ne i$).
Using the basic assumptions, it is easy to verify that $t_{L,k}=\ell_k$. 
If we select the indices $k_1,\dots,k_s$ for which $t_{L,k_r}=\ell_{k_r}$, we can then partition $\{1,\dots,n\}$ into subsets $A_r$ with the property that $t_{L,i}=e(i,k_r)\ell_{k_r}$ for all $i\in A_r$ (this partition may not be unique if some equality $e(i,j)\ell_j=e(i,j')\ell_{j'}$ occurs).
Let $A'_r$ be the set of $i\in A_r$ such that $\widetilde\ell'_i> \widetilde\ell_i$, and
$
S'_r=\sum_{i\in A'_r}\big(M(\widetilde\ell'_i-\widetilde\ell_i)-\gamma_i|\ell'_i-\ell_i|\big)\ .
$
For $i\in A'_r$ we have
\beas
\widetilde\ell'_i-\widetilde\ell_i\leq t_{L',i}-t_{L,i}+1\leq e(i,k_r)\big(t_{L',k_r}-\ell_{k_r}\big)+1+\frac{e(i,k_r)}2\leq e(i,k_r)(\ell'_{k_r}-\ell_{k_r})+C\ ,
\eeas
with $C$ depending only on $\mathbf E$. Therefore
$
S'_r\leq \sum_{i\in A'_r}\big(Me(i,k_r)(\ell'_{k_r}-\ell_{k_r})+MC-\gamma_i|\ell'_i-\ell_i|\big).
$
The terms $Me(i,k_r)(\ell'_{k_r}-\ell_{k_r})$ can be neglected if $\ell'_{k_r}\leq \ell_{k_r}$.
If $\ell'_{k_r}> \ell_{k_r}$, then the constant $\gamma_{k_r}$ can be made $>M\sum_{i\in A'_r}\big(e(i,k_r)$ by choosing $m$ large enough. Summing over $r$, we obtain \eqref{total} as required, and this proves Lemma \ref{Lem6.9}.
\end{proof}

We now continue with the proof of Proposition \ref{Prop6.8}. In analogy with \eqref{5.4}, we define 
\beas
\widetilde\omega_{M,L}(x)=\sup_{x'\in\R^d}\frac{\big|f*\sigma^*_L{}^{(-\widetilde L)}(x-x')\big|}{\prod_i\big(1+2^{\widetilde\ell_i}|x'_i|\big)^M}
\eeas
for $M>0$, $L\in\N^{n}$, and $\sigma^{*}_{L}$ as defined in \eqref{6.5},
We then have
\beas
\big|f*\eta_L^*{}^{(-\widetilde L)}(x)\big|&\leq \!\!\sum_{L'\in\N^{n}}\int \big|f*\sigma^*_L{}^{(-\widetilde L)}(x-x')\big|\big|\eta^*_L{}^{(-\widetilde L)}*\widetilde\sigma^*_{L'}{}^{(-\widetilde{L'})}(x') \big|\,dx'
\leq \!\sum_{L'\in\N^{n}}2^{-\eps|L-L'|}\widetilde\omega_{M,L'}(x)
\eeas
and so $\Big(\sum_{L\in\N^{n}}\big|f*\eta^*_L{}^{(-\widetilde L)}(x)\big|^2\Big)^\half\leq \Big(\sum_{L\in\N^{n}}\Big(\sum_{L'\in\N^{n}}2^{-\eps|L-L'|}\widetilde\omega_{M,L'}\Big)^2\Big)^\half
\leq C \Big(\sum_{L\in\N^{n}}\widetilde\omega_{M,L}^2\Big)^\half$ 
by Young's inequality on $\bZ^n$. We need the following analogue of Lemma \ref{Lem5.6} with $(\Psi^\#_{L,L'})^{(-L')}$ replaced by $\sigma^{**}_{L,L'}{}^{(-\widetilde L')}=\Big(\mathop*_{\{i:\ell'_i>\ell_i\}}\sigma_{i,\ell'_i}^{(-\ell'_i)_i}\Big)*\Big(\mathop*_{\{i:\ell'_i=\ell_i\}}\rho_{i,\ell'_i}^{(-\ell'_i)_i}\Big)$, and similarly for $(\widetilde\Psi^\#_{L,L'})^{(-L')}$.

\begin{lemma}\label{Lem6.10}
Let $f\in \cS'(\R^d)$ and $M>0$. Then, for every positive $\mu$,
\beas
\widetilde\omega_{M,L}(x)\leq C_{M,\mu} \sum_{L'\ge L}2^{-\mu|L-L'|}\big|f*\sigma^*_{L'}{}^{(-\widetilde L')}\big|*\chi_M^{(-\widetilde L)}(x)\ .
\eeas
\end{lemma}
We omit the proof, which follows the same lines and is actually simpler.
The proof of Proposition \ref{Prop6.8} now proceeds as in Theorem \ref{Thm5.1} with the obvious modifications.
\end{proof}

\begin{theorem}\label{Thm6.11}
Let $\{\sigma_{i,\ell}\}_{i\in\{1,\dots,n\}\,,\,\ell\in\bN}$ be a family as in Section \ref{Sec4.3}, $S'_\Sigma$ as in \eqref{6.1} and $S''_\sigma$ as in \eqref{6.3}. Then, given $f\in \cS'(\R^d)$ the two conditions $S'_\Sigma f\in L^{1}$ and $S''_\sigma f\in L^{1}$ are equivalent.
\end{theorem}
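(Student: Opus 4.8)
The plan is to deduce Theorem \ref{Thm6.11} from the machinery already assembled for square functions of convolution type, using the two decompositions of $\R^n_+$ that distinguish the two square functions: $S'_\Sigma$ sums over $L \in \LL$, while $S''_\sigma$ sums over $L \in \N^n$, and the two index sets are related by the tube decomposition $\N^n = \bigcup_{L\in\LL}(T_L\cap\N^n)$ from Lemma \ref{Lem3.7}\eqref{Lem3.7c}. Since Theorem \ref{Thm6.2} already gives that the condition $S'_\Sigma f\in L^1$ is independent of the choice of family $\Sigma$, and the analogous independence for $S''_\sigma$ follows from Proposition \ref{Prop6.8} applied with $\eta_{i,L}=\sigma'_{i,\ell_i}$, it suffices to prove the two implications $S'_\Sigma f\in L^1\Longrightarrow S''_\sigma f\in L^1$ and $S''_\sigma f\in L^1\Longrightarrow S'_\Sigma f\in L^1$ for one convenient pair of families.

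For the implication $S''_\sigma f\in L^1\Longrightarrow S'_\Sigma f\in L^1$, I would exploit that $\Sigma_L^{(-L)}$ is obtained by the grouping procedure of Section \ref{Sec4.3}, so that from Lemma \ref{Lem4.2}\eqref{Lem4.2c}--\eqref{Lem4.2d} one has, for $L\in\LL_D$,
$$
\Sigma_L^{(-L)}*\widetilde\Sigma_L^{(-L)}=\sum_{L'\in T_L\cap\N^n}\Big(\mathop*_{i=1}^n\sigma_{i,\ell'_i}^{(-\ell'_i)_i}\Big)*\Big(\mathop*_{i=1}^n\widetilde\sigma_{i,\ell'_i}^{(-\ell'_i)_i}\Big).
$$
More to the point, $f*\Sigma_L^{(-L)}$ can be written as a sum over $L'\in T_L\cap\N^n$ of the building blocks $f*\big(\mathop*_{i=1}^n\sigma_{i,\ell'_i}^{(-\ell'_i)_i}\big)$ convolved with smooth, rapidly decaying kernels; since the tubes $T_L$ have cross-sections of size $\ell_j+\tfrac12$ this is a genuinely infinite sum and the decay in $|L-L'|$ must be used. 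The natural route is to show $\big|f*\Sigma_L^{(-L)}(x)\big|\lesssim\sum_{L'\in T_L\cap\N^n}2^{-\eps|L-L'|}\widetilde\omega_{M,L'}(x)$ where $\widetilde\omega_{M,L'}$ are the auxiliary maximal functions of Proposition \ref{Prop6.8}, and then conclude by Young's inequality on $\Z^n$ together with the vector-valued strong maximal theorem exactly as in the proof of Theorem \ref{Thm5.1}. For the reverse implication one writes the $S''$-block $f*\big(\mathop*_i\sigma_{i,\ell_i}^{(-\ell_i)_i}\big)$ using $\del_\0=\sum_{L\in\LL}\Sigma_L^{(-L)}*\widetilde\Sigma_L^{(-L)}$ from Proposition \ref{Prop4.4}, reducing everything, via Lemma \ref{Lem4.5} and Lemma \ref{Lem3.8}\eqref{Lem3.8b} (the map $\N^n\ni L\mapsto\widetilde L\in\LL$), to convolutions indexed by $\LL$; the kernel estimates are then exactly those already proved in Lemma \ref{Lem6.9}.

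The main obstacle I anticipate is bookkeeping the ``almost orthogonality'' estimate for pairs $(L,L')$ with $L\in\N^n$ and $L'\in\LL$ (or vice versa). When $L$ lies deep inside a tube $T_{L'}$—far from the dotted face $F_{S}$ in the directions $\e_j$, $j\notin D_{L'}$—the factors $\sigma_{i,\ell_i}^{(-\ell_i)_i}$ with $i\notin D_{L'}$ carry \emph{no} cancellation, so one cannot hope for decay in those directions from the almost-orthogonality mechanism alone. The resolution, which is already implicit in the proof of Lemma \ref{Lem6.9} and in the ``borrowing'' arguments of Lemma \ref{Lem5.3}, is that within a tube the non-dotted coordinates are slaved to the dotted ones via $\ell_i=\lfloor\ell_{k_r}/e(k_r,i)\rfloor$ (Lemma \ref{Lem3.7xyz}), so $|L-L'|$ in the non-dotted directions is controlled by $|L-L'|$ in the dotted directions up to additive constants; hence the cancellation of $\sigma_{i,\ell_i}$ for $i\in D_{L'}$, together with a sufficiently large number $m$ of vanishing moments, suffices to produce a net geometric decay $2^{-\eps|L-L'|}$. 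Once this is in hand, all the $L^1$ estimates run in parallel with Sections \ref{Sec5} and \ref{Sec6.1}, so I would present the proof by isolating the pairing lemma and then citing the already-proven $L^1$-machinery.
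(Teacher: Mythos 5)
Your reduction to a single convenient pair of families (via Theorem \ref{Thm6.2} and Proposition \ref{Prop6.8}) is correct and is how the paper begins, but you never exploit that freedom where it actually matters, and the estimate your argument rests on fails. The claimed bound $\big|f*\Sigma_L^{(-L)}(\x)\big|\lesssim\sum_{L'\in T_L\cap\N^{n}}2^{-\eps|L-L'|}\widetilde\omega_{M,L'}(\x)$ is not available for a general family: if $L\in\LL_D$ and $L'\in T_L\cap\N^{n}$, then $\ell'_i=\ell_i$ for every $i\in D$, while $\ell'_i$ ranges freely over $\{0,\dots,\ell_i\}$ for $i\notin D$. In the relevant factor-by-factor convolutions the function sitting at the \emph{finer} scale in a non-dotted direction is precisely the one with no cancellation ($\rho_{i,\ell_i}$ in one implication, $\widetilde\rho_{i,\ell'_i}$ in the other), so Lemma \ref{Lem5.2} produces no gain there; and since the dotted coordinates of $L$ and $L'$ agree exactly, there is nothing in the dotted directions to ``borrow'' from. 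Your proposed resolution misapplies the slaving relation $\ell_i=\lfloor\ell_{k_r}/e(k_r,i)\rfloor$ of Lemma \ref{Lem3.7xyz}: that relation constrains \emph{maximal} cubes, i.e.\ elements of $\LL$, whereas the points $L'$ deep inside $T_L$ violate it by construction — that is what makes them interior to the tube. A distribution whose spectrum concentrates where some non-dotted $|\xib_i|$ is small makes $f*\Sigma_L^{(-L)}$ of unit size while it is seen only by blocks $\sigma^*_{L'}$ with $L'$ deep in the tube, so no geometric decay in $|L-L'|$ over the whole tube can hold.

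The missing idea is to use the freedom of choice to make the tube sum \emph{finite} rather than to hunt for decay. Taking $\sigma_{i,\ell}$ with $\widehat{\sigma_{i,\ell}}$ supported in $\big\{\xib:\tfrac12-\delta\le\widehat N_i(\xib)\le1+\delta\big\}$ for $\ell>0$ (and in the corresponding ball for $\ell=0$), the Fourier support of $\sigma^*_{L'}=\mathop*_{i=1}^n\sigma_{i,\ell'_i}^{(-\ell'_i)_i}$ lies in $\bigcap_i\big\{\xib:\widehat N_i(\xib)\sim2^{\ell'_i}\big\}$, which is empty unless $L'$ is within distance $\kappa+1$ of $\LL$ by Lemma \ref{Lem3.8}\,\eqref{Lem3.8d}. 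Hence only boundedly many terms of $T_L\cap\N^{n}$ survive, $\Sigma_L^{(-L)}=\sum_{|L'-L|\le A}\sigma^*_{L'}{}^{(-L')}$ is a finite sum, and the two implications follow at once: $S''_\sigma$ is a square function of the type covered by Proposition \ref{Prop6.3}, and $S'_\Sigma$ is a finite sum of square functions of the type covered by Proposition \ref{Prop6.8}. No new almost-orthogonality estimate over the tube is needed.
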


\begin{proof}
By Proposition \ref{Prop6.8}, it suffices to prove the equivalence for an appropriately chosen family~$\{\sigma_{i,\ell}\}$.
We then take $\sigma_{i,\ell}$ with Fourier transform supported in $\big\{\xi:\widehat N_i(\xi)\leq 1+\del\big\}$ if $\ell=0$ and in $\big\{\xi:\frac{1}{2}-\del\leq \widehat N_i(\xi)\leq 1+\del\big\}$ if $\ell>0$, with $\del>0$ sufficiently small.
It follows from Lemma~\ref{Lem3.8}~(iv) that $\sigma^*_L=0$ if $|L-L'|>\kappa+1$ for all $L'\in\LL$ and therefore, for $L\in\LL_D$,
\beas
\Sigma_L^{(-L)}&=\Big(\mathop*_{i\in D}\sigma_{i,\ell_i}^{(-\ell_i)_i}\Big)*\Big(\mathop*_{i\not\in D}\rho_{i,\ell_i}^{(-\ell_i)_i}\Big)
=\Big(\mathop*_{i\in D}\sigma_{i,\ell_i}^{(-\ell_i)_i}\Big)*\Big(\mathop*_{i\not\in D}\sum_{\ell'_i\leq \ell_i}\sigma_{i,\ell'_i}^{(-\ell'_i)_i}\Big)
=\sum_{|L'-L|\leq A}\sigma^*_{L'}{}^{(-L')} .
\eeas

Then the conclusion is a consequence of the following two remarks.
\begin{enumerate}[1.]
\item The square function $S''_\sigma$ can be regarded as the square function associated to a family $\{\gamma_{i,L}\}$ as in Proposition \ref{Prop6.3}.
\item the square function $S'_\Sigma$ can be regarded as a finite sum of square functions in the left-hand side of \eqref{6.4}, each associated to a family $\{\eta_{i,L}\}$ as in Proposition \ref{Prop6.8}. \qedhere
\end{enumerate}
\end{proof}

\section{Dyadic cubes and rectangles in $\R^d_\x$\\ and square functions of Plancherel-P\'olya type}\label{Sec7.1}

We shall need the definition of dyadic cubes and rectangles in our multi-norm context. Since several families of dilations are involved, the definition requires some care. We will only focus our attention on cubes and rectangles at scales smaller than unit scale.

\subsection{Dyadic cubes}\label{Sec8.1s}\quad
  
\smallskip

The dilations $\delta_{t}$ from \eqref{1.1} act on each factor of $\R^{d}=\prod_{i=1}^{n}\R^{d_{i}}$; if $\x_{i}=(x_{h})_{h\in E_{i}}$ then $\delta_{t}(\x_{i})=\big(t^{\lambda_{h}}x_{h}\big)_{h\in E_{i}}$. 
We define dyadic cubes $\bfQ_{i}^{\ell_{i}}\subseteq\R^{d_{i}}$ at scale $2^{-\ell_{i}}$ for $\ell_{i}\geq 0$ which respect this family of dilations. This is done by induction on $\ell_{i}$. For $\ell_{i}=0$ the dyadic cubes at scale $2^{0}=1$ are the unit cubes $\bfQ_{i,\p}^{0}=\big\{(x_{h})_{h\in E_{i}}:p_h\leq x_{h}<p_h+1\big\}$ where $\p=(p_{h})_{h\in E_{i}}\in\Z^{d_{i}}$. We denote this family by $\D_{i}^{0}$. The cubes at scale $2^{-\ell_{i}}$ should then have side lengths comparable to $\big\{2^{-\ell_{i}\lambda_{h}}:h\in E_{i}\big\}$ and be comparable to translates of $\big\{(x_{h})_{h\in E_{i}}:0\leq x_{h} < 2^{-\ell_{i}\lambda_{h}}\big\}$.  
We cannot simply divide the sides of dyadic cubes by the same number at each stage unless the exponents $\lambda_{h}$ are rational multiples of each other. 

To deal with this problem, for $h\in E_{i}$ we say that a sequence $\big\{\mu(\ell_{i},h)\in \N, \ell_{i}\geq 0\big\}$ is  {\it admissible } if it is non-decreasing, if $\mu(0,h)=0$, and if there exists $a>0$ so that 
\bea\label{mu ell}
2^{\ell_{i}\lambda_{h}-a}&\leq 2^{\mu(\ell_{i},h)}\leq2^{\ell_{i}\lambda_{h}+a},&&\text{ or equivalently } & |\mu(\ell_{i},h)-\ell_{i}\la_h|&\leq a.
\eea
Given an admissible sequence, suppose that the family of cubes $\D_{i}^{\ell_{i}-1}=\big\{\bfQ_{i,\p}^{\ell_{i}-1}\big\}$ has been defined so that the $h^{th}$ side of $\bfQ_{i,\p}^{\ell_{i}-1}$ has length $2^{-\mu(\ell_{i}-1,h)}$. Then we divide the $h^{th}$-side of each cube  into $2^{\mu(\ell_{i},h)-\mu(\ell_{i}-1,h)}$ equal pieces. The result of this division is the family $\bD_i^{\ell_{i}}=\big\{\bfQ_{i,\p}^{\ell_{i}}:\p\in \Z^{d_{i}}\big\}$ where 

\bea\label{3.11}
 \bfQ_{i,\p}^{\ell_{i}}=\Big\{(x_{h})_{h\in E_{i}}:2^{-\mu(\ell_{i},h)}p_{h}\leq x_{h}<2^{-\mu(\ell_{i},h)}(p_{h}+1)\Big\}.
\eea
The side lengths of these rectangular sets are not equal, but we call them {\it dyadic cubes in $\R^{d_{i}}$} because their scale depends on a single dyadic parameter $2^{-\ell_i}$ compatible with the given dilations. The following properties of dyadic cubes are then clear.
\begin{enumerate}[(1)]
\item\label{Dy1}
$\R^{d_{i}}=\bigcup_{\p\in \Z^{d_{i}}}{\bfQ_{i,\p}^{\ell_{i}}}$ is a disjoint union.

\smallskip
\item \label{Dy2}
The $h^{th}$ side of $\bfQ_{i,\p}^{\ell_{i}}$ has length $2^{-\mu(\ell_{i},h)}$.

\smallskip
\item\label{Dy3}
If $0\leq m \leq \ell_{i}$ and if $\bfQ_{i,\p}^{\ell_{i}}\in \bD_i^{\ell_{i}}$ there is a unique dyadic cube $\bfQ_{i,\p'}^{\ell_{i}-m}\in \bD_i^{\ell_{i}-m}$ at scale $2^{m-\ell_{i}}$ containing $\bfQ_{i,\p}^{\ell_{i}}$. We use the notation
\bea\label{ancestor}
2^{m}\bfQ_{i,\p}^{\ell_{i}}=\bfQ_{i,\p'}^{\ell_{i}-m}.
\eea
It may happen that $2^{m}\bfQ_{i,\p_i}^{\ell_{i}}$ denotes the same set for more than one value of $m$ since the coefficients $\mu(\ell_{i},h)$ are not assumed to be strictly increasing in $\ell_{i}$.

\item\label{Dy5}
Each dyadic cube at scale $2^{-\ell_{i}}$ is a disjoint union of $\prod_{h\in E_{i}}2^{\mu(\ell_{i}+1,h)-\mu(\ell_{i},h)}$ cubes in $\bD_i^{\ell_{i}+1}$.

\smallskip
\item\label{Dy6}
If $\bfQ_{i,\p}^{\ell_{i}}$ and $\bfQ_{i,\p'}^{\ell_{i}'}$ are cubes with 
$\bfQ_{i,\p}\cap\bfQ_{i,\p'}^{\ell_{i}'}\neq\emptyset$ then 
$\begin{cases}
\bfQ_{i,\p}^{\ell_{i}}=\bfQ_{i,\p'}^{\ell_{i}'}&\text{if $\ell_{i}=\ell_{i}'$}\\
\bfQ_{i,\p}^{\ell_{i}}\subseteq\bfQ_{i,\p'}^{\ell_{i}'}&\text{if $\ell_{i}'<\ell_{i}$}
\end{cases}$.
\end{enumerate}
\label{Dypage}
\smallskip

\begin{remarks}{\rm \quad
\begin{enumerate}[1.]
\item
In this definition of dyadic cubes, the precise choice of the admissible sequence $\mu(\ell_{i},h)$ is not important. The most natural choice is to take $\mu(\ell_{i},h)=\lfloor \ell_{i}\lambda_{h}\rfloor$, with $\lfloor x\rfloor$ denoting the largest integer less than or equal to $x\in \R$, but other choices are used, for example, in the proof of Proposition \ref{R=Q}.
\smallskip

\item This construction of dyadic cubes can be done more generally in any vector space with a family of dilations. We do this in Section \ref{Sec7.2a} below.
\end{enumerate}}
\end{remarks}
 
\subsection{Dyadic rectangles in the product $\R^{d_{1}}\times\cdots\times\R^{d_{n}}$ }\label{Sec7.2}
\quad
\smallskip

{\it Dyadic rectangles} in $\R^{d}=\R^{d_{1}}\times\cdots\times\R^{d_{n}}$ are by definition the product of dyadic cubes in each factor. Thus if $L=(\ell_1,\dots,\ell_n)\in\N^{n}$ and $\bfP=(\p_{1},\dots,\p_{n})\in\Z^{d_1}\times\cdots\times\Z^{d_n}$ then  $\bfR_{\bfP}^{L}=\prod\nolimits_{i=1}^{n}\bfQ_{i,\p_{i}}^{\ell_{i}}$ is a typical dyadic rectangle at scale $2^{-L}$. We use the following notation.
\bea\label{4.6xyz}
\D^{L}&=\Big\{\bfR_{P}^{L}:P\in\Z^{d}\Big\},&&\text{the set of all dyadic rectangles at scale $2^{-L}$;}\\
\D\phantom{^{L}}&=\bigcup\nolimits_{L\in \N^{n}}\D^{L},&&\text{the set of all dyadic rectangles at scales $\le1$}; \\
\D_\LL&=\bigcup\nolimits_{L\in \LL}\bD^{L},&&\text{the set of all dyadic rectangles at scale $2^{-L}$ with $L\in \LL$};\\
\D_{S}&=\bigcup\nolimits_{L\in \LL_{S}}\bD^{L},&&\text{the set of all dyadic rectangles at scale $2^{-L}$ with $L\in \LL_S$}.
\eea
Recall from  the end of Section \ref{Sec2.1}} that $\SS^*_\bfE=\SS_{\bfE}\cup \{\emptyset\}$. Then $\D_\LL=\bigcup\nolimits_{S\in\SS^*_\bfE}\bD_S$ where  $\D_\emptyset=\D^0$.

\subsection{Dyadic rectangles in  the product $\R^{A_{1}}\times\cdots\times\R^{A_{s}}$}\label{Sec7.2a}
\quad
\smallskip

Let $S=\big\{(A_{1},k_{1}), \ldots, (A_{s},k_{s})\big\} \in\SS_{\bfE}$. If $L=(\ell_{1}, \ldots, \ell_{n})\in\LL_S$ and $\bfR=\prod_{i=1}^{n}\bfQ_{i}^{\ell_{i}}\in\D_S$, we have the coarser decomposition $\bfR=\prod_{r=1}^{s}\bfR_{A_{r}}$ where $\bfR_{A_{r}}=\prod_{i\in A_{r}}\bfQ_{i}^{\ell_{i}}\subseteq\R^{A_{r}}$. We show that the {\it dyadic rectangle} $\bfR_{A_{r}}$ is actually the same as a {\it dyadic cube} relative to an appropriate family of dilations in  the subspace $\R^{A_{r}}$. Recall from \eqref{2.5a} $\delta_{t}^{i}(\x)=\big(t^{\frac{1}{e(i,1)}}\x_{1}, \ldots, t^{\frac{1}{e(i,n)}}\x_{n}\big)$.  We introduce the $s$-parameter dilations $\delta_{\t}^{S}$ on $\R^{d}$ by setting

\bea\label{3.11.6}
\del^S_\t\x&=\big(\del^{k_1}_{t_1}\x_{A_1},\dots,\del^{k_s}_{t_s}\x_{A_s}\big)&&\text{ where }&\delta_{t_r}^{k_{r}}(\x_{A_{r}})&=\big(t_r^{\frac{1}{e(k_{r},i)}}\x_{i}\big)_{i\in A_{r}}.
\eea

\begin{proposition}\label{R=Q}
Let $S=\big\{(A_{1},k_{1}), \ldots, (A_{s},k_{s})\big\} \in\SS_{\bfE}$, let $L=(\ell_{1}, \ldots, \ell_{n})\in\LL_S$ and let $\bfR=\prod_{i=1}^{n}\bfQ_{i,\p_i}^{\ell_{i}}\in\D_S$, where each $\bfQ_i^{\ell_i}$ is constructed according to \eqref{3.11} using the admissible sequences $\big\{\mu(\ell_{i},h)\in \N, \ell_{i}\geq 0\big\}$ for $i\in E_i$. Then for each $1 \leq r\leq s$ the dyadic rectangle $\bfR_{A_{r}}=\prod_{i\in A_{r}}\bfQ_{i,\p_i}^{\ell_{i}}$ is a dyadic cube in $\R^{A_{r}}$ relative the to dilations $\delta_{t_r}^{k_{r}}$ restricted to $\R^{A_{r}}$, constructed as in \eqref{3.11} using the admissible sequences $\Big\{\nu(\ell_{k_{r}},h)=\mu\Big(\Big\lfloor\frac{\ell_{k_r}}{e(k_r,i)}\Big\rfloor,h\Big):\ell_{k_{r}}\geq0\Big\}$ for  $h\in E_i$, $i\in A_r$.
\end{proposition}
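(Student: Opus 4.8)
The plan is to unwind both definitions of "dyadic cube" — the one for $\bfR_{A_r}$ as a product of cubes $\bfQ^{\ell_i}_{i,\p_i}$ in the factors $\R^{d_i}$ with $i\in A_r$, and the one for a dyadic cube in $\R^{A_r}$ relative to the dilation $\delta^{k_r}_{t_r}$ — and check that they coincide coordinate by coordinate. First I would record what it means to build a dyadic cube in $\R^{A_r}$ for the dilation $\delta^{k_r}_{t_r}$: by \eqref{3.11.6}, the scalar coordinate $x_h$ with $h\in E_i$, $i\in A_r$, carries the one-parameter dilation weight $\lambda_h/e(k_r,i)$, so the construction of Section \ref{Sec8.1s} asks for an admissible sequence $\{\nu(\ell_{k_r},h)\}$ satisfying $|\nu(\ell_{k_r},h)-\ell_{k_r}\lambda_h/e(k_r,i)|\le a'$ and the $h$-th side of the resulting cube at scale $2^{-\ell_{k_r}}$ has length $2^{-\nu(\ell_{k_r},h)}$. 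The candidate is $\nu(\ell_{k_r},h)=\mu(\lfloor \ell_{k_r}/e(k_r,i)\rfloor,h)$.

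The key step is then twofold. (a) Admissibility of $\{\nu(\ell_{k_r},h)\}$: it is non-decreasing in $\ell_{k_r}$ because $\ell_{k_r}\mapsto\lfloor\ell_{k_r}/e(k_r,i)\rfloor$ is non-decreasing and $\mu(\cdot,h)$ is non-decreasing; it vanishes at $\ell_{k_r}=0$ since $\lfloor 0\rfloor=0$ and $\mu(0,h)=0$; and the two-sided bound \eqref{mu ell} holds because $|\mu(\lfloor \ell_{k_r}/e(k_r,i)\rfloor,h)-\lfloor \ell_{k_r}/e(k_r,i)\rfloor\lambda_h|\le a$ together with $|\lfloor \ell_{k_r}/e(k_r,i)\rfloor-\ell_{k_r}/e(k_r,i)|\le 1$ gives $|\nu(\ell_{k_r},h)-\ell_{k_r}\lambda_h/e(k_r,i)|\le a+\lambda_h$. (b) Matching of the actual sets: since $L\in\LL_S$, Lemma \ref{Lem3.7xyz} (via the choice of $\LL_S$ in Section \ref{LLS}) gives $\ell_i=\lfloor \ell_{k_r}/e(k_r,i)\rfloor$ for every $i\in A_r$. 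Hence the cube $\bfQ^{\ell_i}_{i,\p_i}$ appearing in $\bfR_{A_r}$ is, by \eqref{3.11}, exactly $\{(x_h)_{h\in E_i}:2^{-\mu(\ell_i,h)}p_h\le x_h<2^{-\mu(\ell_i,h)}(p_h+1)\}$, i.e. the $h$-th side has length $2^{-\mu(\lfloor \ell_{k_r}/e(k_r,i)\rfloor,h)}=2^{-\nu(\ell_{k_r},h)}$. Taking the product over $i\in A_r$ and over $h\in E_i$ reconstitutes precisely a dyadic cube in $\R^{A_r}$ at scale $2^{-\ell_{k_r}}$ built from the admissible sequences $\{\nu(\ell_{k_r},h)\}$, with the same index $\bfP_{A_r}=(\p_i)_{i\in A_r}$.

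I would also need to observe that the inductive (nested) structure is preserved: the parent at scale $2^{-(\ell_{k_r}-m)}$ of $\bfR_{A_r}$ in the $\delta^{k_r}$-dyadic grid corresponds, in each factor $i\in A_r$, to the cube at scale $2^{-\lfloor(\ell_{k_r}-m)/e(k_r,i)\rfloor}$, and since $\mu$ is monotone this cube contains $\bfQ^{\ell_i}_{i,\p_i}$; so the "coarsening" operation \eqref{ancestor} in $\R^{A_r}$ is induced by the componentwise coarsenings, which is the compatibility one wants. None of these steps is genuinely difficult; the only point that requires care — and the place I expect the argument to be most delicate — is checking that the floor functions interact correctly, namely that $\ell_i=\lfloor\ell_{k_r}/e(k_r,i)\rfloor$ on $\LL_S$ (which is exactly the defining relation \eqref{3.6} for maximal cubes, inherited by $\LL_S$) and that composing two floors, $\lfloor\lfloor\ell_{k_r}/e(k_r,i)\rfloor\cdot(\text{something})\rfloor$, does not drift; here one uses that $\mu(\ell_i,h)$ is indexed directly by the integer $\ell_i$, so no second floor is actually nested and the bookkeeping closes. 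This is why the statement is phrased with the specific admissible sequence $\nu(\ell_{k_r},h)=\mu(\lfloor\ell_{k_r}/e(k_r,i)\rfloor,h)$ rather than $\mu(\lfloor \ell_{k_r}\lambda_h/e(k_r,i)\rfloor,\cdot)$ — the remark after the list of properties in Section \ref{Sec8.1s} flags exactly that the freedom in choosing $\mu$ is being used.
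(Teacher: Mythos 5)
Your proposal is correct and follows essentially the same route as the paper's proof: both reduce the claim to the identity $\ell_i=\lfloor\ell_{k_r}/e(k_r,i)\rfloor$ for $i\in A_r$ (valid since $L\in\LL_S$) to match the side lengths $2^{-\mu(\ell_i,h)}$ with $2^{-\nu(\ell_{k_r},h)}$, and then verify admissibility of $\nu$ via the triangle inequality $|\nu(\ell_{k_r},h)-\ell_{k_r}\lambda_h/e(k_r,i)|\le a+\lambda_h$. The extra observation about compatibility of the nested (ancestor) structure is a harmless addition not needed for the statement as given.
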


\begin{proof}
On the one hand 
\beas
\bfR_{A_{r}}&=\prod_{i\in A_{r}}\bfQ_{i}^{\ell_{i}}
=
\prod_{i\in A_{r}}\Big\{(x_{h})_{h\in E_{i}}\in\R^{d_{i}}:2^{-\mu(\ell_{i},h)}p_{h}\leq x_{h}<2^{-\mu(\ell_{i},h)}(p_{h}+1)\text{ for all }h\in E_i\Big\}.
\eeas
Since $L\in\LL_{S}$  we have $\ell_{i}=\left\lfloor\frac{\ell_{k_{r}}}{e(k_{r},i)}\right\rfloor$ for  all $i\in A_{r}$ and so
\be\label{8.4yy}
\bfR_{A_{r}}=
\prod_{i\in A_{r}}\prod_{h\in E_i}\Big\{x_{h}:2^{-\mu\left(\lfloor\frac{\ell_{k_r}}{e(k_r,i)}\rfloor,h\right)}p_{h}\leq x_{h}<2^{-\mu\left(\lfloor\frac{\ell_{k_r}}{e(k_r,i)}\rfloor,h\right)}(p_{h}+1)\Big\}.
\ee
Thus scales on $\R^{A_{r}}$ are indexed by the integer $\ell_{k_{r}}$. 

On the other hand, as in Section \ref{Sec8.1s}, we can use the family of dilations $\delta_{t}^{k_{r}}$  to construct dyadic cubes in $\R^{A_{r}}$. 
If $\big\{\nu(\ell_{k_{r}},h):\ell_{k_{r}}\geq0\big\}$  is an admissible sequence for $h\in \bigcup_{i\in A_{r}}E_{i}$, then a dyadic cube in $\R^{A_{r}}$ relative to these dilations  has the form
\bea\label{8.5yy}
\bfQ_{A_{r}}^{\ell_{k_{r}}}=\prod_{i\in A_r}\prod_{h\in E_i}\big\{x_{h}:2^{-\nu(\ell_{k_{r}},h)}p_{h}\leq x_{h}<2^{-\nu(\ell_{k_{r}},h)}(p_{h}+1)\big\}\quad\text{where $\p\in \Z^{A_{r}}$}.
\eea
Comparing equations \eqref{8.4yy} and \eqref{8.5yy}, we see that if $\nu(\ell_{k_{r}},h)=\mu\left(\left\lfloor\frac{\ell_{k_r}}{e(k_r,i)}\right\rfloor,h\right)$ then $\bfR_{A_{r}}=\bfQ_{A_{r}}^{\ell_{k_{r}}}$. It remains to show that with this definition, $\big\{\nu(\ell_{k_{r}},h):\ell_{k_{r}}\geq0\big\}$ is an admissible sequence. But 
\beas
\Big|\nu(\ell_{k_{r}},h)-\frac{\ell_{k_r}\la_h}{e(k_r,i)}\Big|
&=
\Big|\mu_{\lfloor\frac{\ell_{k_r}}{e(k_r,i)}\rfloor,h}\!-\!\frac{\ell_{k_r}\la_h}{e(k_r,i)}\Big|
\le \Big|\mu_{\lfloor\frac{\ell_{k_r}}{e(k_r,i)}\rfloor,h}\!-\!\lfloor\frac{\ell_{k_r}}{e(k_r,i)}\rfloor\la_h\Big|+ \Big|\lfloor\frac{\ell_{k_r}}{e(k_r,i)}\rfloor\la_h-\frac{\ell_{k_r}\la_h}{e(k_r,i)}\Big|\\
&\le a+\la_h
\leq a+\sup\{\lambda_{h}:1\leq h \leq d\},
\eeas
and this shows that $\{\nu(\ell_{k_{r}},h):\text{$h\in E_{i}$ and $i\in A_{r}$}\}$ is an admissible sequence for $\R^{\A_{r}}$ if $L\in \LL_{S}$. This completes the proof.
\end{proof}

The last statement justifies the following alternative notation for $\bfR_{A_r}$.

\begin{definition}\label{8.5zz}
For $L\in\LL_S$, $\bfR=\prod_{i=1}^{n}\bfQ_{i}^{\ell_{i}}\in\D_L$ and $r=1,\dots,s$, we define $\bfQ_{A_r}^{\ell_{k_r}}=\prod_{i\in A_r}\bfQ_{i}^{\ell_{i}}$.
\end{definition}

This allows  us to write $\bfR=\prod_{r=1}^s\bfQ_{A_r}^{\ell_{k_r}}$, a notation that only refers to the coarser decomposition 
$\R^d=\R^{A_{1}}\times\cdots\times\R^{A_{s}}$. The basic properties of dyadic cubes are satisfies by the $\bfQ_{A_r}^{\ell_{k_r}}$. In particular, two such cubes are either disjoint or one contained in the other.

\subsection{Enlargements of dyadic rectangles}\label{Sec7.2b}
\quad
\smallskip

In this section we refer to a fixed marked partition $S=\big\{(A_{1},k_{1}), \ldots, (A_{s},k_{s})\big\} \in\SS_{\bfE}$, with the associated decomposition 
$\R^d=\R^{A_{1}}\times\cdots\times\R^{A_{s}}$ and the dilations $\del^S_{\t}$ in \eqref{3.11.6}.  We will use two kinds of ``enlargements'' of dyadic  rectangles $\bfR=\prod_{r=1}^s\bfQ_{A_r}$. Though we will ultimately be only interested with rectangles of less that unit size, it is not relevant to impose such a restriction at this stage.

\subsubsection{Dyadic enlargements}\label{Sec7.4.1}\quad

\smallskip

 If $\bfQ_{i,\p}^{\ell_{i}}$ is a dyadic cube and $m\in\N$, the dyadic enlargement $2^m\bfQ_{i,\p}^{\ell_{i}}$ was defined in \eqref{ancestor}. If $\m=(m_{1}, \ldots, m_{n})\in \N^{n}$ and $\bfR=\prod_{i=1}^{n}\bfQ_{\i,\p}^{\ell_{i}}$, then $2^{\m}\bfR=\prod_{i=1}^{n}2^{m_{i}}\bfQ_{i,\p}^{\ell_{i}}$. As in Section \ref{Sec7.2a}, if $S=\big\{(A_{1},k_{1}), \ldots, (A_{s},k_{s})\big\}\in\SS_{\bfE}$ and if $\bfQ_{A_{r}}=\prod_{i\in A_{r}}\bfQ_{i,\p}^{\ell_{i}}$ then $2^{m}\bfQ_{A_{r}}=\prod_{i\in A_{r}}2^{m}\bfQ_{i,\p}^{\ell_{i}}$  for $m\in \N$. If $\m=(m_{1}, \ldots, m_{s})\in\N^{s}$ then $2^{\m}\prod_{r=1}^{s}\bfQ_{A_{r}}=\prod_{r=1}^{s}2^{m_{r}}\bfQ_{A_{r}}$. 
If $B\subset\{1,\dots,s\}$ and $\m_B\in\N^B$, we use the notation $2^{\m_B}\bfR$, with a slight abuse of notation, to denote an enlargement only in the variables in $B$.
To denote enlargement in a single variable $\x_r$  let  $\e_r=(\dots,0,1,0,\dots)$ be the unit vector with the 1 in $r$-th position, and write $2^{m\e_r}\bfR=2^{m_r}\bfQ_{A_r}\times \Big(\prod\nolimits_{r'\ne r}\bfQ_{A_{r'}}\Big)$.

\begin{remarks}{\rm\quad 
\begin{enumerate}[(1)]
\item Denoting the homogeneous dimension of $\R^{A_{r}}$ by $q_{A_{r}}=\sum_{i\in A_{r}}\frac{q_{i}}{e(k_{r},i)}$ and letting $a$ be the constant in \eqref{mu ell}, then $\big|2^{m\e_r}\bfR\big|\ge c 2^{mq_{A_r}}|\bfR|$.

\smallskip

\item
If $\bfR\in \D_S$, its dyadic enlargements need not be in $\D_S$ because the second kind of restrictions in \eqref{3.6} are not preserved in general.
\end{enumerate}}
\end{remarks}

\subsubsection{Enlargements by rescaling}\quad

\smallskip

We can also enlarge a cube $\bfQ^{\ell_i}_{i,\p_i}$ be rescaling about its center $\c$.

\begin{definition}\label{Def7.5}{\rm
If  $\bfQ^{\ell_i}_{i,\p_i}=\big\{(x_{h})_{h\in E_i}:2^{-\mu_{\ell_i,h}}p_{h}\leq x_{h}<2^{-\mu_{\ell_i,h}}(p_{h}+1)\big\}\in\D$ and $\rho>0$ write
$
2^{\rho}\circ\bfQ^{\ell_{i}}_{i,\p_{i}}=
\Big\{(x_{h})_{h\in E_i}:c_{i}-2^{\rho}2^{-\mu_{\ell_{i},h}} \leq x_{h}<c_{i}+2^{\rho}2^{-\mu_{\ell_{i},h}}\Big\}$.}
\end{definition}

The following properties are satisfied.

\begin{lemma}\label{Lem11.4pqr}
\quad
\begin{enumerate}[\rm(a)]
\item Given $\bfQ_{i,\p_i}^{\ell_i}$ as above,  the dyadic enlargement $2^m  \bfQ_{i,\p_i}^{\ell_i}$ and the enlargement $2^m\circ  \bfQ_{i,\p_i}^{\ell_i}$  by the dilations \eqref{1.1} have comparable measures and there is $\del>0$, depending only on the exponents $\lambda_h$ and the constant $a$ in \eqref{mu ell},  such that $2^m  \bfQ_{i,\p_i}^{\ell_i}\subset 2^{m+\del}\circ  \bfQ_{i,\p_i}^{\ell_i}$.
\item If $\bfQ_{i,\p_i}^{\ell_i}\subset \bfQ_{i,\p'_i}^{\ell'_i}$ and $m\in\N$, then $2^m\circ\bfQ_{i,\p_i}^{\ell_i}\subset 2^m\circ\bfQ_{i,\p'_i}^{\ell'_i}$.
\end{enumerate}
\end{lemma}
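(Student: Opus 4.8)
The statement is Lemma \ref{Lem11.4pqr}, which asserts two elementary comparison properties between the two notions of enlargement of a dyadic cube $\bfQ_{i,\p_i}^{\ell_i}$ introduced in Section \ref{Sec7.4.1} and Definition \ref{Def7.5}. Both parts reduce to unwinding the definitions coordinate by coordinate. I would prove the two parts separately and keep the whole argument one-dimensional in each slot $h\in E_i$, since all the sets involved are Cartesian products over $h\in E_i$.

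For part (a): fix $h\in E_i$. Recall the $h$-th side of $\bfQ_{i,\p_i}^{\ell_i}$ has length $2^{-\mu(\ell_i,h)}$ by property \eqref{Dy2}, and by \eqref{mu ell} we have $|\mu(\ell_i,h)-\ell_i\lambda_h|\le a$. The dyadic enlargement $2^m\bfQ_{i,\p_i}^{\ell_i}=\bfQ_{i,\p'_i}^{\ell_i-m}$ is a dyadic cube at scale $2^{m-\ell_i}$ containing $\bfQ_{i,\p_i}^{\ell_i}$, so its $h$-th side has length $2^{-\mu(\ell_i-m,h)}$; again $|\mu(\ell_i-m,h)-(\ell_i-m)\lambda_h|\le a$, hence the ratio of the two side lengths is $2^{\mu(\ell_i,h)-\mu(\ell_i-m,h)}$, which lies between $2^{m\lambda_h-2a}$ and $2^{m\lambda_h+2a}$. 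Multiplying over $h\in E_i$ gives that $2^m\bfQ_{i,\p_i}^{\ell_i}$ and $2^{m\lambda_h}$-type rescalings have comparable measure, and in particular $|2^m\bfQ_{i,\p_i}^{\ell_i}|$ is comparable to $2^{mq_i}|\bfQ_{i,\p_i}^{\ell_i}|$, which matches $|2^m\circ\bfQ_{i,\p_i}^{\ell_i}|$ up to a constant (since the latter, from Definition \ref{Def7.5}, has $h$-th side $2\cdot2^{m}2^{-\mu(\ell_i,h)}$, giving measure $2^{|E_i|}2^{m q_i^{\mathrm{lin}}}|\bfQ_{i,\p_i}^{\ell_i}|$ where $q_i^{\mathrm{lin}}=\sum_{h\in E_i}1$; I will need to be slightly careful about whether "$2^m$" in Definition \ref{Def7.5} means the dilation factor or the linear factor — the phrase "by the dilations \eqref{1.1}" indicates the former, so the $h$-th half-width is $2^{m\lambda_h}2^{-\mu(\ell_i,h)}$ and the measures genuinely compare). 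For the containment, note that $\bfQ_{i,\p_i}^{\ell_i}\subset 2^m\bfQ_{i,\p_i}^{\ell_i}$ and the center $\c$ of $\bfQ_{i,\p_i}^{\ell_i}$ lies in $2^m\bfQ_{i,\p_i}^{\ell_i}$, so every point of $2^m\bfQ_{i,\p_i}^{\ell_i}$ is within distance $2^{-\mu(\ell_i-m,h)}\le 2^{m\lambda_h+a}2^{-\mu(\ell_i,h)}\cdot 2^{-a}\cdot 2^{a}$ (combine the two bounds on $\mu$) of $c_h$ in the $h$-th coordinate; taking $\delta$ so that $2^{\delta}$ dominates $2^{2a+1}$ (the $+1$ absorbs the half-width-vs-full-width discrepancy) gives $2^m\bfQ_{i,\p_i}^{\ell_i}\subset 2^{m+\delta}\circ\bfQ_{i,\p_i}^{\ell_i}$, with $\delta$ depending only on the $\lambda_h$ and $a$ as required.

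For part (b): this is purely a statement about the rescaling enlargement $2^m\circ(\cdot)$ and nesting of dyadic cubes. Suppose $\bfQ_{i,\p_i}^{\ell_i}\subset\bfQ_{i,\p'_i}^{\ell'_i}$, so $\ell'_i\le\ell_i$ and by \eqref{Dy6} one is contained in the other. Fix $h\in E_i$; the center $c_h$ of $\bfQ_{i,\p_i}^{\ell_i}$ lies in the interval $[2^{-\mu(\ell'_i,h)}p'_h,2^{-\mu(\ell'_i,h)}(p'_h+1))$, hence $|c_h-c'_h|\le\tfrac12 2^{-\mu(\ell'_i,h)}$ where $c'_h$ is the center of $\bfQ_{i,\p'_i}^{\ell'_i}$. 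Also $\mu(\ell_i,h)\ge\mu(\ell'_i,h)$ since $\mu(\cdot,h)$ is non-decreasing and $\ell_i\ge\ell'_i$, so $2^{-\mu(\ell_i,h)}\le 2^{-\mu(\ell'_i,h)}$. Therefore for $x_h\in 2^m\circ\bfQ_{i,\p_i}^{\ell_i}$ we get $|x_h-c'_h|\le|x_h-c_h|+|c_h-c'_h|\le 2^m 2^{-\mu(\ell_i,h)}+\tfrac12 2^{-\mu(\ell'_i,h)}\le (2^m+\tfrac12)2^{-\mu(\ell'_i,h)}\le 2^m2^{-\mu(\ell'_i,h)}$ — wait, this last inequality needs $m\ge1$, which holds for $m\in\N$ if $\N$ here includes $0$ only trivially; for $m=0$ the claimed inclusion $\bfQ_{i,\p_i}^{\ell_i}\subset\bfQ_{i,\p'_i}^{\ell'_i}$ is the hypothesis, and for $m\ge1$ we have $2^m+\tfrac12\le 2^m\cdot\tfrac32\le 2^{m+1}$, so one gets $2^m\circ\bfQ_{i,\p_i}^{\ell_i}\subset 2^{m+1}\circ\bfQ_{i,\p'_i}^{\ell'_i}$; I expect the clean statement $2^m\circ\bfQ_{i,\p_i}^{\ell_i}\subset 2^m\circ\bfQ_{i,\p'_i}^{\ell'_i}$ to hold because in fact $2^{-\mu(\ell_i,h)}\le 2^{-\mu(\ell'_i,h)-\text{(gap)}}$ whenever the cubes are \emph{strictly} nested, and when they are equal the statement is trivial — so I would split into the cases $\bfQ_{i,\p_i}^{\ell_i}=\bfQ_{i,\p'_i}^{\ell'_i}$ (trivial) and strict containment, in which case $\mu(\ell_i,h)\ge\mu(\ell'_i,h)+1$ for at least the relevant coordinates and the $\tfrac12$ is absorbed. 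This case distinction is the only mildly delicate point; everything else is bookkeeping.

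\textbf{Main obstacle.} There is no real obstacle here — the lemma is routine. The one place to be careful is matching the precise normalizations in Definition \ref{Def7.5} (half-width vs. full-width, dilation factor $2^{m\lambda_h}$ vs. linear factor $2^m$) against the dyadic enlargement \eqref{ancestor}, so that the constant $\delta$ in part (a) and the clean inclusion in part (b) come out exactly as stated; this is purely a matter of tracking factors of $2^a$ and $2^{\pm1}$ and choosing $\delta$ generously (e.g. $\delta=\lceil 2a\rceil+2$ works, depending only on $a$ and $\sup_h\lambda_h$ through the relation between the two enlargement conventions).
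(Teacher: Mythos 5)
Your approach coincides with the paper's: both parts are coordinate-by-coordinate bookkeeping with the side lengths $2^{-\mu(\ell_i,h)}$ and the admissibility bound \eqref{mu ell}. Part (a) is fine and is essentially the paper's argument: the paper encloses $2^m\bfQ_{i,\p_i}^{\ell_i}$ in the rectangle $Q_i^*=\prod_{h\in E_i}\big[2^{-\mu(\ell_i,h)}(p_h+1)-2^{-\mu(\ell_i-m,h)},\,2^{-\mu(\ell_i,h)}p_h+2^{-\mu(\ell_i-m,h)}\big]$, which is centered at the center of $\bfQ_{i,\p_i}^{\ell_i}$, and then compares half-widths exactly as you do, arriving at $\del=(2a+1)\gamma_0$ with $\gamma_0=\max_h\la_h^{-1}$ (so the dependence is through $\min_h\la_h$, not $\sup_h\la_h$ as in your closing remark). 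Your reading of Definition \ref{Def7.5} as a dilation-type enlargement is the one the paper uses in its proof of (a), so that normalization worry resolves the way you guessed.

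Part (b) is where your argument does not quite close as written. The bound $|c_h-c'_h|\le\tfrac12\,2^{-\mu(\ell'_i,h)}$ is too lossy, and your repair is stated at the level of whole cubes when it must be per coordinate: strict containment $\bfQ_{i,\p_i}^{\ell_i}\subsetneq\bfQ_{i,\p'_i}^{\ell'_i}$ does \emph{not} force $\mu(\ell_i,h)\ge\mu(\ell'_i,h)+1$ for every $h$, since an admissible sequence may be constant in some coordinates between $\ell'_i$ and $\ell_i$ while others get subdivided. The clean fix, which is what the paper's ``simple calculations'' amount to, is the sharper center estimate: the interval containment $[2^{-\mu(\ell_i,h)}p_h,2^{-\mu(\ell_i,h)}(p_h+1)]\subseteq[2^{-\mu(\ell'_i,h)}p'_h,2^{-\mu(\ell'_i,h)}(p'_h+1)]$ yields $|c_h-c'_h|\le\tfrac12\big(2^{-\mu(\ell'_i,h)}-2^{-\mu(\ell_i,h)}\big)$, not merely $\tfrac12\,2^{-\mu(\ell'_i,h)}$. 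Then $|x_h-c'_h|\le 2^{m}2^{-\mu(\ell_i,h)}+\tfrac12\big(2^{-\mu(\ell'_i,h)}-2^{-\mu(\ell_i,h)}\big)\le 2^{m}2^{-\mu(\ell'_i,h)}$ for every $m\ge0$ and every $h$, with no case analysis at all (the last inequality is $\tfrac12(A-B)\le 2^m(A-B)$ with $A\ge B$). Your per-coordinate dichotomy would also work once restated correctly — in a coordinate with $\mu(\ell_i,h)=\mu(\ell'_i,h)$ the two intervals coincide, so the centers agree there — but the sharper center bound avoids the dichotomy entirely.
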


\begin{proof}
(a) The side of $2^m  \bfQ_{i,\p_i}^{\ell_i}$ in any direction $x_h$ with $h\in E_i$ has length $2^{-\mu_{\ell_i-m,h}}$ and includes the interval $\big[2^{-\mu_{\ell_i,h}}p_h,2^{-\mu_{\ell_i,h}}(p_{h}+1)\big]$. Therefore it must be contained in the larger interval 
 $$
 \big[2^{-\mu_{\ell_i,h}}(p_{h}+1)-2^{-\mu_{\ell_i-m,h}},2^{-\mu_{\ell_i,h}}p_h+2^{-\mu_{\ell_i-m,h}}\big].
 $$ 
 Defining $
Q_i^*=\prod_{h\in E_i}\big[2^{-\mu_{\ell,h}}(p_{h}+1)-2^{-\mu_{\ell_i-m,h}},2^{-\mu_{\ell_i,h}}p_h+2^{-\mu_{\ell_i-m,h}}\big]$,
we can notice that it has the same center $\c_i$ as $\bfQ_{i,\p_i}^{\ell_i}$. Since $|\mu_{\ell_i,h}-\mu_{\ell_i-m,h}-m\la_h|\le 2a$, the scaling factor in the variable $x_h$ can be estimated by
$
\Big(\frac{2^{-\mu_{\ell_i,h}}+2^{-\mu_{\ell_i-m,h}+1}}{2^{-\mu_{\ell_i,h}+1}}\Big)^\frac1{\la_h}=\Big(\frac12+2^{\mu_{\ell_i,h}-\mu_{\ell_i-m,h}}\Big)^\frac1{\la_h}\le  2^{m+(2a+1)\gamma_0}.
$
Setting $\gamma_{0}=\max_{1\le h\le d}\lambda_{h}^{-1}$,
we then have $2^m\bfQ_{i,\p_i}^{\ell_i}\subset Q_i^*\subset2^{m+\del}\circ\bfQ_{i,\p_i}^{\ell_i}$ with $\del=(2a+1)\gamma_0$. With $q_i$ denoting the homogeneous dimension of $\R^{d_i}$, this gives an inequality $|2^m\bfQ_{i,\p_i}^{\ell_i}|\le 2^{\del q_i}|2^{m}\circ\bfQ_{i,\p_i}^{\ell_i}|$. Moreover 
\bea\label{11.5pqr}
|2^m\bfQ_{i,\p_i}^{\ell_i}|&=\prod_{h\in E_i}2^{-\mu_{\ell_i-m,h}+1} \ge\prod_{h\in E_i}2^{-\mu_{\ell_i,h}+m\la_h-2a}\\
&\ge c_a 2^{mq_i}
|\bfQ_{i,\p_i}^{\ell_i}|=c_a |2^m\circ\bfQ_{i,\p_i}^{\ell_i}|.
\eea
(b) For every $h\in E_i$ we have the inequalities $\mu(\ell'_i,h)<\mu(\ell_i,h)$ and
$$
2^{-\mu(\ell'_i,h)}p'_h\le 2^{-\mu(\ell_i,h)}p_h\le2^{-\mu(\ell_i,h)}(p_h+1)\le 2^{-\mu(\ell'_i,h)}(p'_h+1).
$$
Simple calculations show that these inequalities imply the following ones:
\beas
2^{-\mu(\ell'_i,h)}\Big(p'_h+\frac12-2^{m-1}\Big)&\le 2^{-\mu(\ell_i,h)}\Big(p_h+\frac12-2^{m-1}\Big)\\&\le 2^{-\mu(\ell_i,h)}\Big(p_h+\frac12+2^{m-1}\Big)\le2^{-\mu(\ell'_i,h)}\Big(p'_h+\frac12+2^{m-1}\Big),
\eeas
which give the required inclusion.
\end{proof}

By Proposition \ref{R=Q}, Lemma \ref{Lem11.4pqr} also applies to the  cubes $\bfQ^{\ell_{k_r}}_{A_r,\p_r}$ in $\R^{A_r}$ endowed with the dilations~$\del^{k_r}$. Passing to a dyadic rectangle $\bfR=\prod_{r=1}^s\bfQ_{A_r}\in\D_S$ and $\rhob=(\rho_1,\dots,\rho_s)\in\R^s_+$ we define 
$$
2^\rhob\circ\bfR=\prod_{r=1}^s2^{\rho_r}\bfQ_{A_r}.
$$
We also use the notation $2^{\m+\tau}\circ\bfR=\prod_{r=1}^s2^{m_r+\tau}\bfQ_{A_r}$ with $\m\in\N^s$ and $\tau>0$.

\medskip

\subsection{Multi-norm square functions of Plancherel-P\'olya type}\label{Sec7}
\quad
\smallskip

The notion of Plancherel-P\'olya type square functions was introduced by Frazier and Jawerth in~\cite{MR1070037}. Once we have defined dyadic rectangles in the multi-norm context, we can define Plancherel-P\'olya variants of the multi-norm square functions defined in Section \ref{Sec4.4} and prove $L^{1}$-equivalence between the new and the old ones. We limit ourselves to the Plancherel-P\'olya variants of the multi-norm square functions $S_\Psi$ of tensor type which we will use in later sections, leaving the other cases to the interested reader.

For all $\x\in\R^{d}$ and all $L\in \LL$ denote by $\bfR^L_\x$ the unique dyadic rectangle $\bfR_{P}^{L}$ containing $\x$. With $\Psi_L$ given in \eqref{3.4ee} and for $\x\in\R^{d}$, 
define the Plancherel-P\'olya norm and the Plancherel-P\'olya type square function by
\bea\label{8.1}
\big|f*\Psi_L^{(-L)}\big|_{\rm PP}(\x)&=\sup\nolimits_{\y\in \bfR_{\x}^{L}} \big|f*\Psi_L^{(-L)}(\y)\big|\ ;\\
S^{\rm PP}_\Psi f(\x)&=\Big(\sum\nolimits_{L\in\LL}\big|f*\Psi_L^{(-L)}\big|_{\rm PP}(\x)^2\Big)^{\frac{1}{2}}.
\eea

\begin{theorem}\label{Thm7.1}
For $f\in \cS'(\R^d)$, the square functions $S_\Psi f$ and $S^{\rm PP}_\Psi f$ have comparable $L^{1}$-norms. The same holds true for the square function $S'_\Sigma$ defined in \eqref{6.1} and its Plancherel-P\'olya-type analogue ${S'_\Sigma}^{\!\!\rm PP}$.
\end{theorem}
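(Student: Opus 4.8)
The statement compares a discrete square function $S_\Psi f$ with its Plancherel--P\'olya ``sup over the dyadic rectangle'' variant $S^{\rm PP}_\Psi f$. One direction, $\|S_\Psi f\|_1\le\|S^{\rm PP}_\Psi f\|_1$, is trivial since $\big|f*\Psi_L^{(-L)}(\x)\big|\le\big|f*\Psi_L^{(-L)}\big|_{\rm PP}(\x)$ pointwise. The content is the reverse inequality $\|S^{\rm PP}_\Psi f\|_1\le C\|S_\Psi f\|_1$, and my plan is to reduce it to the $L^1$-equivalence already established in Theorem \ref{Thm5.1} (and Proposition \ref{Prop5.8}) by a standard ``almost-orthogonality plus maximal function'' argument. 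The starting point is the Calder\'on reproducing formula of Proposition \ref{Prop4.3}(a): $f*\Psi_L^{(-L)}=\sum_{L'\in\LL}\big(f*\Psi_{L'}^{(-L')}\big)*\big(\Psi_L^{(-L)}*\widetilde\Psi_{L'}^{(-L')}\big)$, where the $\widetilde\Psi_{L'}$ are tensor products with cancellations of arbitrarily high order $m$ in the variables $\x_i$, $i\in D_{L'}$.

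\textbf{Key steps.} First, for $\y\in\bfR_\x^L$ I estimate $\big|f*\Psi_L^{(-L)}(\y)\big|$ using the reproducing formula and the decay/cancellation kernel bound of Lemma \ref{Lem5.3}: $\int_{\R^d}\big(\prod_i(1+2^{\ell_i}|\x_i|)\big)^M\big|\Psi_L^{(-L)}*\widetilde\Psi_{L'}^{(-L')}(\x)\big|\,d\x\le C2^{-\epsilon|L-L'|}$. This gives, for $\y\in\bfR_\x^L$,
\[
\big|f*\Psi_L^{(-L)}(\y)\big|\le C\sum_{L'\in\LL}2^{-\epsilon|L-L'|}\,\omega_{M,L'}(\y),
\]
where $\omega_{M,L'}$ is the auxiliary maximal-type function from \eqref{5.4}. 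Since $\y$ ranges over a dyadic rectangle of sidelengths $\sim2^{-\ell_i\lambda_h}$ (which is comparable to the ``natural'' scale of $\Psi_L^{(-L)}$), the ratio $\omega_{M,L'}(\y)/\omega_{M,L'}(\x)$ is bounded by a fixed constant depending only on $M$ and the exponents $\lambda_h$; here I use that for $\y\in\bfR_\x^L$ one has $1+2^{\ell_i'}|\x_i-\y_i|\lesssim 2^{|\ell_i'-\ell_i|}$, which is harmless after absorbing a factor $2^{M|L-L'|}$ into the already-exponentially-small $2^{-\epsilon|L-L'|}$ provided $m$ (hence $M$, via $\epsilon$) is chosen large enough. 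Thus $\big|f*\Psi_L^{(-L)}\big|_{\rm PP}(\x)\le C\sum_{L'}2^{-\epsilon'|L-L'|}\omega_{M,L'}(\x)$. Taking $\ell^2(\LL)$ norms and applying Young's inequality on $\bZ^n$ (exactly as in \eqref{5.7}) yields $S^{\rm PP}_\Psi f(\x)\le C\big(\sum_{L\in\LL}\omega_{M,L}^2(\x)\big)^{1/2}$. Finally I invoke Lemma \ref{Lem5.7}, \eqref{5.8} and the vector-valued strong maximal theorem (the strong maximal operator $\cM_s$ of \eqref{Ms}), precisely as in the last display of the proof of Theorem \ref{Thm5.1}, to conclude $\big\|\big(\sum_L\omega_{M,L}^2\big)^{1/2}\big\|_1\le C\|S_\Psi f\|_1$. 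The claim for $S'_\Sigma$ and ${S'_\Sigma}^{\!\!\rm PP}$ follows by the same argument, using the convolution-type Calder\'on formula of Proposition \ref{Prop4.4}, the kernel estimate proved inside Proposition \ref{Prop6.3}, and Theorem \ref{Thm6.2}; alternatively one can deduce it from Theorem \ref{Thm6.4} once the tensor-type Plancherel--P\'olya equivalence is in hand, since $S'_\Sigma$ and its PP-variant are each $L^1$-comparable to a tensor-type square function and its PP-variant.

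\textbf{Main obstacle.} The only genuinely new point compared with Theorem \ref{Thm5.1} is controlling the oscillation of the relevant quantities over a full dyadic rectangle $\bfR_\x^L$ rather than at a single point, i.e.\ verifying that passing to the supremum over $\bfR_\x^L$ costs only a fixed constant (or at worst a factor $2^{C|L-L'|}$ that is absorbed). This is where one must be careful that the dyadic rectangles defined in Section \ref{Sec7.1}, with sidelengths governed by the admissible sequences $\mu(\ell_i,h)$ of \eqref{mu ell}, are indeed comparable to the natural anisotropic scale $2^{-\ell_i\lambda_h}$ of $\Psi_L^{(-L)}$ in each coordinate; this comparability is exactly what \eqref{mu ell} provides. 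Once this is checked, the remainder is a routine repetition of the machinery of Section \ref{Sec5}, and I do not expect any further difficulty. The argument for $S'_\Sigma$ introduces no additional issue, since the rescaling lemmas (Lemma \ref{Lem4.5}, Lemma \ref{Lem3.8}) already convert convolution-type blocks to dyadic scales $2^{-L}$ with $L\in\LL$ with uniform Schwartz control.
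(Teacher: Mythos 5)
Your proposal is correct and follows essentially the same route as the paper: the trivial pointwise bound in one direction, and for the converse the Calder\'on reproducing formula combined with the Lemma \ref{Lem5.3}-type kernel estimate, where the oscillation over $\bfR_\x^L$ costs a factor $2^{CM|L-L'|}$ (only in the directions with $\ell'_i>\ell_i$) that is absorbed by the order-$m$ cancellation of $\widetilde\Psi_{L'}$, followed by Young's inequality and the $\omega_{M,L}$/strong-maximal machinery from the proof of Theorem \ref{Thm5.1}. The paper likewise treats the convolution-type case by repeating the argument (your primary route); your ``alternatively via Theorem \ref{Thm6.4}'' remark is not needed and, as stated, would require a PP-version of that theorem, which is precisely what is being proved.
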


\begin{proof}
By the trivial point-wise inequality $\big|f*\Psi_L^{(-L)}(\x)\big|\leq \big|f*\Psi_L^{(-L)}\big|_{\rm PP}(\x)$, it suffices to prove $\|S^{\rm PP}_\Psi f\|_1\leq C\|S_\Psi f\|_1$.
We start from the identity $f*\Psi_L^{(-L)}=\sum_{L'\in\LL}(f*\Psi_{L'}^{(-L')})*(\Psi_L^{(-L)}*\widetilde\Psi_{L'}^{(-L')})$. Applying \eqref{5.6} with $\x'$ replaced by $\x'-\y$, we obtain

\beas
\big|f*\Psi_L^{(-L)}(\x)\big|&\leq \sum\nolimits_{L'\in\LL}\int\nolimits_{\R^{d}}\big|f*\Psi_{L'}^{(-L')}(\x-\x'+\y)\big|\big|\Psi_L^{(-L)}*\widetilde\Psi_{L'}^{(-L')}(\x'-\y)\big|\,d\x'\\
&\leq C \sum\nolimits_{L'\in\LL}\omega_{M,L'}(\y)\int\nolimits_{\R^{d}} \prod\nolimits_{i=1}^n\big(1+2^{\ell'_i}|\x_{i}-\x'_i|\big)^M\big|\Psi_L^{(-L)}*\widetilde\Psi_{L'}^{(-L')}(\x'-\y)\big|\,d\x'\\
&= C \sum\nolimits_{L'\in\LL}\omega_{M,L'}(\y)\int\nolimits_{\R^{d}} \prod\nolimits_{i=1}^n\big(1+2^{\ell'_i}|\x_{i}-\x'_i-\y_i|\big)^M\big|\Psi_L^{(-L)}*\widetilde\Psi_{L'}^{(-L')}(\x')\big|\,d\x'\ .
\eeas
Fix now $\bfR_{P}^{L}\in\D^L$, with $\D^L$ defined in \eqref{4.6xyz}, 
and assume $\x,\y\in \bfR_{P}^{L}$, so that $|\x_{i}-\y_i|\leq C2^{-\ell_i}$ for all $i=1,\dots,n$. Calling $\r_i=\x_{i}-\y_i$, it follows that 
$$
\int\nolimits_{\R^{d_{i}}}\big(1+2^{\ell'_i}|\r_i-\x'_i|\big)^M\big|\psi_{i,\ell_i}^{(-\ell_i)}*\widetilde\psi_{i,\ell'_i}^{(-\ell'_i)}(\x'_i)\big|\,d\x'_i\leq C 2^{-\gamma_i|\ell_i-\ell'_i|}2^{q_i\bar\ell i}\int\nolimits_{\R^{d_{i}}}\frac{\big(1+2^{\ell'_i}|\r_i-\x'_i|\big)^M}{\big(1+2^{\bar\ell_i}|\x'_i|\big)^{2M}}\,d\x'_i\ .
$$
Since
$$
1+2^{\ell'_i}|\r_i-\x'_i|\leq 1+2^{\ell'_i-\ell_i}+2^{\ell'_i}|\x'_i| \leq \begin{cases}1+2^{\ell'_i}|\x'_i|\qquad\quad \ =1+2^{\bar \ell_i}|\x'_i|&\text{ if }\ell'_i\leq \ell_i\\
2^{\ell'_i-\ell_i}\big(1+2^{\ell_i}|\x'_i|\big)=2^{\ell'_i-\ell_i}\big(1+2^{\bar\ell_i}|\x'_i|\big)&\text{ if }\ell'_i>\ell_i\ ,
\end{cases}
$$
we can continue as before and conclude that $\big|f*\Psi_L^{(-L)}(\x)\big|\leq C\sum_{L'\in\LL}2^{-\eps|L-L'|}\omega_{M,L'}(\y)$. Taking the supremum over $\x\in \bfR_{P}^{L}$ and summing over $M$ and $L$, we obtain that
\beas
\Big(\sum\nolimits_{L\in\LL}\Big(\big|f*\Psi_L^{(-L)}\big|_{\rm PP}(\y)\Big)^2\Big)^\half&\leq C \Big(\sum\nolimits_{L\in\LL}\Big( \sum\nolimits_{L'\in\LL}2^{-\eps|L-L'|} \omega_{M,L'}(\y)\Big)^2\Big)^\half\\
&\lesssim \Big(\sum\nolimits_{L\in\LL}\omega_{M,L}(\y)^2\Big)^\half
\eeas
by Young's inequality. The result for $S^{\rm PP}_\Psi$ follows from the proof of Theorem \ref{Thm5.1}. The proof for ${S'_\Sigma}^{\!\!\rm PP}$ is similar and we omit the details.
\end{proof}

\section{The multi-norm Hardy space $\h^1_{\bfE}(\bR^d)$}\label{Sec8}

We define $\h^1_\mbE(\bR^d)$  to be the space of distributions $f\in\cS'(\bR^d)$ such that $S_{\Xi}f\in L^{1}(\R^{d})$ where $S_{\Xi}$ is any of the square functions introduced in Sections \ref{Sec5}, \ref{Sec6} and \ref{Sec7}. We set $\|f\|_{h^{1}_{\bfE}}=\|S_{\Xi}f\|_{L^{1}}$. Different choices of square function result in equivalent norms. In what follows, the particular square function may vary from one occurrence to another, and will be specified when necessary. 
We recall from Section \ref{Sec7.2b} the two notions of dyadic enlargement and enlargement by dilations of a given dyadic rectangle.

\subsection{Continuous inclusion of $ h^1_\mbE(\bR^d)$ in $L^{1}(\bR^d)$}\label{Sec8.1}

\begin{theorem} \label{Thm8.1}
$\h^1_\mbE(\bR^d)\subseteq L^{1}(\R^{d})$ and there is a constant $C$ so that $\|f\|_{L^{1}}\leq C\|f\|_{h^{1}_{\bfE}}$. \end{theorem}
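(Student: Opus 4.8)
The standard one-parameter argument for $\h^1\subseteq L^1$ shows that $\|f\|_{L^1}\lesssim\|Sf\|_{L^1}$ by first establishing that $f\in\h^1_\mbE(\bR^d)$ forces $f$ to be (the distribution given by) a tempered $L^1_{\rm loc}$ function, and then bounding its $L^1$-norm by duality against $L^\infty$ through the Calder\'on reproducing formula. I would follow exactly this route. Fix a Calder\'on reproducing formula of tensor type, $f=\sum_{L\in\LL}f*\Psi_L^{(-L)}*\widetilde\Psi_L^{(-L)}$ from Proposition \ref{Prop4.3}(a), where the $\widetilde\Psi_L$ have cancellation of some order $m$ (in the marked variables) to be chosen large. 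The first step is a qualitative one: show the series converges in $\cS'(\bR^d)$ and, more importantly, that its partial sums $f_N=\sum_{|L|\le N}f*\Psi_L^{(-L)}*\widetilde\Psi_L^{(-L)}$ form a Cauchy sequence in $L^1$, so that $f\in L^1$ with $\|f\|_{L^1}\le\liminf_N\|f_N\|_{L^1}$.

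The second and main step is the pointwise/duality estimate. For $g\in\cS(\bR^d)$ with $\|g\|_\infty\le1$ one writes
$$
\Big|\int_{\bR^d}f_N(\x)\,g(\x)\,d\x\Big|\le\sum_{|L|\le N}\int_{\bR^d}\big|f*\Psi_L^{(-L)}(\x)\big|\,\big|g*\widetilde\Psi_L^{(-L)}(\x)\big|\,d\x,
$$
using $\widetilde\Psi_L^{(-L)}(-\cdot)$ in place of $\widetilde\Psi_L^{(-L)}$ (a harmless reflection). Now bound $\big|g*\widetilde\Psi_L^{(-L)}(\x)\big|$: since $\|g\|_\infty\le1$ and $\widetilde\Psi_L$ is uniformly Schwartz, $\big|g*\widetilde\Psi_L^{(-L)}(\x)\big|\le C$ for \emph{all} $\x$, but this alone is not summable. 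Instead I would replace $f*\Psi_L^{(-L)}(\x)$ by its Plancherel--P\'olya majorant $|f*\Psi_L^{(-L)}|_{\rm PP}(\x)=\sup_{\y\in\bfR^L_\x}|f*\Psi_L^{(-L)}(\y)|$ over the dyadic rectangle $\bfR^L_\x$ containing $\x$ (this only enlarges the right side), and then pair with an appropriate average of $|g*\widetilde\Psi_L^{(-L)}|$ over that same rectangle. The key point is that $\sum_L|\bfR^L_\x|\cdot(\text{average of }|g*\widetilde\Psi_L^{(-L)}|\text{ over }\bfR^L_\x)$ stays controlled: by Theorem \ref{Thm7.1}, $\|S^{\rm PP}_\Psi f\|_1\le C\|S_\Psi f\|_1=C\|f\|_{\h^1_\mbE}$, so by Cauchy--Schwarz in $L$ it suffices to show that $\big(\sum_L|g*\widetilde\Psi_L^{(-L)}|^2\big)^{1/2}$, suitably interpreted on the dyadic grid, is in $L^\infty$ with norm $\lesssim\|g\|_\infty$ — i.e.\ that $g\mapsto\big(\sum_L|g*\widetilde\Psi_L^{(-L)}|^2\big)^{1/2}$ maps $L^\infty$ to $\mathrm{BMO}$-type or, more simply here, that the \emph{dual} square function of the $\widetilde\Psi_L$ is $L^\infty$-bounded in the averaged sense over dyadic rectangles. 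Concretely: discretize $\int f_N g=\sum_{\bfR\in\D_\LL}\int_\bfR f*\Psi_L^{(-L)}\cdot g*\widetilde\Psi_L^{(-L)}$, bound each term by $|\bfR|\cdot|f*\Psi_L^{(-L)}|_{\rm PP}(\x_\bfR)\cdot\fint_\bfR|g*\widetilde\Psi_L^{(-L)}|$, then apply Cauchy--Schwarz rectangle-by-rectangle:
$$
\Big|\int f_N g\Big|\le\int_{\bR^d}S^{\rm PP}_\Psi f(\x)\cdot\Big(\sum_{L\in\LL}\big(\fint_{\bfR^L_\x}|g*\widetilde\Psi_L^{(-L)}|\big)^2\Big)^{1/2}d\x.
$$
So everything reduces to the uniform bound $\big\|\big(\sum_{L}(\fint_{\bfR^L_\cdot}|g*\widetilde\Psi_L^{(-L)}|)^2\big)^{1/2}\big\|_\infty\le C\|g\|_\infty$.

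That last bound is where the work lies, and it is the step I expect to be the main obstacle. It is the multi-norm analogue of the classical fact that the discrete square function built from a Littlewood--Paley family with cancellation is $L^\infty\to L^\infty$ bounded \emph{on averages over the associated dyadic rectangles} — essentially a Carleson/$\mathrm{BMO}$ statement. I would prove it by the familiar almost-orthogonality estimate: for $g\in L^\infty$, $|g*\widetilde\Psi_L^{(-L)}(\x)|$ decays like $2^{-\eps\,\mathrm{dist}}$ away from the ``active'' region, and the rectangles $\bfR^L_\x$ indexed by $L\in\LL$ over a fixed $\x$ have controlled overlap — Lemma \ref{Lem3.8} guarantees that each $L$ lies within bounded distance of the cone $\Gamma(\mbE)$ and that distinct tubes $T_L$ have essentially disjoint interiors, so the number of $L$ contributing at a given scale is uniformly bounded, and the geometric decay in $|L-L'|$ provided by the cancellation of $\widetilde\Psi_L$ (Lemma \ref{Lem5.2}, exactly as used in Lemma \ref{Lem5.3}) makes the $\ell^2$-sum over $L$ convergent with a bound independent of $\x$ and of $\|g\|_\infty\le1$. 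One should not need the full strength of a $\mathrm{BMO}$ endpoint here because we only need the $L^\infty$ bound against a \emph{fixed} bounded $g$, not a maximal-function statement; the averaging over $\bfR^L_\x$ plus the pointwise Schwartz bound $|g*\widetilde\Psi_L^{(-L)}|\le C$ together with the admissible-scale geometry of $\LL$ suffice. Assembling: $|\int f_N g|\le C\|S^{\rm PP}_\Psi f\|_1\le C\|f\|_{\h^1_\mbE}$ uniformly in $N$ and in $g$ with $\|g\|_\infty\le1$; taking the supremum over such $g$ gives $\|f_N\|_{L^1}\le C\|f\|_{\h^1_\mbE}$, and then Fatou (combined with the Cauchy property from step one) yields $f\in L^1$ with $\|f\|_{L^1}\le C\|f\|_{\h^1_\mbE}$, as claimed.
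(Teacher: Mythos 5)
There is a genuine gap, and it sits exactly at the step you flagged as "where the work lies." Your reduction to the uniform bound $\big\|\big(\sum_{L}(\fint_{\bfR^L_\x}|g*\widetilde\Psi_L^{(-L)}|)^2\big)^{1/2}\big\|_\infty\le C\|g\|_\infty$ is a reduction to a \emph{false} statement. Already in one parameter: take $g$ a (smoothed) characteristic function of a unit cube and $\x$ at distance $d$ from its boundary. For every scale $2^{-\ell}$ with $d\lesssim 2^{-\ell}\lesssim 1$ one has $|g*\widetilde\psi_\ell^{(-\ell)}(\x)|\gtrsim c>0$ (the cancellation of $\widetilde\psi_\ell$ buys nothing against a jump of $g$ at that scale), and $g*\widetilde\psi_\ell^{(-\ell)}$ is essentially constant on the dyadic cube of side $2^{-\ell}$ containing $\x$, so the averaging does not help; the $\ell^2$-sum over the $\sim\log(1/d)$ admissible scales diverges as $d\to0$. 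This is the classical fact that the square function maps $L^\infty$ into $\mathrm{BMO}$ but not into $L^\infty$, and the locality of $\LL$ (scales $\le1$) does not rescue it. The heuristic you offer --- ``geometric decay in $|L-L'|$ provided by the cancellation of $\widetilde\Psi_L$'' --- conflates two different things: Lemmas \ref{Lem5.2}--\ref{Lem5.3} give decay of the \emph{cross-convolutions} $\Psi_L^{(-L)}*\widetilde\Psi_{L'}^{(-L')}$ in $|L-L'|$, which is an almost-orthogonality statement between the family members; they give no decay of $g*\widetilde\Psi_L^{(-L)}(\x)$ in $L$ for a fixed bounded $g$. Your parenthetical ``one should not need the full strength of a BMO endpoint here'' is precisely the wrong turn: the $L^\infty$ endpoint fails even for a single fixed $g$.

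The repair is the stopping-time argument you tried to sidestep, and it is what the paper does. Lemma \ref{Lem8.3} decomposes $f$ over the level sets $\Omega^i=\{S_\Psi^{\rm PP}f>2^i\}$ and the rectangle families $\mathcal B_i$ of \eqref{9.1}, so that duality against $h\in L^2$ is only ever performed on the set $\Omega^i_{(\tau_0)}\setminus\Omega^{i+1}$ where the square function is $\le 2^{i+1}$; the global estimate then comes from $\|f_i\|_1\le|\Omega^i_{(\tau_0)}|^{1/2}\|f_i\|_2\lesssim 2^i|\Omega^i|$ and summation over $i$, using the $L^2$-boundedness of the dual square function (which is true) rather than any $L^\infty$ bound (which is not). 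Separately, your ``first step'' --- that the partial sums are Cauchy in $L^1$ --- is asserted but never argued; the paper handles the corresponding issue by first proving that $L^2\cap\h^1_\mbE$ is dense in $\h^1_\mbE$ (Lemma \ref{Lem8.2}), so that the quantitative bound need only be proved for $f\in L^2$, where the reproducing series converges in $L^2$ and all manipulations are legitimate.
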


The proof is a direct consequence of Lemmas   \ref{Lem8.2} and   \ref{Lem8.3} below.

\begin{lemma}\label{Lem8.2}
$\h^1_\mbE(\R^d)\cap L^2(\bR^d)$ is dense in $\h^1_\mbE(\bR^d)$.
\end{lemma}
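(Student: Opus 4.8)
The goal is to show that every $f\in\h^1_\mbE(\R^d)$ can be approximated in the $\h^1_\mbE$-norm by functions that also lie in $L^2(\R^d)$. The natural device is a truncation of a Calder\'on reproducing formula. Fix a multi-norm Calder\'on family of tensor type $\Psi=\{\Psi_L,\widetilde\Psi_L:L\in\LL\}$ from Proposition \ref{Prop4.3}, so that $f=\sum_{L\in\LL}f*\Psi_L^{(-L)}*\widetilde\Psi_L^{(-L)}$ in the sense of distributions. For $N\in\N$ let
\[
f_N=\sum_{L\in\LL,\ |L|\le N}\big(f*\Psi_L^{(-L)}\big)\chi_{B(0,N)}*\widetilde\Psi_L^{(-L)},
\]
where $\chi_{B(0,N)}$ is the indicator of the ball of radius $N$ (a smooth cutoff would work equally well). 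Each summand is an $L^2$-function: $f*\Psi_L^{(-L)}$ is a tempered function that is bounded on compact sets, its product with $\chi_{B(0,N)}$ is in $L^2$, and convolution with the Schwartz function $\widetilde\Psi_L^{(-L)}$ keeps it in $L^2$; since the sum is finite, $f_N\in L^2(\R^d)$. One must of course first verify that $\h^1_\mbE(\R^d)$, being contained in $L^1$ by Theorem \ref{Thm8.1} (whose proof precedes this lemma in the logical order — here one uses the parts of Theorem \ref{Thm8.1} already available, or argues directly that $f*\Psi_L^{(-L)}$ is locally integrable), so that the truncations make sense.

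\textbf{Main steps.} First, show $f_N\in\h^1_\mbE(\R^d)$: apply a square function $S_\Theta$ of the same tensor type but built from a \emph{different} Calder\'on family $\Theta$ with tilded elements having many vanishing moments, so that by the almost-orthogonality estimate of Lemma \ref{Lem5.3} (and the one-sided bound Proposition \ref{Prop5.8}) one controls $\|S_\Theta f_N\|_1$ by the $\ell^1$-summable diagonal contributions coming from $\|S_\Psi f\|_1$; this simultaneously gives the uniform bound $\|f_N\|_{\h^1_\mbE}\le C\|f\|_{\h^1_\mbE}$. Second, and this is the crux, show $\|f-f_N\|_{\h^1_\mbE}\to0$. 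Write $f-f_N=\sum_L g_{N,L}^{(-L)}*\widetilde\Psi_L^{(-L)}$ where $g_{N,L}=f*\Psi_L^{(-L)}$ if $|L|>N$ and $g_{N,L}=(f*\Psi_L^{(-L)})(1-\chi_{B(0,N)})$ if $|L|\le N$. Computing $S_\Psi(f-f_N)=\big(\sum_L|(f-f_N)*\Psi_L^{(-L)}|^2\big)^{1/2}$ and using the almost-orthogonality $\int|\Psi_L^{(-L)}*\widetilde\Psi_{L'}^{(-L')}*\Psi_{L}^{(-L)}|\lesssim 2^{-\eps|L-L'|}$ together with the vector-valued strong maximal inequality exactly as in the proof of Theorem \ref{Thm5.1}, one majorizes $\|S_\Psi(f-f_N)\|_1$ by
\[
C\,\Big\|\Big(\sum_{L\in\LL}\big|g_{N,L}\big|^2\Big)^{1/2}\Big\|_1
\le C\,\Big\|\Big(\sum_{|L|>N}\big|f*\Psi_L^{(-L)}\big|^2\Big)^{1/2}\Big\|_1
+C\,\Big\|\Big(\sum_{|L|\le N}\big|f*\Psi_L^{(-L)}\big|^2\chi_{B(0,N)^c}\Big)^{1/2}\Big\|_1.
\]
Both terms tend to $0$ as $N\to\infty$: the first is the tail of a convergent integral, since $S_\Psi f=(\sum_L|f*\Psi_L^{(-L)}|^2)^{1/2}\in L^1$ and dominated convergence applies to the sum over $L$; the second is $\int_{B(0,N)^c}S_\Psi f\,d\x\cdot(1+o(1))$ which goes to $0$ because $S_\Psi f\in L^1$.

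\textbf{Expected obstacle.} The routine parts are the $L^2$-membership and the application of the machinery of Section \ref{Sec5}. The one genuinely delicate point is making precise the manipulation $f-f_N=\sum_L g_{N,L}^{(-L)}*\widetilde\Psi_L^{(-L)}$ and controlling the cross-terms when the cutoff $\chi_{B(0,N)}$ is multiplied against $f*\Psi_L^{(-L)}$: the product $g_{N,L}$ is no longer a nice convolution $h^{(-L)}$ of a fixed Schwartz function, so Lemma \ref{Lem5.3} cannot be quoted verbatim. The fix is to note that we never need almost-orthogonality of $g_{N,L}$ itself — the estimate in the proof of Theorem \ref{Thm5.1} only uses the Schwartz bounds of $\widetilde\Psi_L$ and $\Psi_{L'}$ and treats $f*\Psi_L^{(-L)}$ (here $g_{N,L}$) as an arbitrary function entering through $\omega_{M,L}$ and the maximal function, so the same chain of inequalities produces $\|S_\Psi(f-f_N)\|_1\lesssim\|(\sum_L|g_{N,L}|^2)^{1/2}\|_1$ with no cancellation required of $g_{N,L}$. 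Once this observation is in place the remaining estimates are immediate consequences of $S_\Psi f\in L^1$ and dominated convergence.
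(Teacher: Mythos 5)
Your construction of the approximants differs from the paper's: you truncate both in frequency ($|L|\le N$) and in space (multiplying $f*\Psi_L^{(-L)}$ by $\chi_{B(0,N)}$), whereas the paper only truncates in frequency, convolving $f$ with a low-pass filter $\Phi_L^{(-L)}=\sum_{L'\le L}\Psi_{L'}^{(-L')}$ whose Fourier transform is compactly supported, so that $f*\Phi_L^{(-L)}=f*\Phi_L^{(-L)}*\Phi_{2L}^{(-2L)}$ and $L^2$-membership follows from Young's inequality applied to the $L^1$ function $f*\Phi_L^{(-L)}$. The spatial cutoff is exactly where your argument breaks down, and the ``fix'' you propose for the obstacle you correctly identify is not valid. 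The chain of inequalities in the proof of Theorem \ref{Thm5.1} does \emph{not} treat $f*\Psi_L^{(-L)}$ as an arbitrary function: the passage from the Peetre-type maximal quantities $\omega_{M,L}$ back to the square function of the data goes through Lemma \ref{Lem5.7}, whose proof uses Lemma \ref{Lem5.5}, which in turn applies the auxiliary reproducing formula of Lemma \ref{Lem5.4} \emph{to the function $f*\Psi_L^{(-L)}$ itself} and commutes convolutions ($\psi_{i,\ell}^{(-\ell)}*(\psi^{\#}_{i,\ell'})^{(-\ell')}=\psi_{i,\ell'}^{(-\ell')}*(\psi^{\#}_{i,\ell})^{(-\ell)}$). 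This is the sub-mean-value / Plancherel--P\'olya property of band-limited (or convolution-structured) data, and it is what allows the exponent $r<1$ trick so that the Fefferman--Stein vector-valued maximal inequality can be applied in $L^{1/r}(\ell^{2/r})$ with $1/r>1$. Your family $g_{N,L}=(f*\Psi_L^{(-L)})(1-\chi_{B(0,N)})$ has no such structure, so Lemma \ref{Lem5.7} is unavailable; the only direct route left is the pointwise bound $S_\Psi(f-f_N)\lesssim\bigl(\sum_L\bigl(\cM_s g_{N,L}\bigr)^2\bigr)^{1/2}$ followed by the vector-valued maximal inequality at $p=1$, $q=2$ --- which is false. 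So the central reduction $\|S_\Psi(f-f_N)\|_1\lesssim\bigl\|\bigl(\sum_L|g_{N,L}|^2\bigr)^{1/2}\bigr\|_1$ is unjustified, and with it both the uniform bound $\|f_N\|_{\h^1_\bfE}\lesssim\|f\|_{\h^1_\bfE}$ and the convergence $\|f-f_N\|_{\h^1_\bfE}\to0$.

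To repair the argument you should drop the sharp spatial cutoff and imitate the paper: choose the $\Psi_L$ frequency-localized, set $f_N=f*\Phi_N^{(-N)}$ (a pure low-pass truncation), get $f_N\in L^1$ from the pointwise domination $|f*\Psi_{L'}^{(-L')}|\le S_\Psi f\in L^1$ and then $f_N\in L^2$ by Young, and estimate $\|f_N-f\|_{\h^1_\bfE}$ by splitting the index set into the three regimes $E_N^0$, $E_N^\infty$, $E_N^1$: the first contributes nothing by support of the Fourier transforms, the second tends to zero by monotone convergence, and the boundary terms are handled by writing the difference as a convolution with the translated-minus-untranslated kernels $\Omega_{L',\mathbf y}=\Psi_{L'}(\cdot-2^{-L'}\mathbf y)-\Psi_{L'}$, which are legitimately a family of the type covered by Proposition \ref{Prop5.8} (uniformly bounded Schwartz functions with the required cancellations and small norms $\lesssim 2^{-\eps|L'|/2}$). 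Every application of the Section \ref{Sec5} machinery is then to data of the admissible form $f*\Theta_{L}^{(-L)}$.
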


\begin{proof}
Fix a square function $S_\Psi f=\big(\sum_{L\in\LL}\big|f*\Psi_L^{(-L)}\big|^2\big)^\half$ as in \eqref{3.4ee}, with the Fourier transform of $\Psi_L^{(-L)}$ supported on a small neighborhood of $\pi\inv(T_L)$ where $\pi$ is the log-map in \eqref{2.10} and $T_L$ the tube in \eqref{3.8}. Then $\Psi_L^{(-L)}*\Psi_{L'}^{(-L')}=0$ for $|L-L'|>\kappa$ where $\kappa$ is the constant in Lemma \ref{Lem3.8} (iv). This implies that $\Phi_L^{(-L)}=\sum_{L'\leq L}\Psi_{L'}^{(-L')}$ has Fourier transform supported on $\{\xib\in \R^{d}:|\xib_i|\leq 2^{\ell_i+\kappa}\ \forall i\}$ and equal to 1 on $\{\xib:|\xib_i|\leq 2^{\ell_i-\kappa}\ \forall i\}$. Let $f\in \h^1_\mbE(\bR^d)$ so $f\in\cS'(\bR^d)$ and $S_\Psi f\in L^{1}(\R^{d})$. We prove that $f*\Phi_L^{(-L)}$ is in $L^2\cap \h^1_\mbE$ , for every $L\in\LL$, and $\|f*\Phi_L^{(-L)}-f\|_{\h^1_\mbE}\to0$ as $L\to\infty$. 
First observe that $f*\Phi_L^{(-L)}\in L^{1}(\bR^d)$. Since $\int_{\R^{d}}\big|f*\Phi_L^{(-L)}(\x )\big|\,d\x \leq \sum_{L'\leq L}\int\big|f*\Psi_{L'}^{(-L')}(\x )\big|\,d\x $ it follows that $\int_{\R^{d}}\big|f*\Phi_L^{(-L)}(\x )\big|\,d\x  \leq C|L|^n\big(\sum_{L'\leq L}\int\big|f*\Psi_{L'}^{(-L')}(\x )\big|^2\,d\x \big)^\half \leq C_L\|f\|_{\h^1_\mbE}$. Next, since $f*\Phi_L^{(-L)}=f*\Phi_L^{(-L)}*\Phi_{2L}^{(-2L)}$ we have $\|f*\Phi_L^{(-L)}\|_2\leq \|f*\Phi_L^{(-L)}\|_1\|\Phi_{2L}^{(-2L)}\|_2\leq C'_L\|f\|_{\h^1_\mbE}$. Thus $f*\Phi_L^{(-L)}\in L^2(\bR^d)$. Next we prove $\|f*\Phi_L^{(-L)}-f\|_{\h^1_\mbE}\to0$ as $L\to\infty$. (This implicitly includes the fact that $f*\Phi_L^{(-L)}\in \h^1_\mbE$.) We have 
\beas
\|f*\Phi_L^{(-L)}-f\|_{\h^1_\mbE}=\int_{\bR^d}\big(\sum\nolimits_{L'\in\LL}\big|f*\Phi_L^{(-L)}*\Psi_{L'}^{(-L')}(\x )-f*\Psi_{L'}^{(-L')}(\x )\big|^2\big)^\half\,d\x .
\eeas
We split the index set $\LL$ into three subsets depending on $L$: $E_L^0=\{L':\ell'_i<\ell_i-2\kappa\ \forall i\}$,\,\, $E_L^\infty=\{L':\ell'_i>\ell_i+2\kappa\ \forall i\}$, and $E_L^{1}=\{L':\exists i \text{ such that } \ell_1-2\kappa\leq \ell'_i\leq \ell_i+2\kappa\}$.
Because of the support properties, the terms with $L'\in E_L^0$ are equal to zero. Those with $L'\in E_L^\infty$ are equal to $\big|f*\Psi_{L'}^{(-L')}(\x )\big|^2$, and $\lim_{L\to\infty}\int_{\bR^d}\big(\sum_{L'\in E_L^\infty}\big|f*\Psi_{L'}^{(-L')}(\x )\big|^2\big)^\half\,d\x =0$ by monotone convergence. It remains to consider the terms with $L'\in E_L^{1}$. The number of these terms grows polynomially in $L$, so that it suffices to prove that each term is controlled by $2^{-\del|L|}$ for some $\del>0$.
We have

\beas
\int_{\bR^d}\big|f*\Phi_L^{(-L)}&*\Psi_{L'}^{(-L')}(\x )-f*\Psi_{L'}^{(-L')}(\x )\big|\,d\x \\
&\leq \int_{\bR^d}\int_{\bR^d}\big|f*\Psi_{L'}^{(-L')}(\x -\mby)-f*\Psi_{L'}^{(-L')}(\x )\big|\,\big|\Phi_L^{(-L)}(\mby)\big|\,d\mby\,d\x \\
&=\int_{\bR^d}\Big(\int_{\bR^d}\big|f*\Psi_{L'}^{(-L')}(\x -2^{-L'}\mby)-f*\Psi_{L'}^{(-L')}(\x )\big|\,d\x \Big)\big|\Phi_L(\mby)\big|\,d\mby\\
&=\int_{\bR^d}\Big(\int_{\bR^d}\big|f*\Omega_{L',\mby}^{(-L')}(\x )\big|\,d\x \Big)\big|\Phi_L(\mby)\big|\,d\mby\ ,
\eeas
where we have set $\Omega_{L',\mby}(\x )=\Psi_{L'}(\x -2^{-L'}\mby)-\Psi_{L'}(\x )$. 
We split the integral in $d\mby$ into two parts. 

If $|\mby|>2^{|L'|/2}$ we use the inequality
$
\int_{\bR^d}\big|f*\Omega_{L',\mby}^{(-L')}(\x )\big|\,d\x \leq 2\int_{\bR^d}\big|f*\Psi_{L'}^{(-L')}(\x )\big|\,d\x \leq 2\|f\|_{\h^1_\mbE}\ ,
$
to conclude that $\int_{|\mby|>2^{|L'|/2}}\big(\int_{\bR^d}\big|f*\Omega_{L',\mby}^{(-L')}(\x )\big|\,d\x \big)\big|\Phi_L(\mby)\big|\,d\mby$ decays faster than $2^{-N|L'|}$ for every~$N$. In order to replace $|L'|$ with $|L|$, observe that, since $L'\in E_L^{1}$, there is a component $i$ such that $|\ell'_i-\ell_i|\leq \kappa$. Using the basic assumptions, for every component $j$ we obtain that $\ell_j\leq c\ell'_j$ with $c>0$. 
This implies that this part of the integral decays faster than $2^{-N|L|}$ for every~$N$. 
If $|\mby|\le2^{|L'|/2}$ we use instead the inequalities
$$
\big|\de^\al\Omega_{L',\mby}(\x )\big|=\big|\de^\al\Psi_{L'}(\x -2^{-L'}\mby)-\de^\al\Psi_{L'}(\x )\big|\leq C_{\al,N}\frac{|2^{-L'}\mby|^\eps}{(1+|\x |)^N}\leq C_{\al,N}\frac{2^{-\eps|L'|/2}}{(1+|\x |)^N}
$$
for every $N$, with $\eps>0$ depending on the exponents of the basic dilations on $\bR^d$. Hence the Schwartz norms of $\Omega_{L',\mby}$ are bounded by a constant times $2^{-\eps|L'|/2}$ uniformly in $\mby$.
Observing that $\Omega_{L',\mby}$ inherits the cancellations of $\Psi_{L'}$, we can then apply Theorem 4.1 with the family $\{\Theta_L\}_{L\in\LL}$ defined by $\Theta_L=\Omega_{L',\mby}$ if $L=L'$ and $\Theta_L=0$ otherwise. It follows that
$
\int_{\bR^d}\big|f*\Omega_{L',\mby}^{(-L')}(\x )\big|\,d\x \leq C2^{-\eps' |L'|/2} \|S_\Psi f\|_1=C2^{-\eps' |L'|/2}\|f\|_{\h^1_\mbE}$, 
uniformly in $\mby$. As above, $|L'|$ can be changed to $|L|$, correcting the coefficient $\eps'$.
\end{proof}

\begin{lemma}\label{Lem8.3}
There is a constant $C>0$ such that if $f \in L^2(\R^d) \cap \h^1_{\mathbf{E}}(\R^d)$ then $ \|f\|_{1} \leq C\|f\|_{\h^1_{\mathbf{E}}}$.
\end{lemma}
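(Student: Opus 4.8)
\textbf{Proof plan for Lemma \ref{Lem8.3}.} The statement is the $L^2$-to-$L^1$ estimate needed to bootstrap Theorem \ref{Thm8.1}: for $f\in L^2(\R^d)\cap\h^1_\mbE(\R^d)$ we want $\|f\|_1\le C\|f\|_{\h^1_\mbE}$. The plan is to run a Calder\'on reproducing formula and a standard duality/square-function argument. Fix a tensor-type Calder\'on family $\Psi=\{\Psi_L,\widetilde\Psi_L:L\in\LL\}$ as in Proposition \ref{Prop4.3}(a), so that $f=\sum_{L\in\LL}f*\Psi_L^{(-L)}*\widetilde\Psi_L^{(-L)}$. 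Since $f\in L^2$ this identity converges in $L^2$, and each $\Psi_L^{(-L)},\widetilde\Psi_L^{(-L)}$ is a nice Schwartz function, so all the manipulations below are legitimate. Pair $f$ with an arbitrary $g\in L^2(\R^d)\cap L^\infty(\R^d)$ with $\|g\|_\infty\le1$; by duality $\|f\|_1=\sup_g|\langle f,g\rangle|$ over such $g$ (with the sup over a dense-enough class).

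First I would write $\langle f,g\rangle=\sum_{L\in\LL}\langle f*\Psi_L^{(-L)},g*\widetilde\Psi_L^{(-L),\vee}\rangle$ (using that convolution with $\widetilde\Psi_L^{(-L)}$ is, up to reflection, self-adjoint), and estimate
\[
|\langle f,g\rangle|\le \int_{\R^d}\sum_{L\in\LL}\big|f*\Psi_L^{(-L)}(\x)\big|\,\big|g*\widetilde\Psi_L^{(-L),\vee}(\x)\big|\,d\x\le \int_{\R^d}\big(S_\Psi f\big)(\x)\,\big(\widetilde S g\big)(\x)\,d\x,
\]
where $\widetilde Sg=\big(\sum_{L\in\LL}|g*\widetilde\Psi_L^{(-L),\vee}|^2\big)^{1/2}$ is the square function built from the tilded family. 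Then I would apply H\"older's inequality to get $|\langle f,g\rangle|\le\|S_\Psi f\|_{L^1}\,\|\widetilde Sg\|_{L^\infty}$. So the lemma reduces to the uniform bound $\|\widetilde Sg\|_{L^\infty}\le C$ whenever $\|g\|_\infty\le1$ — equivalently, $\sum_{L\in\LL}|g*\widetilde\Psi_L^{(-L),\vee}(\x)|^2\le C$ pointwise. This is where the multi-norm structure enters: the Fourier transforms $\widehat{\widetilde\Psi_L^{(-L)}}$ form a bounded partition-of-unity-type system subordinate to the blocks $B_L=\pi^{-1}(T_L)$ (see the discussion after \eqref{3.4ee} and Lemma \ref{Lem3.8}(iv), which gives bounded overlap $|L-L'|\le\kappa$ for the supports), so a standard Cotlar/almost-orthogonality argument — or simply Plancherel together with the finite-overlap property — bounds $\sum_L|\widehat{\widetilde\Psi_L^{(-L)}}(\xib)|^2$ uniformly in $\xib$. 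For $g\in L^\infty$ the pointwise bound on $\widetilde Sg$ then follows either from this spectral estimate applied to $g\in L^2_{\mathrm{loc}}$ via a localization argument, or, most cleanly, by noting that $\widetilde Sg$ is bounded on $L^\infty$ because it is bounded on $L^2$ (by the finite-overlap Plancherel bound) and each $g*\widetilde\Psi_L^{(-L),\vee}$ is controlled by $\|g\|_\infty$ with an $\ell^2$-summable family of kernel $L^1$-norms — one uses that $\sum_L\|\widetilde\Psi_L^{(-L)}\|_1^2<\infty$ is \emph{false} in general, so the correct route is the Plancherel/bounded-overlap one combined with the observation that pointwise $\sum_L|g*\widetilde\Psi_L^{(-L),\vee}(\x)|^2$ only sees $g$ through a ``local'' Littlewood--Paley square function which is bounded on $L^\infty$ by the classical theory in each of the $n$ one-parameter factors together with the coarsening in Proposition \ref{Prop3.8.5}.

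The main obstacle is precisely this last point: proving $\|\widetilde Sg\|_\infty\le C\|g\|_\infty$ uniformly. One cannot simply quote a single one-parameter Littlewood--Paley $L^\infty$ bound because the index set $\LL$ is the multi-norm lattice, not a product lattice, and the blocks $B_L$ have the anisotropic shapes described in Proposition \ref{Prop3.8.5}. I would handle this by using the decomposition $\LL=\bigcup_{S\in\SS^*_\bfE}\LL_S$ from \eqref{L=cupL_S}: on each piece $\LL_S$, Proposition \ref{Prop3.8.5} identifies $B_L$ with a product of balls/annuli in the coarser decomposition $\R^d=\prod_r\R^{A_r}$ with norms $N_{k_r}$, and the corresponding square function is then a genuine product-type (Chang--Fefferman) Littlewood--Paley square function in $s$ parameters, which is bounded on $L^\infty$ by the classical product theory. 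Summing the $O(1)$-many pieces $S\in\SS^*_\bfE$ gives the uniform bound. Combining, $\|f\|_1=\sup_g|\langle f,g\rangle|\le C\|S_\Psi f\|_{L^1}=C\|f\|_{\h^1_\mbE}$, which is the assertion. An alternative, perhaps slicker, route that avoids any $L^\infty$ square-function estimate is to instead show directly that $f*\Psi_L^{(-L)}*\widetilde\Psi_L^{(-L)}$ has $L^1$-norm controlled by $\|f*\Psi_L^{(-L)}\|_1$ and is ``$\epsilon$-almost orthogonal'' in $L^1$ — but this essentially reduces to the same computation used in the proof of Theorem \ref{Thm5.1}, so I would prefer the duality argument above as it is shortest.
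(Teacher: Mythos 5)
Your reduction to the claim $\|\widetilde Sg\|_{L^\infty}\le C\|g\|_{L^\infty}$ is the fatal step: a Littlewood--Paley square function does \emph{not} map $L^\infty$ to $L^\infty$, even in the classical one-parameter case (and hence not in the product or multi-norm cases either). What the classical and product theories give is $L^\infty\to BMO$ (the Carleson-measure characterization), which is strictly weaker and is exactly why $L^1\ne \bfH^1$. A concrete counterexample already at small scales: in $\R$ take $g=\chi_E$ with $E=\bigcup_k[2^{-2k},2^{-2k+1}]$ and $\psi$ a mean-zero bump with $\int_1^2\psi(-u)\,du\ne0$; then $|g*\psi^{(-2k)}(0)|\ge c>0$ for every $k$ up to exponentially small corrections from the other intervals, so $\sum_j|g*\psi^{(-j)}(0)|^2=\infty$ while $\|g\|_\infty=1$. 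Your attempted patch via Proposition \ref{Prop3.8.5} and ``classical product theory'' does not help, because the statement you need from that theory is precisely the false one. (Your closing ``alternative route'' is also not fleshed out and would face the same issue.)

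The paper's proof avoids this by never testing against a global $L^\infty$ function. It runs a stopping-time decomposition on the level sets $\Omega^i=\{S^{\rm PP}_\Psi f>2^i\}$ of a Plancherel--P\'olya square function built from \emph{spatially} supported $\Psi_L,\widetilde\Psi_L$, groups the terms of the Calder\'on reproducing formula into pieces $f_i$ according to which rectangles $\bfR^L_P$ live at level $i$, and estimates $\|f_i\|_1\le|\Omega^i_{(\tau_0)}|^{1/2}\|f_i\|_2$ using the support information. The $L^2$ norm of $f_i$ is then bounded by duality against $h\in L^2$ (where the $L^2$ boundedness of the square function \emph{is} available), and the key point is that on $\Omega^i_{(\tau_0)}\setminus\Omega^{i+1}$ the square function is $\le 2^{i+1}$, so $\|f_i\|_2^2\lesssim 2^{2i}|\Omega^i_{(\tau_0)}|$; summing $2^i|\Omega^i|$ recovers $\|S^{\rm PP}_\Psi f\|_1$. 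If you want to keep a duality flavor, you must localize the pairing to these level sets in exactly this way; the global H\"older step $\int S_\Psi f\cdot\widetilde Sg\le\|S_\Psi f\|_1\|\widetilde Sg\|_\infty$ cannot be repaired.
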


\begin{proof}
It is now convenient to choose a square function $S_\Psi^{\rm PP}$ of Plancherel-P\'olya type, as in \eqref{8.1}, and assume that the terms $\Psi_L,\widetilde\Psi_L$ in the corresponding Calder\'on reproducing formula $ f=\sum_{L \in\LL}f*\Psi_{L}^{(-L)}*\widetilde{\Psi}_{L}^{(-L)}$ are localized in space; \textit{i.e.} supported on a common compact neighborhood $U$ of $\0$ rather than in frequency space, cf. Lemma \ref{Lem4.2}. For $f \in L^2 \cap \h^1_{\mathbf{E}}(\bR^d)$, the series $\sum_{L \in\LL}f*\Psi_{L}^{(-L)}*\widetilde{\Psi}_{L}^{(-L)}$ is convergent in the $L^2$-norm. For $i \in \mathbb{Z}$ we define 
\bea\label{9.1}
\Omega^{i}&=\Big\{\x \in \mathbb{R}^{d}\;:\;S_{\Psi}^{{\rm PP}}f(\x )>2^{i}\Big\},\\
\mathcal{B}_i&=\Big\{\bfR_{P}^{L} \in \mathbb{D}\;:\;|\bfR_{P}^{L} \cap\Omega^{i}|>\frac{1}{2}|\bfR_{P}^{L}|,\; |\bfR_{P}^{L} \cap \Omega^{i+1}| \leq \frac{1}{2} |\bfR_{P}^{L}|\Big\}.
\eea
Each $\bfR_{P}^{L}\in \D^{L}$ belongs to a unique $\mathcal B_{i}$. Therefore, we have 
\bea\label{8.2}
f(\x ) &= \sum_{L \in\LL}\Psi_{L}^{(-L)}*\widetilde{\Psi}_{L}^{(-L)}*f(\x )\\
&= \sum_{i \in \mathbb{Z}}\sum_{L \in\LL}\sum_{\bfR_{P}^{L} \in \D^{L}\cap \mathcal{B}_i}\int_{\R^d}\widetilde{\Psi}_L^{(-L)}(\x -\mby)\Psi_L^{(-L)}*f(\mby)\chi_{\bfR_{P}^{L}}(\mby) \, d\mby
\eea
in $L^2$-sense. Then, it suffices to show there is a constant $C>0$ such that for all $i \in \mathbb{Z}$ we have 
\be\label{9.3}
\Big\|\sum\nolimits_{L \in\LL}\sum\nolimits_{\bfR_{P}^{L} \in \bD^{L}\cap \mathcal{B}_i}\int_{\mathbb{R}^d}\widetilde{\Psi}_L^{(-L)}(\cdot-\mby)\Psi_L^{(-L)}*f(\mby)\chi_{\bfR_{P}^{L}}(\mby) \, d\mby\Big\|_{1} \leq C\,2^{i}\,|\Omega_i|. 
\ee
Indeed, combining~\eqref{9.1},~\eqref{8.2}, and~\eqref{9.3} we obtain $ \|f\|_{1} \leq C\sum_{i \in \mathbb{Z}}2^{i}|\Omega_i| \leq C\|S^{\rm PP}_\Psi f\|_1 \leq C\|f\|_{\h^1_{\mathbf{E}}}$. Now we prove~\eqref{9.3}. Since the functions $\{\widetilde\Psi_L\}$ are supported on $U$, there is a constant $\tau_0>0$, independent of the scale $L$ and $K$, such that

\bea\label{9.4}
\supp \int_{\mathbb{R}^d}\widetilde{\Psi}_L^{(-L)}(\cdot-\mby)\Psi_L^{(-L)}*f(\mby)\chi_{\bfR_{P}^{L}}(\mby) \, d\mby \subseteq \bfR_{P}^{L}+2^{-L}U\subseteq 2^{\tau_{0}}\circ\bfR_{P}^{L} .
\eea
Therefore, the function $f_i:=\sum_{L \in\LL}\sum_{\bfR_{P}^{L} \in \bD^{L}\cap\mathcal{B}_i}\int_{\mathbb{R}^d}\widetilde{\Psi}_L^{(-L)}(\cdot-\mby)\Psi_L^{(-L)}*f(y)\chi_{\bfR_{P}^{L}}(\mby) \, d\mby
$
is supported on $\bigcup_{\bfR_{P}^{L} \in \mathcal{B}_i} 2^{\tau_{0}}\circ\bfR_{P}^{L} $. Therefore, it follows from the definition \eqref{9.1} of $\mathcal{B}_i$ that $f_i$ is supported on $\Omega^{i}_{(\tau_0)}$ as well, where $\Omega^{i}_{(\tau_0)}:=\Big\{\x \in \mathbb{R}^{d}\;:\;\MM_{s}[\chi_{\Omega_i}](\x )>\frac{1}{(10\cdot2^{\tau_0})^d}\Big\}$, where $\MM_{s}$ is the strong maximal function in $\eqref{Ms}$. Hence, by Cauchy--Schwarz inequality together with the strong maximal theorem applied to $\chi_{\Omega_i}$,
\begin{equation}\label{9.5}
\|f_i\|_1 \leq |\Omega^{i}_{\tau_0}|^{\frac{1}{2}}\|f_i\|_2 \leq C|\Omega_i|^{\frac{1}{2}}\|f_i\|_2\ .
\end{equation}
By a duality argument,
\beas
\|f_i\|_2&=\sup_{h \in L^2, \; \|h\|_2=1}\int_{\mathbb{R}^d}h(\x )f_i(\x )\,d\x 
=\sum_{L \in\LL}\sum_{\bfR_{P}^{L} \in\D^{L}\cap\mathcal{B}_i}\int_{\mathbb{R}^d}\widetilde{\Psi}_L^{(-L)}*h(\mby) \Psi_L^{(-L)}*f(\mby)\chi_{\bfR_{P}^{L}}(\mby) \, d\mby\ .
\eeas
Using Cauchy--Schwarz and the $L^2$-boundedness of square function $ h \longmapsto \Big(\sum_{L \in\LL}|\widetilde{\Psi}_{L}^{(-L)}*h|^2\Big)^{\frac{1}{2}}$ we conclude that $ \|f_i\|_2 \leq C\big\|\big(\sum_{L \in\LL}\sum_{\bfR_{P}^{L} \in \D^{L}\cap\mathcal{B}_i}\max_{\mby' \in R_K^{L}}|\Psi_L^{(-L)}*f(\mby')|^2\chi_{\bfR_{P}^{L}}\big)^{\frac{1}{2}}\big\|_{2}$. Note that, by the definition of $\mathcal{B}_i$, for all $\x \in {\bfR_{P}^{L} \in \D^{L}\cap\mathcal{B}_i}$ we have $\MM_{s}(\chi_{(\Omega^{i}_{(\tau_0)} \setminus \Omega^{i+1}) \cap \bfR_{P}^{L}})(\x )>\frac{1}{2}$, which implies that, for all $\x \in \mathbb{R}^{d}$, we have $\chi_{\bfR_{P}^{L}}({\x }) = \chi_{\bfR_{P}^{L}}({\x })^2 \leq 4 \MM_{s}(\chi_{(\Omega^{i}_{(\tau_0)} \setminus \Omega^{i+1}) \cap \bfR_{P}^{L}})(\x )^2$. Hence, by the Fefferman--Stein vector-valued strong maximal inequality
\beas
\big\|\big(\sum_{L \in\LL}&\sum_{\bfR_{P}^{L} \in\D^{L}\cap \mathcal{B}_i}\max_{\mby' \in R_K^{L}}|\Psi_L^{(-L)}*f(\mby')|^2\chi_{\bfR_{P}^{L}}\big)^{\frac{1}{2}}\big\|_{2}\\
&\qquad \leq C\sum_{L \in\LL}\sum_{\bfR_{P}^{L} \in \cap\mathcal{B}_i}\int_{\mathbb{R}^d} \max_{\mby' \in \bfR_{P}^{L}}|\Psi_{L}^{(-L)}*f(\mby')|^2 \MM_{s}\chi_{(\Omega^{i}_{(\tau_0)}\setminus \Omega^{i+1}) \cap \bfR_{P}^{L}}(\x )^2\,d\x \\
&\qquad \leq C\int_{\Omega^{i,\tau}_{D} \setminus \Omega^{i+1}}S_{\Psi}^{{\rm PP}}(\x )^2\,d\x .
\eeas
Recall that, by \eqref{9.1}, $S_{\Psi}^{{\rm PP}}f(x) \leq 2^{i+1}$ for $x \in\Omega^{i}_{(\tau_0)} \setminus \Omega^{i+1}$, so that, by \eqref{9.4}, 
\be\label{9.7}
\big\|\big(\sum_{L \in\LL}\sum_{\bfR_{P}^{L} \in \D^{L}\cap\mathcal{B}_i}|\Psi_{L}^{(-L)}*f|^2\chi_{\bfR_{P}^{L}}\big)^{\frac{1}{2}}\big\|_{L^2}^2 \leq C2^{2i}|\Omega^{i,\tau}_{D}|.
\ee
Finally,~\eqref{9.3} is a consequence of~\eqref{9.5} and~\eqref{9.7}.
\end{proof}

\subsection{Locality of $\h_{\bfE}^{1}(\R^d)$}\label{Sec8.2}\quad

\smallskip

\begin{proposition}\label{Prop8.4} Let $f\in h_{\bfE^{1}}$ and let $\eta\in \CC^{\infty}_{c}(\R^{d})$ be supported in a ball of radius $2^{\tau}$ with $\tau>0$. Then the product $\eta f\in \h_{\bfE}^{1}(\R^d)$ and $\|\eta f\|_{\h_{\bfE}^{1}}\leq C\|f\|_{\h_{\bfE}^{1}}$ where $C$ depends on $\tau$ and on some $\CC^{k}$-norm of $\eta$.
\end{proposition}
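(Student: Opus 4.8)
The plan is to estimate a tensor-type square function $S_\Psi(\eta f)$ directly, re-running the almost-orthogonality scheme of the proof of Theorem \ref{Thm5.1} with the multiplication operator $f\mapsto\eta f$ inserted between the two halves of a Calder\'on reproducing formula. Fix a Calder\'on family $\Psi=\{\Psi_L,\widetilde\Psi_L:L\in\LL\}$ built via tensor products as in Proposition \ref{Prop4.3}, chosen so that (by Lemma \ref{Lem4.2}\eqref{Lem4.2f}) all the $\Psi_L,\widetilde\Psi_L$ are supported in a fixed compact neighbourhood of $\0$ and, for each $L'\in\LL$, $\widetilde\Psi_{L'}$ has cancellation of order $m$ in every variable $\x_i$ with $i\in D_{L'}$, where $m$ will be taken large. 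Writing $g_{L'}=f*\Psi_{L'}^{(-L')}$, we have $\eta f=\sum_{L'\in\LL}\eta\cdot\big(g_{L'}*\widetilde\Psi_{L'}^{(-L')}\big)$, hence for every $L\in\LL$
\bes
(\eta f)*\Psi_L^{(-L)}(\x)=\sum\nolimits_{L'\in\LL}\int_{\R^d}g_{L'}(\y)\,K_{L,L'}(\x,\y)\,d\y,
\ees
where
\bes
K_{L,L'}(\x,\y)=\int_{\R^d}\Psi_L^{(-L)}(\x-\z)\,\eta(\z)\,\widetilde\Psi_{L'}^{(-L')}(\z-\y)\,d\z.
\ees

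The heart of the argument is the pointwise almost-orthogonality estimate
\bes
\big|(\eta f)*\Psi_L^{(-L)}(\x)\big|\le C\,\|\eta\|_{\CC^k}\sum\nolimits_{L'\in\LL}2^{-\eps|L-L'|}\,\omega_{M,L'}(\x),
\ees
with $\omega_{M,L'}$ as in \eqref{5.4}, where $\eps>0$ and $M$ depend only on the exponents $\lambda_h$ and the matrix $\bfE$, and $M$ can be made as large as desired once $m$ (hence $k$) is taken correspondingly large. To prove it I would estimate $K_{L,L'}$ one coordinate block at a time, exactly as in Lemma \ref{Lem5.3}: since $\Psi_L^{(-L)}$ and $\widetilde\Psi_{L'}^{(-L')}$ are tensor products and $\eta$ is smooth, the estimate reduces, in each $\R^{d_i}$, to the composition of $\psi_{i,\ell_i}^{(-\ell_i)}$, multiplication by $\eta$, and $\widetilde\psi_{i,\ell'_i}^{(-\ell'_i)}$. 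When $\ell'_i\le\ell_i$ and $i\in D_L$, the vanishing mean of $\psi_{i,\ell_i}$, together with the fact that $\z\mapsto\eta(\z)\widetilde\Psi_{L'}^{(-L')}(\z-\y)$ varies at the coarser scale $2^{-\ell'_i}$ in $\z_i$ (here only $\eta\in\CC^1$ is used), gives via a Taylor expansion (Lemma \ref{Lem4.1}) a gain $2^{-\bar\lambda_i(\ell_i-\ell'_i)}$. When $\ell'_i>\ell_i$ and $i\in D_{L'}$, one Taylor-expands $\eta$ about $\y_i$ to order $m$: each resulting term $(\z_i-\y_i)^{\alphab_i}\widetilde\psi_{i,\ell'_i}^{(-\ell'_i)}(\cdot-\y_i)$ equals $2^{-\ell'_i\bl\alphab_i\br}$ times a uniformly Schwartz function with cancellation of order $m-|\alphab_i|$, so by Lemma \ref{Lem5.2} every term — and the $O(\|\eta\|_{\CC^m})$-remainder, which carries the amplitude factor $2^{-m\bar\lambda_i\ell'_i}$ with no cancellation — contributes the same gain $2^{-\bar\lambda_i m(\ell'_i-\ell_i)}$, with constant $\lesssim\|\eta\|_{\CC^m}$; thus the order-$m$ cancellation of $\widetilde\psi_{i,\ell'_i}$ is effectively preserved under multiplication by a $\CC^m$ function. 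In the two remaining cases ($i\notin D_L$ with $\ell'_i\le\ell_i$, or $i\notin D_{L'}$ with $\ell'_i>\ell_i$) there is no gain but at worst the $2^{M|\ell'_i-\ell_i|}$ loss of \eqref{5.2}. The per-block gains and losses are then recombined by the borrowing argument in the proof of Lemma \ref{Lem5.3}, matching marked variables across the cones $\Gamma_S,\Gamma_{S'}$ to which $L$ and $L'$ belong and using Lemma \ref{Lem3.8}; taking $m$ large yields the net decay $2^{-\eps|L-L'|}$, and the spatial integrations are absorbed, as there, by the rapidly decaying $L^1$-normalized tails of $\Psi_L^{(-L)},\widetilde\Psi_{L'}^{(-L')}$ via $\chi_M^{(-L)}$-type bounds.

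Granting the pointwise estimate, the conclusion follows along the lines of the last part of the proof of Theorem \ref{Thm5.1}: Young's inequality on $\ell^2(\LL)$ (legitimate since $\sum_L 2^{-\eps|L-L'|}<\infty$, $\LL$ being a discrete subset of $\N^n$) gives $S_\Psi(\eta f)(\x)\le C\,\|\eta\|_{\CC^k}\big(\sum_{L'\in\LL}\omega_{M,L'}(\x)^2\big)^{\half}$, and then Lemma \ref{Lem5.7} together with the Fefferman--Stein vector-valued inequality for the strong maximal operator $\cM_s$, used as in \eqref{5.8} and the lines following it, yields $\big\|\big(\sum_{L'}\omega_{M,L'}^2\big)^{\half}\big\|_1\le C\|S_\Psi f\|_1$. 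Hence $\|\eta f\|_{\h_\bfE^1}=\|S_\Psi(\eta f)\|_1\le C\,\|\eta\|_{\CC^k}\|f\|_{\h_\bfE^1}$, which is the assertion; the dependence on $\tau$ enters only through the size of $\supp\eta$ and is absorbed into $\|\eta\|_{\CC^k}$. The step I expect to be the main obstacle is the one genuinely new relative to Theorem \ref{Thm5.1}: verifying that $\eta(\z)\widetilde\Psi_{L'}^{(-L')}(\z-\y)$ retains effective cancellation of order $m$ in each marked variable with error controlled solely by $\|\eta\|_{\CC^m}$ (the bookkeeping above, with the anisotropy and the $M$-weight tracked carefully), and that these localized gains survive the borrowing redistribution so as to dominate the $2^{M|\ell'_i-\ell_i|}$ losses in the non-cancellative blocks; once this is in place, the rest is a routine transcription of Sections \ref{Sec5} and \ref{Sec8}.
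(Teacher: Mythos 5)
Your proposal can be made to work, but it takes a far heavier route than the paper does, and it has one real technical hole. The paper's proof of Proposition \ref{Prop8.4} is a short commutator estimate: writing
$(f\eta)*\Psi_L^{(-L)}(\x)=\eta(\x)\,(f*\Psi_L^{(-L)})(\x)+\int_{\R^d} f(\x-\y)\big[\eta(\x-\y)-\eta(\x)\big]\Psi_L^{(-L)}(\y)\,d\y$,
the first term contributes $\|\eta\|_\infty\, S_\Psi f$ pointwise, while in the second the H\"older continuity of $\eta$ and the fact that $\Psi_L^{(-L)}$ is supported at scale $2^{-L}$ produce a factor $2^{-\del'|L|}$; the error is then summed over $L$ in $\ell^1$ and its $L^1$-norm is controlled by $\|f\|_1\lesssim\|f\|_{\h^1_{\bfE}}$ via Theorem \ref{Thm8.1}. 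That argument needs only $\eta\in\CC^1$ and none of the almost-orthogonality machinery. Your route buys independence from Theorem \ref{Thm8.1} (it stays entirely inside Section \ref{Sec5}), at the price of requiring $\eta\in\CC^m$ with $m$ large and of re-proving Lemma \ref{Lem5.3} with the multiplication operator inserted.

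The gap you must close before the comparison with Lemma \ref{Lem5.3} is legitimate: your kernel $K_{L,L'}(\x,\y)$ does \emph{not} factor over the blocks $\R^{d_i}$, because $\eta(\z)$ couples them, whereas the proof of Lemma \ref{Lem5.3} rests entirely on writing the integral as a product $\prod_i I_i$ of one-block integrals. Saying the estimate ``reduces, in each $\R^{d_i}$, to\dots'' is not justified as written. The fix is available in the paper: decompose $\eta$ into a rapidly convergent series of tensor products with controlled Schwartz/$\CC^k$ norms (Lemma \ref{Lem5.9} does exactly this for compactly supported smooth functions) and run your blockwise Taylor/integration-by-parts analysis on each tensor factor, summing the series at the end. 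Your central observation — that multiplication by a $\CC^m$ factor preserves order-$m$ cancellation up to errors carrying the same gain $2^{-\bar\lambda_i m(\ell'_i-\ell_i)}$ with constants bounded by $\|\eta\|_{\CC^m}$ — is correct, and the borrowing argument of Lemma \ref{Lem5.3} is purely combinatorial in $(L,L')$, so it survives unchanged. With the tensorization repair, the remainder of your argument is a faithful transcription of the proof of Theorem \ref{Thm5.1}.
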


\begin{proof}
We may assume that $\eta$ is supported in $2^{\tau}\circ B(1)$. We use the square function $S_{\Psi}$ in \eqref{5.1} with each $\Psi_{L}$ supported in $B(\rho)$ and uniformly bounded in every Schwartz norm. Then $S_{\psi}[\eta f]$ is supported in $2^{\tau}\circ B(1) +B(\rho)\subseteq B(\mu)$ for some $\mu>1$ depending only on $\tau$, $\rho$, and the exponents $\{\lambda_{h},\,1\leq h\leq d\}$ from \eqref{1.1}. By choosing $\rho$ sufficiently small, we can make $\mu$ arbitrarily close to $2^{\tau}$. We have
$$
\big((f\eta)*\Psi_L^{(-L)}\big)(\x)=\eta(\x)(f*\Psi_L^{(-L)})(\x)+\int_{\bR^d}f(\x-\y)\big[\eta(\x-\y)-\eta(\x)\big]\Psi_L^{(-L)}(\y)\,d\y\ .
$$
There are $\del,\del'>0$ such that, for $\y\in\supp\Psi_L^{(-L)}$ and $\x\in B(\mu)$, then $|\eta(\x-\y)-\eta(\x)|\lesssim |\y|^\del\lesssim 2^{-\del'|L|}$, with implicit constants depending only on $\tau$ and the $C^1$-norm of $\eta$. Then
$$
\big|(f\eta)*\Psi_L^{(-L)}(\x)\big|\le \big|f*\Psi_L^{(-L)}(\x)\big|+C2^{-\del'|L|}\big(|f|*|\Psi_L^{(-L)}|\big)(\x)\ ,
$$
and
\beas
\|S_\Psi(f\eta)\|_1&\le \|S_\Psi f\|_1+C\int\Big(\sum_{L\in\LL}\big(2^{-\del'|L|}\big(|f|*|\Psi_L^{(-L)}|\big)(\x)\big)^2\Big)^{1/2}\d\x\\
&\le \|S_\Psi f\|_1+C\sum_{L\in\LL}2^{-\del'|L|}\int\big(|f|*|\Psi_L^{(-L)}|\big)(\x)\d\x\\
&\lesssim \|S_\Psi f\|_1+C'\|f\|_1\lesssim\|S_\Psi f\|_1\ ,
\eeas
by Theorem \ref{Thm8.1}.
\end{proof}

\begin{corollary}\label{Cor8.5}
Let $\tau>0$ and $\eta\in C^\infty_c(\R^d)$ be supported on $ 2^{\tau}\circ B(1)$  such that $\sum_{P\in\Z^n}\eta(\x-P)=1$. Calling $\eta_P(\x)=\eta(\x-P)$, there is $A>1$, depending only on $\tau$ and some $C^k$-norm of $\eta$, such that $\|S_\Psi f\|_1\le\sum_{P\in\Z^n}\|S_\Psi(f\eta_P)\|_1\le A\|S_\Psi f\|_1$ for every $f\in L^{1}(\bR^d)$.\end{corollary}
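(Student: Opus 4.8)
I would split the proof into the two inequalities. The left inequality $\|S_\Psi f\|_1\le\sum_P\|S_\Psi(f\eta_P)\|_1$ is the easy half: since $f=\sum_P f\eta_P$ is a locally finite sum and $S_\Psi$ is sublinear (Minkowski's inequality in $\ell^2(\LL)$), one has the pointwise bound $S_\Psi f(\x)\le\sum_P S_\Psi(f\eta_P)(\x)$, and integrating over $\R^d$ with Tonelli's theorem gives it. For the right inequality I would first reduce to the case $S_\Psi f\in L^1$, i.e.\ $f\in\h^1_\bfE(\R^d)$, since otherwise the right-hand side is $+\infty$; and I would work throughout with the space-localised square function of the proof of Proposition \ref{Prop8.4}, so that each $\Psi_L$ is supported in $B(\rho)$ and uniformly bounded in every Schwartz norm, hence $\Psi_L^{(-L)}$ is supported in $2^{-L}\!\circ B(\rho)$.

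The engine is the pointwise splitting already used in Proposition \ref{Prop8.4},
\begin{align*}
(f\eta_P)*\Psi_L^{(-L)}(\x)&=\eta_P(\x)\,\bigl(f*\Psi_L^{(-L)}\bigr)(\x)+E_{P,L}(\x),\\
E_{P,L}(\x)&=\int_{\R^d}\bigl[\eta_P(\y)-\eta_P(\x)\bigr]f(\y)\,\Psi_L^{(-L)}(\x-\y)\,d\y,
\end{align*}
together with three elementary remarks that I would record: (i) the family $\{\eta_P\}$ has bounded overlap, so $\sum_P|\eta_P(\x)|^2\le C_\eta$ and, for every $\x$, at most $N$ of the functions $f\eta_P$ are nonzero on $\x-2^{-L}\!\circ B(\rho)$ for any $L$, with $C_\eta,N$ depending only on $\tau$ and $\|\eta\|_\infty$; (ii) because $L\in\LL$ lies within distance $\kappa$ of $\Gamma(\bfE)$ (Lemma \ref{Lem3.8}), every component $\ell_i$ is comparable to $|L|$, so $\supp\Psi_L^{(-L)}$ shrinks geometrically in $|L|$ and consequently $|\eta_P(\y)-\eta_P(\x)|\lesssim 2^{-\delta_1|L|}$ whenever $\x-\y\in\supp\Psi_L^{(-L)}$, with $\delta_1>0$ depending only on the exponents $\lambda_h$ and $\|\eta\|_{C^1}$; (iii) $E_{P,L}(\x)=0$ unless $\x\in P+B(\mu)$, with $\mu$ depending only on $\tau$ and $\rho$. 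Facts (ii)--(iii) yield $|E_{P,L}(\x)|\le 2^{-\delta_1|L|}\bigl(|f|*|\Psi_L^{(-L)}|\bigr)(\x)\,\chi_{P+B(\mu)}(\x)$.

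Then I would assemble as follows. By (iii), for each $\x$ at most $N$ indices $P$ contribute to $\sum_P S_\Psi(f\eta_P)(\x)$, so Cauchy--Schwarz gives $\sum_P S_\Psi(f\eta_P)(\x)\le N^{1/2}\bigl(\sum_{L\in\LL}\sum_P|(f\eta_P)*\Psi_L^{(-L)}(\x)|^2\bigr)^{1/2}$. Expanding the splitting, the main part $\sum_L\sum_P|\eta_P(\x)|^2|(f*\Psi_L^{(-L)})(\x)|^2\le C_\eta\,S_\Psi f(\x)^2$ by (i), while the error part $\sum_L\sum_P|E_{P,L}(\x)|^2\le N\sum_L 2^{-2\delta_1|L|}(|f|*|\Psi_L^{(-L)}|)(\x)^2$ by (ii)--(iii). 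Taking square roots, integrating, and using $\|\Psi_L^{(-L)}\|_1=\|\Psi_L\|_1\le C$ together with $\sum_{L\in\LL}2^{-\delta_1|L|}<\infty$ (which holds since $\#\{L\in\LL:|L|=k\}$ grows only polynomially in $k$), I would obtain $\sum_P\|S_\Psi(f\eta_P)\|_1\lesssim\|S_\Psi f\|_1+\|f\|_1$, and finish with $\|f\|_1\lesssim\|S_\Psi f\|_1$ from Theorem \ref{Thm8.1}. The resulting constant $A$ depends only on $\tau$, on $\|\eta\|_{C^1}$, and on the fixed data $\bfE,\{\lambda_h\}$.

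The one step I expect to be delicate is remark (ii): the geometric gain $2^{-\delta_1|L|}$ in the error term relies on the fact that for $L\in\LL$ all $\ell_i$ are comparable to $|L|$, so that $\Psi_L^{(-L)}$ is genuinely concentrated at a scale tending to $0$ and the modulus of continuity of $\eta$ converts into a gain summable over $\LL$. Were some $\ell_i$ allowed to remain bounded as $|L|\to\infty$, $\supp\Psi_L^{(-L)}$ would not shrink and the error would not be controllable this way. The remaining parts are a routine finite-overlap localisation combined with the continuous embedding $\h^1_\bfE\hookrightarrow L^1$ already proved.
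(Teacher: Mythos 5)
Your proof is correct and follows essentially the same route as the paper, whose own argument for the second inequality is just a two-line appeal to Proposition \ref{Prop8.4} together with the bounded overlap of the supports of the $S_\Psi(f\eta_P)$. You have in fact supplied the detail that makes this appeal work — namely that the main term and the error term $E_{P,L}$ are both supported in $P+B(\mu)$, so that the estimate of Proposition \ref{Prop8.4} localizes and can be summed over $P$ — which is exactly what the paper intends.
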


\begin{proof}
The first inequality is trivial and the second follows from Proposition \ref{Prop8.4} and the bounded overlapping property of the supports of the $S_\Psi(f\eta_P)$.
\end{proof}

\subsection{$\h^1_\mbE$-boundedness of multi-norm singular integrals}\label{Sec8.3}\quad

\smallskip

The notions of multi-norm Mihlin-H\"ormander multiplier and multi-norm singular kernel have been introduced in Theorem \ref{Thm3.9}. 
We prove the $\h^1_\mbE$-boundedness theorem for multi-norm singular integral operators.

\begin{theorem}\label{Thm8.6}
Let $\cK$ be a multi-nom singular kernel. Then the operator $Tf=f*\cK$ is bounded on $\h^1_\mbE(\bR^d)$, and $\|Tf\|_{h^{1}_{\bfE}}\leq C\,\|f\|_{h^{1}_{\bfE}}$ where $C$ depends on the norm of $\KK$ in $\PP_{0}(\bfE)$.
\end{theorem}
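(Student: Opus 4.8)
The plan is to use the square function characterization of $\h^1_\mbE$ together with the fact (from Theorem \ref{Thm3.9}) that a multi-norm singular kernel $\cK$ can be written as $\cK=\sum_{M\in\LL}\eta_M^{(-M)}$ with $\{\eta_M\}$ uniformly bounded in every Schwartz norm and with $\eta_M$ having integral zero in each variable $\x_{k_r}$ when $M\in\LL_S$, $S=\{(A_r,k_r):r=1,\dots,s\}$. Fix a tensor-type Calder\'on reproducing family $\Psi=\{\Psi_L,\widetilde\Psi_L\}$ as in Section \ref{Sec4.2}, with $\widetilde\Psi_L$ having cancellation of order $m$ (to be chosen large) in the marked variables. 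For $f\in\h^1_\mbE$ we must estimate $S_\Psi(f*\cK)$ in $L^1$. Writing
$$
(f*\cK)*\Psi_L^{(-L)}=\sum\nolimits_{L'\in\LL}\big(f*\Psi_{L'}^{(-L')}\big)*\big(\widetilde\Psi_{L'}^{(-L')}*\Psi_L^{(-L)}*\cK\big),
$$
the problem reduces to showing that the kernels $G_{L,L'}:=\widetilde\Psi_{L'}^{(-L')}*\Psi_L^{(-L)}*\cK$ satisfy an almost-orthogonality estimate of the form
$$
\int_{\R^d}\Big(\prod\nolimits_{i=1}^n(1+2^{\ell'_i}|\x_i|)\Big)^M\big|G_{L,L'}(\x)\big|\,d\x\le C\,2^{-\eps|L-L'|}
$$
uniformly in $L,L'\in\LL$, exactly the hypothesis needed to run the machinery from the proof of Theorem \ref{Thm5.1} (via the auxiliary functions $\omega_{M,L}$, Lemmas \ref{Lem5.5}--\ref{Lem5.7}, and the vector-valued strong maximal theorem) and conclude $\|S_\Psi(f*\cK)\|_1\le C\|\cK\|_{\PP_0(\bfE)}\|S_\Psi f\|_1$.

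\textbf{Key steps.} First I would insert the dyadic decomposition of $\cK$ and write $G_{L,L'}=\sum_{M\in\LL}\widetilde\Psi_{L'}^{(-L')}*\Psi_L^{(-L)}*\eta_M^{(-M)}$. For a fixed triple $(L,L',M)$ the middle factor $\Psi_L^{(-L)}$ has, by construction, cancellation of order $1$ in each variable $\x_i$ with $i\in D_L$; the tilded factor $\widetilde\Psi_{L'}^{(-L')}$ has cancellation of order $m$ in each $\x_i$ with $i\in D_{L'}$; and $\eta_M^{(-M)}$ has cancellation in each $\x_{k_r}$ when $M\in\LL_S$. Since everything is a tensor product in the first two factors, one applies Lemma \ref{Lem5.2} coordinatewise (and Lemma \ref{Lem4.5} for the convolution-type factor $\eta_M^{(-M)}$) to gain a factor $2^{-\bar\la\,m'\,|\text{scale gap}|}$ in each coordinate where the relevant function carries cancellation, borrowing gain across coordinates via the dominance inequalities of Lemma \ref{Lem3.8} exactly as in the proof of Lemma \ref{Lem5.3}. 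Summing the geometric series in $M$ (the number of $M\in\LL$ within a given $\ell^\infty$-distance of $L\wedge L'$ grows only polynomially, and the decay in $|M-L|$ and $|M-L'|$ is exponential once $m$ is large) yields the desired $2^{-\eps|L-L'|}$ bound for $G_{L,L'}$. Then, second, I would quote verbatim the argument of Theorem \ref{Thm5.1}: define $\omega_{M,L}$ relative to $f*\Psi_L^{(-L)}$, use Lemma \ref{Lem5.6} and Lemma \ref{Lem5.7} to dominate $\omega_{M,L}$ by sums of strong maximal functions of $|f*\Psi_{L'}^{(-L')}|^r$, and apply Young's inequality on $\Z^n$ and the vector-valued maximal theorem. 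The dependence of the constant on $\|\cK\|_{\PP_0(\bfE)}$ enters only through the uniform Schwartz bounds on the $\eta_M$, which are controlled by a fixed kernel seminorm.

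\textbf{Main obstacle.} The delicate point is the coordinatewise bookkeeping in the estimate for $G_{L,L'}$: for a coordinate $i$, whether cancellation is available from $\Psi_L$, from $\widetilde\Psi_{L'}$, or from $\eta_M$ depends on whether $i\in D_L$, $i\in D_{L'}$, or $i\in D(S)$ with $M\in\LL_S$, and these index sets need not coincide. One must verify that in every coordinate where no direct cancellation is present, the dominance structure (Lemma \ref{Lem3.1}, Lemma \ref{Lem3.8}(i)) forces that coordinate's scale to be controlled by a marked coordinate where cancellation \emph{is} present, so that gain can be transferred. This is the same ``borrowing'' phenomenon as in the proofs of Lemma \ref{Lem5.3}, Proposition \ref{Prop6.3}, and Lemma \ref{Lem6.9}, but now with three families of scales ($L$, $L'$, $M$) interacting rather than two, which makes the combinatorial case analysis the bulk of the work; taking the cancellation order $m$ large at the end absorbs all the finitely many borrowing constants. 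A secondary technical point is that $f*\Psi_L^{(-L)}$ and $(f*\cK)*\Psi_L^{(-L)}$ are a priori only tempered distributions, so one should first reduce to $f\in\h^1_\mbE\cap L^2$ by Lemma \ref{Lem8.2} (and note $L^2$-boundedness of $T$, which follows from $\cK\in\PP_0(\bfE)\Rightarrow\widehat\cK\in\MM_\infty(\bfE)\subset L^\infty$), so that all the interchanges of sum and integral above are legitimate, and then pass to the limit.
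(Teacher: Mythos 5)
Your architecture is sound, but you take a genuinely different and considerably harder route than the paper, and the hardest step in your route is asserted rather than proved. The paper's proof avoids the three-scale interaction entirely by a specific choice of the dyadic decomposition of $\cK$: it takes $\Psi_L$ with \emph{compactly Fourier-supported} factors (supports as in \eqref{9.8}) and defines the pieces of the kernel by $m_L(\xib)=m(2^L\xib)\widehat{\Psi_L}(\xib)$, $\eta_L=\FF\inv(m_L)$, so that each $\eta_{L'}^{(-L')}$ is frequency-localized on the $1/4$-enlargement of $\pi\inv(T_{L'})$. Then $\eta_{L'}^{(-L')}*\Psi_L^{(-L)}=0$ unless the enlarged tubes $\widetilde T_L$ and $\widetilde T_{L'}$ meet, which by Lemma \ref{Lem3.8}\eqref{Lem3.8d} forces $|L-L'|\leq\kappa$. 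The expansion of $Tf*\Psi_L^{(-L)}$ thus collapses to a \emph{finite} sum over $|\widetilde L|\le\kappa$, and for each fixed $\widetilde L$ the functions $\Theta_L=\eta_{L+\widetilde L}^{(-\widetilde L)}*\Psi_L$ are uniformly bounded in Schwartz norms with the cancellations \eqref{5.9}, so Proposition \ref{Prop5.8} finishes the proof in one line. No Calder\'on reproducing formula and no almost-orthogonality lemma are needed for this theorem.

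By contrast, you start from an arbitrary decomposition $\cK=\sum_M\eta_M^{(-M)}$ from Theorem \ref{Thm3.9} and insert the reproducing formula, which forces you to prove a weighted $L^1$ almost-orthogonality bound for $G_{L,L'}=\widetilde\Psi_{L'}^{(-L')}*\Psi_L^{(-L)}*\cK$ with three interacting scales $L,L',M$. This estimate is nowhere in the paper, and your sketch of it leans on Lemma \ref{Lem5.2} "coordinatewise" even though $\eta_M$ is not a tensor product and carries only order-one cancellation in the marked variables (so it cannot absorb the weight $\prod_i(1+2^{\ell'_i}|\x_i|)^M$ by itself); one would first have to reduce $\eta_M$ to tensor products via Lemma \ref{Lem5.9} and then redo the borrowing analysis of Lemma \ref{Lem5.3} with the extra scale $M$. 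This is plausibly doable (a cleaner organization is to first show that $\Theta_L:=\big(\Psi_L^{(-L)}*\cK\big)^{(L)}$ is uniformly bounded in Schwartz norms with cancellation in each $i\in D_L$ — a two-scale estimate in the spirit of Proposition \ref{Prop6.3} — and then invoke Proposition \ref{Prop5.8} directly, which removes the weighted estimate and the third scale altogether), but as written it is a substantial unproved claim, and it is exactly the work that the paper's frequency-localization device is designed to make disappear. Your final remark on reducing to $f\in L^2$ is correct but unnecessary in the paper's argument, since Proposition \ref{Prop5.8} is stated for tempered distributions.
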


\begin{proof}
For $i=1,\dots,n$ we fix functions $\ph_{i,\ell_i}\in\cS(\bR^{d_i})$ as in Section \ref{Sec4.1}, imposing the conditions $\supp\widehat{\ph_{i,\ell_i}}\subset \big\{\xib_i:|\xib_i|\leq 2^{3/4}\big\}$ and $\widehat{\ph_{i,\ell_i}}(\xib_i)=1 \text{ for }|\xib_i|\leq 2^{-3/4}$. We define $\psi_{i\ell_i}=\ph_{i,\ell_i}-\ph_{i,\ell_i-1}^{(1)}$ and, as in Section \ref{Sec4.2}, we construct, for $L\in\LL_D$,
$
\Psi_{L}(\x )=\big(\prod_{i\in D_{L}}\psi_{i,\ell_i}(\x _{i})\big)\big(\prod_{i\notin D_{L}}\ph_{i,\ell_i}(\x _{i})\big).
$
For $L\in\LL$ we have $\del_\0=\sum_{L\in \LL}\Psi_{L}^{(-L)}$ and 
\bea\label{9.8}
\supp \widehat{\Psi_L^{(-L)}}&\subset \Big(\prod_{i\in D_L}\big\{\xib_i: 2^{-\ell_i-3/4}\le|\xib_i|\leq 2^{-\ell_i+3/4}\big\}\Big)
\times\Big(\prod_{i\not\in D_L}\big\{\xib_i: |\xib_i|\leq 2^{-\ell_i+3/4}\big\}\Big).
\eea

Let $K$ be a multi-norm singular kernel and $m=\widehat K$ the corresponding multiplier.
For $L\in\LL$, we define $ m_L(\xib)=m(2^L\xib) \widehat{\Psi_L}(\xib)$ and $\eta_L=\cF\inv(m_L)$. It is easy to deduce from \eqref{9.8} that the $m_L$ verify the conditions in Theorem \ref{Thm3.9} (ii) and the $\eta_L$ the conditions in Theorem \ref{Thm3.9} (iv). Given $f\in\cS(\bR^d)$ we have $Tf=f*K=\sum_{L\in\LL}f*\eta_L^{(-L)}$. With $\|f\|_{\h^1_\mbE}=\|S_\Psi\|_1$, we have
$
\|Tf\|_{\h^1_\mbE}=\big\|\big(\sum_{L\in\LL}\big|Tf*\Psi_L^{(-L)}\big|^2\big)^\half\big\|_1\ ,
$
where
$
Tf*\Psi_L^{(-L)}=\sum_{L'\in\LL}f*\eta_{L'}^{(-L')}*\Psi_L^{(-L)}.
$
However, the convolution $\eta_{L'}^{(-L')}*\Psi_L^{(-L)}$ is zero if $L$ and $L'$ are not close enough. To see this, consider
the supports of the Fourier transforms of the two factors and their images under $\tau$ in $\bR^n_+$. By \eqref{9.8},
the two images are contained in the $1/4$-enlargements $\widetilde T_L$ of $T_L$ and $\widetilde T_{L'}$ $T_{L'}$ respectively, i.e.,
$$
\widetilde T_L=\Big\{\mbt\in \bR^{n}_{+}:\ell_{j}-\frac34\leq t_{j}\leq \ell_{j}+\frac34\text{ if }j\in D_{L}\text{ and }0\leq t_{j}\leq \ell_{j}+\frac34\text{ if }j\notin D_{L}\Big\}\ ,
$$
and similarly for $T_{L'}$. The intersection $\widetilde T_L\cap\widetilde T_{L'}$ is nonempty if and only if $T_L$ and $T_{L'}$ have an edge in common. We can then apply Lemma \ref{Lem3.8} (iv) to conclude that $|L-L'|\leq \kappa$. Therefore, 
\beas
\|Tf\|_{\h^1_\mbE}&=\Big\|\Big(\sum_L\big|Tf*\Psi_L^{(-L)}|^2\Big)^\half\Big\|_1
=\Big\|\Big(\sum_L\big|\sum_{|L'-L|\leq \kappa}f*\eta_{L'}^{(-L')}*\Psi_L^{(-L)}|^2\Big)^\half\Big\|_1\\
&\leq \sum_{|\widetilde L|\leq \kappa}\Big\|\Big(\sum_L\big|f*\eta_{L+\widetilde L}^{(-L-\widetilde L)}*\Psi_L^{(-L)}|^2\Big)^\half\Big\|_1\ .
\eeas

For every $\widetilde L$ in the above formula, the functions $\Theta_L=\eta_{L+\widetilde L}^{(-\widetilde L)}*\Psi_L$ are uniformly bounded in every Schwartz norm and satisfy the other conditions in Proposition \ref{Prop5.8}. This gives the inequality $\|Tf\|_{\h^1_\mbE}\leq C\|f\|_{\h^1_\mbE}$.
\end{proof}

\subsection{Characterization of $\h^1_\mbE(\R^d)$ by Riesz transforms}\label{Sec8.4}\quad
\smallskip

By \cite{MR0757952}, for any family $\{\del_r\}_{r>0}$ of dilations on a space $\bR^\nu$, it is possible to construct a system of $\nu$ smooth, homogeneous Calder\'on-Zygmund kernels $\cK_1,\dots,\cK_\nu$ with the property that a function $f\in L^1(\R^\nu)$ belongs to the non-isotropic Hardy space $H^1_\del$ if and only if $f*\KK_j\in L^1(\R^\nu)$ for all $j=1,\dots,\nu$. One construction of the $\cK_j$ goes as follows. In $\bR^\nu$ with scalar coordinates $(x_1,\dots,x_\nu)$ consider the Euclidean sphere $S^{\nu-1}=\big\{\x :\sum_{j=1}^\nu x_j^2=1\big\}$ and define the $\del$-homogeneous norm $|\cdot|$ by the condition
$
|\x |=r\ \Longleftrightarrow \del_{r\inv}\x \in S^{\nu-1}.
$
It follows from the implicit function theorem that this norm is smooth away from the origin.
We define $\cK_j$ by the condition
$
\widehat{\cK_j}(\xi)=\frac{\xi_j}{|\xib|^{\la_j}}\ ,
$
where $\la_j$ is the exponent of the dilations in the $j$-th entry.

\begin{lemma}\label{Lem8.7}
If $f\in L^{1}(\bR^\nu)$ then $f\in H^1_\del(\bR^\nu)$ if and only if $f*\cK_j\in L^{1}(\bR^d)$ for all $j=1,\dots,\nu$.
\end{lemma}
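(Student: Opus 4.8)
The statement to be proved, Lemma \ref{Lem8.7}, is a well-known characterization of the non-isotropic Hardy space $H^1_\del$ by means of a specific finite family of Calder\'on-Zygmund kernels. Since the excerpt cuts off exactly at the statement, what I sketch below is the plan I would follow.

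\begin{proof}[Proof plan]
The plan is to reduce everything to the square-function characterization of $H^1_\del(\bR^\nu)$ together with the $\del$-homogeneous Calder\'on-Zygmund theory, following the classical scheme of \cite{MR0757952}. First I would set up the ``only if'' direction, which is the easy one: if $f\in H^1_\del(\bR^\nu)$, then since each $\cK_j$ is a smooth $\del$-homogeneous Calder\'on-Zygmund kernel, convolution with $\cK_j$ maps $H^1_\del$ to itself (this is the one-parameter analogue of Theorem \ref{Thm8.6}, and in fact the one-parameter special case is classical), and $H^1_\del\subset L^1$, so $f*\cK_j\in L^1$ for every $j$. This requires no new ideas.

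The substance is the ``if'' direction. Suppose $f\in L^1(\bR^\nu)$ and $f*\cK_j\in L^1$ for all $j=1,\dots,\nu$. I would argue as follows. Fix a one-parameter Littlewood-Paley family $\{\psi_\ell\}$ on $\bR^\nu$ adapted to the dilations $\del_r$, with $\widehat{\psi_\ell}$ supported in an annulus $\{c_1\le |\xib|\le c_2\}$ away from the origin, and with $\sum_\ell \widehat{\psi_\ell}(\del_{2^\ell}^*\xib)\equiv 1$ off the origin; here $|\cdot|$ is the smooth $\del$-homogeneous norm. On the support of each $\widehat{\psi_\ell^{(-\ell)}}$ the multiplier symbol $\xib_j/|\xib|^{\la_j}$ is smooth and bounded below in absolute value on a suitable sector, and the vector $\big(\xi_1/|\xib|^{\la_1},\dots,\xi_\nu/|\xib|^{\la_\nu}\big)$ never vanishes on $\bR^\nu\setminus\{0\}$ (indeed on $S^{\nu-1}$ it has a positive lower bound, since $\xib_j=0$ for all $j$ is impossible there). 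Hence one can write $\widehat{\psi_\ell}(\xib)=\sum_{j=1}^\nu \dfrac{\xi_j}{|\xib|^{\la_j}}\, b_{j,\ell}(\xib)$ where $b_{j,\ell}$ is a smooth function supported in the same annulus, with Schwartz norms of $\cF\inv[b_{j,\ell}]$ bounded uniformly in $\ell$ after the dilation normalization; concretely, take $b_{j,\ell}(\xib)=\widehat{\psi_\ell}(\xib)\,\dfrac{\bar{(\xi_j/|\xib|^{\la_j})}}{\sum_k |\xi_k/|\xib|^{\la_k}|^2}$. Writing $\beta_{j,\ell}=\cF\inv[b_{j,\ell}]$ I then get, at each scale, $f*\psi_\ell^{(-\ell)}=\sum_{j=1}^\nu (f*\cK_j)*\beta_{j,\ell}^{(-\ell)}$. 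Squaring, summing in $\ell$, and applying Minkowski's inequality in $\ell^2$, the square function $S_\psi f=\big(\sum_\ell |f*\psi_\ell^{(-\ell)}|^2\big)^{1/2}$ is dominated by $\sum_{j=1}^\nu S_{\beta_j}(f*\cK_j)$, where $S_{\beta_j}$ is the square function built from $\{\beta_{j,\ell}\}$. Since each $\beta_{j,\ell}$ is uniformly bounded in every Schwartz norm and has cancellation (its Fourier transform is supported off the origin), the one-parameter analogue of Proposition \ref{Prop5.8} — i.e., the classical $L^1$-comparison of Littlewood-Paley square functions in the homogeneous setting — gives $\|S_{\beta_j}(f*\cK_j)\|_1\le C\|S_\psi(f*\cK_j)\|_1$. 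But $f*\cK_j\in L^1$, and for an $L^1$ function $g$ with $g*\cK_j\in L^1$ we do not a priori know $g\in H^1$; here however $f*\cK_j$ is already known to be in $L^1$ and we only need its square function to be finite, which follows because $f*\cK_j\in L^1$ and each $\beta_{j,\ell}$ is a fixed nice function — more precisely, one uses that $\|S_\psi g\|_1\lesssim \|g\|_{H^1}$ but also the reverse bookkeeping: it is enough to show $S_\psi f\in L^1$, and we have just bounded it by $\sum_j S_{\beta_j}(f*\cK_j)$; the point is that each $S_{\beta_j}(f*\cK_j)$ is controlled in $L^1$ by $\|f*\cK_j\|_{H^1}$, so I must instead argue that $f*\cK_j\in H^1$. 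This is where I close the loop: applying the operator $\cK_k$ to $f*\cK_j$, the symbol becomes $\dfrac{\xi_j\xi_k}{|\xib|^{\la_j+\la_k}}$, a smooth $\del$-homogeneous Calder\'on-Zygmund symbol of degree $0$, so $(f*\cK_j)*\cK_k = f*(\cK_j*\cK_k)\in L^1$ for all $j,k$; iterating, $f$ together with all its iterated transforms $f*\cK_{j_1}*\cdots*\cK_{j_m}$ lie in $L^1$, and a standard argument (the Fefferman–Stein / Uchiyama type grand-maximal characterization of $H^1_\del$, or equivalently the fact that $\{I,\cK_1,\dots,\cK_\nu\}$ generates an algebra of operators whose joint $L^1$-boundedness forces membership in $H^1_\del$) yields $f\in H^1_\del(\bR^\nu)$.

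Rather than chase the subtle circular dependence above, the clean way I would actually present it is the standard one: reduce directly to showing that the $\del$-homogeneous grand maximal function of $f$ is in $L^1$. One uses the subordination identity $f=\sum_{j=1}^\nu \cK_j*\big((f*\cK_j)\big)$ up to a harmless smoothing — i.e., $\sum_j \widehat{\cK_j}(\xib)^2 = \sum_j \xi_j^2/|\xib|^{2\la_j}$, and since $|\xib|$ is smooth and homogeneous this is a smooth, strictly positive, $0$-homogeneous function, hence its reciprocal is a multiplier of class $\MM_\infty$ for $\{\del_r\}$ (Mihlin–Hörmander), giving a kernel $\Lambda$ with $\widehat\Lambda = \big(\sum_j \widehat{\cK_j}^2\big)^{-1}$ smooth and homogeneous of degree $0$. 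Then $f = \sum_{j=1}^\nu \Lambda*\cK_j*(f*\cK_j)$, expressing $f$ as a finite sum of Calder\'on-Zygmund images of $L^1$ functions $f*\cK_j$. Each $\Lambda*\cK_j$ is again a smooth $\del$-homogeneous Calder\'on-Zygmund kernel, and it is classical (\cite{MR0757952}, or the real-variable theory of $H^1_\del$) that if $g\in L^1$ and $g$ is such that $T g\in L^1$ for a ``Riesz system'' $T$, then $g\in H^1_\del$; applying this with $g=f$ directly and the system $\{\cK_j\}$ gives the claim. The single genuinely nontrivial input is the nonvanishing of the symbol vector $\big(\xi_j/|\xib|^{\la_j}\big)_j$ on $\bR^\nu\setminus\{0\}$ together with its smoothness, which is exactly what the implicit-function-theorem remark preceding the lemma supplies; everything else is the classical one-parameter (anisotropic) Hardy-space machinery.

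The main obstacle, and the step I expect to require the most care, is making rigorous the passage ``$f*\cK_j\in L^1$ for all $j$ $\Rightarrow$ $f\in H^1_\del$'' without circularity: one must either invoke a self-contained characterization of $H^1_\del$ by a Riesz-type system (which is precisely the content cited from \cite{MR0757952}), or build the square-function bound $\|S_\psi f\|_1\lesssim \sum_j\|f*\cK_j\|_1$ directly from the subordination $f=\sum_j\Lambda*\cK_j*(f*\cK_j)$ combined with the one-parameter versions of the Littlewood-Paley $L^1$-equivalence lemmas (Proposition \ref{Prop5.8} with $n=1$) and the $H^1_\del$-to-$L^1$ inclusion (Theorem \ref{Thm8.1} with $n=1$), and then recognize that $S_\psi f\in L^1$ is by definition $f\in H^1_\del$. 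All the analytic estimates involved — uniform Schwartz bounds for the $\beta_{j,\ell}$, the annular Fourier support, the maximal-function arguments — are entirely parallel to (indeed special cases of) the $n$-parameter estimates already established earlier in the paper, so no new technical difficulty arises beyond bookkeeping.
\end{proof}
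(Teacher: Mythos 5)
The ``only if'' direction of your argument is fine, but the ``if'' direction is the entire content of the lemma, and both routes you sketch for it are circular. Your square-function route needs $\|S_{\beta_j}(f*\cK_j)\|_1<\infty$, which follows from $f*\cK_j\in H^1_\del$ but not from $f*\cK_j\in L^{1}$ — you notice this yourself and abandon it. Your ``clean'' route writes $f=\sum_j\Lambda*\cK_j*(f*\cK_j)$ (where in fact $\sum_j\widehat{\cK_j}(\xib)^2\equiv1$ by $0$-homogeneity, so $\Lambda=\del_\0$), but a Calder\'on--Zygmund operator does not map $L^{1}$ into $L^{1}$, let alone into $H^1_\del$, so the identity gives nothing by itself; and your closing step --- ``it is classical that if $g\in L^{1}$ and $Tg\in L^{1}$ for a Riesz system $T$ then $g\in H^1_\del$; apply this with the system $\{\cK_j\}$'' --- is exactly the statement to be proved. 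The question the lemma answers is precisely why $\{\cK_j\}$ is a Riesz system in that sense.

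What the paper actually does is short but irreplaceable: the cited reference \cite{MR0757952} supplies a sufficient condition (*) on a system of homogeneous multipliers $m_0=1,m_1,\dots,m_\nu$ for the associated operators to characterize $H^1_\del$ — for every $\v=(v_0,\dots,v_\nu)\in S^\nu$ one must exhibit degree-zero multipliers $\mu_0,\dots,\mu_\nu$ with $\sum_{j} m_j\mu_j=1$ and $\sum_{j} v_j\mu_j=0$ — and the proof consists of verifying (*) for $m_j(\xib)=\xi_j/|\xib|^{\la_j}$ by writing the $\mu_j$ down explicitly. Your observation that the symbol vector $\big(\xi_j/|\xib|^{\la_j}\big)_j$ is smooth and nonvanishing away from the origin is the correct geometric reason the condition can be met, but it is not the verification; in particular the condition involves the extra component $v_0$ paired with the identity operator, which your argument never engages. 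To repair the proof you must state the sufficient condition from the reference precisely and carry out that explicit algebraic check.
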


\begin{proof}
The sufficient condition (*) in \cite[p. 547]{MR0757952} requires that, for any vector $\v=(v_0,\dots,v_\nu)\in S^\nu$ there exist multipliers $\mu_0,\mu_1,\dots,\mu_\nu$ such that $\sum_{j=0}^\nu m_j(\xib)\mu_j(\xib)=1$ and $ \sum_{j=0}^\nu v_i\mu_j(\xib)=0$. Borrowing formulas from \cite[p. 572]{MR0757952}, we define functions $\mu_{j}$, homogeneous of degree zero, by $\mu_0(\xib)=\sum_{j=1}^\nu v_j^2+v_0\sum_{j=1}^\nu v_j\xi_j$ and $\mu_j(\xib)=-v_0v_j-v_0^2\xi_j$ for $1\leq j \leq \nu$ when $\xib\in S^{\nu-1}$. 
\end{proof}

Modifying the $\cK_j$ constructed above, we can obtain local Calder\'on-Zygmund operators characterizing the local Hardy space~$\h^1_\del(\bR^\nu)$.

\begin{proposition} \label{Prop8.8}
For $j=1,\dots,\nu$, let $\cK^\flat_j$ be the local Calder\'on-Zygmund kernel such that $\widehat {\cK^\flat_j}(\xib)=\widehat {\cK_j}(\xib)\big(1-\theta(\xib)\big)$, where $\theta\in C^\infty_c(\bR^\nu)$ is supported where $|\xib|\leq 2$ and is equal to $1$ for $|\xib|\leq 1$. Then a function $f\in L^{1}(\bR^\nu)$ belongs to $\h^1_\del(\bR^\nu)$ if and only if $f*\cK^\flat_j\in L^{1}(\bR^\nu)$ for all $j=1,\dots,\nu$.
\end{proposition}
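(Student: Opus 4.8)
The plan is to prove the two implications separately and then read off the quantitative equivalence of norms.

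For the forward direction I would argue that each $\cK^\flat_j$ is a local smooth Calder\'on--Zygmund kernel adapted to the dilations $\del$. Indeed $\widehat{\cK^\flat_j}=\widehat{\cK_j}(1-\theta)$ is $C^\infty$ on all of $\bR^\nu$ (the factor $1-\theta$ kills the only possible singularity of the degree-zero symbol $\widehat{\cK_j}$, at the origin), and since $\del^\al\widehat{\cK_j}$ is homogeneous of negative degree away from the origin, $\widehat{\cK^\flat_j}$ obeys the Mihlin--H\"ormander inequalities adapted to $\del$ and singular only at infinity. Hence $\cK^\flat_j\in\PP_0(\bfE)$ for the $1\times1$ (trivially standard) matrix $\bfE=[1]$. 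By Theorem~\ref{Thm8.6} in the case $n=1$ the operator $f\mapsto f*\cK^\flat_j$ is bounded on $\h^1_\del(\bR^\nu)$, and since $\h^1_\del(\bR^\nu)\subseteq L^1(\bR^\nu)$ by Theorem~\ref{Thm8.1} ($n=1$), composing gives $f*\cK^\flat_j\in L^1(\bR^\nu)$ with $\|f*\cK^\flat_j\|_1\lesssim\|f\|_{\h^1_\del}$ whenever $f\in\h^1_\del(\bR^\nu)$.

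For the converse, assume $f\in L^1$ and $f*\cK^\flat_j\in L^1$ for every $j$, and split $f=f*\phi+g$, where $\phi\in\cS(\bR^\nu)$ has $\widehat\phi=\theta_0$ with $\theta_0\in C^\infty_c(\bR^\nu)$ equal to $1$ on $\{|\xib|\le\tfrac12\}$ and supported in $\{|\xib|\le1\}$, and $g:=f-f*\phi$. Because $\theta\equiv1$ on $\supp\theta_0$, the product $\theta_0(1-\theta)$ vanishes identically, so $\phi*\cK^\flat_j=0$ and therefore $g*\cK^\flat_j=f*\cK^\flat_j\in L^1$. Since $\widehat g=\widehat f(1-\theta_0)$ vanishes on $\{|\xib|\le\tfrac12\}$, the symbol $m_j:=(1-\theta_0)\theta\,\widehat{\cK_j}$ is $C^\infty$ and supported in the compact annulus $\{\tfrac12\le|\xib|\le2\}$, hence $m_j=\widehat{\Lambda_j}$ for some $\Lambda_j\in\cS(\bR^\nu)$; comparing Fourier transforms gives $g*(\cK_j-\cK^\flat_j)=f*\Lambda_j\in L^1$. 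Thus $g*\cK_j\in L^1$ for all $j$ and $g\in L^1$, so Lemma~\ref{Lem8.7} yields $g\in H^1_\del(\bR^\nu)$; as $\widehat g$ vanishes near $0$, $g$ lies in $\h^1_\del$ as well (the global part of its maximal/square function being dominated by $\|g\|_1$). Finally $f*\phi$ has Fourier transform supported in $\{|\xib|\le1\}$, so all but finitely many terms of a local Littlewood--Paley square function of $f*\phi$ vanish and the rest are $\lesssim\|f\|_1$ in $L^1$; hence $f*\phi\in\h^1_\del$. Adding, $f=f*\phi+g\in\h^1_\del(\bR^\nu)$ and, tracking constants, $\|f\|_{\h^1_\del}\lesssim\|f\|_1+\sum_{j=1}^\nu\|f*\cK^\flat_j\|_1$.

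The hardest part is not analytic but a matter of arranging the two frequency cutoffs correctly: $\theta_0$ (defining $\phi$) must be \emph{strictly} subordinate to $\theta$ (defining $\cK^\flat_j$), so that simultaneously $\widehat\phi\,\widehat{\cK^\flat_j}\equiv0$---which is what lets us replace $g*\cK^\flat_j$ by $f*\cK^\flat_j$---and the symbol $m_j$ is supported in a compact annulus away from both the origin and infinity---which is what makes $g*(\cK_j-\cK^\flat_j)$ an honest $L^1$ convolution with a Schwartz function. The remaining ingredients (the $\del$-adapted Mihlin--H\"ormander bounds for $\widehat{\cK^\flat_j}$, and the fact that a tempered distribution with spectrum bounded away from the origin belongs to $\h^1_\del$ precisely when it belongs to $H^1_\del$) are classical and require only routine verification.
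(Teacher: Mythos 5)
Your proof is correct and follows essentially the same route as the paper's: the paper splits $f=f_0+f_1+f_\infty$ into low, middle and high frequency pieces so that $f_0*\cK^\flat_j=0$, $f_1*\cK^\flat_j$ is convolution of $f$ with a Schwartz function (compactly supported smooth multiplier), and $f_\infty*\cK^\flat_j=f_\infty*\cK_j$ feeds directly into Lemma \ref{Lem8.7}, while the bounded-spectrum pieces are handled by the finiteness of the local square function. Your two-piece splitting $f=f*\phi+g$ merely merges the middle and high pieces and compensates by correcting $g*\cK^\flat_j$ to $g*\cK_j$ via the Schwartz convolution $f*\Lambda_j$ with $\widehat{\Lambda_j}=(1-\theta_0)\theta\,\widehat{\cK_j}$ supported in a compact annulus; this is the same idea in a slightly different arrangement, and I see no gaps.
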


\begin{proof}
The ``only if'' part of the proof is standard. We then assume that $f\in L^{1}(\bR^\nu)$ and $f*\cK^\flat_j\in L^{1}(\bR^\nu)$ for every $j$. Keeping notation and assumptions of Section \ref{Sec4.1}, take $\ph_\ell=\ph,\psi_\ell=\psi=\ph-\ph^{(1)}$, with $\widehat\ph(\xib)=1$ for $|\xib|\leq \frac{1}{2}$, $\widehat\ph(\xib)=0$ for $|\xib|\ge1$. Decompose $f\in L^{1}(\bR^\nu)$ as $f_0+f_1+f_\infty$ where $ f_0=f*\ph\ $, $ f_1=f*\psi^{(-1)}$, and $f_\infty=f-f_0-f_1=\sum_{\ell\ge2}f*\psi^{(-\ell)}$. Then $f_0*\cK^\flat_j=0$, $f_1*\cK^\flat_j\in L^{1}(\bR^\nu)$ and $f_\infty*\cK^\flat_j=f_\infty*\cK_j$. Then $f_\infty*\cK_j\in L^{1}(\bR^\nu)$ for every $j$ and, by Lemma \ref{Lem8.7}, $f_\infty\in H^1_\del(\bR^\nu)\subset \h^1_\del(\bR^\nu)$.
To prove that also $f_0$ and $f_1$ are in $\h^1_\del(\bR^\nu)$ we consider the square function
$
S_{\rm loc}g(\x )=\Big(\big|g*\ph(\x )\big|^2+\sum_{\ell\ge1}\big|g*\psi^{(-\ell)}(\x )\big|^2\Big)^\half$.
Taking $g=f_0$ or $g=f_1$ only two values of $\ell$ at most give a non-zero term, so that both $S_{\rm loc}f_0, S_{\rm loc}f_1$ are in $L^{1}(\bR^\nu)$.
\end{proof}

After these preliminaries for general dilations, we go back to $\R^d$ with the $n$-tuple of dilations $\widehat\del^i$ in \eqref{2.5a}. For each $i=1,\dots,n$ we denote by $\cK^\flat_{i,j}$, $j=1,\dots,d$ the kernels introduced in Proposition \ref{Prop8.8} relative to the dilations $\widehat\del^i$. 
We also define $\KK^\flat_{i,0}=\del_\0$, the Dirac delta at the origin.

For $J\in\{0,\dots,d\}^n$, the {\it multinorm Riesz kernels} 
$
R_J=\cK^\flat_{1,j_1}*\cdots*\cK^\flat_{n,j_n}\ .
$
Notice that, since one or more factors can be $\del_\0$, this family of kernels also contains convolutions of a smaller number of factors, including the single $\KK^\flat_{i,j}$ and even $\del_\0$.
It follows from Theorem \ref{Thm3.10} that the distributions $R_J$ are in $\cP_0(\mbE)$. By Theorem \ref{Thm8.6}, each $R_J$ maps $\h^1_\mbE(\R^d)$ into itself, hence into $L^{1}(\bR^d)$.
We prove the converse statement using a randomization argument as in \cite{Cowling-etal}.
\begin{theorem}\label{Thm8.9}
Let $f\in\SS'(\R^d)$. If $f*R_J\in L^{1}(\bR^d)$ for every $J\in\{0,\dots,d\}^n$ then $f\in \h^1_\mbE(\R^d)$, and $\|S_\Psi f\|_1\sim\sum_J\|f*R_J\|_1$. 
\end{theorem}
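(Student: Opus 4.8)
The plan is to reduce the characterization by Riesz transforms to the square function machinery already established, following the randomization technique of \cite{Cowling-etal}. The forward direction, that $f\in\h^1_\mbE(\R^d)$ implies $f*R_J\in L^1(\bR^d)$ for all $J$, is immediate from Theorem \ref{Thm3.10} (the $R_J$ lie in $\cP_0(\mbE)$), Theorem \ref{Thm8.6} ($\h^1_\mbE$-boundedness of multi-norm singular integrals), and Theorem \ref{Thm8.1} (the continuous inclusion $\h^1_\mbE\subset L^1$). So the substance is the converse: assuming $f*R_J\in L^1(\bR^d)$ for every $J\in\{0,\dots,d\}^n$, show $S_\Psi f\in L^1$ with $\|S_\Psi f\|_1\lesssim\sum_J\|f*R_J\|_1$.

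First I would choose a convenient square function. Using Theorem \ref{Thm6.4} and Theorem \ref{Thm6.11} it suffices to control a square function $S''_\sigma$ of convolution type with parameters in $\N^n$, built from one-parameter pieces $\sigma_{i,\ell}$ adapted to the dilations $\widehat\del^i$; equivalently, by Theorem \ref{Thm6.2} and Proposition \ref{Prop6.8}, we may build each $\sigma_{i,\ell}$ from a fixed one-parameter Littlewood-Paley function for $\widehat\del^i$ on $\bR^d$. The key point is that for a single family of dilations $\widehat\del^i$ on $\bR^d$, the local Hardy space $\h^1_{\widehat\del^i}(\bR^d)$ is characterized by the kernels $\cK^\flat_{i,1},\dots,\cK^\flat_{i,d}$ of Proposition \ref{Prop8.8}; the standard proof of that characterization shows that if $g\in L^1(\bR^d)$ and $g*\cK^\flat_{i,j}\in L^1$ for all $j$, then the one-parameter local square function $S_{\widehat\del^i}g=\bigl(\sum_{\ell\ge0}|g*\sigma_{i,\ell}^{(-\ell)_i}|^2\bigr)^{1/2}$ satisfies $\|S_{\widehat\del^i}g\|_1\lesssim\|g\|_1+\sum_j\|g*\cK^\flat_{i,j}\|_1$. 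The idea is to apply this one variable at a time: write $S''_\sigma f$ as an iterated $\ell^2$-norm over $\ell_1,\dots,\ell_n$, and peel off the $i$-th index by comparing $\sum_{\ell_i}|{\cdot}*\sigma_{i,\ell_i}^{(-\ell_i)_i}|^2$ to the action of the $\cK^\flat_{i,j}$. Randomization enters here: insert i.i.d.\ random signs $\varepsilon_{i,\ell_i}$, use Khinchine's inequality to replace the $\ell^2$-sum in the $i$-th slot by an average over sign choices of the single convolution $f*\bigl(\sum_{\ell_i}\varepsilon_{i,\ell_i}\sigma_{i,\ell_i}^{(-\ell_i)_i}\bigr)*(\text{remaining factors})$, and observe that $\sum_{\ell_i}\varepsilon_{i,\ell_i}\sigma_{i,\ell_i}^{(-\ell_i)_i}$ is the kernel of a local Mihlin--H\"ormander multiplier for $\widehat\del^i$, uniformly in the signs; such a multiplier is a finite linear combination of products $\theta\cdot\widehat{\cK_{i,j}}$ plus a smooth compactly supported term, so the corresponding operator factors through the $\cK^\flat_{i,j}$ (plus the identity, contributing the $\|f\|_1$ term controlled via Theorem \ref{Thm8.1} applied to lower-order truncations). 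Iterating over $i=1,\dots,n$ and using Fubini on the product of sign spaces together with Minkowski's integral inequality converts $\sum_{J}\|f*R_J\|_1$ on the right-hand side, since $R_J=\cK^\flat_{1,j_1}*\cdots*\cK^\flat_{n,j_n}$ is exactly the composition that appears after all $n$ peeling steps (the index $j_i=0$, i.e.\ $\cK^\flat_{i,0}=\del_\0$, accounts for the identity contributions).

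In more detail, the induction is on the number of slots still to be processed. At stage $i$ one has, after the first $i-1$ steps, an estimate of the partial square function by an average over signs $\varepsilon_{1,\cdot},\dots,\varepsilon_{i-1,\cdot}$ of the $L^1$-norm of $f$ convolved with $i-1$ operators (each a randomized multiplier in the respective dilation), with the remaining slots $i,\dots,n$ still carrying genuine $\ell^2$-sums. Applying Khinchine in slot $i$ and then the one-parameter local characterization for $\widehat\del^i$ — used in the form that a local $\widehat\del^i$-Mihlin--H\"ormander operator is bounded $L^1\to L^1$ once composed with the finite family $\{\del_\0,\cK^\flat_{i,1},\dots,\cK^\flat_{i,d}\}$ in a suitable way — produces the estimate at stage $i+1$. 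Here one must take care that the constants do not blow up: the randomized multipliers $m_{i,\varepsilon}(\xib)=\sum_{\ell}\varepsilon_{i,\ell}\widehat{\sigma_{i,\ell}}(\widehat\del^i_{2^{-\ell}}\xib)$ satisfy $\widehat\del^i$-Mihlin--H\"ormander bounds uniformly in $\varepsilon$ because the supports of the $\widehat{\sigma_{i,\ell}}$ have bounded overlap, and the local truncation (low frequencies, $\ell\le\kappa$ or so) only contributes finitely many smooth terms whose $L^1$-operator norms are bounded and which are absorbed by $\|f\|_1\lesssim\sum_J\|f*R_J\|_1$ (taking $J$ with enough zero entries).

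The main obstacle I expect is making the slot-by-slot induction genuinely rigorous in the multi-norm geometry: the one-parameter characterizations are naturally stated on the full $\bR^d$ with a single dilation $\widehat\del^i$, but the square function $S''_\sigma$ sums over the full lattice $\N^n$ of scales rather than over the admissible set $\LL$, so one must first pass (via Theorem \ref{Thm6.11}) to the $\N^n$-indexed square function $S''_\sigma$, where the factors genuinely decouple into a product over $i$ of one-parameter pieces, and only afterwards carry out the randomization; keeping the Khinchine/Minkowski manipulations compatible with the fact that the ``remaining'' factors at each stage are themselves $\widehat\del^j$-rescaled and depend on the outer sign variables requires a careful bookkeeping of which function spaces the partial sums live in. A secondary technical point is verifying uniform Mihlin--H\"ormander bounds for the randomized multipliers and checking that the low-frequency remainder terms are harmless; both are routine given the bounded-overlap support condition and Theorem \ref{Thm8.1}, but they need to be spelled out. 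Once these are in place, assembling the chain of inequalities and invoking the $L^1$-equivalences of Sections \ref{Sec5}, \ref{Sec6}, and \ref{Sec7} yields $\|S_\Psi f\|_1\sim\sum_J\|f*R_J\|_1$ as claimed.
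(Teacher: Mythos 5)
Your proposal is correct and follows essentially the same route as the paper: reduce to the $\N^{n}$-indexed convolution-type square function $S''_\sigma$ of \eqref{6.3} via Theorem \ref{Thm6.11}, randomize the scales with Rademacher signs so that the randomized kernel factors as $\cH_{s_1}*\cdots*\cH_{s_n}$ with each $\cH_{s_i}$ a uniformly bounded local Calder\'on--Zygmund kernel for $\widehat\del^i$, peel these factors off one at a time through the chain $\|g*\cH_{s_i}\|_1\le\|g*\cH_{s_i}\|_{\h^1_{\widehat\del^i}}\lesssim\|g\|_{\h^1_{\widehat\del^i}}\lesssim\sum_{j\ge0}\|g*\cK^\flat_{i,j}\|_1$, and conclude with Khintchine's inequality. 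The one loose point is your parenthetical that the randomized multiplier ``is a finite linear combination of products $\theta\cdot\widehat{\cK_{i,j}}$'' --- composing with a Calder\'on--Zygmund operator is not $L^1$-bounded, so that factorization alone would not close the argument --- but the correct justification ($\h^1_{\widehat\del^i}$-boundedness of the randomized operator plus the quantitative Riesz-transform characterization of $\h^1_{\widehat\del^i}$ from Proposition \ref{Prop8.8}) is the one you also state and is exactly what the paper uses.
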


\begin{proof}
Define the $\h^1_\mbE$-norm using the square function \eqref{6.3},
$
S'' f=\Big(\sum_{L\in\N^{n}}\big|f*\big(\mathop*_{i=1}^n\sigma_{i,\ell_i}^{(-\ell_i)_i}\big)\big|^2\Big)^\half
$
where the $\sigma_{i,\ell_i}$ are as in \eqref{4.5}. Let $f$ be as in the hypothesis.
For $k\in \bN$ we denote by $r_k(s)$ the $k$-th Rademacher function, defined for $s\in[0,1]$. For any $\mbs=(s_1,\dots,s_n)$, we consider the kernel
\beas
\cH_\mbs&=\sum_{L\in\N^{n}}\Big(\prod_{i=1}^nr_{\ell_i}(s_i)\Big)\Big(\mathop*_{i=1}^n\sigma_{i,\ell_i}^{(-\ell_i)_i}\Big)
=\mathop*_{i=1}^n\Big(\sum_{\ell_i\in\bN}r_{\ell_i}(s_i)\sigma_{i,\ell_i}^{(-\ell_i)_i}\Big)
=\cH_{s_1}*\cdots *\cH_{s_n},
\eeas 
where each $\cH_{s_i}$ is a local Calder\'on-Zygmund kernel adapted to the dilations $\del^i$. By Theorem \ref{Thm8.1} and the uniform boundedness of the functions $\sigma_{i,\ell_i}$ in every Schwartz norm, we have uniform bounds
$
\|g*\cH_{s_i}\|_1\leq \|g*\cH_{s_i}\|_{\h^1_{\del^i}}\leq C\|g\|_{\h^1_{\del^i}}\leq C'\sum_{j=1}^d\|g*\cK^\flat_{i,j}\|_1
$. 
Then
\beas
\|f*H_\mbs\|_1&=\big\|(f*\cH_{s_1}*\cdots*\cH_{s_{n-1}})*\cH_{s_n}\big\|_1
=\big\|(f*\cH_{s_1}*\cdots*\cH_{s_{n-1}})*\cH_{s_n}\big\|_1\\&\leq C\sum_{j=1}^d\big\|(f*\cH_{s_1}*\cdots*\cH_{s_{n-1}})*\cK^\flat_{n,j}\|_1\ .
\eeas
In this way we have replaced, in each summand, the term $\cH_{s_{n}}$ by one of the $\cK^\flat_{n,j}$. We can now continue the chain of inequalities by iteration of the above argument. At the next step, we majorize each term $\big\|(f*\cH_{s_1}\cdots*\cH_{s_{n-1}})*\cK^\flat_{n,j}\|_1$ with $\sum_{j'\ge0}\big\|(f*\cH_{s_{1}}*\cdots*\cH_{s_{n-2}})*\cK^\flat_{n-1,j'}*\cK^\flat_{n,j}\|_1$.
Ultimately we obtain the inequality
$
\|f*H_\mbs\|_1\leq C\sum_{J\in\{1,\dots,d\}^n}\|f*R_J\|_1\ ,
$
uniformly in $\mbs$. It remains to apply Khintchin's inequality to obtain that
$
\|S''f\|_1\leq C\sum\nolimits_{J\in\{1,\dots,d\}^n}\|f*R_J\|_1\ .\qquad\qquad\qquad\qedhere
$
\end{proof}

\begin{corollary}\label{Cor8.10}
If $f\in L^{1}(\R^{d})$ and $\|\KK*f\|_{L^{1}}\leq C\|f\|_{L^{1}}$ for every multi-norm kernel $\KK\in \PP_{0}$ where $C$ depends only on the constants $C_{\alpha,N}$ from Definition \ref{Def3.1}, then $f\in \h_{\bfE}^{1}(\R^d)$ and $\|f\|_{\h_{\bfE}^{1}}\lesssim \|f\|_{L^{1}}$.
\end{corollary}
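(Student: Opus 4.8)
The statement is essentially an immediate consequence of the Riesz transform characterization of $\h^1_\bfE(\R^d)$ in Theorem \ref{Thm8.9}, and the plan is simply to feed the hypothesis into that theorem. First I would recall that for each $i=1,\dots,n$ and $j=1,\dots,d$ the kernels $\cK^\flat_{i,j}$ constructed in Proposition \ref{Prop8.8} (relative to the dilations $\widehat\del^i$) are fixed local smooth Calder\'on--Zygmund kernels depending only on the dilation structure, together with $\cK^\flat_{i,0}=\del_\0$. Hence, by Theorem \ref{Thm3.10}, each multi-norm Riesz kernel $R_J=\cK^\flat_{1,j_1}*\cdots*\cK^\flat_{n,j_n}$, with $J=(j_1,\dots,j_n)\in\{0,\dots,d\}^n$, belongs to $\PP_0(\bfE)$, and its kernel constants $C_{\alphab,N}$, $C'_{\al,N}$ in Definition \ref{Def3.1} are bounded in terms of the exponents $\{\lambda_h\}$ and the matrix $\bfE$ alone, uniformly in $J$.

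Next, since there are only finitely many indices $J\in\{0,\dots,d\}^n$, the hypothesis applied to each $\cK=R_J$ gives a single constant $C>0$, depending only on the structure, such that
\be
\|f*R_J\|_{L^1}\leq C\,\|f\|_{L^1}\qquad\text{for every }J\in\{0,\dots,d\}^n.
\ee
In particular each $f*R_J$ lies in $L^1(\R^d)\subset\cS'(\R^d)$, so $f$ satisfies the hypothesis of Theorem \ref{Thm8.9}.

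Finally I would invoke Theorem \ref{Thm8.9}: it yields $f\in\h^1_\bfE(\R^d)$ together with the equivalence $\|S_\Psi f\|_1\sim\sum_J\|f*R_J\|_1$. Combining this with the previous display gives
\be
\|f\|_{\h^1_\bfE}=\|S_\Psi f\|_1\lesssim\sum\nolimits_J\|f*R_J\|_1\leq (d+1)^n\,C\,\|f\|_{L^1}\lesssim\|f\|_{L^1},
\ee
which is the assertion. There is no real obstacle here beyond bookkeeping: the only point worth stating carefully is that the kernel constants of the finitely many $R_J$ in $\PP_0(\bfE)$ are controlled by the structure, so that the quantifier ``for every multi-norm kernel $\cK\in\PP_0$'' in the hypothesis, with $C$ depending only on the $C_{\alpha,N}$, applies to all of them with one uniform constant.
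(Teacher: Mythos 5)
Your proposal is correct and is exactly the argument the paper intends: the corollary is stated without proof precisely because it follows by applying the hypothesis to the finitely many Riesz kernels $R_J\in\PP_0(\bfE)$ (whose kernel constants are controlled by the structure, as the paper notes just before Theorem \ref{Thm8.9}) and then invoking the equivalence $\|S_\Psi f\|_1\sim\sum_J\|f*R_J\|_1$ of that theorem.
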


\medskip

\section{Atomic decomposition of functions in $\h_{\bfE}^{1}(\R^d)$}\label{Sec9}

In this section we define {\it multi-norm atoms} and prove that every function $f\in \h_{\bfE}^{1}(\R^d)$ can be decomposed $f=\sum_{i}\lambda_{1}a_{i}$ where the functions $\{a_{i}\}$ are \textit{atoms} and $\sum_{i}|\lambda_{i}|\leq C\|f\|_{\h_{\bfE}^{1}}$. Atoms are defined in Section \ref{Sec9.1}, and the decomposition is proved in Section \ref{Sec9.2}. 

\subsection{Multi-norm atoms}\label{Sec9.1}
\quad

\smallskip

Multi-norm atoms are functions $a$ whose support involves non-isotropic dyadic rectangles and whose size is controlled by certain $L^{2}$ estimates. Since $\h_{\bfE}^{1}(\R^d)$ is a local  Hardy space, we require that atoms be supported in a ball of (essentially) unit radius. Each atom $a$ is associated to a parameter $\tau >0$ and to a marked partition  $S\in \SS_{\bfE}^{*}=\SS_{\bfE}\cup\{\emptyset\}$. For $S\in\SS_{\bfE}$ this determines the admissible scales of $a$ in each variable $\x_i$ and the required cancellation properties. For $S=\emptyset$ atoms have unit size and no required cancellation.  

Let $S=\big\{(A_{1},k_{1}), \ldots, (A_{s},k_{s})\big\}\in \SS_{\bfE}$.  Recall from \eqref{4.6xyz} that  $\D_{S}$ is the set of dyadic rectangles $\bfR^{L}=\prod_{r=1}^{s}\bfQ_{A_r}^{\ell_{k_r}}$ with multi-scale $L=(\ell_{1}, \ldots, \ell_{n})\in\LL_{S}$, and if $L\in \LL_{S}$ then $\D_{L}$ is the set of dyadic rectangles with multi-scale $L$. If $\Omega\subseteq\R^{d}$ is a bounded set let $\D_{S}(\Omega)=\big\{\bfR\in\D_{S}:\bfR\subseteq\Omega\big\}$.

\begin{definition}\label{Def9.1}\rm{
A \textit{multi-norm $\h_{\bfE}^{1}$-atom} is a function $a:\R^{d}\to\C$ associated with a pair $(S,\tau)$ where $S\in \SS_{\bfE}^{*}$ and $\tau>0$. There are three kinds, depending on the number $s\geq 0$ of marked elements in $S$. 

\begin{enumerate}[\rm (1)]

\item \label{A1} 
If $s=0$ then $S=\emptyset$ and an \textit{$(\emptyset,\tau)$-atom} is a function $a$ with $\|a\|_{L^{2}}\leq 1$ supported on $2^{\tau} \circ\mathbf Q_{P}$ for some unit cube $\mathbf \bfQ_{P}=\big\{\x\in\R^{d}:p_{j}\leq x_{j}<p_{j}+1,\,1\leq j \leq d\big\}$ where $(p_{1}, \ldots, p_{n})\in \Z^{d}$. No cancellation property of $a$ is required. 

\smallskip

\item \label{A2} 
If $s=1$ and $k\in \{1, \ldots, n\}$ is the only dotted entry for $S$ then an \textit{$(S,\tau)$-atom} is a function $a$ supported on $2^{\tau} \circ\bfR$ for some $\bfR\in\D_{S}$ with $\|a\|_{L^{2}}\leq |\bfR|^{-\frac{1}{2}}$ and cancellation in $\x_{k}$; \textit{i.e.} $\int_{\R^{d_{k}}}a( \ldots, \x_{k},\ldots )\,d\x_{k}=0$, identically in the variables $\x_i$ with $i\ne k$.

\smallskip

\item \label{A3} 
If $s\geq 2$ and $S=\big\{(A_{1},k_{1}), \ldots, (A_{s},k_{s})\big\}$ then an \textit{$(S,\tau)$-atom} 
is a function $a$, an associated open set $\Omega$ contained in some rescaled unit cube $2^{\tau\circ\bfQ_{P}}$, and a decomposition $a=\sum_{\bfR\in \D_{S}(\Omega)}a_{\bfR}$ into {\it pre-atoms} satisfying: 

\smallskip

\begin{enumerate}[(a)]
\item \label{atomsC}

\goodbreak 
$a_{{\bfR}}$ is supported in $2^{\tau}\circ\bfR$;
\smallskip

\item \label{atomsD}
$a_{{\bfR}}$ has cancellation in each dotted variable $\x_{k_{r}}$, \textit{i.e.}
$ \int_{\R^{d_{k_{r}}}}a_{\bfR}(\ldots,\x_{k_{r}}, \ldots)\,d\x_{k_{r}}=0$, identically in the other variables;

\smallskip

\item\label{atomsE}
if $\D_{S}(\Omega)=\bigcup_{j\in\Z}\DD_{j}$ is a partition and if $a_{\mathcal{D}_j}=\sum_{\bfR \in \mathcal{D}_j}a_{\bfR}$ then $\sum_{j \in \mathbb{Z}}\|a_{\mathcal{D}_j}\|_{L^2}^{2} \leq |\Omega|^{-1}$; in particular, $\|a\|_{L^2} \leq |\Omega|^{-\frac{1}{2}}$.
\end{enumerate}

\end{enumerate}
}
\end{definition}

\begin{remarks}\label{Rem11.2}\quad
{\rm
\begin{enumerate}[1.]

\item The $(\emptyset,\tau)$-atoms having no cancellation appear in the atomic decomposition in \textit{any} local Hardy spaces. See for example \cite{MR523600}.

\smallskip

\item When $s=1$ and $\x_{k}$ is the only dotted variable, the dyadic rectangles $\bfR\in\D_S$ are dyadic cubes in $\R^d$ relative to the dilations $\del^k$ in \eqref{2.5aa}, and the atoms defined in \eqref{A2} are {\it special} Coifman-Weiss atoms for the associated local non-isotropic Hardy space. The adjective ``special'' is due to the cancellation in the single variable $\x_k$  assumed here, which is stronger than cancellation in all variables together, which required in the standard Coifman-Weiss theory. See for example \cite{MR447954}.

\smallskip

\item \label{Rem11.2.3}If $s=n$, the atoms defined in \eqref{A3} are {\it standard} Chang-Fefferman atoms for the product Hardy space relative to the decomposition $\bR^d=\bigoplus_{i=1}^n\bR^{d_i}$ with dilations \eqref{1.1}, with the restriction that only pre-atoms at scale $2^{-L}$ with $L\in \LL_{S_p}$ are admitted. See for example \cite{MR584078} and \cite{MR658542}.

\smallskip

\item\label{Rem11.2.4} If $S=\big\{(A_{1},k_{1}), \ldots, (A_{s},k_{s})\big\}$ with $2\leq s\leq d-1$, the atoms defined in \eqref{A3} are {\it special} Chang-Fefferman atoms for the product Hardy space relative to the coarser decomposition $\bR^d=\prod_{r=1}^s\bR^{A_r}$ with $\bR^d$ endowed of the dilations $\del^S$ defined in \eqref{3.11.6}. In addition to the restriction in the admissible scales, cancellation of pre-atoms is requested in each dotted variable $\x_{k_r}$ as opposed to cancellation in each full block of variables $\x_{A_r}$, as requested to product atoms.
\item Notice that for $2\le s\le n-1$ (which forces $n\ge3$) the definition of $(S,\tau)$-atom actually depends on the marked partition $S$ and not simply on the set $D$ of dotted entries.
\smallskip

\item The proof of Theorem \ref{Thm9.3} provides an atomic decomposition of a function $f\in\h^1_\bfE(\R^d)$ where the atoms of type \eqref{A3} are sums of pre-atoms depending only on {\it maximal} rectangles contained in $\Omega$. This is only apparently a stronger property than that required in Definition \ref{Def9.1}, because, given an atom $a$ as in Definition \ref{Def9.1}\eqref{A3}, it is always possible to associate to each pre-atom $a_\bfR$  a same maximal rectangle $\bfR^\sharp$ containing it and regard the sum of the pre-atoms associated to the same $\bfR^\sharp$ as a unique atom $a'_{\bfR^\sharp}$, so that $a=\sum a'_{\bfR^\sharp}$. The possibility of allowing only maximal rectangles may be useful in certain proofs where non-comparability of the different rectangles produces a simplification.
\end{enumerate}
}
\end{remarks}

\subsection{Atomic decomposition in $\h_{\bfE}^{1}(\R^d)$}\label{Sec9.2}\quad

\smallskip
In this section we prove that every $f\in \h_{\bfE}^{1}(\R^d)$ admits an {\it atomic decomposition}. The precise statement is  given in the following theorem. 
\begin{theorem}\label{Thm9.3}
Let $\tau>0$. There exists $C>0$ so that if $f\in \h_{\bfE}^{1}(\R^d)$ the for all $S\in\SS_{\bfE}^{*}$, there exists a sequence $\big\{a_{j,S}:j\in \Z\big\}$ of $(S,\tau)$-atoms, and a sequence $\big\{\lambda_{j,S}\in\C:j\in \Z\big\}$ so that 
\begin{enumerate}[\rm(a)]
\item\label{Thm9.3a}
$f=\sum_{S\in \SS_{\bfE}^{*}}\sum_{j\in \Z}\lambda_{j,S}\,a_{j,S}$ with convergence in the distributional sense;

\smallskip

\item\label{Thm9.3b} 
$\sum_{S\in \SS_{\bfE}^{*}}\sum_{j\in \Z}|\lambda_{j,S}|\leq C\|f\|_{\h^1_\bfE}$.
\end{enumerate}
\end{theorem}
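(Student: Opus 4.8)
The plan is to start from a Plancherel--P\'olya type Calder\'on reproducing formula $f=\sum_{L\in\LL}f*\Psi_L^{(-L)}*\widetilde\Psi_L^{(-L)}$, with the $\Psi_L,\widetilde\Psi_L$ of tensor type, spatially localized (supported in a common compact neighborhood $U$ of $\0$, cf.\ Lemma \ref{Lem4.2}\eqref{Lem4.2f}), and with $\widetilde\Psi_L$ having cancellation of arbitrarily high order $m$ in each dotted variable $\x_{k_r}$ when $L\in\LL_S$. Using the partition $\LL=\bigcup_{S\in\SS^*_\bfE}\LL_S$ from \eqref{L=cupL_S}, write $f=f_\emptyset+\sum_{S\in\SS_\bfE}f_S$ with $f_\emptyset=f*\Psi_0*\widetilde\Psi_0$ and $f_S=\sum_{L\in\LL_S}f*\Psi_L^{(-L)}*\widetilde\Psi_L^{(-L)}$. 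It suffices to decompose each $f_S$ into $(S,\tau)$-atoms with $\ell^1$-control of coefficients by $\|S^{\rm PP}_\Psi f\|_1$, since the latter is equivalent to $\|f\|_{\h^1_\bfE}$ by Theorem \ref{Thm7.1}, and then sum over the finitely many $S\in\SS^*_\bfE$.

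For a fixed $S=\{(A_r,k_r):r=1,\dots,s\}\in\SS_\bfE$, the argument follows the Chang--Fefferman stopping-time / Journ\'e scheme adapted to the coarser product structure $\R^d=\R^{A_1}\times\cdots\times\R^{A_s}$ with dilations $\delta^S$ (Section \ref{Sec7.2a}), but restricted to rectangles in $\D_S$. First, for $i\in\Z$ set $\Omega^i=\{S^{\rm PP}_\Psi f>2^i\}$; these are open, nested, and $\sum_i 2^i|\Omega^i|\lesssim\|S^{\rm PP}_\Psi f\|_1$. For $L\in\LL_S$ let $\D_S^i$ collect the $\bfR\in\D^L$ with $|\bfR\cap\Omega^i|>\tfrac12|\bfR|$ and $|\bfR\cap\Omega^{i+1}|\le\tfrac12|\bfR|$; every rectangle in $\D_S$ lies in exactly one $\D_S^i$. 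Grouping the reproducing-formula terms accordingly, write $f_S=\sum_i g_i$ with
\[
g_i=\sum_{L\in\LL_S}\ \sum_{\bfR\in\D^L\cap\D_S^i}\int_{\bfR}\widetilde\Psi_L^{(-L)}(\cdot-\mby)\,\big(f*\Psi_L^{(-L)}\big)(\mby)\,d\mby.
\]
Because the $\widetilde\Psi_L$ are supported on $2^{-L}U$, each summand is supported on $2^{\tau_0}\circ\bfR$ for a fixed $\tau_0$, hence $g_i$ is supported on $\widetilde\Omega^i:=\{\cM_s\chi_{\Omega^i}>c\}$, a set of measure $\lesssim|\Omega^i|$ by the strong maximal theorem. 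The set $\widetilde\Omega^i$ will play the role of the set $\Omega$ in Definition \ref{Def9.1}\eqref{A3}. One then needs the Journ\'e covering lemma in $s$ parameters (quoted as available from Section \ref{journe}) to control the ``enlarged'' rectangles and to produce the $L^2$ bound $\sum_j\|a_{\D_j}\|_2^2\le|\widetilde\Omega^i|^{-1}$ required by \eqref{atomsE}: partition $\D_S^i$ into maximal rectangles and, via a duality/orthogonality argument using the almost-orthogonality of the $\widetilde\Psi_L^{(-L)}$ (from their frequency supports, Lemma \ref{Lem3.8}\eqref{Lem3.8d}) together with the $L^2$-boundedness of the square function $h\mapsto(\sum_L|\widetilde\Psi_L^{(-L)}*h|^2)^{1/2}$, get $\|g_i\|_2\lesssim\big(\int_{\widetilde\Omega^i\setminus\Omega^{i+1}}(S^{\rm PP}_\Psi f)^2\big)^{1/2}\lesssim 2^i|\widetilde\Omega^i|^{1/2}$; the same estimate applied block-by-block over a partition of $\D_S^i$ gives \eqref{atomsE}. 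Setting $\la_{i,S}=C 2^i|\widetilde\Omega^i|$ and $a_{i,S}=g_i/\la_{i,S}$ produces the atoms; the cancellation in each $\x_{k_r}$ is inherited from that of $\widetilde\Psi_L$, and the scales $L\in\LL_S$ are built in. For $s=1$ the same construction gives special Coifman--Weiss atoms (here $\widetilde\Omega^i$ reduces to a union of dilated cubes and one uses the one-parameter version), and for $S=\emptyset$, $f_\emptyset$ is handled by the locality estimate Proposition \ref{Prop8.4} and a direct unit-cube decomposition into $(\emptyset,\tau)$-atoms of $L^2$ size.

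The main obstacle is the $L^2$ estimate \eqref{atomsE} on the pre-atoms, i.e.\ obtaining $\sum_j\|a_{\D_j}\|_2^2\le|\Omega|^{-1}$ rather than merely $\|a\|_2\le|\Omega|^{-1/2}$: this is exactly where the multi-parameter structure bites and where one must invoke the $n$-parameter Journ\'e covering lemma to pass from a single rectangle's contribution to a sum over a partition into maximal rectangles, controlling the geometric ``spreading'' of the enlargements $2^{\tau_0}\circ\bfR$. The subtlety specific to the multi-norm setting is that the rectangles are constrained to $\D_S$ (the scales satisfy $\ell_i=\lfloor\ell_{k_r}/e(k_r,i)\rfloor$ and the lacunarity conditions $\ell_{k_p}<\tau_S(k_p,k_q)\ell_{k_q}$ of \eqref{3.6}), so the Journ\'e lemma must be applied in the coarser $s$ variables $\ell_{k_1},\dots,\ell_{k_s}$ using Proposition \ref{R=Q} to view $\bfR_{A_r}$ as a genuine dyadic cube for $\delta^{k_r}$; one then has to check that the enlargement bookkeeping of Section \ref{Sec7.2b} (Lemma \ref{Lem11.4pqr}) is compatible with these restricted scales. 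The convergence in \eqref{Thm9.3a} is comparatively routine: partial sums over $|i|\le N$ and $|L|\le N$ converge in $L^2$ when $f\in\h^1_\bfE\cap L^2$ (dense by Lemma \ref{Lem8.2}), and the $\ell^1$ bound \eqref{Thm9.3b} upgrades this to distributional convergence for general $f$ by a standard approximation.
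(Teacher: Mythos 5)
Your proposal is correct and follows essentially the same route as the paper: the same tensor-type, spatially localized Calder\'on reproducing formula split over $\LL=\bigcup_{S}\LL_S$, the same stopping-time sets $\Omega^i=\{S^{\rm PP}_\Psi f>2^i\}$ and selection classes $\BB^i_S$, grouping by maximal rectangles, and the duality argument against the $L^2$-bounded square function $h\mapsto(\sum_L|\widetilde\Psi_L^{(-L)}*h|^2)^{1/2}$ to verify the pre-atom estimate \eqref{atomsE} and the coefficient bound $\sum_i|\la_{i,S}|\lesssim\sum_i2^i|\Omega^i_s|\lesssim\|S^{\rm PP}_\Psi f\|_1$. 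One correction: the Journ\'e covering lemma is \emph{not} needed here — the duality argument you describe already yields \eqref{atomsE} block-by-block over any partition of $\D_S^\sharp(\Omega^i_s)$ — Journ\'e enters only in the converse direction (Theorem \ref{Thm11.7}, showing that $(S,\tau)$-atoms belong to $\h^1_\bfE$).
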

\label{pagetau}
The proof makes use of reproducing formulas from Section \ref{Sec4}, constructed via tensor products from functions $\big\{\varphi_{i,\ell_i}:\ell_{i}\in \N\big\}\subseteq\SS(\R^{d_{i}})$ that are uniformly bounded in every Schwartz norm and have integral~$1$. We shall also suppose that $\varphi_{i,\ell_{i}}$ is supported in $\epsilon\circ B_{i}(1)\subseteq \R^{d_{i}}$ where $\epsilon>0$ depends on $\tau$ is chosen so that condition \eqref{tau2} of the following are satisfied.\begin{enumerate}[(A)]
\item\label{tau1} 
If $\Psi_L, \widetilde\Psi_{L}$ are as in equation \eqref{3.4ee}, then
by Lemma \ref{Lem4.2}\eqref{Lem4.2f} the supports of $\Psi_{L}^{(-L)}$ and $\widetilde \Psi_{L}^{(-L)}$ are also contained supported in an $\eps$-neighborhood of the origin in $\R^d$;

\smallskip

\item \label{tau2}
If $\Psi_L, \widetilde\Psi_{L}$ are as in equation \eqref{3.4ee} and if $f$ is supported in $2^\eps B(1)$ then $
\widetilde\Psi_{L}^{(-L)}*\Psi_{L}^{(-L)}*f$ is supported in $2^{\tau}\circ B(1)$ for every $L\in\LL$. If $f$ is supported in a dyadic rectangle $\bfR$ then $
\widetilde\Psi_{L}^{(-L)}*\Psi_{L}^{(-L)}*f$ is supported in $2^{\tau}\circ \bfR$. 
\end{enumerate}
The existence of such an $\eps$ only depends on continuity of homogeneous norms.

Using the partition $\big\{\LL_{S}:S\in \SS_{\bfE}^{*}\big\}$ of $\LL$, decompose $f\in \h_\bfE^{1}(\R^d)$ as 
\be\label{9.2}
f=\sum_{S\in \SS_{\bfE}^{*}}f_S \qquad\text{where}\quad
\begin{cases}
f_\emptyset=\Psi_{0}*\widetilde{\Psi}_{0}*f,&\\ 
f_S=\sum_{L \in\LL_{S}}\Psi_{L}^{(-L)}*\widetilde{\Psi}_{L}^{(-L)}*f&\text{ for }S\in\SS_{\bfE}\ .\end{cases}
\ee 
We then show that for each $S$ there is a decomposition $f_S=\sum_{j\in \Z}\lambda_{j,S}\,a_{j,S}$ with $(S,\tau)$-atoms $a_{j,S}$ and $\sum_{j\in \Z}|\lambda_{j,S}|\leq C\|f\|_{\h^1_\bfE}$. Since there are finitely many marked partitions, this gives the inequality in part \eqref{Thm9.3b} of the Theorem. With the only excetion of the case $S=\emptyset$, the proof of Theorem \ref{Thm9.3} makes use of the characterization of $\h_{\bfE}^{1}(\R^d)$ using the Plancherel-P\'olya type square function $S_{\Psi}^{{\rm PP}}f(\x)=\big(\sum_{L\in \LL}\big\vert f*\Psi_{L}^{(-L)}\big\vert_{{\rm PP}}(\x)^{2}\big)^{1/2}$ defined in \eqref{8.1}, with $\Psi_L$ satisfying \eqref{tau1} and \eqref{tau2}. 

\begin{proof}[Proof of Theorem \ref{Thm9.3}]

Let $f\in \h_{\bfE}^{1}(\R^d)$. By Corollary \ref{Cor8.5} we may assume that $f$ is supported in $2^{\epsilon}\circ B(1)$ with $\eps>0$ satisfying \eqref{tau2} above, so that every $f_S$ in \eqref{9.2} and $S_{\Psi}^{{\rm {\rm PP}}}f$ are supported in $2^{\tau} \circ B(1)$. For $i\in\Z$ define  $\Omega^{i}=\big\{\x :S_{\Psi}^{{\rm PP}}f(\x)>2^{i}\big\}\subseteq 2^{\tau}\circ B(1)$ and $\Omega^{i}_{s}=\big\{\x :\MM_{s}[\1_{\Omega^{i}}](\x)>1/2 \big\}$, $\MM_s$ being the local maximal operator in \eqref{Ms}.
The differences $\Omega^{i}\setminus\Omega^{i+1}$ partition $\supp (S_{\Psi}^{{\rm PP}}f)$ and 
\be\label{10.1.5}
 \|S_{\Psi}^{{\rm PP}}f\|_{L^{1}}\sim\sum\nolimits_{i\in \Z}2^{i}\,|\Omega^{i}\setminus\Omega^{i+1}|\sim \sum\nolimits_{i\in \Z}2^{i}\,|\Omega^{i}|\sim\sum\nolimits_{i\in \Z}2^{i}\,|\Omega^{i}_s|,
\ee
the last equivalence following by $L^2$-boundedness of the maximal operator. 
For $S\in \SS_{\bfE}^{*}$, we let $s=0$ if $S=\emptyset$ and $s$ equal to the number of dotted entries if $S\in \SS_{\bfE}$. 

\medskip

\noindent\textbf{Case 1: $s=0$}.  In this case $\LL_{\emptyset}=\big\{0\big\}$, $S=\emptyset$, and $\D_{\emptyset}=\big\{\bfR_{P}^{0}:P\in \Z^{d}\big\}$ is the set of dyadic unit cubes in $\R^d$. We have $f_\emptyset(\x)=\Psi_{0}*\widetilde\Psi_{0}*f$. Write $\Psi_{0}*\widetilde\Psi_{0}*f=\la_0 \,a_0$ where 
$\lambda_0=\|f\|_1$ and  $a_0=\|f\|_1^{-1}\Psi_{0}*\widetilde\Psi_{0}*f$.
Then $a_0$ is a multiple (depending only on $\tau$) of a $(\emptyset,\tau)$-atom. In fact, $\supp a_0\subset 2^{\tau}\circ B(1)$ by \eqref{tau2}, and $\|a_0\|_{L^{2}}\leq \|\widetilde\Psi_0*\Psi_0\|_2\le C$. Moreover
$0\le\lambda_0\le\|f\|_{\h^1_\bfE}$.
This completes the proof in case $s=0$.

\medskip

\noindent \textbf{Case 2: $s=1$.}  Adapting the notation in \eqref{9.1}, we define
\bea\label{BBiS}
\BB_{S}^{i}&=\Big\{\bfR\in \mathbb{D}_{S}:\text{$|\bfR\cap\Omega^{i}|>\frac{1}{2}|\bfR|$ and $|\bfR\cap \Omega^{i+1}| \leq \frac{1}{2} |\bfR|$}\Big\},\\
\BB_{S}^{i,\sharp}&=\big\{\bfR:\text{$\bfR$ is maximal in $\BB_{S}^{i}$ under inclusion}\big\},\\
 \BB_{S}^{i,\flat}(\bfR)&=\big\{\bfR'\in \BB_{S}^{i}:\bfR'\subseteq\bfR\big\}\qquad\text{ for }\bfR\in\BB_{S}^{i,\sharp}.
\eea 
If $\bfR\in\BB^i_S$, then $\bfR\subseteq \Omega^i_s$. Moreover, each $\bfR\in\D_{S}$ belongs to a unique $\BB_{S}^{i}$, so that the $\BB_{S}^{i}$ form a partition of $\D_{S}$. Moreover, for $s=1$ with $k$ denoting the unique dotted entry, the rectangles $\bfR\in \D_{S}$ are dyadic cubes of $\R^d$ relative to the dilations $\del^k$. Because of the intersection properties of dyadic cubes, the elements of $\BB_{S}^{i,\sharp}$ are pairwise disjoint sets. Hence the $\BB_{S}^{i,\flat}(\bfR)$ are disjoint subsets of of $\BB_{S}^{i,\sharp}$ and form a partition of $\BB_{S}^{i}$. Denoting by $L_{\bfR'}$ the element of $\LL_S$ such that $\bfR'\in\D^{L_{\bfR'}}$, we obtain the following identity:
\bea\label{11.1}
f_S(\x)&=\sum_{L \in \LL_{S}}\Psi_{L}^{(-L)}*\widetilde{\Psi}_{L}^{(-L)}*f(\x)
= \sum_{L \in \LL_{S}}\sum_{\bfR\in\D_{L}}\int_{\bfR}\widetilde{\Psi}_{L}^{(-L)}(\x-\y)(\Psi_{L}^{(-L)}*f)(\y)\,d\y\\
&=
\sum_{i\in \Z}\sum_{\bfR\in\BB_{S}^{i,\sharp}}\sum_{\bfR'\in\BB_{S}^{i,\flat}(\bfR)}
\int_{\bfR' }\widetilde{\Psi}_{L_{\bfR'}}^{(-L_{\bfR'})}(\x-\y)\Psi_{L_{\bfR'}}^{(-L_{\bfR'})}*f(\y)\,d\y.
\eea

\begin{lemma}\label{Lem10.4X}
Let $\bfR\in \BB_{S}^{i}$. Then $\1_{\bfR}(\x)\leq 2\MM_{s}[\1_{\Omega_{s}^{i}\setminus\Omega^{i+1}}](\x)$.
\end{lemma}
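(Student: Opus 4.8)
The plan is straightforward. If $\x\notin\bfR$ both sides vanish (the right-hand side is nonnegative), so I would reduce at once to $\x\in\bfR$, where it suffices to show $\MM_s\big[\1_{\Omega^i_s\setminus\Omega^{i+1}}\big](\x)\ge\tfrac12$, up to the dimensional constant discussed below. I would invoke two facts. The first is the inclusion $\bfR\subseteq\Omega^i_s$, which is exactly the assertion recorded in the text just above \eqref{BBiS}: for $\x\in\bfR=\prod_{i=1}^n\bfQ_i^{\ell_i}$ one chooses radii $r_i\in(0,1]$ comparable to $2^{-\ell_i}$ (the cap $r_i\le1$ costing nothing since $\ell_i\ge0$), using the admissibility inequality \eqref{mu ell} to guarantee that $\bfQ_i^{\ell_i}$ has homogeneous-norm diameter $\lesssim2^{-\ell_i}$ and measure $\asymp 2^{-\ell_i q_i}$; then the ball-product $B:=\x+\prod_{i=1}^n\{|\y_i|\le r_i\}$ contains $\bfR$ and satisfies $|\bfR|\le|B|\le C_0|\bfR|$, with $C_0$ depending only on $d$, the exponents $\{\lambda_h\}$ and the constant $a$ in \eqref{mu ell}, whence $\MM_s[\1_{\Omega^i}](\x)\gtrsim|\bfR\cap\Omega^i|/|B|>\tfrac12$ because $\bfR\in\BB^i_S$. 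The second fact is the defining property of $\BB^i_S$ from \eqref{BBiS}, namely $|\bfR\cap\Omega^{i+1}|\le\tfrac12|\bfR|$.

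Combining these, and using $\bfR\subseteq\Omega^i_s$ to write $\bfR\cap(\Omega^i_s\setminus\Omega^{i+1})=\bfR\setminus\Omega^{i+1}$, I would get
\beas
\big|\bfR\cap(\Omega^i_s\setminus\Omega^{i+1})\big|=|\bfR|-|\bfR\cap\Omega^{i+1}|\ge\tfrac12|\bfR|.
\eeas
Then, for $\x\in\bfR$, taking the same ball-product $B\supseteq\bfR$ as above (an admissible set in the supremum defining $\MM_s$ in \eqref{Ms}, with $|B|\le C_0|\bfR|$), one obtains
\beas
\MM_s\big[\1_{\Omega^i_s\setminus\Omega^{i+1}}\big](\x)\ \ge\ \frac{\big|B\cap(\Omega^i_s\setminus\Omega^{i+1})\big|}{|B|}\ \ge\ \frac{\big|\bfR\cap(\Omega^i_s\setminus\Omega^{i+1})\big|}{C_0|\bfR|}\ \ge\ \frac1{2C_0},
\eeas
so that $\1_\bfR(\x)=1\le 2C_0\,\MM_s[\1_{\Omega^i_s\setminus\Omega^{i+1}}](\x)$. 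This is the asserted inequality, the constant $2$ in the statement being understood as this dimensional constant $2C_0$ (it is literally $2$ in the model case where each $\bfQ_i^{\ell_i}$ coincides, up to a measure-$1$ comparison, with one of the balls $\{|\y_i|\le r_i\}$).

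The only step that is not pure bookkeeping — and the one I expect to be the main obstacle — is the geometric comparison, used twice above, between the anisotropic dyadic rectangle $\bfR=\prod_i\bfQ_i^{\ell_i}$ and a product of homogeneous-norm balls $\{|\y_i|\le r_i\}$ with $0<r_i\le1$ entering \eqref{Ms}. This is precisely where admissibility \eqref{mu ell} of the sequences $\mu(\ell_i,h)$ and the restriction to scales $\ell_i\ge0$ are needed, so that one can enclose $\bfR$ in such a ball-product centered at an arbitrary point of $\bfR$ with only a bounded loss of measure. Everything else is elementary set-theoretic bookkeeping together with the already-established inclusion $\bfR\subseteq\Omega^i_s$.
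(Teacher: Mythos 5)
Your proof is correct and follows essentially the same route as the paper's: the inclusion $\bfR\subseteq\Omega^i_s$ plus the defining inequality $|\bfR\cap\Omega^{i+1}|\le\tfrac12|\bfR|$ give $|\bfR\cap(\Omega^i_s\setminus\Omega^{i+1})|\ge\tfrac12|\bfR|$, and then one tests the strong maximal function at $\x\in\bfR$ against a set comparable to $\bfR$. Your extra care in comparing the anisotropic dyadic rectangle with the ball-products admissible in \eqref{Ms} — and the resulting dimensional constant $2C_0$ in place of the literal $2$ — is a point the paper silently glosses over, and is harmless for the application in \eqref{10.5.5}, where such constants are absorbed.
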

\begin{proof}
Since $\bfR\subseteq\Omega_{s}^{i}$ we have 
$\bfR=(\bfR\cap\Omega^{i+1}) \cup \big(\bfR\cap (\Omega_{s}^{i}\setminus\Omega^{i+1})\big)$. Then
$$
|\bfR|=\big|\bfR\cap\Omega^{i+1}\big| +\big|\bfR\cap (\Omega_{s}^{i}\setminus\Omega^{i+1})\big|< \frac{1}{2}|\bfR| +\big|\bfR\cap (\Omega_{s}^{i}\setminus\Omega^{i+1})\big|
$$
and so $\big|\bfR\cap (\Omega_{s}^{i}\setminus\Omega^{i+1})\big|>\frac{1}{2}|\bfR|$. This means that if $\x\in \bfR$ then $\MM_{s}[\1_{\Omega_{s}^{i}\setminus \Omega^{i+1}}](\x)>\frac{1}{2}$.
\end{proof}

 For $i\in \Z$ and $\bfR\in \BB_{S}^{i,\sharp}$ put
\bea\label{10.4}
\lambda_{i,\bfR}&= \big\vert\bfR\big\vert^{\frac{1}{2}}\Big\vert\Big\vert\Big(\sum\nolimits_{\bfR'\in \BB_{S}^{i,\flat}(\bfR)}\big\vert\Psi_{L_{\bfR'}}^{(-L_{\bfR'})}*f\big\vert^2 \,\1_{\bfR'}\Big)^{\frac{1}{2}}\Big\vert\Big\vert_{2},\\
a_{i,\bfR}(\x)&= \lambda_{i,\bfR}^{-1}\sum\nolimits_{\bfR'\in\BB_{S}^{i,\flat}(\bfR)}\widetilde\Psi_{L_{\bfR'}}^{(-L_{\bfR'})}*\left[\1_{\bfR'}\big(\Psi_{L_{\bfR'}}^{(-L_{\bfR'})}*f\big)\right](\x).
\eea
We show that $a_{i,\bfR}$ is a multiple of a $(S,\tau)$-atom, where the multiple depends only on $\tau$ and on the $L^{2}$ bound of the square function operator $h \longmapsto \big(\sum_{L' \in \LL_{S}}|\widetilde{\Psi}_{L'}^{(-L')}*h|^2\big)^{\frac{1}{2}}$. It follows from properties of $\epsilon$ and $\tau$ in \eqref{tau1} and \eqref{tau2} on page \pageref{pagetau} that $a_{i,\bfR}$ is supported in $2^{\tau}\bfR$, since the rectangles $\bfR'$ are contained in $\bfR$. The cancellation condition follows since $\widetilde{\Psi}_{L}$ has cancellation in the variable $\x_{k}$ if $L\in\LL_S$. To show that $\|a_{i,\bfR}\|_{L^2}\leq C|\bfR|^{-\frac{1}{2}}$ we use duality and the Cauchy-Schwarz inequality as follows. Let $\big[\widetilde{\Psi}_{L_{\bfR'}}^{(-L_{\bfR'})}\big]^\vee(\y)=\widetilde{\Psi}_{L_{\bfR'}}^{(-L_{\bfR'})}(-\y)$. Then

\bea\label{10.5}
\lambda_{i,\bfR}\|a_{i,\bfR}&\|_{L^2}
=
\sup_{\|h\|_{2} \leq 1} \Big|\int_{\mathbb{R}^d} h(\x)\Big(\sum_{\bfR'\in \BB_{S}^{i,\flat}(\bfR)}\int_{\bfR' }\widetilde{\Psi}_{L_{\bfR'}}^{(-L_{\bfR'})}(\x-\y)\Psi_{L_{\bfR'}}^{(-L_{\bfR'})}*f(\y)\,d\y\Big)\,d\x\Big|\\
&=
\sup_{\|h\|_{2} \leq 1} \Big|\sum_{\bfR'\in\BB_{S}^{i,\flat}(\bfR)}\int_{{\bfR' }}\big[\widetilde{\Psi}_{L_{\bfR'}}^{(-L_{\bfR'})}\big]^\vee*h(\y)\,\,\Psi_{L_{\bfR'}}^{(-L_{\bfR'})}*f(\y)\,d\y\Big\vert
\\
&\leq 
\Big\|\Big(\sum_{\bfR'\in \BB_{S}^{i,\flat}(\bfR)}|\Psi_{L_{\bfR'}}^{(-L_{\bfR'})}*f|^2\1_{\bfR'}\Big)^{\frac{1}{2}}\Big\|_{2}
\sup_{\|h\|_{2} \leq 1}\Big\|\Big(\sum_{ \bfR'\in\BB_{S}^{i,\flat}(\bfR)}|\widetilde{\Psi}_{L_{\bfR'}}^{(-L_{\bfR'})}*h|^2\1_{\bfR'}\Big)^{\frac{1}{2}}\Big\|_{2}
\\
&=
\lambda_{i,\bfR}\big\vert\bfR\big\vert^{-\frac{1}{2}}\sup_{\|h\|_{2} \leq 1}\Big\|\Big(\sum_{L_{\bfR'}\in \LL_{S}}|\Psi_{L_{\bfR'}}^{(-L_{\bfR'})}*h|^2\Big)^{\frac{1}{2}}\Big\|_{2}
\leq C\,\lambda_{i,\bfR}\big\vert\bfR\big\vert^{-\frac{1}{2}}.
\eea

To complete the case $s=1$ it remains to show that $\sum_{i\in\Z}\sum_{\bfR\in\BB_{S}^{i,\sharp}}|\lambda_{i,\bfR}|\leq C \|f\|_{\h_\bfE^{1}}$. 
We use the inequality $\sum_{\bfR\in\BB_{S}^{i,\sharp}} |\bfR|\le|\Omega^i_s|$, due to the fact that maximal rectangles are essentially disjoint. From this, Lemma \ref{Lem10.4X}, the fact that $\big|\Psi_{L_{\bfR'}}^{(-L_{\bfR'})}*f\big|_{\rm PP}$ is constant, and the $L^2$-boundedness of the maximal operator, we obtain, for every $i$, 
\bea\label{10.5.5}
\sum_{\bfR\in\BB_{S}^{i,\sharp}} |\lambda_{i,\bfR}|&
\leq\Big(\sum_{\bfR\in\BB_{S}^{i,\sharp}} |\bfR| \Big)^{\frac{1}{2}}
\Big(\sum_{\bfR'\in\BB_{S}^{i}}\int_{\bfR' }\,\big|\Psi_{L_{\bfR'}}^{(-L_{\bfR'})}*f(\x)\big|^2\,d\x\Big)^\frac12\\
&\le
|\Omega^i_s|^\frac12
\Big(\sum_{\bfR'\in\BB_{S}^{i}}\int_{\bfR' }\,\big|\Psi_{L_{\bfR'}}^{(-L_{\bfR'})}*f\big|_{\rm PP}^2(\x)\,d\x\Big)^\frac12\\
&\leq 2|\Omega^i_s|^\frac12
\Big( \sum_{\bfR'\in\BB_{S}^{i}}\int_{\bfR'}\bfM_{S}[\1_{\Omega^{i}_{s}\setminus\Omega^{i+1}}](\x)\,\big|\Psi_{L_{\bfR'}}^{(-L_{\bfR'})}*f\big|_{\rm PP}^2(\x)\,d\x\Big)^{\frac{1}{2}}\\
&\le 2|\Omega^i_s|^\frac12
\Big( \sum_{\bfR'\in\BB_{S}^{i}}\int_{\bfR'}\Big(\bfM_{S}\big[\1_{\Omega^{i}_{s}\setminus\Omega^{i+1}}\big|\Psi_{L_{\bfR'}}^{(-L_{\bfR'})}*f\big|_{\rm PP}\big](\x)\Big)^2\,d\x\Big)^{\frac{1}{2}}\\
&\le C|\Omega^i_s|^\frac12
\Big( \sum_{\bfR'\in\BB_{S}^{i}}\int_{\Omega^{i}_{s}\setminus\Omega^{i+1}}\big|\Psi_{L_{\bfR'}}^{(-L_{\bfR'})}*f\big|_{\rm PP}^2(\x)\,d\x\Big)^{\frac{1}{2}}
\eea
For every $L\in\LL_S$, almost every $\x\in\R^d$ is contained in at most one rectangle $\bfR'\in \D_{L}$, so that $\sum_{\bfR'\in\BB_{S}^{i}}\big|\Psi_{L_{\bfR'}}^{(-L_{\bfR'})}*f\big|_{\rm PP}^2(\x)\le (S_\Psi^{\rm PP}f)^2(\x)$. Since $S_\Psi^{\rm PP}f(\x)\le 2^{i+1}$ for $\x\not\in\Omega^{i+1}$, we conclude that
$ |\lambda_{i,\bfR}|\le C|\Omega^i_s|2^i$.
Summing over $i\in\Z$ the conclusion in the case $s=1$ follows from \eqref{10.1.5}.

\medskip

\noindent \textbf{Case 3: $s\geq 2$.}  Given a marked partition $S$ with $s\ge2$, we will write $f_{S}=\sum_{i\in \Z}\lambda_{i,S}\,a_{i,s}$ where the atom $a_{i,S}$ is associated to the set $\Omega_{s}^{i}$ and can be decomposed as in Definition \ref{Def9.1} \eqref{A3}. To do this we modify the definitions in \eqref{BBiS}. Again let $\BB_{S}^{i}=\big\{\bfR\in \mathbb{D}_{S}:\text{$|\bfR\cap\Omega^{i}|>\frac{1}{2}|\bfR|$ and $|\bfR\cap \Omega^{i+1}| \leq \frac{1}{2} |\bfR|$}\big\}$, but instead of using $\BB_{S}^{i,\sharp}$ (the maximal rectangles in $\BB_{S}^{i}$), we use $\widetilde\BB_{S}^{i,\sharp}=\D_{S}^{\sharp}(\Omega_{s}^{i})$, the maximal rectangles in $\Omega_{s}^{i}$, ordered by inclusion. A rectangle of $\ \BB_{S}^{i}$ may be contained in more than one element of $\D_{S}^{\sharp}(\Omega_{s}^{i})$, but we can partition $\BB_{S}^{i}$ into disjoint subsets $\widetilde\BB_{S}^{i,\flat}(\bfR)$ indexed by rectangles $\bfR\in \D_{S}^{\sharp}(\Omega_{s}^{i})$, so that $\bfR'\in \widetilde\BB_{S}^{i,\flat}(\bfR)\Longrightarrow \bfR'\subseteq \bfR$. Then the decomposition corresponding to \eqref{11.1} is
\bea\label{9.10z}
f_{S}(\x)=\sum_{i\in \Z}\sum_{\bfR\in \D_{S}^{\sharp}(\Omega_{s}^{i})}\sum_{\bfR'\in \widetilde\BB_{S}^{i,\flat}(\bfR)}\int_{\bfR' }\widetilde{\Psi}_{L_{\bfR'}}^{(-L_{\bfR'})}(\x-\y)\Psi_{L_{\bfR'}}^{(-L_{\bfR'})}*f(\y)\,d\y.
\eea

Let
\be\label{eq:lambda}
\lambda_{i,S}=|\Omega^{i}_s|^{\frac{1}{2}}\,\Big\|\Big(\sum_{\bfR'\in\BB_{S}^{i}}|\Psi_{L_{\bfR'}}^{(-L_{\bfR'})}*f|^2\1_{\bfR'}\Big)^{\frac{1}{2}}\Big\|_2,
\ee
\be\label{eq:a}
a_{i,S}(\x)=\lambda_{i,S}^{-1}\sum_{\bfR'\in\BB_{S}^{i}}\int_{{\bfR'}}\widetilde{\Psi}_{L_{\bfR'}}^{(-L_{\bfR'})}(\x-\y)\Psi_{L_{\bfR'}}^{(-L_{\bfR'})}*f(\y)\,d\y\ .
\ee
so that, by \eqref{9.10z},
 $\sum_{L \in\LL_{S}}\Psi_{L}^{(-L)}*\widetilde{\Psi}_{L}^{(-L)}*f(\x)
=\sum_{i\in\Z}\lambda_{i,S} \,a_{i,S}(\x)$. 
We prove that each $a_{i,S}$ is a $(S,\tau)$-atom associated to $\Omega^i_s$. Define

\be\label{10.6}
a_{i,S,\bfR}=\lambda_{i,S}^{-1}\sum_{\bfR'\in\widetilde\BB_{S}^{i,\flat}(\bfR)}\int_{{\bfR'}}\widetilde{\Psi}_{L_{\bfR'}}^{(-L_{\bfR'})}(\x-\y)\Psi_{L_{\bfR'}}^{(-L_{\bfR'})}*f(\y)\,d\y
\ee
so that $a_{i,S}(\x)=\sum_{\bfR\in \D_{S}^{\sharp}(\Omega_{s}^{i})}a_{i,S,\bfR}(\x)$.
We prove that the $a_{i,S,\bfR}$ satisfy the requirements for being pre-atoms of $a_{i,S}$.
Condition \eqref{atomsC} in Definition~\ref{Def9.1} on the support follows from the formula in \eqref{eq:lambda} and the cancellation condition \eqref{atomsD} is inherited from the factors $\widetilde\Psi_{L_{\bfR'}}$, which have cancellation in the dotted variables in $S$ because $L_{\bfR'}\in\LL_S$. To prove property \eqref{atomsE} we use a duality argument as in \eqref{10.5}. Given a partition $\{\DD_j\}_{j\in J}$ of $\D_{S}^{\sharp}(\Omega_{s}^{i})$, we have
\bea\label{10.9}
\lambda_{i,S}\Big\|\sum_{\bfR\in\DD_j}a_{i,S,\bfR}\Big\|_2&=\sup_{\|h\|_{2} \leq 1} \Big|\sum_{\bfR\in\DD_j}\sum_{\bfR'\in\widetilde \BB^{i}_S(\bfR)}\int_{\bfR'}\big[\widetilde{\Psi}_{L_{\bfR'}}^{(-L_{\bfR'})}\big]^\vee*h(\y)\Psi_{L_{\bfR'}}^{(-L_{\bfR'})}*f(\y)\,d\y\Big|\\
&\leq 
C\,\bigg[\int_{\R^d}\sum_{\bfR\in\DD_j}\sum_{\bfR'\in\widetilde \BB^{i}_S(\bfR)}|\Psi_{L_{\bfR'}}^{(-L_{\bfR'})}*f(\x)|^2\,d\x\bigg]^\frac12 \ ,
\eea
and so
\bea\label{10.8}
\sum_{j\in J}\Big\|\sum_{\bfR\in\DD_j}a_{i,S,\bfR}\Big\|_2^2
&\leq
C^{2}\lambda_{i,S}^{-2}\sum_{\bfR\in\D_{S}^{\sharp}(\Omega_{s}^{i})}\sum_{\bfR'\in\widetilde \BB^{i}_S(\bfR)}\int_{\bfR'}|\Psi_{L_{\bfR'}}^{(-L_{\bfR'})}*f
(\y)|^{2}d\y\\
&=C^{2}\lambda_{i,S}^{-2}\sum_{\bfR'\in \BB^{i}_S}\int_{\bfR'}|\Psi_{L_{\bfR'}}^{(-L_{\bfR'})}*f(\y)|^{2}d\y
\le C^{2}\big|\Omega^{i}\big|^{-1}\ .
\eea
Thus each $a_{i,S,\bfR}$ satisfies \eqref{atomsE}.  To finish the proof of the theorem in Case 3 we must show that $ \sum_{i\in\Z}|\lambda_{i,S}| \leq C\|S_{\Psi}^{{\rm PP}}f\|_{L^{1}}$. Starting from the formula
$
\la_{i,S}^2=|\Omega^{i}_s|\sum_{\bfR'\in\BB_{S}^{i}}\int_{\bfR'}|\Psi_{\bfR'}^{(-L_{\bfR'})}*f(\x)|^2\,d\x
$
and repeating the same steps in \eqref{10.5.5} we deduce that $\la_{i,S}\le C2^i|\Omega^i_s|$
and derive the conclusion summing over $i\in\Z$.
\end{proof}

\section{A Journ\'e-type covering lemma}\label{journe}

In this section we obtain geometric results related to Journ\'e's covering lemma. This was first stated in \cite{MR0826486} and then further developed by Pipher \cite{MR0860666}, Carbery and Seeger \cite{MR1072104}, and Cabrelli {\it et.al.} \cite{MR2247912}. The content of this section is motivated by these previous works and, in particular, our results are essentially equivalent to those in the geometric part of  \cite{MR1072104}. The natural setting for these results is a general product structure and for this reason  we temporarily abandon our multi-norm setting. The results we obtain will be used in Section \ref{Sec13} where the relevant  product structures on $\R^d$ will depending on a marked partition $S=\big\{(A_{1},k_{1}), \ldots, (A_{s},k_{s})\big\}$, with factors $\R^{A_1}\times\cdots\times\R^{A_s}$ and  dilations $\del^S_t$ defined in~\eqref{3.11.6}. In principle we could also remove the locality assumption, but this is not so relevant and we prefer to focus on the local setting.
 
To establish the basic terminology in a general product structure, we denote the ambient space by $\R^\nu=\R^{\nu_1}\times\cdots\times\R^{\nu_s}$ and by $\{\del_t\}_{t>0}$ a family of dilations  leaving each subspace $\R^{\nu_r}$ invariant. Following the definitions in Section \ref{Sec7.1}, we denote by $\bfQ^{\ell_r}_{r}$, or simply $\bfQ_r$, a dyadic cube in $\R^{\nu_r}$ and by $\bfR^L=\prod_{r=1}^s\bfQ^{\ell_r}_{r}$ or simply $\bfR=\prod_{r=1}^s\bfQ_r$, a dyadic rectangle in $\R^\nu$. For $\m\in\N^s$ we denote by $2^\m\bfR$ the $2^\m$ dyadic enlargement of $\bfR$ as defined in Section \ref{Sec7.4.1} .

\subsection{Enlargements and embeddedness factors}\label{Sec11.1ww}\quad

\smallskip

Let $\Omega\subseteq\R^{\nu}$ be a set of finite measure.  The {\it standard enlargements} $\big\{\Omega^{(j)}:j\geq 0\big\}$ of $\Omega$ are defined inductively. For $j=0$ set $\Omega^{(0)}=\Omega$. Let $\MM_{s}$ denote the dyadic strong maximal function. Then for $j\geq 1$ set
\be\label{Omegaj}
\Omega^{(j)}=\big\{\x\in \R^{\nub}:\MM_{s}[\1_{\Omega^{(j-1)}}](\x)>1/2
\big\}.
\ee
It follows from the $L^{2}$-boundedness of $\MM_{s}$ that there is a constant $C$  so that $\big\vert\Omega^{(j)}\big\vert\leq C^{j}\big\vert\Omega\big\vert$.

Next let $\D(\Omega)$ be the family of all dyadic rectangles contained in $\Omega$. Let $\bfR\in\D(\Omega)$ and  write $2^{m\,\e_r}\bfR=2^m\bfQ_r\times\big(\prod_{r'\ne r}\bfQ_{r'}\big)$ as in Section \ref{Sec7.4.1}. The {\it embeddedness factor of $\bfR$ in $\Omega$ in the variable $\x_r$} is
\bea\label{5.2q}
\Emb_{\Omega}^{r}&=2^{\mu} &&\text{ where }& \mu&=\max_{m\in \N}\Big\{m: 
\big\vert 2^{m\,\e_r}\bfR\cap\Omega\big\vert >\frac12\big\vert 2^{m\,\e_r}\bfR\big\vert\Big\}.
\eea
Note that if $\bfR\subseteq \Omega^{(j)}$ and $\Emb_{\Omega^{(j)}}^{r}(\bfR)=2^\mu$ then $2^{\mu\e_r}\bfR\subseteq \Omega^{(j+1)}$. 

\smallskip

We now introduce enlargements of  dyadic rectangles. Let $N\in \N$ and let $\r=(r_{1}, \ldots, r_{N})\in \big\{1, \ldots, s\big\}^{N}$  so that each $r_{j}\in \{1, \ldots, s\}$. (We call $\r$ an {\it $N$-string}.) Given $\bfR\in \D(\Omega)$ and $0\leq j \leq N$ we construct enlargements $\bfR^{(j)}$ of $\bfR$  and  multi-indices $\m^{(j)}(\bfR)=\big(m_1^{(j)}(\bfR), \dots,m_s^{(j)}(\bfR)\big)\in\N^s$. This is done by induction. For $j=0$ set $\bfR^{(0)}=\bfR$ and $\m^{(0)}(\bfR)=(0, \ldots, 0)$.  If $\bfR^{(j-1)}$ and $m^{(j-1)}(\bfR)$ have been defined, let $2^{\mu_j(\bfR)}=\Emb^{r_j}_{\Omega^{(j)}}(\bfR^{(j-1)})$ and put
\bea\label{10.3z}
m^{(j)}_{r_j}(\bfR)&=
\begin{cases}
m^{(j-1)}_{r_{j}}(\bfR)+\mu_j(\bfR)& \text{if $r_{j}\in \r$}\\
m^{(j)}_{r'}=m^{(j-1)}_{r'}& \text{if  $r'\neq r_{j}$} 
\end{cases}&&\text{ and }&
\bfR^{(j)}&=2^{\m^{(j)}(\bfR)}\bfR=2^{\mu_j(\bfR)\e_{r_j}}\bfR^{(j-1)}.
\eea

\subsection{Subfamilies of $\D(\Omega)$ and factorizations of $\R^\nu$ depending on $B\in\mathfrak P_{s}$ }\label{Sec11.2ww}\quad

\smallskip

Let $\UU\subseteq\D(\Omega)$ be a family of dyadic rectangles and let $\r=(r_{1}, \ldots, r_{N})$ be an $N$-string. Then $\bfR^{(j)}$ and $\m^{(j)}(\bfR)$ are defined in \eqref{10.3z} for $\bfR\in\UU$ and $0\leq j \leq N$. Let $B$ be a non-empty subset of $\{1, \ldots, s\}$ with cardinality $|B|=b$. 

We define certain subfamilies of $\UU$ depending on $B$ and $\r$. To do this we write $\R^\nu=\R^B\times\R^{B^c}=\big(\prod\nolimits_{r\in B}\R^{\nu_r}\big)\times\big(\prod\nolimits_{r\notin B}\R^{\nu_r}\big)$.  $\bfR'$ and $\bfR''$ will denote dyadic rectangles in $\R^{B}$ and $\R^{B^{c}}$, and if $\bfR\in\D(\Omega)$ we write $\bfR=\bfR_{B}\times\bfR_{B^{c}}\subseteq \R^{B}\times\R^{B^{c}}$. $\m'$ and $\m''$ will denote multi-indices in $\N^{B}$ and $\N^{B^{c}}$ and if $\m\in \N^{s}$ we write $\m=(\m',\m'')$.  Given $\m'\in\N^{B}$ and $0\leq j \leq N$ define
\bea\label{UUVV}
\UU^{j,\m'}&=\big\{\bfR\in\UU:\m_B^{(j)}(\bfR)=\m'\big\}\\
\UU_B^{j,\m'}&=\big\{\bfR':\text{exists }\bfR'' \text{ such that }\bfR'\times\bfR''\in\UU^{j,\m'}\big\}\\
\UU^{j,\m'}(\bfR')&=\big\{\bfR\in\UU^{j,\m'}:\bfR_{B}=\bfR'\big\}\\
\VV^{j,\m'}(\bfR')&=\big\{\bfR'':\bfR'\times\bfR''\in\UU^{j,\m'}\big\}.
\eea
If $|B|=s$ only the first two lines of \eqref{UUVV} make sense and define the same object. 
With $|B|$-tuples $\m'$ ordered component-wise, we also define 
\be\label{barUUVV}
\overline \UU^{j,\m'}=\bigcup\nolimits_{\n'\le\m'} \UU^{j,\n'}\quad \text{and similarly for } \overline \UU_B^{j,\m'}\ \text {and }\ \overline \VV^{j,\m'}(\bfR').
\ee
The sets in \eqref{UUVV} and \eqref{barUUVV} depend on the choice of an N-string $\r$ and the subset $B$. 

\begin{definition}\label{Def10.1}{\rm\quad

\begin{enumerate}[(a)]
\item
Let $\r=(r_1,\dots,r_N)\in\{1, \ldots, s\}^{N}$  be an $N$-string. 
If $B\subseteq\{1, \ldots, s\}$ then {\it $\r$ contains $B$} if there exists a segment of $\r$,   $\r_B=(r_k,r_{k+1},\dots,r_{k+b-1})$, which is a list, without repetitions, of all elements of $B$. $\r$ is {\it complete} if it contains every non-empty subset of $\{1, \ldots, s\}$.  

\smallskip

\item
If $\AA$ is a family of sets,  the {\it shadow} of $\AA$ is the set $\sh(\AA)=\bigcup_{A\in\AA}A$.
\end{enumerate}}
\end{definition}
\noindent An example of complete string is the one defined in the proof of Lemma~3.3 of \cite{MR1072104} for general $s$. An earlier example for $s=3$ is $\r=(1,2,3,1)$, implictly used in \cite{MR0860666}.
\smallskip

\noindent

We now have the following proposition, which is essentially contained in   \cite[Lemmas~2.3,~3.3]{MR1072104}.

\begin{proposition}\label{CarberySeeger}
Let $\UU$ be a family of dyadic rectangles contained in $\Omega$, $B\subseteq\{1, \ldots, s\}$ a non-empty subset, and $\r=(r_1,\dots,r_N)$ a string containing $B$. Then there exist $C,p>0$ such that, for every $\m'\in\N^B$,
\be\label{abc}
\sum\nolimits_{\bfR'\in\overline\UU^{N,\m'}_B}|\bfR'|\big\vert\sh\big(\overline\VV^{N,\m'}(\bfR')\big)\big\vert\le C\big(1+|\m'|\big)^p|\Omega|.
\ee
\end{proposition}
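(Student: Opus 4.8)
The proof is an induction on the cardinality $b=|B|$, mirroring the structure of the Carbery–Seeger argument. The base case $b=|B|=s$ is the classical Journé/Pipher-type estimate for a single product structure; here the inequality \eqref{abc} reduces to $\sum_{\bfR'\in\overline\UU^{N,\m'}}|\bfR'|\le C(1+|\m'|)^p|\Omega|$ (since $\sh$ of the trivial $B^c$-family has measure bounded by $|\Omega|$, up to the enlargements already absorbed), which follows by summing the embeddedness inequalities: each time the $\r$-algorithm enlarges in a coordinate $r$, the measure of the rectangle at most doubles while still covering at most $\frac12$ of $\Omega^{(j)}$, and $|\Omega^{(j)}|\le C^j|\Omega|\le C^N|\Omega|$. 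The dependence on $(1+|\m'|)^p$ comes from counting how many rectangles $\bfR'$ can share the same terminal multi-index $\m'$: for each such $\bfR'$ the string records a bounded number of doublings, and the combinatorics of distributing the total enlargement $|\m'|$ among the steps produces the polynomial factor.

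\textbf{Inductive step.} Assume \eqref{abc} holds for all subsets of size $>b$ and fix $B$ with $|B|=b$. Since $\r$ contains $B$, choose the segment $\r_B=(r_k,\dots,r_{k+b-1})$ listing $B$ without repetition, and let $j_0=k+b-1$ be the index at which the last element of $B$ is processed. The idea is to split the enlargement history of a rectangle $\bfR=\bfR'\times\bfR''$ into the part that happens on $B$ up through step $j_0$ and the part on $B^c$. First I would fix $\bfR'\in\overline\UU^{N,\m'}_B$ and analyze $\sh(\overline\VV^{N,\m'}(\bfR'))$. For $\bfR''\in\overline\VV^{N,\m'}(\bfR')$, the rectangle $\bfR'\times\bfR''$ lives in $\Omega$, and after the $B$-segment of the string has acted, the enlarged rectangle $(\bfR')^{\m'}\times\bfR''$ (enlarged only in the $B$-variables) sits inside some standard enlargement $\Omega^{(j_0)}$. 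Applying the base-case (full product) estimate \emph{within the slice} $\{\bfR'\}\times\R^{B^c}$, together with the embeddedness property of $\bfR''$ in the fiber of $\Omega^{(j_0)}$ over $(\bfR')^{\m'}$, bounds $|\bfR'|\,|\sh(\overline\VV^{N,\m'}(\bfR'))|$ by a polynomial-in-$|\m'|$ multiple of the measure of the part of $\Omega^{(j_0)}$ lying over the enlarged base. Summing over $\bfR'$ then requires controlling $\sum_{\bfR'}|\sh(\text{bases})|\cdot(\text{fiber bound})$; one reorganizes the $\bfR'$-sum by their own terminal multi-index within the coarser $B$-product structure and invokes the inductive hypothesis applied to the larger set $\widetilde B=B\cup\{r\}$ for a suitable $r\notin B$ (available because $\r$ is required to contain $B$ but the string, being long, also contains supersets when we truncate appropriately — more precisely one uses that after processing $\r_B$ the algorithm continues and eventually revisits, so the relevant enlargement factors telescope into embeddedness factors for a strictly larger subset). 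Each such application costs another polynomial factor, and since there are at most $2^s$ nested subsets the exponent $p$ and constant $C$ remain finite.

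\textbf{Main obstacle.} The delicate point is the bookkeeping that lets one pass from the embeddedness/enlargement data recorded by the string $\r$ at the intermediate step $j_0$ to a genuine application of the inductive hypothesis for a larger subset. The string is only \emph{assumed} to contain $B$, not every superset as a sub-segment adjacent to $\r_B$; one must exploit that a complete string (or, as here, the hypothesis that in the eventual application $\r$ is complete, cf. the remark after Definition \ref{Def10.1}) visits every coordinate enough times that the partial enlargements can be legitimately re-read as embeddedness factors relative to $\widetilde B$. Making this reinterpretation precise — i.e.\ showing $(\bfR')^{\m'}\times\bfR''\subseteq\Omega^{(j_0)}$ with $\bfR''$ carrying the correct embeddedness bound in the $\widetilde B\setminus B$ directions — is where the argument of \cite{MR1072104} must be followed carefully. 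I would therefore state the inductive hypothesis in a slightly strengthened uniform form (allowing an arbitrary intermediate enlargement level $\Omega^{(j)}$ in place of $\Omega$, with the constant depending only on $j-N$ and $s$), which makes the splicing of the two sub-structures formal; the polynomial losses then simply add across the $s$ induction stages, giving the claimed $C(1+|\m'|)^p|\Omega|$.
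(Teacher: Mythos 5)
Your proposal does not follow the paper's route, and it has genuine gaps. The paper's proof is essentially a reduction to \cite{MR1072104}: Lemma~2.3 there gives \eqref{abc} when $\r=(1,\dots,b)$, Lemma~3.3 gives it when the segment listing $B$ is \emph{terminal} in $\r$, and the only new ingredient is the observation that $j\mapsto\m^{(j)}(\bfR)$ is non-decreasing, so that $\overline\UU^{N,\m'}\subseteq\overline\UU^{\ell+b,\m'}$ and $\overline\VV^{N,\m'}(\bfR')\subseteq\overline\VV^{\ell+b,\m'}(\bfR')$, which reduces an arbitrary string containing $B$ to the terminal-segment case. Your proposal contains neither this monotonicity reduction nor a substitute for the cited lemmas: you explicitly flag the Journ\'e-type counting for fixed $\m'$ as the ``main obstacle'' and defer to following \cite{MR1072104} carefully, so the core of the estimate is never actually established.

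Moreover, the induction scheme you describe cannot close as stated. First, invoking the inductive hypothesis for $\widetilde B=B\cup\{r\}$ requires $\r$ to contain $\widetilde B$, whereas the proposition only assumes $\r$ contains $B$; strengthening this to completeness proves a different statement. Second, even granting completeness, the hypothesis for $\widetilde B$ is a bound for each fixed $\widetilde\m'\in\N^{\widetilde B}$ of the form $C(1+|\widetilde\m'|)^p|\Omega|$; to recover the fixed-$\m'$ bound for $B$ you must sum over the extra indices $\m''\in\N^{\widetilde B\setminus B}$, and $\sum_{\m''}(1+|\m'|+|\m''|)^p$ diverges. The downward induction on $|B|$ in Section~\ref{Sec13} works only because Proposition~\ref{Lem10.11} injects a factor $2^{-\delta|\m''|}$ at each stage; the purely geometric statement \eqref{abc} has no such decay, so the geometry cannot be bootstrapped this way. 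Finally, your base case $|B|=s$ is not the right anchor: the argument of \cite{MR1072104} starts from the case where the string \emph{equals} the listing of $B$ (for arbitrary $B$), and the passage to longer strings is exactly the embeddedness/monotonicity step your proposal omits.
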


\begin{proof}
We may assume that $B=\{1,2,\dots,b\}$ and 
$
\r=(r_1,\dots,r_\ell,1,2,\dots,b,r_{\ell+b+1},\dots,r_N),
$
 the issue being the validity of \eqref{abc} independently on the location of $\r_B$ inside $\r$. 
Lemma 2.3 in \cite{MR1072104} proves \eqref{abc} in the case $\r=(1,2,\dots,b)$, and Lemma 3.3 essentially contains the proof in the case where $(1,2,\dots,b)$ is the terminal segment of $\r$, that is $N=\ell+b$. It remains to discuss the case where $N>\ell+b$.
But, if $\bfR\in\UU$ then $\m^{(\ell+b)}(\bfR)\le\m^{(N)}(\bfR)$, hence $\overline\UU^{N,\m'}\subseteq\overline\UU^{\ell+b,\m'}$ and
$\overline\VV^{N,\m'}(\bfR')\subseteq \overline\VV^{\ell+b,\m'}(\bfR')$. Then, applying Lemma 3.3 in \cite{MR1072104},
\beas
\sum\nolimits_{\bfR_B\in\overline\UU^{N,\m'}_B}|\bfR'|\big\vert\sh\big(\overline\VV^{N,\m'}(\bfR')\big)\big\vert&\le \sum\nolimits_{\bfR'\in\overline\UU^{\ell+b,\m'}_B}|\bfR'|\big\vert\sh\big( \overline\VV^{\ell+b,\m'}(\bfR')\big)\big\vert
\le C\big(1+|\m'|\big)^{p}|\Omega|.\qedhere
\eeas
\end{proof}

For completeness we add the estimates on the total measure of rectangles enlarged either dyadically or by dilations, as defined in Section \ref{Sec7.2b}.  Let $\UU,\Omega,\r,N,B$ be as above.

\begin{lemma}\label{CSmeasure}
 Let $\overline\Omega=\bigcup\nolimits_{\bfR\in\UU}2^{\m^{(N)}(\bfR)}\bfR$ and $\widehat\Omega=\bigcup\nolimits_{\bfR\in\UU}2^{\m^{(N)}(\bfR)}\circ\bfR$.
Then, with appropriate constants independent of $\UU$, $|\overline\Omega|\sim|\Omega|\sim|\widehat\Omega|$.
\end{lemma}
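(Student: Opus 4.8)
The claim is a two-sided comparison $|\overline\Omega|\sim|\Omega|\sim|\widehat\Omega|$, and each of the four inequalities has a short, essentially independent argument. The lower bounds are the trivial direction: since $\bfR\subseteq 2^{\m^{(N)}(\bfR)}\bfR$ and $\bfR\subseteq 2^{\m^{(N)}(\bfR)}\circ\bfR$ for every $\bfR\in\UU$, and since $\Omega=\bigcup_{\bfR\in\UU}\bfR$ when $\UU=\D(\Omega)$ (or $\Omega\supseteq\bigcup_{\bfR\in\UU}\bfR$ in general, which still suffices for the inequality we want, namely $|\Omega|\le|\overline\Omega|$ and $|\Omega|\le|\widehat\Omega|$), we get $|\Omega|\le|\overline\Omega|$ and $|\Omega|\le|\widehat\Omega|$ immediately. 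The only point to note is that the relevant statement is really ``$|\overline\Omega|\ge c|\Omega|$'', which is what the $\sim$ encodes; if $\UU\subsetneq\D(\Omega)$ one should interpret $\Omega$ as the union of the rectangles in $\UU$, exactly as in Proposition \ref{CarberySeeger}.

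\textbf{The upper bound for $\overline\Omega$.} Here I would use Proposition \ref{CarberySeeger} with $B=\{1,\dots,s\}$, so that only the first two lines of \eqref{UUVV} are in play and $\overline\VV^{N,\m'}(\bfR')$ collapses (there is no $\bfR''$ component). With this choice \eqref{abc} reads $\sum_{\bfR\in\overline\UU^{N,\m'}}|\bfR|\le C(1+|\m'|)^p|\Omega|$, where $\overline\UU^{N,\m'}=\{\bfR\in\UU:\m^{(N)}(\bfR)\le\m'\}$. Decompose $\overline\Omega\subseteq\bigcup_{\m'\in\N^s}\bigcup_{\bfR:\m^{(N)}(\bfR)=\m'}2^{\m'}\bfR$. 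By property \eqref{Dy1}/\eqref{Dy6} of dyadic cubes (disjointness at each scale), for a fixed $\m'$ the sets $\{2^{\m'}\bfR:\m^{(N)}(\bfR)=\m'\}$ are, up to the usual finite-overlap bound coming from the fact that several $\bfR$ can have the same dyadic ancestor $2^{\m'}\bfR$, essentially a disjoint family, and each $2^{\m'}\bfR$ has measure $\le 2^{C|\m'|}|\bfR|$ (indeed $|2^{m\e_r}\bfR|\le c\,2^{mq_r}|\bfR|$ from the remark in Section \ref{Sec7.4.1}, iterated over $r$; more simply, $|2^{\m'}\bfR|\le 2^{a|\m'|}\cdot$const$\cdot|\bfR|$ using \eqref{mu ell}). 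Hence
\[
|\overline\Omega|\le\sum_{\m'\in\N^s}\Big|\bigcup_{\m^{(N)}(\bfR)=\m'}2^{\m'}\bfR\Big|\le C\sum_{\m'\in\N^s}2^{C|\m'|}\sum_{\m^{(N)}(\bfR)=\m'}|\bfR|.
\]
This is still divergent as stated, so the actual mechanism is: for each $\bfR$ with $\m^{(N)}(\bfR)=\m'$, the enlargement $2^{\m'}\bfR$ is contained in $\Omega^{(N)}$ by the remark following \eqref{5.2q} (each one-step enlargement $2^{\mu_j(\bfR)\e_{r_j}}\bfR^{(j-1)}$ lands in $\Omega^{(j)}$, so $\bfR^{(N)}\subseteq\Omega^{(N)}$). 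Therefore $\overline\Omega\subseteq\Omega^{(N)}$ directly, and $|\Omega^{(N)}|\le C^N|\Omega|$ by the $L^2$-boundedness of $\MM_s$ noted right after \eqref{Omegaj}. Since the string $\r$ and hence $N$ are fixed (depending only on $s$, e.g. a complete string), this gives $|\overline\Omega|\le C|\Omega|$ with $C$ independent of $\UU$. This is the clean argument and I would present it this way, bypassing the summation in $\m'$ entirely.

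\textbf{The upper bound for $\widehat\Omega$.} For the enlargement by dilations I would compare it to the dyadic one. By Lemma \ref{Lem11.4pqr}(a) applied in each factor $\R^{\nu_r}$ (legitimate via Proposition \ref{R=Q}, which identifies the factor cubes as dyadic cubes for the appropriate one-parameter dilations), there is $\del>0$, depending only on the dilation exponents and the admissibility constant $a$, such that $2^{m}\bfQ_r\subseteq 2^{m+\del}\circ\bfQ_r$ and $|2^m\circ\bfQ_r|\le c_a|2^m\bfQ_r|$; taking products over $r$, $2^{\m^{(N)}(\bfR)}\circ\bfR\subseteq 2^{\m^{(N)}(\bfR)+\del\mathbf 1}\,\bfR$ and $|2^{\m^{(N)}(\bfR)}\circ\bfR|\le c_a^s|2^{\m^{(N)}(\bfR)}\bfR|$, where $\del\mathbf 1=(\del,\dots,\del)$. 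Then $\widehat\Omega\subseteq\bigcup_{\bfR\in\UU}2^{\m^{(N)}(\bfR)+\del\mathbf 1}\bfR$, and the same argument as in the previous paragraph — one-step enlargements stay inside successive $\Omega^{(j)}$'s, with the extra $\del$ costing at most a bounded number $\lceil\del\rceil$ of additional enlargement steps — shows this union is contained in $\Omega^{(N+s\lceil\del\rceil)}$, whose measure is $\le C'|\Omega|$. This closes the chain $|\Omega|\le|\widehat\Omega|\le C'|\Omega|$ and, together with the $\overline\Omega$ estimate, proves $|\overline\Omega|\sim|\Omega|\sim|\widehat\Omega|$.

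\textbf{Main obstacle.} The only genuinely delicate point is making the claim ``$\bfR^{(N)}\subseteq\Omega^{(N)}$'' airtight for \emph{all} $\bfR\in\UU$ simultaneously: one must check that the inductive construction \eqref{10.3z} really produces, at step $j$, an enlargement $2^{\mu_j(\bfR)\e_{r_j}}\bfR^{(j-1)}$ that is contained in $\Omega^{(j)}$ — this is exactly the remark after \eqref{5.2q} (if $\bfR^{(j-1)}\subseteq\Omega^{(j-1)}$ has $\mathrm{Emb}^{r_j}_{\Omega^{(j-1)}}=2^{\mu_j}$... wait, the index is $\Omega^{(j)}$ there; one uses instead that $\bfR^{(j-1)}\subseteq\Omega^{(j-1)}$ implies, by definition \eqref{5.2q} of the embeddedness factor and \eqref{Omegaj}, that $2^{\mu_j\e_{r_j}}\bfR^{(j-1)}\subseteq\Omega^{(j)}$). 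Once that inclusion is in hand the measure estimates are purely mechanical, relying only on $L^2$-boundedness of $\MM_s$ and the comparability of dyadic and dilation enlargements from Lemma \ref{Lem11.4pqr}. I expect no real difficulty beyond bookkeeping the dependence of constants on $s$ and on the fixed string $\r$, which are harmless since $\UU$ and $\Omega$ do not enter them.
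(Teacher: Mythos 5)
Your treatment of $\overline\Omega$ is correct and, after you discard the detour through Proposition \ref{CarberySeeger}, coincides with the paper's proof: each inductive enlargement step lands one level higher in the $\Omega^{(j)}$ hierarchy, so $2^{\m^{(N)}(\bfR)}\bfR\subseteq\Omega^{(N+1)}$ and $|\overline\Omega|\leq|\Omega^{(N+1)}|\leq C^{N+1}|\Omega|$. (Your index $\Omega^{(N)}$ versus the paper's $\Omega^{(N+1)}$ is an immaterial off-by-one.)

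The argument for $\widehat\Omega$, however, has a genuine gap. You invoke the inclusion $2^{m}\circ\bfQ_r\subseteq 2^{m+\del}\bfQ_r$, but Lemma \ref{Lem11.4pqr}(a) gives only the \emph{opposite} containment, $2^{m}\bfQ_r\subseteq 2^{m+\del}\circ\bfQ_r$, and the one you need is false: the dyadic ancestor $2^{m+\del}\bfQ_r$ need not be anywhere near centered at the center of $\bfQ_r$ (a dyadic cube can sit in a corner of all of its ancestors, e.g.\ $[0,1)\subset[0,2)\subset[0,4)\subset\cdots$ in $\R$), whereas $2^{m}\circ\bfQ_r$ expands symmetrically about that center and therefore protrudes outside every dyadic ancestor of comparable scale. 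Consequently $2^{\m^{(N)}(\bfR)}\circ\bfR$ is not contained in any bounded dyadic enlargement of $\bfR$, and the chain of containments into $\Omega^{(N+s\lceil\del\rceil)}$ collapses. (Even if it did not, the step ``an extra $\del$ costs boundedly many levels of $\Omega^{(j)}$'' is not automatic, since the $\Omega^{(j)}$ are defined by the fixed density threshold $\tfrac12$ and one dyadic parent step can drop the density below it.) The correct route — the one the paper takes — is a maximal-function comparison of $\widehat\Omega$ with $\overline\Omega$ rather than with $\Omega$ directly: by Lemma \ref{Lem11.4pqr}, $|2^{\m}\circ\bfR|\leq C|2^{\m}\bfR|$ and both sets are contained in the common rectangle $2^{\m+\del}\circ\bfR$, which also has comparable measure; hence for every $\x\in 2^{\m}\circ\bfR$ one has $\MM_{s}[\1_{\overline\Omega}](\x)\geq|2^{\m}\bfR|\,/\,|2^{\m+\del}\circ\bfR|\geq\gamma>0$, so $\widehat\Omega\subseteq\{\MM_{s}\1_{\overline\Omega}>\gamma\}$ and the $L^{2}$-boundedness of $\MM_{s}$ gives $|\widehat\Omega|\leq C\gamma^{-2}|\overline\Omega|\leq C'|\Omega|$.
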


\begin{proof} By the choice of $\del$ in \eqref{Omegaj} we have $2^{\m^{(N)}(\bfR)}\bfR\subset\Omega^{(N+1)}$. Then the first comparison follows since $|\Omega^{(N+1)}|\leq C^{N+1}|\Omega|$. 
By Lemma \ref{Lem11.4pqr} the sets $2^{\m^{(N)}(\bfR)}\bfR$ and $2^{\m^{(N)}(\bfR)}\circ\bfR$ have comparable measures and this implies that the (ordinary) strong maximal function of $\1_{\overline\Omega}$ is greater than a constant $\gamma>0$ on $\widehat\Omega$, hence $|\widehat\Omega|\le C|\overline\Omega|$.
\end{proof}

\section{A convolution estimates for multi-norm kernels}\label{Sec12gg}

We return to the multi-norm setting with $\bfE$ a standard $n\times n$ matrix and $\R^{d}=\R^{d_{1}}\times\cdots\times\R^{d_{n}}$.  Let  $\KK\in \PP_{0}(\bfE)$ be a multi-norm kernel on $\R^{d}$ as in Definition \ref{Def3.1} and let $f\in L^{2}(\R^{d})$ satisfy appropriate support and cancellation conditions. The main result of this section is Proposition \ref{Lem10.11} below which gives estimates for certain integrals of $|f*\KK|$.

 As usual, if $S=\big\{(A_{1},k_{1}), \ldots, (A_{s},k_{s})\big\}\in\SS_{\bfE}$   we write $\R^{d}=\R^{A_{1}}\times\cdots\times\R^{A_{s}}$. If $\x\in \R^{d}$ we write $\x=(\x_{A_{1}}, \ldots, \x_{A_{s}})$ with $\x_{A_{r}}=(\x_{i})_{i\in A_{r}}$. If $\bfR=\prod_{i=1}^{n}\bfQ_{i}^{\ell_{i}}$ is a dyadic rectangle we write $\bfQ_{A_{r}}=\prod_{i\in A_{r}}\bfQ_{i}^{\ell_{i}}$. (Recall from Proposition \ref{R=Q}  that $\bfQ_{A_r}$ is a dyadic cube in $\R^{A_r}$ for the dilations $\del^{k_r}$.) We use notation and results from Sections \ref{Sec7.1} and \ref{journe}  with $\R^{\nu_r}=\R^{A_r}$.

Let $\mathfrak P_{s}$ denote the set of all non-empty subsets of $\{1, \ldots, s\}$. Given $B\in\mathfrak P_{s}$, let  $\R^B=\prod_{r\in B}\R^{A_{r}}$ and $\R^{B^{c}}=\prod_{r\notin B}\R^{A_{r}}$. If $\x\in \R^{d}$,  write $\x=(\x_B,\x_{B^{c}})\in \R^{B}\times\R^{B^{c}}$. In the same spirit, if $\bfR\subset\R^{d}$ is a dyadic rectangle, write $\bfR=\bfR_B\times\bfR_{B^{c}}$. 
The Fourier transform of $\phi\in L^{1}(\R^{d})$ is the function $\FF[\phi](\xib)=\int_{\R^{d}}e^{-2\pi i\langle\xib,\x\rangle}\phi(\x)\,d\x$. Then $\FF_{B}$ and $\FF_{B^{c}}$ are the partial transforms in the variables $\x_{B}$ and $\x_{B^c}$ respectively: $\FF_{B}[\phi](\xib_{B},\x_{{B^{c}}})=\int_{\R^{B}}e^{-2\pi i\langle\xib_{B},\x_{B}\rangle}\phi(\x_{B},\x_{{B^{c}}})\,d\x_{B}$ and $\FF_{{B^{c}}}[\phi](\x_{B},\xib_{{B^{c}}})=\int_{\R^{B^{c}}}e^{-2\pi i\langle\xib_{{B^{c}}},\x_{{B^{c}}}\rangle}\phi(\x_{B},\x_{{B^{c}}})\,d\x_{{B^{c}}}$.

\smallskip

To state the main result, fix $S=\big\{(A_{1},k_{1}), \ldots, (A_{s},k_{s})\big\}\in\SS_{\bfE}$, fix a subset $B\in \mathfrak P_{s}$ with cardinality $|B|$, and fix a parameter $\tau>0$. Let $\bfR=\bfR_{B}\times\bfR_{{B^{c}}}\in\D_S$ with $\bfR=\prod_{i=1}^{n}\bfQ_{i}^{\ell_{i}}$, and let $\m_B=(m_r)_{r\in B}\in \N^{|B|}$ with $1\le m_r\le \ell_{k_r}$ for $r\in B$. Then put 
\bea\label{11.1z}
\widehat\bfR_{B,\m}&=2^{\m+\tau}\circ\bfR_{B}=\prod\nolimits_{r\in B}2^{m_r+\tau}\circ\bfQ_{A_r}&&\text{ and }&
\widehat\bfR^{cc}_{B,\m}&=\prod\nolimits_{r\in B}(2^{m_r+\tau}\circ\bfQ_{A_r})^c.
\eea
It is important to note that $\widehat\bfR^{cc}_{B,\m}\subsetneqq (\widehat\bfR_{B,\m})^{c}$ if $s\ge2$.

Moreover we observe that, keeping the notation introduced at the end of Section \ref{Sec7.2b}, the enlargement by rescaling $2^{\sigma}\circ\bfQ_{A_r}$ in each factor subspace $\R^{A_r}$ is made with respect to the dilations $\del^{k_r}$. Since the scalar coordinates $x_h$ of points $\x_{A_r}$ will intervene in the next proofs, it is convenient to introduce, for $h=1,\dots,d$, a new notation, $\la'_h$ (depending on the marked partition $S$), for the exponent of $x_h$ in these dilations. Explicitly, if $h\in E_i$ with $i\in A_r$, then 
\be\label{lambda'}
\la'_h=\frac{\la_h}{e(k_r,i)}.
\ee

\begin{proposition}\label{Lem10.11} 
Let $S=\big\{(A_{1},k_{1}), \ldots, (A_{s},k_{s})\big\}$, let $\tau>0$, $B\in\PP_s$, $\bfR=\bfR_{B}\times\bfR_{{B^{c}}}\in\D_S$, and let $f\in L^{2}(R^{d})$ be supported on $(2^{\tau}\circ\bfR_{B})\times\R^{B^{c}}$, with cancellation in the coordinate $\x_{k_{r}}$ for each $r\in B$. Let $V\subseteq\R^{B^{c}}$ be a set of finite measure. Given $\m_B=(m_r)_{r\in B}$ with $1\le m_r\le \ell_{k_r}$ for $r\in B$, let
\beas
\widehat\bfR_{B}=2^{\m+\tau}\circ\bfR_{B}=\prod\nolimits_{r\in B}2^{m_r+\tau}\circ\bfQ_{A_r},\qquad 
\widehat\bfR^{cc}_{B}=\prod\nolimits_{r\in B}(2^{m_r+\tau}\circ\bfQ_{A_r})^c
\eeas
Then there exists $\delta>0$ such that, if $\KK\in \PP_{0}(\bfE)$ is a multi-norm kernel, there is a constant $C_{\KK}$ depending on only finitely many kernel constants of $\KK$ so that 
\be\label{estimate}
\int_{\widehat\bfR^{cc}_{B}\times V}\big|f*\KK(\x)\big|\,d\x\le C_\KK|\bfR_{B}|^\frac{1}{2}| V|^\frac{1}{2}2^{-\del |\m_B|}\|f\|_2\ .
\ee
If $B=\{1,\dots,s\}$ there is no set $ V$, and no factor $| V|^\frac{1}{2}$ in \eqref{estimate}.
\end{proposition}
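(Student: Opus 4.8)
The strategy is to reduce, via the Calderón reproducing formula, the convolution $f*\cK$ to a sum over scales $L'\in\LL_S$ of pieces $f*\Psi_{L'}^{(-L')}*\widetilde\Psi_{L'}^{(-L')}*\cK$, and then exploit two types of decay: (a) almost-orthogonality between $f$ (supported on $2^\tau\circ\bfR_B\times\R^{B^c}$, with cancellation in each $\x_{k_r}$, $r\in B$) and $\Psi_{L'}^{(-L')}$, which gives a gain whenever $\ell'_{k_r}$ is much larger or much smaller than $\ell_{k_r}$; and (b) the decay of the multi-norm kernel $\cK$ away from the region where $\x_{A_r}$ is small, measured by the $N_{k_r}$-norms, which is exactly what makes the integral over $\widehat\bfR^{cc}_B$ small. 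The point is that a point $\x$ in $\widehat\bfR^{cc}_B$ has \emph{every} block coordinate $\x_{A_r}$, $r\in B$, at distance $\gtrsim 2^{m_r}$ (in the $\del^{k_r}$-homogeneous sense) from the support of $f$ in that block; since $\Psi_{L'}^{(-L')}$ and $\widetilde\Psi_{L'}^{(-L')}$ are Schwartz and concentrated at scale $2^{-L'}$, and $\cK$ satisfies the differential inequalities \eqref{3.9} in the $N_i$-norms, the kernel of $f\mapsto f*\Psi_{L'}^{(-L')}*\widetilde\Psi_{L'}^{(-L')}*\cK$ restricted to this product-complement region decays like $\prod_{r\in B}(1+2^{\ell_{k_r}}\,{\rm dist})^{-M}$ times a power of $2^{-m_r}$.

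\textbf{Key steps.} First I would write $f*\cK=\sum_{L'\in\LL}f*\Psi_{L'}^{(-L')}*\widetilde\Psi_{L'}^{(-L')}*\cK$ and group the sum according to $L'\in\LL_{S'}$; using Theorem \ref{Thm3.9}(4) (applied to $\cK$) and the almost-orthogonality estimates already developed in Lemma \ref{Lem5.3} and the proof of Proposition \ref{Prop6.3}, one checks that $\widetilde\Psi_{L'}^{(-L')}*\cK=\eta_{L'}^{(-L')}$ with $\{\eta_{L'}\}$ uniformly bounded in every Schwartz norm and having cancellation in the dotted variables of the marked partition carrying $L'$; this reduces everything to estimating $\sum_{L'}\int_{\widehat\bfR_B^{cc}\times V}|f*\Psi_{L'}^{(-L')}*\eta_{L'}^{(-L')}(\x)|\,d\x$. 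Second, for fixed $L'$ I would estimate the convolution kernel $\Psi_{L'}^{(-L')}*\eta_{L'}^{(-L')}(\z)$ pointwise by $C_N 2^{\sum q_i\ell'_i}\prod_i(1+2^{\ell'_i}|\z_i|)^{-N}$, split off the $B$-variables, and use the support/cancellation of $f$ in each block $\x_{A_r}$, $r\in B$ (via the integral-zero condition in $\x_{k_r}$ together with Lemma \ref{Lem4.1}), to produce a factor $2^{-\bar\la\,m(\ell_{k_r}-\ell'_{k_r})^+}$-type gain when $\ell'_{k_r}<\ell_{k_r}$, and the transversality/distance gain $2^{-c\,m_r}$ coming from the fact that $\x_{A_r}$ lies outside $2^{m_r+\tau}\circ\bfQ_{A_r}$ while the kernel is concentrated at scale $2^{-\ell'_{k_r}}\le 2^{-\ell_{k_r}}\le 2^{-m_r}$-ish after translation by $\supp f$. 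Third, I would apply Cauchy--Schwarz in the $B^c$-variables over the set $V$ — using that the operators $g\mapsto g*\Psi_{L'}^{(-L')}*\eta_{L'}^{(-L')}$ are $L^2$-bounded with constants summable after the orthogonality gain — to extract $|V|^{1/2}\|f\|_2$, and Cauchy--Schwarz / Hölder in the $B$-variables over $\widehat\bfR_B^{cc}$ to extract $|\bfR_B|^{1/2}$ (here one uses the measure comparisons in Lemma \ref{Lem11.4pqr} and the homogeneous-dimension bookkeeping with the exponents $\la'_h$ in \eqref{lambda'}). Summing the resulting geometric series in $L'$ and in the partitions $S'$, the net exponent is $2^{-\del'|L-L'|}\cdot 2^{-\del\sum_{r\in B}m_r}$, which sums to $C_\cK 2^{-\del|\m_B|}$.

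\textbf{Main obstacle.} The delicate point is the simultaneous handling of the two independent sources of smallness in the $B$-block: the scale mismatch $|\ell_{k_r}-\ell'_{k_r}|$ and the spatial separation $m_r$. When $\ell'_{k_r}\gg\ell_{k_r}$ the kernel $\Psi_{L'}^{(-L')}$ is \emph{finer} than $\bfR$, so the cancellation of $f$ in $\x_{k_r}$ does \emph{not} directly help and one must instead use the cancellation (vanishing moments) of $\eta_{L'}$ together with smoothness of $f$ at the scale $2^{-\ell_{k_r}}$ — exactly the ``borrowing'' mechanism of Lemma \ref{Lem5.2} and the proof of Lemma \ref{Lem5.3}, but now one must also make sure the borrowed decay survives the further restriction to the product-complement $\widehat\bfR^{cc}_B$ and is compatible with the factorized Cauchy--Schwarz in the $B$ versus $B^c$ variables. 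A second, more bookkeeping-heavy difficulty is that dyadic enlargements need not stay in $\D_S$ (Remark after Section \ref{Sec7.4.1}) and the norms $N_{k_r}$ mix the coordinates within a block, so translating ``$\x\in\widehat\bfR^{cc}_B$'' into a usable lower bound for the arguments of $\cK$ requires care with the $N_i$-norm estimates \eqref{3.9} and the comparison Proposition \ref{Prop3.8.5}; I would isolate this as a separate geometric lemma before doing the analytic estimates.
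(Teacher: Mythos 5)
Your plan rests on the same three mechanisms as the paper's proof: (i) on $\widehat\bfR_B^{cc}$ only the coarse scales of $\KK$ in the $B$-blocks matter; (ii) against those coarse scales the cancellation of $f$ in each $\x_{k_r}$, $r\in B$, produces the gain $2^{-\delta m_r}$; (iii) the $B^c$-variables are handled by $L^2$ theory, with $|V|^{1/2}$ from Cauchy--Schwarz over $V$ and $|\bfR_B|^{1/2}$ from Cauchy--Schwarz over the support of $f$. The technical realization differs. The paper decomposes $\KK=\sum_J\eta_J^{(-J)}$ with compactly supported $\eta_J$ (Theorem \ref{Thm3.9}\eqref{Thm3.9.(4)}), so Lemma \ref{Lem10.6} shows that the pieces with $j_i>\ell_i-\nu m_r$ contribute \emph{literally zero} on $\widehat\bfR_B^{cc}$ --- a support argument replacing your rapid-decay estimates; it realizes the cancellation of $f$ by writing $f=\partial^b_{\h}g$ and integrating by parts onto $\KK_{L,J'}$, so the gain appears as the explicit factor $2^{-\sum_{r\in B}\lambda_{h_r}(\ell_{k_r}-j'_{k_r})}$; and Lemma \ref{Lem10.8} identifies $\partial^b_{\h}\KK_{L,J'}(\x',\cdot)$ as a multi-norm kernel on $\R^{B^c}$, which is what makes (iii) run. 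Your route through the reproducing formula and Lemma \ref{Lem5.2}/\ref{Lem5.3}-type almost-orthogonality avoids the explicit primitive at the cost of a messier case analysis in $\ell'_{k_r}$ versus $\ell_{k_r}$, but it is a workable variant of the same argument.

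Two steps would fail as written. First, in your ``main obstacle'' paragraph you propose to handle the fine scales $\ell'_{k_r}\gg\ell_{k_r}$ by pairing the vanishing moments of $\eta_{L'}$ with ``smoothness of $f$ at scale $2^{-\ell_{k_r}}$''; but $f$ is only in $L^2$ and has no smoothness, so the borrowing mechanism of Lemma \ref{Lem5.2} is not available. The correct and sufficient mechanism is the one you already state under ``Key steps'': on $\widehat\bfR_B^{cc}$ the argument of the scale-$2^{-L'}$ kernel lies at homogeneous distance $\gtrsim2^{m_r-\ell_{k_r}}\gg2^{-\ell'_{k_r}}$ from the support of $f$ in the $B$-blocks, so Schwartz decay alone yields a factor $2^{-N(m_r+\ell'_{k_r}-\ell_{k_r})}$ and no cancellation is needed for fine scales. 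Second, ``Cauchy--Schwarz in the $B$-variables over $\widehat\bfR_B^{cc}$'' cannot produce $|\bfR_B|^{1/2}$, since that set has infinite measure; one must instead integrate the kernel in $\x'$ (its $L^1_{\x'}$-norm is scale-invariant) and apply Cauchy--Schwarz in the $\y'$-variable over $2^{\tau}\circ\bfR_B$, which contains the $B$-support of $f$ and has measure comparable to $|\bfR_B|$ --- exactly the order of operations in the displayed chain concluding the paper's proof. Both repairs are consistent with the rest of your plan.
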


We begin with some preliminary lemmas.

\begin{lemma}\label{Lem11.4}
Let $\gamma_{0}=\max_{1\leq h \leq d}(\lambda'_{h})^{-1}$ where $\{\lambda'_{h}\}$ are the exponents in \eqref{lambda'}. 
Let $\bfQ_{A_r}\subset\R^{A_r}$ be a dyadic cube with center $\c_{A_r}$, and let $\tau>0$. If $\y_i\in 2^{\tau}\circ\bfQ_{A_r}$ and $\x_{A_r}\notin 2^{\tau+1}\circ\bfQ_{A_r}$, then $2^{-\gamma_{0}}|\x_{A_r}-\c_{A_r}|\le|\x_{A_r}-\y_{A_r}|\le 2^{\gamma_{0}}|\x_{A_r}-\c_{A_r}|$.
\end{lemma}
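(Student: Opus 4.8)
The statement is a purely geometric comparison of two homogeneous norms on the subspace $\R^{A_r}$, endowed with the dilations $\del^{k_r}$ (equivalently, the one‑parameter dilations on $\R^{A_r}$ with scalar exponents $\la'_h$, $h\in E_i$, $i\in A_r$, as in \eqref{lambda'}). Here $|\cdot|$ denotes the corresponding homogeneous norm $|\x_{A_r}|=\sum_{i\in A_r}\sum_{h\in E_i}|x_h|^{1/\la'_h}$. The key features to exploit are: (i) scaling homogeneity of the norm, i.e.\ $|\del^{k_r}_t\z|=t\,|\z|$; (ii) $2^\tau\circ\bfQ_{A_r}$ is, up to the fixed factor $2^\tau$ in each scalar coordinate, a rescaled copy of $\bfQ_{A_r}$ about its center $\c_{A_r}$ (Definition \ref{Def7.5}); and (iii) the triangle‑type inequality $|\z+\w|\le C(|\z|+|\w|)$, which for an $\ell^1$‑type homogeneous norm with exponents $\le1$ (recall $\la_h\le1$ is not assumed, but one may reduce to small pieces, see below) in fact holds with $C$ controlled by $\gamma_0=\max_h(\la'_h)^{-1}$.

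\textbf{First step.} Reduce to the unit cube. By translating we may assume $\c_{A_r}=\0$, and by applying a dilation $\del^{k_r}_t$ with $t$ equal to the scale of $\bfQ_{A_r}$, we may assume $\bfQ_{A_r}$ has unit scale, so that $2^\tau\circ\bfQ_{A_r}$ is the set $\{\x_{A_r}: |x_h|<2^\tau\ \forall h\}$ (in the $x_h$ coordinates) and $2^{\tau+1}\circ\bfQ_{A_r}=\{|x_h|<2^{\tau+1}\ \forall h\}$. Both hypotheses and the desired conclusion are invariant under this normalization because the homogeneous norm scales by the same factor $t$ in $|\x_{A_r}-\y_{A_r}|$ and in $|\x_{A_r}-\c_{A_r}|$, and because $\y_{A_r}\in 2^\tau\circ\bfQ_{A_r}$, $\x_{A_r}\notin2^{\tau+1}\circ\bfQ_{A_r}$ are preserved.

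\textbf{Second step.} Prove the two inequalities. Write $\x=\x_{A_r}$, $\y=\y_{A_r}$ for brevity, and recall $\gamma_0=\max_h(\la'_h)^{-1}$, so $|u+v|^{1/\la'_h}\le 2^{(1/\la'_h-1)_+}\big(|u|^{1/\la'_h}+|v|^{1/\la'_h}\big)\le 2^{\gamma_0}\big(|u|^{1/\la'_h}+|v|^{1/\la'_h}\big)$ for each scalar coordinate, whence the quasi‑triangle inequality $|\z+\w|\le 2^{\gamma_0}(|\z|+|\w|)$ holds for the homogeneous norm on $\R^{A_r}$. Since $\y\in2^\tau\circ\bfQ_{A_r}$ (unit scale) we have $|\y|\le |\x|$ in fact much smaller: more precisely, because $\x\notin2^{\tau+1}\circ\bfQ_{A_r}$ there is a coordinate $h_0$ with $|x_{h_0}|\ge 2^{\tau+1}$, so $|\x|\ge 2^{(\tau+1)/\la'_{h_0}}\ge 2^{\tau+1}$, while $|\y|\le \sum_h (2^\tau)^{1/\la'_h}\le d\,2^{\tau\gamma_0}$; one checks, after the normalization and adjusting constants, that $|\y|\le \tfrac12 2^{-\gamma_0}|\x|$ (if necessary one first shrinks $\bfQ_{A_r}$ by a fixed further dyadic factor, which only changes the admissible range of $\tau$ by an additive constant and is harmless for the application). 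Then from $\x=(\x-\y)+\y$ and the quasi‑triangle inequality, $|\x|\le 2^{\gamma_0}(|\x-\y|+|\y|)\le 2^{\gamma_0}|\x-\y|+\tfrac12|\x|$, giving $|\x-\y|\ge 2^{-\gamma_0-1}|\x|\ge 2^{-\gamma_0}|\x|$ after relabeling, which is the lower bound (with a possibly cosmetically adjusted $\gamma_0$). Conversely $|\x-\y|\le 2^{\gamma_0}(|\x|+|\y|)\le 2^{\gamma_0}(1+\tfrac12 2^{-\gamma_0})|\x|\le 2^{\gamma_0}|\x|$, after again absorbing constants. This yields $2^{-\gamma_0}|\x-\c_{A_r}|\le |\x-\y|\le 2^{\gamma_0}|\x-\c_{A_r}|$ as claimed; passing back through the scaling of Step 1 restores $\c_{A_r}$ and the original scale.

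\textbf{Main obstacle.} There is essentially no deep obstacle; the only points requiring care are bookkeeping of constants. The numerical factor $2^{\gamma_0}$ in the statement is somewhat loose, so the argument should be arranged so that whatever absolute constants arise from the quasi‑triangle inequality, from the dimension $d$, and from the separation between $2^\tau\circ\bfQ_{A_r}$ and $(2^{\tau+1}\circ\bfQ_{A_r})^c$, can all be absorbed into $2^{\gamma_0}$; the cleanest route is the one above, where one observes that $|\y|$ is a \emph{small} fraction of $|\x|$ so that the two triangle inequalities both collapse to the single factor $2^{\gamma_0}$ with room to spare. One should also be mildly careful that the definition of $2^\tau\circ\bfQ_{A_r}$ uses the coordinates $x_h$ with exponents $\la'_h$ (not the original $\la_h$), which is exactly why $\gamma_0$ is defined via the $\la'_h$; this is already reflected in the statement. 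A remark that the same inequality, with the same $\gamma_0$, applies verbatim to dyadic cubes $\bfQ^{\ell_i}_{i}$ in the individual factors $\R^{d_i}$ (taking $S=S_p$) would be worth including for later use, but is immediate from the above.
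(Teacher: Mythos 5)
Your argument diverges from the paper's and has a genuine quantitative gap at its crux. The paper proves the lemma coordinate by coordinate: for each relevant scalar coordinate $h$ it shows $|y_h-c_h|\le\tfrac12|x_h-c_h|$ (because $|y_h-c_h|$ is at most half the $h$-th half-width of $2^{\tau}\circ\bfQ_{A_r}$ while $|x_h-c_h|$ is at least the full one), hence $\tfrac12|x_h-c_h|\le|x_h-y_h|\le 2|x_h-c_h|$; raising to the power $1/\la'_h\le\gamma_0$ converts the factor $2$ into $2^{\gamma_0}$, and taking the maximum over $h$ (the homogeneous norm used in that proof is $\max_h|x_h|^{1/\la'_h}$) gives exactly the stated two-sided bound. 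You instead work at the level of the homogeneous norm via a quasi-triangle inequality, and your whole argument hinges on the claim $|\y-\c_{A_r}|\le\tfrac12 2^{-\gamma_0}|\x-\c_{A_r}|$, which you assert with ``one checks.'' It does not follow from the bounds you derive: you obtain $|\y-\c_{A_r}|\le d\,2^{\tau\gamma_0}$ and $|\x-\c_{A_r}|\ge 2^{\tau+1}$, and when $\gamma_0>1$ (i.e.\ some $\la'_h<1$) the first quantity exceeds the second by a factor of order $2^{\tau(\gamma_0-1)}$, unbounded in $\tau$; shrinking $\bfQ_{A_r}$ by a fixed dyadic factor cannot repair this. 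The root of the problem is your Step 1 normalization $2^\tau\circ\bfQ_{A_r}=\{|x_h|<2^\tau\ \forall h\}$: the enlargement $2^\tau\circ$ must be read as the anisotropic rescaling of $\bfQ_{A_r}$ about its center by the dilation $\del^{k_r}_{2^\tau}$, so the $h$-th half-width is multiplied by $2^{\tau\la'_h}$, not by $2^\tau$; this is what the paper's proof uses and what makes $2^\tau\circ\bfQ_{A_r}$ comparable to a homogeneous ball. With the isotropic reading the conclusion is in fact false: take two coordinates with $\la'_{h_0}=1$ and $\la'_{h_1}=1/10$, let $\y$ be extremal in the $h_1$ coordinate and $\x$ lie just outside $2^{\tau+1}\circ\bfQ_{A_r}$ in the $h_0$ coordinate; then $|\x-\y|/|\x-\c_{A_r}|\to\infty$ as $\tau\to\infty$.

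Even after correcting the normalization, your route does not recover the precise constant $2^{\gamma_0}$: the quasi-triangle inequality alone already costs a factor $2^{\gamma_0}$, and the dimension $d$ and the $\ell^1$-versus-max discrepancy cost more; these cannot all be ``absorbed into $2^{\gamma_0}$'' as claimed. If only comparability $|\x-\y|\sim|\x-\c_{A_r}|$ with some constant depending on the exponents were needed, a repaired version of your argument would do, but to obtain the stated constant you should follow the coordinate-wise comparison and only pass to the homogeneous norm at the very end.
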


\begin{proof} Suppose $\bfQ_{A_r}$ has scale $2^{-\ell_r}$. With $E_{A_r}=\bigcup_{i\in A_r}E_i$, write $\c_{A_r}=(c_h)_{h\in E_{A_r}}$, $\x_{A_r}=(x_{h})_{h\in E_{A_r}}$ and $\y_{A_r}=(y_h)_{h\in E_{A_r}}$. Then for each $h\in E_{A_r}$ we have $|y_h-c_h|\le\frac12 2^{\tau\la'_h-\mu_{\ell_i,h}}$ and $|x_{h}-c_h|\ge 2^{\tau\la'_h-\mu_{\ell_i,h}}>2|y_h-c_h|$, so $|x_{h}-c_h|<2|x_{h}-y_h|$ and $|x_{h}-y_h|<2|x_{h}-c_h|$. Since $|\x_{A_r}|=\max_{h\in E_{A_r}}|x_{h}|^{1/\la'_h}$, the lemma follows.
\end{proof}

\begin{lemma}\label{Lem3.17}
Let $\bfQ_{A_r}$ and $\bfQ'_{A_r}$ be dyadic cubes with centers $\c_{A_r}$, $\c_{A_r}'$ and scales $2^{-\ell_r}$, $2^{-\ell'_r}$ such that $2^{\tau+1}\circ\bfQ_{A_r}\subseteq 2^{\tau}\circ\bfQ'_{A_r}$. 
Let $\psi\in\SS(\R^{A_r})$. There is a constant $\eps>0$ and for every $N\in\N$ a constant $C_N$ so that if $\y_{A_r}\in 2^{\tau}\circ\bfQ_{A_r}$, $\x_i\notin 2^{\tau}\circ \bfQ'_{A_r}$ and $m_r\in\N$, then
\beas
\big|\psi\big(2^{m_r}(\x_{A_r}-\y_{A_r})\big)&-\psi\big(2^{m_r}(\x_{A_r}-\c_{A_r})\big)\big|\\
&\le C_N\min\big\{2^{-\eps(\ell_r-m_r)},2^{-\eps(m_r-\ell'_r)}\big\}\big[1+2^{m_r}|\x_{A_r}-\y_{A_r}|\big]^{-N}\\
&\le C_N2^{-\frac\eps2(\ell_r-\ell'_r)}\big[1+2^{m_r}|\x_r-\y_r|\big]^{-N}\ .
\eeas
The constants $C_N$ only depend only on a Schwartz norm of $\psi$, of order $M_N$ depending only on $N$, and the exponents $\la'_h$ in \eqref{lambda'}.
\end{lemma}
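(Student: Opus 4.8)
The statement is a quantitative mean–value estimate for a Schwartz function $\psi\in\SS(\R^{A_r})$ rescaled at dyadic level $2^{m_r}$, comparing its value at $2^{m_r}(\x_{A_r}-\y_{A_r})$ with its value at $2^{m_r}(\x_{A_r}-\c_{A_r})$, when $\y_{A_r}$ is in the $\tau$-dilate of the small cube $\bfQ_{A_r}$ and $\x_{A_r}$ is outside the $\tau$-dilate of the larger cube $\bfQ'_{A_r}$. First I would set up coordinates: write $E_{A_r}=\bigcup_{i\in A_r}E_i$, and in each scalar coordinate $h\in E_{A_r}$ use that $\bfQ_{A_r}$ has scale $2^{-\ell_r}$ and $\bfQ'_{A_r}$ has scale $2^{-\ell'_r}$, so (via the admissible sequences $\mu(\ell_r,h)$, with the slack $a$ from \eqref{mu ell}) the side of $2^\tau\circ\bfQ_{A_r}$ in direction $x_h$ has length $\sim 2^{\tau\la'_h-\mu(\ell_r,h)}$ and similarly for $\bfQ'_{A_r}$. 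The key geometric facts are: (a) $|\y_{A_r}-\c_{A_r}|\lesssim 2^{-\ell_r}$ in the $\del^{k_r}$-homogeneous norm, hence $2^{m_r}|\y_{A_r}-\c_{A_r}|\lesssim 2^{-(\ell_r-m_r)}$, which is the source of the gain $2^{-\eps(\ell_r-m_r)}$; and (b) $\x_{A_r}\notin 2^\tau\circ\bfQ'_{A_r}$ together with $2^{\tau+1}\circ\bfQ_{A_r}\subseteq 2^\tau\circ\bfQ'_{A_r}$ forces $|\x_{A_r}-\c_{A_r}|\gtrsim 2^{-\ell'_r}$, so that $2^{m_r}|\x_{A_r}-\c_{A_r}|\gtrsim 2^{-(\ell'_r-m_r)}=2^{m_r-\ell'_r}$, which when $m_r\ge\ell'_r$ gives largeness of the argument and the gain $2^{-\eps(m_r-\ell'_r)}$ via Schwartz decay.

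Second, I would prove the pointwise comparison. By Lemma \ref{Lem11.4} applied with the larger cube $\bfQ'_{A_r}$ in place of $\bfQ_{A_r}$ (legitimate since $\x_{A_r}\notin 2^{\tau+1}\circ\bfQ_{A_r}$ follows from $\x_{A_r}\notin 2^\tau\circ\bfQ'_{A_r}\supseteq 2^{\tau+1}\circ\bfQ_{A_r}$), the quantities $|\x_{A_r}-\y_{A_r}|$ and $|\x_{A_r}-\c_{A_r}|$ are comparable up to the fixed factor $2^{\gamma_0}$; in particular $1+2^{m_r}|\x_{A_r}-\y_{A_r}|\sim 1+2^{m_r}|\x_{A_r}-\c_{A_r}|$, so it is harmless which one appears in the final weight. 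Then write the difference as a line integral: $\psi\big(2^{m_r}(\x_{A_r}-\y_{A_r})\big)-\psi\big(2^{m_r}(\x_{A_r}-\c_{A_r})\big)=\int_0^1 \nabla\psi\big(2^{m_r}(\x_{A_r}-\c_{A_r})+t\,2^{m_r}(\c_{A_r}-\y_{A_r})\big)\cdot 2^{m_r}(\c_{A_r}-\y_{A_r})\,dt$, handling each scalar coordinate separately so that the homogeneity exponent $\la'_h$ enters cleanly. Bound $|2^{m_r}(c_h-y_h)|\le 2^{m_r\la'_h-\mu(\ell_r,h)+\tau}\lesssim 2^{-\la'_h(\ell_r-m_r)+O(1)}$; taking a small power, the product over $h\in E_{A_r}$ gives a factor $\lesssim 2^{-\eps(\ell_r-m_r)}$ with $\eps$ depending only on the $\la'_h$. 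For the Schwartz decay, use that along the whole segment $[\y_{A_r},\c_{A_r}]$ the point stays within $2^\tau\circ\bfQ_{A_r}$, so the argument of $\nabla\psi$ is still comparable (again by Lemma \ref{Lem11.4}) to $2^{m_r}|\x_{A_r}-\y_{A_r}|$, and $|\nabla\psi(\zeta)|\le C_{N'}(1+|\zeta|)^{-N'}$ for any $N'$; choosing $N'$ large relative to $N$ and absorbing the loss from the $\la'_h$-powers yields the weight $[1+2^{m_r}|\x_{A_r}-\y_{A_r}|]^{-N}$. This proves the bound with $2^{-\eps(\ell_r-m_r)}$.

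Third, to obtain the alternative bound $2^{-\eps(m_r-\ell'_r)}$, I would instead use the crude estimate $|\psi(u)-\psi(v)|\le |\psi(u)|+|\psi(v)|$ together with the lower bound from (b): since $2^{m_r}|\x_{A_r}-\c_{A_r}|\gtrsim 2^{m_r-\ell'_r}$ (and, by the comparability, the same for $2^{m_r}|\x_{A_r}-\y_{A_r}|$), when $m_r\ge\ell'_r$ each term is $\le C_{N+N''}(1+2^{m_r}|\x_{A_r}-\y_{A_r}|)^{-(N+N'')}\lesssim 2^{-N''(m_r-\ell'_r)}(1+2^{m_r}|\x_{A_r}-\y_{A_r}|)^{-N}$; taking $N''\ge\eps$ gives the claim, and when $m_r<\ell'_r$ the factor $2^{-\eps(m_r-\ell'_r)}$ is $\ge 1$ so there is nothing to prove. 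Taking the minimum of the two bounds is then immediate, and the last inequality of the lemma follows from the elementary observation that $\min\{2^{-\eps(\ell_r-m_r)},2^{-\eps(m_r-\ell'_r)}\}\le 2^{-\frac\eps2(\ell_r-\ell'_r)}$, obtained by averaging the two exponents since $(\ell_r-m_r)+(m_r-\ell'_r)=\ell_r-\ell'_r$. Throughout, the constant $C_N$ depends only on a single Schwartz norm $\|\psi\|_{(M_N)}$ with $M_N$ determined by $N$ (through the choice of $N'$, $N''$) and on the exponents $\la'_h$.

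\textbf{Main obstacle.} The only genuinely delicate point is bookkeeping the interplay between the two scales $\ell_r$ (small cube) and $\ell'_r$ (large cube) and the running scale $m_r$, and making sure the hypothesis $2^{\tau+1}\circ\bfQ_{A_r}\subseteq 2^\tau\circ\bfQ'_{A_r}$ is used exactly where needed: it is what guarantees both that Lemma \ref{Lem11.4} applies (so the arguments are comparable) and that the lower bound $|\x_{A_r}-\c_{A_r}|\gtrsim 2^{-\ell'_r}$ holds. A secondary nuisance is that $\del^{k_r}$ is non-isotropic, so one must carry the exponents $\la'_h=\la_h/e(k_r,i)$ coordinate by coordinate rather than treating $2^{m_r}$ as a uniform scaling; this is what forces the ``take a small power and absorb'' step and fixes $\eps$ in terms of $\gamma_0=\max_h(\la'_h)^{-1}$ and $\min_h\la'_h$. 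None of this is conceptually hard, but it must be written carefully so that all constants are uniform in $\bfR$, $\bfR'$, $\y_{A_r}$, $\x_{A_r}$ and $m_r$.
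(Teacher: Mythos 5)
Your proposal is correct and follows essentially the same route as the paper's (much terser) proof: comparability of $|\x_{A_r}-\c_{A_r}|$ and $|\x_{A_r}-\y_{A_r}|$ via Lemma \ref{Lem11.4}, Schwartz decay for the weight, the non-isotropic mean value theorem applied to the displacement $\y_{A_r}-\c_{A_r}$ for the gain $2^{-\eps(\ell_r-m_r)}$, the lower bound $|\x_{A_r}-\y_{A_r}|\gtrsim 2^{-\ell'_r}$ traded against extra decay for the gain $2^{-\eps(m_r-\ell'_r)}$, and the minimum--geometric-mean comparison for the last line. The only slip is the parenthetical claim that Lemma \ref{Lem11.4} is applied ``with the larger cube $\bfQ'_{A_r}$'': the comparability you need and actually justify is for the small cube $\bfQ_{A_r}$, using precisely that $\x_{A_r}\notin 2^{\tau+1}\circ\bfQ_{A_r}$, which is what the paper does.
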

\begin{proof}
It follows from Lemma \ref{Lem11.4} that $|\x_{A_r}-\c_{A_r}|\sim|\x_{A_r}-\y_{A_r}|$. 
The rapid decay of $\psi$ then gives $\big|\psi\big(2^{m_r}(\x_{A_r}-\y_{A_r})\big)-\psi\big(2^{m_r}(\x_{A_r}-\c_{A_r})\big)\big|\le C_{N,\psi}[1+2^{m_r}|\x_{A_r}-\y_{A_r}|]^{-N}$. 
Since $\y_{A_r}\in 2^{\tau}\circ\bfQ_{A_r}$ and $\x_{A_r}\notin 2^{\tau}\circ\bfQ'_{A_r}$, the non-isotropic mean value theorem (see \cite{FoSt82})  gives an additional gain of $C_{\psi}\big|2^{m_r}(\x_{A_r}-\y_{A_r})\big|^{\sigma}\leq C_{\psi}2^{-\epsilon(\ell_{r}-m_{r})}$. On the other hand we can use a power of $2^{m_{r}}|\x_{A_r}-\y_{A_r}|$ in the denominator to give an additional gain of $C_{\psi}\big|2^{m_{r}}(\x_{A_r}-\y_{A_r})\big|^{-\sigma}\leq C_{\psi}2^{-\epsilon(m_{r}-\ell'_{r})}$. This gives the first estimate. The second inequality comes from comparison between minimum and geometric mean.
\end{proof}
\begin{lemma}\label{Lem10.6}
Let $f$, $B$, $\bfR$, $\m_B$ be as in Proposition \ref{Lem10.11} (in particular,  $\bfR\in \D_{L}$ with $L\in\LL_S$ in the notation of \eqref{4.6xyz}). Let also $\c_{B}$ be the center of $\bfR_{B}$, $\eta\in \CC^{\infty}(\R^{d})$ be supported in the unit ball $B(1)$ and $J\in\LL$.
There exist constants $\mu>1$ and $\nu>0$ depending only on the exponents $\lambda_{h}$ and the entries of the matrix $\bfE$ so that if $m_{r}>\mu$ for all $r\in B$ and if $f*\eta^{(-J)}$ does not vanish identically on $\widehat\bfR_{B}^{cc}\times\R^{B^{c}}$ then $j_{i}\leq \ell_{i}-\nu\,m_{r}$ for all $r\in B$ and  $i\in A_{r}$.
\end{lemma}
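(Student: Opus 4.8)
\textbf{Proof strategy for Lemma \ref{Lem10.6}.}
The plan is to analyze the convolution $f*\eta^{(-J)}$ by exploiting the support of $f$ (contained in $(2^{\tau}\circ\bfR_{B})\times\R^{B^{c}}$), the support of $\eta^{(-J)}$ (contained in the $2^{-J}$-dilate of the unit ball $B(1)$), and the hypothesis that the convolution is nonzero somewhere on $\widehat\bfR_{B}^{cc}\times\R^{B^{c}}$, i.e.\ at some point $\x$ with $\x_{A_r}\notin 2^{m_r+\tau}\circ\bfQ_{A_r}$ for \emph{every} $r\in B$. First I would fix such a point $\x$ and fix $r\in B$ and $i\in A_r$. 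If $(f*\eta^{(-J)})(\x)\ne0$ there must exist $\y$ with $f(\x-\y)\ne0$ and $\eta^{(-J)}(\y)\ne0$. Projecting onto the $i$-th coordinate block: from $f(\x-\y)\ne0$ we get $(\x-\y)_i\in 2^{\tau}\circ\bfQ_i^{\ell_i}$ (since the support of $f$ in the $\x_{A_r}$-block is $2^{\tau}\circ\bfQ_{A_r}=\prod_{i\in A_r}2^{\tau}\circ\bfQ_i^{\ell_i}$ by Proposition \ref{R=Q} and the definition of $\circ$-enlargement), and from $\eta^{(-J)}(\y)\ne0$ we get $|\y_i|\lesssim 2^{-\mu(j_i,h)}$ in each scalar coordinate $h\in E_i$, i.e.\ $\y_i$ lies in a ball of radius $\sim 2^{-j_i}$ in the $|\cdot|$-norm on $\R^{d_i}$.

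Next I would combine these two containments with the key hypothesis $\x_{A_r}\notin 2^{m_r+\tau}\circ\bfQ_{A_r}$, which forces, for at least one scalar coordinate $h_0\in E_{A_r}$, the estimate $|x_{h_0}-c_{h_0}|\ge 2^{(m_r+\tau)\la'_{h_0}-\nu(\ell_{k_r},h_0)}$ (using the notation of \eqref{8.5yy} and \eqref{lambda'}), whereas $|x_{h_0}-y_{h_0}|<2^{\tau\la'_{h_0}-\nu(\ell_{k_r},h_0)}$ by the support of $f$; hence $|y_{h_0}|=|x_{h_0}-(x_{h_0}-y_{h_0})|\gtrsim 2^{m_r\la'_{h_0}}\cdot 2^{\tau\la'_{h_0}-\nu(\ell_{k_r},h_0)}$ once $m_r$ is large. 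But $\eta^{(-J)}(\y)\ne0$ gives an upper bound $|y_{h_0}|\le 2^{-\mu(j_{i_0},h_0)+O(1)}$ where $i_0\in A_r$ is the block containing $h_0$. Comparing these, and using the admissibility bound \eqref{mu ell} relating $\mu(\cdot,h_0)$ to $\la_{h_0}$, one obtains $j_{i_0}\le \ell_{i_0}-c\,m_r$ for a constant $c>0$. To upgrade this from the single index $i_0$ to \emph{all} $i\in A_r$, I would use that $J\in\LL$: by Lemma \ref{Lem3.8}\eqref{Lem3.8a} the point $J$ is within bounded distance of $\Gamma(\mbE)$, so for $i,i_0$ in the same block $A_r$ the coordinates $j_i$ and $j_{i_0}$ are comparable up to the relation $j_i\approx e(k_r,i_0)^{-1}e(k_r,i)^{-1}\cdots$; more precisely, using $\ell_i=\lfloor\ell_{k_r}/e(k_r,i)\rfloor$ from \eqref{3.6} and the near-membership of $J$ in $\Gamma_{S'}$ for the appropriate $S'$, one deduces $\ell_i-j_i\ge \nu'(\ell_{i_0}-j_{i_0})-O(1)\ge\nu\,m_r$ after adjusting constants and absorbing the bounded error by enlarging $\mu$.

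The main obstacle I anticipate is the bookkeeping in the last step: transferring the gain $j_{i_0}\le\ell_{i_0}-c\,m_r$ obtained at one scalar coordinate to a uniform statement over the whole block $A_r$, while keeping all constants dependent only on $\{\la_h\}$, the admissibility constant $a$ in \eqref{mu ell}, and the matrix $\bfE$ — and in particular ensuring the threshold $\mu$ can be chosen to absorb all the $O(1)$ discrepancies (those coming from the floor functions in \eqref{3.6}, from the $\kappa$ in Lemma \ref{Lem3.8}, and from the admissible-sequence tolerance $a$). A secondary point requiring care is that the constraint must hold simultaneously for all $r\in B$: since the hypothesis $\x\in\widehat\bfR_B^{cc}$ means $\x_{A_r}$ escapes $2^{m_r+\tau}\circ\bfQ_{A_r}$ for every $r\in B$ at once, the argument above applies independently to each such $r$ with the \emph{same} witness point $\x$ and the \emph{same} witness $\y$, so no consistency issue arises — but this should be stated explicitly. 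Once these are in place, the conclusion $j_i\le\ell_i-\nu\,m_r$ for all $r\in B$, $i\in A_r$ follows.
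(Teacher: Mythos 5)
Your strategy is the same as the paper's: pick a point $\x$ where the convolution is nonzero and a witness $\y$ in the two supports, locate for each $r\in B$ a coordinate where the escape condition $\x_{A_r}\notin 2^{m_r+\tau}\circ\bfQ_{A_r}$ bites, play the resulting lower bound on $|y_{h_0}|$ against the upper bound coming from $\supp\eta^{(-J)}\subseteq 2^{-J}B(1)$, and then propagate the resulting inequality $j_{i_0}\le\ell_{i_0}-m_r/e(k_r,i_0)+O(1)$ through the block $A_r$ using $J\in\LL$ and \eqref{3.6}. The first part is correct (modulo the small slip that $\supp f$ is centred at $\c$, so the triangle inequality should read $|y_{h_0}|\ge|x_{h_0}-c_{h_0}|-|x_{h_0}-y_{h_0}-c_{h_0}|$), and it reproduces the paper's computation. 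The propagation from the witness index $i_0$ to the \emph{dotted} index $k_r$ also works, because the two inequalities $j_{k_r}\le e(k_r,i_0)j_{i_0}+O(\kappa)$ and $e(k_r,i_0)\ell_{i_0}\le\ell_{k_r}$ (the latter from \eqref{3.6}) point the same way and give $j_{k_r}-\ell_{k_r}\le e(k_r,i_0)(j_{i_0}-\ell_{i_0})+O(\kappa)$.

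The gap is exactly at the step you flagged as the main obstacle: the passage from $k_r$ to a general undotted $i\in A_r$. The inequality $\ell_i-j_i\ge\nu'(\ell_{k_r}-j_{k_r})-O(1)$ that you invoke does not follow from $J\in\LL$. The only upper bound on $j_i$ furnished by the proximity of $J$ to $\Gamma(\bfE)$ is $j_i\le e(i,k_r)j_{k_r}+O(\kappa)$, while \eqref{3.6} forces $\ell_i$ to be as \emph{small} as possible, $\ell_i\le\ell_{k_r}/e(k_r,i)$; since $e(i,k_r)e(k_r,i)>1$ strictly for a standard matrix, one is left with
$$j_i-\ell_i\ \le\ \Big(e(i,k_r)-\tfrac1{e(k_r,i)}\Big)\ell_{k_r}\ -\ \tfrac{e(i,k_r)}{4}\,m_r\ +\ O(\kappa),$$
whose first term is an uncontrolled positive multiple of $\ell_{k_r}$. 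This is not merely a defect of the argument: take $n=2$, $\bfE=\left(\begin{smallmatrix}1&1\\2&1\end{smallmatrix}\right)$, $S=\{\overset{.}1,2\}$, $L=(\ell,\ell)\in\LL_S$, and $J=(p,2p)\in\LL$ with $p=\ell-m_1-\tau-2$. Then $f*\eta^{(-J)}$ can be nonzero on $\widehat\bfR_{B}^{cc}$ (the escape occurs in the first coordinate only), while $j_2=2p>\ell_2$ as soon as $\ell>2m_1+2\tau+4$, so the asserted conclusion fails for the undotted index $i=2$. The paper's own proof has the same lacuna at the same spot (it dismisses the general $i\in A_r$ with ``the argument is very similar'', but there the inequality runs the wrong way), and in the application (Proposition \ref{Lem10.11}) only the dotted case $i=k_r$ is actually used. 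So you should establish the statement for $i=k_r$, $r\in B$, together with the cheap consequence $j_i\le e(i,k_r)j_{k_r}+O(\kappa)$ for the remaining indices (which is all that is needed for the finiteness of $\LL_L$ and for the decay $|L'-J'|\gtrsim|\m_B|$), rather than trying to force the conclusion for every $i\in A_r$.
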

\begin{proof}
Since $J$ need not be in $\LL_S$, we must work with the single components $\x_i$ of $\x_{A_r}$ with $r\in B$. For $i\in A_r$ we also let $m_i=m_r/e(k_r,i)$, not necessarily an integer.
Let $\x=(\x',\x'')\in \widehat\bfR_{B}^{cc}\times\R^{B^{c}}$ such that $f*\eta_{J}^{(-J)}(\x)=\int_{\R^{d}}f(\y)\eta_{J}^{(-J)}(\x-\y)\,d\y\neq 0$.
Since $\x'\in \widehat\bfR_{B}^{cc}$, for every $r\in B$ there exists $i_{r}\in A_{r}$ such that $|\x_{i_{r}}-\c_{i_{r}}|^{e(k_r,i_r)}\geq 2^{\tau+m_{r}-\ell_{k_{r}}}$, that is $|\x_{i_{r}}-\c_{i_{r}}|\geq C_\tau 2^{m_{i_r}-\frac{\ell_{k_{r}}}{e(k_r,i_r)}}\geq C'_\tau2^{m_{i_r}-\ell_{i_{r}}}$, since $L\in\LL_S$. On the other hand there exists $\y'\in 2^{\tau}\circ\bfR_{B}$ so that $2^{J_B}(\x'-\y')\in 2^{\tau}\circ B'(1)$, $B'(1)$ being the unit ball in $\R^B$. Thus 
\beas
h\in E_{i_{r}}&\Longrightarrow |y_{h}-c_{h}|^{\frac{1}{\lambda_{h}}}\leq |\y_{i_{r}}-\c_{i_{r}}|\leq 2^{\tau-\ell_{i_{r}}}\text{\quad and\quad}|x_{h}-y_{h}|^{\frac{1}{\lambda_{h}}} \leq 2^{\tau-j_{i_{r}}}\\
&\Longrightarrow 
|y_{h}-c_{h}|\leq 2^{\lambda_{h}(\tau-\ell_{i_{r}})}\text{\quad and\quad}|x_{h}-y_{h}|\leq 2^{\lambda_{h}(\tau-j_{i_{r}})}\\
&\Longrightarrow |x_{h}-c_{h}|\leq \big[2^{\lambda_{h}(\tau-\ell_{i_{r}})}+2^{\lambda_{h}(\tau-j_{i_{r}})}\big]\\
&\Longrightarrow |\x_{i_{r}}-\c_{i_{r}}|\leq \big[2^{\lambda_{h}(\tau-\ell_{i_{r}})}+2^{\lambda_{h}(\tau-j_{i_{r}})}\big]^{\frac{1}{\lambda_{h}}}\leq C_\tau\big(2^{-\ell_{i_{r}}}+2^{-j_{i_{r}}}\big),
\eeas
where $C$ depends only on the exponents $\lambda_{h}$. Combining the two inequalities we have
\bea\label{10.7}
2^{m_{i_r}-\ell_{i_{r}}}\leq |\x_{i_{r}}-\c_{i_{r}}| 
\leq 
\begin{cases}
2^{-\ell_{i_{r}}+c_\tau}&\text{if $\ell_{i_{r}}\leq j_{i_{r}}$}\\
2^{-j_{i_{r}}+c_\tau}&\text{if $\ell_{i_{r}}> j_{i_{r}}$}.
\end{cases}
\eea
Let $\mu>1+2e(k_r,i_r)c_\tau$ for all $r\in B$. We can exclude that for some $r$, we have $m_r>\mu$ and $\ell_{i_{r}}\leq j_{i_{r}}$, because in this case it would follows from \eqref{10.7} that $m_{i_r}\le c_\tau$ and then $m_r=e(k_r,i_r)m_{i_r}\leq e(k_r,i_r)c_\tau<\mu$, a contradiction. Assume therefore that $m_r>\mu$ for $r\in B$. Then, for $r\in B$, $\ell_{i_{r}}> j_{i_{r}}$ and it follows from \eqref{10.7} that $
j_{i_{r}}-\ell_{i_{r}}\leq -m_{i_r}+c_\tau< \frac{-2m_{r}+\mu-1}{2e(k_r,i_r)}< -\frac{m_{r}}{2e(k_r,i_r)}$. Now let $i\in A_{r}$. This proves the thesis for $i=i_r$ with $\nu=\big(2e(k_r,i_r)\big)^{-1}$.

In order to extend the inequality to all $i\in A_r$, assume first that $i=k_r$, in case $k_r\ne i_r$. Since $J\in \LL$ we have $j_{k_r}\leq e(k_r,i_{r})j_{i_{r}}+\kappa$ where $\kappa$ is the constant in Lemma~\ref{Lem3.8}.  Since  $L\in\LL_{S}$ we have $\ell_{i_{r}}=\big\lfloor\frac{\ell_{k_{r}}}{e(k_{r},i_{r})}\big\rfloor \leq \frac{\ell_{k_{r}}}{e(k_{r},i_{r})}$. Hence $j_{k_r}-\ell_{k_r}\le e(k_r,i_{r})(j_{i_{r}}-\ell_{i_r})+\kappa\le-\frac{m_r}2+\kappa$. With the further condition $\mu>4\kappa$ we obtain
$j_{k_r}-\ell_{k_r}\le -\frac{m_r}4$.
Finally, for general $i\in A_r$, using the inequalities $j_{i}\leq e(i,k_r)j_{k_{r}}+\kappa$
and $\ell_i=\big\lfloor\frac{\ell_{k_{r}}}{e(k_{r},i)}\big\rfloor\ge \frac{\ell_{k_{r}}}{e(k_{r},i)}-1$, the argument is very similar.
 \end{proof}

 Now let $\KK\in\PP_0(\bfE)$, expressed as a dyadic sum $\KK=\sum_{J\in \LL}\eta_{J}^{(-J)}$ according to Theorem \ref{Thm3.9}\eqref{Thm3.9.(4)}. Without loss of generality, we may assume that $\eta_0=0$ and that all the other $\eta_J$ are supported in the unit ball $B(2^\tau)$. 
 We split indices $J\in\LL$ as $(J',J'')$ according to the splitting $\N^n=\N^B\times\N^{{B^{c}}}$. For better clarity, we denote the entries of $J'$ as $j'_i$ for $i\in B$ and the entries of $J''$ as $j''_i$ for $i\notin B$. If $L\in \LL$ let
\bea\label{11.7}
\LL_{L}&=\Big\{J=(J',J'')\in \LL:\text{$f*\eta_{J}^{(-J)}\not\equiv0$ on $\widehat\bfR_{B}^{cc}\times\R^{ B^{c}}$}\big\},\\
 \JJ_{L}'&=\big\{J':\exists\, J=(J',J'')\in\LL_L\big\},\\
\JJ''_{L,J'}&=\big\{J'':(J',J'')\in\LL_L\big\}\text{ \,if $J'\in \JJ_{L}'$},\\
\KK_{L,J'}&=\sum_{J''\in \JJ''_{L,J'}}\eta_{(J',J'')}^{(-(J',J''))}.
\eea
\begin{remarks}\label{Rem10.7}\quad
{\rm
\begin{enumerate}[(1)]
\item\label{Rem10.71}
It follows from Lemma \ref{Lem10.6} that $\LL_{L}$ is a finite set, and its cardinality grows at most polynomially in $|L|$. Consequently the sets $\JJ_{L}'$ and $\JJ_{L,J'}''$ are also finite sets.

\smallskip

\item \label{Rem10.72}
Since $\KK_{L,J'}$ is a finite sum, it is a $\CC^{\infty}$-functions with compact support.

\smallskip

\item \label{Rem10.73}
If $\KK_{L,J'}(\x',\x'')\neq 0$ then $|\x_{A_r}|\leq 2^{-j_{k_r}}$ for each $r\in B$.
\end{enumerate}
 }
\end{remarks}
\begin{lemma}\label{Lem10.8} 
 Let $\h\in F(B)$ and let $\AA_{\h,J',\x'}(\x'')=\partial^{b}_\h\KK_{L,J'}(\x',\x'')$.
Then $\AA_{\h,J',\x'}$ is a multi-norm kernel on the space $\R^{B^{c}}$ relative to the matrix $\bfE_{B}= \big(e(j,k)\big)_{j,k\in B}$, with the constants in equations \eqref{3.9} and \eqref{3.9.5} replaced by $C_{\alpha,N}2^{\sum_{r\in B}j'_{k_{r}}\lambda_{h_{r}}+\sum_{i\in  B}j'_{i}q_{i}}$ so that, for all derivatives~$\de^{\al''}_{\x''}$,
\beas
\big\vert\partial_{\x''}^{\alpha''}\AA_{\h,J',\x'}(\x'')\big|&\leq C_{\alpha,N}2^{\sum_{r\in B}j'_{k_{r}}(\lambda_{h_{r}}+q_{k_{r}})}\prod\nolimits_{i\in  B^{c}}N_{i}(\x'')^{-q_{i}-\[\alpha_{i}\]}\big(1+\|\x''\|\big)^{-N},
\eeas
and 
$$
\big\vert \FF''[\AA_{\h,J',\x'}](\xib'')\big|\leq C\,2^{\sum_{r\in B}j'_{k_{r}}(\lambda_{h_{r}}+q_{k_{r}})}.
$$
\end{lemma}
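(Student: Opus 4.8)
The statement asserts that, after differentiating $\KK_{L,J'}$ to order $b=|B|$ in the $B$-variables (one derivative per block in $B$, with the multi-index $\h\in F(B)$ picking out an admissible scalar direction $h_r\in E_{k_r}$ in each marked variable $\x_{k_r}$), the resulting function of $\x''$ is a multi-norm kernel on $\R^{B^c}$ for the restricted matrix $\bfE_B=(e(j,k))_{j,k\in B^c}$ — wait, more precisely $\bfE_{B^c}$ in spirit, though the paper writes $\bfE_B$; in any case it is the submatrix indexed by the surviving factors — with kernel constants inflated by the stated power of $2$. The natural route is to pass through the dyadic description $\KK_{L,J'}=\sum_{J''\in\JJ''_{L,J'}}\eta_{(J',J'')}^{(-(J',J''))}$ and to establish, term by term, the required estimates on derivatives $\de^{\al''}_{\x''}$ and on the partial Fourier transform $\FF''$, then sum over $J''$. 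First I would record that each $\eta_J$ is uniformly bounded in every Schwartz norm and (by Theorem \ref{Thm3.9}\eqref{Thm3.9.(4)}) has cancellation $\int_{\R^{d_{k_r}}}\eta_J(\x)\,d\x_{k_r}=0$ for $r$ such that $k_r\in D(S)$; in particular for $r\in B$. Cancellation is exactly what converts the derivative $\de^b_\h$ applied to the rescaled bump into a controlled factor: differentiating $\eta_{(J',J'')}^{(-(J',J''))}$ once in the scalar variable $x_{h_r}$ (with $h_r\in E_{k_r}$) produces a factor $2^{j'_{k_r}\la_{h_r}}$ from the chain rule (since the dilation of the $k_r$-th block in $\widehat\del$-rescaling carries exponent $\la_{h_r}$ up to the $e(k_r,i)$-weights that, by $L\in\LL_S$ and the relation between $J'$ and $L$, are comparable), and $\de_{h_r}\eta_J$ is again a Schwartz function with cancellation preserved. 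This accounts for the $\prod_{r\in B}2^{j'_{k_r}\la_{h_r}}$ part of the claimed constant.

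\textbf{Key steps.} (1) Reduce to a single term $\eta_{(J',J'')}^{(-(J',J''))}$, writing its $\de^b_\h$-derivative in the $\x'$-variables as $2^{\sum_{r\in B}j'_{k_r}\la_{h_r}}$ times a rescaling of a fixed Schwartz function $\zeta_{J}=\de^b_\h\eta_J$ (uniformly bounded in Schwartz norms, with the same cancellations), evaluated at $\del^{?}_{2^{J'}}\x'$. (2) Integrate out, or rather evaluate at, the frozen $\x'$: since $\x'$ ranges in $\widehat\bfR^{cc}_B$ (or is just frozen) and the Fourier transform in $\x''$ of $\eta_{(J',J'')}^{(-(J',J''))}$ factors as $\FF''$ of a dilation, the partial transform $\FF''[\de^b_\h\eta_{(J',J'')}^{(-(J',J''))}](\x',\xib'')$ is $2^{\sum_{r\in B}j'_{k_r}\la_{h_r}}$ times $2^{-\sum_{i\in B}j'_i q_i}$-type factors times a Schwartz function of $(2^{J'}\x', 2^{-J''}\xib'')$; the sum over $J''\in\JJ''_{L,J'}$ telescopes/converges by the one-parameter or multi-norm Littlewood-Paley structure on $\R^{B^c}$, producing the bound $C\,2^{\sum_{r\in B}j'_{k_r}(\la_{h_r}+q_{k_r})}$ for $\FF''[\AA_{\h,J',\x'}]$. (3) For the kernel estimates on $\AA_{\h,J',\x'}(\x'')$, observe that summing $\de^{\al''}_{\x''}$ of the rescaled bumps over $J''$ in the admissible index set $\JJ''_{L,J'}$ — which by Remark \ref{Rem10.7}\eqref{Rem10.71} is (a polynomially-growing but) genuine multi-norm index subset inside $\LL_{\bfE_{B^c}}$ — and invoking the standard synthesis of a multi-norm kernel from its dyadic pieces (the $(4)\Rightarrow(1)$ direction implicit in the proof of Theorem \ref{Thm3.9}, applied on $\R^{B^c}$) gives the differential inequalities \eqref{3.9}/\eqref{3.9.5} with the stated extra weight $2^{\sum_{r\in B}j'_{k_r}\la_{h_r}+\sum_{i\in B}j'_i q_i}=2^{\sum_{r\in B}j'_{k_r}(\la_{h_r}+q_{k_r})}$, the last equality using $\sum_{i\in B}j'_i q_i\le\sum_{r\in B}j'_{k_r}q_{k_r}$ up to the comparability forced by $L\in\LL_S$ (so $j'_i\le j'_{k_r}/e(k_r,i)$ and $q_i/e(k_r,i)$ sums to $q_{A_r}$, absorbed into constants). (4) The restricted cancellation conditions \eqref{3.9.5} for subsets $F\subseteq B^c$ are inherited from those of $\KK$ by the same freezing/integration argument, since integrating $\AA_{\h,J',\x'}$ against a bump $\eta(R\x_F)$ commutes with $\de^b_\h$ and with the partial Fourier transform.

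\textbf{Main obstacle.} The delicate point is the bookkeeping of exponents: the original $\KK\in\PP_0(\bfE)$ lives on the full $\R^d$ with the $\widehat N_i$ (equivalently, estimates \eqref{3.9} in terms of the $N_i$), and freezing $\x'$ while differentiating must be shown to collapse the $n$-norm structure to the $(n-|B|)$-norm structure on $\R^{B^c}$ with the submatrix, \emph{and} to produce precisely the advertised gain $2^{\sum_{r\in B}j'_{k_r}(\la_{h_r}+q_{k_r})}$ — not more, not less. The factor $2^{j'_{k_r}\la_{h_r}}$ is the derivative cost; the factor $2^{j'_{k_r}q_{k_r}}$ (or really $2^{j'_i q_i}$ summed over $i\in B$, which one then dominates) is the $L^1\!\to\!L^\infty$ cost of the rescaling in the integrated-out block variables. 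Getting the constants to depend on only finitely many kernel constants of $\KK$ requires controlling uniformly the Schwartz norms of $\de^b_\h\eta_J$, which follows from Theorem \ref{Thm3.9}\eqref{Thm3.9.(4)} since the $\eta_J$ are uniformly bounded in every Schwartz norm with constants governed by finitely many of the $C_{\al,N}$, $C'_{\al,N}$; this is routine but must be stated carefully. Everything else — the convergence of the sum over $J''$, the non-isotropic comparabilities coming from $L\in\LL_S$ and Lemma \ref{Lem3.8} — is a direct application of results already in the excerpt.
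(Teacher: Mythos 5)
Your proposal is correct and follows essentially the same route as the paper: expand $\KK_{L,J'}$ into its dyadic pieces $\eta_{(J',J'')}^{(-(J',J''))}$, pull the factor $2^{\sum_{r\in B}j'_{k_r}\la_{h_r}}$ out of $\de^b_\h$ by the chain rule and the factor $2^{\sum_{i\in B}j'_iq_i}$ out of the $\x'$-rescaling, observe that the resulting functions $\de^b_\h\eta_{(J',J'')}(2^{J'}\x',\cdot)$ are uniformly bounded in every Schwartz norm with the required cancellations, and invoke the dyadic synthesis of Theorem \ref{Thm3.9} on $\R^{B^c}$. Your side remarks (the submatrix should really be indexed by the complementary blocks, and the two forms of the constant are reconciled via $j'_i\le j'_{k_r}/e(k_r,i)$ for $L\in\LL_S$) are both accurate and consistent with what the paper leaves implicit.
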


\begin{proof}

\beas
\AA_{\h,J',\x'}(\x'')
&=
\sum\limits_{J''\in \JJ''_{L,J'}}\partial^b_{\h}\big[\eta_{(J',J'')}^{(-J',-J'')}(\x',\x'')\big]\\
&=
\sum\limits_{J''\in \JJ''_{L,J'}}2^{\sum_{r\in B}j'_{k_{r}}\lambda_{h_{r}}} \big(\partial^b_{\h}\eta_{(J',J'')}\big)^{(-J'-,J'')}(\x',\x'')\\
&=
\sum\limits_{J''\in \JJ''_{L,J'}}2^{\sum_{r\in B}j_{k_{r}}\lambda_{h_{r}}+\sum_{i\in  B}j_{i}q_{i}}\big(\partial^b_{\h}\eta_{(J',J'')}\big)^{(0,-J'')}(2^{J'}\x',\x'')\\
\eeas
Since the functions 
$ \partial^b_{\h}\eta_{(J',J'')}(2^{J'}\x',\cdot)$, depending on the parameters $\h,J',\x'$, are uniformly bounded in every Schwartz norm  and have the required cancellations, the conclusion follows easily (notice that the factor $2^{J'}$ plays in favour of uniformity in $\x'$).
\end{proof}

\begin{proof}[Proof of Proposition \ref{Lem10.11}]
We can assume $b=|B|<s$ since, as will be clear from the proof, the case $B=\{1,\dots,s\}$ is simpler. 
The cancellations of $f$ allow us to express it as a sum of derivatives. Lemma \ref{Lem10.8} shows that for each $\h\in F(B)$ there exists $g_{\h}\in L^{2}(\R^{d})$ supported on $2^{\tau}\bfR_{ B}\times\R^{ B^{c}}$ so that 
\beas
f(\x',\x'')&=\sum\nolimits_{\h\in F}\de^b_\h g_\h(\x',\x'')&&\text{ with }&&\|g_{\h}\|_{2}&\leq C\prod\nolimits_{r\in B}2^{-\lambda_{h_{r}}\ell'_{k_{r}}}\|f\|_{2}.
\eeas
To prove the Proposition we can assume that $f$ consists of a single term in the above sum: thus assume
there exists $g\in L^{2}(\R^{d})$ supported on $2^{\tau}\bfR_{ B}\times\R^{ B^{c}}$, and for each $r\in B$ there is an index $h_{r}\in E_{k_{r}}$, so that
\bea\label{11.8}
f(\x',\x'')&=\de^b_\h g(\x',\x'')=\Big(\prod\nolimits_{r\in B}\partial_{x_{h_{r}}}\Big)g(\x',\x''),\\
\|g\|_{2}&\leq C\,2^{-\sum_{r\in B}\lambda_{h_{r}}\ell'_{k_{r}}}\|f\|_{2}.
\eea

 If $\x'\in \widehat\bfR_{B}^{cc}=\prod_{r\in B}\big(2^{m_{r}+\tau}\circ \bfQ_{A_r}\big)^{c}$ then for each $r\in B$ there exists $i_{r}\in A_{r}$ so that $\x_{i_{r}}\notin 2^{m_{r}+\tau}\circ\bfQ^{\ell_{i}}$, and therefore $|\x_{i_{r}}|>2^{\tau+m_{r}-\ell_{i_{r}}-1}$. Now let $(\x',\x'')\in \widehat\bfR_{B}^{cc}\times V$. Using \eqref{11.7} and \eqref{11.8} and integration by parts, we have 
\bea\label{11.9}
f*\KK(\x',\x'')&=\sum_{J\in \LL_{L}}f*\eta_{J}^{(-J)}(\x',\x'')
=\sum_{J'\in \JJ_{L}'}f*\KK_{L,J'}(\x',\x'')\\
&=\sum_{J'\in\JJ'_L}\de^b_\h g*\KK_{L,J'}(\x',\x'')
=
\sum_{J'\in\JJ'_L} g*\de^b_\h \KK_{L,J'}(\x',\x'').
\eea
Thus from \eqref{11.9}

\beas
\phantom{.}&\int_{\widehat\bfR^c_{B}\times V}\big\vert f*\KK(\x',\x'')\big\vert\,d\x'\,d\x''
\leq
\sum_{J'\in \JJ_{L}'}\int_{\widehat\bfR^c_{B}\times V}\big\vert g *\de^b_\h \KK_{L,J'}(\x',\x'')\big\vert\,d\x'\,d\x''\\
&\leq
\sum_{J'\in \JJ_{L}'}\int_{2^{\tau}\bfR_{ B}}\int_{\widehat\bfR_{B}^{cc}}\Big[\int_{ V}\Big\vert g (\y',\y'')\de^b_\h \KK_{L,J'}(\x'-\y',\x''-\y'')\big\vert d\x''\Big]\,d\x'\,d\y'\\
&\leq
\big| V\big|^{\frac{1}{2}}\sum_{J'\in \JJ_{L}'}\int_{2^{\tau}\bfR_{ B}}\int_{\widehat\bfR_{B}^{cc}}\Big[\int_{ V}\Big\vert g (\y',\y'')\de^b_\h \KK_{L,J'}(\x'-\y',\x''-\y'')\Big\vert^{2} d\x''\Big]^{\frac{1}{2}}\,d\x'\,d\y'\\
&=\big| V\big|^{\frac{1}{2}}\sum_{J'\in \JJ_{L}'}\int_{2^{\tau}\bfR_{ B}}\int_{\widehat\bfR_{B}^{cc}}\big\|g(\y',\,\cdot\,)*''\de^b_\h \KK_{L,J'}(\x'-\y',\,\cdot\,)\big\|_{L^{2}(\R^{ B^{c}})}\,d\x'\,d\y',
\eeas
where $*''$ is convolution in the $\x''$-variables. Let $\AA_{\h,J',\x'-\y'}(\x'')=\partial^b_{\h}\KK_{L,J'}(\x'-\y',\x'')$. According to Lemma \ref{Lem10.8}, $\AA_{\h,J',\x'-\y'}$ is a multi-norm kernel on $\R^{ B^{c}}$ with constants independent of $\x'-\y'$ and equal to those of $\KK$ multiplied by the factor $2^{\sum_{r\in B}j_{k_{r}}\lambda_{h_{r}}+\sum_{i\in B}j_{i}'q_{i}}$. Also $\AA_{\h,J', \x'-\y'}(\x'')=0$ unless $|\x_{A_r}'-\y_{A_r}'|\leq 2^{\tau-j'_{k_r}}$ for all $r\in B$. Then using the Cauchy-Schwarz inequality, Remark \ref{Rem10.7}\eqref{Rem10.73}, and the $L^{2}(\bfR^{ B^{c}})$-boundedness of $\AA_{\h,J',\x'-\y'}$, we have
\beas
\phantom{.}&\int_{\widehat\bfR^c_{B}\times V}\big\vert f*\KK(\x',\x'')\big\vert\,d\x'\,d\x''\\
&\leq
\big| V\big|^{\frac{1}{2}}\sum_{J'\in \JJ_{L}'}\int_{2^{\tau}\bfR_{ B}}\int_{\R^{ B}}\big\|g(\y',\,\cdot\,)*''\AA_{\h,J',\x'-\y'}(\,\cdot\,)\big\|_{{L^{2}(\R^{ B^{c}})}}\1_{2^{-J'}\circ B(1)}(\x'-\y')\,d\x'\,d\y'\\
&\leq
C_{\KK}\big| V\big|^{\frac{1}{2}}\sum_{J'\in \JJ_{L}'}2^{\sum_{r\in B}j_{k_{r}}'\lambda_{h_{r}}+\sum_{i\in  B}j_{i}'q_{i}}\int_{2^{\tau}\bfR_{ B}}\big\|g(\y',\cdot)\big\|_{L^{2}(\R^{ B^{c}})}\Big[\int_{\R^{ B}}\1_{2^{-J'}\circ B(1)}(\x'-\y')\,d\x'\Big]\,d\y'\\
&\leq
C_{\KK,\tau}\big| V\big|^{\frac{1}{2}}\sum_{J'\in \JJ_{L}'}2^{\sum_{r\in B}j_{k_{r}}'\lambda_{h_{r}}}\int_{2^{\tau}\bfR_{ B}}\big\|g(\y',\cdot)\big\|_{L^{2}(\R^{ B^{c}})}\,d\y'\\
&\leq
C_{\KK,\tau}\big\vert \bfR_{ B}\big\vert^{\frac{1}{2}}
\big| V\big|^{\frac{1}{2}}\sum_{J'\in \JJ_{L}'}2^{\sum_{r\in B}j_{k_{r}}'\lambda_{h_{r}}}\Big[\int_{\R^{ B}}\big\|g(\y',\cdot)\big\|^{2}_{L^{2}(\R^{ B^{c}})}\,d\y'\Big]^{\frac{1}{2}}\\
&=
C_{\KK,\tau}\big\vert \bfR_{ B}\big\vert^{\frac{1}{2}}
\big| V\big|^{\frac{1}{2}}\sum_{J'\in \JJ_{L}'}2^{\sum_{r\in B}j_{k_{r}}'\lambda_{h_{r}}}\|g\|_{L^{2}(\R^{d})}\\
&\leq
C_{\KK,\tau}\big\vert \bfR_{ B}\big\vert^{\frac{1}{2}}
\big| V\big|^{\frac{1}{2}}\|f\|_{2}
\sum_{J'\in \JJ_{L}'}2^{-\sum_{r\in B}\lambda_{h_{r}}(\ell'_{k_{r}}-j'_{k_{r}})}\\
&\leq
C_{\KK,\tau}\big\vert \bfR_{ B}\big\vert^{\frac{1}{2}}
\big| V\big|^{\frac{1}{2}}\|f\|_{2}
\sum_{J'\in \JJ_{L}'}2^{-\eps|L'-J'|},
\eeas
for some $\eps>0$. We have used that, since $J\in\LL$ and $L\in\LL_S$, $\ell'_i-j'_i\le(\ell'_{k_r}- j'_{k_r})/e(k_r,i)$ for every $i\in A_r$. By Lemma \ref{Lem10.6}, for $J'\in \JJ_{L}'$ we have $|L'-J'|\ge c|\m_B|$ and this concludes the proof.
\end{proof}

\medskip

\section{Atoms belong to $\h_{\bfE}^{1}(\R^{d})$}\label{Sec13}

In this section we  show that the multi-norm atoms $a$ in Definition \ref{Def9.1} belong to the Hardy space $ \h_{\bfE}^{1}(\R^d)$. We remark that if $S=\big\{(A_{1},k_{1}),\ldots,(A_{s},k_{s})\big\}$ then an $(S,\tau)$-atom  is a Chang-Fefferman product atom relative to the decomposition $\R^{d}=\prod_{r=1}^{s}\R^{A_r}$ with dilations $\del^{k_r}$ in the $A_r$-factor (or a Coifman-Weiss atom if $s=1$, see Remarks \ref{Rem11.2}.) It follows that the `partial' square function $S_{\Psi,S}a=\big(\sum_{L\in \LL_{S}}|a*\Psi_{L}^{(-L)}|^{2}\big)^{\frac{1}{2}}\in L^{1}(\R^{d})$. It is thanks to the extra restrictions on a  multi-norm atom (supporting rectangles of pre-atoms in $\mathbb{D}_S$ and stronger cancellation assumptions) that we obtain  $S_{\Psi, S'}a\in L^{1}(\R^{d})$ also for the other marked partitions $S'$.

\begin{theorem}\label{Thm11.7}
There is a constant $C>0$ so that $\|a\|_{\h_{\bfE}^{1}}\leq C$ for all multi-norm atoms $a$.
\end{theorem}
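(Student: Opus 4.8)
Fix a multi-norm atom $a$ associated with $(S,\tau)$ where $S=\big\{(A_1,k_1),\dots,(A_s,k_s)\big\}\in\SS_{\bfE}^*$. We use the $L^1$-equivalence of square functions (Theorems \ref{Thm5.1}, \ref{Thm6.2}, \ref{Thm6.4}, \ref{Thm6.11}, \ref{Thm7.1}) to choose whichever square function is most convenient; I would work with $S_\Psi$ of tensor type (or its Plancherel-P\'olya variant). The decomposition we must control is $S_\Psi a=\big(\sum_{S'\in\SS^*_\bfE}\sum_{L\in\LL_{S'}}|a*\Psi_L^{(-L)}|^2\big)^{1/2}$, so it suffices to bound $\|S_{\Psi,S'}a\|_1$ for each marked partition $S'$ by a constant independent of the atom. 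The argument naturally splits into two regimes: (i) $S'=S$, where $a$ is, by Remark \ref{Rem11.2}, a genuine Coifman--Weiss atom (if $s=1$) or Chang--Fefferman product atom (if $s\ge2$) for the product structure $\R^d=\prod_r\R^{A_r}$ with dilations $\del^{k_r}$, and we only use the scales $L\in\LL_S$ matching that structure; (ii) $S'\ne S$, the genuinely new case, where the atom's supporting rectangles live in $\D_S$ but we test against Littlewood-Paley pieces indexed by $\LL_{S'}$.

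\textbf{Case $S'=S$.} Here I would invoke the known atomic theory. For $s=1$, $a$ is a special Coifman--Weiss atom for the local non-isotropic Hardy space $\h^1_{\del^k}(\R^d)$; the classical estimate gives $\|S_{\Psi,S}a\|_1\le C$ directly, splitting $\R^d$ into a bounded multiple of the supporting cube (where one uses $\|a\|_2$ and Cauchy--Schwarz) and its complement (where one uses the cancellation $\int_{\R^{d_k}}a\,d\x_k=0$ together with the smoothness/decay of $\Psi_L^{(-L)}$ to produce geometric decay in $|L|$ and in the distance to the cube). For $s\ge2$ one runs the Chang--Fefferman argument: decompose $a=\sum_{\bfR\in\D_S(\Omega)}a_\bfR$, use the $L^2$-control $\sum_j\|a_{\DD_j}\|_2^2\le|\Omega|^{-1}$, the $L^2$-boundedness of the product square function, and a Journé-type argument (Section \ref{journe}, Proposition \ref{CarberySeeger}) to handle the region outside the enlargement of $\Omega$; the estimate $|\overline\Omega|\sim|\Omega|$ from Lemma \ref{CSmeasure} keeps the measures under control.

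\textbf{Case $S'\ne S$, the main obstacle.} This is where Proposition \ref{Lem10.11} enters and is the crux of the whole theorem. The point is that $\Psi_L^{(-L)}$ with $L\in\LL_{S'}$ has cancellation in the variables $\x_{k'_{r'}}$ dotted in $S'$, which need not be dotted in $S$; conversely, the atom (or its pre-atoms) has cancellation in the variables dotted in $S$. One must exploit the mismatch: roughly, for $B\subseteq\{1,\dots,s\}$ equal to the set of indices $r$ such that $A_r$ meets a block of $S'$ that is not entirely "aligned" with $S$, I would apply Proposition \ref{Lem10.11} with the multi-norm kernel $\KK$ taken to be (a finite sum of) the pieces $\Psi_L^{(-L)}$, or rather with $\KK$ a multi-norm kernel realizing the operator $f\mapsto\sum_{L\in\LL_{S'}}f*\Psi_L^{(-L)}*\widetilde\Psi_L^{(-L)}$, decompose the atom into its pre-atoms $a_\bfR$, split $\R^d$ over the "bad" coordinates into the dyadic enlargements $\widehat\bfR_{B,\m}=2^{\m+\tau}\circ\bfR_B$ (a geometric sum over $\m_B$) times the complementary region $\widehat\bfR^{cc}_{B,\m}$, and use the estimate $\int_{\widehat\bfR^{cc}_{B,\m}\times V}|a_\bfR*\KK|\le C_\KK|\bfR_B|^{1/2}|V|^{1/2}2^{-\delta|\m_B|}\|a_\bfR\|_2$ to sum the geometric series. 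The region inside all the enlargements is handled by the product/Journé machinery exactly as in Case (i), after observing that the union of the enlarged rectangles has measure $\sim|\Omega|$ (Lemma \ref{CSmeasure}); the remaining geometric decay in $|\m_B|$ pays for the polynomial loss $(1+|\m'|)^p$ in Proposition \ref{CarberySeeger}. The hard part will be bookkeeping: identifying the correct subset $B$ and the correct auxiliary factorization $\R^d=\R^B\times\R^{B^c}$ for each pair $(S,S')$, checking that $\Psi_L^{(-L)}$ (for $L\in\LL_{S'}$) genuinely presents itself as a multi-norm kernel to which Proposition \ref{Lem10.11} applies after the pre-atom's cancellations are extracted as derivatives (as in \eqref{11.8}), and verifying that the scale constraints $L\in\LL_S$ versus $L'\in\LL_{S'}$ interact via Lemma \ref{Lem3.8} to give the uniform distance bound $|L-L'|\gtrsim|\m_B|$ needed to close the sum. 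I would complete the proof by checking that the number of marked partitions $S'$ is finite, so summing over $S'$ costs only a constant, giving $\|a\|_{\h^1_\bfE}=\|S_\Psi a\|_1\le C$ uniformly.
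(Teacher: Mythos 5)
There is a genuine gap at the step you yourself identify as ``the main obstacle.'' Your plan is to bound the square function $S_\Psi a$ directly and, for $S'\ne S$, to invoke Proposition \ref{Lem10.11} with $\KK$ ``realizing the operator $f\mapsto\sum_{L\in\LL_{S'}}f*\Psi_L^{(-L)}*\widetilde\Psi_L^{(-L)}$.'' But Proposition \ref{Lem10.11} estimates $\int|f*\KK|$ over a region, i.e.\ the $L^1$ norm of a \emph{single} convolution; applied to that kernel it controls the $L^1$ norm of the summed piece $f_{S'}$, not the $\ell^2$-valued quantity $S_{\Psi,S'}a=\big(\sum_{L\in\LL_{S'}}|a*\Psi_L^{(-L)}|^2\big)^{1/2}$, and neither quantity dominates the other. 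Applying the proposition instead to each single piece $\Psi_L^{(-L)}$ (which is a legitimate multi-norm kernel with uniform constants) yields a bound with no decay in $L$, so the sum over $L\in\LL_{S'}$ diverges; the decay in the distance between the kernel scales and the atom's scale is produced \emph{inside} the proof of Proposition \ref{Lem10.11} (via Lemma \ref{Lem10.6}) only for the full kernel sum, and you do not supply the $L$-by-$L$ almost-orthogonality estimates that a direct square-function argument would require. The paper sidesteps this entirely: by Corollary \ref{Cor8.10} — equivalently the Riesz-transform characterization of Theorem \ref{Thm8.9}, since the $R_J$ are multi-norm kernels and $\|f\|_{\h^1_\bfE}\sim\sum_J\|f*R_J\|_1$ — Theorem \ref{Thm11.7} reduces to exactly the single-kernel estimate $\|a*\KK\|_1\le C_\KK$ of Proposition \ref{Prop11.9}, which is the statement Proposition \ref{Lem10.11} is designed to feed. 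Your proposal never makes this reduction, and without it your use of the key estimate does not close.

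Even granting that reduction, your description of the geometry (``identifying the correct subset $B$ \dots for each pair $(S,S')$'') is not how the argument runs. In Proposition \ref{Prop11.9} the marked partition of the test object plays no role; one fixes the atom's partition $S$, writes $\1_{\widehat\Omega^c}$ by inclusion--exclusion as a signed sum over \emph{all} nonempty $B\subseteq\{1,\dots,s\}$ of indicators of $(\widehat\bfR_B)^{cc}\times\R^{B^c}$, and proves $|I_B|\le C$ by reverse induction on $|B|$ starting from $B=\{1,\dots,s\}$: the term $S_1^B$ is handled by Proposition \ref{Lem10.11} together with Journ\'e's lemma (Proposition \ref{CarberySeeger}) to absorb the polynomial loss $(1+|\m'|)^p$, while $S_2^B$ is re-expanded by inclusion--exclusion in the $B^c$ variables and absorbed into the inductive hypothesis for strictly larger $H\supsetneq B$. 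That induction is the essential bookkeeping you defer, and it cannot be recovered from a single choice of $B$ per pair of partitions.
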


By Corollary \ref{Cor8.10}, Theorem \ref{Thm11.7} is a direct consequence of the following Proposition.

\begin{proposition}\label{Prop11.9}
Let $S{=\big\{(A_{1},k_{1}),\ldots,(A_{s},k_{s})\big\}}\in \mathcal{S}_{\bfE}^{*}$ and let $\tau >1$. If $\KK\in\PP_{0}(\bfE)$ is a multi-norm kernel, there is a constant $C_{\KK}>0$ so that $\|a*\KK\|_1\leq C_{\KK}$ for any $(S,\tau)$-atom $a$. $C_{\KK}$ depends on finitely many of the kernel constants of $\KK$ in \eqref{3.9} and \eqref{3.9.5}.

\end{proposition}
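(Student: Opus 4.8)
The plan is to reduce the estimate $\|a*\KK\|_1\le C_\KK$ to a sum of localized pieces controlled by the convolution estimate Proposition~\ref{Lem10.11} together with the Journ\'e-type covering estimate Proposition~\ref{CarberySeeger}. First I would dispose of the easy cases: when $S=\emptyset$ the atom $a$ is just an $L^2$-normalized function supported in a translate of $2^\tau\circ B(1)$, and since $\KK\in\PP_0(\bfE)$ is bounded on $L^2$ and is a local kernel (singular only at $0$), the function $a*\KK$ is $L^2$-bounded and has rapid decay coming from the off-diagonal decay $(1+\|\x\|)^{-N}$ in \eqref{3.9}, so $\|a*\KK\|_1\lesssim 1$ directly. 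When $s=1$, $a$ is a (special) Coifman--Weiss atom for $\h^1_{\del^k}$, and one can argue as in the classical local theory: split $\R^d$ into the region $2^{c\tau}\circ\bfR^\sharp$ near the supporting cube $\bfR$ (where Cauchy--Schwarz and $L^2$-boundedness give a bound) and its complement, where the cancellation $\int a\,d\x_k=0$ combined with the kernel estimates of Proposition~\ref{Lem10.11} (taken with $B=\{1\}$, $s=1$, so no set $V$) produces a geometrically summable tail $\sum_{m\ge1}2^{-\delta m}$.

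The substance is the case $s\ge2$. Here I would use the pre-atom decomposition $a=\sum_{\bfR\in\D_S(\Omega)}a_\bfR$ and, as in Remark~\ref{Rem11.2.3}ff., may assume the supporting rectangles $\bfR$ are the maximal dyadic rectangles in $\Omega$. For each fixed subset $B\in\PP_s$ I would peel off the region of $\R^d$ where, for $r\in B$, the point $\x_{A_r}$ lies outside the dyadic enlargement $2^{m_r+\tau}\bfQ_{A_r}$ but for $r\notin B$ it stays inside, iterating over $B$ from $B=\emptyset$ (the fully local part, handled by $L^2$ plus maximal-function bounds) up to $B=\{1,\dots,s\}$. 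On the $B$-region, for a single pre-atom $a_\bfR$ one applies Proposition~\ref{Lem10.11} with the set $V=\R^{B^c}$ replaced by the union of the complementary enlargements of the $\bfR'$-rectangles dominated by $\bfR$, obtaining $\int_{\widehat\bfR^{cc}_B\times V}|a_\bfR*\KK|\lesssim |\bfR_B|^{1/2}|V|^{1/2}2^{-\delta|\m_B|}\|a_\bfR\|_2$. Summing over $\m_B\in\N^B$ the factor $2^{-\delta|\m_B|}$ is summable, but the price is that $V$ grows as the enlargement factor grows, and this is exactly where the embeddedness factors $\Emb^r_\Omega(\bfR)$ and the standard enlargements $\Omega^{(j)}$ enter: the $2^{\m_B}$-enlargement of $\bfR_B$ stays inside a controlled enlargement of $\Omega$ only up to the embeddedness exponents, and beyond that one is forced to compare against $|\Omega^{(j)}|\le C^j|\Omega|$. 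Choosing a complete string $\r$ and invoking Proposition~\ref{CarberySeeger}, the sum $\sum_{\bfR'\in\overline\UU^{N,\m'}_B}|\bfR'|\,|\sh(\overline\VV^{N,\m'}(\bfR'))|\lesssim (1+|\m'|)^p|\Omega|$ absorbs the polynomial growth in $|\m_B|$ against the exponential gain $2^{-\delta|\m_B|}$, and Cauchy--Schwarz in the pre-atom index using condition \eqref{atomsE} ($\sum_j\|a_{\DD_j}\|_2^2\le|\Omega|^{-1}$) closes the estimate with a bound $\lesssim |\Omega|^{1/2}\cdot|\Omega|^{-1/2}=1$.

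The main obstacle I anticipate is the bookkeeping that links the three devices — the convolution estimate, the embeddedness/standard-enlargement machinery, and the Journ\'e covering bound — into a single summable double (really multiple) series. In particular one must verify that the set $V$ appearing in Proposition~\ref{Lem10.11}, when chosen for the $B$-piece of a fixed pre-atom $a_\bfR$, is genuinely the shadow of the family $\overline\VV^{N,\m'}(\bfR_B)$ of complementary $B^c$-enlargements, so that Proposition~\ref{CarberySeeger} applies with the correct $\m'=\m_B$; and one must check that the measure bound from Proposition~\ref{CarberySeeger} (which controls $\sum_{\bfR'}|\bfR'_B|\,|\sh(\cdots)|$) matches the combination $|\bfR_B|^{1/2}|V|^{1/2}$ produced by Cauchy--Schwarz after squaring. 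A secondary technical point is ensuring that the exponents $\delta$ from Proposition~\ref{Lem10.11}, the $\nu$ from Lemma~\ref{Lem10.6}, and the polynomial degree $p$ from Proposition~\ref{CarberySeeger} are all uniform (depending only on $\bfE$, the $\la_h$, and $\tau$), so that after all summations the final constant $C_\KK$ depends only on finitely many kernel constants of $\KK$. Once these identifications are made, the actual inequalities are routine Cauchy--Schwarz and geometric-series manipulations; I would present the $B=\{1,\dots,s\}$ (no-$V$) case first as a model and then indicate the modifications for general $B$.
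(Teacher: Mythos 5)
Your overall architecture is the paper's: dispose of $s\le 1$ by $L^2$-boundedness plus Proposition \ref{Lem10.11}, then for $s\ge2$ decompose the far region by subsets $B\subseteq\{1,\dots,s\}$, apply Proposition \ref{Lem10.11} with $V$ equal to a shadow, and close with Cauchy--Schwarz, condition \eqref{atomsE}, and Proposition \ref{CarberySeeger}. But two steps in the $s\ge2$ case are genuinely missing, and as written the argument does not close. First, you propose to apply Proposition \ref{Lem10.11} ``for a single pre-atom $a_\bfR$'' with $V=\sh\big(\VV^{N,\m'}(\bfR_B)\big)$ and then sum. After Cauchy--Schwarz this produces $\big(\sum_{\bfR\in\UU}|\bfR_B|\,|\sh(\VV^{N,\m'}(\bfR_B))|\,2^{-2\delta|\m_B|}\big)^{1/2}$, and regrouping the inner sum by $(\m',\bfR')$ yields a factor $\#\VV^{N,\m'}(\bfR')\cdot|\bfR'|\,|\sh(\VV^{N,\m'}(\bfR'))|$: the cardinality of the (heavily overlapping) family of $B^c$-components appears, and Proposition \ref{CarberySeeger} does not control it. The fix, which is the point of the paper's definitions \eqref{12.3q}--\eqref{12.4q}, is to group \emph{before} estimating: set $a_{\m',\bfR'}=\sum_{\bfR\in\UU^{N,\m'}(\bfR')}a_\bfR$ and apply Proposition \ref{Lem10.11} once to this grouped function with the single set $V=\widehat{\widetilde\Omega}$; then $|V|\lesssim|\sh(\VV^{N,\m'}(\bfR'))|$ appears exactly once per $(\m',\bfR')$, the family $\{\UU^{N,\m'}(\bfR')\}$ partitions $\UU$ so \eqref{atomsE} bounds $\sum\|a_{\m',\bfR'}\|_2^2$, and the Journ\'e bound applies.

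Second, once $V$ is the shadow rather than all of $\R^{B^c}$, the region $(\widehat{\bfR'})^{cc}\times\big(\widehat{\widetilde\Omega}\big)^c$ is left over, and your proposal never accounts for it; your suggestion to iterate ``from $B=\emptyset$ up to $B=\{1,\dots,s\}$'' runs the induction in the wrong direction for this purpose. The paper handles this leftover term ($S_2^B$ in \eqref{S^B_1,S^B_2}) by a second inclusion--exclusion in the $\R^{B^c}$ variables, which expresses it as a sum of quantities $S_2^H(\m''',\bfR''')$ with $H=B\cup G\supsetneq B$; these are controlled by a \emph{reverse} induction on $|B|$ whose base case is $B=\{1,\dots,s\}$ (the no-$V$ case you correctly identify as the model). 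Without the grouping and without this downward induction absorbing the off-shadow region into strictly larger $B$'s, the ``routine Cauchy--Schwarz and geometric-series manipulations'' you defer to cannot be carried out. The remaining ingredients you cite (uniformity of $\delta$, $\nu$, $p$; the matching of $|\bfR_B|^{1/2}|V|^{1/2}$ against Proposition \ref{CarberySeeger}) are indeed as you describe.
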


\begin{proof}
\smallskip

Let $a$ be an $(S,\tau)$-atom with $a=\sum_{\bfR\in \UU}a_{\bfR}$ and $\Omega=\sh(\UU)\subseteq2^\tau\circ B(1)$.  The argument depends on the number $s$ of dotted entries in $S$ and is straightforward if $s\leq 1$. If $s=0$,  the $L^2$-boundedness of convolution with $\KK$ and the fact that $a*\KK$ is supported on a ball of bounded radius shows that $\|a*\KK\|_1\le C\|a*\KK\|_2\le C'\|a\|_2\le C'$. If $s=1$ and $a$ is supported in $2^{\tau}\circ\bfR$ with $\bfR\in\D_S$,
split $\int_{\R^d}\big|a*\KK(\x)\big|\,d\x$ into the sum of the integral over $2^{1+\tau}\circ\bfR$ and the integral over its complement. For the first integral, again apply the $L^2$-boundedness of convolution by $\KK$. For the integral over the complement it is sufficient to use Proposition \ref{Lem10.11} with $\delta=0$ and $m=1$  to obtain $\int_{(2^{1+\tau}\circ\bfR)^c}\big|a*\KK(\x)\big|\,d\x\le C_\KK|\bfR|^\frac12 \|a\|_2\le C_\KK$.

\smallskip

Now  let $S=\big\{(A_{1},k_{1}), \ldots, (A_{s},k_{s})\big\}\in\SS_{\bfE}$ with $s\geq 2$ and let $\tau>0$.  An $(S,\tau)$-atom $a$ is associated with an open set $\Omega\subset 2^{\tau}\circ\bfQ$ where $\bfQ$ a unit cube in $\R^{d}$, and $a(\x)=\sum_{\bfR\in\UU}a_\bfR(\x)$ where $\UU\subseteq\D_S(\Omega)$ is a family of dyadic rectangles. The functions $\{a_{\bfR}:\bfR\in \UU\}$ are pre-atoms as in part \eqref{A3} of Definition \ref{Def9.1}.  Fix a complete N-string $\r=(r_{1}, \ldots,r_{N})$ as Definition \ref{Def10.1}. If $\bfR\in\UU$ let $\bfR^{(j)}$ and $\m^{(j)}(\bfR)=\big(m_1^{(j)}(\bfR), \dots,m_s^{(j)}(\bfR)\big)\in\N^s$ for $0\leq j \leq N$ be the sequence of enlargements and multi-indices defined in equation \eqref{10.3z}. Let $\mathfrak P_{s}$ be the family of all non-empty subsets of $\{1, \ldots, s\}$. If $\bfR=\prod_{i=1}^{n}\bfQ_{i}\in\UU$  then $\bfQ_{A_{r}}=\prod_{i\in A_{r}}\bfQ_{i}\subseteq\R^{A_{r}}$ is a dyadic cube with respect to the dilations $\del^{k_r}$. Let
\bea
\widehat \bfQ_{A_{r}}&=2^{m_{r}^{(N)}(\bfR)}\circ\bfQ_{A_r},\\
\widehat \bfR&=2^{\m^{(N)}(\bfR)}\circ\bfR=\prod\nolimits_{r=1}^{s}\widehat\bfQ_{A_{r}},\\
\widehat\Omega &= \bigcup\nolimits_{\bfR\in\UU}2^{\m^{(N)}(\bfR)}\circ\bfR=\bigcup\nolimits_{\bfR\in\UU}\widehat\bfR.
\eea
From  Lemma \ref{CSmeasure} we have $|\widehat\Omega|\leq C|\Omega|$. Using the Cauchy-Schwarz inequality, the $L^{2}$-boundedness of convolution with $\KK$,  and the $L^2$ normalization of atoms, we  have 
\beas
\int_{\widehat\Omega}\big|a*\KK(\x)\big|\,d\x\leq \|a*\KK\|_{L^{2}}|\widehat\Omega|^{\frac{1}{2}}\leq C_{\KK} |\widehat\Omega|^\frac{1}{2}\|a\|_2\leq C |\widehat\Omega|^{\frac{1}{2}}|\Omega|^{\frac{1}{2}}
\leq  C.
\eeas 

Thus to prove the proposition when $s\geq 2$ we  must show that $\int_{\widehat\Omega^c}\big|a*\KK(\x)\big|\,d\x\leq C_{\KK}$, and to proceed we partition $\widehat \Omega^{c}$  into regions indexed by sets $B\in \mathfrak P_{s}$. We recall the following notation from Section \ref{Sec11.2ww}. If $B\in \mathfrak P_{s}$ write $\R^{d}=\R^{B}\times\R^{B^{c}}=\big(\prod_{r\in B}\R^{A_{r}}\big)\times\big(\prod_{r\notin B}\R^{A_{r}}\big)$, and if $\bfR\in\UU$ write $\bfR=\bfR_{B}\times\bfR_{B^{c}}\subseteq \R^{B}\times\R^{B^{c}}$. We define
\bea
\widehat\bfR_{B}&=\prod_{r\in B}\widehat\bfQ_{A_{r}}&&\text{ and }&(\widehat\bfR_{B})^{cc}&=\prod_{r\in B}\big(\widehat\bfQ_{A_{r}}\big)^{c}.
\eea
The symbols $\bfR'$ and $\bfR''$ denote dyadic rectangles in $\R^{B}$ and $\R^{B^{c}}$, and we use similar notation for  multi-indices $\m'\in\N^{B}$ and $\m''\in\N^{B^{c}}$. Given $\m'\in\N^{B}$ and $0\leq j \leq N$ let
\bea\label{12.3q}
\UU^{j,\m'}&=\Big\{\bfR\in\UU:\m_B^{(j)}(\bfR)=\m'\Big\},\\
\UU_B^{j,\m'}&=\Big\{\bfR'\subseteq \R^{B}:\exists \bfR'' \subseteq\R^{B^{c}}\text{ with }\bfR'\times\bfR''\in\UU^{j,\m'}\Big\},
\eea
and if $\bfR'\subseteq\R^{B}$ let
\bea\label{12.4q}
\UU^{j,\m'}(\bfR')&=\Big\{\bfR\in\UU^{j,\m'}:\bfR_{B}=\bfR'\Big\}\\
\VV^{j,\m'}(\bfR')&=\Big\{\bfR'':\bfR'\times\bfR''\in\UU^{j,\m'}\Big\}.
\eea
With $|B|$-tuples $\m'\in\N^{B}$ ordered component-wise, we also define 
\bea\label{12.5q}
\overline \UU^{j,\m'}&=\bigcup\nolimits_{\n'\le\m'} \UU^{j,\n'},&&&
\overline \UU_B^{j,\m'}&=\bigcup_{\n'\leq\m'}\UU_B^{j,\m'},&&& 
\overline \VV^{j,\m'}(\bfR')&=\bigcup_{\n'\leq\m'}\VV^{j,\m'}(\bfR').
\eea
With this notation in place we can now proceed with the proof.

\smallskip

The inclusion-exclusion formula gives 
$
\1_{\R^{d}}(\x)=\1_{\widehat\bfR}(\x)+\sum\nolimits_{B\in \mathfrak P_{s}}(-1)^{|B|+1}\prod\nolimits_{r\in B}\1_{(\widehat\bfQ_{A_{r}})^{c}}(\x_{A_{r}})$. Then since $\widehat\bfR \subseteq \widehat\Omega$ we have
\bea\label{in-ex}
\1_{\widehat\Omega^{c}}(\x)&=\sum\nolimits_{B\in\mathfrak P_{s}}(-1)^{|B|+1}\prod\nolimits_{r\in B}\1_{(\widehat\bfQ_{A_r})^c}(\x_{A_{r}})\1_{\widehat\Omega^{c}}(\x)\\
&=\sum\nolimits_{B\in \mathfrak P_{s}}(-1)^{|B|+1}\1_{(\widehat\bfR_{B})^{cc}\times\R^{B^{c}}}(\x)\1_{\widehat\Omega^{c}}(\x).
\eea
Thus if $\displaystyle I_{B}=\sum\nolimits_{\bfR\in\UU}\int_{[(\widehat\bfR_{B})^{cc}\times\R^{B^{c}}]\cap\widehat\Omega^c}a_\bfR*\KK(\x)\,d\x$ it follows that 
\beas
\int_{\widehat\Omega^c}a*\KK(\x)\,d\x&=\sum_{\bfR\in\UU}\int_{\widehat\Omega^c}a_\bfR*\KK(\x)\,d\x
=\sum_{B\in\mathfrak P_{s}}(-1)^{|B|+1}\sum\nolimits_{\bfR\in\UU}\int_{[(\widehat\bfR_{B})^{cc}\times\R^{B^{c}}]\cap\widehat\Omega^c}a_\bfR*\KK(\x)\,d\x\\
&=\sum\nolimits_{B\in\mathfrak P_{s}}(-1)^{|B|+1}I_B,
\eeas
and  to prove the proposition it suffices to show that $|I_{B}|\leq C$ for every $B\in \mathfrak P_{s}$.

We argue by ``reverse induction'' on the cardinality of $B$, and we consider first the case of $B_0=\{1, \ldots, s\}$. From \eqref{12.3q} we have the partition $\UU=\bigcup_{\m\in\N^{s}}\UU^{N,\m}$.  Using the inclusion $\widehat\bfR\subseteq \widehat\Omega$, the Cauchy-Schwarz inequality, and Propositions \ref{Lem10.11}and \ref{CarberySeeger} we have 
\beas
|I_{B_0}|&=\Big|\sum_{\bfR\in\UU}\int_{(\widehat\bfR)^{cc}\cap\widehat\Omega^c}a_\bfR*\KK(\x)\,d\x\Big|
\leq\sum_{\bfR\in\UU}\int_{\widehat\bfR^{cc}}\big|a_\bfR*\KK(\x)\big|\,d\x
=\sum_{\m\in \N^{s}}\sum_{\bfR\in \UU^{N,\m}}\int_{\widehat\bfR^{cc}}\big\vert a_{\bfR}*\KK(\x)\big\vert\,d\x\\
&\leq 
\sum_{\m\in \N^{s}}2^{-\delta|\m|}\sum_{\bfR\in  \UU^{N,\m}}\big\vert\bfR\big\vert^{\frac{1}{2}}\|a_{\bfR}\|_{L^{2}}
\leq
\sum_{\m\in \N^{s}}2^{-\delta|\m|}\Big[\sum_{\bfR\in \UU^{N,\m}}\big\vert\bfR\big\vert\Big]^{\frac{1}{2}}\Big[\sum_{\bfR\in\UU^{N,\m}}\|a_{\bfR}\|^{2}\Big]^{\frac{1}{2}}\\
&\leq
C\sum_{\m\in \N^{s}}2^{-\delta|\m|}\big(1+|\m|\big)^{p/2}|\Omega|^{\frac{1}{2}}\big\vert\Omega\big\vert^{-\frac{1}{2}}\lesssim 1.
\eeas

It remains to prove  the estimate $|I_{B}|\lesssim1$ for proper subsets of $\{1, \ldots,s\}$.  
For every $B\in\mathfrak P_{s}$ we have the partition 
$\UU=\bigcup\nolimits_{\m'\in \N^{B}}\UU^{N,\m'}=\bigcup\nolimits_{\m'\in\N^{B}}\bigcup\nolimits_{\bfR'\in \UU^{N,\m'}_{B}}  \UU^{N,\m'}(\bfR')$,
which we use to group together the $a_\bfR$. Define
$
a_{\m',\bfR'}=\sum\nolimits_{\bfR\in \UU^{N,\m'}(\bfR')}a_{\bfR},
$
so that 
$
a=\sum\nolimits_{\m'\in \N^{B}}\sum\nolimits_{\bfR'\in\UU^{N,\m'}_{B}}\,a_{\m',\bfR'}$. 
For $\m'\in \N^{B}$ fixed, if $\bfR_1,\,\bfR_2\in \UU^{N,\m'}(\bfR')$ then the two dyadic enlargements $2^{\m(\bfR_1)}\bfR_1,\, 2^{\m(\bfR_2)}\bfR_2$ have the same $B$-component $2^{\m'}\bfR'$, and the same is true for the enlargement by dilations. Thus $(\widehat\bfR_1)_B=(\widehat\bfR_2)_B$. We denote this last set by $\widehat{\bfR'}$. If now we put
\bea\label{12.8q}
S^{B}(\m',\bfR')=\int_{({\bfR'})^{cc}\times\R^{B^{c}}}\big| a_{\m',\bfR'}*\KK(\x',\x'')\big|\,d\x'\,d\x''
\eea
then
\bea\label{S^B}
|I_{B}|&\leq\sum_{\m'\in \N^{B}}\sum_{\bfR'\in\UU^{N,\m'}_{B}}
\int_{({\bfR'}^*)^{cc}\times\R^{B^{c}}}\big| a_{\m',\bfR'}*\KK(\x)\big|\,d\x
=
\sum_{\m'\in \N^{B}}\sum_{\bfR'\in\UU^{N,\m'}_{B}}S^{B}(\m',\bfR').
\eea
We show that the sum $\sum_{\m',\bfR'}S^B(\m',\bfR')$ in \eqref{S^B} is bounded by a constant depending only on $\KK$, $\UU$, $\Omega$ 
for sets $B$ with $|B|=b<s$ assuming the inductive hypothesis that the same holds for all $H$ with $|H|>b$.

Now suppose $b<s$ and that $\sum_{\m',\bfR'}S^{H}(\m',\bfR')$ is bounded for all subsets $H\in \mathfrak P_{s}$ with $|H|>b$. Fix a pair $(\m',\bfR')$ with $\bfR'\in\UU^{N,\m'}_B$.  We split the integral in \eqref{12.8q} into two parts by decomposing $\R^{B^{c}}$ into two pieces. Replacing $\UU$ with $\VV^{N,\m'}$ we \ apply the definitions in \eqref{12.3q} and \eqref{12.4q} and introduce enlargements $\widetilde\VV=\VV^{N,\m'}(\bfR')$ and a family of rectangles $\widetilde\Omega=\big\{\bfR'':\bfR''\in\widetilde\VV\big\}$.  We also fix a complete $N'$-string $\widetilde\r\in(B^c)^{N'}$ and define $\widehat{\widetilde\Omega}\subset\R^{B^c}$ as in Lemma \ref{CSmeasure}. Then
\bea\label{S^B_1,S^B_2}
S^{B}(\m',\bfR') &= \int_{\big((\widehat{\bfR'})^{cc}\times\widetilde\Omega^{(j)}}\big|a_{\m',\bfR'}*\KK(\x)\big|\,d\x+\int_{(\widehat{\bfR'})^{cc}\times(\widetilde\Omega^{(j)})^c}\big|a_{\m',\bfR'}*\KK(\x)\big|\,d\x\\
&=
S^B_1(\m',\bfR') + S^B_2(\m',\bfR').
\eea
The inequality \eqref{abc}  holds true, {\it a fortiori}, with $\UU^{N,\m'}_B$, $\VV^{N,\m'}(\bfR_B)$ in place of  \,$\overline\UU^{N,\m'}_B$, \,$\overline\VV^{N,\m'}(\bfR')$.

We first deal with $\sum_{\m',\bfR'}S^B_1(\m',\bfR')$.
Applying Proposition \ref{Lem10.11} with $f=a_{\m',\bfR'}$ and using the inequality  $\big|\widehat{\widetilde\Omega}\big|\le C|\widetilde\Omega|=C|\VV^{N,\m'}(\bfR')|$ we obtain
\beas
S^B_1(\m',\bfR')&\leq 
C_\KK|\bfR'|^\frac{1}{2}\big|\widehat{\widetilde\Omega}\big|^\frac{1}{2}2^{-\delta |\m'|}\big\|a_{\m',\bfR'}\big\|_2
= C_{\KK}|\bfR'|^\frac{1}{2}\big|\sh\big(\VV^{N,\m'}(\bfR')\big)\big|^\frac{1}{2}2^{-\delta |\m'|}\big\|a_{\m',\bfR'}\big\|_2.
\eeas
Summing over $\m',\bfR'$ and using Proposition \ref{CarberySeeger}, we have 
\beas
\sum_{\m'\in \N^{B}}&\sum_{\bfR'\in \UU^{N,\m'}_{B}}S^B_1(\m',\bfR')
\leq
\sum_{\m'\in \N^{B}}2^{-\delta |\m'|}\sum_{\bfR'\in\UU^{N,\m'}_{B}}|\bfR'|^\frac{1}{2}\big|\sh\big(\VV^{N,\m'}(\bfR')\big|^\frac{1}{2}\big\|a_{\m',\bfR'}\big\|_2\\
 &
 \le C\Big(\sum_{\m'\in \N^{B}}2^{-2\delta |\m'|}\sum_{\bfR'\in  \UU^{N,\m'}_{B}}|\bfR'|\big|\sh\big(\VV^{N,\m'}(\bfR')\big|\Big)^\frac{1}{2}
 \Big(\sum_{\m'\in \N^{B}}\sum_{\bfR'\in  \UU^{N,\m'}_{B}}\big\|a_{\m',\bfR'}\big\|_2^2\Big)^\frac{1}{2}\\
 &\leq
 C\Big(\big\vert\Omega\big\vert\sum_{\m'\in \N^{B}}2^{-2\delta|\m'|} \big(1+|\m'|)^p\Big)^{\frac{1}{2}}\Big(\sum_{\m'\in \N^{B}}\sum_{\bfR'\in  \UU^{N,\m'}_{B}}\big\|a_{\m',\bfR'}\big\|_2^2\Big)^\frac{1}{2}
\le C|\Omega|^{\frac{1}{2}}|\Omega|^{-\frac{1}{2}}\lesssim 1.
\eeas
This takes care of the sum  $\sum_{\m',\bfR'}S_{1}^{B}(\m',\bfR')$.

To deal with $\sum_{\m',\bfR'}S^B_2(\m',\bfR')$ we decompose integration over $\big(\widehat{\widetilde\Omega}\big)^c$ into a sum of terms, depending on nonempty subsets $G$ of $B^c$ and analogous to the $I_B$ in \eqref{S^B}. We denote by $\PP_{B^c}$ the collection of these sets $G$. For $G\in \PP_{B^{c}}$, we set $H=B\cup G$ and  $H^{c}=\{1,\dots,s\}\setminus H=B^{c}\setminus G$. We write $\R^{d}=\R^{B}\times\R^{G}\times\R^{H^c}$, and $\x\in \R^{d}$ as $\x=(\x_{B},\x_{B^{c}})=(\x_{B},\x_{G},\x_{H^{c}})$ (if $G=B^c$ the third component is not present). We also denote rectangles in $\R^G$ by $\bfR''$ and $G$-tuples of nonegative integers by $\m''$. Using the inclusion-exclusion formula \eqref{in-ex} in the $\R^{B^c}$ variables to obtain the inequality
$
\big|a_{\m',\bfR'}*\KK (\x)\big|\1_{\big(\widehat{\widetilde\Omega}\big)^c}(\x_{B^{c}})
\le \sum\nolimits_{G\in\PP_{B^c}}\big|a_{\m',\bfR'}*\KK (\x)\big|\1_{(\widehat\bfR_G)^{cc}}(\x_G).
$
Since $\UU^{N,\m'}(\bfR')$ decomposes as the disjoint union
\beas
\UU^{N,\m'}(\bfR')=\bigcup\nolimits_{\m''\in\N^G}\bigcup\nolimits_{\bfR'\times\bfR''\in\UU^{N,(\m',\m'')}_H}\UU^{N,(\m',\m'')}(\bfR'\times\bfR''),
\eeas
we can write
$
a_{\m',\bfR'}=\sum\nolimits_{\m''\in\N^G}\sum\nolimits_{\R''\in\UU^{N,(\m',\m'')}_H}\,\,a_{(\m',\m''),\bfR'\times\bfR''}.
$
Thus
\bea\label{12.10}
S_{2}^B(\m',\bfR')\leq \sum_{G\in\PP_{B^c}}&\sum_{\m''\in\N^G}\sum_{\bfR'\times\bfR''\in\UU^{N,(\m',\m'')}_{B\cup G}}
 \int_{(\widehat{\bfR'\times\bfR''})^{cc}\times\R^{H^c}}\big|a_{(\m',\m''),\bfR'\times\bfR''}*\KK(\x)\big|\,d\x.
\eea
Summing over $\m',\bfR'$ we obtain

\beas
\sum\nolimits_{\m'\in \N^{B}}&\sum\nolimits_{\bfR'\in\UU^{N,\m'}_{B}}S_2^{B}(\m',\bfR')\\
&\le\sum\nolimits_{H\supsetneqq B}\sum\nolimits_{(\m',\m'')\in\N^H}\sum\nolimits_{\bfR'\times\bfR''\in \UU^{N,(\m',\m'')}_H}       \int_{(\widehat\bfR_H)^{cc}\times\R^{H^c}}\big|a_{(\m',\m''),\bfR'\times\bfR''}*\KK(\x)\big|\,d\x\\
&=\sum\nolimits_{H\supsetneqq B}\Big(\sum\nolimits_{\m'''\in\N^H}\sum\nolimits_{\bfR'''\in \UU^{N,(\m''')}_H}    S_2^H(\m''',\bfR''')\Big).
\eeas
 Each expression in parentheses is uniformly bounded by the inductive hypothesis, and this completes the proof.
\end{proof}

\section{Further comments}\label{Sec12}

The study of square functions and Hardy spaces developed in this paper is far from being complete. This is due in part to a choice, in order to limit the size of the paper, in part to the fact that certain aspects of the theory would require further thoughts.
\smallskip

\begin{enumerate}[1.]

\item

The multi-norm Littlewood-Paley decomposition presented in Sections \ref{Sec2} and \ref{Sec3} is local for a good reason. If we extend the content of Section \ref{Sec3} to take into account all scales, we immediately realize that large and small scales, either in space or in frequency, behave differently and independently one from the other. To begin with, a dyadic annulus $\widehat N_i(\xib)\sim 2^p$ with $p>0$ cannot intersect another dyadic annulus $\widehat N_j(\xib)\sim 2^{q}$ with $q<0$. Moreover the rules that determine which intersections of dyadic annuli are different in the two ranges. Though the ``far-from-local'' part of the theory may also have some interest, we focus on the local part, which is more natural also because, in principle, it could be extended to non-translation invariant contexts, such as in \cite[Ch. 12, 13]{MR3862599}.

\smallskip

\item We believe that an extension of $L^{1}$-equivalence to $L^p$-equivalence for $p<1$ and close to 1 is possible using the techniques of Section \ref{Sec5}.
\smallskip

\item We do not consider characterizations of $\h^1_\mbE(\bR^d)$ via maximal operators. Natural candidates as ``multi-norm maximal operators'' are those of the form
$
M_\varphi f(\x )=\sup_{\mbt\in\Gamma(\mbE)}\big|f*\varphi^{(-\mbt)}(\x )\big|\ ,
$
with $\int\varphi=1$, as well as grand-maximal analogues.
\smallskip

\item We do not know whether the results of Section \ref{Sec4.2} still hold removing the assumption that the $\Psi_L$ are tensor products, but keeping the other conditions, that is, uniform boundedness in Schwartz norms, cancellation in each variable $\x _{i}$ with $i\in D_S$ and $\sum_{L\in\LL}\Psi_L^{(-L)}=\del_\0$.
More generally, we do not know whether it is possible to obtain, in a simple form, a larger class of mutually $L^{1}$-equivalent square functions which includes those of both tensor- and convolution-type. 
\smallskip

\item In this paper we do not consider continuous-parameter square functions that are natural counterparts of those studied Sections \ref{Sec5}- \ref{Sec7}. In some cases, it is not hard to obtain their $L^{1}$-equivalence with those in this paper, adapting the arguments of Section \ref{Sec5}. This is the case with a $g$-function of tensor type depending on the choice, for $i=1,\dots,n$, of $\ph_i\in \cS(\bR^{d_i})$ with $\int\ph_i=1$ and of the form
$
g(f)(\x )=\big(\big|f*\Psi_0(\x )\big|^2+\sum_{S\in\cS'_\mbE}\int_{F_S}\big|f*\Psi_S^{(-\mbt)}(\x )\big|^2\,d\sigma_S(\mbt)\big)^\half$. 
Here $F_S$ is the face \eqref{3.6*} associated to $S\in\cS'_\mbE$ and $\sigma_S$ is the Hausdorff measure on $F_S$. Moreover, $\Psi_0(\x )=\prod_{i=1}^n\ph_i(\x _{i})$ and $\Psi_S(\x )=\Big(\prod_{i\in D_S}\psi_i(\x _{i})\Big)\Big(\prod_{i\not\in D_S}\ph_i(\x _{i})\Big)$ where $\psi_i(\x _{i})=(d/dt)_{|_{t=1}}\ph^{(-t)}(\x _{i})$.
\smallskip

\item 
There are close relations between multi-norm and flag structures. The paper \cite{MR3862599} was initially motivated by the remarkable observation that a singular kernel that is simultaneously a flag kernel for two ``opposite'' flags is singular only at the origin and locally satisfies the conditions of a multi-norm kernel. Heuristically, flag structures can be regarded as ``limits'' of (special) multi-norm structures. Considering, for instance, the flag $\bR_{x_1}\subset \bR^2_{x_1,x_2}\subset\cdots\bR^n$ endowed with isotropic dilations, the corresponding flag-seminorms $N_i=\max\big\{|x_i|,|x_{i+1}|,\dots,|x_n|\big\}$ and $\widehat N_j=\max\big\{|\xi_1|,|\xi_2|,\dots,|\xi_j|\big\}$ are the limits as $\nu\to\infty$ of the norms \eqref{2.5a} and associated to the standard matrices
\beas
\mbE_\nu=\begin{pmatrix} 1&1&1&1&\cdots\\\nu&1&1&1&\cdots\\\nu^2&\nu&1&1&\cdots\\\nu^3&\nu^2&\nu&1&\cdots\\\vdots&\vdots&\vdots&\vdots&\ddots\end{pmatrix}\ \qquad (\nu>1)\ ,
\eeas

The Littlewood-Paley decomposition of the flag frequency space, constructed in analogy with Section \ref{Sec3.4}, consists of rectangles depending on parameters  $L=(\ell_1,\dots,\ell_n)\in\bZ^n$ 
with $\ell_1\leq \ell_2\leq \cdots\leq \ell_n$ . Fixing one such $L$ and grouping together the consecutive entries that are equal, we obtain a string $\{I_1,\dots,I_s\}$ of consecutive subintervals of $\{1,\dots,n\}$. Setting $k_r=\min I_r$ (the dotted entry in $I_r$), the rectangle at scale $L$ consists of the points $\xib\in\bR^n$ such that, for all $r=1,\dots,s$,
$
|\xi_{k_r}|\sim 2^{\ell_{k_r}}\ \text{ and }\ |\xi_j|\lesssim 2^{\ell_j}=2^{\ell_{k_r}}\text{ if }j\in I_r\setminus\{k_r\}\ .
$
We believe that analysis of flag structure with arbitrary number of factors can be developed along the lines of the present paper, extending results that are known with two factors and, at the same time, acquiring a more structured point of view.
\smallskip

\item It is likely that large part of our results can be extended to homogeneous nilpotent groups, with a standard matrix $\mbE$ that is doubly monotone according to \cite[Def. 9.2]{MR3862599}. The main obstacle to face is the lack of commutativity and the fact that not all dilations can be automorphisms. Indeed, matters are much simpler in special situations, like in the Phong-Stein case of the Heisenberg group $H_n\cong\bC^n\times\bR$ endowed with isotropic and parabolic dilations, provided the square functions involve convolution with Schwartz functions $\Psi_L$ that are radial in the $\bC^n$ variable. 
This last context has been studied by various authors in connection with flag structures~\cite{arXiv:2102.07371, MR3293443}.

\end{enumerate}

\bibliographystyle{plain}

\bibliography{Ricci.bib}

\end{document}